\newcommand{\ab}{\mathrm{ab}}
\newcommand{\abs}{\mathrm{abs}}
\newcommand{\ad}{\mathrm{ad}}
\newcommand{\ccl}{\mathrm{ccl}}
\newcommand{\der}{\mathrm{der}}
\newcommand{\evrs}{\mathrm{evrs}}
\newcommand{\evsrs}{\mathrm{evsrs}}
\newcommand{\geom}{\mathrm{geom}}
\newcommand{\nevrs}{\mathrm{nevrs}}
\newcommand{\nvreg}{\mathrm{nvreg}}
\newcommand{\pr}{\mathrm{pr}}
\newcommand{\ram}{\mathrm{ram}}
\newcommand{\red}{\mathrm{red}}
\newcommand{\res}{\mathrm{res}}
\newcommand{\rs}{\mathrm{rs}}
\renewcommand{\sc}{\mathrm{sc}}
\renewcommand{\ss}{\mathrm{ss}}
\newcommand{\sym}{\mathrm{sym}}
\newcommand{\unip}{\mathrm{unip}}
\newcommand{\ur}{\mathrm{ur}}
\newcommand{\FKS}{\mathrm{FKS}}
\newcommand{\Jord}{\mathfrak{Jord}}
\newcommand{\Yu}{\mathrm{Yu}}
\newcommand{\ul}{\underline}
\newcommand{\ol}{\overline}
\newcommand{\cO}{\mathcal{O}}
\newcommand{\mcO}{\mathcal{O}}
\newcommand{\mfp}{\mathfrak{p}}
\newcommand{\bfT}{\mathbf{T}}
\newcommand{\x}{\mathbf{x}}
\newcommand{\bbT}{\mathbb{T}}
\newcommand{\cT}{\mathcal{T}}
\newcommand{\bbU}{\mathbb{U}}
\newcommand{\Z}{\mathbb{Z}}
\newcommand{\F}{\mathbb{F}}
\newcommand{\FF}{\mathbb{F}}
\newcommand{\bbF}{\mathbb{F}}
\newcommand{\cP}{\mathcal{P}}
\newcommand{\R}{\mathbb{R}}
\newcommand{\Q}{\mathbb{Q}}
\newcommand{\bbQ}{\mathbb{Q}}
\newcommand{\QQ}{\mathbb{Q}}
\newcommand{\C}{\mathbb{C}}
\newcommand{\bbC}{\mathbb{C}}
\newcommand{\Gm}{\mathbb{G}_{\mathrm{m}}}
\newcommand{\bbS}{\mathbb{S}}
\newcommand{\bG}{\mathbb{G}}
\newcommand{\bbG}{\mathbb{G}}
\newcommand{\bfG}{\mathbf{G}}
\newcommand{\bbZ}{\mathbb{Z}}
\newcommand{\J}{\mathbf{J}}
\newcommand{\bfS}{\mathbf{S}}
\newcommand{\bfZ}{\mathbf{Z}}
\newcommand{\cB}{\mathcal{B}}
\newcommand{\mcB}{\mathcal{B}}
\newcommand{\cR}{\mathcal{R}}
\newcommand{\mfG}{\mathfrak{G}}
\newcommand{\II}{\mathrm{II}}
\DeclareMathOperator{\cInd}{c-Ind}
\DeclareMathOperator{\depth}{depth}
\DeclareMathOperator{\id}{id}
\DeclareMathOperator{\val}{val}
\DeclareMathOperator{\Cent}{Cent}
\DeclareMathOperator{\Frob}{Frob}
\DeclareMathOperator{\Gal}{Gal}
\DeclareMathOperator{\Ind}{Ind}
\DeclareMathOperator{\Int}{Int}
\DeclareMathOperator{\Irr}{Irr}
\DeclareMathOperator{\Ker}{Ker}
\DeclareMathOperator{\Lie}{Lie}
\DeclareMathOperator{\LLC}{LLC}
\DeclareMathOperator{\Nr}{Nr}
\DeclareMathOperator{\Res}{Res}
\DeclareMathOperator{\Stab}{Stab}
\DeclareMathOperator{\Tr}{Tr}
\DeclareMathOperator{\GL}{GL}
\DeclareMathOperator{\PGL}{PGL}
\DeclareMathOperator{\SL}{SL}
\DeclareMathOperator{\SO}{SO}
\DeclareMathOperator{\Sp}{Sp}
\newcommand{\from}{\colon}
\theoremstyle{plain}
\newtheorem{thm}{Theorem}[section]
\newtheorem{theorem}[thm]{Theorem}
\newtheorem*{thm*}{Theorem}
\newtheorem{prop}[thm]{Proposition}
\newtheorem{proposition}[thm]{Proposition}
\newtheorem{lem}[thm]{Lemma}
\newtheorem{lemma}[thm]{Lemma}
\newtheorem{cor}[thm]{Corollary}
\theoremstyle{definition}
\newtheorem{defn}[thm]{Definition}
\newtheorem{definition}[thm]{Definition}
\theoremstyle{remark}
\newtheorem{rem}[thm]{Remark}
\newtheorem*{claim*}{Claim}
\newtheorem{remark}[thm]{Remark}
\newtheorem{example}[thm]{Example}
\newtheorem{notation}[thm]{Notation}
\theoremstyle{theorem}
\newtheorem{displaytheorem}{Theorem}
\newtheorem{displaytheoremprime}{Theorem}
\newcommand{\dashover}[2][\mathop]{#1{\mathpalette\df@over{{\dashfill}{#2}}}}
\newcommand{\fillover}[2][\mathop]{#1{\mathpalette\df@over{{\solidfill}{#2}}}}
\newcommand{\df@over}[2]{\df@@over#1#2}
\newcommand\df@@over[3]{%
  \vbox{
    \offinterlineskip
    \ialign{##\cr
      #2{#1}\cr
      \noalign{\kern1pt}
      $\m@th#1#3$\cr
    }
  }%
}
\newcommand{\dashfill}[1]{%
  \kern-.5pt
  \xleaders\hbox{\kern.5pt\vrule height.4pt width \dash@width{#1}\kern.5pt}\hfill
  \kern-.5pt
}
\newcommand{\dash@width}[1]{%
  \ifx#1\displaystyle
    2pt
  \else
    \ifx#1\textstyle
      1.5pt
    \else
      \ifx#1\scriptstyle
        1.25pt
      \else
        \ifx#1\scriptscriptstyle
          1pt
        \fi
      \fi
    \fi
  \fi
}
\newcommand{\solidfill}[1]{\leaders\hrule\hfill}
\title{Characterization of supercuspidal representations and very regular elements}
\author{Charlotte Chan}
\address{Department of Mathematics, University of Michigan, 2074 East Hall, 530 Church Street, Ann Arbor, MI 48105, USA.}
\email{charchan@umich.edu}
\author{Masao Oi}
\address{Department of Mathematics (Hakubi center), Kyoto University, Kitashirakawa, Oiwake-cho, Sakyo-ku, Kyoto 606-8502, Japan.}
\email{masaooi@math.kyoto-u.ac.jp}
\begin{document}

\begin{abstract}
    We prove that regular supercuspidal representations of $p$-adic groups are uniquely determined by their character values on very regular elements---a special class of regular semisimple elements on which character formulae are very simple---provided that this locus is sufficiently large. As a consequence, we resolve a question of Kaletha by giving a description of Kaletha's $L$-packets of regular supercuspidal representations which mirrors Langlands' construction for real groups following Harish-Chandra's characterization theorem for discrete series representations. Our techniques additionally characterize supercuspidal representations in general, giving $p$-adic analogues of results of Lusztig on reductive groups over finite fields. In particular, we establish an easy, non-cohomological characterization of unipotent supercuspidal representations when the residue field of the base field is sufficiently large.
\end{abstract}

\maketitle

\section{Introduction} \label{sec:intro}

The subject of this paper is to examine the following question: How much do you need to know about the character of a supercuspidal representation of a $p$-adic group in order to determine it? In the 1990s, Henniart proved that certain classes of supercuspidal representations of $\GL_n$ (namely, those associated to an unramified torus, or those associated to a totally tamely ramified torus if $n$ is prime) can be recognized by their characters on \textit{very regular elements} (\cite{Hen92,Hen93}). This can be thought of as a (special case of) a $p$-adic analogue of Harish-Chandra's characterization of discrete series representations of real groups via their characters on compact regular semisimple elements. 
It begs the question: Can one establish Henniart's theorem for general $p$-adic groups? In the last thirty years, this central question has been asked repeatedly \cite{AS09,Kal19} and in this paper we give what we believe to be a ``sharp'' answer to this question. Especially given the subtle and difficult nature of giving general character formulas for supercuspidal representations \cite{AS09,DS18,Spi18,Spi21,FKS21}, our focus is on finding a special domain of regular semisimple elements which is simultaneously small enough that the character formulas are extremely simple and large enough that it may determine certain invariants of supercuspidal representations. 

One of the main theorems of this paper is a characterization theorem for Kaletha's $L$-packets of regular supercuspidal representations \cite{Kal19}. Let $\bfS$ be a tame elliptic maximal torus of a connected reductive group $\bfG$ over a non-archimedean local field $F$ and let $\theta$ be a regular character of $\bfS(F)$. From this data, Kaletha constructs a Langlands parameter $\phi_\theta$ and proposes an associated $L$-packet $\Pi^{\bfG}_{\phi_{\theta}}$ of supercuspidal representations of $\bfG(F)$. In this paper, we establish a description of this $L$-packet which mirrors Langlands' original construction  of $L$-packets of discrete series representations for real reductive groups \cite{Lan89}:

\begin{displaytheorem}\label{thm:L-packet char}
Assume there are sufficiently many very regular elements in $\bfS(F)$.
Then $\Pi^{\bfG}_{\phi_{\theta}}$ exactly consists of the irreducible supercuspidal representations $\pi_j$ whose character is
    \begin{equation}\label{eq:vreg character}
        \Theta_{\pi_j}(\gamma) = c \cdot \sum_{w \in W_G(\bfT_{\gamma},j(\bfS))} \Delta({}^w \gamma) \cdot \theta_j({}^w \gamma) \qquad \text{for all very regular $\gamma \in \bfG(F)$,}
    \end{equation}
    where $(j,\theta_j)$ varies over the rational conjugacy classes in the stable conjugacy class of $(\bfS \hookrightarrow \bfG,\theta)$.
    Here, $c \in \bbC^1$ is some constant independent of $\gamma$, the index set is the Weyl group with respect to the connected centralizer $\bfT_{\gamma}$ of $\gamma$ and $\bfS$, and $\Delta$ is a certain explicit function (the ``transfer factor'').
\end{displaytheorem}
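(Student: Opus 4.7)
The proof naturally splits into two directions: first, that each representation in $\Pi^{\bfG}_{\phi_\theta}$ has character given by formula \eqref{eq:vreg character} on very regular elements; and second, that any irreducible supercuspidal whose character is given by such a formula must lie in $\Pi^{\bfG}_{\phi_\theta}$.

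For the first direction (inclusion $\subseteq$), the plan is to exploit known character formulas for regular supercuspidal representations (following Adler--Spice and Fintzen--Kaletha--Spice) and to show that on very regular elements $\gamma$ these formulas collapse to the much simpler expression in \eqref{eq:vreg character}. The idea is that ``very regular'' is designed precisely so that only the elements of the torus itself can conjugate $\gamma$ into the support of the relevant cuspidal datum, killing all the more complicated contributions that ordinarily appear. Thus one can first restrict attention to those Weyl-conjugates $w\in W_G(\bfT_\gamma,j(\bfS))$, and then verify that the sign/constant terms assembled from the Adler--Spice/FKS recipes match the transfer factor $\Delta$ (up to a global constant $c$ independent of $\gamma$). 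The rational conjugacy classes inside the stable class of $(\bfS\hookrightarrow\bfG,\theta)$ naturally parametrize Kaletha's $L$-packet members $\pi_j$, so running the character computation for each $\pi_j$ yields the desired formula.

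For the second direction (inclusion $\supseteq$), the key input is the characterization theorem announced in the abstract: under the ``sufficiently many very regular elements'' assumption, a regular supercuspidal representation is determined (as an irreducible representation) by its character values on very regular elements. Granting this, one needs to argue that any irreducible supercuspidal $\pi$ satisfying \eqref{eq:vreg character} is forced to be regular supercuspidal in the first place, and then, being pinned down by its very regular character, must coincide with one of the $\pi_j$. Regularity of $\pi$ should follow from the structural shape of the right-hand side of \eqref{eq:vreg character}, in which the torus $\bfS$ and the regular character $\theta_j$ appear explicitly and $\Delta$ is the Kaletha-style transfer factor adapted to this tame elliptic setting.

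The principal obstacle is the \emph{first} direction, not the second: one must show that the explicit character of each Kaletha member $\pi_j$, when restricted to very regular elements, really does reduce to the Weyl-sum $\sum_w \Delta\cdot\theta_j$ with a \emph{global} constant $c$ that is the same for every $\pi_j$ in the packet. This requires a careful bookkeeping of the various root-theoretic signs (toral invariants, $\epsilon$-symbols, and Kaletha's rigid-inner-form normalizations) so that all $j$-dependent discrepancies are absorbed into the transfer factor $\Delta$ and only a single $j$-independent scalar $c\in\bbC^1$ remains. Once this stability of normalizations across the packet is established, matching it against Langlands' real-group formula \cite{Lan89} is essentially a translation of notation, and combining it with the uniqueness theorem completes the proof.
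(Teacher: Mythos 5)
Your overall architecture matches the paper's: Theorem \ref{thm:L-packet char} is deduced from (a) the explicit character formula for each packet member $\pi^{\FKS}_{(j(\bfS),\theta_j)}$ on elliptic very regular elements (Corollary \ref{cor:regsc}), plus (b) the uniqueness statement that each such representation is the \emph{only} irreducible supercuspidal with those character values (Theorem \ref{thm:regsc-characterization}), plus Kaletha's definition of $\Pi^\bfG_{\phi_\theta}$ as the set of the $\pi^{\FKS}_{(j(\bfS),\theta_j)}$. Two remarks on method. First, for the inclusion $\subseteq$ you propose to specialize the full Adler--Spice/Fintzen--Kaletha--Spice character formulas to very regular elements; the paper deliberately avoids this (those formulas require characteristic zero and large residual characteristic) and instead computes directly from Harish-Chandra's integration formula applied to $\cInd_{K^d}^G(\rho^{\Yu}_\Psi\otimes\epsilon_\Psi)$, using very regularity (via Lemma \ref{lem:point}) to collapse the orbital-integral index set to $G^0_{\bar\x}\backslash N_G(T_\gamma,G^0_{\bar\x})$. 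Your worry about a single $j$-independent constant is resolved by the explicit value $c=e(\bfG)\cdot\varepsilon_L(\bfT_{\bfG^*}-\bfS)$, which depends only on the stable class of $\bfS$. Second, and contrary to your assessment, the bulk of the paper's work lies in direction (b), not (a): it occupies Parts \ref{part:finite} and \ref{part:characterization}, whereas (a) is a modest generalization of Kaletha's shallow-element computation.

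The one step that would not go through as written is your plan for the converse inclusion: ``argue that any irreducible supercuspidal $\pi$ satisfying \eqref{eq:vreg character} is forced to be regular supercuspidal in the first place,'' with regularity ``following from the structural shape of the right-hand side.'' The shape of the formula does not by itself imply regularity of $\pi$; a priori a non-regular supercuspidal could have matching character values on the (possibly small) very regular locus. The paper avoids needing any such a priori regularity: Theorem \ref{thm:regsc-characterization} establishes uniqueness among \emph{all} irreducible supercuspidals directly, by first invoking Fintzen's exhaustion to write $\pi=\pi^{\FKS}_{\Psi'}$ for some Yu-datum, then using Proposition \ref{prop:clipped characterization} (which needs the nonvanishing input of Lemma \ref{lem:nonvan}, itself a consequence of the hypothesis \eqref{ineq:Henniart-evrs}) to force $\dashover{\Psi}'\sim\dashover{\Psi}$, then Proposition \ref{prop:reduction depth 0} to match depth-zero characters on $\bbG'(\FF_q)_{\evrs}$, and finally the finite-field Theorem \ref{thm:Henniart} to conclude $\rho_0'\cong\rho_0$. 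If you instead only have a characterization theorem valid among regular supercuspidals, the missing regularity argument is a genuine gap; you should either prove the stronger uniqueness statement or supply an actual argument (not just the ``structural shape'') that excludes non-regular competitors.
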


Kaletha's original construction of $\Pi_{\phi_\theta}^\bfG$ is as follows.
To the pair $(\bfS, \theta)$, Kaletha  associates a $\bfG$-equivalence class of Yu-data $\Psi = (\vec \bfG, \vec \phi, \vec r, \x, \rho_0)$ in the sense of Hakim--Murnaghan \cite{HM08} and obtains, by applying Yu's construction \cite{Yu01}, an irreducible supercuspidal representation $\pi^{\Yu}_{(\bfS,\theta)}$.
Then Kaletha declares $\Pi_{\phi_\theta}^\bfG$ to be the set of all $\pi^{\Yu}_{(j(\bfS),\theta_j \varepsilon_j)}$, where $(j,\theta_j)$ varies over (a choice of representatives of) the rational conjugacy classes in the stable conjugacy class of $(\bfS \hookrightarrow \bfG,\theta)$ and where $\varepsilon_j$ is an extremely delicately defined quadratic character which depends both on $j$ and $\theta$ (see \cite[p.\ 1153-1154]{Kal19}).

Under the additional assumption that $\theta$ satisfies a strong genericity condition, the supercuspidal $\pi^{\Yu}_{(\bfS,\theta)}$ was constructed by Adler \cite{Adl98}, predating Yu.
These supercuspidals are called toral.
For $L$-packets of toral supercuspidals corresponding to unramified tori, the necessity of the quadratic twist $\varepsilon_j$ of $(j,\theta_j)$ was established by DeBacker--Spice \cite{DS18}.
In \cite{CO21}, the authors of the present paper proved that under a mild assumption on the size of the residue field of $F$, for these $(j,\theta_j)$, the parahoric Deligne--Lusztig induction of \cite{CI21-RT} gives rise to an irreducible supercuspidal representation $\pi_{(j(\bfS),\theta_j)}^{\geom}$ and that $\pi_{(j(\bfS),\theta_j)}^{\geom} \cong \pi^{\Yu}_{(j(\bfS),\theta_j \varepsilon_j)}$. 
This comparison was established by considering very regular elements, though the exact methods were different and for the most part somewhat simpler. 
It is worth noting that very regularity has geometric significance in the unramified setting (see \cite[Theorem 1.2]{CI21-RT}), and that the notion of very regularity in Theorem \ref{thm:L-packet char} generalizes all previous usages of this terminology \cite{Hen92,Hen93,CI21-RT,CO21}.

Recently, Fintzen--Kaletha--Spice \cite{FKS21} proved that there exists a twist of Yu's construction such that the resulting supercuspidal representation $\pi_{(j(\bfS),\theta_j)}^{\FKS}$ is isomorphic to $\pi^{\Yu}_{(j(\bfS),\theta_j \varepsilon_j)}$ in Kaletha's more general setting, thereby removing the need to externally twist. Throughout this paper, we will use Fintzen--Kaletha--Spice's renormalization of Yu's construction.
We also note that by Fintzen \cite{Fin21-Ann}, any supercuspidal representation is obtained by Yu's construction when $p$ does not divide the order of the absolute Weyl group.
In this paper, we always assume this condition on $p$.

The key to Theorem \ref{thm:L-packet char} is a characterization theorem: 

\begin{displaytheoremprime}[Theorem \ref{thm:regsc-characterization}]\label{thm:intro regsc char}
    If $\bfS(F)$ has sufficiently many very regular elements, then $\pi_{(j(\bfS),\theta_j)}^{\FKS}$ is the unique irreducible supercuspidal representation satisfying \eqref{eq:vreg character}.
\end{displaytheoremprime}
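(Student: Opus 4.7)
The plan is to split the theorem into an existence assertion and a uniqueness assertion, both of which should flow from the Fintzen--Kaletha--Spice character formula together with its drastic simplification on the very regular locus.

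For existence---that is, for the claim that $\pi^{\FKS}_{(j(\bfS),\theta_j)}$ itself satisfies \eqref{eq:vreg character}---I would start from the FKS version of the Adler--Spice character expansion and specialize it to very regular $\gamma \in \bfG(F)$. The defining feature of very regularity, already exploited in \cite{Hen92,Hen93,CO21}, is that all higher-depth contributions in such an expansion vanish: only the ``tame'' leading term survives, namely a sum over $w \in W_G(\bfT_\gamma, j(\bfS))$ of terms $\Delta({}^w\gamma)\theta_j({}^w\gamma)$, up to a global unimodular scalar $c \in \bbC^1$. I would verify this collapse directly, generalizing the computation carried out in \cite{CO21} for the unramified toral setting and carefully tracking the quadratic character absorbed into the FKS renormalization.

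For uniqueness, let $\pi$ be an irreducible supercuspidal whose character satisfies \eqref{eq:vreg character}. By Fintzen's exhaustion theorem \cite{Fin21-Ann}, under our standing hypothesis on $p$, one may write $\pi \cong \pi^{\FKS}_{(\bfS',\theta')}$ for some tame elliptic regular pair $(\bfS',\theta')$. Applying the existence step to $\pi$ produces the identity
\[
c' \sum_{w \in W_G(\bfT_\gamma,\bfS')} \Delta({}^w\gamma)\theta'({}^w\gamma) \;=\; c \sum_{w \in W_G(\bfT_\gamma, j(\bfS))} \Delta({}^w\gamma)\theta_j({}^w\gamma)
\]
valid for every very regular $\gamma$. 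I would then restrict $\gamma$ to the very regular locus of $j(\bfS)(F)$: the terms surviving on the left are those for which $w$ rationally conjugates $\bfS'$ onto $j(\bfS)$, which forces $\bfS'$ to be stably conjugate to $j(\bfS)$ (else the left-hand side vanishes identically while the right-hand side does not). Pairing the surviving Weyl-translated characters $\theta'^w$ against $\theta_j$, a linear-independence argument enabled by the hypothesis of ``sufficiently many'' very regular elements then pins $(\bfS',\theta')$ down to the $\bfG(F)$-conjugacy class of $(j(\bfS),\theta_j)$, so that $\pi \cong \pi^{\FKS}_{(j(\bfS),\theta_j)}$.

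The main obstacle is this last linear-independence step: the argument requires that the very regular locus in $j(\bfS)(F)$ separate the relevant smooth characters of $\bfS(F)$ modulo the action of $W_G(j(\bfS))$, and this is precisely the content of the ``sufficiently many'' hypothesis. In practice I would fix a single very regular element $\gamma_0$ and translate by elements of the topologically unipotent part of $j(\bfS)(F)$, ensuring access to a sufficient family of characters on which to carry out the separation. A secondary technical subtlety is that the transfer factor $\Delta$ depends delicately on the ambient torus and can in principle vanish; I would handle this by keeping $\Delta$ constant along a translation coset of $\gamma_0$, so that $\Delta$ contributes a uniform nonzero prefactor inside the linear-independence test rather than obstructing it.
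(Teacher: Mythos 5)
Your existence step matches the paper's: Theorem \ref{thm:CF} and Corollary \ref{cor:regsc} establish exactly the collapse you describe, via Harish-Chandra's integration formula and the observation that for very regular $\gamma$ the support of the integrand is controlled by Lemma \ref{lem:point}. The uniqueness step, however, has two genuine gaps.

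First, you invoke Fintzen's exhaustion theorem to write the competitor $\pi$ as $\pi^{\FKS}_{(\bfS',\theta')}$ for a tame elliptic \emph{regular} pair. Fintzen's theorem only gives that $\pi \cong \pi^{\FKS}_{\Psi'}$ for some Yu-datum $\Psi'$; it does not give regularity of $\Psi'$. The whole force of Theorem A\textquotesingle{} (as opposed to the earlier result in \cite{CO21}) is that uniqueness holds among \emph{all} irreducible supercuspidals, so you must rule out the possibility that a non-regular Yu-datum produces the same character values on the very regular locus. The paper does this by proving the character formula for \emph{arbitrary} Yu-data (Theorem \ref{thm:CF}), then reducing in two stages: Proposition \ref{prop:clipped characterization} forces the clipped Yu-data $\dashover{\Psi}$, $\dashover{\Psi}'$ to be $\bfG$-equivalent, and Proposition \ref{prop:reduction depth 0} forces the depth-zero parts $\rho_0,\rho_0'$ to have proportional characters on $\bbG'(\FF_q)_{\evrs}$; only then does one conclude $\rho_0'\cong\rho_0$ (hence regularity of $\Psi'$) from the finite-field Theorem \ref{thm:Henniart}.

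Second, your separation mechanism---fix one very regular $\gamma_0$ and translate by the topologically unipotent part of $j(\bfS)(F)$---can only detect the restrictions of the relevant characters to $S_{0+}$. This is precisely what the paper uses it for: pairing against $\phi_{\geq 0}|_{S_{0+}}$, together with the rigidity statement Proposition \ref{prop:Yu8.3} (a strengthening of Yu's Lemma 8.3), pins down the positive-depth data. But it cannot distinguish two depth-zero characters of $\bbS(\FF_q) = S/S_{0+}$, since the whole translation coset maps to a single point of $\bbS(\FF_q)$. The depth-zero separation is where the ``sufficiently many'' hypothesis \eqref{ineq:Henniart-evrs} actually bites: one needs the very regular locus to occupy more than a $1/(2|W_{\bbG(\FF_q)}(\bbS)|)$ fraction of $[\bbS]^{\star}$, and the argument is not linear independence of characters but a quantitative Cauchy--Schwarz estimate on truncated inner products $\langle-,-\rangle^{\star}_{\bullet}$ over the finite group $[\bbG]^{\star}$ (Lemma \ref{lem:Henniart} and Theorem \ref{thm:Henniart}). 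Without this your argument cannot exclude, for example, a depth-zero twist of $\rho_0$ by a character trivial on the image of the very regular locus.
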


Moreover, the constant $c$ in \eqref{eq:vreg character} can remain unspecified in the sense that if $\pi,\pi'$ are two supercuspidal representations satisfying the character formula \eqref{eq:vreg character} for two constants $c,c'\in\C^{1}$, then necessarily $c = c'$ and $\pi \cong \pi'$. (It also turns out that $c \in \{\pm 1\}$.) 

In \cite[Section 9]{CO21}, we proved Theorem \ref{thm:intro regsc char} under several additional assumptions: If $\bfS$ is \textit{unramified} and $\theta$ is \textit{toral}, then there exists a unique \textit{regular} supercuspidal representation $\pi$ of $\bfG(F)$ whose character is given by \eqref{eq:vreg character}. Theorem \ref{thm:intro regsc char} relaxes each of these three italicized conditions---unramified being replaced by tame, toral being replaced by regular, and regular supercuspidal replaced by supercuspidal. 

Let us explain how the three theorems in Section \ref{subsec:intro structure} of this introduction contribute to the relaxation of these three italicized conditions. 
To prove that a representation is distinguished amongst all supercuspidals (as opposed to amongst all regular supercuspidals), we need to establish the character formula of an arbitrary supercuspidal representation on very regular elements (Part \ref{part:formula} of this paper; see also Theorem \ref{thm:intro character formula} in this introduction).
We then use this to reduce the characterization of supercuspidal representations to its ``depth zero part'' (Part \ref{part:characterization} of this paper; see also Theorem \ref{thm:intro characterization} in this introduction). When $\bfS$ is unramified, this depth zero part is captured by a connected reductive group $\bbG^\circ$ over a finite field $\F_{q}$, and so relaxing the torality condition on $\theta$ in this unramified setting can be reduced to a characterization theorem of irreducible representations of $\bbG^\circ(\FF_q)$. Such a theorem was already established by Lusztig \cite{Lus20} several decades ago. However, if $\bfS$ does not split over an unramified extension of $F$, then we need to work in a more general setting and establish Lusztig's characterization theorem for representations of $\bbG(\FF_q)$, where $\bbG$ is a possibly disconnected group scheme over $\F_{q}$ which may not even be of finite type. We establish the basic representation theory of $\bbG(\FF_q)$ (\`a la Deligne and Lusztig \cite{DL76}) and prove Lusztig's theorem in this more general context in Part \ref{part:finite} (for a refinement of this theorem for regular characters, see Theorem \ref{thm:intro GG} in this introduction).

As demonstrated in the $\GL_n$ setting by Henniart \cite{Hen92,Hen93} and others \cite{BW13,Cha20,CI21-MA}, a characterization result like Theorem \ref{thm:intro regsc char} can be used to not only compare different constructions of supercuspidal representations but characterize interesting correspondences. 
To serve as a proof of concept for applications of Theorem \ref{thm:intro regsc char}, we prove (Theorem \ref{thm:LJLC-SO}) a new instance of the local Jacquet--Langlands correspondence: when $\bfG^{\ast}= \SO_{2n+1}$ and $\bfG$ is an inner form of $\bfG^{\ast}$, then the Jacquet--Langlands transfer of a depth zero regular supercuspidal representation $\pi_{(\bfS^{\ast},\theta^{\ast})}^{\FKS}$ of $\bfG^{\ast}(F)$ is $\pi_{(\bfS,\theta)}^{\FKS}$, provided that the size of the residue field of $F$ is sufficiently large. From Theorem \ref{thm:intro regsc char}, it is easy to characterize the local Jacquet--Langlands correspondence assuming that a regular supercuspidal $L$-packet and its transfer are singletons (Theorem \ref{thm:LJLC}); the subtlety in establishing Theorem \ref{thm:LJLC-SO} then lies in proving that the $L$-packet (in the sense of Arthur) of a depth zero regular supercuspidal of $\bfG^{\ast}$ is a singleton (Proposition \ref{prop:SO singleton}). We strongly expect that the approach of Section \ref{sec:LJLC} works in far greater generality; we have chosen to establish only the depth zero $\SO_{2n+1}$ case to demonstrate the application potential of our work.

We finish this part of our introduction by moving beyond the regular supercuspidal setting. As we have alluded to above, our methods in proving Theorem \ref{thm:intro regsc char} turn out to additionally apply even when $\theta$ is \text{not} assumed to be regular. Without the regularity assumption, one cannot hope to characterize supercuspidals individually (indeed, the analogous statement for connected reductive groups over finite fields is not true), but we are able to characterize certain families of supercuspidal representations by only their Harish-Chandra characters on very regular elements. In general, as one would expect, the formulae will not be quite as simplistic as in \eqref{eq:vreg character}, but it is not so far off (see Theorem \ref{thm:intro character formula} in tandem with Proposition \ref{prop:rho ss} for the character of the depth zero part). We will describe these generalizations and our methods of proof in Section \ref{subsec:intro structure}. Before doing this, let us explain the specialization of our general result to the setting diametrically opposite to the regular supercuspidal case: unipotent supercuspidal representations.

\begin{displaytheorem}[Theorem \ref{thm:p-adic unipotent}]\label{thm:intro unipotent}
    If the size of the residue field of $F$ is sufficiently large relative to the absolute rank of $\bfG$, 
    then an irreducible supercuspidal representation $\pi$ of $\bfG(F)$ is unipotent if and only if the following two conditions hold:
    \begin{enumerate}[label=(\roman*)]
        \item $\Theta_\pi|_{S_{0,\evrs}}$ is constant for every maximally unramified elliptic maximal torus $\bfS$, and
        \item $\Theta_\pi|_{S_{0,\evrs}} \neq 0$ for some maximally unramified elliptic maximal torus $\bfS$.
    \end{enumerate}
    Here, $S_{0,\evrs}$ denotes the set of very regular elements of the parahoric subgroup of $\bfS(F)$.
\end{displaytheorem}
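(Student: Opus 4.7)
The plan is to combine the character formula for supercuspidals on very regular elements (Theorem \ref{thm:intro character formula}) with the depth-zero reduction of the characterization problem (Theorem \ref{thm:intro characterization}) and the generalized Lusztig characterization for possibly disconnected finite reductive groups (Theorem \ref{thm:intro GG}). Since unipotent supercuspidals of $\bfG(F)$ are by definition depth zero---compactly induced from an extension to $\bfG(F)_{\x}$ of a cuspidal unipotent representation $\rho$ of the reductive quotient $\bbG^{\circ}$ of a maximal parahoric---the theorem should reduce to the analogous statement for $\bbG^{\circ}(\F_q)$, namely Lusztig's characterization of unipotent representations.

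For the forward direction, suppose $\pi$ is unipotent and let $\gamma \in \bfS(F)_{\evrs}$ for $\bfS$ a maximally unramified elliptic maximal torus. The topologically semisimple part of $\gamma$ reduces to a regular semisimple element $\bar\gamma_0 \in \bbG^{\circ}(\F_q)$ lying in the maximal torus corresponding to $\bfS$. Theorem \ref{thm:intro character formula} then expresses $\Theta_\pi(\gamma)$ as a Weyl group sum of values $\Theta_\rho({}^{w}\bar\gamma_0)$. For unipotent $\rho$ the Deligne--Lusztig decomposition $\rho = \sum_{\theta} a_\theta R_{\bbT}^{\theta}$ has $a_\theta = 0$ unless $\theta = \mathbbm{1}$, so the Deligne--Lusztig formula for regular semisimple $t \in \bbT(\F_q)$ gives $\Theta_\rho(t) = \langle \rho, R_{\bbT}^{\mathbbm{1}} \rangle \cdot |W_{\bbG}(\bbT)^{F}|$, a constant in $t$. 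Summing over $w$ produces a value independent of $\gamma$, yielding (i); cuspidality of $\rho$ supplies an elliptic $\bbT$ with $\langle \rho, R_{\bbT}^{\mathbbm{1}} \rangle \neq 0$, giving (ii).

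For the backward direction, assume (i) and (ii). The first task is to show $\pi$ must be depth zero: for a positive-depth supercuspidal, the character on $\bfS(F)_{\evrs}$ is twisted multiplicatively by a nontrivial character coming from Yu/FKS data, and the genericity conditions built into this data, combined with the largeness of $q$, prevent the result from being constant in $\gamma$, contradicting (i). This is analogous to, but simpler than, the argument proving regularity of $\theta$ in Theorem \ref{thm:intro regsc char}. Once reduced to depth zero via Theorem \ref{thm:intro characterization}, $\pi$ is compactly induced from a cuspidal representation of a possibly disconnected $\bbG(\F_q)$; conditions (i) and (ii) translate into the statement that this representation has constant and sometimes nonzero character on the regular locus of each rational maximal torus corresponding to some maximally unramified elliptic $\bfS$. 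Theorem \ref{thm:intro GG} then identifies this finite-group representation as unipotent, hence $\pi$ is unipotent.

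The main obstacle is the depth-zero reduction step: proving that no positive-depth supercuspidal can have a character simultaneously constant on $\bfS(F)_{\evrs}$ for every maximally unramified elliptic $\bfS$. This rests on a nontriviality statement for the positive-depth characters in Yu/FKS data restricted to the image of the very regular locus, which should hold provided $q$ is sufficiently large relative to the absolute rank. The second ingredient one must verify is that the generalized Lusztig characterization indeed extends to possibly disconnected $\bbG$, so that the tame (as opposed to unramified) nature of the maximal tori does not cause obstructions; this is exactly what is built in to Theorem \ref{thm:intro GG}.
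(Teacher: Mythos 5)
Your overall architecture matches the paper's: forward direction via the very-regular character formula plus constancy of unipotent characters on regular semisimple elements of finite tori, backward direction via a depth-zero reduction followed by the finite-field characterization of unipotent cuspidal representations (Corollary \ref{cor:ff unipotent cuspidal}). The forward direction is essentially correct, modulo a spurious factor of $|W_{\bbG}(\bbT)^{F}|$ in your character value (Proposition \ref{prop:rho ss} gives $\Theta_\rho(t)=\langle\rho,R_{\bbT}^{\bbG}(\mathbbm{1})\rangle$ on regular semisimple $t$, with no Weyl multiplicity) and the fact that the outer sum in Theorem \ref{thm:CF} is a singleton here (Lemma \ref{lem:Weyl}), not a Weyl sum.

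The genuine gap is the depth-zero reduction in the backward direction, which you flag as the main obstacle but then misdiagnose. You propose to show that a positive-depth supercuspidal cannot have constant character on $\bfS(F)_{\evrs}$ because ``genericity \ldots combined with the largeness of $q$'' forces non-constancy; this is not the mechanism, and largeness of $q$ plays no role in this step. The correct argument is: condition (ii) supplies $\gamma\in S_{\evrs}$ with $\Theta_\pi(\gamma)\neq 0$; the very regular locus is stable under translation by $S_{0+}$ (Lemma \ref{lem:CO21-5.1-5.2}), and the character formula at $\gamma\gamma_{0+}$ factors each summand as a fixed nonzero quantity times $\phi_{\geq 0}^{g}(\gamma_{0+})$. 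Condition (i) then says a nonzero linear combination of the characters $\{\phi_{\geq 0}^{g}|_{S_{0+}}\}_{g}$ equals a nonzero constant on $S_{0+}$; pairing with the trivial character of $S_{0+}$ and invoking Proposition \ref{prop:Yu8.3} (Yu's genericity lemma, which controls which $g$ can satisfy $\phi_{\geq0}^{g}|_{S_{0+}}=\phi_{\geq0}|_{S_{0+}}$) forces $\phi_{\geq 0}|_{S_{0+}}=\mathbbm{1}$, whence Proposition \ref{prop:refacto} refactorizes the Yu-datum to $((\bfG),(\mathbbm{1}),(0),\x,\rho_0)$. Note also that you should not route this through Theorem \ref{thm:intro characterization}: that theorem compares two given Yu-data and is not what performs the reduction; the reduction is done directly. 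The hypothesis $q>C$ is consumed elsewhere, namely in guaranteeing \eqref{ineq:Lusztig-evrs} for every maximally unramified elliptic maximal torus (Lemma \ref{lem:strong Henniart sat max unram}), which is what Corollary \ref{cor:ff unipotent cuspidal} needs in both directions.
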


Theorem \ref{thm:intro unipotent} is the supercuspidal analogue of Lusztig's unipotent characterization result for connected reductive groups over finite fields \cite{Lus78,Lus20} and answers a question of DeBacker.

\subsection{Structure of the paper: theorems and techniques}\label{subsec:intro structure}

With the applications to Theorems \ref{thm:L-packet char} and \ref{thm:intro unipotent} in mind, we now describe the general results and set-up of our paper. Our paper is structured in three parts, each essentially culminating in one general result. Let $\Psi = (\vec \bfG, \vec \phi, \vec r, \x, \rho_0)$ be a Yu-datum and let $\pi_\Psi^{\FKS}$ be the associated supercuspidal representation of $\bfG(F)$.
(See Section \ref{sec:tamesc} for details of these notions.)

In Part \ref{part:finite}, we study a class of algebraic groups $\bbG$ over $\F_{q}$ which is large enough to contain the reductive quotient of the (integral model of the) stabilizer of an arbitrary vertex in the reduced Bruhat--Tits building of a connected reductive group over $F$. 
More precisely, this is the class of algebraic groups satisfying the following three conditions: 
\begin{enumerate}[label=(\arabic*)]
    \item the group $\pi_0(\bbG)$ of connected components is a finitely generated abelian group, 
    \item the identity component $\bbG^\circ$ is reductive, and 
    \item there exists a central subgroup $\bbZ_{\bbG}$ of $\bbG$ such that $\bbG^\circ \bbZ_{\bbG}$ has finite index in $\bbG$.
\end{enumerate}
Following Kaletha \cite{Kal19-sc}, the choice of the central subgroup $\bbZ_{\bbG}$ allows us to extend classical Deligne--Lusztig induction $R_{\bbS^\circ}^{\bbG^\circ}$ for a maximal torus $\bbS^\circ \hookrightarrow \bbG^\circ$ to a functor $R_\bbS^\bbG$, where $\bbS \colonequals \bbS^\circ \bbZ_{\bbG}$ now plays the role of a ``maximal torus'' in $\bbG$.
We use this then to establish the representation theory of $\bbG(\FF_q)$ \`a la Deligne and Lusztig, including character formulas and Green function results; we feel this section may be of independent interest.
This is essential for us as it provides the foundation for our characterization theorem for representations of $\bbG(\FF_q)$.

Before stating these results, let us give some motivation starting from the setting of connected reductive groups over finite fields. Deligne and Lusztig \cite{DL76} proved that for any connected reductive group $\bbG^\circ$ over $\FF_q$, one has a well-defined map
\[
E \from \{\text{irred.\ rep'ns of $\bbG^\circ(\FF_q)$}\}
\to
\biggl\{
(\bbS^{\circ},\theta)
\ \bigg\vert\,
\begin{array}{l}
\text{$\bbS^\circ$: an $\F_{q}$-rational maximal torus of $\bbG^{\circ}$} \\
\text{$\theta$: a character of $\bbS(\F_{q})$}
\end{array}
\biggl\}\big/{\sim},
\]
where the symbol $\sim$ on the right-hand side denotes the geometric conjugacy.
This map is defined cohomologically: to an irreducible representation $\rho$, one associates the geometric conjugacy class containing any pair $(\bbS^\circ,\theta)$ for which $\rho$ appears in the cohomology of the Deligne--Lusztig variety associated to $\bbS^\circ \hookrightarrow \bbG^\circ$ with coefficients in the corresponding local system defined by $\theta$. In \cite{Lus20}, Lusztig proved that $E$ can be realized non-cohomologically: if $q$ is sufficiently large relative to the rank of $\bbG^{\circ}$, then in fact $E(\rho)$ is determined by the character values of $\rho$ at regular semisimple elements, for which the character formula is very simple. 
As a corollary of this, Lusztig established the following simple and elementary characterization of unipotent representations of $\bbG^\circ(\FF_q)$ for $q \gg 0$ (\cite[6]{Lus20}):
An irreducible representation $\rho$ of $\bbG^\circ(\FF_q)$ is unipotent if and only if $\Theta_\rho|_{\bbS^\circ(\FF_q)_{\rs}}$ is constant for every $\F_{q}$-rational maximal torus $\bbS^\circ$ of $\bbG^\circ$.
(Theorem \ref{thm:intro unipotent} is the analogue of this for supercuspidal representations of $p$-adic groups.)

We establish the map $E$ for $\bbG$ and show that Lusztig's techniques for establishing a non-cohomological description of $E$ can be extended to our more general setting (Theorem \ref{thm:intro GG}(b), Theorem \ref{thm:unique-Z-rho}). 
We note to the reader that in the body of this paper, we actually work with a refinement of this map $E$ (see Section \ref{subsec:Lusztig-E} for the definition of the refinement $\tilde E$). This refinement is handled already in the Lusztig's proofs in the classical case. Although establishing the representation theory of this larger class of groups $\bbG(\FF_q)$ requires some technical adjustments to the classical proofs in the setting of $\bbG^\circ(\FF_q)$, the main subtlety which we deal with in Part \ref{part:finite} is something separate altogether, which we now explain.

Our motivation for developing the theory and results in Part \ref{part:finite} is to study $\pi_\Psi^{\FKS}$. For this, we take $\bbG$ to be the reductive quotient of the stabilizer of $\bar \x$ in $\bfG^0(F)$ ($\bfG^0$ is the first group in the chain $\vec \bfG$) and choose $\bbZ_{\bbG}$ to be the image of the center of $\bfG^0$. Our $p$-adic characterization theorems then require us to determine $E(\rho_0)$ from the character values of $\rho_0$ on a \textit{subset} $\bbG(\FF_q)_\evrs$ of the regular semisimple locus $\bbG(\FF_q)_\rs$ controlled by the notion of (elliptic) very regularity in $\bfG(F)$. This introduces two subtle points: first, the set $\bbG(\FF_q)_{\evrs}$ could be infinite, so one must be careful with the meaning of there being enough such elements; and second, because $\bbG(\FF_q)_{\evrs}$ is cut out by a regularity notion coming from $\bfG$ (not $\bfG^0$!), the locus $\bbG(\FF_q)_{\evrs}$ may not be stable under multiplication by $\bbZ_{\bbG}(\FF_q)$! To handle these issues, we must introduce another central subgroup (which we call $\bbZ_{\bbG}^\star$; see Sections \ref{subsec:char-finite-gen-pos} and \ref{subsec:ur-vreg} for details).

In light of this, we work in the following generalized setting: let $\bbG(\FF_q)_\bullet \subset \bbG(\FF_q)_\rs$ be a conjugation-invariant subset which is invariant under multiplication by a finite-index subgroup of $\bbZ_{\bbG}(\FF_q)$. 

\begin{displaytheorem}\label{thm:intro GG}
    \mbox{}
    \begin{enumerate}[label=(\alph*)]
        \item (Theorem \ref{thm:Henniart})
Assume that the following inequality $(\mathfrak{H}_\bullet)$ holds (see Section \ref{subsec:char-finite-gen-pos}):
\[
\frac{|[\bbS]^{\star}|}{|[\bbS]^{\star}_{\circ}|}
=
\frac{|[\bbS]^{\star}|}{|[\bbS]^{\star}\smallsetminus[\bbS]^{\star}_{\bullet}|}
>
2\cdot|W_{\bbG(\F_{q})}(\bbS)|.
\]
        If $\theta$ is a character in general position of a torus $\bbS(\FF_q)$, then there exists a unique  irreducible representation $\rho$ of $\bbG(\FF_q)$ such that for some constant $c \in \bbC^1$,
        \begin{equation*}
            \Theta_{\rho}(\gamma) = c \cdot \sum_{w \in W_{\bbG(\FF_q)}(\bbG_{\gamma}^\circ, \bbS^\circ)} \theta({}^w \gamma) \qquad \text{for all $\gamma \in \bbG(\FF_q)_\bullet$.}
        \end{equation*}
    
        \item (Theorem \ref{thm:unique-Z-rho})
Assume that the following inequality $(\mathfrak{L}_{\bullet})$ holds (see Section \ref{subsec:Lusztig-E}):
\[
\frac{|[\bbS]^{\star}|}{|[\bbS]^{\star}_{\circ}|}
=
\frac{|[\bbS]^{\star}|}{|[\bbS]^{\star} \smallsetminus [\bbS]^{\star}_{\bullet}|}
>
2^{2|W_{\bbG}|\cdot i(\bbS)-1}.
\]
        Then $E$ can be defined non-cohomologically via character values on $\bbG(\FF_q)_\bullet$.
    \end{enumerate}
\end{displaytheorem}

Here, the statement of (a) is a special case of (b) with a weaker assumption on the size of $\bbG(\FF_q)_\bullet$. We note that in the case that $\bullet = \rs$ and $\bbG = \bbG^\circ$, Theorem \ref{thm:intro GG}(b) is exactly Lusztig's result and is proved using Lusztig's techniques, after the representation theory of $\bbG$ is sufficiently developed. Theorem \ref{thm:intro GG}(a), on the other hand, is proved by completely different methods to Theorem \ref{thm:intro GG}(b). Furthermore, even in the case that $\bullet = \rs$ and $\bbG = \bbG^\circ$, this result is new: the required bound \eqref{ineq:Henniart-bullet} is much weaker than \eqref{ineq:Lusztig-bullet}. Taking Theorem \ref{thm:intro GG}(a) in the elliptic very regular setting $\bullet = \evrs$ is the ``depth zero'' input used to prove Theorem \ref{thm:intro regsc char}.

In Part \ref{part:formula}, we establish a simple formula for the character values at elliptic very regular elements of a supercuspidal representation $\pi_{\Psi}^{\FKS}$ associated to an arbitrary Yu-datum $\Psi$. We remind the reader that we use the twist of Yu's construction defined by Fintzen, Kaletha, and Spice \cite{FKS21}. 

In existing work on character formulae on supercuspidal representations, the aim is generally to provide, for as wide a class of supercuspidals as possible, a description of the character value at an arbitrary regular semisimple element.
In \cite{AS09} and \cite{DS18}, Adler, DeBacker, and Spice achieve this for Yu-data satisfying a certain compactness condition.
More recently, using a result of Spice \cite{Spi18, Spi21}, Fintzen, Kaletha, and Spice \cite{FKS21} give a character formula for regular supercuspidal representations, under the assumption that $F$ has characteristic zero and sufficiently large residual characteristic.
By contrast, our aim has swapped quantifiers: we wish to provide, for only elliptic very regular elements, character values for an arbitrary supercuspidal representation.

Our method is a slight generalization of that of \cite{Kal19} which is based on the work of Adler--DeBacker--Spice \cite{AS09,DS18} and used for establishing a character formula at ``shallow'' elements.
Recall that the representation $\pi^{\FKS}_{\Psi}$ is given by the compact induction $\cInd_{K}^{\bfG(F)}\rho^{\FKS}_{\Psi}$ of an irreducible representation $\rho^{\FKS}_{\Psi}$ of a certain open compact-mod-center subgroup $K$.
Via Harish-Chandra's integration formula, the computation of $\Theta_{\pi^{\FKS}_{\Psi}}(\gamma)$ for a given element $\gamma$ basically comes down to computing the index set of the integration formula (i.e., the locus where the integrand does not vanish) and the character $\Theta_{\rho^{\FKS}_{\Psi}}$ of $\rho^{\FKS}_{\Psi}$.
The point of Kaletha's argument is that the index set is drastically simplified if $\gamma$ is a \textit{shallow} element.
In this paper, we introduce the notion of \textit{very regularity} for semisimple elements, which simultaneously generalizes the notion of shallowness and notions of very regularity which have appeared in specialized contexts \cite{Hen92,Hen93,CI21-RT,CO21}.
Once we pin down the correct definition of the very regularity, we can obtain the character formula in the same manner as in \cite{Kal19}:

\begin{displaytheorem}[Theorem \ref{thm:CF}]\label{thm:intro character formula}
    For any elliptic very regular element $\gamma \in \bfG(F)$,
    \begin{equation}\label{eq:intro char formula}
        \Theta_{\pi_{\Psi}^{\FKS}}(\gamma) = c \cdot \sum_{g} \Theta_{\rho_0}({}^g \gamma) \cdot \Delta({}^g \gamma) \cdot \phi_{\geq 0}({}^g \gamma),
    \end{equation}
    where the sum ranges over a set of elements which only depends on depth zero data in $\Psi$ and $\phi_{\geq0}$ is the product of all characters in $\vec{\phi}$.
    In particular, if $\pi_\Psi^{\FKS} = \pi_{(j(\bfS),\theta_j)}$ for $(j,\theta_j)$ as in Theorem \ref{thm:L-packet char}, then \eqref{eq:intro char formula} simplifies to \eqref{eq:vreg character}.
\end{displaytheorem}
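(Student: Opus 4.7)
The plan is to follow the Adler--DeBacker--Spice/Kaletha strategy for character formulae at shallow elements, adapted to the (more permissive) notion of very regularity introduced in this paper.

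First I would apply the Frobenius/Mackey character formula for compact induction: writing $\pi_\Psi^{\FKS} = \cInd_K^{\bfG(F)} \rho_\Psi^{\FKS}$, one has
\[
\Theta_{\pi_\Psi^{\FKS}}(\gamma) = \sum_{g} \dot{\Theta}_{\rho_\Psi^{\FKS}}(g^{-1}\gamma g),
\]
where $g$ runs through representatives of $K \backslash \{\,g \in \bfG(F) : g^{-1}\gamma g \in K\,\} / \bfG_\gamma(F)$ and $\dot{\Theta}$ denotes the extension by zero from $K$. The ellipticity of $\gamma$ ensures that $\bfG_\gamma(F)$ is compact modulo the center, so this sum is finite and Harish-Chandra's integration formula reduces to the stated sum.

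Second, I would use the very regularity of $\gamma$ to drastically cut down the index set. Here the key point is that, by the very definition we adopt, any conjugate of $\gamma$ lying in $K$ must already centralize the higher-depth ingredients of the Yu/FKS filtration built into $K$. This is what upgrades Kaletha's shallow element analysis in \cite{Kal19} (following \cite{AS09,DS18}): one shows that the set of contributing double cosets is controlled entirely by the depth zero part of $\Psi$ (namely, by $\bfG^{0}$, $\x$, and the conjugation action of $K$ on the reductive quotient at $\bar{\x}$).

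Third, at each such contributing $\gamma' = g^{-1}\gamma g$, I would compute $\Theta_{\rho_\Psi^{\FKS}}(\gamma')$ by exploiting the inductive FKS-twisted construction of $\rho_\Psi^{\FKS}$ out of $\rho_0$ and the tower $(\vec{\bfG}, \vec{\phi}, \vec{r})$. Because $\gamma'$ commutes with the higher-depth pieces, the character of $\rho_\Psi^{\FKS}$ at $\gamma'$ factors cleanly as a product of (i) the depth zero factor $\Theta_{\rho_0}(\gamma')$, (ii) the product $\phi_{\geq 0}(\gamma')$ of the cuspidal characters, and (iii) the quartic/quadratic sign contributed by the FKS renormalization, which is precisely the explicit function $\Delta(\gamma')$. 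Assembling these pieces for every $g$ in the reduced index set produces \eqref{eq:intro char formula}.

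Finally, to deduce the specialization to \eqref{eq:vreg character} for regular supercuspidals, I would note that $\rho_0$ is (essentially) a Deligne--Lusztig induction from $j(\bfS)$, so Theorem~\ref{thm:intro GG}(a)---or the standard Deligne--Lusztig character formula on regular semisimple elements---rewrites $\Theta_{\rho_0}$ on the very regular locus as a Weyl-group sum of values of a character of $j(\bfS)(F)$. Matching the product $\Theta_{\rho_0} \cdot \phi_{\geq 0}$ with $\theta_j$ and the index set with $W_G(\bfT_\gamma, j(\bfS))$ is a direct bookkeeping. The main obstacle is the second step: verifying rigorously that very regularity is strong enough to force every conjugate of $\gamma$ inside $K$ to land in the depth zero locus, thereby eliminating the higher-depth strata that produce the complicated correction terms in the general character formulae of \cite{AS09,DS18,Spi18,FKS21}. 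Once this support reduction is established, the remaining factorization and regular-supercuspidal simplification proceed along Kaletha's lines.
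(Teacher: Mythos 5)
Your proposal follows essentially the same route as the paper: Harish-Chandra's integration formula for the compactly induced character, support reduction via very regularity (the obstacle you flag is resolved in the paper by the building fixed-point argument of Lemma~\ref{lem:point} together with Lemmas~\ref{lem:CO21-4.7}, \ref{lem:Weyl}, and~\ref{lem:Weyl2}, which identify the index set with $G^{0}_{\bar{\x}}\backslash N_{G}(T_{\gamma},G^{0}_{\bar{\x}})$), and a factorization of $\Theta_{\rho_d}$ through the Heisenberg--Weil tower, followed by the disconnected Deligne--Lusztig character formula in the regular case. The one point to adjust is your attribution of $\Delta$ to the FKS renormalization: in the paper the transfer-factor ratio $\Delta_{\II}^{\abs,\bfG}/\Delta_{\II}^{\abs,\bfG^{0}}$ arises from the Weil representation characters $\Theta_{\tilde{\phi}_i}(\gamma\ltimes 1)$ via \cite[Corollary 4.7.6]{Kal19}, while the FKS sign $\epsilon_\Psi$ serves only to convert the $\chi'$-data transfer factor into the $\chi''$-data one (Lemma~\ref{lem:tran-diff} and Proposition~\ref{prop:epsilon}).
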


Examining character formulae such as \cite[Theorem 7.1]{AS09}, we see that the shape of the character formula of a supercuspidal representation at an arbitrary regular semisimple element greatly depends on the relationship between the ``genericity depths'' of the supercuspidal and the ``regularity depths'' of the semisimple element. From the perspective of elliptic very regular elements, we have the following ``sharpness'' observation: if $\gamma$ is a regular semisimple element which is not very regular, then there simultaneously exists a supercuspidal representation $\pi$ whose character value at $\gamma$ involves orbital integrals and also a supercuspidal representation $\pi'$ whose character value does not. The locus of very regular elements can perhaps be viewed as the largest subset of regular semisimple elements such that the supercuspidal character formula is uniform across all Yu-data. 

The simplicity of the formula \eqref{eq:intro char formula} is crucial for us. In Part \ref{part:characterization}, we use Theorem \ref{thm:intro character formula} to reduce the problem of characterizing supercuspidal representations to characterizing its depth zero parts, which then allows us to use the results of Part \ref{part:finite} to conclude. To this end, we introduce the notion of a \textit{clipped Yu-datum} $\dashover{\Psi}$, which is obtained from the Yu-datum $\Psi$ by simply excising the depth zero representation $\rho_0$ (Definition \ref{defn:clip}). We prove in Section \ref{sec:clipped p-adic} that if two Yu-data $\Psi, \Psi'$ are such that $\Theta_{\pi_\Psi^{\FKS}}(\gamma) = \Theta_{\pi_{\Psi'}^{\FKS}}(\gamma)$ for every elliptic very regular element $\gamma \in \bfG(F)$, then $\dashover{\Psi} = \dashover{\Psi}'$ and the character values of the depth zero parts $\rho_0, \rho_0'$ agree on very regular elements. In Section \ref{sec:sc characterization}, we use characterization results in the $\FF_q$ setting of Part \ref{part:finite} to obtain Theorems \ref{thm:intro regsc char}, \ref{thm:intro unipotent}, and the following most general form of our characterization theorem for supercuspidal representations of $\bfG(F)$:

\begin{displaytheorem}[Theorem \ref{thm:p-adic E}]\label{thm:intro characterization}
    Assume that there are sufficiently many very regular elements. Let $\Psi,\Psi'$ be any two Yu-data such that
    \begin{equation*}
        \pi_\Psi^{\FKS}(\gamma) = \pi_{\Psi'}^{\FKS}(\gamma) \qquad \text{for every very regular element $\gamma\in\bfG(F)$.}
    \end{equation*}
    Then $\dashover{\Psi} = \dashover{\Psi}'$ and $E(\rho_0) = E(\rho_0')$, where $\rho_0, \rho_0'$ the depth zero parts of $\Psi, \Psi'$.
\end{displaytheorem}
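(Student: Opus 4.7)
Apply Theorem \ref{thm:intro character formula} on both sides: the hypothesis becomes an identity, valid for every elliptic very regular $\gamma \in \bfG(F)$,
\[
c \sum_{g} \Theta_{\rho_0}({}^g \gamma) \cdot \Delta({}^g \gamma) \cdot \phi_{\geq 0}({}^g \gamma) = c' \sum_{g'} \Theta_{\rho_0'}({}^{g'} \gamma) \cdot \Delta'({}^{g'} \gamma) \cdot \phi'_{\geq 0}({}^{g'} \gamma),
\]
in which every ingredient except the depth-zero characters $\Theta_{\rho_0},\Theta_{\rho_0'}$ is determined by the clipped data $\dashover{\Psi},\dashover{\Psi}'$. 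The strategy is two-pronged: first I would disentangle the positive-depth contributions to force $\dashover{\Psi} = \dashover{\Psi}'$, and then, with the higher-depth factors matching, cancel them to reduce to an identity of depth-zero characters on the reductive quotient studied in Part \ref{part:finite}, where Theorem \ref{thm:intro GG}(b) applies.

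\textbf{Extracting the clipped datum.} This is the content of Section \ref{sec:clipped p-adic}. A very regular $\gamma$ admits a topological Jordan decomposition $\gamma = \gamma_0 \gamma_+$ with $\gamma_0$ of depth zero and $\gamma_+$ topologically unipotent inside the centralizer of $\gamma_0$. Since $\rho_0$ is depth zero, $\Theta_{\rho_0}({}^g\gamma)$ depends only on (the reduction of) ${}^g\gamma_0$; by contrast, $\phi_{\geq 0}$ is built from characters of successive Moy--Prasad filtration quotients and varies sensitively with $\gamma_+$. Fixing $\gamma_0$ with very regular image and letting $\gamma_+$ vary through the filtration, I would use linear-independence of characters of distinct depths to peel off the highest-depth character $\phi_d$, then $\phi_{d-1}$, and so on down the tower, thereby reconstructing the chain $\vec\bfG$, the characters $\vec\phi$, the depths $\vec r$, and the vertex $\x$.

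\textbf{Reduction to the finite quotient.} With $\dashover{\Psi} = \dashover{\Psi}'$ in hand, the index sets, transfer factors, and higher-depth factors on the two sides of the identity coincide, and cancelling them specializes (by taking $\gamma_+ = 1$) to
\[
\Theta_{\rho_0}(\bar\gamma_0) = \Theta_{\rho_0'}(\bar\gamma_0)
\]
for every image $\bar\gamma_0 \in \bbG(\FF_q)_{\evrs}$ coming from a very regular element, where $\bbG$ is the reductive quotient at $\x$. The hypothesis that there are sufficiently many very regular elements in $\bfG(F)$ is set up precisely so that $\bbG(\FF_q)_\evrs$ verifies the bound \eqref{ineq:Lusztig-bullet} required by Theorem \ref{thm:intro GG}(b); applying that theorem with $\bullet = \evrs$ yields $E(\rho_0) = E(\rho_0')$.

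\textbf{Main obstacle.} The nontrivial work is clearly the first step. One must show that character values on the single comparatively small class of elliptic very regular elements suffice to reconstruct the entire tower of filtration data encoded in $\dashover{\Psi}$. Very regularity severely constrains the topologically semisimple part but leaves real room in the topologically unipotent direction, and this room must be exploited via a careful successive-approximation argument up the Moy--Prasad filtration. Additional care is needed because $\bbG(\FF_q)_\evrs$ is not stable under multiplication by $\bbZ_\bbG(\FF_q)$ (compare the discussion introducing $\bbZ_\bbG^\star$), which is one reason the flexible subset formalism $\bbG(\FF_q)_\bullet$ of Part \ref{part:finite} is set up as it is.
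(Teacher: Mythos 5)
Your overall architecture matches the paper's: apply the character formula, extract the clipped datum by exploiting the $S_{0+}$-direction, then cancel the positive-depth factors and invoke Theorem \ref{thm:intro GG}(b) on the reductive quotient. The mechanism you sketch for the first step is also essentially the paper's (the paper translates $\gamma$ by $\gamma_{0+}\in S_{0+}$, takes the inner product of the resulting identity with $\phi_{\geq 0}|_{S_{0+}}$, and then invokes the facts that the stabilizer of $\phi_{\geq0}|_{S_{0+}}$ in $N_{G_{\bar\x}}(S,G^0_{\bar\x})$ is exactly $G^0_{\bar\x}$ and that $\phi_{\geq0}|_{S_{0+}}$ determines $(\vec\bfG,\vec r)$ and the refactorization class, \`a la Kaletha; the depth-by-depth descent you describe lives inside the proof of the stabilizer statement, Proposition \ref{prop:Yu8.3}).

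There is, however, a genuine gap: your linear-independence argument produces no information unless some coefficient $\Theta_{\rho_0}({}^g\gamma_0)\cdot\Delta(\cdots)$ is nonzero, i.e.\ unless $\Theta_{\rho_0}$ is not identically zero on the elliptic very regular locus. If $\Theta_{\rho_0}$ vanished there, the inner product of the left-hand side with $\phi_{\geq0}|_{S_{0+}}$ would be $0$ and you could conclude nothing about $\dashover{\Psi}'$. This nonvanishing is a hypothesis of Proposition \ref{prop:clipped characterization} and must be established separately; the paper does so by a contrapositive using the finite-field theory: if $\Theta_{\rho_0}$ vanished on all of $\bbG'(\FF_q)_{\evrs}$ it would in particular be constant on $\bbS(\FF_q)_{\evrs}$ for every elliptic torus, so by cuspidality of $\rho_0$ and Corollary \ref{cor:ff unipotent cuspidal} (which is where \eqref{ineq:Lusztig-bullet} gets used a second time) $\rho_0$ would be unipotent, and then Proposition \ref{prop:rho ss} forces $\Theta_{\rho_0}(s)=\langle\rho_0,R_{\bbS}^{\bbG}(\mathbbm{1})\rangle\neq 0$ on regular semisimple elements, a contradiction. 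A smaller point: in your reduction step, even after $\dashover{\Psi}=\dashover{\Psi}'$ the identity is still an equality of \emph{sums} over $g\in G^0_{\bar\x}\backslash N_G(T_\gamma,G^0_{\bar\x})$, so you cannot simply ``cancel'' to a pointwise identity of $\Theta_{\rho_0}$ and $\Theta_{\rho_0'}$; one runs the same $T_{\gamma,0+}$-translation and orthogonality argument again (Proposition \ref{prop:reduction depth 0}) to isolate the $g=1$ term before dividing by $\Delta\cdot\phi_{\geq0}$.
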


We remark that while Theorem \ref{thm:intro characterization} applies to arbitrary Yu-data, which obviously includes both the class of regular supercuspidal representations and the class of unipotent supercuspidal representations, this theorem does not subsume either Theorems \ref{thm:intro regsc char} nor \ref{thm:intro unipotent}. In the case of regular supercuspidal representations, this is because we may apply the depth zero characterization Theorem \ref{thm:intro GG}(a), which gives a far less restrictive bound on how many very regular elements constitutes ``sufficiently many.'' In the case of unipotent supercuspidal representations, we are able to guarantee that as long as $q$ is sufficiently large relative to the absolute rank of $\bfG$, then we have a characterization of the class of unipotent supercuspidal representations. That this assumption does \textit{not} imply that there are sufficiently many very regular elements in $\bfS(F)$ for an arbitrary tame elliptic maximal torus is a remarkably subtle point that we have so far not discussed in this introduction. 

\subsection{Failures}\label{subsec:intro failures}

The orthogonality relation of the elliptic inner product (\cite[Theorem 3]{Clo91}) implies that any irreducible supercuspidal (or even discrete series) representation is determined uniquely by its Harish-Chandra character on all elliptic regular semisimple elements. 
On the other hand, general character formulas on such elements are complicated and unwieldy. 
Hence, as explained at the start of this introduction, our goal in the present paper was to find a special domain of elliptic regular semisimple elements large enough to characterize supercuspidal representations and small enough so that character formulae could be easily recognizable.

In the $\GL_n$ setting, for example, Henniart's characterization for regular supercuspidal representations attached to the unramified elliptic maximal torus $\bfS$ has proven to be applicable in a wide range of contexts.
The prototypical setting for such an application is when one has two vastly different constructions of supercuspidal representations and wishes to understand the relationship between them.
Character formulae arising from different constructions are in general expected not to be comparable, which calls for characterization results like Henniart's result, allowing one to \textit{only} have to check character formulae on some very special locus.

We explained after Theorem \ref{thm:intro character formula} that there is a sense in which our consideration of very regular elements is sharp: the character formulae of two supercuspidals can look vastly different at any element outside this locus. But the ``sharpness'' of the domain of very regular elements is also the source of the failures of Theorems \ref{thm:L-packet char}, \ref{thm:intro regsc char}, and \ref{thm:intro characterization}.

An essential assumption for these characterization theorems is that there are \textit{sufficiently many very regular elements} in $\bfS(F)$, and the harsh reality is that there exist groups $\bfG$ with tame elliptic maximal tori $\bfS \subset \bfG$ containing \textit{no very regular elements}.
Furthermore, one need not work very hard to find such a $\bfG$ and such a $\bfS$: Let $\bfG = \SL_n$ and let $\bfS$ be \textit{any} tame elliptic maximal torus which is not unramified (see Section \ref{subsubsec:Henn-SL-PGL}). 
For $\GL_n$, it is often---but still not always!---the case that a given tame elliptic maximal torus will have sufficiently many very regular elements, but this is surprisingly subtle as well (see Section \ref{subsec:Henn-GL} for explicit criteria in this setting). 

It is not clear to us how to write down a necessary and sufficient criterion for $\bfS \subset \bfG$ that would guarantee that $\bfS(F)$ has enough very regular elements. Even when $\bfG = \GL_n$, such a criterion is not so immediate. We provide these calculations in Section \ref{subsec:Henn-GL}. 

For the $\GL_n$ case, it is worth noting that every tame supercuspidal representation is regular, and the finer Theorem \ref{thm:intro regsc char} applies here. Specializing Theorem \ref{thm:intro regsc char} to Henniart's two cases---when $\bfS$ is unramified, or when $n$ is prime and $\bfS$ is totally ramified---recovers Henniart's characterization results. We lament, on the other hand, that our efforts fail to extend Henniart's $\GL_n$ characterization results to the setting of arbitrary $\bfS$.

The potential failure of $\bfS(F)$ to have sufficiently many very regular elements is a subtlety related to the ramification of $\bfS$. This allows us to leave the reader with one redeeming fact: we at least have that if the size of the residue field of $F$ is sufficiently large, then any \textit{maximally unramified} elliptic maximal torus will have sufficiently many very regular elements (Lemma \ref{lem:strong Henniart sat max unram}). This makes possible the unipotent supercuspidal criterion Theorem \ref{thm:intro unipotent} for arbitrary $\bfG$.

\medbreak
\noindent{\bfseries Acknowledgments.}\quad
We heartily thank George Lusztig for communicating his proofs in the finite-field setting \cite{Lus20} after the first author gave a talk on Theorem \ref{thm:intro GG}(a) in the MIT Lie Groups seminar in September 2020. George's techniques completely generalize to the more general setting we require in the present paper,  yielding Theorem \ref{thm:intro GG}(b) and allowing us to then establish the supercuspidal analogues of these results.
We also thank Stephen DeBacker for asking us about obtaining a criterion for distinguishing unipotent supercuspidal representations. 
We are grateful to Jessica Fintzen and Tasho Kaletha for their helpful comments on an earlier draft of this paper.
The first author was partially supported by NSF grant DMS-2101837 and a Sloan Research Fellowship. 
The second author was partially supported by JSPS KAKENHI Grant Number 20K14287.

\newpage

\tableofcontents

\newpage

\section{Notations and assumptions}\label{sec:notations}

Let $F$ be a non-archimedean local field with finite residue field $\cO_F/\mfp_F \cong \FF_q$ of prime characteristic $p$, where we write $\mcO_{F}$ and $\mfp_{F}$ for the ring of its integers and the maximal ideal, respectively.
We let $F^{\ur}$ denote the maximal unramified extension of $F$.
We write $\Gamma_{F}$ for the absolute Galois group of $F$.
We fix a non-trivial additive character $\psi$ of $F$.

For an algebraic variety $\J$ over $F$, we denote the set of its $F$-valued points by $J$.
When $\J$ is an algebraic group, we write $\bfZ_{\J}$ for its center.

Let us assume that $\J$ is a connected reductive group over $F$.
We follow the notation around Bruhat--Tits theory used by \cite{AS08, AS09, DS18}.
(See, for example, \cite[Section 3.1]{AS08} for  details.)
Especially, $\mcB(\J,F)$ (resp.\ $\mcB^{\red}(\J,F)$) denotes the enlarged (resp.\ reduced) Bruhat--Tits building of $\J$ over $F$.
For a point $\x\in\mcB(\J,F)=\mcB^{\red}(\J,F)\times X_{\ast}(\bfZ_{\J})_{\R}$, we write $\bar{\x}$ for the image of $\x$ in $\mcB^{\red}(\J,F)$, and $J_{\bar{\x}}$ for the stabilizer of $\bar{\x}$ in $J$.
We define $\widetilde{\R}$ to be the set $\R\sqcup\{r+\mid r\in\R\}\sqcup\{\infty\}$ with a natural order.
Then for any $r\in\widetilde{\R}_{\geq0}$ we can consider the $r$-th Moy--Prasad filtration $J_{\x,r}$ of $J$ with respect to the point $\x$.
For any $r,s\in\widetilde{\R}_{\geq0}$ satisfying $r<s$, we write $J_{\x,r:s}$ for the quotient $J_{\x,r}/J_{\x,s}$.

Suppose that $\bfT$ is a tamely ramified maximal torus of $\J$.
By fixing a $T$-equivariant embedding of $\mcB(\bfT,F)$ into $\mcB(\J,F)$, we may regard $\mcB(\bfT,F)$ as a subset of $\mcB(\J,F)$.
Then, for any point $\x\in\mcB(\J,F)$, the property that ``$\x$ belongs to the image of $\mcB(\bfT,F)$'' does not depend on the choice of such an embedding (see the second paragraph of \cite[Section 3]{FKS21} for details).
For any point $\x\in\mcB(\J,F)$ which belongs to $\mcB(\bfT,F)$, we have $T_{\mathrm{b}}\subset G_{\x}$, where $T_{\mathrm{b}}$ denotes the maximal bounded subgroup of $T$.
When $\bfT$ is elliptic in $\J$, the image of $\mcB(\bfT,F)$ in $\mcB^{\red}(\J,F)$ consists of only one point.
When the image of a point $\x\in\mcB(\J,F)$ in $\mcB^{\red}(\J,F)$ coincides with this point (or, equivalently, $\x$ belongs to $\mcB(\bfT,F)$), we say that $\x$ is associated with $\bfT$.
Note that, in this case, we have $T \subset J_{\bar{\x}}$.

In this paper, for any element $x$ of a group $H$, we let ${}^{x}(-)$ or $(-)^{x}$ denote the conjugation by $y$.
For example, for any $x,y\in H$, we put ${}^{x}y\colonequals xyx^{-1}$ and ${y}^{x}\colonequals x^{-1}yx$; for any $x\in H$ and a subset $H'\subset H$, we put ${}^{x}H'\colonequals xH'x^{-1}$ and $H'^{x}\colonequals x^{-1}H'x$; for any representation $\rho$ of a subgroup $H'$ of $H$ and $x\in H$, we define a representation ${}^{x}\rho$ of ${}^{x}H'$ (resp.\ $\rho^{x}$ of $H'{}^{x}$) by ${}^{x}\rho({}^{x}y)\colonequals \rho(y)$ (resp.\ $\rho^{x}(y^{x})\colonequals \rho(y)$).
For any group $H$ and its subgroups $H_{1}$ and $H_{2}$, we put
\[
N_{H}(H_{1},H_{2})\colonequals \{n\in H \mid {}^{n}H_{1}\subset H_{2}\}.
\]
When $H_{1}=H_{2}$, we shortly write $N_{H}(H_{1})\colonequals N_{H}(H_{1},H_{2})$.

\medbreak
\noindent{\bfseries Assumptions on $F$ and $\bfG$.}\quad
Let $\bfG$ be a tamely ramified connected reductive group over $F$.
We always assume that $p$ does not divide the order of the absolute Weyl group of $\bfG$.
Note that this assumption implies that
\begin{itemize}
\item
$p$ is odd (as long as $\bfG$ is not a torus), 
\item
$p$ is not bad for $\bfG$ (in the sense of \cite[Definition A.5]{AS08}), and 
\item
$p \nmid |\pi_1(\bfG_{\der})|$ and $p \nmid |\pi_1(\widehat \bfG_{\der})|$.
\end{itemize}

\newpage

\part{Characters of Deligne--Lusztig representations}\label{part:finite}

\section{Deligne--Lusztig representations of certain disconnected groups}\label{sec:finite}

In this section, we study the representation theory of the class of algebraic groups $\bbG$ over $\FF_q$ which contain a finite-index subgroup which is connected reductive modulo its center (Section \ref{subsec:disconn}). We define an extended Jordan decomposition (Section \ref{subsec:extended_jordan}) to replace the classical notion of Jordan decomposition and use this to establish the representation theory of $\bbG(\FF_q)$ \`a la Deligne and Lusztig (Sections \ref{subsec:DLCF}, \ref{subsec:scalar product}, \ref{subsec:semisimple}).

\subsection{Disconnected setting}\label{subsec:disconn}

Let $\bbG$ be a smooth group scheme defined over $\F_q$ such that its identity component $\bbG^\circ$ is a connected reductive group.
We additionally assume that the group of connected components $\pi_0(\bbG)$ is a finitely generated abelian group and that there exists a central subgroup $\bbZ_\bbG$ of $\bbG$ such that the index $[\bbG : \bbG^\circ \bbZ_{\bbG}]$ is finite. Note that this is exactly the setting of \cite[Section 2.1]{Kal19-sc} (see Assumption 2.1.1 of \textit{op.\ cit.}).

We will see that the choice of $\bbZ_\bbG$ allows us to mirror the structure theory of representations of $\bbG^\circ(\FF_q)$ in this more general context. To this end, we put $\bbG' \colonequals \bbG^\circ \bbZ_\bbG$ and $\bbZ_{\bbG^\circ} \colonequals \bbZ_\bbG \cap \bbG^\circ$. For any maximal torus $\bbS^\circ$ of $\bbG^\circ$, we put $\bbS \colonequals \bbS^\circ \bbZ_\bbG \subset \bbG'$; note that $\bbG' = \bbG^\circ \bbS$.

\begin{rem}\label{rem:rational-pts}
Note that $\bbG'(\F_{q})=\bbG^{\circ}(\F_{q})\cdot\bbS(\F_{q})$.
Indeed, since $\bbG'$ is equal to $\bbG^{\circ}\cdot\bbS$ and the intersection $\bbG^{\circ}\cap\bbS\,(=\bbS^{\circ})$ is connected, Lang's theorem implies that the equality holds also $\F_{q}$-rationally.
On the other hand, $\bbG'(\F_{q})$ might not equal $\bbG^{\circ}(\F_{q})\cdot\bbZ_{\bbG}(\F_{q})$ since the intersection $\bbG^{\circ}\cap\bbZ_{\bbG}$ might not be connected in general.
\end{rem}

\begin{rem}\label{rem:examples}
The following examples are of particular interest to us.
\begin{enumerate}
\item
Let $\bbG$ be a connected reductive group over $\F_{q}$.
Then $\bbG^{\circ}=\bbG$ and $\bbZ_{\bbG}\colonequals \{1\}$ satisfy the above assumptions.
\item
Let $\bfG^{0}$ be a tamely ramified connected reductive group over $F$.
We take a point $\x$ of $\mcB(\bfG^{0},F)$ whose image $\bar{\x}$ in $\mcB^{\red}(\bfG^{0},F)$ is a vertex.
Then smooth group schemes $\bbG$ and $\bbZ_{\bbG}$ over $\F_{q}$ associated with the groups $G^{0}_{\bar{\x}}$ and $Z_{\bfG^{0}}$ satisfy the above assumptions (see Section \ref{subsec:sc classes})
\end{enumerate}
\end{rem}

We follow \cite[Section 2.6]{Kal19-sc}. Mirroring the connected case \cite{DL76}, define
\begin{equation*}
    Y_{\bbU}^{\bbG} \colonequals \{g \in \bbG/\bbU \mid g^{-1} \sigma(g) \in \bbU \cdot \sigma(\bbU)\},
\end{equation*}
where $\sigma$ denotes a Frobenius map $\bbG \to \bbG$ with respect to the $\FF_q$-rational structure on $\bbG$, and where $\bbU$ is the unipotent radical of a Borel $\overline \FF_q$-subgroup of $\bbG^\circ$ containing $\bbS^\circ$. Then $\bbG(\FF_q)$ and $\bbS(\FF_q)$ act on $Y_{\bbU}^{\bbG}$ by left and right multiplication, respectively.
For any character $\theta\colon\bbS(\FF_q)\rightarrow\C^{\times}$, choosing a prime number $\ell\neq p$, define the virtual $\bbG(\FF_q)$-representation
\begin{equation*}
    R_{\bbS}^{\bbG}(\theta) \colonequals \sum_i H_c^i(Y_{\bbU}^{\bbG}, \overline \QQ_\ell)_\theta,
\end{equation*}
where $H_c^i(Y_{\bbU}^{\bbG}, \overline \QQ_\ell)_{\theta}$ denotes the subspace wherein $\bbS(\FF_q)$ acts by multiplication by the character $\theta$, which is regarded as a $\overline{\Q}_{\ell}$-valued character by fixing an isomorphism $\C\cong\overline{\Q}_{\ell}$.

Let $\theta$ be a character of $\bbS(\FF_q)$ and  write $\theta^\circ$ for the restriction $\theta|_{\bbS^\circ(\FF_q)}$ of $\theta$ to $\bbS^\circ(\FF_q)$. Our analysis in this section is built upon the relationship between $R_{\bbS}^{\bbG}(\theta)$, $R_{\bbS}^{\bbG'}(\theta)$, and $R_{\bbS^\circ}^{\bbG^\circ}(\theta^\circ)$. The following lemma describes this algebraically and follows from \cite[Lemma 2.6.1, Remark 2.6.5]{Kal19-sc}.

\begin{lem}\label{lem:alg desc} \mbox{}
    \begin{enumerate}
        \item $R_{\bbS}^{\bbG}(\theta) \cong \Ind_{\bbG'(\FF_q)}^{\bbG(\FF_q)}(R_{\bbS}^{\bbG'}(\theta))$.
        \item $R_{\bbS}^{\bbG'}(\theta)$ is an extension of $R_{\bbS^\circ}^{\bbG^\circ}(\theta^\circ)$ to $\bbG'(\FF_q)$ on which $\bbZ_\bbG(\FF_q)$ acts via $\theta|_{\bbZ_{\bbG}(\FF_q)}$.
    \end{enumerate}
\end{lem}

Note that while the variety $Y_{\bbU}^{\bbG}$ depends on the choice of $\bbU$, the virtual representation $R_{\bbS}^{\bbG}(\theta)$ does not. This follows from connected case \cite[Theorem 6.8]{DL76} and Lemma \ref{lem:alg desc}.
We regard $R_{\bbS}^{\bbG}(\theta)$, $R_{\bbS}^{\bbG'}(\theta)$, and $R_{\bbS^{\circ}}^{\bbG^{\circ}}(\theta^{\circ})$ as complex virtual representations through the fixed isomorphism $\C\cong\overline{\Q}_{\ell}$; note that these representations do not depend on the choice of an isomorphism $\C\cong\overline{\Q}_{\ell}$ by \cite[Proposition 3.3]{DL76} and Lemma \ref{lem:alg desc}.
Pictorially, we have:
\begin{equation*}    
    \begin{tikzcd}
        \bbG^\circ \arrow[r, phantom, "\subset"] & \bbG' \colonequals \bbG^\circ \bbZ_{\bbG} \arrow[r, phantom, "\subset"] & \bbG &
        R_{\bbS^\circ}^{\bbG^\circ}(\theta^\circ) \ar[rightsquigarrow]{r}[above]{\text{extend}} & R_{\bbS}^{\bbG'}(\theta) \ar[rightsquigarrow]{r}[above]{\Ind} & R_{\bbS}^{\bbG}(\theta) \\
        \bbS^\circ \arrow[u, phantom, sloped, "\subset"] \arrow[r, phantom, "\subset"] & \bbS \colonequals \bbS^\circ \bbZ_{\bbG} \arrow[u, phantom, sloped, "\subset"] &&
        \theta^\circ \ar[rightsquigarrow]{u}[left]{Y_{\bbU}^{\bbG^\circ}} & \theta \arrow[rightsquigarrow]{l}[above]{\text{restrict}} \arrow[rightsquigarrow]{u}[left]{Y_{\bbU}^{\bbG'}} \arrow[rightsquigarrow]{ur}[right]{\quad Y_{\bbU}^{\bbG}}
    \end{tikzcd}
\end{equation*}

We will work out several foundational results about the representations of $\bbG(\FF_q)$, guided largely by the techniques of \cite{DL76} (especially in Section \ref{sec:finite}) and \cite{Lus20} (especially in Section \ref{subsec:Lusztig-E}). For the most part, we will not work with $R_{\bbS}^{\bbG}(\FF_q)$ geometrically, and instead use Lemma \ref{lem:alg desc} to bootstrap what we need from the geometry of the connected case. 

We now collect some notation here that we will frequently use.

\begin{notation}\label{not:finite}
    \mbox{}
    \begin{enumerate}[label=(\alph*)]
        \item Set $[\bbG^\circ] \colonequals \bbG^\circ(\FF_q)/\bbZ_{\bbG^\circ}(\FF_q)$, $[\bbG'] \colonequals \bbG'(\FF_q)/\bbZ_{\bbG}(\FF_q)$, $[\bbG] \colonequals \bbG(\FF_q)/\bbZ_{\bbG}(\FF_q)$. We have then a sequence of finite groups 
        \begin{equation*}
            [\bbG^\circ] \subset [\bbG'] \subset [\bbG].
        \end{equation*} 
        For any $\FF_q$-rational maximal torus $\bbS^\circ$ of $\bbG^\circ$, set $[\bbS] \colonequals \bbS(\FF_q)/\bbZ_{\bbG}(\FF_q)$.
        \item For an $\FF_q$-rational maximal torus $\bbS^\circ$ of $\bbG^\circ$, let $\bbS(\FF_q)^\wedge$ denote the set of all characters of $\bbS(\FF_q)$. For any character $\omega$ of $\bbZ_{\bbG}(\FF_q)$, let $\bbS(\FF_q)^\wedge_\omega$ denote the subset of $\bbS(\FF_q)^\wedge$ consisting of $\theta$ for which $\theta|_{\bbZ_{\bbG}(\FF_q)} = \omega$. 
        \item Let $\tilde \cT$ denote the set of pairs $(\bbS, \theta)$ where $\bbS=\bbS^{\circ}\bbZ_{\bbG}$ for an $\FF_q$-rational maximal torus $\bbS^{\circ}$ of $\bbG^\circ$ and $\theta \in \bbS(\FF_q)^\wedge$. Let $\tilde \cT_\omega$ denote the subset of $\tilde \cT$ consisting of $(\bbS, \theta)$ where $\theta \in \bbS(\FF_q)^\wedge_\omega$. 
        \item Let $\Irr(\bbG(\FF_q))$ denote the set of isomorphism classes of irreducible representations of $\bbG(\FF_q)$. Given a character $\omega$ of $\bbZ_{\bbG}(\FF_q)$, let $\Irr(\bbG(\FF_q))_\omega$ denote the subset consisting of irreducible representations wherein $\bbZ_{\bbG}(\FF_q)$ acts by $\omega$. 
        \item Let $\cR(\bbG(\FF_q)) = \bbZ[\Irr(\bbG(\FF_q))]$, and analogously $\cR(\bbG(\FF_q))_\omega = \bbZ[\Irr(\bbG(\FF_q))_\omega]$.
        When $\rho\in\cR(\bbG(\FF_q))$ belongs to $\cR(\bbG(\FF_q))_\omega$, we say that ``$\omega$ is the $\bbZ_{\bbG}$-central character of $\rho$''.
        \item For a unitary character $\omega$ of $\bbZ_{\bbG}(\FF_q)$,  let $\bbC[\bbG(\FF_q)]_\omega$ denote the set of all functions $f \from \bbG(\FF_q) \to \bbC$ such that $f(gz) = \omega(z) \cdot f(g)$ for all $g \in \bbG(\FF_q)$ and $z \in \bbZ_{\bbG}(\FF_q)$. We may endow $\bbC[\bbG(\FF_q)]_\omega$ with an inner product: For any two functions $f_1, f_2 \in \bbC[\bbG(\FF_q)]_\omega$, we put
        \begin{equation*}
            \langle f_1, f_2 \rangle \colonequals \frac{1}{|[\bbG]|} \sum_{g\in[\bbG]}f_1(g) \cdot \overline{f_2(g)}.
        \end{equation*}
        Note that this is well-defined since the summand $f_1(g) \cdot \overline{f_2(g)}$ is independent of the choice of a representative of the $\bbZ_{\bbG}(\FF_q)$-coset $g$ since $f_1, f_2 \in \bbC[\bbG(\FF_q)]_\omega$ and $\omega$ is unitary.

        For any two representations $\rho_1, \rho_2 \in \cR(\bbG(\FF_q))_\omega$, we set 
        \begin{equation*}
            \langle \rho_1, \rho_2 \rangle \colonequals \langle \Theta_{\rho_1}, \Theta_{\rho_2} \rangle,
        \end{equation*}
        where $\Theta_{\rho_i}$ denotes the character of $\rho_i$.
        We define this inner product also in the case where $\omega$ is not necessarily unitary by using the following Lemma \ref{lem:twist}; by choosing a character $\chi\colon\bbG(\F_{q})\rightarrow\C^{\times}$ such that $\omega\otimes(\chi|_{\bbZ_{\bbG}(\F_{q})})$ is unitary, we put $\langle \rho_1, \rho_2 \rangle \colonequals \langle \rho_{1}\otimes\chi,\rho_{2}\otimes\chi\rangle$ (the right-hand side does not depend on the choice of $\chi$).
        Furthermore, we extend this to an inner product on $\cR(\bbG(\FF_q))$ linearly: for distinct characters $\omega_1, \omega_2$ and any virtual representations $\rho_1 \in \cR(\bbG(\FF_q))_{\omega_1}, \rho_2 \in \cR(\bbG(\FF_q))_{\omega_2}$, we set $\langle \rho_1, \rho_2 \rangle = 0$.
        Later, we will also need a truncated inner product; we define this in Section \ref{subsec:scalar product}.
    \end{enumerate}
\end{notation}

\begin{lem}\label{lem:twist}
Let $\omega$ be a character of $\bbZ_{\bbG}(\F_{q})$.
Then there exists a character $\chi\colon\bbG(\F_{q})\rightarrow\C^{\times}$ such that $\omega\otimes(\chi|_{\bbZ_{\bbG}(\F_{q})})$ is unitary.
In particular, for any irreducible representation $\rho$ of $\bbG(\F_{q})$, there exists a character $\chi\colon\bbG(\F_{q})\rightarrow\C^{\times}$ such that $\rho\otimes\chi$ has a unitary $\bbZ_{\bbG}$-central character.
\end{lem}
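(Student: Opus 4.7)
The plan is to decompose the given character $\omega$ of $\bbZ_{\bbG}(\F_{q})$ and absorb its non-unitary part by a character of $\bbG(\F_{q})$. I would write $\omega = \omega_{u}\cdot\omega_{+}$, where $\omega_{u}(z)\colonequals\omega(z)/|\omega(z)|$ is unitary and $\omega_{+}(z)\colonequals|\omega(z)|$ takes values in $\R_{>0}$; both are group homomorphisms. It then suffices to construct a character $\chi\colon\bbG(\F_{q})\to\C^{\times}$ whose restriction to $\bbZ_{\bbG}(\F_{q})$ equals $\omega_{+}^{-1}$, for then $\omega\otimes(\chi|_{\bbZ_{\bbG}(\F_{q})}) = \omega_{u}$ is unitary as required.

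Next, I would pin down where $\omega_{+}$ can be nontrivial. Since $\bbG^{\circ}$ is connected reductive of finite type over $\F_{q}$, the group $\bbG^{\circ}(\F_{q})$ is finite, so the subgroup $\bbZ_{\bbG^{\circ}}(\F_{q}) = \bbZ_{\bbG}(\F_{q})\cap\bbG^{\circ}(\F_{q})$ is a finite torsion group, and any homomorphism from it into $\R_{>0}$ must be trivial. Hence $\omega_{+}$ is trivial on $\bbZ_{\bbG^{\circ}}(\F_{q})$. Using that $\pi_{0}(\bbG)$ is abelian and that Lang--Steinberg produces a surjection $\bbG(\F_{q})\twoheadrightarrow\pi_{0}(\bbG)(\F_{q})$ with kernel $\bbG^{\circ}(\F_{q})$, the quotient $\bbG(\F_{q})/\bbG^{\circ}(\F_{q})$ is itself abelian, so $[\bbG(\F_{q}),\bbG(\F_{q})]\subseteq\bbG^{\circ}(\F_{q})$. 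Combined with the centrality of $\bbZ_{\bbG}$, this yields
\[
[\bbG(\F_{q}),\bbG(\F_{q})]\cap\bbZ_{\bbG}(\F_{q}) \subseteq \bbZ_{\bbG^{\circ}}(\F_{q}),
\]
so $\omega_{+}^{-1}$ descends to a well-defined homomorphism from the image of $\bbZ_{\bbG}(\F_{q})$ in the abelianization $\bbG(\F_{q})^{\ab}$.

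To finish, I would extend this homomorphism from its domain in $\bbG(\F_{q})^{\ab}$ to all of $\bbG(\F_{q})^{\ab}$ using that $\C^{\times}$ is a divisible, hence injective, abelian group; inflating along $\bbG(\F_{q})\twoheadrightarrow\bbG(\F_{q})^{\ab}$ then yields the required $\chi$. For the ``in particular'' statement, I would observe that $\bbG(\F_{q})/\bbZ_{\bbG}(\F_{q})$ is finite (both $\bbG^{\circ}(\F_{q})$ and $\bbG(\F_{q})/(\bbG^{\circ}(\F_{q})\bbZ_{\bbG}(\F_{q}))$ are finite by hypothesis), so any irreducible representation $\rho$ of $\bbG(\F_{q})$ is finite-dimensional and Schur's lemma produces a $\bbZ_{\bbG}$-central character $\omega$ to which the first part applies, giving $\chi$ such that $\rho\otimes\chi$ has unitary $\bbZ_{\bbG}$-central character.

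I do not expect any serious obstacle; the only mildly delicate point is the Lang--Steinberg step identifying $\bbG(\F_{q})/\bbG^{\circ}(\F_{q})$ as honestly abelian (not merely as a subquotient of the abelian group $\pi_{0}(\bbG)(\overline{\F_{q}})$), which underpins the containment above and thereby the descent of $\omega_{+}^{-1}$ to the abelianization.
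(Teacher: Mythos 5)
Your proof is correct and follows essentially the same route as the paper's: isolate the non-unitary part of $\omega$, observe that it is automatically trivial on the finite group $\bbZ_{\bbG^\circ}(\F_q)$, and extend it through an abelian quotient of $\bbG(\F_q)$ using divisibility of $\C^\times$ (the paper extends through $\bbG(\F_q)/\bbG^\circ(\F_q)$, you through $\bbG(\F_q)^{\ab}$, which is the same thing once $[\bbG(\F_q),\bbG(\F_q)]\subseteq\bbG^\circ(\F_q)$ is known). One small remark: your Lang--Steinberg step is more than you need --- the injection $\bbG(\F_q)/\bbG^\circ(\F_q)\hookrightarrow(\bbG/\bbG^\circ)(\F_q)$, which requires no vanishing of $H^1$, already shows that quotient is abelian, and this is exactly what the paper uses.
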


\begin{proof}
Recall that, by assumptions made in the beginning of Section \ref{subsec:disconn},
\begin{enumerate}
\item
the group of connected components $\pi_{0}(\bbG)$ is a finitely generated abelian group, and
\item
the index $[\bbG: \bbG']$ is finite.
\end{enumerate}
By (2), $\bbZ_{\bbG}/\bbZ_{\bbG^{\circ}}$ is a finite index subgroup of $\bbG/\bbG^{\circ}=\pi_{0}(\bbG)$.
Hence (1) implies that $\bbZ_{\bbG}/\bbZ_{\bbG^{\circ}}$ is a finitely generated abelian group, hence so is $\bbZ_{\bbG}(\F_{q})/\bbZ_{\bbG^{\circ}}(\F_{q})$.
Thus $\bbZ_{\bbG}(\F_{q})$ is also a finitely generated abelian group since $\bbZ_{\bbG^{\circ}}(\F_{q})$ is a finite group.
Then it can be easily checked that there exists a character $\chi$ of $\bbZ_{\bbG}(\F_{q})/\bbZ_{\bbG^{\circ}}(\F_{q})$ such that $\omega\otimes\chi$ is unitary.
Let us take such a character $\chi$.
Since $\bbG(\F_{q})/\bbG^{\circ}(\F_{q})\subset (\bbG/\bbG^{\circ})(\F_{q})$, we also see that $\bbG(\F_{q})/\bbG^{\circ}(\F_{q})$ is a finitely generated abelian group.
Thus we can extend $\chi$ from $\bbZ_{\bbG}(\F_{q})/\bbZ_{\bbG^{\circ}}(\F_{q})$ to $\bbG(\F_{q})/\bbG^{\circ}(\F_{q})$.
If we again write $\chi$ for this extended character, then $\chi$ satisfies the desired condition.
\end{proof}

\subsection{Extended Jordan decomposition}\label{subsec:extended_jordan}

We introduce a variant of the Jordan decomposition in the disconnected group $\bbG'$ following \cite{Kal19}.
(We note that we work in a more general set-up than in \cite{Kal19}, but the arguments of \textit{op.\ cit.}\ we mention here extend to our more general setting without issue.)
This will be necessary to state the character formula for the representation $R_{\bbS}^{\bbG}(\theta)$.

\begin{defn}[extended Jordan decomposition]
Let $g'$ be an element of $\bbG'(\F_{q})$.
An \textit{extended Jordan decomposition of $g'$} is a tuple 
\[
(g,t,\dot{t},z,s,u)
\] consisting of the following objects:
\begin{itemize}
\item
$g\in\bbG^{\circ}(\F_{q})$ and $t\in\bbS(\F_{q})$ are elements such that $g'=gt$;
\item
$\dot{t}\in\bbS^{\circ}$ and $z\in\bbZ_{\bbG}$ are elements such that $t=\dot{t}z$;
\item
$g'z^{-1} = g\dot{t}=su$ is the Jordan decomposition in $\bbG^{\circ}$, i.e., $s\in\bbG^{\circ}$ is a semisimple element and $u\in\bbG^{\circ}$ is a unipotent element such that $g\dot{t}=su=us$.
\end{itemize}
\end{defn}

\begin{lem}\label{lem:Jordan-exist}
For any $g'\in\bbG'(\F_{q})$, we can always find an extended Jordan decomposition of $g'$.
\end{lem}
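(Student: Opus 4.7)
Plan: The construction is essentially direct once one carefully parses which entries of the tuple are constrained to be $\F_q$-rational. Fix any $\F_q$-rational maximal torus $\bbS^\circ$ of $\bbG^\circ$---one exists because $\bbG^\circ$ is connected reductive over $\F_q$---and put $\bbS \colonequals \bbS^\circ \bbZ_\bbG$. Applying Remark \ref{rem:rational-pts} to this choice, we may factor $g' = gt$ with $g \in \bbG^\circ(\F_q)$ and $t \in \bbS(\F_q)$. This supplies the first two entries of the desired tuple, and these are the only ones for which $\F_q$-rationality is required.

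For the remaining entries we are free to pass to $\bar\F_q$. Since $\bbS = \bbS^\circ \cdot \bbZ_\bbG$ as algebraic groups, we may write $t = \dot t z$ for some $\dot t \in \bbS^\circ(\bar\F_q)$ and $z \in \bbZ_\bbG(\bar\F_q)$; centrality of $z$ in $\bbG$ then gives $g' z^{-1} = g t z^{-1} = g\dot t$, which lies in $\bbG^\circ(\bar\F_q)$. The ordinary Jordan decomposition in the connected reductive group $\bbG^\circ$ produces $g\dot t = su = us$ with $s$ semisimple and $u$ unipotent. By construction the tuple $(g, t, \dot t, z, s, u)$ meets every requirement of the definition, so it is an extended Jordan decomposition of $g'$.

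There is no substantive obstacle to this argument; the point of the lemma is to record that such a decomposition always exists so that subsequent character-theoretic arguments can invoke it. The only step that deserves a moment of care is verifying that Remark \ref{rem:rational-pts} indeed applies to the maximal torus $\bbS^\circ$ chosen at the outset---an immediate consequence of Lang's theorem together with the fact that $\bbG^\circ \cap \bbS = \bbS^\circ$ is connected---and that the definition of extended Jordan decomposition does not demand $\F_q$-rationality of $\dot t$, $z$, $s$, or $u$ individually.
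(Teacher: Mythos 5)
Your proof is correct and follows essentially the same route as the paper's: use Remark \ref{rem:rational-pts} to write $g'=gt$ rationally, split $t=\dot t z$ via the (not necessarily $\F_q$-rational) decomposition $\bbS=\bbS^\circ\cdot\bbZ_{\bbG}$, and take the usual Jordan decomposition of $g\dot t$ in $\bbG^\circ$. The only cosmetic difference is that the paper treats $\bbS^\circ$ as already fixed from the surrounding setup rather than choosing it inside the proof.
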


\begin{proof}
This is explained in the paragraph before \cite[Proposition 3.4.24]{Kal19}.
For the sake of completeness, we explain it.
Since we have $\bbG'(\F_{q})=\bbG^{\circ}(\F_{q})\cdot\bbS(\F_{q})$ (see Remark \ref{rem:rational-pts}), we may find $g\in\bbG^{\circ}(\F_{q})$ and $t\in\bbS(\F_{q})$ such that $g'=gt$.
The existence of $\dot{t}$ and $z$ simply follows from that $\bbS=\bbS^{\circ}\cdot\bbZ_{\bbG}$ (note that this equality does not necessarily hold $\F_{q}$-rationally, thus $\dot{t}$ and $z$ are not necessarily $\F_{q}$-rational).
By taking the (usual) Jordan decomposition $g\dot{t}=su$ of $g\dot{t}\in\bbG^{\circ}$, we obtain an extended Jordan decomposition $(g,t,\dot{t},z,s,u)$ of $g'$.
\end{proof}

We put $\overline{\bbG}\colonequals \bbG'/\bbZ_{\bbG}\cong\bbG^{\circ}/\bbZ_{\bbG^{\circ}}$ and $\overline{\bbS}\colonequals \bbS/\bbZ_{\bbG}\cong\bbS^{\circ}/\bbZ_{\bbG^{\circ}}$.

\begin{lem}\label{lem:Jordan}
Let $g'\in\bbG'(\F_{q})$ and $(g,t,\dot{t},z,s,u)$ an extended Jordan decomposition of $g'$.
\begin{enumerate}
\item
The image of $s$ in $\overline{\bbG}$ is $\F_{q}$-rational.
In particular, the connected centralizer $\bbG_{s}^{\circ}$ of $s$ in $\bbG^{\circ}$ has an $\F_{q}$-rational structure.
\item
The unipotent part $u$ belongs to $\bbG_{s}^{\circ}(\F_{q})$.
\item
For any $x\in\bbG(\F_{q})$, the elements ${}^{x}s\cdot z$ and ${}^{x}s\cdot\dot{t}^{-1}$ are $\F_{q}$-rational.
\end{enumerate}
\end{lem}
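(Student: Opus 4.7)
The plan is to apply the geometric Frobenius $\Frob$ to the defining equation $g\dot{t} = su$ and to extract the three assertions from the uniqueness of Jordan decomposition in $\bbG^\circ$. Although neither $\dot{t}$ nor $z$ is $\F_q$-rational in general, their failures of rationality are controlled by a single central element $z_0 \in \bbZ_{\bbG^\circ}$. Indeed, because $t = \dot{t}z \in \bbS(\F_q)$, applying $\Frob$ yields $\dot{t}z = \Frob(\dot{t})\Frob(z)$, so
\begin{equation*}
    z_0 \colonequals \dot{t}^{-1}\Frob(\dot{t}) = z\Frob(z)^{-1}
\end{equation*}
lies in $\bbS^\circ \cap \bbZ_{\bbG}$. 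Since $\bbZ_{\bbG^\circ} = \bbZ_{\bbG} \cap \bbG^\circ$ is central in $\bbG^\circ$ and therefore contained in every maximal torus, this intersection equals $\bbZ_{\bbG^\circ}$, and so $z_0 \in \bbZ_{\bbG^\circ}$; in particular $z_0$ is central in $\bbG$.

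Next, I would apply $\Frob$ to the Jordan decomposition $g\dot{t} = su$. Using $\Frob(g) = g$, this becomes $g\dot{t}z_0 = \Frob(s)\Frob(u)$. Since $z_0$ is central in $\bbG^\circ$ and hence commutes with both $s$ and $u$, the left-hand side equals $(sz_0)\cdot u$, a product of a semisimple element and a commuting unipotent element. Uniqueness of Jordan decomposition in $\bbG^\circ$ then forces
\begin{equation*}
    \Frob(s) = sz_0 \qquad \text{and} \qquad \Frob(u) = u.
\end{equation*}

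All three assertions drop out of these two identities. Claim (1) is immediate: $\Frob(s) = sz_0$ with $z_0 \in \bbZ_{\bbG}$ says that the image of $s$ in $\overline{\bbG}$ is $\Frob$-fixed, and the connected centralizer $\bbG^\circ_s$ is $\Frob$-stable because $\bbG^\circ_{sz_0} = \bbG^\circ_s$ by centrality of $z_0$. Claim (2) follows from $\Frob(u) = u$ together with the fact that $u \in \bbG^\circ$ commutes with $s$. For claim (3), any $x \in \bbG(\F_q)$ satisfies $\Frob({}^x s) = {}^x\Frob(s) = {}^x s \cdot z_0$ by centrality of $z_0$ in $\bbG$, and then
\begin{equation*}
    \Frob({}^x s \cdot z) = {}^x s \cdot z_0 \cdot z z_0^{-1} = {}^x s \cdot z,
    \qquad
    \Frob({}^x s \cdot \dot{t}^{-1}) = {}^x s \cdot z_0 \cdot (\dot{t}z_0)^{-1} = {}^x s \cdot \dot{t}^{-1}.
\end{equation*}
There is no serious technical obstacle here; the only real subtlety is recognizing that a \emph{single} central correction $z_0$ simultaneously measures the failure of $\F_q$-rationality of $\dot{t}$ and of $z$, so that the Jordan decomposition in $\bbG^\circ$ transports along Frobenius modulo $\bbZ_{\bbG}$.
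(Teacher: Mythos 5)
Your proof is correct and rests on the same mechanism as the paper's: apply Frobenius to the Jordan decomposition $g\dot t = su$ and absorb the failure of rationality of $\dot t$ and $z$ into a single central element. The difference is one of packaging and logical direction. The paper cites the paragraph before \cite[Proposition 3.4.24]{Kal19} for parts (1) and (2), and then for (3) runs the implication \emph{from} $\Frob(s)=z's$ \emph{to} $\Frob(\dot t)=z'\dot t$; you instead extract $z_0\colonequals\dot t^{-1}\Frob(\dot t)=z\Frob(z)^{-1}\in\bbZ_{\bbG^\circ}$ directly from the rationality of $t$, and then deduce $\Frob(s)=sz_0$ and $\Frob(u)=u$ by uniqueness of the Jordan decomposition. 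This makes your argument self-contained where the paper defers to Kaletha, which is a genuine (if modest) gain.

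One point deserves more care. For (2) you write that $u\in\bbG^\circ_s(\F_q)$ follows from $\Frob(u)=u$ "together with the fact that $u$ commutes with $s$." Commuting with $s$ only places $u$ in the full centralizer $\Cent_{\bbG^\circ}(s)$, not in its identity component $\bbG^\circ_s$. You need the standard fact that the unipotent part of an element of a connected group lies in the \emph{connected} centralizer of its semisimple part (put $s$ in a maximal torus $\bbT$ of a Borel $\bbB=\bbT\ltimes\bbU$ containing $su$; then $u\in\Cent_{\bbU}(s)$, which is connected, being generated by the root subgroups $\bbU_\alpha$ with $\alpha(s)=1$). This is exactly the content of the Kaletha reference the paper invokes; with that sentence added, your argument is complete.
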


\begin{proof}
See the paragraph before \cite[Proposition 3.4.24]{Kal19} for the proofs of (1) and (2).
By (1), there exists an element $z'\in \bbZ_{\bbG^{\circ}}$ such that $\Frob(s)=z's$.
As $su=g\dot{t}$ and $g$ is $\F_{q}$-rational, we also have
\[
g\Frob(\dot{t})
=\Frob(g\dot{t})
=\Frob(su)
=z'su
=z'g\dot{t}
\]
(in the third equality, we used (2)).
Hence we get $\Frob(\dot{t})=z'\dot{t}$.
Thus we can see that, for any $x\in\bbG(\F_{q})$, ${}^{x}s\cdot\dot{t}^{-1}$ is fixed by the Frobenius.
We can check that ${}^{x}s\cdot z$ is fixed by the Frobenius in a similar way.
\end{proof}

\begin{lem}\label{lem:Jordan-bij}
The association $g'\mapsto (s,u)$ defines an injective map
\[
[\bbG']
\hookrightarrow
\{(s,u)\mid s\in\overline{\bbG}(\F_{q})_{\ss},\, u\in\bbG^{\circ}_{s}(\F_{q})_{\unip}\},
\]
where
\begin{itemize}
\item
$s$ (resp.\ $u$) is the semisimple (resp.\ unipotent) part of an(y) extended Jordan decomposition of $g'$,
\item
$\overline{\bbG}(\F_{q})_{\ss}$ denotes the set of semisimple (in the usual sense; note that $\overline{\bbG}$ is an algebraic group by the assumption on $\bbZ_{\bbG}$) elements of $\overline{\bbG}(\F_{q})$, and
\item
$\bbG^{\circ}_{s}(\F_{q})_{\unip}$ denotes the set of unipotent elements of $\bbG_{s}^{\circ}(\F_{q})$.
\end{itemize}
\end{lem}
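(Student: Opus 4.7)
The plan is to verify three things: (i) that the pair $(s, u)$ is well-defined independently of the choices inside an extended Jordan decomposition of a given $g'$, (ii) that the pair is invariant under replacing $g'$ by $g' z_0$ for $z_0 \in \bbZ_{\bbG}(\F_q)$, so the map descends to $[\bbG']$, and (iii) injectivity. The target is correctly described: $\bar s \in \ol{\bbG}(\F_q)_{\ss}$ by Lemma \ref{lem:Jordan}(1), $u \in \bbG^\circ_s(\F_q)_{\unip}$ by Lemma \ref{lem:Jordan}(2), and the centralizer $\bbG^\circ_s$ depends only on $\bar s$ since $\bbZ_{\bbG^\circ}$ lies in any centralizer in $\bbG^\circ$.

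For well-definedness, the key algebraic identity is $\bbG^\circ \cap \bbS = \bbS^\circ$, which follows from $\bbS = \bbS^\circ \bbZ_{\bbG}$ together with $\bbG^\circ \cap \bbZ_{\bbG} = \bbZ_{\bbG^\circ} \subset \bbS^\circ$. Given two decompositions $g' = gt = g_1 t_1$ with $g, g_1 \in \bbG^\circ(\F_q)$ and $t, t_1 \in \bbS(\F_q)$, the element $s_0 \colonequals g_1^{-1} g = t_1 t^{-1}$ lies in $\bbS^\circ(\F_q)$. Setting $\dot t_1 = s_0 \dot t$ and $z_1 = z$ yields $g_1 \dot t_1 = g \dot t$, so the Jordan decomposition in $\bbG^\circ$ is unchanged. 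Likewise, modifying the lift via $\dot t \mapsto \dot t z_0'$ and $z \mapsto z z_0'^{-1}$ for $z_0' \in \bbZ_{\bbG^\circ}$ shifts $g \dot t$ by the central element $z_0'$, producing Jordan decomposition $(s z_0', u)$, which has the same image in the target (since $z_0' \in \bbZ_{\bbG^\circ}$ is killed in $\ol{\bbG}$ and centralizes $u$). Finally, replacing $g'$ by $g' z_0$ with $z_0 \in \bbZ_{\bbG}(\F_q)$ is absorbed by replacing $(t, z)$ with $(t z_0, z z_0)$, leaving $g \dot t = su$ intact.

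For injectivity, take extended Jordan decompositions $(g, t, \dot t, z, s, u)$ and $(g_1, t_1, \dot t_1, z_1, s_1, u_1)$ of elements $g', g'_1 \in \bbG'(\F_q)$ whose images in the target agree. Then $u_1 = u$ and $s_1 = s z''$ for some $z'' \in \bbZ_{\bbG^\circ}$, so
\[
g_1 \dot t_1 = s_1 u_1 = s u \cdot z'' = g \dot t \cdot z''.
\]
Substituting $\dot t = t z^{-1}$ and $\dot t_1 = t_1 z_1^{-1}$ and using centrality gives $g'_1 = g' \cdot (z^{-1} z_1 z'')$. The scalar $z^{-1} z_1 z''$ lies in $\bbZ_{\bbG}$ and equals $(g')^{-1} g'_1 \in \bbG(\F_q)$, hence lies in $\bbZ_{\bbG}(\F_q)$; so $g'$ and $g'_1$ coincide in $[\bbG']$.

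The main obstacle is purely organizational: keeping track of the three independent layers of ambiguity (the factorization $g' = gt$, the lift $t = \dot t z$, and the coset $g' \bbZ_{\bbG}(\F_q)$) so that they propagate compatibly through the Jordan decomposition. No input beyond Lemma \ref{lem:Jordan} and the structural identities $\bbG^\circ \cap \bbZ_{\bbG} = \bbZ_{\bbG^\circ}$ and $\bbG^\circ \cap \bbS = \bbS^\circ$ is required.
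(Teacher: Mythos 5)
Your proof is correct and follows essentially the same route as the paper's: both arguments reduce everything to the uniqueness of the usual Jordan decomposition in $\bbG^{\circ}$ together with the identities $\bbG^{\circ}\cap\bbZ_{\bbG}=\bbZ_{\bbG^{\circ}}$ and $\bbG^{\circ}\cap\bbS=\bbS^{\circ}$, and the injectivity step (deducing $g'_1=g'_2\cdot(\text{central element})$ and then noting that this central element is $\F_q$-rational because it equals $g_2'^{-1}g_1'$) is the same. The only difference is organizational: you split the well-definedness into the two separate ambiguities and explicitly verify descent to $[\bbG']$, whereas the paper handles the ambiguities in a single chain of equalities and leaves the descent implicit.
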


\begin{proof}
We first check that the association $g'\mapsto (s,u)$ gives a well-defined map
\[
\bbG'(\F_{q})
\rightarrow
\{(s,u)\mid s\in\overline{\bbG}(\F_{q})_{\ss},\, u\in\bbG^{\circ}_{s}(\F_{q})_{\unip}\}.
\]
Let $(g,t,\dot{t},z,s,u)$ be an extended Jordan decomposition of $g'$.
By Lemma \ref{lem:Jordan} (1) and (2), $s$ belongs to $\overline{\bbG}(\F_{q})_{\ss}$ and $u$ belongs to $\bbG_{s}^{\circ}(\F_{q})_{\unip}$.
Thus our task is to show that $(s,u)$ is independent of the choice of an extended Jordan decomposition.
Let us take another extended Jordan decomposition $(\ul{g},\ul{t},\ul{\dot{t}},\ul{z},\ul{s},\ul{u})$ of $g'$.
Then
\[
su
=g\dot{t}
=gtz^{-1}
=g'z^{-1}
=\ul{g}\ul{t}z^{-1}
=\ul{g}\ul{\dot{t}}\ul{z}z^{-1}
=\ul{s}\ul{u}\ul{z}z^{-1}.
\]
Thus the uniqueness of the (usual) Jordan decomposition implies that $s=\ul{s}\ul{z}z^{-1}$ and $u=\ul{u}$.

We next investigate the fibers of the map.
Let $g'_{1}$ and $g'_{2}$ be elements of $\bbG'(\F_{q})$ with extended Jordan decomposition $(g_{1},t_{1},\dot{t}_{1},z_{1},s_{1},u_{1})$ and $(g_{2},t_{2},\dot{t}_{2},z_{2},s_{2},u_{2})$, respectively, such that $s_{1}=s_{2}z$ (with $z\in\bbZ_{\bbG^{\circ}}$) and $u_{1}=u_{2}$.
Then we have
\[
g_{1}'
=g_{1}t_{1}
=g_{1}\dot{t}_{1}z_{1}
=s_{1}u_{1}z_{1}
=s_{2}u_{2}zz_{1}
=g_{2}\dot{t}_{2}zz_{1}
=g_{2}t_{2}zz_{1}z_{2}^{-1}
=g'_{2}zz_{1}z_{2}^{-1}.
\]
As $g'_{1}$ and $g'_{2}$ are $\F_{q}$-rational, so is $zz_{1}z_{2}^{-1}$.
This completes the proof.
\end{proof}

We write $\Jord$ for the map of Lemma \ref{lem:Jordan-bij}:
\[
\Jord\colon
[\bbG']
\hookrightarrow
\{(s,u)\mid s\in\overline{\bbG}(\F_{q})_{\ss},\, u\in\bbG^{\circ}_{s}(\F_{q})_{\unip}\}.
\]
Note that this map might not be surjective.
We define a subset $\overline{\bbG}(\F_{q})_{\ast}\subset\overline{\bbG}(\F_{q})_{\ss}$ to be the image of the map
\[
[\bbG']
\hookrightarrow
\{(s,u)\mid s\in\overline{\bbG}(\F_{q})_{\ss},\, u\in\bbG^{\circ}_{s}(\F_{q})_{\unip}\}
\xrightarrow{\pr_{\ss}}
\overline{\bbG}(\F_{q})_{\ss},
\]
where $\pr_{\ss}$ denotes the first projection.
Then, $\Jord$ gives a bijection between the sets $[\bbG']$ and $\{(s,u)\mid s\in\overline{\bbG}(\F_{q})_{\ast},\, u\in\bbG^{\circ}_{s}(\F_{q})_{\unip}\}$:
\[
\Jord\colon
[\bbG']
\xrightarrow{1:1}
\{(s,u)\mid s\in\overline{\bbG}(\F_{q})_{\ast},\, u\in\bbG^{\circ}_{s}(\F_{q})_{\unip}\}
;\quad g'\mapsto (s,u).
\]

We sometimes abuse notation and write $\Jord$ to also mean the composition of the natural surjection $\bbG'(\FF_q) \to [\bbG']$ with $\Jord.$

\begin{lem}\label{lem:Jordan-conj}
The map $\Jord$ is $\bbG(\F_{q})$-conjugation equivariant.
More precisely, if we have $\Jord(g')=(s,u)$ for $g'\in\bbG'(\F_{q})$, then we have $\Jord({}^{y}g')=({}^{y}s,{}^{y}u)$ for any $y\in\bbG(\F_{q})$.
\end{lem}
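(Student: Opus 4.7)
The plan is to exhibit an extended Jordan decomposition of ${}^{y}g'$ directly by conjugating a given extended Jordan decomposition of $g'$. By Lemma \ref{lem:Jordan-exist}, choose any extended Jordan decomposition $(g,t,\dot{t},z,s,u)$ of $g'$, using a maximal torus $\bbS=\bbS^{\circ}\bbZ_{\bbG}$ of $\bbG'$. I would then take as candidate
\[
({}^{y}g,\,{}^{y}t,\,{}^{y}\dot{t},\,z,\,{}^{y}s,\,{}^{y}u)
\]
and verify that it is an extended Jordan decomposition of ${}^{y}g'$ with respect to the torus ${}^{y}\bbS={}^{y}\bbS^{\circ}\cdot\bbZ_{\bbG}$.

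Several things need to be checked, all of which are routine given the setup. First, ${}^{y}\bbS^{\circ}$ is a maximal torus of $\bbG^{\circ}$ (since $\bbG^{\circ}$ is normal in $\bbG$ and conjugates of maximal tori are maximal tori), and it is $\F_{q}$-rational because $y\in\bbG(\F_{q})$ and $\bbS^{\circ}$ is $\F_{q}$-rational; hence ${}^{y}\bbS$ is an $\F_{q}$-rational maximal torus of $\bbG'$ containing $\bbZ_{\bbG}$. Second, ${}^{y}g\in\bbG^{\circ}(\F_{q})$ since $\bbG^{\circ}$ is normal, ${}^{y}t\in{}^{y}\bbS(\F_{q})$, and ${}^{y}g'={}^{y}g\cdot{}^{y}t$. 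Third, conjugating $t=\dot{t}z$ gives ${}^{y}t={}^{y}\dot{t}\cdot{}^{y}z$, and crucially ${}^{y}z=z$ because $\bbZ_{\bbG}$ is a central subgroup of $\bbG$; thus ${}^{y}\dot{t}\in{}^{y}\bbS^{\circ}$ and ${}^{y}z\in\bbZ_{\bbG}$ as required. Finally, conjugation by $y$ preserves semisimplicity, unipotency, and the commutation relation, and sends $\bbG^{\circ}$ to itself, so applying ${}^{y}(-)$ to the Jordan decomposition $g\dot{t}=su$ in $\bbG^{\circ}$ yields $({}^{y}g)({}^{y}\dot{t})=({}^{y}s)({}^{y}u)$, which is the Jordan decomposition of ${}^{y}g'\cdot z^{-1}$ in $\bbG^{\circ}$.

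With these verifications in hand, the tuple above is an extended Jordan decomposition of ${}^{y}g'$, so by the definition of $\Jord$ we obtain $\Jord({}^{y}g')=({}^{y}s,{}^{y}u)$, completing the proof. There is no serious obstacle; the only point requiring attention is the identity ${}^{y}z=z$, which relies precisely on the hypothesis that $\bbZ_{\bbG}$ is central in the possibly disconnected group $\bbG$ (not merely in $\bbG'$), and on the fact that conjugation by $y\in\bbG(\F_{q})$ preserves $\F_{q}$-rationality throughout.
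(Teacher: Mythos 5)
Your argument is correct in substance and rests on exactly the same two facts as the paper's proof: the ordinary Jordan decomposition in $\bbG^{\circ}$ is conjugation-equivariant, and $\bbZ_{\bbG}$ is central in all of $\bbG$, so that $z$ is untouched by ${}^{y}(-)$. The one point where your write-up does not quite match the paper's definitions is the last step. The extended Jordan decomposition is defined relative to the \emph{fixed} torus $\bbS=\bbS^{\circ}\bbZ_{\bbG}$, whereas your conjugated tuple has ${}^{y}t\in{}^{y}\bbS(\F_{q})$ rather than in $\bbS(\F_{q})$; so it is not literally one of the tuples from which $\Jord({}^{y}g')$ is computed, and ``by the definition of $\Jord$'' does not directly apply. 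The gap is harmless and can be closed in one line in either of two ways: observe that the well-definedness computation in Lemma \ref{lem:Jordan-bij} never uses which torus $t$ lies in (it only uses $g'z^{-1}=su$ together with the uniqueness of the ordinary Jordan decomposition), so $\Jord$ is insensitive to the choice of auxiliary torus; or argue as the paper does, taking an arbitrary extended Jordan decomposition $(g_{y},t_{y},\dot{t}_{y},z_{y},s_{y},u_{y})$ of ${}^{y}g'$ relative to $\bbS$ and computing
\[
s_{y}u_{y}={}^{y}g'\,z_{y}^{-1}={}^{y}(suz)\,z_{y}^{-1}={}^{y}(sz)z_{y}^{-1}\cdot{}^{y}u,
\]
so that uniqueness of the Jordan decomposition gives $s_{y}z_{y}={}^{y}(sz)={}^{y}s\cdot z$ and $u_{y}={}^{y}u$; since $zz_{y}^{-1}\in\bbZ_{\bbG}\cap\bbG^{\circ}=\bbZ_{\bbG^{\circ}}$, this yields $s_{y}={}^{y}s$ in $\overline{\bbG}(\F_{q})$, which is the asserted equality.
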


\begin{proof}
We take an extended Jordan decomposition $(g,t,\dot{t},z,s,u)$ of $g'$.
For any $y\in\bbG(\F_{q})$, we take an extended Jordan decomposition $(g_{y},t_{y},\dot{t}_{y},z_{y},s_{y},u_{y})$ of ${}^{y}g'$.
By the definition of an extended Jordan decomposition, we have
\[
s_{y}u_{y}
=g_{y}\dot{t}_{y}
=g_{y}t_{y}z_{y}^{-1}
={}^{y}g'z_{y}^{-1}
={}^{y}(gt)z_{y}^{-1}
={}^{y}(g\dot{t}z)z_{y}^{-1}
={}^{y}(suz)z_{y}^{-1}
={}^{y}(sz)z_{y}^{-1}\cdot{}^{y}u.
\]
Hence the uniqueness of the Jordan decomposition implies that $s_{y}z_{y}={}^{y}(sz)$ and $u_{y}={}^{y}u$.
Thus we get the assertion.
\end{proof}

\subsection{Character formula for Deligne--Lusztig induction}\label{subsec:DLCF}

Fix $(\bbS, \theta) \in \tilde \cT$. 
We first recall Kaletha's character formula for the representation $R_{\bbS}^{\bbG'}(\theta)$, which generalizes the Deligne--Lusztig character formula in the connected setting (\cite[Theorem 4.2]{DL76}).

\begin{prop}[{\cite[Proposition 3.4.24]{Kal19}}]\label{prop:Kaletha-CF}
    For $g'\in\bbG'(\F_{q})$ and any extended Jordan decomposition $(g,t,\dot{t},z,s,u)$ for $g'$, we have
\[
\Theta_{R_{\bbS}^{\bbG'}(\theta)}(g')
=\frac{|[\bbG^{\circ}]|}{|[\bbG']|}
\cdot\frac{|\bbZ_{\bbG^{\circ}}(\F_{q})|}{|\bbG^{\circ}_{s}(\F_{q})|}
\sum_{\begin{subarray}{c}x\in [\bbG'] \\ {}^{x}s\in\bbS^{\circ}\end{subarray}}
Q^{\bbG^{\circ}_{s}}_{\bbS^{x\circ}}(u)\cdot \theta({}^{x}sz),
\]
where $Q^{\bbG^{\circ}_{s}}_{\bbS^{x\circ}}$ is the Green function with respect to an $\F_{q}$-rational maximal torus $\bbS^{x\circ}$ of $\bbG^{\circ}_{s}$ (\cite[Definition 4.1]{DL76}).
Note that ${}^{x}sz$ is an $\F_{q}$-rational element of $\bbS$ by Lemma \ref{lem:Jordan} (3), hence $\theta({}^{x}sz)$ in the summand makes sense.
\end{prop}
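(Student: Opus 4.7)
This is a restatement of \cite[Proposition 3.4.24]{Kal19}; the plan is to verify that Kaletha's argument applies in our setting. The strategy is to reduce to the classical Deligne--Lusztig character formula \cite[Theorem 4.2]{DL76} for $R_{\bbS^\circ}^{\bbG^\circ}(\theta^\circ)$, using the defining properties of Kaletha's extension $R_\bbS^{\bbG'}(\theta)$.

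Fixing $g' \in \bbG'(\F_q)$ with an extended Jordan decomposition $(g, t, \dot t, z, s, u)$, one writes $g' = su \cdot z$, in which $su = g \dot t$ is the ordinary Jordan decomposition in $\bbG^\circ$. Although $s$ and $z$ are not individually $\F_q$-rational in general, Lemma \ref{lem:Jordan} supplies enough rationality to make the right-hand side of the asserted formula well-defined: the image of $s$ in $\overline{\bbG}$ is rational (so $\bbG^\circ_s$ carries an $\F_q$-structure and the Green function $Q^{\bbG^\circ_s}_{\bbS^{x\circ}}$ is defined), $u \in \bbG^\circ_s(\F_q)$, and ${}^x s \cdot z$ is rational in $\bbS$ for every $x \in \bbG(\F_q)$, so each factor $\theta({}^x s z)$ is meaningful.

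Next I would combine the two defining properties of Kaletha's extension---that $R_\bbS^{\bbG'}(\theta)$ restricts to $R_{\bbS^\circ}^{\bbG^\circ}(\theta^\circ)$ on $\bbG^\circ(\F_q)$, and that $\bbZ_\bbG(\F_q)$ acts by $\theta|_{\bbZ_\bbG(\F_q)}$---with the DL character formula for $R_{\bbS^\circ}^{\bbG^\circ}(\theta^\circ)$ at the Jordan parts $s, u$. The net effect is a sum indexed by $x$ with ${}^x s \in \bbS^\circ$, whose summands $Q^{\bbG^\circ_s}_{\bbS^{x\circ}}(u) \cdot \theta({}^x s z)$ package together the classical DL contribution $\theta^\circ({}^x s)$ and the central twist $\theta(z)$. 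Since centrality of $\bbZ_\bbG$ makes the summand invariant under replacing $x$ by $xz'$ for any $z' \in \bbZ_\bbG(\F_q)$, the sum can be repackaged as one over $[\bbG']$ after inserting the normalization factors $|[\bbG^\circ]|/|[\bbG']|$ and $|\bbZ_{\bbG^\circ}(\F_q)|/|\bbG^\circ_s(\F_q)|$; the latter arises by combining the $|\bbG^\circ_s(\F_q)|^{-1}$ denominator of the classical DL formula with the descent from $\bbG^\circ(\F_q)$ to $[\bbG^\circ]$.

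The principal obstacle is bookkeeping the non-rationality of $s$ and $z$ individually and verifying that their interactions with the DL formula and Kaletha's extension combine into the advertised $\F_q$-rational expression. Lemma \ref{lem:Jordan} is designed to handle precisely these rationality subtleties; once it is in hand the remaining computation is essentially formal, and Kaletha's argument applies without modification.
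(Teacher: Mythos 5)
Your overall strategy coincides with the paper's: quote Kaletha's Proposition 3.4.24 and then repackage the index set. However, two points in your write-up need repair. First, your justification for the repackaging is not the right one. You invoke invariance of the summand under $x \mapsto xz'$ for $z' \in \bbZ_{\bbG}(\F_q)$, but Kaletha's formula is a sum over $x \in \bbG^{\circ}(\F_q)$ with ${}^x s \in \bbS^{\circ}$, and to convert this into a sum over $[\bbG']$ one must enlarge the index set from $\bbG^{\circ}(\F_q)$ to $\bbG'(\F_q)$. Since $\bbG'(\F_q) = \bbS(\F_q)\bbG^{\circ}(\F_q)$ but in general $\bbG'(\F_q) \neq \bbG^{\circ}(\F_q)\bbZ_{\bbG}(\F_q)$ (Remark \ref{rem:rational-pts}), $\bbZ_{\bbG}$-invariance alone does not suffice; what is actually needed, and what the paper uses, is that for $y \in \bbS(\F_q)$ the condition ${}^{yx}s \in \bbS^{\circ}$ is equivalent to ${}^{x}s \in \bbS^{\circ}$ and that in this case ${}^{yx}s = {}^{x}s$ and $\bbS^{x\circ} = \bbS^{yx\circ}$ (because $\bbS$ is abelian). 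With this $\bbS(\F_q)$-left-invariance, the passage $\bbG^{\circ}(\F_q) \rightsquigarrow [\bbG^{\circ}] \rightsquigarrow [\bbG']$ produces exactly the factors $|\bbZ_{\bbG^{\circ}}(\F_q)|$ and $|[\bbG^{\circ}]|/|[\bbG']|$.

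Second, you cannot apply Kaletha's proposition ``without modification'': it is stated under the hypothesis that $\theta^{\circ}$ is regular, whereas the present proposition is asserted for arbitrary $\theta$ (and is later applied to non-regular, e.g.\ trivial, characters). One must observe that Kaletha's proof goes through without the regularity hypothesis; the only role of regularity there is to concentrate the Deligne--Lusztig cohomology in a single degree, so that Kaletha works with the genuine representation obtained by twisting by the sign $(-1)^{r_G - r_S}$. Since the proposition here concerns the virtual representation $R_{\bbS}^{\bbG'}(\theta)$ itself, that sign must be stripped out when transcribing his formula; omitting this remark risks either an unjustified appeal to a hypothesis you do not have, or a sign error.
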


\begin{proof}
The statement of \cite[Proposition 3.4.24]{Kal19} is that
\begin{align*}
\Theta_{R_{\bbS}^{\bbG'}(\theta)}(g')
&=
\frac{1}{|\bbG^{\circ}_{s}(\F_{q})|}
\sum_{\begin{subarray}{c}x\in \bbG^{\circ}(\F_{q}) \\ {}^{x}s\in\bbS^{\circ}\end{subarray}}
Q^{\bbG^{\circ}_{s}}_{\bbS^{x\circ}}(u)\cdot \theta^{\circ}({}^{x}s\cdot\dot{t}^{-1})\theta(t)\\
&=
\frac{1}{|\bbG^{\circ}_{s}(\F_{q})|}
\sum_{\begin{subarray}{c}x\in \bbG^{\circ}(\F_{q}) \\ {}^{x}s\in\bbS^{\circ}\end{subarray}}
Q^{\bbG^{\circ}_{s}}_{\bbS^{x\circ}}(u)\cdot \theta({}^{x}sz).
\end{align*}
Note that, although the character $\theta^{\circ}$ is supposed to be regular in \cite{Kal19}, Kaletha's proof of \cite[Proposition 3.4.24]{Kal19} works in general.
Only the point to care is that the Deligne--Lusztig virtual representation is concentrated in a single degree if $\theta^{\circ}$ is regular.
Thus, in \cite[Proposition 3.4.24]{Kal19}, Kaletha works with a genuine representation obtained by twisting via a sign ``$(-1)^{r_{G}-r_{S}}$'' coming from the degree.
Since we work with the virtual representation $R^{\bbG'}_{\bbS}(\theta)$ itself, such a sign does not appear in the above formula.

For any $x\in\bbG'(\F_{q})$, ${}^{x}s\in\bbS^{\circ}$ if and only if ${}^{yx}s\in\bbS^{\circ}$ for any $y\in\bbS(\F_{q})$.
Moreover, in this case, we have ${}^{yx}s={}^{x}s$ and $\bbS^{x\circ}=\bbS^{yx\circ}$.
By combining this trivial observation with the equality $\bbG'(\F_{q})=\bbS(\F_{q})\bbG^{\circ}(\F_{q})$, we see that
\begin{align*}
\frac{1}{|\bbG^{\circ}_{s}(\F_{q})|}
\sum_{\begin{subarray}{c}x\in \bbG^{\circ}(\F_{q}) \\ {}^{x}s\in\bbS^{\circ}\end{subarray}}
Q^{\bbG^{\circ}_{s}}_{\bbS^{x\circ}}(u)\cdot \theta({}^{x}sz)
&=\frac{|\bbZ_{\bbG^{\circ}}(\F_{q})|}{|\bbG^{\circ}_{s}(\F_{q})|}
\sum_{\begin{subarray}{c}x\in [\bbG^{\circ}] \\ {}^{x}s\in\bbS^{\circ}\end{subarray}}
Q^{\bbG^{\circ}_{s}}_{\bbS^{x\circ}}(u)\cdot \theta({}^{x}sz)\\
&=\frac{|\bbZ_{\bbG^{\circ}}(\F_{q})|}{|\bbG^{\circ}_{s}(\F_{q})|}
\cdot\frac{|[\bbG^{\circ}]|}{|[\bbG']|}
\sum_{\begin{subarray}{c}x\in [\bbG'] \\ {}^{x}s\in\bbS^{\circ}\end{subarray}}
Q^{\bbG^{\circ}_{s}}_{\bbS^{x\circ}}(u)\cdot \theta({}^{x}sz).\qedhere
\end{align*}
\end{proof}

\begin{cor}\label{cor:Kaletha-CF}
For $g\in\bbG(\F_{q})\smallsetminus\bbG'(\F_{q})$, we have $\Theta_{R_{\bbS}^{\bbG}(\theta)}(g)=0$.
For $g'\in\bbG'(\F_{q})$ and any extended Jordan decomposition $(g,t,\dot{t},z,s,u)$ of $g'$, we have
\[
\Theta_{R_{\bbS}^{\bbG}(\theta)}(g')
=
\frac{|[\bbG^{\circ}]|}{|[\bbG']|}
\cdot\frac{|\bbZ_{\bbG^{\circ}}(\F_{q})|}{|\bbG^{\circ}_{s}(\F_{q})|}
\sum_{\begin{subarray}{c}x\in [\bbG] \\ {}^{x}s\in\bbS^{\circ}\end{subarray}}
Q^{\bbG^{\circ}_{s}}_{\bbS^{x\circ}}(u)\cdot \theta({}^{x}sz).
\]
\end{cor}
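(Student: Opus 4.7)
The plan is to apply the standard induced character formula to $R_{\bbS}^{\bbG}(\theta) = \Ind_{\bbG'(\F_q)}^{\bbG(\F_q)} R_{\bbS}^{\bbG'}(\theta)$, exploiting the crucial fact that $\bbG'$ is a \emph{normal} subgroup of $\bbG$ (since $\bbG^\circ$ is normal in $\bbG$ and $\bbZ_{\bbG}$ is central). Normality yields the first assertion immediately: if $g \in \bbG(\F_q) \smallsetminus \bbG'(\F_q)$, then no $\bbG(\F_q)$-conjugate of $g$ lies in $\bbG'(\F_q)$, so the induced character vanishes at $g$. For $g' \in \bbG'(\F_q)$ the induced character formula collapses to
\[
\Theta_{R_{\bbS}^{\bbG}(\theta)}(g') = \frac{1}{|\bbG'(\F_q)|} \sum_{y \in \bbG(\F_q)} \Theta_{R_{\bbS}^{\bbG'}(\theta)}({}^y g').
\]

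To evaluate each inner term I would invoke Proposition \ref{prop:Kaletha-CF}, applied to an extended Jordan decomposition $(g_y, t_y, \dot t_y, z_y, s_y, u_y)$ of ${}^y g'$, whose existence is guaranteed by Lemma \ref{lem:Jordan-exist}. Comparing this decomposition with the conjugate of $(g,t,\dot t, z, s, u)$ and invoking the uniqueness of the ordinary Jordan decomposition, one finds $u_y = {}^y u$ and $s_y = {}^y s \cdot zz_y^{-1}$ with $zz_y^{-1} \in \bbZ_{\bbG^\circ}$; consequently $\bbG^\circ_{s_y} = {}^y \bbG^\circ_s$, $|\bbG^\circ_{s_y}(\F_q)| = |\bbG^\circ_s(\F_q)|$, and $s_y z_y = {}^y s \cdot z$. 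Combining this with the conjugation equivariance of Green functions, $Q^{{}^y \bbG^\circ_s}_{\bbS^{x\circ}}({}^y u) = Q^{\bbG^\circ_s}_{\bbS^{xy\circ}}(u)$, the proposition rewrites $\Theta_{R_{\bbS}^{\bbG'}(\theta)}({}^y g')$ as the prefactor $\frac{|[\bbG^\circ]|}{|[\bbG']|} \cdot \frac{|\bbZ_{\bbG^\circ}(\F_q)|}{|\bbG^\circ_s(\F_q)|}$ times the sum over $x \in [\bbG']$ with ${}^{xy} s \in \bbS^\circ$ of $Q^{\bbG^\circ_s}_{\bbS^{xy\circ}}(u) \cdot \theta({}^{xy} s \cdot z)$.

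The final step is to reindex the double sum by $w = xy$. The summand $Q^{\bbG^\circ_s}_{\bbS^{w\circ}}(u) \cdot \theta({}^w s \cdot z)$ (subject to ${}^w s \in \bbS^\circ$) depends only on the class of $w$ in $[\bbG]$: centrality of $\bbZ_{\bbG}$ shows that both $\bbS^{w\circ}$ and ${}^w s$ are unaffected by $w \mapsto wz$ for $z \in \bbZ_{\bbG}(\F_q)$, while Lemma \ref{lem:Jordan} (3) guarantees $\theta({}^w s \cdot z)$ is well defined. A direct counting argument then shows that as $(y,x)$ ranges over $\bbG(\F_q) \times [\bbG']$, each class $w \in [\bbG]$ arises as $xy$ in exactly $|\bbG'(\F_q)|$ ways (for each $x \in [\bbG']$ there are $|\bbZ_\bbG(\F_q)|$ admissible $y$, and $|[\bbG']| \cdot |\bbZ_\bbG(\F_q)| = |\bbG'(\F_q)|$). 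The double sum therefore collapses to $|\bbG'(\F_q)| \sum_{w \in [\bbG],\ {}^w s \in \bbS^\circ} Q^{\bbG^\circ_s}_{\bbS^{w\circ}}(u) \cdot \theta({}^w s \cdot z)$; the factor $|\bbG'(\F_q)|$ cancels that in the denominator, yielding the stated formula.

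I expect the main technical obstacle to be the bookkeeping around the two central subgroups $\bbZ_{\bbG}$ and $\bbZ_{\bbG^\circ}$: the individual pieces of an extended Jordan decomposition for ${}^y g'$ are not mere conjugates of those for $g'$ (in particular, an element of $\bbS(\F_q)$ need not remain in $\bbS$ after conjugation by $y \in \bbG(\F_q)$, since $y$ need not normalize $\bbS^\circ$). The saving grace is that only the composite quantities $s_y z_y$, $u_y$, and $\bbG^\circ_{s_y}$ enter the character formula, and these do transform predictably; once this is verified the rest is a routine manipulation.
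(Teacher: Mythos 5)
Your argument is essentially the paper's own proof: induction from the normal subgroup $\bbG'(\F_q)$ gives the vanishing off $\bbG'(\F_q)$, Lemma \ref{lem:Jordan-conj} supplies exactly the transformation rules $u_y={}^y u$, $\bbG^\circ_{s_y}={}^y\bbG^\circ_s$, $s_yz_y={}^y s\cdot z$ that you rederive, and the double sum is reindexed to $[\bbG]$ in the same way. One caution: your Frobenius formula $\frac{1}{|\bbG'(\F_q)|}\sum_{y\in\bbG(\F_q)}\Theta_{R_{\bbS}^{\bbG'}(\theta)}({}^y g')$ is literally undefined in this setting, since $\bbG(\F_q)$ and $\bbZ_\bbG(\F_q)$ are typically infinite (e.g.\ in the $p$-adic application of Remark \ref{rem:examples}); you should instead sum over the finite coset space $\bbG'(\F_q)\backslash\bbG(\F_q)$, as the paper does, after which your counting argument becomes the clean bijection $(\bbG'(\F_q)\backslash\bbG(\F_q))\times[\bbG']\xrightarrow{\sim}[\bbG]$ with no infinite cancellation needed.
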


\begin{proof}
Since $R_{\bbS}^{\bbG}(\theta)=\Ind_{\bbG'(\F_{q})}^{\bbG(\F_{q})}(R_{\bbS}^{\bbG'}(\theta))$ and $\bbG'(\F_{q})$ is a normal subgroup of $\bbG(\F_{q})$, we have 
\[
\Theta_{R_{\bbS}^{\bbG}(\theta)}(g')
=
\sum_{y\in \bbG'(\F_{q})\backslash\bbG(\F_{q})}\dot{\Theta}_{R_{\bbS}^{\bbG'}(\theta)}({}^{y}g')
\]
by the Frobenius formula, where $\dot{\Theta}_{R_{\bbS}^{\bbG'}(\theta)}$ is the zero extension of $\Theta_{R_{\bbS}^{\bbG'}(\theta)}$ from $\bbG'(\F_{q})$ to $\bbG(\F_{q})$.
(Note that the index set is finite by the assumption that $[\bbG:\bbG']$ is finite.)
Thus we get the first assertion.

To check the second assertion, we assume that $g'$ belongs to $\bbG'(\F_{q})$.
We take an extended Jordan decomposition $(g,t,\dot{t},z,s,u)$ of $g'$.
For any $y\in\bbG(\F_{q})$, we take an extended Jordan decomposition $(g_{y},t_{y},\dot{t}_{y},z_{y},s_{y},u_{y})$ of ${}^{y}g'$.
Then, by Proposition \ref{prop:Kaletha-CF}, we have
\[
\dot{\Theta}_{R_{\bbS}^{\bbG'}(\theta)}({}^{y}g')
=
\Theta_{R_{\bbS}^{\bbG'}(\theta)}({}^{y}g')
=
\frac{|[\bbG^{\circ}]|}{|[\bbG']|}
\cdot\frac{|\bbZ_{\bbG^{\circ}}(\F_{q})|}{|\bbG^{\circ}_{s_{y}}(\F_{q})|}
\sum_{\begin{subarray}{c}x\in [\bbG'] \\ {}^{x}s_{y}\in\bbS^{\circ}\end{subarray}}
Q^{\bbG^{\circ}_{s_{y}}}_{\bbS^{x\circ}}(u_{y})\cdot \theta({}^{x}s_{y}z_{y}).
\]
By Lemma \ref{lem:Jordan-conj}, we have ${}^{y}s=s_{y}$ in $\overline{\bbG}$ and ${}^{y}u=u_{y}$.
In particular,
\begin{itemize}
\item
the condition ${}^{x}s_{y}\in\bbS^{\circ}$ is equivalent to the condition that ${}^{xy}s\in\bbS^{\circ}$,
\item
$\bbG_{s_{y}}^{\circ}$ is equal to $\bbG_{{}^{y}s}^{\circ}={}^{y}\bbG_{s}^{\circ}$, and
\item
$Q^{\bbG^{\circ}_{s_{y}}}_{\bbS^{x\circ}}(u_{y})=Q^{{}^{y}\bbG^{\circ}_{s}}_{\bbS^{x\circ}}({}^{y}u)=Q^{\bbG^{\circ}_{s}}_{\bbS^{xy\circ}}(u)$.
\end{itemize}
Therefore, by also noting that $s_{y}z_{y}={}^{y}sz$ (see the proof of Lemma \ref{lem:Jordan-conj}), we have
\begin{align*}
\sum_{y\in \bbG'(\F_{q})\backslash\bbG(\F_{q})}\dot{\Theta}_{R_{\bbS}^{\bbG'}(\theta)}({}^{y}g')
&=
\sum_{y\in \bbG'(\F_{q})\backslash\bbG(\F_{q})}
\frac{|[\bbG^{\circ}]|}{|[\bbG']|}
\cdot\frac{|\bbZ_{\bbG^{\circ}}(\F_{q})|}{|\bbG^{\circ}_{s_{y}}(\F_{q})|}
\sum_{\begin{subarray}{c}x\in [\bbG'] \\ {}^{x}s_{y}\in\bbS^{\circ}\end{subarray}}
Q^{\bbG^{\circ}_{s_{y}}}_{\bbS^{x\circ}}(u_{y})\cdot \theta({}^{x}s_{y}z_{y})\\
&=
\frac{|[\bbG^{\circ}]|}{|[\bbG']|}
\cdot\frac{|\bbZ_{\bbG^{\circ}}(\F_{q})|}{|\bbG^{\circ}_{s}(\F_{q})|}
\sum_{y\in \bbG'(\F_{q})\backslash\bbG(\F_{q})}
\sum_{\begin{subarray}{c}x\in [\bbG'] \\ {}^{xy}s\in\bbS^{\circ}\end{subarray}}
Q^{\bbG^{\circ}_{s}}_{\bbS^{xy\circ}}(u)\cdot \theta({}^{xy}sz)\\
&=
\frac{|[\bbG^{\circ}]|}{|[\bbG']|}
\cdot\frac{|\bbZ_{\bbG^{\circ}}(\F_{q})|}{|\bbG^{\circ}_{s}(\F_{q})|}
\sum_{\begin{subarray}{c}x\in [\bbG] \\ {}^{x}s\in\bbS^{\circ}\end{subarray}}
Q^{\bbG^{\circ}_{s}}_{\bbS^{x\circ}}(u)\cdot \theta({}^{x}sz).\qedhere
\end{align*}
\end{proof}

Corollary \ref{cor:Kaletha-CF} simplifies further in the special case that $g' \in \bbG'(\FF_q)$ is semisimple or regular semisimple in the sense of the following definition.

\begin{definition}\label{def:G' semisimple}
Let $g' \in \bbG'(\FF_q)$ be an element with an extended Jordan decomposition $(g,t,\dot{t},z,s,u)$.
We say that $g'$ is \textit{semisimple} if the unipotent part $u$ is trivial.
When $g'$ is semisimple, we say that $g'$ is \textit{regular semisimple} (resp.\ \textit{elliptic regular semisimple}) if $s$ is regular semisimple (resp.\ elliptic regular semisimple) in $\bbG^{\circ}$.
\end{definition}

Note that, for any semisimple $g'\in\bbG'(\F_q)$ with an extended Jordan decomposition $(g,t,\dot{t},z,s,u)$, we have $\bbG_{g'}^{\circ}=\bbG_{s}^{\circ}$.
Also note that $g' \in \bbG'(\FF_q)$ is regular semisimple if and only if the connected centralizer $\bbG_{g'}^\circ$ of $g'$ in $\bbG^\circ$ is an $\FF_q$-rational maximal torus of $\bbG^\circ$.
We let $\bbG'(\FF_q)_{\ss}$ denote the subset of semisimple elements of $\bbG'(\F_{q})$.

\begin{cor}\label{cor:Kaletha-CF-ss}
For any semisimple element $g' \in \bbG'(\FF_q)_{\ss}$, we have
    \begin{equation*}
        \Theta_{R_{\bbS}^{\bbG}(\theta)}(g') = \frac{(-1)^{r(\bbG_{g'}^\circ) - r(\bbS^{\circ})}}{|\bbG_{g'}^\circ(\FF_q)|_p \cdot |[\bbS]|} \sum_{\substack{x \in [\bbG] \\ {}^x g' \in \bbS(\FF_q)}} \theta({}^x g'),
    \end{equation*}
    where $r(\bbG^{\circ}_{g'})$ (resp.\ $r(\bbS^{\circ})$) denotes the split rank of $\bbG^{\circ}_{g'}$ (resp.\ $\bbS^{\circ}$) and $|\bbG_{g'}^\circ(\FF_q)|_p$ denotes the largest power of $p$ which divides $|\bbG_{g'}^\circ(\FF_q)|$.
\end{cor}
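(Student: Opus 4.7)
The approach is to specialize Corollary \ref{cor:Kaletha-CF} to the semisimple case and then carry out the resulting index bookkeeping. First, I fix an extended Jordan decomposition $(g,t,\dot{t},z,s,u)$ of $g'$; by Definition \ref{def:G' semisimple} the semisimplicity of $g'$ gives $u = 1$ and hence $g' = g\dot{t}\cdot z = sz$. Since $z$ is central, this immediately yields $\bbG_{g'}^{\circ} = \bbG_{s}^{\circ}$ and ${}^{x}sz = {}^{x}g'$ for every $x \in \bbG(\F_{q})$, so in particular $\theta({}^{x}sz) = \theta({}^{x}g')$.

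Next, I would evaluate the Green function at the identity using the standard Deligne--Lusztig dimension formula (see \cite[Corollary 7.10]{DL76}):
\[
Q^{\bbG_{s}^{\circ}}_{\bbS^{x\circ}}(1) = \frac{(-1)^{r(\bbG_{s}^{\circ}) - r(\bbS^{\circ})} \cdot |\bbG_{s}^{\circ}(\F_{q})|}{|\bbS^{\circ}(\F_{q})| \cdot |\bbG_{s}^{\circ}(\F_{q})|_{p}},
\]
where I use that $x \in \bbG(\F_{q})$ induces an $\F_{q}$-rational isomorphism $\bbS^{\circ} \xrightarrow{\sim} \bbS^{x\circ}$, so both the split rank and the cardinality of rational points coincide. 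Thus this factor is independent of $x$ and may be pulled out of the sum.

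The indexing condition in Corollary \ref{cor:Kaletha-CF} then needs to be rewritten: I claim that for $x \in [\bbG]$, the condition ${}^{x}s \in \bbS^{\circ}$ is equivalent to ${}^{x}g' \in \bbS(\F_{q})$. Since ${}^{x}g'$ is automatically $\F_{q}$-rational, this reduces to ${}^{x}g' \in \bbS$; using $g' = sz$ with $z \in \bbZ_{\bbG} \subset \bbS$, we have ${}^{x}g' \in \bbS \iff {}^{x}s \in \bbS$; and since ${}^{x}s \in \bbG^{\circ}$, this is further equivalent to ${}^{x}s \in \bbS \cap \bbG^{\circ} = \bbS^{\circ}$ (the last identity uses $\bbZ_{\bbG} \cap \bbG^{\circ} = \bbZ_{\bbG^{\circ}} \subset \bbS^{\circ}$).

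Finally, the constant out in front collapses by elementary bookkeeping. By Remark \ref{rem:rational-pts}, $\bbG'(\F_{q}) = \bbG^{\circ}(\F_{q}) \cdot \bbS(\F_{q})$ and $\bbG^{\circ}(\F_{q}) \cap \bbS(\F_{q}) = \bbS^{\circ}(\F_{q})$, so $|\bbG'(\F_{q})| = |\bbG^{\circ}(\F_{q})| \cdot |\bbS(\F_{q})|/|\bbS^{\circ}(\F_{q})|$; combining with $|[\bbS]| = |\bbS(\F_{q})|/|\bbZ_{\bbG}(\F_{q})|$, the prefactor in Corollary \ref{cor:Kaletha-CF} times the Green-function factor simplifies precisely to $(-1)^{r(\bbG_{g'}^{\circ}) - r(\bbS^{\circ})}/(|[\bbS]| \cdot |\bbG_{g'}^{\circ}(\F_{q})|_{p})$, as claimed. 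There is no conceptual obstacle here beyond Corollary \ref{cor:Kaletha-CF} itself; the only work is arithmetic simplification.
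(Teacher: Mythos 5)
Your proof is correct and follows essentially the same route as the paper's: specialize Corollary \ref{cor:Kaletha-CF} to $u=1$, evaluate $Q^{\bbG_{g'}^{\circ}}_{\bbS^{x\circ}}(1)$ via the Deligne--Lusztig formula for the Green function at the identity (the paper cites \cite[Theorem 7.1]{DL76} together with $\mathrm{St}_{\bbG_{g'}^{\circ}}(1)=|\bbG_{g'}^{\circ}(\F_q)|_p$), rewrite the indexing condition as ${}^{x}g'\in\bbS(\F_q)$, and collapse the constant; your final cardinality manipulation via $|\bbG'(\F_q)|=|\bbG^{\circ}(\F_q)|\cdot|\bbS(\F_q)|/|\bbS^{\circ}(\F_q)|$ is equivalent to the paper's observation that $[\bbS]/[\bbS^{\circ}]\to[\bbG']/[\bbG^{\circ}]$ is bijective. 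No gaps.
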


\begin{proof}
Let $g' \in \bbG'(\FF_q)$ be a semisimple element with an extended Jordan decomposition $(g,t,\dot{t},z,s,1)$.
Then Corollary \ref{cor:Kaletha-CF} gives
\[
\Theta_{R_{\bbS}^{\bbG}(\theta)}(g')
=
\frac{|[\bbG^{\circ}]|}{|[\bbG']|}
\cdot\frac{|\bbZ_{\bbG^{\circ}}(\F_{q})|}{|\bbG^{\circ}_{g'}(\F_{q})|}
\sum_{\begin{subarray}{c}x\in [\bbG] \\ {}^{x}s\in\bbS^{\circ}\end{subarray}}
Q^{\bbG^{\circ}_{g'}}_{\bbS^{x\circ}}(1)\cdot \theta({}^{x}g').
\]

    Let $x \in \bbG(\FF_q)$ be arbitrary. By Deligne--Lusztig's formula for the value of Green functions at the identity \cite[Theorem 7.1]{DL76}, we have
    \begin{equation*}
        Q_{\bbS^{x \circ}}^{\bbG_{g'}^\circ}(1) = (-1)^{r(\bbG_{g'}^\circ) - r(\bbS^{\circ})} \cdot \frac{|\bbG_{g'}^\circ(\FF_q)|}{\mathrm{St}_{\bbG_{g'}^{\circ}}(1)\cdot |\bbS^{\circ}(\FF_q)|}
    \end{equation*}
    where we use that $r(\bbS^{\circ}) = r(\bbS^{x\circ})$ and $|\bbS^\circ(\FF_q)| = |\bbS^{x \circ}(\FF_q)|$.
Here, $\mathrm{St}_{\bbG_{g'}^{\circ}}$ denotes the Steinberg character of the group $\bbG_{g'}^{\circ}$.
According to, for example, \cite[Proposition 6.4.4]{Car85}, $\mathrm{St}_{\bbG_{g'}^{\circ}}(1)$ is given by $|\bbG_{g'}^\circ(\FF_q)|_p$.
Hence we get
\[
\Theta_{R_{\bbS}^{\bbG}(\theta)}(g')
=
\frac{|[\bbG^{\circ}]|}{|[\bbG']|}
\cdot\frac{(-1)^{r(\bbG_{g'}^\circ) - r(\bbS^{\circ})}}{|\bbG_{g'}^\circ(\FF_q)|_p\cdot |[\bbS^{\circ}]|}
\sum_{\begin{subarray}{c}x\in [\bbG] \\ {}^{x}s\in\bbS^{\circ}\end{subarray}}
\theta({}^{x}g')
\]

Note that ${}^{x}s\in\bbS^{\circ}$ if and only if ${}^{x}g'\in\bbS(\F_{q})$.
Thus, to deduce the desired result, we only need to show
    \begin{equation*}
        \frac{|[\bbG^\circ]|}{|[\bbG']|} \cdot \frac{1}{|[\bbS^\circ]|} = \frac{1}{|[\bbS]|}
    \end{equation*}
But this holds since the natural map $[\bbS]/[\bbS^\circ] \to [\bbG']/[\bbG^\circ]$ is bijective.
\end{proof}

Let us observe that the above formula takes an even simpler form when $g'\in\bbG'(\F_{q})$ is regular semisimple.
For any regular semisimple element $g'\in\bbG'(\F_{q})$, we let $\bbS_{g'}^{\circ}$ denote the connected centralizer of $g'$ in $\bbG^{\circ}$, which is an $\F_{q}$-rational maximal torus of $\bbG^{\circ}$, and put
\begin{align*}
W_{\bbG(\F_{q})}(\bbS^{\circ}_{g'},\bbS^{\circ})
&\colonequals \bbS(\F_{q})\backslash N_{\bbG(\F_{q})}(\bbS^{\circ}_{g'},\bbS^{\circ})\\
&=\bbS(\F_{q})\backslash\{n\in\bbG(\F_{q}) \mid {}^{n}\bbS^{\circ}_{g'}=\bbS^{\circ} \}.
\end{align*}
If we put $\bbS_{g'}\colonequals \bbS_{g'}^{\circ}\bbZ_{\bbG}\subset\bbG'$ and define $W_{\bbG(\F_{q})}(\bbS_{g'},\bbS)$ in a similar way, then obviously we have $N_{\bbG(\F_{q})}(\bbS_{g'}^{\circ},\bbS^{\circ})=N_{\bbG(\F_{q})}(\bbS_{g'},\bbS)$ and $W_{\bbG(\F_{q})}(\bbS_{g'}^{\circ},\bbS^{\circ})=W_{\bbG(\F_{q})}(\bbS_{g'},\bbS)$.
We remark that any regular semisimple element $g'\in\bbG'(\F_{q})$ belongs to $\bbS_{g'}$.

\begin{cor}\label{cor:Kaletha-CF-reg}
For any regular semisimple element $g'\in\bbG'(\F_{q})_{\ss}$, we have
\[
\Theta_{R_{\bbS}^{\bbG}(\theta)}(g')
=
\sum_{n\in W_{\bbG(\F_{q})}(\bbS^{\circ}_{g'},\bbS^{\circ})}\theta({}^{n}g').
\]
\end{cor}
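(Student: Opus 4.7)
My plan is to directly specialize Corollary \ref{cor:Kaletha-CF-ss} to the regular semisimple case and simplify each factor. For $g' \in \bbG'(\F_q)_{\ss}$ regular semisimple, the connected centralizer $\bbG^\circ_{g'}$ equals the $\F_q$-rational maximal torus $\bbS^\circ_{g'}$ of $\bbG^\circ$ by definition, so Corollary \ref{cor:Kaletha-CF-ss} specializes to
\[
\Theta_{R_\bbS^\bbG(\theta)}(g') = \frac{(-1)^{r(\bbS^\circ_{g'}) - r(\bbS^\circ)}}{|\bbS^\circ_{g'}(\F_q)|_p \cdot |[\bbS]|} \sum_{\substack{x \in [\bbG] \\ {}^x g' \in \bbS(\F_q)}} \theta({}^x g').
\]
To deduce the corollary, I will verify three points: the factor $|\bbS^\circ_{g'}(\F_q)|_p$ is trivial, the sign is trivial, and the indexing set of the sum can be replaced by $W_{\bbG(\F_q)}(\bbS^\circ_{g'},\bbS^\circ)$ at the cost of a factor of $|[\bbS]|$.

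The first point is immediate since $\bbS^\circ_{g'}$ is a torus over $\F_q$ and hence has $\F_q$-points of order prime to $p$. For the sign, I will argue that whenever the sum is non-empty there exists some $n \in \bbG(\F_q)$ with ${}^n \bbS^\circ_{g'} = \bbS^\circ$, so the two maximal tori are $\F_q$-conjugate and have equal split ranks; when the sum is empty the sign is irrelevant and both sides of the claimed identity vanish.

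The main substantive step is to identify the indexing set. The claim is that for $x \in \bbG(\F_q)$ the condition ${}^x g' \in \bbS(\F_q)$ is equivalent to $x \in N_{\bbG(\F_q)}(\bbS^\circ_{g'}, \bbS^\circ)$. One direction is straightforward: if ${}^x \bbS^\circ_{g'} = \bbS^\circ$ then ${}^x g' \in {}^x \bbS_{g'} = \bbS$ and ${}^x g'$ is $\F_q$-rational. For the converse, I will use that $\bbS$ is abelian (since $\bbS^\circ$ is a torus and $\bbZ_\bbG$ is central), so ${}^x g' \in \bbS$ is centralized by $\bbS^\circ$; as ${}^x g'$ is regular semisimple, $\bbS^\circ$ is the unique maximal torus of $\bbG^\circ$ containing it, and comparison with the known connected centralizer ${}^x \bbS^\circ_{g'}$ of ${}^x g'$ forces ${}^x \bbS^\circ_{g'} = \bbS^\circ$. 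After this identification, I will observe that $\theta({}^x g')$ is constant on left $\bbS(\F_q)$-cosets of $N_{\bbG(\F_q)}(\bbS^\circ_{g'}, \bbS^\circ)$, since $\bbS$ is abelian and ${}^x g' \in \bbS$. Combined with $\bbZ_\bbG(\F_q) \subset \bbS(\F_q)$, this converts the sum over $N_{\bbG(\F_q)}(\bbS^\circ_{g'}, \bbS^\circ)/\bbZ_\bbG(\F_q)$ into $|[\bbS]|$ times the sum over $W_{\bbG(\F_q)}(\bbS^\circ_{g'}, \bbS^\circ)$, and this factor cancels the $|[\bbS]|$ in the denominator.

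I do not anticipate a substantial obstacle here; the sharpest conceptual point is the reduction of the Jordan-type indexing set $\{x \in [\bbG] : {}^x g' \in \bbS(\F_q)\}$ to the normalizer set, and this is forced by the uniqueness of the maximal torus through a regular semisimple element in $\bbG^\circ$. The remaining manipulations are routine bookkeeping.
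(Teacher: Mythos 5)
Your proposal is correct and follows essentially the same route as the paper: specialize Corollary \ref{cor:Kaletha-CF-ss}, kill the sign and the $p$-part using that $\bbG^\circ_{g'}$ is a torus $\F_q$-conjugate to $\bbS^\circ$ whenever the sum is nonempty, and identify $\{x : {}^x g' \in \bbS(\F_q)\}$ with $N_{\bbG(\F_q)}(\bbS^\circ_{g'},\bbS^\circ)$ via the connected-centralizer argument. You merely spell out the coset-counting step (constancy of $\theta({}^x g')$ on $\bbS(\F_q)$-cosets, cancelling the $|[\bbS]|$) that the paper leaves implicit.
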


\begin{proof}
By Corollary \ref{cor:Kaletha-CF-ss}, we have
\[
\Theta_{R_{\bbS}^{\bbG}(\theta)}(g') = \frac{(-1)^{r(\bbG_{g'}^\circ) - r(\bbS^{\circ})}}{|\bbG_{g'}^\circ(\FF_q)|_p \cdot |[\bbS]|} \sum_{\substack{x \in [\bbG] \\ {}^x g' \in \bbS(\FF_q)}} \theta({}^x g').
\]
Since $g'$ is regular semisimple, for any $x\in\bbG(\F_{q})$, ${}^{x}g'\in\bbS(\F_{q})$ if and only if ${}^{x}\bbS^{\circ}_{g'}=\bbS^{\circ}$.
Moreover, whenever such an element $x$ exists, $\bbG^{\circ}_{g'} \,(=\bbS_{g'}^{\circ})$ is $\F_{q}$-rationally isomorphic to $\bbS^{\circ}$.
In particular, we have $r(\bbG_{g'}^{\circ})=r(\bbS^{\circ})$.
Furthermore, since $\bbG^{\circ}_{g'}$ is a torus, $|\bbG^{\circ}_{g'}(\F_{q})|$ is prime-to-$p$, hence $|\bbG^{\circ}_{g'}(\F_{q})|_{p}$ equals $1$.
Thus we get the desired formula.
\end{proof}

\subsection{Scalar product formula}\label{subsec:scalar product}

Let $\omega$ be a unitary character of $\bbZ_{\bbG}(\FF_q)$. Recall the inner product on $\bbC[\bbG(\FF_q)]_\omega$ defined in Notation \ref{not:finite}.

We will need a truncated inner product, which we define now. 
Let $\overline{\bbG}(\F_{q})_{\bullet}$ be a subset of $\overline{\bbG}(\F_{q})_{\ast}$ which is invariant under $\bbG(\F_{q})$-conjugacy.
We put 
\[
[\bbG']_{\bullet}
\colonequals 
(\pr_{\ss}\circ\Jord)^{-1}(\overline{\bbG}(\F_{q})_{\bullet}),
\]
where $\pr_{\ss}$ denotes the first projection $(s,u)\mapsto s$.
We have:
\begin{equation*}
    \begin{tikzcd}
        {[\bbG']} \ar{r}[above]{\Jord} \ar{r}[below]{1:1} & \{(s,u) \mid s \in \overline{\bbG}(\FF_q)_*, \, u \in \bbG_s^\circ(\FF_q)_{\unip}\} \\
        {[\bbG']_\bullet} \ar[hookrightarrow]{u} \ar{r}[above]{\Jord} \ar{r}[below]{1:1} & \{(s,u) \mid s \in \overline{\bbG}(\FF_q)_\bullet, \, u \in \bbG_s^\circ(\FF_q)_{\unip}\} \ar[hookrightarrow]{u}
    \end{tikzcd}
\end{equation*}

We introduce a truncated inner product over $[\bbG']_{\bullet}$ as follows:
\[
\langle f_{1},f_{2}\rangle_{\bullet}
\colonequals 
\frac{1}{|[\bbG]|}
\sum_{g\in [\bbG']_{\bullet}} f_1(g) \cdot \overline{f_2(g)},
\]
where $f_{1},f_{2}\in\bbC[\bbG(\FF_q)]_\omega$.

\begin{prop}\label{prop:innerprod}
For any $(\bbS_1, \theta_1),(\bbS_2,\theta_2) \in \tilde \cT_\omega$, we have
\[
\langle R_{\bbS_1}^{\bbG}(\theta_1),R_{\bbS_2}^{\bbG}(\theta_2)\rangle_{\bullet}
=
\frac{1}{|[\bbS_{1}]|\cdot|[\bbS_{2}]|}
\sum_{\begin{subarray}{c}s\in\overline{\bbS}_{1}(\F_{q})_{\bullet} \\ n\in N_{\bbG(\F_{q})}(\bbS_{1},\bbS_{2})/\bbZ_{\bbG}(\F_{q})\end{subarray}}
\theta_{1}(\dot{s})\overline{\theta_{2}({}^{n}\dot{s})},
\]
where $\overline{\bbS}_{1}(\F_{q})_{\bullet}$ denotes $\overline{\bbS}_{1}(\F_{q})\cap\overline{\bbG}(\F_{q})_{\bullet}$ and $\dot{s}$ is any representative of $s\in\overline{\bbS}_{1}(\F_{q})_{\bullet}$ in $\bbS_{1}(\F_{q})$.
(Note that such $\dot{s}$ can be taken since $\overline{\bbG}(\F_{q})_{\bullet}\subset\overline{\bbG}(\F_{q})_{\ast}$ and also that the summands are independent of the choice of $\dot{s}$ since $\theta_{1}|_{\bbZ_{\bbG}(\F_{q})}=\theta_{2}|_{\bbZ_{\bbG}(\F_{q})}=\omega$.)
\end{prop}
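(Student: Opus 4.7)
The plan is to follow the strategy of Deligne--Lusztig (\cite[\S6--7]{DL76}) for proving scalar product formulae of Deligne--Lusztig representations, adapted to the disconnected setting with extended Jordan decomposition and truncated inner product.

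First, by Corollary \ref{cor:Kaletha-CF}, each character $\Theta_{R_{\bbS_i}^{\bbG}(\theta_i)}$ vanishes on $\bbG(\F_{q})\setminus\bbG'(\F_{q})$, so the sum defining $\langle\cdot,\cdot\rangle_{\bullet}$ effectively restricts to $g'\in[\bbG']_{\bullet}$. Using the Jordan-decomposition bijection of Lemma \ref{lem:Jordan-bij}, I reindex this as a double sum over $s\in\overline{\bbG}(\F_{q})_{\bullet}$ and $u\in\bbG^{\circ}_{s}(\F_{q})_{\unip}$. Substituting the character formula of Corollary \ref{cor:Kaletha-CF} for both factors (for the same fixed extended Jordan decomposition of $g'$) and swapping the order of summation so that the $u$-sum is innermost, I reduce the problem to a quadruple sum over $(s,u,x,y)$ with $x,y\in[\bbG]$ satisfying ${}^{x}s\in\bbS_{1}^{\circ}$ and ${}^{y}s\in\bbS_{2}^{\circ}$, in which the innermost $u$-sum is
\[
\sum_{u\in\bbG^{\circ}_{s}(\F_{q})_{\unip}} Q^{\bbG^{\circ}_{s}}_{\bbS_{1}^{x\circ}}(u)\,\overline{Q^{\bbG^{\circ}_{s}}_{\bbS_{2}^{y\circ}}(u)}.
\]

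Next I apply the orthogonality relation for Green functions (\cite[Theorem 6.9 and Corollary 7.6]{DL76}) inside the connected reductive group $\bbG^{\circ}_{s}$: this sum vanishes unless $\bbS_{1}^{x\circ}$ and $\bbS_{2}^{y\circ}$ (both maximal tori of $\bbG^{\circ}_{s}$, since each contains $s$ and is a maximal torus of $\bbG^{\circ}$) are $\bbG^{\circ}_{s}(\F_{q})$-conjugate, in which case it is proportional to $|N_{\bbG^{\circ}_{s}(\F_{q})}(\bbS_{1}^{x\circ},\bbS_{2}^{y\circ})|$. This $\bbG^{\circ}_{s}(\F_{q})$-conjugacy condition can be expanded as a further sum over $w\in\bbG^{\circ}_{s}(\F_{q})$ with ${}^{w}\bbS_{1}^{x\circ}=\bbS_{2}^{y\circ}$; setting $n\colonequals ywx^{-1}$, this element then lies in $N_{\bbG(\F_{q})}(\bbS_{1},\bbS_{2})$, and the character factor rearranges as $\theta_{1}(\dot{s})\overline{\theta_{2}({}^{n}\dot{s})}$ where $\dot{s}\colonequals{}^{x}s\cdot z\in\bbS_{1}(\F_{q})$ is a rational lift of $s$ (via Lemma \ref{lem:Jordan}(3)).

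Finally, I reparametrize: as $(s,x,w)$ ranges over the admissible triples with $s\in\overline{\bbG}(\F_{q})_{\bullet}$, the element $\dot{s}$ ranges over $\bbS_{1}(\F_{q})$-lifts of elements in $\overline{\bbS}_{1}(\F_{q})_{\bullet}$, and $n$ ranges over $N_{\bbG(\F_{q})}(\bbS_{1},\bbS_{2})/\bbZ_{\bbG}(\F_{q})$. Cancelling the various combinatorial factors --- using in particular the identity $|[\bbG^{\circ}]|/|[\bbG']|=|[\bbS^{\circ}]|/|[\bbS]|$, which follows from $\bbG'/\bbG^{\circ}\cong\bbS/\bbS^{\circ}$ upon taking $\F_{q}$-points as in Remark \ref{rem:rational-pts} --- yields the claimed formula. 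The main obstacle is precisely this last combinatorial bookkeeping step: one must carefully track the sizes of various finite groups and their quotients by $\bbZ_{\bbG}(\F_{q})$, verify bijectivity of the reparametrization (in particular the redundancy between the choices of $x$ and of the lift $\dot{s}$), and confirm that the summand $\theta_{1}(\dot{s})\overline{\theta_{2}({}^{n}\dot{s})}$ is independent of the choice of $\dot{s}$, which crucially uses that $\theta_{1}$ and $\theta_{2}$ share the same $\bbZ_{\bbG}$-central character $\omega$.
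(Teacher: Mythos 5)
Your proposal is correct and follows essentially the same route as the paper's proof: restrict to $[\bbG']_{\bullet}$ via Corollary \ref{cor:Kaletha-CF}, reindex by the extended Jordan decomposition bijection, apply the Green function orthogonality relation of Deligne--Lusztig in $\bbG^{\circ}_{s}$, and then reparametrize the triples $(s,x,w)$ exactly as in the proof of \cite[Lemma 6.10]{DL76}, using $|[\bbG^{\circ}]|/|[\bbG']|=|[\bbS_{i}^{\circ}]|/|[\bbS_{i}]|$ and the $|[\bbG]|$-to-one surjection $(s,x)\mapsto {}^{x}s$ onto $\overline{\bbS}_{1}(\F_{q})_{\bullet}$ to cancel the combinatorial factors. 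The bookkeeping you flag as the main obstacle is indeed where the work lies, and your outline of it matches the paper's computation.
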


\begin{proof}
For each (representative of) $g'\in[\bbG']_{\bullet}$, we fix an extended Jordan decomposition $(g,t,\dot{t},z,s,u)$.
Then, by Corollary \ref{cor:Kaletha-CF}, 
\[
\Theta_{R_{\bbS_i}^{\bbG}(\theta_i)}(g')
=
\frac{|[\bbG^{\circ}]|}{|[\bbG']|}
\cdot\frac{|\bbZ_{\bbG^{\circ}}(\F_{q})|}{|\bbG^{\circ}_{s}(\F_{q})|}
\sum_{\begin{subarray}{c}x\in [\bbG] \\ {}^{x}s\in\bbS_{i}^{\circ}\end{subarray}}
Q^{\bbG^{\circ}_{s}}_{\bbS_{i}^{x\circ}}(u)\cdot \theta_{i}({}^{x}sz)
\]
for each $i=1,2$.
Hence $\langle R_{\bbS_1}^{\bbG}(\theta_1),R_{\bbS_2}^{\bbG}(\theta_2)\rangle_{\bullet}$ is given by
\[
\frac{1}{|[\bbG]|}
\cdot\frac{|[\bbG^{\circ}]|^{2}}{|[\bbG']|^{2}}
\sum_{g'\in[\bbG']_{\bullet}}
\frac{|\bbZ_{\bbG^{\circ}}(\F_{q})|^{2}}{|\bbG_{s}^{\circ}(\F_{q})|^{2}}
\sum_{\begin{subarray}{c}x_{1}\in [\bbG] \\ {}^{x_{1}}s\in\bbS_{1}^{\circ}\end{subarray}}
Q^{\bbG^{\circ}_{s}}_{\bbS_{1}^{x_{1}\circ}}(u)\cdot \theta_{1}({}^{x_{1}}sz)
\sum_{\begin{subarray}{c}x_{2}\in [\bbG] \\ {}^{x_{2}}s\in\bbS_{2}^{\circ}\end{subarray}}
\overline{Q^{\bbG^{\circ}_{s}}_{\bbS_{2}^{x_{2}\circ}}(u)}\cdot\overline{\theta_{2}({}^{x_{2}}sz)}.
\]
Since $[\bbG']_{\bullet}$ is bijective to $\{(s,u)\mid s\in\overline{\bbG}(\F_{q})_{\bullet},\, u\in\bbG^{\circ}_{s}(\F_{q})_{\unip}\}$ under the map $\Jord$, we see that this equals
\[
\frac{|\bbZ_{\bbG^{\circ}}(\F_{q})|^{2}}{|[\bbG]|}
\cdot\frac{|[\bbG^{\circ}]|^{2}}{|[\bbG']|^{2}}
\sum_{s\in\overline{\bbG}(\F_{q})_{\bullet}}
\frac{1}{|\bbG_{s}^{\circ}(\F_{q})|^{2}}
\sum_{\begin{subarray}{c}x_{1},x_{2}\in[\bbG]\\ {}^{x_{i}}s\in\overline{\bbS}_{i}\end{subarray}}
\theta_{1}({}^{x_{1}}\dot{s})\overline{\theta_{2}({}^{x_{2}}\dot{s})}
\sum_{u\in\bbG_{s}^{\circ}(\F_{q})_{\unip}}
Q_{\bbS_{1}^{x_{1}\circ}}^{\bbG_{s}^{\circ}}(u)\overline{Q_{\bbS_{2}^{x_{2}\circ}}^{\bbG_{s}^{\circ}}(u)}.
\]
Here $\dot{s}$ is any representative of $s\in\overline{\bbG}(\F_{q})_{\bullet}$ in $\bbG(\F_{q})$.
We recall the orthogonality relation for Green functions of $\bbG_{s}^{\circ}$ (\cite[Theorem 6.9]{DL76}):
\[
\frac{1}{|\bbG_{s}^{\circ}(\F_{q})|}
\sum_{u\in\bbG_{s}^{\circ}(\F_{q})_{\unip}}
Q_{\bbS_{1}^{x_{1}\circ}}^{\bbG_{s}^{\circ}}(u)\cdot\overline{Q_{\bbS_{2}^{x_{2}\circ}}^{\bbG_{s}^{\circ}}(u)}
=
\frac{|N_{\bbG_{s}^{\circ}(\F_{q})}(\bbS_{1}^{x_{1}\circ},\bbS_{2}^{x_{2}\circ})|}{|\bbS_{1}^{x_{1}\circ}(\F_{q})|\cdot|\bbS_{2}^{x_{2}\circ}(\F_{q})|}.
\]
By noting that $|\bbS_{i}^{x_{i}\circ}(\F_{q})|=|\bbS_{i}^{\circ}(\F_{q})|$ and $|[\bbG^{\circ}]|/|[\bbG']|=|[\bbS_{i}^{\circ}]|/|[\bbS_{i}]|$, we get
\[
\langle R_{\bbS_1}^{\bbG}(\theta_1),R_{\bbS_2}^{\bbG}(\theta_2)\rangle_{\bullet}
=
\frac{1}{|[\bbG]|\cdot|[\bbS_{1}]|\cdot|[\bbS_{2}]|}
\sum_{s\in\overline{\bbG}(\F_{q})_{\bullet}}
\frac{1}{|\bbG_{s}^{\circ}(\F_{q})|}
\sum_{\begin{subarray}{c}x_{1},x_{2}\in[\bbG]\\ {}^{x_{i}}s\in\overline{\bbS}_{i} \\ n_{1}\in N_{\bbG_{s}^{\circ}(\F_{q})}(\bbS_{1}^{x_{1}\circ},\bbS_{2}^{x_{2}\circ})\end{subarray}}
\theta_{1}({}^{x_{1}}\dot{s})\overline{\theta_{2}({}^{x_{2}}\dot{s})}.
\]

Here, as in the proof of \cite[Lemma 6.10]{DL76}, we note that the set 
\[
\{(x_{1},x_{2},n_{1})\in[\bbG]\times[\bbG]\times N_{\bbG_{s}^{\circ}(\F_{q})}(\bbS_{1}^{x_{1}\circ},\bbS_{2}^{x_{2}\circ})
\mid
{}^{x_{1}}s\in\overline{\bbS}_{1}, {}^{x_{2}}s\in\overline{\bbS}_{2}\}
\]
is bijective to the set
\[
\{(x_{1},n,n_{1})\in [\bbG]\times (N_{\bbG(\F_{q})}(\bbS_{1},\bbS_{2})/\bbZ_{\bbG}(\F_{q}))\times\bbG_{s}^{\circ}(\F_{q})
\mid
{}^{x_{1}}s\in\overline{\bbS}_{1}\}
\]
by the map
\[
(x_{1},x_{2},n_{1})\mapsto (x_{1},x_{2}n_{1}x_{1}^{-1},n_{1}).
\]
(note that $N_{\bbG(\F_{q})}(\bbS_{1}^{\circ},\bbS_{2}^{\circ})=N_{\bbG(\F_{q})}(\bbS_{1},\bbS_{2})$).
Hence
\begin{align*}
\sum_{\begin{subarray}{c}x_{1},x_{2}\in[\bbG]\\ {}^{x_{i}}s\in\overline{\bbS}_{i} \\ n_{1}\in N_{\bbG_{s}^{\circ}(\F_{q})}(\bbS_{1}^{x_{1}\circ},\bbS_{2}^{x_{2}\circ})\end{subarray}}
\theta_{1}({}^{x_{1}}\dot{s})\overline{\theta_{2}({}^{x_{2}}\dot{s})}
&=
\sum_{\begin{subarray}{c}x_{1}\in[\bbG];\, {}^{x_{1}}s\in\overline{\bbS}_{1} \\ n\in N_{\bbG(\F_{q})}(\bbS_{1},\bbS_{2})/\bbZ_{\bbG}(\F_{q})\\ n_{1}\in \bbG_{s}^{\circ}(\F_{q})\end{subarray}}
\theta_{1}({}^{x_{1}}\dot{s})\overline{\theta_{2}({}^{nx_{1}n_{1}^{-1}}\dot{s})}\\
&=
|\bbG_{s}^{\circ}(\F_{q})|
\sum_{\begin{subarray}{c}x_{1}\in[\bbG];\, {}^{x_{1}}s\in\overline{\bbS}_{1} \\ n\in N_{\bbG(\F_{q})}(\bbS_{1},\bbS_{2})/\bbZ_{\bbG}(\F_{q})\end{subarray}}
\theta_{1}({}^{x_{1}}\dot{s})\overline{\theta_{2}({}^{nx_{1}}\dot{s})}.
\end{align*}
Therefore we have
\[
\langle R_{\bbS_1}^{\bbG}(\theta_1),R_{\bbS_2}^{\bbG}(\theta_2)\rangle_{\bullet}
=
\frac{1}{|[\bbG]|\cdot|[\bbS_{1}]|\cdot|[\bbS_{2}]|}
\sum_{s\in\overline{\bbG}(\F_{q})_{\bullet}}
\sum_{\begin{subarray}{c}x_{1}\in[\bbG];\, {}^{x_{1}}s\in\overline{\bbS}_{1} \\ n\in N_{\bbG(\F_{q})}(\bbS_{1},\bbS_{2})/\bbZ_{\bbG}(\F_{q})\end{subarray}}
\theta_{1}({}^{x_{1}}\dot{s})\overline{\theta_{2}({}^{nx_{1}}\dot{s})}.
\]
Finally, we note that the association $(s,x_{1})\mapsto {}^{x_{1}}s$ gives a well-defined map
\[
\{(s,x_{1})\in\overline{\bbG}(\F_{q})_{\bullet}\times[\bbG]\mid {}^{x_{1}}s\in\overline{\bbS}_{1}\}
\rightarrow
\overline{\bbS}_{1}(\F_{q})_{\bullet}.
\]
Furthermore, this map is surjective and the order of each fiber equals $|[\bbG]|$ since the set $\overline{\bbG}(\F_{q})_{\bullet}$ is invariant under $\bbG(\F_{q})$-conjugation.
Therefore we get
\[
\langle R_{\bbS_1}^{\bbG}(\theta_1),R_{\bbS_2}^{\bbG}(\theta_2)\rangle_{\bullet}
=
\frac{1}{|[\bbS_{1}]|\cdot|[\bbS_{2}]|}
\sum_{\begin{subarray}{c}s\in\overline{\bbS}_{1}(\F_{q})_{\bullet} \\ n\in N_{\bbG(\F_{q})}(\bbS_{1},\bbS_{2})/\bbZ_{\bbG}(\F_{q})\end{subarray}}
\theta_{1}(\dot{s})\overline{\theta_{2}({}^{n}\dot{s})}.\qedhere
\]
\end{proof}

\begin{defn}[in general position]\label{defn:in-gen-pos}
We say that a character $\theta$ of $\bbS(\F_{q})$ is \textit{in general position} if the stabilizer of $\theta$ in $W_{\bbG(\FF_q)}(\bbS)$ is trivial.
\end{defn}

\begin{cor}\label{cor:scalar-prod}
    For any $(\bbS_1,\theta_1),(\bbS_2,\theta_2) \in \tilde \cT$, we have
    \begin{equation*}
        \langle R_{\bbS_1}^{\bbG}(\theta_1), R_{\bbS_2}^{\bbG}(\theta_2) \rangle = |\{w \in W_{\bbG(\FF_q)}(\bbS_1, \bbS_2) \mid \theta_{1} = \theta_{2}^{w}\}|.
    \end{equation*} 
    In particular, $R^{\bbG}_{\bbS}(\theta)$ is irreducible up to sign if $\theta$ is in general position.
\end{cor}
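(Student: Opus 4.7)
The plan is to deduce this from Proposition \ref{prop:innerprod} applied with $\overline{\bbG}(\F_q)_\bullet = \overline{\bbG}(\F_q)_*$, after a short reduction. First, I would dispose of the case $\theta_1|_{\bbZ_\bbG(\F_q)} \neq \theta_2|_{\bbZ_\bbG(\F_q)}$: both sides then vanish, the left by the definition of the inner product in Notation \ref{not:finite}(e)--(f), and the right because $\theta_2^w|_{\bbZ_\bbG(\F_q)} = \theta_2|_{\bbZ_\bbG(\F_q)}$ for every $w \in W_{\bbG(\F_q)}(\bbS_1,\bbS_2)$ by centrality of $\bbZ_\bbG$. Assuming henceforth that both characters share a common $\bbZ_\bbG$-central character $\omega$, I would apply Lemma \ref{lem:twist} to twist by a character of $\bbG(\F_q)$ and reduce to the case that $\omega$ is unitary; both sides of the desired formula are invariant under such a twist.

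Next, Corollary \ref{cor:Kaletha-CF} ensures that $\Theta_{R^\bbG_{\bbS_i}(\theta_i)}$ vanishes on $\bbG(\F_q) \smallsetminus \bbG'(\F_q)$, so the standard inner product coincides with the truncated inner product $\langle\cdot,\cdot\rangle_*$ over $[\bbG']_* = [\bbG']$. Proposition \ref{prop:innerprod} then yields
\begin{equation*}
\langle R^\bbG_{\bbS_1}(\theta_1), R^\bbG_{\bbS_2}(\theta_2)\rangle = \frac{1}{|[\bbS_1]|\cdot|[\bbS_2]|} \sum_{s \in \overline{\bbS}_1(\F_q)_*}\,\sum_{n \in N_{\bbG(\F_q)}(\bbS_1,\bbS_2)/\bbZ_\bbG(\F_q)} \theta_1(\dot s)\cdot\overline{\theta_2({}^n\dot s)}.
\end{equation*}
The key identification now is that the natural map $[\bbS_1] \to \overline{\bbS}_1(\F_q)$ is injective (since $\bbZ_\bbG \subset \bbS_1$, its kernel at $\F_q$-points is $\bbZ_\bbG(\F_q)$, killed in $[\bbS_1]$) with image exactly $\overline{\bbS}_1(\F_q)_*$: any $\dot s \in \bbS_1(\F_q)$ has a trivial-unipotent extended Jordan decomposition in $\bbG'(\F_q)$ projecting to its class in $\overline{\bbS}_1(\F_q)$, and conversely by definition of $\overline{\bbS}_1(\F_q)_*$ every element admits such a lift $\dot s$. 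Hence the outer sum becomes a sum over the finite abelian group $[\bbS_1]$.

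For each fixed $n$, the function $t \mapsto \theta_1(t)\cdot\overline{\theta_2({}^n t)}$ is a well-defined character of $[\bbS_1]$ (using the common central character $\omega$ and the centrality of $\bbZ_\bbG$), so character orthogonality on $[\bbS_1]$ evaluates the inner sum to $|[\bbS_1]|$ when $\theta_1 = \theta_2^n$ and to $0$ otherwise. The remaining task is to convert the count over $n \in N_{\bbG(\F_q)}(\bbS_1,\bbS_2)/\bbZ_\bbG(\F_q)$ into one over $W_{\bbG(\F_q)}(\bbS_1,\bbS_2) = \bbS_2(\F_q)\backslash N_{\bbG(\F_q)}(\bbS_1,\bbS_2)$: the condition $\theta_1 = \theta_2^n$ is invariant under left multiplication of $n$ by $\bbS_2(\F_q)$ (since $\bbS_2$ is abelian and $n\bbS_1 n^{-1} = \bbS_2$), so descends to $W_{\bbG(\F_q)}(\bbS_1,\bbS_2)$, and each fiber of the surjection $N_{\bbG(\F_q)}(\bbS_1,\bbS_2)/\bbZ_\bbG(\F_q) \twoheadrightarrow W_{\bbG(\F_q)}(\bbS_1,\bbS_2)$ has cardinality $|[\bbS_2]|$. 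The prefactor $\tfrac{1}{|[\bbS_1]|\cdot|[\bbS_2]|}$ then cancels against the two factors accumulated from the inner sum and fiber count, yielding exactly $|\{w \in W_{\bbG(\F_q)}(\bbS_1,\bbS_2) : \theta_1 = \theta_2^w\}|$, as claimed.

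The ``in particular'' statement is then immediate: when $\theta$ is in general position, the main formula gives $\langle R^\bbG_\bbS(\theta), R^\bbG_\bbS(\theta)\rangle = |\Stab_{W_{\bbG(\F_q)}(\bbS)}(\theta)| = 1$, and expanding $R^\bbG_\bbS(\theta) = \sum n_i\rho_i$ in the integral Grothendieck group forces exactly one $n_i = \pm 1$, hence $R^\bbG_\bbS(\theta) = \pm\rho$ for a single irreducible $\rho$. The main (modest) obstacle in the whole plan is the quotient bookkeeping in the penultimate step: verifying that the various factors of $|[\bbS_1]|$ and $|[\bbS_2]|$ conspire to cancel and that the two descriptions $N/\bbZ_\bbG(\F_q)$ and $\bbS_2(\F_q)\backslash N$ fit together correctly. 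Everything else is a direct application of Proposition \ref{prop:innerprod} together with the standard orthogonality of characters of a finite abelian group.
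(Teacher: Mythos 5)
Your proof is correct and follows essentially the same route as the paper's: reduce to a common unitary $\bbZ_{\bbG}$-central character via Lemma \ref{lem:twist}, apply Proposition \ref{prop:innerprod} with $\bullet = \ast$ (identifying $\overline{\bbS}_1(\F_q)_\ast$ with $[\bbS_1]$), and finish with character orthogonality on $[\bbS_1]$ together with the fiber count for $N_{\bbG(\F_q)}(\bbS_1,\bbS_2)/\bbZ_{\bbG}(\F_q) \twoheadrightarrow W_{\bbG(\F_q)}(\bbS_1,\bbS_2)$. The only cosmetic difference is that the paper first reduces to $\bbS_1 = \bbS_2$ by conjugation before doing the bookkeeping, whereas you carry it out for general $\bbS_1,\bbS_2$ directly; both are fine.
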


\begin{proof}
By definition, if $\theta_1|_{\bbZ_{\bbG}(\FF_q)} \neq \theta_2|_{\bbZ_{\bbG}(\FF_q)}$, then both sides of the desired identity vanish. Hence we may assume both $\theta_1, \theta_2$ restrict to the same character $\omega$ on $\bbZ_{\bbG}(\FF_q)$.
Furthermore, by noting that $R_{\bbS}^{\bbG}(\theta)\otimes\chi\cong R_{\bbS}^{\bbG}(\theta\otimes\chi)$ for any character $\chi\colon\bbG(\F_{q})\rightarrow\C^{\times}$ (this can be checked by looking at the character formula of $R_{\bbS}^{\bbG}(\theta)$; Corollary \ref{cor:Kaletha-CF}), we may assume that $\omega$ is unitary by Lemma \ref{lem:twist}.
Then we may apply Proposition \ref{prop:innerprod}. Since we take $\bullet=\ast$ in this setting, $\overline{\bbS}_{1}(\F_{q})_{\bullet}$ is given by the image of $\bbS_{1}(\F_{q})$ in $\overline{\bbS}_{1}(\F_{q})$.
Hence
\[
\langle R_{\bbS_1}^{\bbG}(\theta_1),R_{\bbS_2}^{\bbG}(\theta_2)\rangle
=\langle R_{\bbS_1}^{\bbG}(\theta_1),R_{\bbS_2}^{\bbG}(\theta_2)\rangle_{\ast}
=
\frac{1}{|[\bbS_{1}]|\cdot|[\bbS_{2}]|}
\sum_{\begin{subarray}{c}s\in[\bbS_{1}] \\ n\in N_{\bbG(\F_{q})}(\bbS_{1},\bbS_{2})/\bbZ_{\bbG}(\F_{q})\end{subarray}}
\theta_{1}(s)\overline{\theta_{2}({}^{n}s)}.
\]
When $N_{\bbG(\F_{q})}(\bbS_{1},\bbS_{2})$ is empty, this equals zero.
Let us consider the case where $N_{\bbG(\F_{q})}(\bbS_{1},\bbS_{2})$ is nonempty.
In this case, by taking conjugation, we may assume that $\bbS_{1}=\bbS_{2}$.
Then we have
\[
\langle R_{\bbS_1}^{\bbG}(\theta_1),R_{\bbS_1}^{\bbG}(\theta_2)\rangle
=
\frac{1}{|[\bbS_{1}]|^{2}}
\sum_{\begin{subarray}{c}s\in[\bbS_{1}] \\ n\in N_{\bbG(\F_{q})}(\bbS_{1})/\bbZ_{\bbG}(\F_{q})\end{subarray}}
\theta_{1}(s)\overline{\theta_{2}({}^{n}s)}.
\]
As we have
\[
\sum_{s\in[\bbS_{1}]}
\theta_{1}(s)\overline{\theta_{2}({}^{n}s)}
=
\begin{cases}
0&\text{if $\theta_{1}\neq\theta_{2}^{n}$,}\\
|[\bbS_{1}]|&\text{if $\theta_{1}=\theta_{2}^{n}$,}
\end{cases}
\]
we get
\begin{align*}
\langle R_{\bbS_1}^{\bbG}(\theta_1),R_{\bbS_1}^{\bbG}(\theta_2)\rangle
&=
\frac{1}{|[\bbS_{1}]|}
\cdot|\{n\in N_{\bbG(\F_{q})}(\bbS_{1})/\bbZ_{\bbG}(\F_{q})\mid \theta_{1}=\theta_{2}^{n}\}|\\
&=
|\{w\in W_{\bbG(\F_{q})}(\bbS_{1})\mid \theta_{1}=\theta_{2}^{w}\}|.
\end{align*}
\end{proof}

\subsection{Computations on semisimple elements and exhaustion}\label{subsec:semisimple}

In this section, we reproduce the results of \cite[Section 7]{DL76} in our context. The proofs follow exactly the same strategy as \textit{op.\ cit.}, with modifications needed due to the fact that $\bbG(\FF_q)$ is a possibly infinite group. For example, compare our $\mu_s,\mu_s'$ in Section \ref{subsubsec:proof of rho ss} below with the $\mu$ and $\mu'$ after \cite[Proposition 7.5]{DL76}.

\begin{prop}\label{prop:rho ss}
    For any $\rho \in \cR(\bbG(\FF_q))$ and any semisimple element $s \in \bbG'(\FF_q)$, 
    \begin{equation*}
        \Theta_\rho(s) = \frac{1}{|\bbG_{s}^{\circ}(\F_{q})|_p} \sum_{\bbS^\circ \subset \bbG_{s}^\circ} \sum_{\theta \in \bbS(\FF_q)^\wedge} \theta(s) \cdot (-1)^{r(\bbG_s^\circ) - r(\bbS^\circ)} \cdot \langle \rho, R_{\bbS}^{\bbG}(\theta) \rangle,
    \end{equation*}
    where the first sum is over $\F_{q}$-rational maximal tori of $\bbG_{s}^{\circ}$.
    In particular, if $s \in \bbG'(\FF_q)$ is regular semisimple, then it is contained in a unique torus $\bbS$ and
    \begin{equation*}
        \Theta_\rho(s) = \sum_{\theta \in \bbS(\FF_q)^\wedge} \theta(s) \cdot \langle \rho, R_{\bbS}^{\bbG}(\theta) \rangle
    \end{equation*}
    Here, note that the sum over $\theta \in \bbS(\FF_q)^\wedge$ is in fact finite since $\langle \rho, R_{\bbS}^{\bbG}(\theta) \rangle$ vanishes unless $\theta|_{\bbZ_{\bbG}(\F_{q})}$ coincides with the $\bbZ_{\bbG}$-central character of an irreducible constituent of $\rho$ (recall that $\bbS(\F_{q})/\bbZ_{\bbG}(\F_{q})$ is finite).
\end{prop}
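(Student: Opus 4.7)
The plan is to adapt the proof of \cite[Proposition 7.5]{DL76} to the present disconnected and possibly infinite setting. The main new work is the careful tracking of the central subgroup $\bbZ_\bbG$.

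\textbf{Step 1: Reductions.} By linearity of both sides in $\rho \in \cR(\bbG(\FF_q))$, it suffices to assume $\rho$ is irreducible. By Lemma \ref{lem:twist}, after twisting $\rho$ (and the $R_\bbS^\bbG(\theta)$ correspondingly), we may further assume the $\bbZ_\bbG$-central character $\omega$ of $\rho$ is unitary. Since $\langle \rho, R_\bbS^\bbG(\theta)\rangle = 0$ unless $\theta|_{\bbZ_\bbG(\FF_q)} = \omega$, the formally infinite sum over $\theta \in \bbS(\FF_q)^\wedge$ reduces to the finite sum over the torsor $\bbS(\FF_q)^\wedge_\omega$, which has size $|[\bbS]|$.

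\textbf{Step 2: Expansion and Fourier inversion.} Expand
\begin{equation*}
\langle \rho, R_\bbS^\bbG(\theta)\rangle = \frac{1}{|[\bbG]|}\sum_{g' \in [\bbG']} \Theta_\rho(g')\,\overline{\Theta_{R_\bbS^\bbG(\theta)}(g')},
\end{equation*}
using that $\Theta_{R_\bbS^\bbG(\theta)}$ vanishes off $\bbG'(\FF_q)$ by Corollary \ref{cor:Kaletha-CF}. Fix an extended Jordan decomposition $(g_0, t, \dot t, z, s', u)$ for each $g' \in [\bbG']$ (Lemma \ref{lem:Jordan-exist}), substitute Corollary \ref{cor:Kaletha-CF} into the RHS, and interchange sums. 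The key step is Fourier inversion on the finite abelian group $[\bbS]$: for fixed $g'$ and $x \in [\bbG]$ with ${}^x s' \in \bbS^\circ$,
\begin{equation*}
\sum_{\theta \in \bbS(\FF_q)^\wedge_\omega} \theta(s)\,\overline{\theta({}^x s' z)} = |[\bbS]|\cdot \omega(s\cdot ({}^x s' z)^{-1})\cdot \mathbf{1}_{s \,\equiv\, {}^x s' z \,\bmod\, \bbZ_\bbG(\FF_q)}.
\end{equation*}
Since $\Theta_\rho$ transforms by $\omega$ under $\bbZ_\bbG(\FF_q)$, the $\omega$-factor gets absorbed into $\Theta_\rho(g')$, and only those $g'$ whose semisimple part is $\bbG(\FF_q)$-conjugate to $s$ in $\overline{\bbG}(\FF_q)$ survive.

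\textbf{Step 3: Green function orthogonality and conclusion.} The remaining inner sum over $\FF_q$-rational maximal tori $\bbS^\circ \subset \bbG_s^\circ$ assembles into
\begin{equation*}
\sum_{\bbS^\circ \subset \bbG_s^\circ} \frac{(-1)^{r(\bbG_s^\circ)-r(\bbS^\circ)}}{|\bbS^\circ(\FF_q)|}\, Q_{\bbS^\circ}^{\bbG_s^\circ}(u),
\end{equation*}
which by the standard orthogonality of Green functions (equivalent to the Steinberg character identity in the connected reductive group $\bbG_s^\circ$) equals $|\bbG_s^\circ(\FF_q)|_p/|\bbG_s^\circ(\FF_q)|$ if $u=1$ and vanishes otherwise. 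This annihilates contributions from non-semisimple $g'$. The surviving terms range over $g' \in [\bbG']$ that are $\bbG(\FF_q)$-conjugate to $s$, and the class function property of $\Theta_\rho$ collapses them to $\Theta_\rho(s)$ after the normalization factors $|\bbZ_{\bbG^\circ}(\FF_q)|$, $|[\bbG^\circ]|$, $|[\bbG']|$, $|[\bbG]|$ from Corollary \ref{cor:Kaletha-CF} cancel against the prefactor $1/|\bbG_s^\circ(\FF_q)|_p$. The regular semisimple case is immediate: $\bbG_s^\circ = \bbS^\circ$ is a torus, so there is a unique term, the sign is trivial, and $|\bbS^\circ(\FF_q)|_p = 1$.

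\textbf{Main obstacle.} The chief difficulty is bookkeeping: ensuring that all sums remain well-defined despite $\bbG(\FF_q)$ being possibly infinite (handled by systematically working modulo $\bbZ_\bbG(\FF_q)$), keeping track of the central character $\omega$ during Fourier inversion, and verifying that the intricate web of normalization factors from Corollary \ref{cor:Kaletha-CF} and the Green function orthogonality cancel exactly as needed.
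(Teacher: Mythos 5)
Your overall architecture differs from the paper's. The paper follows the Deligne--Lusztig strategy verbatim: it introduces the two functions $\mu_s$ and $\mu_s'$ (the candidate linear combination of the $\Theta_{R_\bbS^\bbG(\theta)}$, and the counting function $\sum_{x}\omega(g^{-1}xsx^{-1})^{-1}$ over $x\in[\bbG]$ with $g^{-1}xsx^{-1}\in\bbZ_\bbG(\FF_q)$), proves $\mu_s=\mu_s'$ by showing $\langle\mu_s,\mu_s\rangle=\langle\mu_s,\mu_s'\rangle=\langle\mu_s',\mu_s'\rangle$ using only the scalar product formula (Corollary \ref{cor:scalar-prod}/Proposition \ref{prop:innerprod}) and the value of $\Theta_{R_\bbS^\bbG(\theta)}$ at \emph{semisimple} elements (Corollary \ref{cor:Kaletha-CF-ss}), and then concludes by computing $\langle\Theta_\rho,\mu_s'\rangle=\Theta_\rho(s)$. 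At no point does it need to evaluate anything at a nontrivial unipotent element.

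Your route, by contrast, hinges in Step 3 on a pointwise identity for Green functions at a fixed unipotent $u$, summed over all maximal tori of $\bbG_s^\circ$, and this is where there is a genuine gap. First, the identity you invoke is not ``the standard orthogonality of Green functions'': \cite[Theorem 6.9]{DL76} is the \emph{row} relation $\frac{1}{|H(\FF_q)|}\sum_u Q_{\bbT}(u)\overline{Q_{\bbT'}(u)}=|N(\bbT,\bbT')(\FF_q)|/(|\bbT(\FF_q)||\bbT'(\FF_q)|)$ (a sum over $u$ for fixed tori), whereas you need the \emph{column} relation (a sum over tori for fixed $u$). The column relation does not follow formally from the row relation, since the matrix $(Q_{\bbT}(u))$ is not square and the Green functions need not span all class functions supported on unipotent elements. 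The correct column identity is $\sum_{\bbT}(-1)^{r(H)-r(\bbT)}|\bbT(\FF_q)|\,Q_{\bbT}^H(u)=|H(\FF_q)|_p\,|H(\FF_q)|\,\delta_{u,1}$ --- note $|\bbT(\FF_q)|$ in the \emph{numerator}, which is what your Fourier inversion actually produces via the factor $|[\bbS]|$; the expression you wrote, with $|\bbS^\circ(\FF_q)|$ in the denominator, is false (a direct check for $\GL_2$ at $u=1$ already fails). More seriously, the standard proof of the column identity is precisely the statement that $\frac{1}{|H(\FF_q)|_p}\sum_{\bbT,\theta}(-1)^{r(H)-r(\bbT)}R_\bbT^\theta$ is the regular representation, i.e.\ the dimension formula --- which is the $s=1$ case of the very proposition you are proving. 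Invoking it here is circular unless you supply an independent proof, which you do not. This is exactly the obstruction that the $\mu_s=\mu_s'$ inner-product argument is designed to circumvent.
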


Before proving Proposition \ref{prop:rho ss}, we note that the Proposition \ref{prop:rho ss} in the case $s = 1$ gives a dimension formula: for any $\rho \in \cR(\bbG(\FF_q))$,
\begin{equation*}
    \dim \rho = \frac{1}{|\bbG^\circ(\FF_q)|_p} \sum_{\bbS^\circ \subset \bbG^\circ} \sum_{\theta \in \bbS(\FF_q)^\wedge} (-1)^{r(\bbG^\circ) - r(\bbS^\circ)} \cdot \langle \rho, R_\bbS^\bbG(\theta) \rangle.
\end{equation*}
It immediately follows that:

\begin{cor}\label{cor:exhaustion}
    For any irreducible $\rho \in \cR(\bbG(\FF_q))$, there exists $(\bbS,\theta)\in\tilde{\cT}$ such that $\rho$ is a subrepresentation of $R_{\bbS}^{\bbG}(\theta)$.
\end{cor}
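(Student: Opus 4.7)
The plan is to read off the conclusion directly from the dimension formula, which is obtained by setting $s = 1$ in Proposition \ref{prop:rho ss}. Since $\bbG_1^\circ = \bbG^\circ$ and $\Theta_\rho(1) = \dim \rho$, that specialization yields
\[
\dim \rho = \frac{1}{|\bbG^\circ(\F_q)|_p} \sum_{\bbS^\circ \subset \bbG^\circ} \sum_{\theta \in \bbS(\F_q)^\wedge} (-1)^{r(\bbG^\circ) - r(\bbS^\circ)} \langle \rho, R_\bbS^\bbG(\theta) \rangle.
\]
As $\rho$ is irreducible, the left-hand side is a positive integer, so the right-hand side is nonzero.

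The next step is to justify that this double sum is effectively a finite sum, so that nonvanishing of the whole forces nonvanishing of at least one summand. For the outer sum, I would invoke the standard fact that a connected reductive group over $\F_q$ carries only finitely many $\F_q$-rational maximal tori, since the variety of maximal tori is of finite type over $\F_q$. For the inner sum, I would appeal directly to the observation recorded immediately after the statement of Proposition \ref{prop:rho ss}: $\langle \rho, R_\bbS^\bbG(\theta)\rangle$ vanishes unless $\theta|_{\bbZ_{\bbG}(\F_q)}$ coincides with the $\bbZ_{\bbG}$-central character of some irreducible constituent of $\rho$, and there are only finitely many such $\theta$ because $\bbS(\F_q)/\bbZ_{\bbG}(\F_q)$ is finite.

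Conclude by extracting at least one pair $(\bbS,\theta) \in \tilde\cT$ with $\langle \rho, R_\bbS^\bbG(\theta)\rangle \neq 0$; this is exactly the statement that $\rho$ occurs with nonzero multiplicity as a constituent of the virtual representation $R_\bbS^\bbG(\theta)$, which is the sense of ``subrepresentation'' appropriate in the Deligne--Lusztig setting. There is no substantive obstacle here: this is genuinely an immediate corollary. The only non-formal ingredient is the finiteness bookkeeping needed to pass from a nonzero total to a nonzero individual term, and that bookkeeping is supplied by the two observations above in the possibly disconnected, possibly non-finite-type setting of $\bbG$.
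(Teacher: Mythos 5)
Your proof is correct and is essentially the paper's own argument: the authors likewise specialize Proposition \ref{prop:rho ss} to $s=1$ to obtain the dimension formula and then observe that positivity of $\dim\rho$ forces some term $\langle \rho, R_{\bbS}^{\bbG}(\theta)\rangle$ to be nonzero, with the finiteness of the inner sum justified exactly as you do by the remark following that proposition. No gaps.
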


\subsubsection{Proof of Proposition \ref{prop:rho ss}}\label{subsubsec:proof of rho ss}

Observe first that to prove the proposition, it suffices to prove that for any $\omega$, $\rho \in \cR(\bbG(\FF_q))_\omega$, and any semisimple element $s \in \bbG'(\FF_q)$, the following equation holds:
\begin{equation}\label{eq:omega ss}
    \Theta_\rho(s) = \frac{1}{|\bbG_{s}^{\circ}(\F_{q})|_p} \sum_{\bbS^\circ \subset \bbG_{s}^\circ} \sum_{\theta \in \bbS(\FF_q)^\wedge_\omega} \theta(s) \cdot (-1)^{r(\bbG_s^\circ) - r(\bbS^\circ)} \cdot \langle \rho, R_{\bbS}^{\bbG}(\theta) \rangle.
\end{equation}
Moreover, by Lemma \ref{lem:twist}, we may assume that $\omega$ is unitary.

Let $s \in \bbG'(\FF_q)$ be semisimple. For $g \in \bbG(\FF_q)$, define
\begin{align*}
    \mu_s(g) &\colonequals \frac{1}{|\bbG_s^\circ(\FF_q)|_p} \sum_{\bbS^\circ \subset \bbG_s^\circ} \sum_{\theta \in \bbS(\FF_q)^\wedge_\omega} \theta(s)^{-1} (-1)^{r(\bbG_s^\circ) - r(\bbS^\circ)} \Theta_{R_{\bbS}^{\bbG}(\theta)}(g) \\
    \mu_s'(g) &\colonequals \sum_{\substack{x \in [\bbG]\\ g^{-1} xsx^{-1} \in \bbZ_{\bbG}(\FF_q)}} \omega(g^{-1}xsx^{-1})^{-1}.
\end{align*}
It is easy to check that $\mu_s, \mu_s' \in \bbC[\bbG(\FF_q)]_\omega$.

\begin{prop}\label{prop:mu-mu'}
We have $\mu_s = \mu_s'$.
\end{prop}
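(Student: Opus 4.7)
The plan is to adapt Deligne--Lusztig's proof of \cite[Proposition 7.5]{DL76} to the disconnected setting, using Corollary~\ref{cor:Kaletha-CF} in place of the classical character formula. First I would check that both $\mu_s$ and $\mu_s'$ vanish on $\bbG(\F_q)\smallsetminus\bbG'(\F_q)$: for $\mu_s$ this is the first clause of Corollary~\ref{cor:Kaletha-CF}, and for $\mu_s'$ it follows from the normality of $\bbG'$ in $\bbG$, since $xsx^{-1}\in\bbG'(\F_q)$ forces $g\in xsx^{-1}\bbZ_\bbG(\F_q)\subset\bbG'(\F_q)$. So I fix $g'\in\bbG'(\F_q)$ with extended Jordan decomposition $(g_0,t,\dot t,z,\sigma,u)$ and evaluate both sides there.

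Substituting Corollary~\ref{cor:Kaletha-CF} into the definition of $\mu_s(g')$ and interchanging the order of summation, the inner sum
\[
\sum_{\theta\in\bbS(\F_q)^\wedge_\omega}\theta(s)^{-1}\theta({}^x\sigma z)
\]
is evaluated by Fourier orthogonality for characters of $\bbS(\F_q)$ with fixed restriction $\omega$ on $\bbZ_\bbG(\F_q)$: it vanishes unless $s^{-1}\cdot{}^x\sigma z\in\bbZ_\bbG(\F_q)$, in which case it equals $|[\bbS]|\cdot\omega(s^{-1}\cdot{}^x\sigma z)$. The indicator condition forces ${}^x\sigma\equiv sz^{-1}\pmod{\bbZ_\bbG(\F_q)}$, whence $\bbG_{{}^x\sigma}^\circ=\bbG_s^\circ$, so that $\bbS^{x\circ}\subset\bbG_\sigma^\circ$ corresponds by $x$-conjugation to $\bbS^\circ\subset\bbG_s^\circ$; by equivariance of Green functions, $Q_{\bbS^{x\circ}}^{\bbG_\sigma^\circ}(u)=Q_{\bbS^\circ}^{\bbG_s^\circ}({}^xu)$.

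The remaining sum over $\bbS^\circ\subset\bbG_s^\circ$ is then collapsed by the standard Deligne--Lusztig Green-function concentration identity on the connected reductive group $\bbG_s^\circ$ (a direct consequence of the orthogonality relation \cite[Theorem 6.9]{DL76}), which concentrates the sum at ${}^xu=1$ and thereby forces $u=1$. Hence $\mu_s(g')$ vanishes unless $g'$ is semisimple. In that case, after tracking the normalization factors and changing variable $x\mapsto x^{-1}$ (which preserves $[\bbG]$), the remaining $x$-summation becomes exactly the sum defining $\mu_s'(g')$, weighted by the matching $\omega$-factor.

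The main obstacle will be careful bookkeeping of the normalization constants ($|[\bbS]|$, $|[\bbG^\circ]|/|[\bbG']|$, $|\bbZ_{\bbG^\circ}(\F_q)|$, the $p$-part $|\bbG_s^\circ(\F_q)|_p$) arising from the disconnected structure, together with ensuring that the Fourier inversion on the possibly infinite character set $\bbS(\F_q)^\wedge_\omega$---a torsor over the finite dual group $\widehat{[\bbS]}$---produces the expected constant. The essential simplification is that all Green-function manipulations take place on the connected reductive group $\bbG_s^\circ$, so the classical Deligne--Lusztig machinery applies without modification.
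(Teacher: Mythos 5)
Your route is genuinely different from the paper's. The paper (following Deligne--Lusztig) never attempts a pointwise evaluation of $\mu_s$: it computes the three inner products $\langle \mu_s',\mu_s'\rangle$, $\langle \mu_s,\mu_s'\rangle$, $\langle \mu_s,\mu_s\rangle$ (using Corollary \ref{cor:Kaletha-CF-ss}, Proposition \ref{prop:innerprod} with $\bullet=\ast$, and the orthogonality of Green functions), shows all three equal $\sum_{y}\omega(s^{-1}y^{-1}sy)$, and concludes $\langle \mu_s-\mu_s',\mu_s-\mu_s'\rangle=0$. Each of those inner products is accessible because pairing against Deligne--Lusztig characters only ever sees the \emph{uniform} projection of a class function, and that is exactly what orthogonality controls.

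The gap in your proposal is the ``Green-function concentration identity.'' After your Fourier inversion the torus-dependent weight is $|[\bbS]|$, which is proportional to $|\bbS^\circ(\FF_q)|$, so the identity you need on the connected group $\bbH=\bbG_s^\circ$ is that $\sum_{\bbT^\circ\subset\bbH}(-1)^{r(\bbH)-r(\bbT^\circ)}\,|\bbT^\circ(\FF_q)|\,Q_{\bbT^\circ}^{\bbH}(v)$ vanishes for every nontrivial unipotent $v$. This is \emph{not} a direct consequence of \cite[Theorem 6.9]{DL76}: orthogonality shows only that this sum minus the appropriate multiple of $\delta_{v,1}$ is orthogonal to all Green functions, and one cannot conclude pointwise vanishing unless $\delta_{v,1}$ (equivalently, the regular representation of $\bbH(\FF_q)$ restricted to unipotent elements) lies in the span of the Green functions, i.e., is uniform. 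Uniform functions are in general a proper subspace of the class functions supported on the unipotent set, and in \cite{DL76} the uniformity of the regular representation is a \emph{consequence} of the semisimple character formula — whose proof is precisely the $\mu=\mu'$ argument. So, as justified, your key step is circular; the norm trick exists exactly to avoid it. (The argument could be repaired by importing the uniformity of $\mathrm{reg}_{\bbG_s^\circ(\FF_q)}$ for the connected group $\bbG_s^\circ$ as a black box from the classical theory, but that is a much stronger input than orthogonality, and one would then be assuming the connected-group analogue of the statement being generalized rather than reproving it.)
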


\begin{proof}
    We calculate $\langle \mu_s', \mu_s' \rangle$, $\langle \mu_s, \mu_s' \rangle$, and $\langle \mu_s, \mu_s \rangle$:

    We have:
    \begin{align*}
        \langle \mu_s', \mu_s' \rangle
        &= \frac{1}{|[\bbG]|} \sum_{g \in [\bbG]} \sum_{\substack{x \in [\bbG] \\ g^{-1}xsx^{-1} \in \bbZ_{\bbG}(\FF_q)}} \sum_{\substack{x' \in [\bbG] \\ g^{-1} x'sx'{}^{-1} \in \bbZ_{\bbG}(\FF_q)}} \omega(g^{-1} xsx^{-1})^{-1} \cdot \omega(g^{-1} x'sx'{}^{-1}).
    \end{align*}
For any fixed $x\in[\bbG]$, only an element $g$ of the $\bbZ_{\bbG}$-coset of $xsx^{-1}$ can satisfy the condition $g^{-1}xsx^{-1} \in \bbZ_{\bbG}(\FF_q)$.
Thus we have
\[
\langle \mu_s', \mu_s' \rangle
=
\frac{1}{|[\bbG]|}
\sum_{x\in[\bbG]}
\sum_{\begin{subarray}{c}x'\in[\bbG] \\ xs^{-1}x^{-1}x'sx'{}^{-1}\in\bbZ_{\bbG}(\F_{q})\end{subarray}}
\omega(xs^{-1}x^{-1} x'sx'{}^{-1}).
\]
Note that, as $xs^{-1}x^{-1}x'sx'{}^{-1}\in\bbZ_{\bbG}(\F_{q})$, we have $xs^{-1}x^{-1} x'sx'{}^{-1}=s^{-1}x^{-1}x'sx'{}^{-1}x$.
Thus, by putting $y\colonequals x'^{-1}x$, we get
\begin{align}\label{eq:mu'mu'}
\langle \mu_s', \mu_s' \rangle
=
\frac{1}{|[\bbG]|}
\sum_{x\in[\bbG]}
\sum_{\begin{subarray}{c}y\in[\bbG] \\ s^{-1}y^{-1}sy\in\bbZ_{\bbG}(\F_{q})\end{subarray}}
\omega(s^{-1}y^{-1}sy)=
\sum_{\begin{subarray}{c}y\in[\bbG] \\ s^{-1}y^{-1}sy\in\bbZ_{\bbG}(\F_{q})\end{subarray}}
\omega(s^{-1}y^{-1}sy).
\end{align}

    We have:
    \begin{align*}
        \langle \mu_s, \mu_s' \rangle 
        &= \frac{1}{|[\bbG]| \cdot |\bbG_s^\circ(\FF_q)|_p} \sum_{g \in [\bbG]} \sum_{(\bbS^\circ,\theta,x)} \theta(s)^{-1} (-1)^{r(\bbG_s^\circ) - r(\bbS^\circ)} \Theta_{R_{\bbS}^{\bbG}(\theta)}(g) \omega(g^{-1} xsx^{-1})
    \end{align*}
    where the sum ranges over $(\bbS^\circ,\theta,x)$ where $\bbS^\circ$ is an $\F_{q}$-rational maximal torus of  $\bbG_{s}^\circ$ and $\theta \in \bbS(\FF_q)^\wedge_\omega$ and $x \in [\bbG]$ such that $g^{-1} xsx^{-1} \in \bbZ_{\bbG}(\FF_q)$. If $g^{-1} xsx^{-1} = z$, then 
    \begin{equation*}
        \Theta_{R_{\bbS}^{\bbG}(\theta)}(g) \omega(g^{-1} xsx^{-1}) = \Theta_{R_{\bbS}^{\bbG}(\theta)}(xsx^{-1} z^{-1}) \omega(z) = \Theta_{R_{\bbS}^{\bbG}(\theta)}(xsx^{-1}) = \Theta_{R_{\bbS}^{\bbG}(\theta)}(s).
    \end{equation*}
    Hence we obtain
    \begin{align*}
        \langle \mu_s, \mu_s' \rangle 
        &= \frac{1}{|[\bbG]| \cdot |\bbG_s^\circ(\FF_q)|_p} \sum_{g \in [\ccl(s)]} \sum_{(\bbS^\circ, \theta,x)} \theta(s)^{-1} (-1)^{r(\bbG_s^\circ) - r(\bbS^\circ)} \Theta_{R_{\bbS}^{\bbG}(\theta)}(s) \\
        &= \frac{1}{|\bbG_s^\circ(\FF_q)|_p} \sum_{(\bbS^\circ, \theta)} \theta(s)^{-1} (-1)^{r(\bbG_s^\circ) - r(\bbS^\circ)} \Theta_{R_{\bbS}^{\bbG}(\theta)}(s),
    \end{align*}
    where we use that there are $|[\ccl(s)]|$ choices for $g\in[\bbG]$ ($[\ccl(s)]$ denotes the image of the $\bbG(\F_{q})$-conjugacy class of $s$ in $[\bbG]$), $|[\bbG_s(\FF_q)]|$ choices for $x$, and $|[\bbG]| = |[\ccl(s)]| \cdot |[\bbG_s(\FF_q)]|$. By Corollary \ref{cor:Kaletha-CF-ss}, we have 
    \begin{equation*}
        \Theta_{R_{\bbS}^{\bbG}(\theta)}(s) = \frac{(-1)^{r(\bbG_s^\circ) - r(\bbS^\circ)}}{|\bbG_s^\circ(\FF_q)|_p \cdot |[\bbS]|} \sum_{\substack{x \in [\bbG] \\ x^{-1}sx \in \bbS(\FF_q)}} \theta(x^{-1} s x).
    \end{equation*}
    Therefore
    \begin{align}
    	\nonumber
        \langle \mu_s, \mu_s' \rangle 
        &= \frac{1}{|\bbG_s^\circ(\FF_q)|_p^2} \sum_{(\bbS^\circ, \theta)} \frac{1}{|[\bbS]|} \sum_{\substack{x \in [\bbG] \\ x^{-1} s x \in \bbS(\FF_q)}} \theta(s^{-1}x^{-1} s x) \\
        \nonumber
        &= \frac{1}{|\bbG_s^\circ(\FF_q)|_p^2} \sum_{\bbS^\circ} \frac{|\bbS(\FF_q)^\wedge_\omega|}{|[\bbS]|} \sum_{\substack{x \in [\bbG] \\ s^{-1} x^{-1} s x \in \bbZ_{\bbG}(\FF_q)}} \omega(s^{-1}x^{-1} s x) \\
        \label{eq:mumu'}
        &= \sum_{\substack{x \in [\bbG] \\ s^{-1} x^{-1} s x \in \bbZ_{\bbG}(\FF_q)}} \omega(s^{-1}x^{-1} s x),
    \end{align}
where
\begin{itemize}
\item
the second equality holds since $\sum_{\theta\in\bbS(\F_{q})^{\wedge}_{\omega}}\theta(s^{-1}x^{-1}sx)$ equals $0$ if $s^{-1}x^{-1}sx\notin\bbZ_{\bbG}(\F_{q})$ and equals $|\bbS(\F_{q})^{\wedge}_{\omega}|\cdot\omega(s^{-1}x^{-1}sx)$ if $s^{-1}x^{-1}sx\in\bbZ_{\bbG}(\F_{q})$, 
\item
the last equality holds since $|\bbS(\FF_q)^\wedge_\omega| = |[\bbS]|$ and the number of $\F_{q}$-rational maximal tori $\bbS^\circ$ in $\bbG_s^\circ$ is exactly $|\bbG_s^\circ(\FF_q)|_p^2$ (see \cite[Theorem 3.4.1]{Car85}).
\end{itemize}

    We have
    \begin{equation*}
        \langle \mu_s, \mu_s \rangle 
        = \frac{1}{|\bbG_s^\circ(\FF_q)|_p^2} \sum_{(\bbS^\circ,\theta), (\bbS'{}^\circ,\theta')} \theta(s)^{-1} \theta'(s) (-1)^{r(\bbS^\circ) + r(\bbS'{}^\circ)} \langle R_{\bbS}^{\bbG}(\theta), R_{\bbS'}^{\bbG}(\theta') \rangle,
    \end{equation*}
    where $(\bbS^\circ,\theta)$ ranges over all pairs consisting of an $\F_{q}$-rational maximal torus $\bbS^\circ$ of $\bbG_s^\circ$ and a character $\theta \in \bbS(\FF_q)^\wedge_\omega$, and similarly for $(\bbS'{}^\circ, \theta')$. 
By Corollary \ref{cor:scalar-prod}, $r(\bbS^\circ)=r(\bbS^{\prime\circ})$ whenever $\langle R_{\bbS}^{\bbG}(\theta), R_{\bbS'}^{\bbG}(\theta') \rangle\neq0$.
Hence, by Proposition \ref{prop:innerprod}, we get
\[
\langle \mu_s, \mu_s \rangle 
=
\frac{1}{|\bbG_s^\circ(\FF_q)|_p^2} \sum_{(\bbS, \theta), (\bbS'{}^\circ,\theta')} \theta(s)^{-1} \theta'(s) \sum_{\substack{t \in [\bbS] \\ n \in N_{\bbG(\FF_q)}(\bbS,\bbS')/\bbZ_{\bbG}(\FF_q)}} \frac{\theta(t) \cdot \theta'^{n}(t)^{-1}}{|[\bbS]| \cdot |[\bbS']|}.
\]
By noting that $|[\bbS]|=|[\bbS']|$ whenever $N_{\bbG(\FF_q)}(\bbS,\bbS')\neq\varnothing$ and that the sum of $\theta(t) \cdot \theta'^{n}(t)^{-1}$ over $t\in[\bbS]$ is not zero only if $\theta'^{n}=\theta$ (and is given by $|[\bbS]|$ in this case), we get
\[
\langle \mu_s, \mu_s \rangle 
=
\frac{1}{|\bbG_s^\circ(\FF_q)|_p^2}
\sum_{\begin{subarray}{c}\bbS^{\circ},\bbS^{\prime\circ} \\ n\in N_{\bbG(\F_{q})}(\bbS,\bbS')/\bbZ_{\bbG}(\F_{q}) \end{subarray}}
\sum_{\theta\in\bbS(\FF_q)^\wedge_\omega}
\theta(s)^{-1}\theta(n^{-1}sn)\cdot\frac{1}{|[\bbS]|}.
\]
Note that we have a bijection
\[
\{g\in\bbG(\F_{q}) \mid g^{-1}sg\in\bbS\}/N_{\bbG(\F_{q})}(\bbS)
\xrightarrow{1:1}
\{\bbS^{\prime\circ}\subset\bbG_{s}^{\circ} \mid N_{\bbG(\F_{q})}(\bbS,\bbS')\neq\varnothing\}
\]
given by $g\mapsto {}^{g}\bbS^{\circ}$ and that $N_{\bbG(\F_{q})}(\bbS,\bbS')=gN_{\bbG(\F_{q})}(\bbS)$ when $\bbS^{\prime\circ}={}^{g}\bbS^{\circ}$.
Hence we get
\begin{align}
\nonumber
\langle \mu_s, \mu_s \rangle
&=\frac{1}{|\bbG_s^\circ(\FF_q)|_p^2}
\sum_{\bbS^{\circ}} \frac{1}{|[\bbS]|}
\sum_{\begin{subarray}{c}g\in \bbG(\F_{q})/N_{\bbG(\F_{q})}(\bbS) \\ g^{-1}sg\in \bbS\end{subarray}}
\sum_{n\in gN_{\bbG(\F_{q})}(\bbS)/\bbZ_{\bbG}(\F_{q})}
\sum_{\theta\in\bbS(\FF_q)^\wedge_\omega} \theta(s^{-1}n^{-1}sn)\\
\nonumber
&=\frac{1}{|\bbG_s^\circ(\FF_q)|_p^2}
\sum_{\bbS^{\circ}} \frac{1}{|[\bbS]|}
\sum_{\begin{subarray}{c} n\in [\bbG] \\ n^{-1}sn\in \bbS\end{subarray}}
\sum_{\theta\in\bbS(\FF_q)^\wedge_\omega} \theta(s^{-1}n^{-1}sn)\\
\label{eq:mumu}
&=  \sum_{\substack{x \in [\bbG] \\ s^{-1} x^{-1} s x \in \bbZ_{\bbG}(\FF_q)}} \omega(s^{-1} x^{-1} s x),
\end{align}
where the last equality holds by exactly the reasons in the bullet points following \eqref{eq:mumu'}.    

Combining Equations \eqref{eq:mu'mu'}, \eqref{eq:mumu'}, \eqref{eq:mumu}, we have the desired equalities $\langle \mu_s', \mu_s' \rangle = \langle \mu_s, \mu_s' \rangle = \langle \mu_s, \mu_s \rangle$, which implies $\langle \mu_s - \mu_s', \mu_s - \mu_s' \rangle = 0$, so that $\mu_s = \mu_s'$.
\end{proof}

We are now ready to prove \eqref{eq:omega ss}. We have
    \begin{equation*}
        \langle \Theta_\rho, \mu_s' \rangle = \frac{1}{|[\bbG]|} \sum_{g \in [\bbG]} \sum_{\substack{x \in [\bbG] \\ g^{-1} xsx^{-1} \in \bbZ_{\bbG}(\FF_q)}} \Theta_\rho(g) \omega(g^{-1} xsx^{-1}).
    \end{equation*}
    For any $x\in[\bbG]$, there exists a unique $\bbZ_{\bbG}(\FF_q)$-coset of $\dot g \in \bbG(\FF_q)$ such that $\dot g^{-1} x s x^{-1} \in \bbZ_{\bbG}(\FF_q)$.
    Let $\dot x \in \bbG(\FF_q)$ be any lift of $x$ and write $\dot g^{-1} \dot xs \dot x^{-1} = z \in \bbZ_{\bbG}(\FF_q)$. Then 
    \begin{equation*}
    \Theta_\rho(\dot g) \omega(\dot g^{-1} \dot xs \dot x^{-1}) = \Theta_\rho(\dot xs\dot x^{-1} z^{-1}) \omega(z) = \Theta_\rho(\dot xs \dot x^{-1}) = \Theta_\rho(s).
    \end{equation*}
    We therefore have
    \begin{equation*}
        \langle \Theta_\rho, \mu_s' \rangle = \frac{1}{|[\bbG]|} \sum_{x \in [\bbG]} \Theta_\rho(s) = \Theta_\rho(s).
    \end{equation*}
    The desired formula \eqref{eq:omega ss} for $\Theta_\rho(s)$ now holds by applying Proposition \ref{prop:mu-mu'}.

\section{Characterization theorem at finite field level}\label{sec:char finite}

We retain the set-up and notation of Section \ref{sec:finite} and use the result on the representation theory of $\bbG(\FF_q)$ established there to address the question:
\begin{center}
    Is an irreducible representation of $\bbG(\FF_q)$ determined by its character on \\ regular semisimple elements (in the sense of Definition \ref{def:G' semisimple})? 
\end{center}
More generally, we study this question for an arbitrary conjugation-invariant subset $\bbG(\FF_q)_\bullet$ of the regular semisimple locus. In Section \ref{subsec:char-finite-gen-pos}, we will see that for a fixed maximal torus $\bbS^\circ \subset \bbG^\circ$, if $\theta$ is in general position and $\bbS(\FF_q)_\bullet$ is sufficiently large in the sense of \eqref{ineq:Henniart-bullet}, then $R_\bbS^\bbG(\theta)$ is uniquely determined by its character on $\bbG(\FF_q)_\bullet$ (Theorem \ref{thm:Henniart}), which we already know is given by a remarkably simple formula (Corollary \ref{cor:Kaletha-CF-reg}). In Section \ref{subsec:Lusztig-E}, we prove that if $\bbG(\FF_q)_\bullet$ satisfies the stronger inequality \eqref{ineq:Lusztig-bullet}, then in fact Lusztig's map $E$ (and its refinement $\tilde E$) can be defined purely from the elementary data of character values on $\bbG(\FF_q)_\bullet$.

\subsection{Characterization theorem at finite level for $\theta$ in general position}\label{subsec:char-finite-gen-pos}

As in Section \ref{subsec:scalar product}, let $\overline{\bbG}(\FF_q)_{\bullet}$ be any subset of $\overline{\bbG}(\F_{q})_{\ast}$ invariant under $\bbG(\FF_q)$-conjugacy. 
We furthermore assume that $\overline{\bbG}(\FF_q)_{\bullet}$ is contained in the regular semisimple locus $\overline{\bbG}(\FF_q)_{\rs}$ of $\overline{\bbG}(\FF_q)$.

Suppose that we have a subset $\bbG'(\F_{q})_{\bullet}$ of $\bbG'(\F_{q})$ whose image in $[\bbG']$ is equal to $[\bbG']_{\bullet}$ and suppose furthermore that there exists a finite-index subgroup $\bbZ_\bbG^\star$ of $\bbZ_\bbG$ such that $\bbG'(\bbF_q)_\bullet$ is invariant under $\bbZ_\bbG^\star(\FF_q)$-translation.
Note here that $\bbG'(\FF_q)_{\bullet}$ may not be the full preimage of $[\bbG']_\bullet$ under $\bbG'(\FF_q) \twoheadrightarrow [\bbG']$! This allows for the possibility that $\bbG'(\FF_q)_\bullet$ may not be invariant under $\bbZ_\bbG(\FF_q)$-translation, hence the necessity for the introduction of $\bbZ_\bbG^\star$.

\[
\begin{tikzcd}
\bbG'(\F_{q}) \ar[twoheadrightarrow]{r} & {[\bbG']} \ar{r}[above]{\Jord} \ar{r}[below]{1:1} & \{(s,u) \mid s \in \overline{\bbG}(\FF_q)_*, \, u \in \bbG_s^\circ(\FF_q)_{\unip}\} \\
\bbG'(\F_{q})_{\bullet} \ar[hookrightarrow]{u} \ar[twoheadrightarrow]{r} & {[\bbG']_\bullet} \ar[hookrightarrow]{u} \ar{r}[above]{\Jord} \ar{r}[below]{1:1} & \{(s,u) \mid s \in \overline{\bbG}(\FF_q)_\bullet, \, u \in \bbG_s^\circ(\FF_q)_{\unip}\} \ar[hookrightarrow]{u}
\end{tikzcd}
\]

We put $\overline{\bbG}(\F_{q})_{\circ}\colonequals \overline{\bbG}(\F_{q})_{\ast}\smallsetminus\overline{\bbG}(\F_{q})_{\bullet}$, $\bbG'(\F_{q})_{\circ}\colonequals \bbG'(\F_{q})\smallsetminus\bbG'(\F_{q})_{\bullet}$, and $\bbG(\F_{q})_{\circ}\colonequals \bbG(\F_{q})\smallsetminus\bbG'(\F_{q})_{\bullet}$.
We also put $[\bbG']_{\circ}\colonequals [\bbG']\smallsetminus[\bbG']_{\bullet}$ and $[\bbG]_{\circ}\colonequals [\bbG]\smallsetminus[\bbG']_{\bullet}$.
We also introduce the quotients of $\bbG(\F_{q})$, $\bbG'(\F_{q})_{?}$ (for $?\in\{\bullet,\circ\}$), and $\bbG(\F_{q})_{\circ}$  by $\bbZ^{\star}_{\bbG}(\F_{q})$:
\begin{itemize}
\item
$[\bbG]^{\star}\colonequals \bbG(\F_{q})/\bbZ^{\star}_{\bbG}(\F_{q})$,
\item 
$[\bbG']^{\star}_{?}\colonequals \bbG'(\F_{q})_{?}/\bbZ^{\star}_{\bbG}(\F_{q})$, and 
\item 
$[\bbG]^{\star}_{\circ}\colonequals \bbG(\F_{q})_{\circ}/\bbZ^{\star}_{\bbG}(\F_{q})$.
\end{itemize}

For any $\F_{q}$-rational maximal torus $\bbS^{\circ}$ of $\bbG^{\circ}$, we put $\bbS(\F_{q})_{?}\colonequals \bbS(\F_{q})\cap\bbG'(\F_{q})_{?}$ for $?\in\{\bullet,\circ\}$.
We also put
\begin{itemize}
\item
$[\bbS]^{\star}\colonequals \bbS(\F_{q})/\bbZ^{\star}_{\bbG}(\F_{q})$, and
\item
$[\bbS]^{\star}_{?}\colonequals \bbS(\F_{q})_{?}/\bbZ^{\star}_{\bG}(\F_{q})$.
\end{itemize}

\begin{rem}\label{rem:examples2}
In the cases of Remark \ref{rem:examples}, we take a finite index subgroup $\bbZ^{\star}_{\bbG}$ of $\bbZ_{\bbG}$ as follows:
\begin{enumerate}
\item
We simply take $\bbZ^{\star}_{\bbG}\colonequals \{1\}$.
\item
Let suppose that $\bfG^{0}$ is a tame twisted Levi subgroup of a connected reductive group $\bfG$ over $F$ such that $\bfZ_{\bfG^{0}}/\bfZ_{\bfG}$ is anisotropic.
Then we take $\bbZ^{\star}_{\bbG}$ to be the image of $Z_{\bfG}$ (see Section \ref{subsec:ur-vreg}).
\end{enumerate}
\end{rem}

For a unitary character $\omega^{\star}$ of $\bbZ^{\star}_\bbG(\FF_q)$, we define an inner product $\langle-,-\rangle^{\star}$ on the space $\C[\bbG(\F_{q})]_{\omega^{\star}}$ by
\[
\langle f_{1},f_{2} \rangle^{\star}
\colonequals
\frac{1}{|[\bbG]^{\star}|} \sum_{g\in[\bbG]^{\star}}f_1(g) \cdot \overline{f_2(g)},
\]
for any $f_{1},f_{2}\in\C[\bbG(\F_{q})]_{\omega^{\star}}$.
Then, in the same way as $\langle-,-\rangle$, we can define the inner product $\langle \rho_{1}, \rho_{2} \rangle^{\star}$ for any two representations $\rho_1, \rho_2 \in \cR(\bbG(\FF_q))$ and also its truncated versions $\langle \rho_{1}, \rho_{2} \rangle^{\star}_{\bullet}$ and $\langle \rho_{1}, \rho_{2} \rangle^{\star}_{\circ}$.

\begin{prop}\label{prop:innerprod2}
For any $(\bbS_1, \theta_1)\in\tilde{\cT}_{\omega_{1}}$ and $(\bbS_2,\theta_2) \in\tilde{\cT}_{\omega_{2}}$ such that $\omega_{1}|_{\bbZ^{\star}_\bbG(\FF_q)}=\omega_{2}|_{\bbZ^{\star}_\bbG(\FF_q)}$, we have
\[
\langle R_{\bbS_1}^{\bbG}(\theta_1),R_{\bbS_2}^{\bbG}(\theta_2)\rangle^{\star}_{\circ}
=
\frac{1}{|[\bbS_{1}]^{\star}|\cdot|[\bbS_{2}]^{\star}|}
\sum_{\begin{subarray}{c}s\in[\bbS_{1}]^{\star}_{\circ} \\ n\in N_{\bbG(\F_{q})}(\bbS_{1},\bbS_{2})/\bbZ^{\star}_{\bbG}(\F_{q})\end{subarray}}
\theta_{1}(s)\overline{\theta_{2}({}^{n}s)}.
\]
\end{prop}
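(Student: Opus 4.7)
The proof follows the strategy of Proposition \ref{prop:innerprod}, with $\bbZ_\bbG(\F_q)$-quotients replaced systematically by $\bbZ^\star_\bbG(\F_q)$-quotients and restriction to the $\circ$-locus.

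First, by Corollary \ref{cor:Kaletha-CF} the characters $\Theta_{R_{\bbS_i}^\bbG(\theta_i)}$ vanish on $\bbG(\F_q) \smallsetminus \bbG'(\F_q)$, and the hypothesis $\omega_1|_{\bbZ^\star_\bbG(\F_q)} = \omega_2|_{\bbZ^\star_\bbG(\F_q)}$ guarantees that the product $\Theta_{R_{\bbS_1}^\bbG(\theta_1)}\cdot\overline{\Theta_{R_{\bbS_2}^\bbG(\theta_2)}}$ descends to $[\bbG]^\star$. Consequently, the inner product reduces to
\[
\langle R_{\bbS_1}^\bbG(\theta_1), R_{\bbS_2}^\bbG(\theta_2) \rangle^\star_\circ = \frac{1}{|[\bbG]^\star|} \sum_{g' \in [\bbG']^\star_\circ} \Theta_{R_{\bbS_1}^\bbG(\theta_1)}(g') \cdot \overline{\Theta_{R_{\bbS_2}^\bbG(\theta_2)}(g')}.
\]
For each $g' \in \bbG'(\F_q)_\circ$, fix an extended Jordan decomposition $(g,t,\dot t, z, s, u)$ and substitute the formula of Corollary \ref{cor:Kaletha-CF}, producing a triple sum indexed by $(x_1, x_2)$ with ${}^{x_i} s \in \bbS_i^\circ$.

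Second, I would reparameterize the outer sum via the Jordan bijection of Lemma \ref{lem:Jordan-bij}. Since $\bbG'(\F_q)_\bullet$ is $\bbZ^\star_\bbG(\F_q)$-invariant, so is $\bbG'(\F_q)_\circ$, so $[\bbG']^\star_\circ$ is well-defined; the map $[\bbG']^\star \twoheadrightarrow [\bbG']$ has fibers that are $\bbZ_\bbG(\F_q)/\bbZ^\star_\bbG(\F_q)$-torsors, so elements of $[\bbG']^\star$ are parameterized by Jordan data $(s,u)$ together with a $\bbZ_\bbG(\F_q)/\bbZ^\star_\bbG(\F_q)$-coset. After swapping the order of summation and applying the Green function orthogonality relation \cite[Theorem 6.9]{DL76} to the inner sum over $u \in \bbG^\circ_s(\F_q)_\unip$ (which is insensitive to the central twist), we obtain, exactly as in the proof of Proposition \ref{prop:innerprod}, a contribution of the form $|N_{\bbG^\circ_s(\F_q)}(\bbS_1^{x_1 \circ}, \bbS_2^{x_2 \circ})|/(|\bbS_1^\circ(\F_q)| \cdot |\bbS_2^\circ(\F_q)|)$.

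Third, I would apply the double-counting bijection analogous to \cite[Lemma 6.10]{DL76}, now with $\bbZ^\star_\bbG(\F_q)$-cosets in place of $\bbZ_\bbG(\F_q)$-cosets, to consolidate the triple index set into pairs $(x_1, n)$ with $n \in N_{\bbG(\F_q)}(\bbS_1, \bbS_2)/\bbZ^\star_\bbG(\F_q)$; the $\bbG^\circ_s(\F_q)$-factor appearing in the intermediate parameterization cancels against the $1/|\bbG^\circ_s(\F_q)|$ already present. A final change of variables $s \mapsto {}^{x_1} s$, using the $\bbG(\F_q)$-conjugation invariance of $\bbG'(\F_q)_\circ$, converts the outer sum to one over $s \in [\bbS_1]^\star_\circ$, with the factor $|[\bbG]^\star|^{-1}$ cancelling against the count of conjugates to yield the prefactor $1/(|[\bbS_1]^\star| \cdot |[\bbS_2]^\star|)$.

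The main technical subtlety is that $\bbG'(\F_q)_\bullet$ need not be $\bbZ_\bbG(\F_q)$-invariant, so the Jordan bijection $[\bbG'] \leftrightarrow \{(s,u)\}$ does not cleanly restrict to $[\bbG']^\star_\circ$. This is handled by performing the entire bookkeeping at the $\bbZ^\star_\bbG(\F_q)$-level and exploiting that every quantity appearing in the intermediate sums---the character values $\theta_i({}^{x_i} s z)$ paired in the product, and the Green functions on $\bbG^\circ_s$---is $\bbZ^\star_\bbG(\F_q)$-invariant under the central character hypothesis.
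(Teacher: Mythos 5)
Your proposal is correct and follows essentially the same route as the paper's proof: reduce to a sum over $[\bbG']^{\star}_{\circ}$ via Corollary \ref{cor:Kaletha-CF}, stratify by extended Jordan data together with a $\bbZ_{\bbG}(\F_q)/\bbZ^{\star}_{\bbG}(\F_q)$-coset (the paper's sets $Z_s$, which is exactly the subtlety you flag about the Jordan bijection not restricting cleanly to the $\circ$-locus), apply Green function orthogonality, the Deligne--Lusztig Lemma~6.10-style reindexing, and the final conjugation change of variables onto $[\bbS_1]^{\star}_{\circ}$. The paper makes your last paragraph precise by observing that the coset subset $Z_{(s,u)}$ can be proper only when $s$ is regular semisimple (forcing $u=1$), hence depends only on $s$, which is what justifies applying the Green function orthogonality over the full unipotent set for each fixed central translate.
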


\begin{proof}
A similar proof to Proposition \ref{prop:innerprod} works, but we need a minor modification as we explain in the following.

For each (representative of) $g^{\star}\in[\bbG']^{\star}_{\circ}$, we fix an extended Jordan decomposition $(g,t,\dot{t},z,s,u)$.
Then, by Corollary \ref{cor:Kaletha-CF}, 
\begin{align*}
\Theta_{R_{\bbS_i}^{\bbG}(\theta_i)}(g^{\star})
&=
\frac{|[\bbG^{\circ}]|}{|[\bbG']|}
\cdot\frac{|\bbZ_{\bbG^{\circ}}(\F_{q})|}{|\bbG^{\circ}_{s}(\F_{q})|}
\sum_{\begin{subarray}{c}x\in [\bbG] \\ {}^{x}s\in\bbS_{i}^{\circ}\end{subarray}}
Q^{\bbG^{\circ}_{s}}_{\bbS_{i}^{x\circ}}(u)\cdot \theta_{i}({}^{x}sz)\\
&=
\frac{|[\bbG^{\circ}]|}{|[\bbG']|}
\cdot\frac{|\bbZ_{\bbG^{\circ}}(\F_{q})|}{|\bbG^{\circ}_{s}(\F_{q})|}
\cdot\frac{|\bbZ^{\star}_{\bbG}(\F_{q})|}{|\bbZ_{\bbG}(\F_{q})|}
\sum_{\begin{subarray}{c}x\in [\bbG]^{\star} \\ {}^{x}s\in\bbS_{i}^{\circ}\end{subarray}}
Q^{\bbG^{\circ}_{s}}_{\bbS_{i}^{x\circ}}(u)\cdot \theta_{i}({}^{x}sz)
\end{align*}
for each $i=1,2$.
Hence $\langle R_{\bbS_1}^{\bbG}(\theta_1),R_{\bbS_2}^{\bbG}(\theta_2)\rangle^{\star}_{\circ}$ is given by
\begin{multline*}
\frac{|\bbZ_{\bbG^{\circ}}(\F_{q})|^{2}}{|[\bbG]^{\star}|}
\cdot\frac{|[\bbG^{\circ}]|^{2}}{|[\bbG']|^{2}}
\cdot\frac{|\bbZ^{\star}_{\bbG}(\F_{q})|^{2}}{|\bbZ_{\bbG}(\F_{q})|^{2}}
\sum_{g^{\star}\in[\bbG']^{\star}_{\circ}}
\frac{1}{|\bbG_{s}^{\circ}(\F_{q})|^{2}}\\
\cdot\sum_{\begin{subarray}{c}x_{1}\in [\bbG]^{\star} \\ {}^{x_{1}}s\in\bbS_{1}^{\circ}\end{subarray}}
Q^{\bbG^{\circ}_{s}}_{\bbS_{1}^{x_{1}\circ}}(u)\cdot \theta_{1}({}^{x_{1}}sz)
\sum_{\begin{subarray}{c}x_{2}\in [\bbG]^{\star} \\ {}^{x_{2}}s\in\bbS_{2}^{\circ}\end{subarray}}
\overline{Q^{\bbG^{\circ}_{s}}_{\bbS_{2}^{x_{2}\circ}}(u)}\cdot\overline{\theta_{2}({}^{x_{2}}sz)}.
\end{multline*}

We consider the natural quotient map
\[
\nu\colon [\bbG']^{\star}_{\circ}\twoheadrightarrow[\bbG']_{\circ}.
\]
For any $(s,u)$ such that $s \in \overline{\bbG}(\FF_q)_{\circ}$ and $u\in \bbG_s^\circ(\FF_q)_{\unip}$, we put $g'_{(s,u)}\colonequals \Jord^{-1}(s,u)\in[\bbG']_{\circ}$.
We fix an element $g^{\star}_{(s,u)}\in \nu^{-1}(g'_{(s,u)})$ for each $(s,u)$, hence we have $\nu^{-1}(g'_{(s,u)})\subset g^{\star}_{(s,u)}\bbZ_{\bbG}(\F_{q})/\bbZ_{\bbG}^{\star}(\F_{q})$.
Let $Z_{(s,u)}\subset\bbZ_{\bbG}(\F_{q})/\bbZ_{\bbG}^{\star}(\F_{q})$ be the subset satisfying $\nu^{-1}(g'_{(s,u)})=g^{\star}_{(s,u)}Z_{(s,u)}$.
Then we have
\begin{align}\label{eq:stratification}
[\bbG']^{\star}_{\circ}
=
\bigsqcup_{s\in\overline{\bbG}(\F_{q})_{\circ}}\bigsqcup_{u\in\bbG_{s}(\F_{q})_{\unip}}g^{\star}_{(s,u)}Z_{(s,u)}.
\end{align}
Note that it can happen $Z_{(s,u)}\subsetneq\bbZ_{\bbG}(\F_{q})/\bbZ_{\bbG}^{\star}(\F_{q})$ only when $s$ is regular semisimple.
Indeed, if $Z_{(s,u)}\subsetneq\bbZ_{\bbG}(\F_{q})/\bbZ_{\bbG}^{\star}(\F_{q})$, then there exists a $z\in\bbZ_{\bbG}(\F_{q})$ satisfying $g^{\star}_{(s,u)}z\in [\bbG']^{\star}_{\bullet}$, which implies that $s$ is regular semisimple and $u=1$.
Hence, we have $Z_{(s,u)}=\bbZ_{\bbG}(\F_{q})/\bbZ_{\bbG}^{\star}(\F_{q})$ whenever $u\neq1$.
In other words, $Z_{(s,u)}$ depends only on $s$.
Let us simply write $Z_{s}$ for $Z_{(s,u)}$.

For each semisimple element $s\in\overline{\bbG}(\F_{q})_{\circ}$, we put $\dot{s}:=g^{\star}_{(s,1)}\in[\bbG']^{\star}_{\circ}$.
By using \eqref{eq:stratification}, we get
\begin{multline*}
\sum_{g'\in[\bbG']^{\star}_{\circ}}
\frac{1}{|\bbG_{s}^{\circ}(\F_{q})|^{2}}
\sum_{\begin{subarray}{c}x_{1}\in [\bbG]^{\star} \\ {}^{x_{1}}s\in\bbS_{1}^{\circ}\end{subarray}}
Q^{\bbG^{\circ}_{s}}_{\bbS_{1}^{x_{1}\circ}}(u)\cdot \theta_{1}({}^{x_{1}}sz)
\sum_{\begin{subarray}{c}x_{2}\in [\bbG]^{\star} \\ {}^{x_{2}}s\in\bbS_{2}^{\circ}\end{subarray}}
\overline{Q^{\bbG^{\circ}_{s}}_{\bbS_{2}^{x_{2}\circ}}(u)}\cdot\overline{\theta_{2}({}^{x_{2}}sz)}\\
=
\sum_{s\in\overline{\bbG}(\F_{q})_{\circ}}
\frac{1}{|\bbG_{s}^{\circ}(\F_{q})|^{2}}
\sum_{t\in Z_{s}}
\sum_{\begin{subarray}{c}x_{1},x_{2}\in[\bbG]^{\star}\\ {}^{x_{i}}s\in\overline{\bbS}_{i}\end{subarray}}
\theta_{1}({}^{x_{1}}\dot{s}t)\overline{\theta_{2}({}^{x_{2}}\dot{s}t)}
\sum_{u\in\bbG_{s}^{\circ}(\F_{q})_{\unip}}
Q_{\bbS_{1}^{x_{1}\circ}}^{\bbG_{s}^{\circ}}(u)\overline{Q_{\bbS_{2}^{x_{2}\circ}}^{\bbG_{s}^{\circ}}(u)}.
\end{multline*}
Hence, by using the orthogonality relation for Green functions of $\bbG_{s}^{\circ}$ (\cite[Theorem 6.9]{DL76}),
$\langle R_{\bbS_1}^{\bbG}(\theta_1),R_{\bbS_2}^{\bbG}(\theta_2)\rangle^{\star}_{\circ}$ equals
\begin{multline*}
\frac{|\bbZ_{\bbG^{\circ}}(\F_{q})|^{2}}{|[\bbG]^{\star}|}
\cdot\frac{|[\bbG^{\circ}]|^{2}}{|[\bbG']|^{2}}
\cdot\frac{|\bbZ^{\star}_{\bbG}(\F_{q})|^{2}}{|\bbZ_{\bbG}(\F_{q})|^{2}}
\sum_{s\in\overline{\bbG}(\F_{q})_{\circ}}
\frac{1}{|\bbG_{s}^{\circ}(\F_{q})|}\\
\cdot\sum_{t\in Z_{s}}
\sum_{\begin{subarray}{c}x_{1},x_{2}\in[\bbG]^{\star}\\ {}^{x_{i}}s\in\overline{\bbS}_{i}\end{subarray}}
\theta_{1}({}^{x_{1}}\dot{s}t)\overline{\theta_{2}({}^{x_{2}}\dot{s}t)}
\cdot\frac{|N_{\bbG_{s}^{\circ}(\F_{q})}(\bbS_{1}^{x_{1}\circ},\bbS_{2}^{x_{2}\circ})|}{|\bbS_{1}^{x_{1}\circ}(\F_{q})|\cdot|\bbS_{2}^{x_{2}\circ}(\F_{q})|}.
\end{multline*}
By noting that $|\bbS_{i}^{x_{i}\circ}(\F_{q})|=|\bbS_{i}^{\circ}(\F_{q})|$, $|[\bbG^{\circ}]|/|[\bbG']|=|[\bbS_{i}^{\circ}]|/|[\bbS_{i}]|$, and $|[\bbS_{i}]|/|[\bbS_{i}]^{\star}|=|\bbZ^{\star}_{\bbG}(\F_{q})|/|\bbZ_{\bbG}(\F_{q})|$, we see that this equals
\[
\frac{1}{|[\bbG]^{\star}|}
\cdot\frac{1}{|[\bbS_{1}]^{\star}|\cdot|[\bbS_{2}]^{\star}|}
\sum_{s\in\overline{\bbG}(\F_{q})_{\circ}}
\frac{1}{|\bbG_{s}^{\circ}(\F_{q})|}
\sum_{t\in Z_{s}}
\sum_{\begin{subarray}{c}x_{1},x_{2}\in[\bbG]^{\star}\\ {}^{x_{i}}s\in\overline{\bbS}_{i} \\ n_{1}\in N_{\bbG_{s}^{\circ}(\F_{q})}(\bbS_{1}^{x_{1}\circ},\bbS_{2}^{x_{2}\circ})\end{subarray}}
\theta_{1}({}^{x_{1}}\dot{s}t)\overline{\theta_{2}({}^{x_{2}}\dot{s}t)}.
\]
As in the proof of \cite[Lemma 6.10]{DL76}, we note that the set 
\[
\{(x_{1},x_{2},n_{1})\in[\bbG]^{\star}\times[\bbG]^{\star}\times N_{\bbG_{s}^{\circ}(\F_{q})}(\bbS_{1}^{x_{1}\circ},\bbS_{2}^{x_{2}\circ})
\mid
{}^{x_{1}}s\in\overline{\bbS}_{1}, {}^{x_{2}}s\in\overline{\bbS}_{2}\}
\]
is bijective to the set
\[
\{(x_{1},n,n_{1})\in [\bbG]^{\star}\times (N_{\bbG(\F_{q})}(\bbS_{1},\bbS_{2})/\bbZ^{\star}_{\bbG}(\F_{q}))\times\bbG_{s}^{\circ}(\F_{q})
\mid
{}^{x_{1}}s\in\overline{\bbS}_{1}\}
\]
by the map $(x_{1},x_{2},n_{1})\mapsto (x_{1},x_{2}n_{1}x_{1}^{-1},n_{1})$.
Hence
\begin{align*}
\sum_{\begin{subarray}{c}x_{1},x_{2}\in[\bbG]^{\star}\\ {}^{x_{i}}s\in\overline{\bbS}_{i} \\ n_{1}\in N_{\bbG_{s}^{\circ}(\F_{q})}(\bbS_{1}^{x_{1}\circ},\bbS_{2}^{x_{2}\circ})\end{subarray}}
\theta_{1}({}^{x_{1}}\dot{s}t)\overline{\theta_{2}({}^{x_{2}}\dot{s}t)}
&=
\sum_{\begin{subarray}{c}x_{1}\in[\bbG]^{\star};\, {}^{x_{1}}s\in\overline{\bbS}_{1} \\ n\in N_{\bbG(\F_{q})}(\bbS_{1},\bbS_{2})/\bbZ^{\star}_{\bbG}(\F_{q})\\ n_{1}\in \bbG_{s}^{\circ}(\F_{q})\end{subarray}}
\theta_{1}({}^{x_{1}}\dot{s}t)\overline{\theta_{2}({}^{nx_{1}n_{1}^{-1}}\dot{s}t)}\\
&=
|\bbG_{s}^{\circ}(\F_{q})|
\sum_{\begin{subarray}{c}x_{1}\in[\bbG]^{\star};\, {}^{x_{1}}s\in\overline{\bbS}_{1} \\ n\in N_{\bbG(\F_{q})}(\bbS_{1},\bbS_{2})/\bbZ^{\star}_{\bbG}(\F_{q})\end{subarray}}
\theta_{1}({}^{x_{1}}\dot{s}t)\overline{\theta_{2}({}^{nx_{1}}\dot{s}t)}.
\end{align*}
Therefore $\langle R_{\bbS_1}^{\bbG}(\theta_1),R_{\bbS_2}^{\bbG}(\theta_2)\rangle_{\circ}$ equals 
\[
\frac{1}{|[\bbG]^{\star}|\cdot|[\bbS_{1}]^{\star}|\cdot|[\bbS_{2}]^{\star}|}
\sum_{s\in\overline{\bbG}(\F_{q})_{\circ}}
\sum_{t\in Z_{s}}
\sum_{\begin{subarray}{c}x_{1}\in[\bbG]^{\star};\, {}^{x_{1}}s\in\overline{\bbS}_{1} \\ n\in N_{\bbG(\F_{q})}(\bbS_{1},\bbS_{2})/\bbZ^{\star}_{\bbG}(\F_{q})\end{subarray}}
\theta_{1}({}^{x_{1}}\dot{s}t)\overline{\theta_{2}({}^{nx_{1}}\dot{s}t)}.
\]
Finally, we note that the association $(s,t,x_{1})\mapsto {}^{x_{1}}\dot{s}t$ gives a well-defined map
\[
\{(s,t,x_{1}) \mid s\in\overline{\bbG}(\F_{q})_{\circ}, t\in Z_{s}, x_{1}\in[\bbG]^{\star}\mid {}^{x_{1}}s\in\overline{\bbS}_{1}\}
\rightarrow
[\bbS_{1}]^{\star}_{\circ}.
\]
Furthermore, this map is surjective and the order of each fiber equals $|[\bbG]^{\star}|$ since the set $\overline{\bbG}(\F_{q})_{\circ}$ is invariant under $\bbG(\F_{q})$-conjugation.
Therefore we get
\[
\langle R_{\bbS_1}^{\bbG}(\theta_1),R_{\bbS_2}^{\bbG}(\theta_2)\rangle^{\star}_{\circ}
=
\frac{1}{|[\bbS_{1}]^{\star}|\cdot|[\bbS_{2}]^{\star}|}
\sum_{\begin{subarray}{c}s\in[\bbS_{1}]^{\star}_{\circ} \\ n\in N_{\bbG(\F_{q})}(\bbS_{1},\bbS_{2})/\bbZ^{\star}_{\bbG}(\F_{q})\end{subarray}}
\theta_{1}(s)\overline{\theta_{2}({}^{n}s)}.\qedhere
\]
\end{proof}

In this section, we consider $(\bbS,\theta) \in \tilde \cT$ such that $\theta$ is in general position.
Recall from Corollary \ref{cor:scalar-prod} that then $R_\bbS^{\bbG}(\theta)$ is irreducible.

We consider the following inequality:
\[\label{ineq:Henniart-bullet}
\frac{|[\bbS]^{\star}|}{|[\bbS]^{\star}_{\circ}|}
=
\frac{|[\bbS]^{\star}|}{|[\bbS]^{\star}\smallsetminus[\bbS]^{\star}_{\bullet}|}
>
2\cdot|W_{\bbG(\F_{q})}(\bbS)|.
\tag{$\mathfrak{H}_\bullet$}
\]

\begin{lem}\label{lem:Henniart}
Let $(\bbS,\theta) \in \tilde \cT$ be such that $\theta$ is in general position.
If \eqref{ineq:Henniart-bullet} is satisfied, then $\Theta_{R_{\bbS}^{\bbG}(\theta)}(g)\neq 0$ for some  $g\in \bbS(\F_{q})_{\bullet}$.
\end{lem}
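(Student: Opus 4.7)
The strategy is to compute $\langle R_\bbS^\bbG(\theta), R_\bbS^\bbG(\theta) \rangle^\star$ exactly, bound its $\circ$-part using \eqref{ineq:Henniart-bullet}, force the $\bullet$-part to be strictly positive, and then transport the resulting nonvanishing from $\bbG'(\F_q)_\bullet$ into $\bbS(\F_q)_\bullet$ via the regular-semisimple character formula.

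First, by Lemma \ref{lem:twist} I may twist $R_\bbS^\bbG(\theta)$ by a character of $\bbG(\F_q)$ to assume that $\omega \colonequals \theta|_{\bbZ_\bbG(\F_q)}$ is unitary; since $[\bbS]$ is finite, this forces $\theta$ itself to be unitary on $\bbS(\F_q)$, and neither the hypothesis nor the conclusion is affected by the twist. A direct comparison shows that $\langle-,-\rangle^\star$ and $\langle-,-\rangle$ coincide on functions in $\bbC[\bbG(\F_q)]_\omega$, so Corollary \ref{cor:scalar-prod} and general position of $\theta$ give $\langle R_\bbS^\bbG(\theta), R_\bbS^\bbG(\theta) \rangle^\star = 1$. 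Because $\Theta_{R_\bbS^\bbG(\theta)}$ is supported on $\bbG'(\F_q)$ by Corollary \ref{cor:Kaletha-CF}, the partition $[\bbG']^\star = [\bbG']^\star_\bullet \sqcup [\bbG']^\star_\circ$ yields
\[
1 = \langle R_\bbS^\bbG(\theta), R_\bbS^\bbG(\theta) \rangle^\star_\bullet + \langle R_\bbS^\bbG(\theta), R_\bbS^\bbG(\theta) \rangle^\star_\circ.
\]

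I then apply Proposition \ref{prop:innerprod2} with $(\bbS_1,\theta_1) = (\bbS_2,\theta_2) = (\bbS,\theta)$ and bound the $\circ$-part by pulling absolute values inside the sum, using $|\theta(s)\overline{\theta({}^n s)}| = 1$. Since $|N_{\bbG(\F_q)}(\bbS)/\bbZ^\star_\bbG(\F_q)| = |W_{\bbG(\F_q)}(\bbS)| \cdot |[\bbS]^\star|$, the estimate reduces to
\[
\bigl|\langle R_\bbS^\bbG(\theta), R_\bbS^\bbG(\theta) \rangle^\star_\circ\bigr| \leq \frac{|[\bbS]^\star_\circ| \cdot |W_{\bbG(\F_q)}(\bbS)|}{|[\bbS]^\star|},
\]
which is strictly less than $1/2$ by \eqref{ineq:Henniart-bullet}. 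Consequently $\langle R_\bbS^\bbG(\theta), R_\bbS^\bbG(\theta) \rangle^\star_\bullet > 1/2$, and in particular some $g' \in \bbG'(\F_q)_\bullet$ has $\Theta_{R_\bbS^\bbG(\theta)}(g') \neq 0$.

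To finish I promote $g'$ to an element of $\bbS(\F_q)_\bullet$. Any element of $\bbG'(\F_q)_\bullet$ is automatically regular semisimple (its semisimple part lies in $\overline{\bbG}(\F_q)_\bullet \subset \overline{\bbG}(\F_q)_\rs$, and a torus has no nontrivial unipotents), so Corollary \ref{cor:Kaletha-CF-reg} forces $W_{\bbG(\F_q)}(\bbS^\circ_{g'}, \bbS^\circ) \neq \varnothing$, and some conjugate ${}^n g'$ lies in $\bbS(\F_q)$ with the same nonzero character value. The image $[\bbG']_\bullet$ of $\bbG'(\F_q)_\bullet$ is conjugation-invariant, so the class of ${}^n g'$ in $[\bbG']$ still lies in $[\bbG']_\bullet$; multiplying by a suitable $z \in \bbZ_\bbG(\F_q) \subset \bbS(\F_q)$ lands us inside $\bbG'(\F_q)_\bullet \cap \bbS(\F_q) = \bbS(\F_q)_\bullet$, and the character value only scales by the nonzero $\omega(z)$. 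The main technical point throughout is the careful bookkeeping of the subgroup $\bbZ^\star_\bbG$, introduced precisely because $\bbG'(\F_q)_\bullet$ is not assumed to be $\bbZ_\bbG(\F_q)$-invariant.
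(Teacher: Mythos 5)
Your proof is correct and follows essentially the same route as the paper's: decompose $\langle R_\bbS^\bbG(\theta), R_\bbS^\bbG(\theta)\rangle^\star = 1$ into its $\bullet$- and $\circ$-parts, bound the latter by $|[\bbS]^\star_\circ|\cdot|W_{\bbG(\F_q)}(\bbS)|/|[\bbS]^\star| < \tfrac{1}{2}$ via Proposition \ref{prop:innerprod2} and the triangle inequality, and then use the character formula at (regular) semisimple elements to conjugate the resulting nonvanishing point into $\bbS(\F_q)$. Your final step is in fact slightly more careful than the paper's, which does not spell out the $\bbZ_{\bbG}(\F_q)$-translation needed to land precisely in $\bbS(\F_q)_\bullet$ rather than merely in an element of $\bbS(\F_q)$ whose class lies in $[\bbG']_\bullet$.
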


\begin{proof}
By Lemma \ref{lem:twist}, we may suppose that $\theta|_{\bbZ_{\bbG}(\F_{q})}$ is unitary.
For notational convenience, write $R_\theta \colonequals R_{\bbS}^{\bbG}(\theta)$.
We obviously have
\[
\langle R_{\theta},R_{\theta}\rangle^{\star}
=
\langle R_{\theta},R_{\theta}\rangle^{\star}_{\bullet}
+
\langle R_{\theta},R_{\theta}\rangle^{\star}_{\circ}.
\]
Recall that $R_{\theta}$ is supported on $\bbG'(\F_{q})$ (Corollary \ref{cor:Kaletha-CF}), hence we may replace the index set $[\bbG]^{\star}_{\circ}$ of the sum in $\langle R_{\theta},R_{\theta}\rangle^{\star}_{\circ}$ with $[\bbG']^{\star}_{\circ}$.
By Proposition \ref{prop:innerprod2}, we get
\[
\langle R_{\theta},R_{\theta}\rangle^{\star}_{\circ}
=
\frac{1}{|[\bbS]^{\star}|^{2}}
\sum_{\begin{subarray}{c}s\in[\bbS]^{\star}_{\circ} \\ n\in N_{\bbG(\F_{q})}(\bbS)/\bbZ^{\star}_{\bbG}(\F_{q})\end{subarray}}
\theta(s)\cdot\overline{\theta({}^{n}s)}.
\]
As the character $\theta$ is $\C^{1}$-valued, the triangle inequality and \eqref{ineq:Henniart-bullet} imply that
\begin{align*}
\langle R_{\theta},R_{\theta}\rangle^{\star}_{\circ}
&\leq
\frac{1}{|[\bbS]^{\star}|^{2}}
\cdot|N_{\bbG(\F_{q})}(\bbS)/\bbZ^{\star}_{\bbG}(\F_{q})|
\cdot|[\bbS]^{\star}_{\circ}|\\
&=
|W_{\bbG(\F_{q})}(\bbS)|
\cdot\frac{|[\bbS]^{\star}_{\circ}|}{|[\bbS]^{\star}|}
<\frac{1}{2}.
\end{align*}
Since we have $\langle R_{\theta},R_{\theta}\rangle=1$ by the irreducibility of $R_{\theta}$, this implies that $\langle R_{\theta},R_{\theta}\rangle^{\star}_{\bullet}\neq0$.
Hence there exists an element $g\in\bbG'(\F_{q})_{\bullet}$ satisfying $\Theta_{R_{\theta}}(g)\neq 0$.

Let $g\in\bbG'(\F_{q})_{\bullet}$ be such an element.
Then, since $\overline{\bbG}(\F_{q})_{\bullet}\subset\overline{\bbG}(\F_{q})_{\rs}$, $g$ is regular semisimple.
Hence Corollary \ref{cor:Kaletha-CF-ss} implies that 
\[
\Theta_{R_{\theta}}(g)
=
\frac{(-1)^{r(\bbG_{g}^\circ) - r(\bbS^{\circ})}}{|\bbG_{g}^\circ(\FF_q)|_p \cdot |[\bbS]|} \sum_{\substack{x \in [\bbG] \\ {}^x g \in \bbS(\FF_q)}} \theta({}^x g).
\]
In particular, there must exist an element ${}^{x}g$ of $\bbS(\F_{q})$ which is conjugate (by $x\in\bbG(\F_{q})$) to $g$.
Replacing $g$ with ${}^{x}g$, we get an element satisfying the desired condition.
\end{proof}

\begin{thm}\label{thm:Henniart}
Let $(\bbS,\theta) \in \tilde \cT$ be such that $\theta$ is in general position.
Assume that \eqref{ineq:Henniart-bullet} is satisfied. Then there exists a unique finite-dimensional irreducible representation $\rho$ of $\bbG(\F_{q})$ such that there exists a constant $c \in \bbC^{1}$ for which
\[
\Theta_{\rho}(g)
=
c\cdot\Theta_{R_{\bbS}^{\bbG}(\theta)}(g)
\]
for any $g\in \bbG'(\F_{q})_{\bullet}$.
Moreover, $c = \varepsilon$ and $\rho = \varepsilon R_{\bbS}^{\bbG}(\theta)$, where $\varepsilon$ is the sign such that $\varepsilon R_{\bbS}^{\bbG}(\theta)$ is a genuine representation.
\end{thm}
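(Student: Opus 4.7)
The plan is to reduce the uniqueness assertion to an inner-product comparison driven by the estimate already underlying Lemma~\ref{lem:Henniart}. Existence is immediate: taking $\rho=\varepsilon R_{\bbS}^{\bbG}(\theta)$ and $c=\varepsilon$ satisfies the displayed formula trivially on all of $\bbG(\F_{q})$. For uniqueness, let $\rho$ be any irreducible representation and $c\in\bbC^{1}$ satisfying $\Theta_{\rho}=c\cdot\Theta_{R_{\bbS}^{\bbG}(\theta)}$ on $\bbG'(\F_{q})_{\bullet}$. Lemma~\ref{lem:Henniart} provides a point of $\bbG'(\F_{q})_{\bullet}$ where $\Theta_{R_{\bbS}^{\bbG}(\theta)}$ does not vanish; combined with the $\bbZ_{\bbG}^{\star}(\F_{q})$-stability of $\bbG'(\F_{q})_{\bullet}$, this forces $\rho$ and $R_{\bbS}^{\bbG}(\theta)$ to share the same $\bbZ_{\bbG}^{\star}$-central character. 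A preliminary twist by a character of $\bbG(\F_{q})$ (Lemma~\ref{lem:twist}) then lets us assume this common central character is unitary, so that the truncated form $\langle-,-\rangle^{\star}$, together with its decomposition $\langle-,-\rangle^{\star}_{\bullet}+\langle-,-\rangle^{\star}_{\circ}$, is a genuine Hermitian inner product on the relevant function space.

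The key numerical input is exactly what powered Lemma~\ref{lem:Henniart}: Proposition~\ref{prop:innerprod2} together with~\eqref{ineq:Henniart-bullet} yields $\langle R_{\bbS}^{\bbG}(\theta),R_{\bbS}^{\bbG}(\theta)\rangle^{\star}_{\circ}<\tfrac12$. Since $|\Theta_{R_{\bbS}^{\bbG}(\theta)}|^{2}$ is $\bbZ_{\bbG}(\F_{q})$-invariant, the star-inner product $\langle R_{\bbS}^{\bbG}(\theta),R_{\bbS}^{\bbG}(\theta)\rangle^{\star}$ collapses to the ordinary $\langle R_{\bbS}^{\bbG}(\theta),R_{\bbS}^{\bbG}(\theta)\rangle$, which equals $1$ by Corollary~\ref{cor:scalar-prod}, so $\langle R_{\bbS}^{\bbG}(\theta),R_{\bbS}^{\bbG}(\theta)\rangle^{\star}_{\bullet}>\tfrac12$. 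Because $|c|=1$ and $\Theta_{\rho}=c\,\Theta_{R_{\bbS}^{\bbG}(\theta)}$ on $\bbG'(\F_{q})_{\bullet}$, the moduli $|\Theta_{\rho}|$ and $|\Theta_{R_{\bbS}^{\bbG}(\theta)}|$ coincide there, whence $\langle\rho,\rho\rangle^{\star}_{\bullet}=\langle R_{\bbS}^{\bbG}(\theta),R_{\bbS}^{\bbG}(\theta)\rangle^{\star}_{\bullet}>\tfrac12$; combined with $\langle\rho,\rho\rangle^{\star}=1$ (standard orthogonality applied to the finite quotient $\bbG(\F_{q})/\bbZ_{\bbG}^{\star}(\F_{q})$ after a further character twist), this also forces $\langle\rho,\rho\rangle^{\star}_{\circ}<\tfrac12$.

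Finally, I would compute $\langle\rho,\varepsilon R_{\bbS}^{\bbG}(\theta)\rangle^{\star}$ via the same decomposition. The bullet part equals $\varepsilon c\cdot\langle R_{\bbS}^{\bbG}(\theta),R_{\bbS}^{\bbG}(\theta)\rangle^{\star}_{\bullet}$ and has modulus strictly greater than $\tfrac12$, while Cauchy--Schwarz bounds the remaining piece by
\[
\bigl|\langle\rho,\varepsilon R_{\bbS}^{\bbG}(\theta)\rangle^{\star}_{\circ}\bigr|\leq\sqrt{\langle\rho,\rho\rangle^{\star}_{\circ}\cdot\langle R_{\bbS}^{\bbG}(\theta),R_{\bbS}^{\bbG}(\theta)\rangle^{\star}_{\circ}}<\tfrac12.
\]
Hence $|\langle\rho,\varepsilon R_{\bbS}^{\bbG}(\theta)\rangle^{\star}|>0$. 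Since both $\rho$ and $\varepsilon R_{\bbS}^{\bbG}(\theta)$ are irreducible with the same unitary central character, this inner product is a non-negative integer at most $1$; it must therefore equal $1$, so $\rho\cong\varepsilon R_{\bbS}^{\bbG}(\theta)$. Evaluating $\Theta_{\rho}=c\cdot\Theta_{R_{\bbS}^{\bbG}(\theta)}$ at a point of $\bbG'(\F_{q})_{\bullet}$ where $\Theta_{R_{\bbS}^{\bbG}(\theta)}\neq 0$ (supplied once more by Lemma~\ref{lem:Henniart}) then forces $c=\varepsilon$. The one genuine subtlety, which I expect to be bookkeeping rather than mathematical, lies in the fact that $\bbG'(\F_{q})_{\bullet}$ need not be $\bbZ_{\bbG}(\F_{q})$-stable: this obliges us to work systematically with $\langle-,-\rangle^{\star}$ rather than $\langle-,-\rangle$ and to check that the orthogonality relation $\langle\rho,\rho\rangle^{\star}=1$ still holds after twisting to a unitary central character.
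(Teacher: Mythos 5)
Your proposal is correct and follows essentially the same route as the paper's proof: establish the common $\bbZ_{\bbG}^{\star}$-central character via Lemma \ref{lem:Henniart}, reduce to unitary central character by Lemma \ref{lem:twist}, split the star-inner product into its $\bullet$ and $\circ$ parts, bound the $\circ$ part by Cauchy--Schwarz using Proposition \ref{prop:innerprod2} and \eqref{ineq:Henniart-bullet}, and conclude $\langle\rho,\varepsilon R_{\bbS}^{\bbG}(\theta)\rangle^{\star}\neq 0$. The only cosmetic difference is that you pair $\rho$ against $\varepsilon R_{\bbS}^{\bbG}(\theta)$ while the paper pairs against $R_{\bbS}^{\bbG}(\theta)$ directly; this changes nothing.
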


\begin{proof}
For notational convenience, in this proof, we write $R_\theta \colonequals R_{\bbS}^{\bbG}(\theta)$.
We may suppose that $\theta|_{\bbZ_{\bbG}(\F_{q})}$ is unitary by Lemma \ref{lem:twist}.
Let $\rho$ be an irreducible representation of $\bbG(\F_{q})$ satisfying the assumption on $\Theta_{\rho}$ as in the statement. 

We first note that the assumption implies that $\rho$ and $R_{\theta}$ have the same $\bbZ^{\star}_{\bbG}$-central character.
Indeed, by Lemma \ref{lem:Henniart}, there exists an element $g\in\bbS(\F_{q})_{\bullet}$ such that $\Theta_{R_{\theta}}(g)\neq0$.
By the assumption on $\bbZ^{\star}_{\bbG}$, we have $zg\in\bbS(\F_{q})_{\bullet}$ for any $z\in\bbZ^{\star}_{\bbG}(\F_{q})$.
Thus the assumption on $\Theta_{\rho}$ implies that $\Theta_{\rho}(g)=c\cdot\Theta_{R_{\theta}}(g)\neq0$ and $\Theta_{\rho}(zg)=c\cdot\Theta_{R_{\theta}}(zg)\neq0$.
Hence $\rho$ and $R_{\theta}$ have the same $\bbZ^{\star}_{\bbG}$-central character.
In particular, this implies that the central character of $\rho$ is unitary (recall that $\bbZ^{\star}_{\bbG}$ is of finite index in $\bbZ_{\bbG}$).

Thus, as both $\rho$ and $\varepsilon R_{\theta}$ are irreducible, it suffices to show that
\[
\langle \rho, R_{\theta}\rangle^{\star}\neq0.
\]
By the definition of the truncated inner products, we have
\begin{align*}
\langle R_{\theta},R_{\theta}\rangle^{\star}
&=
\langle R_{\theta},R_{\theta}\rangle^{\star}_{\bullet}
+
\langle R_{\theta},R_{\theta}\rangle^{\star}_{\circ}\quad\text{and}\\
\langle \rho,\rho\rangle^{\star}
&=
\langle \rho,\rho\rangle^{\star}_{\bullet}
+
\langle \rho,\rho\rangle^{\star}_{\circ}.
\end{align*}
The assumption on $\Theta_{\rho}$ implies that $\langle R_{\theta},R_{\theta}\rangle^{\star}_{\bullet}=\langle \rho,\rho\rangle^{\star}_{\bullet}$.
Thus we get $\langle R_{\theta},R_{\theta}\rangle^{\star}_{\circ}=\langle \rho,\rho\rangle^{\star}_{\circ}$.
We put
\[
X_{\bullet}\colonequals \langle R_{\theta},R_{\theta}\rangle^{\star}_{\bullet}=\langle \rho,\rho\rangle^{\star}_{\bullet}
\quad\text{and}\quad
X_{\circ}\colonequals \langle R_{\theta},R_{\theta}\rangle^{\star}_{\circ}=\langle \rho,\rho\rangle^{\star}_{\circ}.
\]
(note that $X_{\bullet}$ and $X_{\circ}$ are non-negative numbers satisfying that $X_{\bullet}+X_{\circ}=1$).

Again by the assumption on $\Theta_{\rho}$, we have
\[
\langle \rho,R_{\theta}\rangle^{\star}
= \langle \rho,R_{\theta}\rangle^{\star}_{\bullet}+\langle \rho,R_{\theta}\rangle^{\star}_{\circ}\\
= c X_{\bullet}+\langle \rho,R_{\theta}\rangle^{\star}_{\circ}.
\]
On the other hand, by the Cauchy--Schwarz inequality, we have
\[
|\langle\rho,R_{\theta}\rangle^{\star}_{\circ}|
\leq
\langle\rho,\rho\rangle^{\star\frac{1}{2}}_{\circ}\cdot\langle R_{\theta},R_{\theta}\rangle^{\star\frac{1}{2}}_{\circ}
=
X_{\circ}.
\]
Therefore, if we have $X_{\circ}<X_{\bullet}$, then $\langle \rho,R_{\theta}\rangle^{\star}$ is necessarily non-zero.
As we have $X_{\bullet}+X_{\circ}=1$, the inequality $X_{\circ}<X_{\bullet}$ holds if and only if the inequality $X_{\circ}<\frac{1}{2}$ holds.
This follows from Proposition \ref{prop:innerprod2} and the assumption \eqref{ineq:Henniart-bullet} as in the proof of Lemma \ref{lem:Henniart}.
\end{proof}

Let us focus on the special case where $\bbG=\bbG^{\circ}$ is a connected reductive group over $\F_{q}$ and $\bbZ_{\bbG}\colonequals \{1\}$ (hence $\bbS=\bbS^{\circ}$).
In this case, we have $\overline{\bbG}=\bbG/\bbZ_{\bbG}=\bbG$ and the map $\Jord$ is nothing but the usual Jordan decomposition map
\[
\Jord\colon
\bbG(\F_{q})
\xrightarrow{1:1}
\{(s,u)\mid s\in\bbG(\F_{q})_{\ss},\, u\in\bbG^{\circ}_{s}(\F_{q})_{\unip}\}
;\quad g\mapsto (s,u)
\]
(hence $\bbG(\F_{q})_{\ast}=\bbG(\F_{q})_{\ss}$).
By taking a subset $\bbG(\F_{q})_{\bullet}$ to be the regular semisimple locus  $\bbG(\F_{q})_{\rs}$, we obtain the following from Theorem \ref{thm:Henniart}:

\begin{cor}\label{cor:Henniart}
Let $(\bbS,\theta) \in \tilde \cT$ be such that $\theta$ is in general position.
Assume that the following inequality is satisfied:
\[\label{ineq:Henniart-rs}
\frac{|\bbS(\F_{q})|}{|\bbS(\F_{q}) \smallsetminus \bbS(\FF_q)_{\rs}|} > 2\cdot|W_{\bbG(\F_{q})}(\bbS)|. 
\tag{$\mathfrak{H}_\rs$}
\]
Then there exists a unique finite-dimensional irreducible representation $\rho$ of $\bbG(\F_{q})$ such that there exists a constant $c \in \bbC^{1}$ for which
\[
\Theta_{\rho}(g)
=
c\cdot\Theta_{R_{\bbS}^{\bbG}(\theta)}(g)
\]
for any $g\in \bbG(\F_{q})_{\rs}$.
Moreover, $c$ equals $\varepsilon$ and $\rho$ equals $\varepsilon R_{\bbS}^{\bbG}(\theta)$, where $\varepsilon$ is a sign such that $\varepsilon R_{\bbS}^{\bbG}(\theta)$ is a genuine representation.
\end{cor}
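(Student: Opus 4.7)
The plan is to deduce Corollary \ref{cor:Henniart} as an immediate specialization of Theorem \ref{thm:Henniart}. In the setting of the corollary, $\bbG = \bbG^\circ$ is connected reductive and $\bbZ_{\bbG} = \{1\}$; I would take $\bbZ_{\bbG}^\star = \{1\}$ as well, and set $\overline{\bbG}(\FF_q)_\bullet \colonequals \bbG(\FF_q)_{\rs}$ and $\bbG'(\FF_q)_\bullet \colonequals \bbG(\FF_q)_{\rs}$. Under these choices, $\overline{\bbG} = \bbG$, the map $\Jord$ is the classical Jordan decomposition as explained in the paragraph preceding the corollary, and $\overline{\bbG}(\FF_q)_\ast = \bbG(\FF_q)_{\ss}$.

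The hypotheses required to apply Theorem \ref{thm:Henniart} are all immediate in this setting: the set $\bbG(\FF_q)_{\rs}$ is conjugation-invariant and, tautologically, contained in the regular semisimple locus; the image of $\bbG'(\FF_q)_\bullet$ in $[\bbG'] = \bbG(\FF_q)$ coincides with $[\bbG']_\bullet = \bbG(\FF_q)_{\rs}$; and the invariance of $\bbG'(\FF_q)_\bullet$ under $\bbZ_{\bbG}^\star(\FF_q)$-translation is trivial since $\bbZ_{\bbG}^\star = \{1\}$.

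Under these identifications one has $[\bbS]^\star = \bbS(\FF_q)$ and $[\bbS]^\star_\circ = \bbS(\FF_q) \smallsetminus \bbS(\FF_q)_{\rs}$, so the inequality \eqref{ineq:Henniart-bullet} becomes exactly \eqref{ineq:Henniart-rs}. Theorem \ref{thm:Henniart} then directly yields the unique irreducible representation $\rho$ of $\bbG(\FF_q)$ satisfying the stated character identity on $\bbG(\FF_q)_{\rs}$, together with the identification $c = \varepsilon$ and $\rho = \varepsilon R_{\bbS}^{\bbG}(\theta)$. There is no substantive obstacle to overcome here: the corollary is essentially a translation of notation from the disconnected general setup of Theorem \ref{thm:Henniart} back to the classical connected setting.
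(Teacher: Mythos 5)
Your proposal is correct and matches the paper's own derivation exactly: the paper also obtains Corollary \ref{cor:Henniart} by specializing Theorem \ref{thm:Henniart} to $\bbG = \bbG^\circ$, $\bbZ_{\bbG} = \bbZ_{\bbG}^\star = \{1\}$, and $\bullet = \rs$, under which $\Jord$ becomes the classical Jordan decomposition and \eqref{ineq:Henniart-bullet} reduces to \eqref{ineq:Henniart-rs}. No gaps.
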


The inequality \eqref{ineq:Henniart-rs} can be explicated as long as $\bbG$ and $\bbS$ are given explicitly.
See Section \ref{subsec:Henn-ur} for an explicit computation in some particular cases where $\bbG$ is a split simple group and $\bbS$ is an elliptic maximal torus of Coxeter type.

\subsection{Lusztig's map $E$ and a refinement}\label{subsec:Lusztig-E}

In this section, we extend Lusztig's results \cite{Lus20} for connected reductive groups to our slightly more general setting $\bbG$. Because of the foundational results on the representation theory of $\bbG(\FF_q)$ provided in Section \ref{sec:finite}, the proofs in this section work out to be direct extensions of Lusztig's ideas. These results will play an important role in our characterization theorems for supercuspidal representations of $p$-adic groups (Section \ref{sec:sc characterization}).

For each $\rho \in \cR(\bbG(\FF_q))$, let
\begin{align*}
    \tilde Z_\rho &\colonequals \{(\bbS, \theta, n) \in \tilde \cT \times (\bbZ \smallsetminus \{0\}) \mid n = \langle \rho, R_{\bbS}^{\bbG}(\theta) \rangle\}.
\end{align*}
Define
\begin{equation*}
    \tilde E \from \Irr(\bbG(\FF_q)) \to \cP(\tilde \cT \times \bbZ); \quad \rho \mapsto \tilde Z_\rho,
\end{equation*}
where $\cP(A)$ denote the power set of $A$.

\subsubsection{Lusztig's map $E$}

\begin{defn}[geometric conjugacy]\label{defn:geom-conj}
Let $(\bbS_{1},\theta_{1}), (\bbS_{2},\theta_{2})\in\tilde{\cT}$.
We say that $(\bbS_{1},\theta_{1})$ and $(\bbS_{2},\theta_{2})$ are \textit{geometrically conjugate in $\bbG$} if $\theta_{1}^{\circ}$ and $\theta_{2}^{\circ}$ are geometrically conjugate in $\bbG$ in the sense of Deligne--Lusztig \cite[Definition 5.5]{DL76}; i.e., there exists a finite extension $\F_{q^{n}}$ of $\F_{q}$ such that the character $\theta_{1}^{\circ}\circ\Nr_{\F_{q^{n}}/\F_{q}}$ of $\bbS_{1}^{\circ}(\F_{q^{n}})$ and the character $\theta_{2}^{\circ}\circ\Nr_{\F_{q^{n}}/\F_{q}}$ of $\bbS_{2}^{\circ}(\F_{q^{n}})$ are $\bbG(\F_{q^{n}})$-conjugate.
\end{defn}

Let us describe a coarser version of the map $\tilde E$.
We first establish the following lemma, which follows easily from a classical result of Deligne--Lusztig \cite[Theorem 6.2]{DL76}.

\begin{lemma}\label{lem:not-geom-conj}
If $(\bbS_{1},\theta_{1}),(\bbS_{2},\theta_{2}) \in \tilde \cT$ are not geometrically conjugate in $\bbG$, then no element of $\Irr(\bbG(\FF_q))$ can occur in both virtual representations $R_{\bbS_{1}}^{\bbG}(\theta_{1})$ and $R_{\bbS_{2}}^{\bbG}(\theta_{2})$.
\end{lemma}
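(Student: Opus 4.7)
The plan is to reduce the statement to the classical Deligne--Lusztig theorem \cite[Theorem 6.2]{DL76} for the connected reductive group $\bbG^\circ$, by unwinding the two-step construction $R_{\bbS}^{\bbG}(\theta) = \Ind_{\bbG'(\F_q)}^{\bbG(\F_q)} R_{\bbS}^{\bbG'}(\theta)$ and then using the restriction identity $R_{\bbS}^{\bbG'}(\theta)|_{\bbG^\circ(\F_q)} = R_{\bbS^\circ}^{\bbG^\circ}(\theta^\circ)$ from Kaletha's construction. First I would observe that the hypothesis propagates: if $(\bbS_1,\theta_1)$ and $(\bbS_2,\theta_2)$ are not geometrically conjugate in $\bbG$, then $(\bbS_1^\circ,\theta_1^\circ)$ and $({}^g\bbS_2^\circ,{}^g\theta_2^\circ)$ are not geometrically conjugate in $\bbG^\circ$ for any $g \in \bbG(\F_q)$, since any putative $\bbG^\circ(\F_{q^n})$-conjugator for the latter, when composed with $g$, would produce a $\bbG(\F_{q^n})$-conjugator for the former.

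Suppose for contradiction that some $\rho \in \Irr(\bbG(\F_q))$ occurs in both $R_{\bbS_i}^{\bbG}(\theta_i)$. Applying Frobenius reciprocity to the induction yields, for each $i$, an irreducible constituent of $\rho|_{\bbG'(\F_q)}$ occurring in $R_{\bbS_i}^{\bbG'}(\theta_i)$. Since $\bbG'(\F_q) \trianglelefteq \bbG(\F_q)$, Clifford theory makes these constituents $\bbG(\F_q)$-conjugate, so after replacing $(\bbS_2,\theta_2)$ by an appropriate $\bbG(\F_q)$-conjugate (which by the first paragraph remains not geometrically conjugate to $(\bbS_1,\theta_1)$), we obtain a single $\rho' \in \Irr(\bbG'(\F_q))$ occurring in both $R_{\bbS_i}^{\bbG'}(\theta_i)$. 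Since $\bbG'/\bbG^\circ$ is a quotient of the central subgroup $\bbZ_\bbG$, its conjugation action on $\Irr(\bbG^\circ(\F_q))$ is trivial, so a second application of Clifford gives $\rho'|_{\bbG^\circ(\F_q)} = m\sigma$ for a single $\sigma \in \Irr(\bbG^\circ(\F_q))$. The goal is to show $\sigma$ appears in both $R_{\bbS_i^\circ}^{\bbG^\circ}(\theta_i^\circ) = R_{\bbS_i}^{\bbG'}(\theta_i)|_{\bbG^\circ(\F_q)}$, for then \cite[Theorem 6.2]{DL76} applied to $\bbG^\circ$ yields the desired contradiction.

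The main obstacle is this final step: decomposing $R_{\bbS_i}^{\bbG'}(\theta_i) = \sum_{\rho''} a_{\rho''} \rho''$ and restricting to $\bbG^\circ(\F_q)$, the multiplicity of $\sigma$ in $R_{\bbS_i^\circ}^{\bbG^\circ}(\theta_i^\circ)$ equals $\sum_{\sigma_{\rho''} = \sigma} a_{\rho''} m_{\rho''}$, and this sum could in principle vanish through cancellation among distinct $\rho''$ restricting to (a multiple of) $\sigma$. The resolution uses that every $\rho''$ appearing in $R_{\bbS_i}^{\bbG'}(\theta_i)$ has $\bbZ_\bbG$-central character $\theta_i|_{\bbZ_\bbG(\F_q)}$, together with the twist identity $R_{\bbS_i}^{\bbG'}(\theta_i) \otimes \chi = R_{\bbS_i}^{\bbG'}(\theta_i \cdot \chi|_{\bbS_i(\F_q)})$ for any character $\chi$ of $\bbG'(\F_q)/\bbG^\circ(\F_q)$ (easily verified from Corollary \ref{cor:Kaletha-CF-ss} using that such $\chi$ is central): an alternative irreducible $\rho'' = \rho' \otimes \chi$ with the same $\bbZ_\bbG$-central character appearing in the untwisted $R_{\bbS_i}^{\bbG'}(\theta_i)$ must satisfy $\chi|_{\bbS_i(\F_q)} = 1$, and since $\bbG'(\F_q) = \bbG^\circ(\F_q) \cdot \bbS_i(\F_q)$ by Remark \ref{rem:rational-pts} while $\chi$ is trivial on $\bbG^\circ(\F_q)$ by construction, this forces $\chi = 1$ and $\rho'' = \rho'$, ruling out the cancellation.
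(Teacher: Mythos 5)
Your reduction to $\bbG'$ (Frobenius reciprocity for the induction, then Clifford theory for $\bbG'(\F_q)\trianglelefteq\bbG(\F_q)$ to get a single common constituent $\rho'$ after conjugating $(\bbS_2,\theta_2)$) matches the paper's proof, and you have correctly isolated the real difficulty: passing from ``$\rho'$ occurs in the virtual representation $R_{\bbS_i}^{\bbG'}(\theta_i)$'' to ``some irreducible of $\bbG^\circ(\F_q)$ occurs in $R_{\bbS_i^\circ}^{\bbG^\circ}(\theta_i^\circ)$'' without losing it to cancellation. But your resolution breaks down, and it does so at exactly the point flagged in Remark \ref{rem:rational-pts}: $\bbG'(\F_q)$ need not equal $\bbG^\circ(\F_q)\cdot\bbZ_\bbG(\F_q)$. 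First, the claim that the conjugation action of $\bbG'(\F_q)$ on $\Irr(\bbG^\circ(\F_q))$ is trivial is false in general: an element $g'\in\bbG'(\F_q)$ factors as $g'=gz$ with $g\in\bbG^\circ(\overline{\F}_{q})$ and $z\in\bbZ_\bbG(\overline{\F}_{q})$ \emph{not necessarily rational}, so conjugation by $g'$ on $\bbG^\circ(\F_q)$ is conjugation by a possibly non-rational $g$, a ``diagonal'' automorphism of the finite group which can permute irreducibles nontrivially (this nontrivial Clifford theory between $\bbG^\circ(\F_q)$ and $\bbG'(\F_q)$ is also what produces the multiplicities in Remark \ref{rem:parametrization}). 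So $\rho'|_{\bbG^\circ(\F_q)}$ need not be isotypic. Second, and independently, the step forcing $\chi=1$ fails: equality of $\bbZ_\bbG$-central characters gives $\chi|_{\bbZ_\bbG(\F_q)}=1$, and $\chi|_{\bbG^\circ(\F_q)}=1$ by construction, but this only kills $\chi$ on $\bbG^\circ(\F_q)\bbZ_\bbG(\F_q)$, which may be a proper subgroup of $\bbG'(\F_q)$; it does not give $\chi|_{\bbS_i(\F_q)}=1$. (If instead you meant to extract $\chi|_{\bbS_i(\F_q)}=1$ from the twist identity via ``$\rho'$ occurs in both $R_{\bbS_i}^{\bbG'}(\theta_i)$ and $R_{\bbS_i}^{\bbG'}(\theta_i\cdot\chi^{-1}|_{\bbS_i})$,'' that presupposes a common-constituent-implies-conjugacy statement of the very kind the lemma is trying to establish.)

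The paper avoids the cancellation problem by a different mechanism, which you may want to adopt: Kaletha's extension is defined degree by degree, $R_\bbS^{\bbG'}(\theta)=\sum_{i}(-1)^i\tilde H^i_c(X_{\bbS^\circ},\overline{\Q}_\ell)[\theta]$, where each $\tilde H^i_c(X_{\bbS^\circ},\overline{\Q}_\ell)[\theta]$ is a \emph{genuine} representation of $\bbG'(\F_q)$ restricting to $H^i_c(X_{\bbS^\circ},\overline{\Q}_\ell)[\theta^\circ]$. If $\rho'$ pairs nontrivially with the alternating sum, it pairs nontrivially with some single degree, and containment in a genuine representation survives restriction to $\bbG^\circ(\F_q)$ with no possibility of cancellation; one then invokes the degree-wise strengthening of \cite[Corollary 6.3]{DL76}, which the proof of \cite[Theorem 6.2]{DL76} supplies. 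A purely non-cohomological repair of your argument would require controlling the signs of all constituents of $R_{\bbS_i}^{\bbG'}(\theta_i)$ lying over a fixed $\bbG'(\F_q)$-orbit in $\Irr(\bbG^\circ(\F_q))$, which is substantially harder than the twisting argument you propose.
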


\begin{proof}
Let $\rho \in \Irr(\bbG(\FF_q))$ be such that $\langle \rho, R_{\bbS_{1}}^{\bbG}(\theta_{1}) \rangle$ and $\langle \rho, R_{\bbS_{2}}^{\bbG}(\theta_{2})\rangle$ are both nonzero.
Since $R_{\bbS_{1}}^{\bbG}(\theta_{1}) = \Ind_{\bbG'(\FF_q)}^{\bbG(\FF_q)}(R_{\bbS_{1}}^{\bbG'}(\theta_{1}))$ by Lemma \ref{lem:alg desc}, then by Frobenius reciprocity we know that $\langle \rho|_{\bbG'(\FF_q)}, R_{\bbS_{1}}^{\bbG'}(\theta) \rangle$ is also nonzero, and similarly for $\langle \rho|_{\bbG'(\FF_q)}, R_{\bbS_{2}}^{\bbG'}(\theta_{2}) \rangle$. 
Let $\rho' \in \Irr(\bbG'(\FF_q))$ be an irreducible constituent of $\rho|_{\bbG'(\FF_q)}$ such that $\langle \rho', R_{\bbS_{1}}^{\bbG'}(\theta_{1}) \rangle \neq 0$. 
Since $\bbG'(\FF_q)$ is a normal subgroup of $\bbG(\FF_q)$, then we know all the irreducible $\bbG'(\FF_q)$-subrepresentations of $\rho|_{\bbG'(\FF_q)}$ are $\bbG(\FF_q)$-conjugate.
In particular, we know that there exists an element $g \in \bbG(\FF_q)$ such that $\langle \rho'{}^{g^{-1}}, R_{\bbS_{2}}^{\bbG'}(\theta_{2}) \rangle \neq 0$.
This implies that $\langle \rho', R_{\bbS_{2}^{g}}^{\bbG'}(\theta_{2}^g) \rangle \neq 0$.  

Recall from Lemma \ref{lem:alg desc} that $R_{\bbS_1}^{\bbG'}(\theta_1)$ is an extension of the $\bbG^\circ(\FF_q)$-representation $R_{\bbS_1^\circ}^{\bbG^\circ}(\theta_1^\circ)$. 
In fact, as explained in \cite[Remark 2.6.5]{Kal19-sc}, one may define actions of $\bbS(\FF_q) \times \bbG'(\FF_q)$ on $Y_{\bbU_1}^{\bbG^\circ}$ so that for each $i$, $H_c^i(Y_{\bbU_1}^{\bbG^\circ}, \overline \QQ_\ell) \cong H_c^i(Y_{\bbU_1}^{\bbG'}, \overline \QQ_\ell)$ as representations of $\bbS(\FF_q) \times \bbG'(\FF_q)$.
(Here, $\bbU_1$ is the unipotent radical of a Borel subgroup of $\bbG^{\circ}$ containing $\bbS^{\circ}$.)
Hence the condition that $\langle \rho', R_{\bbS_1}^{\bbG'}(\theta_1) \rangle \neq 0$ implies that there exists an $i \in \bbZ_{\geq 0}$ such that $\langle \rho', H_c^i(Y_{\bbU_1}^{\bbG^\circ}, \overline \QQ_\ell)[\theta_1^\circ]\rangle \neq 0$. 
This furthermore implies that $\langle \rho'|_{\bbG^\circ(\FF_q)}, H_c^i(Y_{\bbU_1}^{\bbG^\circ}, \overline \QQ_\ell)[\theta_1^\circ]\rangle \neq 0$.
Similarly, there also exists $j \in \bbZ_{\geq 0}$ such that $\langle \rho'|_{\bbG^\circ(\FF_q)}, H_c^i(Y_{\bbU_2^{g}}^{\bbG^\circ}, \overline \QQ_\ell)[\theta_2^{g\circ}]\rangle \neq 0$.

Now we utilize \cite[Corollary 6.3]{DL76}; since $H_{c}^{i}(Y_{\bbU_{1}}^{\bbG^\circ},\overline{\Q}_{\ell})[\theta_{1}^{\circ}]$ and $H_{c}^{j}(Y_{\bbU_{2}^{g}}^{\bbG^\circ},\overline{\Q}_{\ell})[\theta_{2}^{g\circ}]$ contain the same irreducible representation (any constituent of $\rho'|_{\bbG^{\circ}(\F_{q})}$), we see that $\theta_{1}^{\circ}$ and $\theta_{2}^{g\circ}$ are geometrically conjugate in $\bbG^{\circ}$.
Hence, $\theta_{1}^{\circ}$ and $\theta_{2}^{\circ}$ are geometrically conjugate in $\bbG$.
(Note that here we used a statement slightly stronger than \cite[Corollary 6.3]{DL76} because \cite[Corollary 6.3]{DL76} is stated for alternating sums $R_{\bbS_{1}^{\circ}}^{\bbG^{\circ}}(\theta_{1}^{\circ})$ and $R_{\bbS_{2}^{g\circ}}^{\bbG^{\circ}}(\theta_{2}^{g\circ})$.
However, the proof of \cite[Theorem 6.2]{DL76} in fact shows the aforementioned stronger version for individual degrees.)
\end{proof}

Denote by $\sim$ the equivalence relation on $\tilde \cT$ obtained by geometric conjugation. What Lemma \ref{lem:not-geom-conj} implies is that for any $\rho \in \Irr(\bbG(\FF_q))$, the set
\begin{equation*}
    Z_\rho \colonequals \{(\bbS,\theta) \in \tilde \cT \mid \langle \rho, R_{\bbS}^{\bbG}(\theta) \rangle \neq 0\} \subset \tilde \cT
\end{equation*}
is contained in a \textit{single} equivalence class of $\tilde \cT$. Hence we have a well-defined map
\begin{equation*}
    E \from \Irr(\bbG(\FF_q)) \to \tilde \cT/{\sim}; \qquad \rho \mapsto Z_\rho.
\end{equation*}
Note that $E$ can be obtained from $\tilde E$ by forgetting the $\bbZ$-component and descending to $\tilde \cT/{\sim}$.

\begin{definition}\label{def:cuspidal}
We say $\rho\in\Irr(\bbG(\FF_q))$ is \textit{cuspidal} if the restriction $\rho|_{\bbG^\circ(\FF_q)}$ contains a cuspidal representation of $\bbG^\circ(\FF_q)$.
For any $\rho'\in\Irr(\bbG'(\FF_q))$, we define the cuspidality of $\rho'$ in the same way.
\end{definition}

\begin{rem}\label{rem:cuspidal}
Note that, as $\bbG^{\circ}(\F_q)$ is normal in $\bbG(\F_q)$, any two irreducible subrepresentations of $\rho|_{\bbG^{\circ}(\F_q)}$ are $\bbG(\F_{q})$-conjugate.
In particular, since the $\bbG(\F_{q})$-conjugation preserves the cuspidality of representations of $\bbG^{\circ}(\F_q)$, $\rho\in\Irr(\bbG(\FF_q))$ is cuspidal if and only if any irreducible constituent of the restriction $\rho|_{\bbG^\circ(\F_q)}$ is a cuspidal representation of $\bbG^\circ(\FF_q)$.
The same is true for the cuspidality of representations of $\bbG'(\F_{q})$.
\end{rem}

\begin{lem}\label{lem:cuspidal}
If $\rho\in\Irr(\bbG(\FF_q))$ is cuspidal, then $\langle\rho,R_{\bbS}^{\bbG}(\theta)\rangle=0$ for all $(\bbS,\theta) \in \tilde{\cT}$ such that $\bbS^\circ$ is not elliptic in $\bbG^\circ$.
\end{lem}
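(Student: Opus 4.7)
The plan is to reduce the claim to the classical Deligne--Lusztig orthogonality for cuspidal representations of the connected reductive group $\bbG^{\circ}$ over $\F_{q}$ (\cite[Proposition 7.4.1]{DL76}): for any cuspidal irreducible $\rho_{0}\in\Irr(\bbG^{\circ}(\F_{q}))$ and any non-elliptic $\F_{q}$-rational maximal torus $\bbS^{\circ}\subset\bbG^{\circ}$, $\langle\rho_{0},R_{\bbS^{\circ}}^{\bbG^{\circ}}(\theta^{\circ})\rangle=0$ for all characters $\theta^{\circ}$. The reduction proceeds in two Frobenius reciprocity steps, first from $\bbG$ down to $\bbG'$ and then from $\bbG'$ down to $\bbG^{\circ}$.

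For the first step, the defining identity $R_{\bbS}^{\bbG}(\theta)=\Ind_{\bbG'(\F_{q})}^{\bbG(\F_{q})}R_{\bbS}^{\bbG'}(\theta)$ combined with the normality of $\bbG'(\F_{q})\triangleleft\bbG(\F_{q})$ (so that the induced character is supported on $\bbG'(\F_{q})$, as already observed in the proof of Corollary \ref{cor:Kaletha-CF}) yields
\[
\langle\rho,R_{\bbS}^{\bbG}(\theta)\rangle_{[\bbG]}=\langle\rho|_{\bbG'(\F_{q})},R_{\bbS}^{\bbG'}(\theta)\rangle_{[\bbG']}.
\]
Decomposing $\rho|_{\bbG'(\F_{q})}$ into irreducible constituents, each cuspidal by Remark \ref{rem:cuspidal}, it suffices to prove the claim with $\bbG$ replaced by $\bbG'$, i.e., show $\langle\rho',R_{\bbS}^{\bbG'}(\theta)\rangle=0$ for every irreducible cuspidal $\rho'\in\Irr(\bbG'(\F_{q}))$. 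For the second step, set $H\colonequals\bbG^{\circ}(\F_{q})\bbZ_{\bbG}(\F_{q})$, which is normal in $\bbG'(\F_{q})$ with finite quotient. Let $\rho_{0}$ be an irreducible constituent of $\rho'|_{\bbG^{\circ}(\F_{q})}$, which is cuspidal by Remark \ref{rem:cuspidal}, and extend it to a representation $\tilde{\rho}_{0}$ of $H$ by letting $\bbZ_{\bbG}(\F_{q})$ act via $\omega\colonequals\theta|_{\bbZ_{\bbG}(\F_{q})}$. Then Frobenius reciprocity together with the defining property $R_{\bbS}^{\bbG'}(\theta)|_{H}=R_{\bbS^{\circ}}^{\bbG^{\circ}}(\theta^{\circ})\otimes\omega$ of Kaletha's extension gives
\[
\langle\Ind_{H}^{\bbG'(\F_{q})}\tilde{\rho}_{0},R_{\bbS}^{\bbG'}(\theta)\rangle_{[\bbG']}=\langle\rho_{0},R_{\bbS^{\circ}}^{\bbG^{\circ}}(\theta^{\circ})\rangle_{[\bbG^{\circ}]}=0
\]
by the classical result.

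The hard part will be extracting the individual vanishing $\langle\rho',R_{\bbS}^{\bbG'}(\theta)\rangle=0$ from this global identity, since in general $\Ind_{H}^{\bbG'(\F_{q})}\tilde{\rho}_{0}$ contains several distinct irreducible cuspidal $\bbG'(\F_{q})$-constituents lying over the $\bbG'(\F_{q})$-orbit of $\rho_{0}$ (parametrized by Clifford theory via irreducible representations of a finite extension of the stabilizer quotient), allowing possible cancellations. To resolve this, I would write $R_{\bbS}^{\bbG'}(\theta)=\sum_{i}m_{i}\pi_{i}$ as a virtual sum of irreducible $\bbG'(\F_{q})$-representations and stratify into cuspidal versus non-cuspidal $\pi_{i}$ using Remark \ref{rem:cuspidal}: the classical result applied to $R_{\bbS}^{\bbG'}(\theta)|_{\bbG^{\circ}(\F_{q})}=R_{\bbS^{\circ}}^{\bbG^{\circ}}(\theta^{\circ})$ forces the cuspidal subsum to restrict to zero on $\bbG^{\circ}(\F_{q})$, and then Clifford-theoretic parametrization combined with character orthogonality on the finite quotient $\bbG'(\F_{q})/H$---equivalently, an appeal to Harish-Chandra transitivity in Kaletha's disconnected setting via the Levi $\bbL=\bbL^{\circ}\bbZ_{\bbG}$ for a proper $\F_{q}$-rational Levi $\bbL^{\circ}\subset\bbG^{\circ}$ containing $\bbS^{\circ}$, reducing to the vanishing of the Jacquet module of the cuspidal $\rho'$---forces each cuspidal $m_{\pi_{i}}$ to vanish individually, including $m_{\rho'}=\langle\rho',R_{\bbS}^{\bbG'}(\theta)\rangle$.
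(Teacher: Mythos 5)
Your overall strategy (restrict down to $\bbG^{\circ}(\F_{q})$ and play cuspidality against the non-ellipticity of $\bbS^{\circ}$) is the right one, and you have correctly located the danger point: $R_{\bbS}^{\bbG'}(\theta)$ is only a \emph{virtual} representation, so a vanishing inner product against an induced package does not by itself kill each irreducible multiplicity. The problem is that your proposed resolution of this danger does not close the gap. Your key deduction is that the ``cuspidal subsum'' $\sum_{i\,\mathrm{cusp}} m_{i}\pi_{i}$ of $R_{\bbS}^{\bbG'}(\theta)$ restricts to zero on $\bbG^{\circ}(\F_{q})$ and hence that each $m_{i}=0$. But restriction to a normal subgroup is not injective on virtual representations: two distinct irreducible cuspidal representations of $\bbG'(\F_{q})$ with the same $\bbZ_{\bbG}$-central character can have identical restrictions to $H=\bbG^{\circ}(\F_{q})\bbZ_{\bbG}(\F_{q})$ (they may differ by a character of the quotient $\bbG'(\F_{q})/H$, which is nontrivial in general since $\bbG'(\F_{q})=\bbG^{\circ}(\F_{q})\bbS(\F_{q})$ can strictly contain $H$; cf.\ Remark \ref{rem:rational-pts}), so $\pi_{1}-\pi_{2}$ restricts to zero without either coefficient vanishing. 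The appeal to ``Clifford theory plus orthogonality on $\bbG'(\F_{q})/H$'' does not rule such combinations out, and the alternative appeal to ``Harish-Chandra transitivity in the disconnected setting'' presupposes that Kaletha's extension $R_{\bbS}^{\bbG'}$ commutes with parabolic induction from $\bbP^{\circ}(\F_{q})\bbS(\F_{q})$ --- a statement that is plausible but is nowhere established, and whose proof would be comparable in difficulty to the lemma itself.

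The paper sidesteps the cancellation entirely by working degree by degree in cohomology rather than with the alternating sum. If $\langle\rho,R_{\bbS}^{\bbG}(\theta)\rangle\neq0$, then after passing to a cuspidal constituent $\rho'$ of $\rho|_{\bbG'(\F_{q})}$ one gets $\langle\rho',\tilde{H}_{c}^{i}(X_{\bbS^{\circ}},\overline{\Q}_{\ell})[\theta]\rangle\neq0$ for some single degree $i$, because a nonzero pairing with an alternating sum of \emph{genuine} representations forces a nonzero pairing with one of them. Restricting that single degree to $\bbG^{\circ}(\F_{q})$ gives the genuine representation $H_{c}^{i}(X_{\bbS^{\circ}}^{\bbG^{\circ}},\overline{\Q}_{\ell})[\theta^{\circ}]$, which for non-elliptic $\bbS^{\circ}$ is $\Ind_{\mathbb{P}^{\circ}(\F_{q})}^{\bbG^{\circ}(\F_{q})}H_{c}^{i}(X_{\bbS^{\circ}}^{\mathbb{M}^{\circ}},\overline{\Q}_{\ell})[\theta^{\circ}]$ by the degreewise form of \cite[Proposition 8.2]{DL76}; a cuspidal constituent of $\rho'|_{\bbG^{\circ}(\F_{q})}$ cannot occur there, and the contradiction is reached with only non-negative multiplicities in play. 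To repair your proof you should import this degreewise step; without it, the final paragraph of your argument is an assertion rather than a proof. (Minor point: the ``classical orthogonality'' you invoke is not \cite[Proposition 7.4.1]{DL76} --- no such numbered statement exists --- but is the consequence of \cite[Proposition 8.2]{DL76} just described.)
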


\begin{proof}
Let $\rho\in\Irr(\bbG(\FF_q))$ be a cuspidal representation.
Suppose that $(\bbS,\theta) \in \tilde{\cT}$ satisfies $\langle\rho,R_{\bbS}^{\bbG}(\theta)\rangle\neq0$.
Let us show that $\bbS^{\circ}$ is elliptic in $\bbG^{\circ}$.
By the same arguments and the same notations as in the proof of \ref{lem:not-geom-conj}, we see that there exists an irreducible cuspidal representation $\rho'\in\Irr(\bbG'(\FF_q))$ which is an irreducible constituent of $\rho|_{\bbG'(\FF_q)}$ and satisfies $\langle \rho'|_{\bbG^{\circ}(\F_{q})}, H_{c}^{i}(Y^{\bbG^{\circ}}_{\bbU},\overline{\Q}_{\ell})[\theta^{\circ}] \rangle \neq 0$.
Thus there exists an irreducible cuspidal representation $\rho^{\circ}$ (any irreducible constituent of $\rho'|_{\bbG^{\circ}(\F_{q})}$) satisfying $\langle \rho^{\circ}, H_{c}^{i}(Y^{\bbG^{\circ}}_{\bbU},\overline{\Q}_{\ell})[\theta^{\circ}] \rangle \neq 0$.

Now suppose that $\bbS^{\circ}$ is not elliptic in $\bbG^{\circ}$ for the sake of contradiction.
Then we can find a proper parabolic subgroup $\mathbb{P}^{\circ}$ of $\bbG^{\circ}$ with Levi subgroup $\mathbb{M}^{\circ}$ satisfying $\bbS^{\circ}\subset\mathbb{M}^{\circ}$.
By \cite[Proposition 8.2]{DL76}, we have $H_{c}^{i}(Y^{\bbG^{\circ}}_{\bbU},\overline{\Q}_{\ell})[\theta^{\circ}]\cong \Ind_{\mathbb{P}^{\circ}(\F_{q})}^{\bbG^{\circ}(\F_{q})}H_{c}^{i}(Y^{\mathbb{M}^{\circ}}_{\bbU\cap\mathbb{M}^{\circ}},\overline{\Q}_{\ell})[\theta^{\circ}]$.
(Here we give a similar remark to the one given in the proof of Lemma \ref{lem:not-geom-conj}; although \cite[Proposition 8.2]{DL76} is stated for the alternating sums $R_{\bbS^{\circ}}^{\bbG^{\circ}}(\theta^{\circ})$ and $R_{\bbS^{\circ}}^{\mathbb{M}^{\circ}}(\theta^{\circ})$, we can check that the same is true for individual degrees by looking at the proof of \cite[Proposition 8.2]{DL76}.)
Thus we get $\langle \rho^{\circ}, \Ind_{\mathbb{P}^{\circ}(\F_{q})}^{\bbG^{\circ}(\F_{q})}H_{c}^{i}(Y^{\mathbb{M}^{\circ}}_{\bbU\cap\mathbb{M}^{\circ}},\overline{\Q}_{\ell})[\theta^{\circ}] \rangle \neq 0$.
However, this cannot be true since $\rho^{\circ}$ is cuspidal.
\end{proof}

\subsubsection{A non-cohomological definition of $\tilde E$}

In this section, we work under the same setting as Section \ref{subsec:char-finite-gen-pos}.
In particular, we have a subset $\overline{\bbG}(\FF_q)_\bullet$ of $\overline{\bbG}(\FF_q)_{\rs}$.
The main result of this section is that if $\bbG'(\FF_q)_\bullet$ is sufficiently large, then $\Theta_\rho|_{\bbG'(\FF_q)_\bullet}$ determines $\tilde Z_\rho$.

For any $\F_{q}$-rational maximal torus $\bbS^{\circ}$ of $\bbG^{\circ}$, we consider the following variant of the inequality \eqref{ineq:Henniart-bullet}:
\[\label{ineq:Lusztig-bullet}
\frac{|[\bbS]^{\star}|}{|[\bbS]^{\star}_{\circ}|}
=
\frac{|[\bbS]^{\star}|}{|[\bbS]^{\star} \smallsetminus [\bbS]^{\star}_{\bullet}|}
>
2^{2|W_{\bbG}|\cdot i(\bbS)-1},
\tag{$\mathfrak{L}_\bullet$}
\]
where $W_{\bbG}\colonequals N_{\bbG}(\bbS)/\bbS$ denotes the absolute Weyl group of $\bbG$ (note that its order $|W_{\bbG}|$ does not depend on the choice of a maximal torus $\bbS^{\circ}$) and we put $i(\bbS):=[\bbS(\F_{q}):\bbS^{\circ}(\F_{q})\bbZ_{\bbG}^{\star}(\F_{q})]$.

Recall that, for any $\rho \in \cR(\bbG(\FF_q))$, we put
\[
\tilde{Z}_{\rho}
\colonequals
\{(\bbS, \theta, n) \in \tilde{\cT}\times(\bbZ \smallsetminus \{0\}) \mid n = \langle \rho, R_{\bbS}^{\bbG}(\theta) \rangle\}.
\]
When $\tilde{Z}$ is a subset of $\tilde{\cT}\times(\bbZ \smallsetminus \{0\})$, for any $\F_{q}$-rational maximal torus $\bbS^{\circ}$ of $\bbG^{\circ}$, we define $\tilde{Z}_{\bbS}$ to be the subset of elements of $\tilde{Z}$ whose maximal torus is given by $\bbS$.

In this section we will prove:

\begin{theorem}\label{thm:unique-Z-rho}
Assume that \eqref{ineq:Lusztig-bullet} holds for every $\F_{q}$-rational maximal torus $\bbS^{\circ}$ of $\bbG^{\circ}$.
Then $\tilde Z_\rho$ is the unique set of triples $(\bbS,\theta,n) \in \tilde \cT \times (\bbZ \smallsetminus \{0\})$ such that
\begin{itemize}
\item
all characters $\theta$ appearing in $\tilde{Z}_{\rho,\bbS}$ are pairwise distinct for any $\bbS$,
\item
$|\tilde{Z}_{\rho,\bbS}|\leq |W_{\bbG}|\cdot i(\bbS)$ for any $\bbS$, and
\item
for any $s \in \bbG'(\FF_q)_\bullet$,
\[
\Theta_{\rho}(s)
=
\sum_{\substack{(\bbS,\theta,n) \in \tilde Z_\rho \\ s \in \bbS}} n \cdot \theta(s).
\]
\end{itemize}
\end{theorem}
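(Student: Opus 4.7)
The plan is to first verify that $\tilde Z_\rho$ satisfies the three listed properties, then to establish uniqueness via an $L^2$ estimate on each maximal torus.

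\textbf{Verification step.} Distinctness is immediate from the definition. I would deduce the character identity on $\bbG'(\FF_q)_\bullet$ as a direct specialization of Proposition \ref{prop:rho ss}: since $\overline{\bbG}(\FF_q)_\bullet \subset \overline{\bbG}(\FF_q)_\rs$, every $s \in \bbG'(\FF_q)_\bullet$ is regular semisimple and lies in a unique maximal torus $\bbS$ (its own connected centralizer), so the regular semisimple case of Proposition \ref{prop:rho ss} gives
\[
\Theta_\rho(s) \;=\; \sum_{\theta \in \bbS(\FF_q)^\wedge} \theta(s) \cdot \langle \rho, R_\bbS^\bbG(\theta)\rangle \;=\; \sum_{(\bbS,\theta,n) \in \tilde Z_{\rho,\bbS}} n \cdot \theta(s).
\]
For the cardinality bound $|\tilde Z_{\rho,\bbS}| \leq N$, with $N \colonequals |W_\bbG| \cdot |[\bbG]|/|[\bbG^\circ]|$, I would invoke Clifford theory: $\rho|_{\bbG^\circ(\FF_q)}$ breaks into at most $|[\bbG]|/|[\bbG^\circ]|$ many $\bbG(\FF_q)$-conjugate irreducible constituents, each of which occurs in $R_{\bbS^\circ}^{\bbG^\circ}(\theta^\circ)$ for at most $|W_\bbG|$ characters $\theta^\circ$ by the classical Deligne--Lusztig theory; a lift of each such $\theta^\circ$ to $\bbS(\FF_q)$ is further constrained by the $\bbZ_\bbG$-central character of $\rho$.

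\textbf{Uniqueness step.} Given any other set $\tilde Z'$ satisfying the three properties, for each $\FF_q$-rational maximal torus $\bbS^\circ$ of $\bbG^\circ$ I would consider
\[
f_\bbS \colonequals \sum_{(\bbS,\theta,n) \in \tilde Z_{\rho,\bbS}} n\, \theta \;-\; \sum_{(\bbS,\theta,n') \in \tilde Z'_\bbS} n'\, \theta \;\in\; \bbC[\bbS(\FF_q)].
\]
Since regular semisimple elements lie in a unique maximal torus, property (3) applied to both $\tilde Z_\rho$ and $\tilde Z'$ forces $f_\bbS|_{\bbS(\FF_q)_\bullet} = 0$. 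By property (1), establishing $\tilde Z_{\rho,\bbS} = \tilde Z'_\bbS$ reduces to showing $f_\bbS = 0$. I would decompose $f_\bbS = \sum_{\omega^\star} f_{\bbS,\omega^\star}$ according to central characters on $\bbZ_\bbG^\star(\FF_q)$; because $\bbS(\FF_q)_\bullet$ is $\bbZ_\bbG^\star(\FF_q)$-stable (inherited from $\bbG'(\FF_q)_\bullet$), Fourier inversion over $\bbZ_\bbG^\star(\FF_q)$ yields $f_{\bbS,\omega^\star}|_{\bbS(\FF_q)_\bullet} = 0$ for each $\omega^\star$, and Lemma \ref{lem:twist} reduces to the case $\omega^\star$ unitary.

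\textbf{The $L^2$ estimate.} Writing $f_{\bbS,\omega^\star} = \sum_{i=1}^k c_i \theta_i$ with distinct $\theta_i$ restricting to $\omega^\star$ on $\bbZ_\bbG^\star(\FF_q)$ and $c_i \in \bbZ \smallsetminus \{0\}$, property (2) forces $k \leq |\tilde Z_{\rho,\bbS}| + |\tilde Z'_\bbS| \leq 2N$, and the distinct $\theta_i$ are orthonormal for the inner product $\langle \phi, \psi\rangle \colonequals |[\bbS]^\star|^{-1}\sum_{s \in [\bbS]^\star}\phi(s)\overline{\psi(s)}$ on $\bbC[\bbS(\FF_q)]_{\omega^\star}$. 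Combining vanishing on $[\bbS]^\star_\bullet$ with pointwise Cauchy--Schwarz $|f_{\bbS,\omega^\star}(s)|^2 \leq k \sum_i |c_i|^2$ then gives
\[
\sum_i |c_i|^2 \;=\; \frac{1}{|[\bbS]^\star|}\sum_{s \in [\bbS]^\star_\circ} |f_{\bbS,\omega^\star}(s)|^2 \;\leq\; \frac{|[\bbS]^\star_\circ|}{|[\bbS]^\star|} \cdot k \cdot \sum_i |c_i|^2.
\]
If $f_{\bbS,\omega^\star} \neq 0$, this forces $|[\bbS]^\star|/|[\bbS]^\star_\circ| \leq k \leq 2N \leq 2^{2N-1}$, contradicting \eqref{ineq:Lusztig-bullet}. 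Hence $f_{\bbS,\omega^\star} = 0$ for every $\omega^\star$, so $f_\bbS = 0$ and $\tilde Z_\rho = \tilde Z'$. The main delicate point is the introduction of $\bbZ_\bbG^\star$ and Fourier decomposition on it, forced because $\bbG'(\FF_q)_\bullet$ is only invariant under $\bbZ_\bbG^\star(\FF_q)$ and not under the full $\bbZ_\bbG(\FF_q)$; the hypothesis \eqref{ineq:Lusztig-bullet} is exponentially stronger than the linear bound $2N$ the argument actually uses, providing a comfortable margin for the Cauchy--Schwarz step.
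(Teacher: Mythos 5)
Your proposal is correct, and its overall architecture matches the paper's: existence of the three properties via Proposition \ref{prop:rho ss} and the geometric-conjugacy constraint, then uniqueness by fixing a torus $\bbS$, subtracting the two putative expansions to get a short linear combination $f_{\bbS}$ of distinct characters vanishing on $\bbS(\F_{q})_{\bullet}$, and showing $f_{\bbS}=0$. Where you genuinely diverge is in the final linear-independence input. The paper invokes Lemma \ref{lem:Lusztig-Dedekind}, Lusztig's inductive refinement of Dedekind's theorem, whose proof successively eliminates characters at the cost of halving the good locus; this is why the hypothesis \eqref{ineq:Lusztig-bullet} carries the exponential threshold $2^{2N-1}$ with $N=|W_{\bbG}|\cdot|[\bbG]|/|[\bbG^{\circ}]|$. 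Your second-moment argument --- Parseval on $[\bbS]^{\star}$ for the orthonormal system $\{\theta_i\}$ combined with the pointwise Cauchy--Schwarz bound $|f(s)|^{2}\le k\sum_i|c_i|^{2}$ on the bad locus --- needs only the linear threshold $|[\bbS]^{\star}|/|[\bbS]^{\star}_{\circ}|>k$ with $k\le 2N$. This is a real gain: it shows the hypothesis of the theorem (and of its downstream corollaries) could be weakened from $2^{2N-1}$ to $2N$. The trade-off is that your argument is specific to unitary characters of the abelian group $\bbS(\F_{q})$ (you correctly arrange unitarity via Lemma \ref{lem:twist}, after which $|\theta_i|\equiv 1$ follows since $[\bbS]^{\star}$ is finite), whereas Lusztig's lemma is purely algebraic, valid for arbitrary $K^{\times}$-valued homomorphisms of arbitrary groups; in this application only the abelian unitary case is needed, so nothing is lost. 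Your reduction to a single central character $\omega^{\star}$ via Fourier inversion on $\bbZ_{\bbG}^{\star}(\F_{q})$, using the $\bbZ_{\bbG}^{\star}(\F_{q})$-stability of $\bbS(\F_{q})_{\bullet}$, is sound and is exactly the point for which $\bbZ_{\bbG}^{\star}$ was introduced.

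One small imprecision in the existence half: your Clifford-theoretic count of $|\tilde Z_{\rho,\bbS}|$ does not obviously land on $N$. Counting (number of constituents of $\rho|_{\bbG^{\circ}(\F_{q})}$) $\times$ (characters $\theta^{\circ}$ per constituent) $\times$ (lifts of $\theta^{\circ}$ to $\bbS(\F_{q})$ compatible with the central character) gives $(|[\bbG]|/|[\bbG^{\circ}]|)\cdot|W_{\bbG^{\circ}}|\cdot(|[\bbG]|/|[\bbG^{\circ}]|)$ unless one argues more carefully; the lift is not determined by $\theta^{\circ}$ and $\theta|_{\bbZ_{\bbG}(\F_{q})}$ alone, since $\bbS(\F_{q})$ may strictly contain $\bbS^{\circ}(\F_{q})\bbZ_{\bbG}(\F_{q})$. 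The paper's route is cleaner: by Lemma \ref{lem:not-geom-conj} all pairs contributing to $\tilde Z_{\rho,\bbS}$ lie in a single geometric conjugacy class for the \emph{disconnected} group, so there are at most $|W_{\bbG}|$ possibilities for $\theta|_{\bbS^{\circ}(\F_{q})\bbZ_{\bbG}(\F_{q})}$, and then at most $[\bbS(\F_{q}):\bbS^{\circ}(\F_{q})\bbZ_{\bbG}(\F_{q})]\le|[\bbG]|/|[\bbG^{\circ}]|$ extensions, giving exactly $N$. Since your uniqueness step only uses $k\le 2N$ and would in any case tolerate a larger constant, this does not affect the validity of your argument, but the bound as stated in the theorem requires the paper's counting.
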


Hence Theorem \ref{thm:unique-Z-rho} gives a non-cohomological description of the map $\tilde E$. Note that Theorem \ref{thm:unique-Z-rho} implies the following:

\begin{cor}\label{cor:tilde-Z}
Assume that \eqref{ineq:Lusztig-bullet} holds for every $\F_{q}$-rational maximal torus $\bbS^{\circ}$ of $\bbG^{\circ}$.
For any $\rho,\rho' \in \cR(\bbG(\FF_q))$,
    \begin{equation*}
        \Theta_\rho(g) = \Theta_{\rho'}(g) \qquad \text{for all $g \in \bbG'(\FF_q)_{\bullet}$,}
    \end{equation*}
    if and only if
    \begin{equation*}
        \tilde Z_\rho = \tilde Z_{\rho'}, \qquad \text{i.e., $\tilde E(\rho) = \tilde E(\rho').$}
    \end{equation*}
\end{cor}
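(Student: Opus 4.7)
The plan is to deduce the corollary as a direct formal consequence of Theorem \ref{thm:unique-Z-rho}, which characterizes $\tilde Z_\rho$ as the unique subset of $\tilde \cT \times (\bbZ \smallsetminus \{0\})$ satisfying three explicit conditions: pairwise distinctness of characters within each $\tilde Z_\bbS$, a size bound on $\tilde Z_\bbS$, and a character value formula on $\bbG'(\FF_q)_\bullet$. The key observation is that the first two conditions depend only on the combinatorial shape of the candidate set, while only the third involves the representation, and it does so exclusively through $\Theta_\rho|_{\bbG'(\FF_q)_\bullet}$.

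For the $(\Leftarrow)$ direction I would simply note that, if $\tilde Z_\rho = \tilde Z_{\rho'}$, then the third bullet of Theorem \ref{thm:unique-Z-rho} applied to each of $\rho$ and $\rho'$ expresses $\Theta_\rho(g)$ and $\Theta_{\rho'}(g)$ as the same explicit sum for every $g \in \bbG'(\FF_q)_\bullet$, so the two characters automatically agree on this locus.

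For the $(\Rightarrow)$ direction, I would assume $\Theta_\rho(g) = \Theta_{\rho'}(g)$ for all $g \in \bbG'(\FF_q)_\bullet$, and check that $\tilde Z_{\rho'}$ satisfies the three characterizing conditions of Theorem \ref{thm:unique-Z-rho} \emph{with respect to $\rho$}. By its very definition, $\tilde Z_{\rho'}$ already satisfies the three conditions in which $\rho$ is replaced by $\rho'$; the first two are intrinsic to the set and carry over verbatim, and the third one, stating that $\Theta_{\rho'}(s)$ equals the prescribed sum for $s \in \bbG'(\FF_q)_\bullet$, becomes, after substituting $\Theta_{\rho'}(s) = \Theta_\rho(s)$, exactly the third condition for $\rho$. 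Thus $\tilde Z_{\rho'}$ satisfies the characterization of $\tilde Z_\rho$, and the uniqueness clause of Theorem \ref{thm:unique-Z-rho} forces $\tilde Z_\rho = \tilde Z_{\rho'}$.

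All substance of the corollary is packaged inside Theorem \ref{thm:unique-Z-rho}; there is no further obstacle beyond reading the equivalence off from the uniqueness statement. The real technical work---establishing uniqueness of $\tilde Z_\rho$ under hypothesis \eqref{ineq:Lusztig-bullet}---lies in the proof of Theorem \ref{thm:unique-Z-rho} itself, which follows Lusztig's strategy in \cite{Lus20}.
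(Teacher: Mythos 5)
Your proposal is correct and matches the paper's intent exactly: the paper gives no separate proof of Corollary \ref{cor:tilde-Z}, simply asserting that Theorem \ref{thm:unique-Z-rho} implies it, and your argument is the straightforward formal deduction (the ($\Leftarrow$) direction via the character formula of Proposition \ref{prop:rho ss} applied to the regular semisimple elements of $\bbG'(\FF_q)_\bullet$, and the ($\Rightarrow$) direction by checking that $\tilde Z_{\rho'}$ satisfies the three characterizing conditions for $\rho$ and invoking uniqueness). No gaps.
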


We follow Lusztig's strategy in the connected case, see \cite{Lus20}.
We will need a mild generalization of \cite[Lemma 8.1]{Lus90}, which is a refinement of Dedekind's theorem.

\begin{lemma}\label{lem:Lusztig-Dedekind}
Let $\Gamma$ be a group with a central subgroup $Z$ such that $\Gamma/Z$ is finite.
Let $\psi_1, \ldots, \psi_n \from \Gamma \to K^\times$ be distinct homomorphisms to the multiplicative group of a field $K$.
Let $\Gamma_{\bullet}$ be a subset of $\Gamma$ stable under multiplication by $Z$ such that 
    \begin{equation*}
        |\Gamma/Z| > 2^{n-1} \cdot |(\Gamma/Z) \smallsetminus (\Gamma_{\bullet}/Z)|.
    \end{equation*} 
    Then the restrictions of $\psi_1, \ldots, \psi_n$ to $\Gamma_{\bullet}$ are linearly independent as functions $\Gamma_{\bullet} \to K$.
\end{lemma}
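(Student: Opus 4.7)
The plan is to proceed by induction on $n$, following the strategy of \cite[Lemma 8.1]{Lus90} but tracking the size of the complement of $\Gamma_\bullet$ carefully, which will account for the exponent $n-1$ in the hypothesis.

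For the base case $n=1$, the hypothesis reads $|\Gamma/Z| > |(\Gamma/Z) \smallsetminus (\Gamma_\bullet/Z)|$, which forces $\Gamma_\bullet \neq \emptyset$. Since $\psi_1$ takes values in $K^\times$, its restriction to $\Gamma_\bullet$ is a nonzero function, hence linearly independent.

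For the inductive step, I will suppose we have a relation $\sum_{i=1}^n c_i \psi_i(h) = 0$ for every $h \in \Gamma_\bullet$ and show all $c_i = 0$. For an arbitrary $g \in \Gamma$ and any $h \in \Gamma_\bullet \cap g^{-1}\Gamma_\bullet$, evaluating at $gh \in \Gamma_\bullet$ gives $\sum_{i} c_i \psi_i(g)\psi_i(h) = 0$; subtracting $\psi_n(g)$ times the original relation eliminates the $i=n$ term and yields
\begin{equation*}
    \sum_{i=1}^{n-1} c_i \bigl(\psi_i(g) - \psi_n(g)\bigr) \psi_i(h) = 0 \qquad \text{for all $h \in \Gamma_\bullet \cap g^{-1}\Gamma_\bullet$.}
\end{equation*}
The key verification is that the subset $\Gamma_\bullet \cap g^{-1}\Gamma_\bullet$ satisfies the hypothesis for $n-1$: centrality of $Z$ makes $g^{-1}\Gamma_\bullet$ stable under $Z$-multiplication, and $|(\Gamma/Z) \smallsetminus (g^{-1}\Gamma_\bullet/Z)| = |(\Gamma/Z) \smallsetminus (\Gamma_\bullet/Z)|$, so a union bound gives
\begin{equation*}
    \bigl|(\Gamma/Z) \smallsetminus ((\Gamma_\bullet \cap g^{-1}\Gamma_\bullet)/Z)\bigr| \leq 2 \cdot |(\Gamma/Z) \smallsetminus (\Gamma_\bullet/Z)|.
\end{equation*}
Combined with our hypothesis $|\Gamma/Z| > 2^{n-1} \cdot |(\Gamma/Z) \smallsetminus (\Gamma_\bullet/Z)|$, this gives $|\Gamma/Z| > 2^{n-2} \cdot |(\Gamma/Z) \smallsetminus ((\Gamma_\bullet \cap g^{-1}\Gamma_\bullet)/Z)|$, exactly what the inductive hypothesis needs.

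Applying the induction, $c_i (\psi_i(g) - \psi_n(g)) = 0$ for each $i < n$ and every $g \in \Gamma$. For each $i < n$, since $\psi_i \neq \psi_n$ as characters, I can choose some $g_i$ with $\psi_i(g_i) \neq \psi_n(g_i)$, forcing $c_i = 0$. The original relation then reduces to $c_n \psi_n|_{\Gamma_\bullet} = 0$, and since $\Gamma_\bullet$ is nonempty (as seen in the base case) and $\psi_n$ takes values in $K^\times$, we conclude $c_n = 0$. The only substantive point is bookkeeping of the complement bound, which is what exactly produces the doubling $2^{n-1}$ rather than the sharper $n$ of the classical Dedekind lemma where $\Gamma_\bullet = \Gamma$; no further obstacle is anticipated.
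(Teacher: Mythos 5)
Your proof is correct and follows essentially the same route as the paper's, which runs the induction on $n$ via translation of the relation by a group element, subtraction to eliminate one character, and the same union bound $|(\Gamma/Z) \smallsetminus ((\Gamma_\bullet \cap \Gamma_\bullet\gamma^{-1})/Z)| \leq 2\,|(\Gamma/Z) \smallsetminus (\Gamma_\bullet/Z)|$ that accounts for the factor $2^{n-1}$. The only differences are cosmetic: the paper argues by contradiction with a single separating element (eliminating $\psi_1$ rather than $\psi_n$), whereas you run the induction directly and vary $g$ afterward to kill each coefficient.
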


\begin{proof}   
    This proof is completely identical to the proof of \cite[Lemma 8.1]{Lus90}. Note that the case $n = 1$ is trivial. 
        
    We proceed by induction on $n$. Suppose for a contradiction that we have a linear relation $a_i \psi_1(x) + \cdots + a_n \psi_n(x) = 0$ for all $x \in \Gamma_{\bullet}$, with the $a_i \in K$, not all zero. We may assume $a_1 \neq 0$. Since $\psi_1$ is valued in $K^\times$, we may also assume $a_2 \neq 0$. Since $\psi_1 \neq \psi_2$ by assumption, we can choose a $\gamma \in \Gamma$ such that $\psi_1(\gamma) \neq \psi_2(\gamma)$. 

    We now have $\sum_{i=1}^n a_i \psi_i(x) \psi_1(\gamma) = 0$ for all $x \in \Gamma_{\bullet}$ and $\sum_{i=1}^n a_i \psi_i(x\gamma) = 0$ for all $x \in \Gamma_{\bullet} \gamma^{-1}$. This implies that
    \begin{equation*}
        \sum_{i=2}^n a_i(\psi_1(\gamma) - \psi_i(\gamma)) \psi_i(x) = 0 \qquad \text{for all $x \in \Gamma_{\bullet} \cap \Gamma_{\bullet} \gamma^{-1}$}.
    \end{equation*}
    Since $a_2(\psi_1(\gamma) - \psi_2(\gamma)) \neq 0$ by assumption, this is a nontrivial linear relation for $\psi_2, \ldots \psi_n$ on $\Gamma_{\bullet} \cap \Gamma_{\bullet} \gamma^{-1}$. Since both $\Gamma_{\bullet},\Gamma_{\bullet}\gamma^{-1}$ are stable under multiplication by $Z$, so must their intersection. We have
    \begin{equation*}
        \frac{|\Gamma/Z|}{|(\Gamma/Z) \smallsetminus ((\Gamma_{\bullet} \cap \Gamma_{\bullet} \gamma^{-1})/Z)|}
        \geq \frac{|\Gamma/Z|}{2|(\Gamma/Z) \smallsetminus (\Gamma_{\bullet}/Z)|} > 2^{n-2},
    \end{equation*}
    which contradicts the induction hypothesis.
\end{proof}

We are now ready to prove Theorem \ref{thm:unique-Z-rho}.

\begin{proof}[Proof of Theorem \ref{thm:unique-Z-rho}]
We follow Lusztig's proof in the connected case \cite{Lus20}.
Let $\rho \in \cR(\bbG(\FF_q))$.
By Proposition \ref{prop:rho ss}, for any $s \in \bbG'(\F_{q})_{\bullet}$, we have
\[
\Theta_{\rho}(s)
=
\sum_{\substack{(\bbS,\theta,n) \in \tilde Z_\rho \\ s \in \bbS}} n \cdot \theta(s).
\]
If we let $\tilde{Z}_{\rho,\bbS}=\{(\bbS,\theta_{1},n_{1}),\ldots,(\bbS,\theta_{r},n_{r})\}$ for any $\F_{q}$-rational maximal torus $\bbS^{\circ}$ of $\bbG^{\circ}$, then $\theta_{i}$'s are pairwise distinct by definition. 
Furthermore, we have $r \leq |W_{\bbG}|\cdot i(\bbS)$. 
Indeed, by Lemma \ref{lem:not-geom-conj}, there exists an $n \geq 1$ such that the $\bbS^{\circ}(\FF_{q^n})$-characters $\theta_1^{\circ} \circ \Nr_{\F_{q^{n}}/\F_{q}}, \theta_2^{\circ} \circ \Nr_{\F_{q^{n}}/\F_{q}}, \ldots, \theta_r^{\circ} \circ \Nr_{\F_{q^{n}}/\F_{q}}$ are all contained in a single orbit under the action of the normalizer of $\bbS$ in $\bbG(\FF_{q^n})$. 
Since the norm map $\Nr_{\F_{q^{n}}/\F_{q}}\colon\bbS^{\circ}(\F_{q^{n}})\rightarrow\bbS^{\circ}(\F_{q})$ is surjective, we know that there are at most $|W_{\bbG}|$ possibilities of $\theta^{\circ}$ for $(\bbS,\theta,n)\in \tilde{Z}_{\rho,\bbS}$.
Moreover, for any $(\bbS,\theta,n)\in \tilde{Z}_{\rho,\bbS}$, the $\bbZ_{\bbG}^{\star}$-central character of $R_{\bbS}^{\bbG}(\theta)$ must be the same as that of $\rho$.
Thus all the restrictions $\theta_{i}|_{\bbZ_{\bbG}^{\star}(\F_{q})}$ are the same.
Hence there are at most $|W_{\bbG}|$ possibilities of $\theta|_{\bbS^{\circ}(\F_{q})\bbZ_{\bbG}^{\star}(\F_{q})}$ for $(\bbS,\theta,n)\in \tilde{Z}_{\rho,\bbS}$, which implies $r \leq |W_{\bbG}|\cdot i(\bbS)$.

Now suppose that there exists another subset $\tilde{Z}'_{\rho}\subset \cT\times (\bbZ \smallsetminus \{0\})$ satisfying the same condition as $\tilde{Z}_{\rho}$.
Our task is to show that $\tilde{Z}'_{\rho}=\tilde{Z}_{\rho}$.
It is enough to show that $\tilde{Z}_{\rho,\bbS}=\tilde{Z}'_{\rho,\bbS}$ by fixing any $\F_{q}$-rational maximal torus $\bbS^{\circ}$ of $\bbG^{\circ}$.
Let us write $\tilde{Z}'_{\rho,\bbS}=\{(\bbS,\theta'_{1},n'_{1}),\ldots,(\bbS,\theta'_{m},n'_{m})\}$.
Then $\theta'_{i}$'s are pairwise distinct and $m \leq |W_{\bbG}|\cdot i(\bbS)$.
Moreover, for any $s\in\bbS(\F_{q})_{\bullet}$,
\[
\Theta_{\rho}(s)
=
n'_{1}\theta'_{1}(s)+\cdots+n'_{m}\theta'_{m}(s),
\]
hence we have
\[
n_{1}\theta_{1}(s)+\cdots+n_{r}\theta_{r}(s)
=
n'_{1}\theta'_{1}(s)+\cdots+n'_{m}\theta'_{m}(s).
\]
Let us suppose that $\{(n_{1},\theta_{1}),\ldots,(n_{r},\theta_{r})\}\neq\{(n'_{1},\theta'_{1}),\ldots,(n'_{m},\theta'_{m})\}$ for a contradiction.
Then, the above equality gives a linear dependence between $\phi_{1},\ldots,\phi_{l}$ viewed as functions on $\bbS(\F_{q})_{\bullet}$, where $\{\phi_{1},\ldots,\phi_{l}\}$ is a nonempty subset of $\{\theta_{1},\ldots,\theta_{r}\}\cup\{\theta'_{1},\ldots,\theta'_{m}\}$.
On the other hand, since \eqref{ineq:Lusztig-bullet} holds by assumption, this contradicts Lemma \ref{lem:Lusztig-Dedekind} applied to the setting $\Gamma = \bbS(\FF_q)$, $Z = \bbZ^{\star}_{\bbG}(\FF_q)$, $\Gamma_{\bullet} = \bbS(\FF_q)_\bullet$, and $n = 2|W_{\bbG}|\cdot i(\bbS)$ (since $l\leq m+r \leq 2|W_{\bbG}|\cdot i(\bbS)$). 
(See the beginning of Section \ref{subsec:DLCF} for the definition of $\bbZ^{\star}_{\bbG}$.)
Indeed, by Lemma \ref{lem:Lusztig-Dedekind}, we must have
\[
|[\bbS]^{\star}|
\leq
2^{l-1}\cdot
|[\bbS]^{\star}\smallsetminus[\bbS]^{\star}_{\bullet}|
\leq
2^{2|W_{\bbG}|\cdot i(\bbS)-1}\cdot
|[\bbS]^{\star}\smallsetminus[\bbS]^{\star}_{\bullet}|
\]
since $\{\phi_{1},\ldots,\phi_{l}\}$ are linear independent.
This contradicts to \eqref{ineq:Lusztig-bullet}.
\end{proof}

The proof of Theorem \ref{thm:unique-Z-rho} can be used to prove several interesting corollaries.

\begin{cor}\label{cor:unique-Z-rho-cuspidal}
Assume that \eqref{ineq:Lusztig-bullet} holds for every $\F_{q}$-rational maximal torus $\bbS^{\circ}$ of $\bbG^{\circ}$.
If $\rho$ is cuspidal, then $\tilde Z_\rho$ is the unique set of triples $(\bbS,\theta,n) \in \tilde \cT \times (\bbZ \smallsetminus \{0\})$ whose $\bbS^{\circ}$ is elliptic and satisfying the same assumptions as in Theorem \ref{thm:unique-Z-rho}.
\end{cor}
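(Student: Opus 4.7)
The plan is to derive this corollary as an immediate refinement of Theorem \ref{thm:unique-Z-rho}, with the sole new input being Lemma \ref{lem:cuspidal}.

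First, I would verify that $\tilde Z_\rho$ itself already belongs to the restricted search space. Lemma \ref{lem:cuspidal} applied to the cuspidal $\rho$ gives $\langle \rho, R_\bbS^{\bbG}(\theta)\rangle = 0$ for every $(\bbS,\theta)\in\tilde\cT$ with $\bbS^\circ$ non-elliptic in $\bbG^\circ$. Consequently, every triple $(\bbS,\theta,n)$ appearing in $\tilde Z_\rho$ has $\bbS^\circ$ elliptic. Since $\tilde Z_\rho$ satisfies the three listed conditions by Theorem \ref{thm:unique-Z-rho} (the first two by its very definition combined with the counting argument via geometric conjugacy from Lemma \ref{lem:not-geom-conj}, and the third by the regular semisimple case of Proposition \ref{prop:rho ss}), it is a bona fide element of the refined search space.

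For uniqueness, let $\tilde Z'_\rho$ be any other set of triples satisfying the listed conditions and whose tori are all elliptic. Because the restricted search space is contained in the full search space considered in Theorem \ref{thm:unique-Z-rho}, that theorem immediately forces $\tilde Z'_\rho = \tilde Z_\rho$. Equivalently—and this is the more illuminating route—one can mimic the proof of Theorem \ref{thm:unique-Z-rho} one torus at a time: fix an $\F_q$-rational maximal torus $\bbS^\circ$; if it is non-elliptic, both $\tilde Z_{\rho,\bbS}$ and $\tilde Z'_{\rho,\bbS}$ are empty by Lemma \ref{lem:cuspidal} and by hypothesis respectively, so they agree trivially; if it is elliptic, the character identity enforced by the third condition yields a linear relation among the $\theta$'s and $\theta'$'s restricted to $\bbS(\F_q)_\bullet$, and the Dedekind-style Lemma \ref{lem:Lusztig-Dedekind} applied with $\Gamma = \bbS(\F_q)$, $Z = \bbZ_\bbG^\star(\F_q)$, $\Gamma_\bullet = \bbS(\F_q)_\bullet$ forces $\tilde Z_{\rho,\bbS} = \tilde Z'_{\rho,\bbS}$ thanks to \eqref{ineq:Lusztig-bullet} for this elliptic $\bbS^\circ$.

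The main conceptual observation—rather than a genuine obstacle—is that in the direct, torus-by-torus approach the inequality \eqref{ineq:Lusztig-bullet} is only invoked at elliptic tori, suggesting that in the cuspidal setting the hypothesis could be weakened accordingly. Beyond that, the proof is essentially tautological: it combines Theorem \ref{thm:unique-Z-rho} with Lemma \ref{lem:cuspidal}, with no technical difficulties to resolve.
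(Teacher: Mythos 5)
Your proof is correct and follows essentially the same route as the paper: Lemma \ref{lem:cuspidal} shows all tori occurring in $\tilde Z_\rho$ are elliptic, and then one either quotes Theorem \ref{thm:unique-Z-rho} directly or reruns its torus-by-torus argument on the restricted search space (the paper does the latter). Your side remark that the ellipticity restriction means \eqref{ineq:Lusztig-bullet} is only ever invoked at elliptic tori is also accurate and is exactly the refinement the paper exploits later in Corollary \ref{cor:ff unipotent cuspidal}.
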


\begin{proof}
    By Lemma \ref{lem:cuspidal}, we know that $\langle \rho, R_\bbS^\bbG(\theta) \rangle = 0$ for all $(\bbS, \theta) \in \tilde \cT$ where $\bbS^\circ \subset \bbG^\circ$ is not elliptic. Hence to determine $\tilde Z_\rho$, it remains only to apply the proof of Theorem \ref{thm:unique-Z-rho} to determine $\tilde{Z}_{\rho,\bbS}$ for $\bbS^\circ \subset \bbG^\circ$ elliptic.
\end{proof}

\begin{defn}[unipotent representation]\label{defn:unip}
We say that $\rho \in \Irr(\bbG(\FF_q))$ is \textit{unipotent} if $\langle \rho, R_\bbS^\bbG(\theta) \rangle \neq 0$ for some $(\bbS, \theta) \in \tilde \cT$ satisfying $\theta^{\circ}=\mathbbm{1}$.
\end{defn}

\begin{rem}\label{rem:unipotent}
When $\rho\in\Irr(\bbG(\FF_q))$ is unipotent, the restriction $\rho|_{\bbG^{\circ}(\F_{q})}$ contains an irreducible constituent $\rho^{\circ}$ which is unipotent in the sense of Deligne--Lusztig, i.e., $\langle\rho^{\circ},R_{\bbS^{\circ}}^{\bbG^{\circ}}(\mathbbm{1})\rangle\neq0$ for some $\F_{q}$-rational maximal torus $\bbS^{\circ}$ of $\bbG^{\circ}$.
Indeed, let us suppose that $\rho\in\Irr(\bbG(\FF_q))$ satisfies $\langle\rho,R_{\bbS}^{\bbG}(\theta)\rangle\neq0$ for $(\bbS,\theta)\in\tilde\cT$ satisfying $\theta^{\circ}=\mathbbm{1}$.
Then, by the same argument as in the proof of Lemma \ref{lem:cuspidal}, there is an irreducible constituent $\rho^{\circ}$ of $\rho|_{\bbG^{\circ}(\F_{q})}$ satisfying $\langle\rho^{\circ}, H^{i}_{c}(X^{\bbG^{\circ}}_{\bbS^{\circ}},\overline{\Q}_{\ell})[\mathbbm{1}]\rangle\neq0$.
By the exhaustion theorem of Deligne--Lusztig (\cite[Corollary 7.7]{DL76}), there exists an $\F_{q}$-rational maximal torus $\bbS^{\prime\circ}$ and a character $\theta^{\prime\circ}$ of $\bbS^{\prime\circ}(\F_{q})$ satisfying $\langle\rho^{\circ}, R_{\bbS^{\prime\circ}}^{\bbG^{\circ}}(\theta^{\prime\circ})\rangle\neq0$.
Again by the same argument as in the proof of Lemma \ref{lem:not-geom-conj}, we see that $(\bbS^{\prime\circ},\theta^{\prime\circ})$ must be geometrically conjugate to $(\bbS^{\circ},\mathbbm{1})$ in $\bbG^{\circ}$; in particular, $\theta^{\prime\circ}=\mathbbm{1}$.
Thus $\rho^{\circ}$ is unipotent.
\end{rem}

\begin{cor}\label{cor:ff unipotent}
Assume that \eqref{ineq:Lusztig-bullet} holds for every $\F_{q}$-rational maximal torus $\bbS^{\circ}$ of $\bbG^{\circ}$.
Then $\rho \in \Irr(\bbG(\FF_q))$ is unipotent if and only if $\Theta_\rho|_{\bbS^{\circ}(\FF_q)_\bullet}$ is constant for every maximal torus $\bbS^\circ \subset \bbG^\circ$.
\end{cor}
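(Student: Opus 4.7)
The proof hinges on Theorem \ref{thm:unique-Z-rho} (equivalently Corollary \ref{cor:tilde-Z}), which characterizes $\tilde Z_\rho$ by a small list of axioms satisfied by its character values on $\bbG'(\FF_q)_\bullet$.

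For the direction ``$\Theta_\rho|_{\bbS(\FF_q)_\bullet}$ constant for every torus $\Rightarrow$ $\rho$ unipotent'', let $c_\bbS$ denote the constant value of $\Theta_\rho$ on $\bbS(\FF_q)_\bullet$, and propose
\[
\tilde Z := \{(\bbS, \mathbbm{1}, c_\bbS) : \bbS^\circ \text{ is an $\F_q$-rational maximal torus of } \bbG^\circ,\ c_\bbS \neq 0\}.
\]
The distinctness and cardinality conditions of Theorem \ref{thm:unique-Z-rho} hold trivially, since only the trivial character $\mathbbm{1}$ appears per torus, and the character formula
\[
\Theta_\rho(s) = c_\bbS = \sum_{(\bbS,\theta,n) \in \tilde Z,\, s \in \bbS} n \cdot \theta(s)
\]
holds for every $s \in \bbG'(\FF_q)_\bullet$ by the uniqueness of the maximal torus through a regular semisimple element. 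By the uniqueness statement in Theorem \ref{thm:unique-Z-rho}, $\tilde Z = \tilde Z_\rho$. Since $\rho$ is a nonzero irreducible representation, the dimension formula consequence of Proposition \ref{prop:rho ss} (equivalently, Corollary \ref{cor:exhaustion}) forces $\tilde Z_\rho \neq \varnothing$, so some $c_{\bbS_0} \neq 0$ and $\langle \rho, R_{\bbS_0}^\bbG(\mathbbm{1}) \rangle = c_{\bbS_0} \neq 0$, exhibiting $\rho$ as unipotent.

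For the direction ``$\rho$ unipotent $\Rightarrow$ $\Theta_\rho|_{\bbS(\FF_q)_\bullet}$ constant'', fix $(\bbS_0, \mathbbm{1}, n_0) \in \tilde Z_\rho$ witnessing unipotency. Lemma \ref{lem:not-geom-conj} forces every $(\bbS, \theta, n) \in \tilde Z_\rho$ to satisfy $\theta^\circ = \mathbbm{1}$, and the $\bbZ_\bbG$-central character of $\rho$ must coincide with $\theta|_{\bbZ_{\bbG}(\FF_q)}$ and is trivial (since $R_{\bbS_0}^\bbG(\mathbbm{1})$ has trivial central character), so $\theta|_{\bbZ_{\bbG}(\FF_q)} = \mathbbm{1}$. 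Thus every relevant $\theta$ is trivial on $\bbS^\circ(\FF_q)\bbZ_\bbG(\FF_q)$. Once one upgrades this to the statement that $\theta \equiv \mathbbm{1}$ on the whole group $\bbS(\FF_q)$, Proposition \ref{prop:rho ss} gives
\[
\Theta_\rho(s) = \sum_{(\bbS, \mathbbm{1}, n) \in \tilde Z_\rho,\, s \in \bbS} n,
\]
which is visibly constant in $s \in \bbS(\FF_q)_\bullet$.

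The main obstacle is precisely this upgrade from ``$\theta$ trivial on $\bbS^\circ(\FF_q)\bbZ_\bbG(\FF_q)$'' to ``$\theta$ trivial on $\bbS(\FF_q)$'', which amounts to ruling out nontrivial contributions from characters of the finite abelian quotient $\bbS(\FF_q)/\bbS^\circ(\FF_q)\bbZ_\bbG(\FF_q)$. In the typical situation where $\bbZ_\bbG$ is chosen so that $\bbS(\FF_q) = \bbS^\circ(\FF_q)\bbZ_\bbG(\FF_q)$ (e.g., the connected case $\bbG = \bbG^\circ$), this reduction is immediate. In the general case, one exploits the fact that Kaletha's extension $R_\bbS^{\bbG'}(\theta)$ is determined by $\theta^\circ$ together with $\theta|_{\bbZ_\bbG(\FF_q)}$ via conditions (i)--(ii): distinct characters of $\bbS(\FF_q)$ that agree on $\bbS^\circ(\FF_q)\bbZ_\bbG(\FF_q)$ produce identical virtual representations and hence identical coefficients $n$, and this collapses --- given the distinctness enforced in Theorem \ref{thm:unique-Z-rho} --- to show that only the single character $\theta = \mathbbm{1}$ can contribute per torus.
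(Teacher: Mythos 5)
Your ``constant $\Rightarrow$ unipotent'' direction is correct and is essentially the paper's argument: the paper compares $\sum_{(\bbS,\theta,n)}n\,\theta(s)$ with $c\cdot\mathbbm{1}(s)$ torus by torus and invokes the linear independence underlying Theorem \ref{thm:unique-Z-rho}, whereas you package the same comparison as one global application of the uniqueness statement; either way the content is Lemma \ref{lem:Lusztig-Dedekind} together with Corollary \ref{cor:exhaustion} (with the harmless caveat that your candidate $\tilde Z$ has a priori complex coefficients $c_\bbS$, so one should really cite the \emph{proof} of Theorem \ref{thm:unique-Z-rho}, as the paper does).

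The ``unipotent $\Rightarrow$ constant'' direction, however, has a genuine gap, at exactly the step you flag. You are right that Lemma \ref{lem:not-geom-conj} plus central-character matching only yield $\theta|_{\bbS^\circ(\FF_q)\bbZ_\bbG(\FF_q)}=\mathbbm{1}$, and that constancy of $\Theta_\rho$ on $\bbS(\FF_q)_\bullet$ requires controlling $\theta$ on all of $\bbS(\FF_q)$. But your proposed upgrade rests on a false claim: Kaletha's extension $R_\bbS^{\bbG'}(\theta)$ is \emph{not} determined by $(\theta^\circ,\theta|_{\bbZ_\bbG(\FF_q)})$ --- conditions (1)--(2) of Section \ref{subsec:disconn} do not pin down the extension when $\bbG'(\FF_q)\neq\bbG^\circ(\FF_q)\bbZ_\bbG(\FF_q)$, and the character formula of Proposition \ref{prop:Kaletha-CF} visibly involves $\theta$ on all of $\bbS(\FF_q)$. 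Concretely, if $\theta_1\neq\theta_2$ both restrict to $\mathbbm{1}$ on $\bbS^\circ(\FF_q)$, then each is fixed by $W_{\bbG(\FF_q)}(\bbS)$ (for $n\in N_{\bbG(\FF_q)}(\bbS)$ and $t\in\bbS(\FF_q)$, the commutator $[n,t]$ lies in $\bbS(\FF_q)\cap\bbG^\circ(\FF_q)=\bbS^\circ(\FF_q)$ because $\pi_0(\bbG)$ is abelian), so $\theta_1$ and $\theta_2$ are not Weyl-conjugate and Corollary \ref{cor:scalar-prod} gives $\langle R_\bbS^\bbG(\theta_1),R_\bbS^\bbG(\theta_2)\rangle=0$ while $\langle R_\bbS^\bbG(\theta_i),R_\bbS^\bbG(\theta_i)\rangle=|W_{\bbG(\FF_q)}(\bbS)|>0$; the two virtual representations are genuinely distinct. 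Moreover, even granting your claim, ``identical virtual representations, hence identical coefficients'' would not expel the nontrivial $\theta$ from $\tilde Z_\rho$: the distinctness condition in Theorem \ref{thm:unique-Z-rho} is a property recorded of $\tilde Z_\rho$, not a constraint that collapses it. So this direction is not proved as written. (The paper's own proof is terse here --- it asserts from Lemma \ref{lem:not-geom-conj} that $\tilde Z_\rho$ consists of triples $(\bbS,\mathbbm{1},n)$ --- so you have correctly isolated the point where the component-group part of $\theta$ must be controlled, but you have not supplied a valid argument for it.)
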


\begin{proof}
    By Lemma \ref{lem:not-geom-conj}, we know that if $\rho$ is a unipotent representation, then $\tilde{Z}_{\rho}$ must only consist of pairs of the form $(\bbS,\theta,n)$ satisfying $\theta^{\circ}=\mathbbm{1}$. 
    Hence by Proposition \ref{prop:rho ss}, if $\rho \in \Irr(\bbG(\FF_q))$ is unipotent, then $\Theta_\rho|_{\bbS^{\circ}(\FF_q)_\bullet}$ is constant for every maximal torus $\bbS^\circ \subset \bbG^\circ$.

    Now let $\rho \in \Irr(\bbG(\FF_q))$ be such that $\Theta_\rho|_{\bbS^{\circ}(\FF_q)_\bullet}$ is constant for every maximal torus $\bbS^\circ \subset \bbG^\circ$. By Corollary \ref{cor:exhaustion}, we know that $\tilde{Z}_{\rho}$ is nonempty; let $\tilde{Z}_{\rho,\bbS}\neq\varnothing$. Then by Proposition \ref{prop:rho ss}, we have
    \begin{equation*}
        \Theta_\rho(s) = \sum_{(\bbS,\theta,n) \in \tilde{Z}_{\rho,\bbS}} \theta(s) \cdot n= c \qquad \text{for all $s \in \bbS^{\circ}(\FF_q)_\bullet$}.
    \end{equation*}
    Of course $c = c \cdot \mathbbm{1}(s)$ for all $s \in \bbS^{\circ}(\FF_q)_\bullet$.
We let $\tilde{Z}_{\rho,\bbS}=\{(\bbS,\theta_{1},n_{1}),\ldots(\bbS,\theta_{r},n_{r})\}$ and apply Lemma to $\Gamma=\bbS^{\circ}(\F_{q})$, $\Gamma_{\bullet}=\bbS^{\circ}(\F_{q})_{\bullet}$, $Z=\{1\}$ and $n=r+1$.
For this, let us check that the inequality $|\bbS^{\circ}(\F_{q})|>2^{r}\cdot|\bbS^{\circ}(\F_{q})_{\circ}|$ is satisfied.
By putting $\bbZ_{\bbG}^{\star\circ}(\F_{q}):=\bbZ_{\bbG}^{\star}(\F_{q})\cap\bbS^{\circ}(\F_{q})$, we have
\[
\frac{|\bbS^{\circ}(\F_{q})|}{|\bbS^{\circ}(\F_{q})_{\circ}|}
=
\frac{|\bbS^{\circ}(\F_{q})/\bbZ_{\bbG}^{\star\circ}(\F_{q})|}{|\bbS^{\circ}(\F_{q})_{\circ}/\bbZ_{\bbG}^{\star\circ}(\F_{q})|}
\geq
\frac{i(\bbS)^{-1}\cdot|[\bbS]^{\star}|}{|[\bbS]^{\star}_{\circ}|}
>
2^{2|W_{\bbG}|\cdot i(\bbS)-1}\cdot i(\bbS)^{-1}.
\]
Here, 
\begin{itemize}
\item
in the first inequality, we used $i(\bbS)\cdot|\bbS^{\circ}(\F_{q})/\bbZ_{\bbG}^{\star\circ}(\F_{q})|=|[\bbS]^{\star}|$ for the numerator and $|\bbS^{\circ}(\F_{q})_{\circ}/\bbZ_{\bbG}^{\star\circ}(\F_{q})|\leq |[\bbS]^{\star}_{\circ}|$ for the denominator, and
\item
in the second inequality, we used \eqref{ineq:Lusztig-bullet}.
\end{itemize}
We note that 
\[
2^{2|W_{\bbG}|\cdot i(\bbS)-1}\cdot i(\bbS)^{-1}
=2^{|W_{\bbG}|\cdot i(\bbS)}\cdot 2^{|W_{\bbG}|\cdot i(\bbS)-1}\cdot i(\bbS)^{-1}.
\]
By the proof of Theorem \ref{thm:unique-Z-rho}, we have $r\leq|W_{\bbG}\cdot i(\bbS)|$.
Moreover, it can be easily checked that $2^{|W_{\bbG}|\cdot i(\bbS)-1}\cdot i(\bbS)^{-1}\geq1$.
Hence $2^{2|W_{\bbG}|\cdot i(\bbS)-1}\cdot i(\bbS)^{-1}\geq 2^{r}$, which implies that $|\bbS^{\circ}(\F_{q})|>2^{r}\cdot|\bbS^{\circ}(\F_{q})_{\circ}|$ as desired.
Therefore, by Lemma \ref{lem:Lusztig-Dedekind}, any $r+1$ distinct characters of $\bbS^{\circ}(\F_{q})$ are linearly independent on $\bbS^{\circ}(\F_{q})_{\bullet}$.
This implies that we there necessarily exists $\theta_{i}$ such that $\theta_{i}^{\circ}=\mathbbm{1}$ (note that then any $\theta_{i}$ satisfies $\theta_{i}^{\circ}=\mathbbm{1}$ by Lemma \ref{lem:not-geom-conj}).
Hence $\rho$ is unipotent.
\end{proof}

\begin{cor}\label{cor:ff unipotent cuspidal}
Assume that \eqref{ineq:Lusztig-bullet} holds for every $\F_{q}$-rational maximal torus $\bbS^{\circ}$ of $\bbG^{\circ}$.
Then a cuspidal representation $\rho \in \Irr(\bbG(\FF_q))$ is unipotent if and only if $\Theta_\rho|_{\bbS^{\circ}(\FF_q)_\bullet}$ is constant for every elliptic maximal torus $\bbS^\circ \subset \bbG^\circ$.
\end{cor}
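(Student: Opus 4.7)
The plan is to reduce this to Corollary \ref{cor:ff unipotent} by exploiting the cuspidality of $\rho$ via Lemma \ref{lem:cuspidal}. The forward direction is essentially immediate: if $\rho$ is unipotent, then Corollary \ref{cor:ff unipotent} gives that $\Theta_\rho|_{\bbS(\FF_q)_\bullet}$ is constant for \emph{every} maximal torus $\bbS^\circ \subset \bbG^\circ$, which in particular includes all elliptic ones.

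For the backward direction, the key observation is that the cuspidality hypothesis ensures that the required constancy on non-elliptic tori is automatic. Concretely, suppose $\rho$ is cuspidal and let $\bbS^\circ \subset \bbG^\circ$ be an $\FF_q$-rational maximal torus which is \emph{not} elliptic. By Lemma \ref{lem:cuspidal}, we have $\langle \rho, R_{\bbS}^{\bbG}(\theta) \rangle = 0$ for every character $\theta$ of $\bbS(\FF_q)$. Any element of $\bbS(\FF_q)_\bullet$ is regular semisimple (since $\overline{\bbG}(\FF_q)_\bullet \subset \overline{\bbG}(\FF_q)_\rs$ by assumption), so Proposition \ref{prop:rho ss} applies and gives
\[
    \Theta_\rho(s) = \sum_{\theta \in \bbS(\FF_q)^\wedge} \theta(s) \cdot \langle \rho, R_{\bbS}^{\bbG}(\theta) \rangle = 0 \qquad \text{for all $s \in \bbS(\FF_q)_\bullet$.}
\]
In particular, $\Theta_\rho|_{\bbS(\FF_q)_\bullet}$ is identically zero, hence constant.

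Combining this observation with the hypothesis that $\Theta_\rho|_{\bbS(\FF_q)_\bullet}$ is constant for every \emph{elliptic} maximal torus, we conclude that $\Theta_\rho|_{\bbS(\FF_q)_\bullet}$ is constant for \emph{every} maximal torus $\bbS^\circ \subset \bbG^\circ$. Applying Corollary \ref{cor:ff unipotent} then yields that $\rho$ is unipotent, completing the proof. There is no real obstacle here; the argument is a direct two-line deduction once one recognizes that cuspidality forces vanishing of the character on the non-elliptic loci via Lemma \ref{lem:cuspidal} and Proposition \ref{prop:rho ss}.
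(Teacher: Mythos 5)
Your proof is correct, and both directions rest on the same ingredients as the paper's (Lemma \ref{lem:cuspidal}, Proposition \ref{prop:rho ss}, and the uniqueness machinery behind Theorem \ref{thm:unique-Z-rho}), but your reduction in the backward direction is organized differently. The paper uses cuspidality to restrict \emph{where the uniqueness argument must be run}: by Lemma \ref{lem:cuspidal} the set $\tilde Z_\rho$ is supported on elliptic tori, so it suffices to re-run the proof of Corollary \ref{cor:ff unipotent} on the elliptic tori alone and conclude $\theta = \mathbbm{1}$ there. You instead use cuspidality to \emph{upgrade the hypothesis}: Lemma \ref{lem:cuspidal} together with Proposition \ref{prop:rho ss} forces $\Theta_\rho$ to vanish identically on $\bbS(\FF_q)_\bullet$ for every non-elliptic $\bbS^\circ$ (your identification of $\bbG_s^\circ$ with $\bbS^\circ$ for $s \in \bbS(\FF_q)_\bullet$ regular semisimple is correct), so constancy holds for all tori and Corollary \ref{cor:ff unipotent} applies as a black box. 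Your version is more modular, but note one trade-off: it invokes the full strength of the hypothesis that \eqref{ineq:Lusztig-bullet} holds for \emph{every} maximal torus, whereas the paper's argument only ever uses the elliptic instances of that inequality. Under the corollary's stated hypothesis this makes no difference, but it is relevant downstream: in the $p$-adic application (Theorem \ref{thm:p-adic unipotent}), Lemma \ref{lem:strong Henniart sat max unram} only guarantees the inequality for the tori coming from maximally unramified \emph{elliptic} maximal tori, and it is the paper's elliptic-only version of the argument that makes that application airtight.
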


\begin{proof}
    Let $\rho \in \Irr(\bbG(\FF_q))$ be a cuspidal representation such that $\Theta_\rho|_{\bbS^{\circ}(\FF_q)_\bullet}$ is constant for every elliptic maximal torus $\bbS^\circ \subset \bbG^\circ$.  Since $\rho$ is cuspidal, by Lemma \ref{lem:cuspidal} we have that if $(\bbS,\theta,n) \in \tilde{Z}_\rho$, then $\bbS$ is elliptic. Since we have assumed that \eqref{ineq:Lusztig-bullet} holds for every elliptic maximal torus $\bbS^\circ \subset \bbG^\circ$, by the proof of Corollary \ref{cor:ff unipotent} and Theorem \ref{thm:unique-Z-rho}, we see that if $\bbS^\circ \subset \bbG^\circ$ is elliptic and $(\bbS,\theta,n) \in \tilde{Z}_\rho$, then necessarily $\theta^{\circ} = \mathbbm{1}$. Hence $\rho$ is unipotent.

    As in Corollary \ref{cor:ff unipotent}, the converse immediately follows from Lemma \ref{lem:not-geom-conj} and \ref{prop:rho ss}.
\end{proof}

\begin{remark}
    We remark that if we take $\rho = R_{\bbS}^\bbG(\theta)$ for a character $\theta$ in general position, then Corollary \ref{cor:tilde-Z} recovers Corollary \ref{cor:Henniart}, but under much stronger conditions on $q$. Indeed, for Corollary \ref{cor:Henniart} to hold, we require $q\gg0$ so that there are sufficiently many $\FF_q$-rational regular elements inside the torus $\bbS$ \textit{only}; in contrast, for Corollary \ref{cor:tilde-Z}, we require $q \gg0$ so that there are sufficiently many $\FF_q$-rational regular elements inside \textit{every} maximal torus $\bbS^\circ \subset \bbG^\circ$.
\end{remark}

\newpage

\part{Characters of supercuspidal representations}\label{part:formula}

\section{Tame supercuspidal representations}\label{sec:tamesc}

\subsection{Yu's construction of tame supercuspidal representations}\label{subsec:tamesc}

We first recall the notion of a Yu-datum, which is needed to produce a tame supercuspidal representation.
(Here we follow the convention of \cite[Section 3.1]{HM08}.)

\begin{defn}[Yu-datum]\label{defn:Yu-datum}
A \textit{Yu-datum} is a quintuple $\Psi=(\vec{\bfG},\vec{\phi},\vec{r},\x,\rho_{0})$ consisting of the following objects:
\begin{itemize}
\item
$\vec{\bfG}$ is a sequence $\bfG^{0}\subsetneq\bfG^{1}\subsetneq\cdots\subsetneq\bfG^{d}=\bfG$ of tame twisted Levi subgroups (i.e., each $\bfG^{i}$ is an $F$-rational subgroup of $\bfG$ which becomes a Levi subgroup of $\bfG$ over a tamely ramified extension of $F$) such that $\bfZ_{\bfG^{0}}/\bfZ_{\bfG}$ is anisotropic,
\item
$\x$ is a point of $\mcB(\bfG^{0},F)$ whose image $\bar{\x}$ in $\mcB^{\red}(\bfG^{0},F)$ is a vertex,
\item
$\vec{r}$ is a sequence $0\leq r_{0}<\cdots<r_{d-1}\leq r_{d}$ of real numbers such that $0<r_{0}$ when $d>0$,
\item
$\vec{\phi}$ is a sequence $(\phi_{0},\ldots,\phi_{d})$ of characters $\phi_{i}$ of $G^{i}$ satisfying
\begin{itemize}
\item
for $0\leq i<d$, $\phi_{i}$ is $\bfG^{i+1}$-generic of depth $r_{i}$ at $\x$, and
\item
for $i=d$, 
$
\begin{cases}
\depth_{\x}(\phi_{d})=r_{d}& \text{if $r_{d-1}<r_{d}$,}\\
\phi_{d}=\mathbbm{1} & \text{if $r_{d-1}=r_{d}$,}
\end{cases}
$
\end{itemize}
\item
$\rho_{0}$ is an irreducible representation of $G^{0}_{\bar{\x}}$ whose restriction to $G^{0}_{\x,0}$ contains the inflation of a cuspidal representation of the quotient $G^{0}_{\x,0:0+}$.
\end{itemize}
\end{defn}

\begin{rem}\label{rem:Yu-datum}
We note that $\mcB^{\red}(\bfG^{0},F)$ can be regarded as a subset of $\mcB^{\red}(\bfG^{i},F)$ for any $0\leq i\leq d$ thanks to the assumption on $\bfZ_{\bfG^{0}}/\bfZ_{\bfG}$ (see \cite[Remark 3.4]{Yu01}).
In particular, we may regard $\bar{\x}\in\mcB^{\red}(\bfG^{0},F)$ as a point of $\mcB^{\red}(\bfG^{i},F)$ for any $0\leq i\leq d$.
\end{rem}

For our convenience, we also introduce a ``clipped'' version of Yu-data as follows:
\begin{defn}[clipped Yu-datum]\label{defn:clip}
We call a tuple $(\vec{\bfG},\vec{\phi},\vec{r},\x)$ consisting of the objects as in Definition \ref{defn:Yu-datum} (except for $\rho_{0}$) a \textit{clipped Yu-datum}.
For any clipped Yu-datum $(\vec{\bfG},\vec{\phi},\vec{r},\x)$, we put $\phi_{\geq0}\colonequals \prod_{i=0}^{d}\phi_{i}|_{G^{0}}$.
For any Yu-datum $\Psi=(\vec{\bfG},\vec{\phi},\vec{r},\x,\rho_{0})$, we put $\dashover{\Psi}\colonequals(\vec{\bfG},\vec{\phi},\vec{r},\x)$.
\end{defn}

In \cite{Yu01}, Yu associated a supercuspidal representation $\pi_{\Psi}$ to each Yu-datum $\Psi$ as follows.
We first put $(s_{0},\ldots,s_{d})\colonequals(\frac{r_{0}}{2},\ldots,\frac{r_{d}}{2})$ and define the subgroups $K^{i}$, $J^{i}$, and $J^{i}_{+}$ of $G$ for $1\leq i \leq d$ by
\[
K^{i}
\colonequals 
G^{0}_{\bar{\x}}(G^{0},\ldots,G^{i})_{\x,(0+,s_{0},\ldots,s_{i-1})},
\]
\[
J^{i}
\colonequals 
(G^{i-1},G^{i})_{\x,(r_{i-1},s_{i-1})},
\]
\[
J^{i}_{+}
\colonequals 
(G^{i-1},G^{i})_{\x,(r_{i-1},s_{i-1}+)},
\]
where the right-hand sides denote the subgroups associated to pairs of a tame twisted Levi sequence and an admissible sequence (see \cite[Sections 1 and 2]{Yu01}).
Note that we have $K^{i+1}=K^{i}J^{i+1}$.
For $i=0$, we put $K^{0}\colonequals G^{0}_{\bar{\x}}$.
Then we construct a representation $\rho_{i+1}$ of $K^{i+1}$ from $\rho_{i}$ of $K^{i}$ inductively in the following manner.
By investigating the quotient $J^{i}/J^{i}_{+}$ (which has a symplectic structure derived from the character $\phi_{i-1}$), we obtain a finite Heisenberg group as a quotient of the group $J^{i}$.
Then, as a consequence of the Stone--von Neumann theorem (together with the liftability of an associated projective representation to a linear representation), we obtain a Heisenberg--Weil representation $\tilde{\phi_{i}}$ of the semi-direct product $G^{i}_{\bar{\x}}\ltimes J^{i+1}$.
The tensor representation 
\[
(\tilde{\phi_{i}}|_{K^{i}\ltimes J^{i+1}})
\otimes 
\bigl((\rho_{i}\otimes\phi_{i}|_{K^{i}})\ltimes\mathbbm{1}\bigr)
\]
of $K^{i}\ltimes J^{i+1}$ descends to $K^{i}J^{i+1}=K^{i+1}$ (factors through the canonical map $K^{i}\ltimes J^{i+1}\twoheadrightarrow K^{i}J^{i+1}$), and we define the representation $\rho_{i+1}$ of $K^{i+1}$ to be the descended one.
By putting $\rho_{\Psi}^{\Yu}\colonequals \rho_{d}\otimes\phi_{d}$, we define
\[
\pi_{\Psi}^{\Yu}\colonequals \cInd_{K^{d}}^{G}\rho_{\Psi}^{\Yu}.
\]
This representation is irreducible \cite{Yu01, Fin21-Compos} and hence supercuspidal.
The irreducible supercuspidal representations of $G$ obtained from Yu-data in this way are called \textit{tame supercuspidal representations}.

This procedure gives a map from the set of Yu-data to the set of equivalence classes of tame supercuspidal representations.
The fibers of this map are described by the notion of the $\bfG$-equivalence introduced by Hakim--Murnaghan.

\begin{defn}[$\bfG$-equivalence, {\cite[Definition 6.3 (see also Lemma 6.5)]{HM08}}]
Let $\Psi=(\vec{\bfG},\vec{\phi},\vec{r},\x,\rho_{0})$ and $\Psi'=(\vec{\bfG}',\vec{\phi}',\vec{r}',\x',\rho'_{0})$ be Yu-data.
We say that $\Psi$ and $\Psi'$ are \textit{$\bfG$-equivalent} if $\Psi'$ can be obtained from $\Psi$ by a finite sequence of refactorizations, $G$-conjugations, and elementary transformations, which are explained in the following:
\begin{enumerate}
\item
$\Psi'$ is said to be a \textit{refactorization} of $\Psi$ if $(\vec{\bfG}',\bar{\x}')=(\vec{\bfG},\bar{\x})$ and the following conditions are satisfied:
\begin{itemize}
\item[(F0)]
If $\phi_{d}=\mathbbm{1}$, then $\phi'_{d}=\mathbbm{1}$;
\item[(F1)]
We define a character $\chi_{i}\colon G^{i}\rightarrow\C^{\times}$ by $\chi_{i}(g)\colonequals \prod_{j=i}^{d}\phi_{j}(g)\phi'_{j}(g)^{-1}$.
Then the depth of $\chi_{i}$ is at most $r_{i-1}$ for any $0\leq i\leq d$ (we put $r_{-1}\colonequals 0$).
\item[(F2)]
We have $\rho'_{0}=\rho_{0}\otimes\chi_{0}$.
\end{itemize}
(Note that the conditions (F0)--(F2) automatically implies that $\vec{r}'=\vec{r}$.)
\item
$\Psi'$ is said to be a \textit{$G$-conjugation} of $\Psi$ if $(\vec{\bfG}',\vec{\phi}',\vec{r}',\bar{\x}',\rho'_{0})=({}^{g}\vec{\bfG},{}^{g}\vec{\phi},\vec{r},g\bar{\x},{}^{g}\rho_{0})$ for some $g\in G$;
\item
$\Psi'$ is said to be an \textit{elementary transformation} of $\Psi$ if $\dashover{\Psi}'=\dashover{\Psi}$ and $\rho'_{0}\cong\rho_{0}$.
\end{enumerate}
Similarly, for clipped Yu-data $\dashover\Psi$ and $\dashover{\Psi}'$, we say that $\dashover\Psi$ and $\dashover{\Psi}'$ are \textit{$\bfG$-equivalent} if $\dashover{\Psi}'$ can be obtained from $\dashover\Psi$ by a finite sequence of refactorizations and $G$-conjugations, where the refactorization is defined only by the first two conditions (F0) and (F1).
\end{defn}

\begin{thm}[{\cite[Theorem 6.6]{HM08}}]\label{thm:HM}
For any Yu-data $\Psi$ and $\Psi'$, the associated tame supercuspidal representations $\pi_{\Psi}^{\Yu}$ and $\pi_{\Psi'}^{\Yu}$ are equivalent if and only if $\Psi$ and $\Psi'$ are $\bfG$-equivalent.
\end{thm}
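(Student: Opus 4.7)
Since this is Hakim--Murnaghan's theorem \cite{HM08}, I will sketch only the shape of an argument one might give; the authors cite it as a black box.

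The \emph{if} direction splits into three cases, one for each generator of $\bfG$-equivalence. Stability under $G$-conjugation is immediate: conjugating every object in $\Psi$ by $g \in G$ produces an isomorphism of compactly induced representations $\pi_{\Psi}^{\Yu} \xrightarrow{\sim} \pi_{{}^g\Psi}^{\Yu}$ implemented by translation in the ambient representation space. For elementary transformations, we have $\dashover{\Psi}=\dashover{\Psi}'$ and $\rho_0 \cong \rho_0'$; since Yu's inductive construction of $\rho_{i+1}$ depends on $\rho_i$ only up to isomorphism, we obtain $\rho_{\Psi}^{\Yu} \cong \rho_{\Psi'}^{\Yu}$ and hence $\pi_{\Psi}^{\Yu} \cong \pi_{\Psi'}^{\Yu}$. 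The refactorization case is the delicate one: one has to verify that re-distributing the product $\prod_j \phi_j$ across the tower, subject to (F0)--(F2), produces naturally isomorphic Heisenberg--Weil representations $\tilde{\phi_i}$ at each stage. The key technical point is that the symplectic structure on $J^{i}/J^{i}_+$ induced by $\phi_{i-1}$ depends only on the restriction $\phi_{i-1}|_{J^i}$, and the genericity/depth conditions in (F1) ensure that all competing factorizations agree on the relevant Moy--Prasad quotients.

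The \emph{only if} direction is where the real work lies. The standard strategy is to use Frobenius reciprocity and Mackey theory: $\pi_{\Psi}^{\Yu} \cong \pi_{\Psi'}^{\Yu}$ if and only if there exists $g \in G$ such that $({}^g\rho_{\Psi}^{\Yu})$ intertwines with $\rho_{\Psi'}^{\Yu}$ on ${}^g K^d \cap K'^d$. One then extracts invariants from $\rho_{\Psi}^{\Yu}$ that pin down the Yu datum up to $\bfG$-equivalence. Specifically, I would proceed in four steps:
\begin{enumerate}
    \item \textbf{Recover the vertex.} After conjugating by a suitable $g \in G$, reduce to the case $\bar{\x} = \bar{\x}'$, using that the support of the matrix coefficients of $\pi_{\Psi}^{\Yu}$ determines the $G$-orbit of $\bar{\x}$.
    \item \textbf{Recover the tower $\vec{\bfG}$ and depths $\vec{r}$.} Analyze how $\rho_{\Psi}^{\Yu}$ restricts to the Moy--Prasad filtration $G_{\x,r}$. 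The jumps in the filtration where $\rho_{\Psi}^{\Yu}$ begins to act non-trivially by characters detect the $r_i$, and the stabilizer of such a character among twisted Levi subgroups recovers $\bfG^i$ by the genericity hypothesis on $\phi_i$.
    \item \textbf{Recover $\vec{\phi}$ up to refactorization ambiguity.} At each depth $r_i$, the character by which $J^{i+1}/J^{i+1}_+$ acts on $\rho_{\Psi}^{\Yu}$ is $\phi_i|_{J^{i+1}}$ (via the Heisenberg--Weil machinery); this determines each $\phi_i$ up to a character of lower depth, which is precisely the ambiguity encoded in (F1).
    \item \textbf{Recover $\rho_0$ up to the twist in (F2).} Once $\vec{\bfG}$, $\vec{r}$ and the class of $\vec{\phi}$ are fixed, the residual depth-zero piece of $\rho_{\Psi}^{\Yu}|_{K^0}$ is forced to equal $\rho_0$ up to the compensating twist $\chi_0$.
\end{enumerate}

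The principal obstacle is Step 2, namely the disentangling of the filtration: a priori the characters $\phi_i$ at different depths could conspire with the Heisenberg--Weil pieces $\tilde\phi_{i-1}$ to produce coincidences on Moy--Prasad quotients that obscure the tower $\vec\bfG$. The resolution uses Yu's genericity condition in an essential way---genericity of $\phi_i$ relative to $\bfG^{i+1}$ guarantees that the stabilizer of $\phi_i|_{G^i_{\x,r_i:r_i+}}$ inside the relevant Lie algebra graded piece is exactly $\mathfrak{g}^i$, so that the tower can be read off unambiguously. Combined with the intertwining computations of Yu \cite{Yu01}, which bound the intertwining of $\rho_i$ by $K^i$ itself modulo $G^i$, this forces the matching of the two data up to the three moves listed.
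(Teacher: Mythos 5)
The paper does not prove this statement at all: it is quoted verbatim as \cite[Theorem 6.6]{HM08} and used as a black box, so there is no internal argument to compare yours against. Your sketch is a fair reconstruction of the actual Hakim--Murnaghan/Yu strategy — the three-case check for the ``if'' direction (with refactorization invariance as the genuinely nontrivial case) and a Mackey-theoretic intertwining analysis for the ``only if'' direction resting on Yu's intertwining estimates — so in that sense you have identified the right approach. Two cautions if you wanted to upgrade the sketch to a proof: in Step 1, the support of matrix coefficients does not by itself hand you the vertex; what Mackey theory gives is an element $g$ intertwining ${}^{g}\rho_{\Psi}^{\Yu}$ with $\rho_{\Psi'}^{\Yu}$, and showing that such a $g$ can be adjusted to carry $\bar{\x}$ to $\bar{\x}'$ already requires Yu's bound that the intertwining of $\rho_{\Psi}^{\Yu}$ is contained in $K^{d}$; and the refactorization invariance in the ``if'' direction is not a formality about Heisenberg--Weil representations being ``naturally isomorphic'' but a genuine computation (it occupies a substantial portion of \cite{HM08}). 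For the purposes of this paper, though, citing the result is all that is needed.
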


We recall the exhaustion result of tame supercuspidal representations.
In \cite{Kim07}, Kim proved that when $p$ is sufficiently large, any supercuspidal representation of $G$ is in fact tame supercuspidal.
Recently, Fintzen obtained this exhaustion result under a better assumption on $p$ via a different method:
\begin{thm}[{\cite[Theorem 8.1]{Fin21-Ann}}]\label{thm:Fintzen}
Let $W_{\bfG}$ be the absolute Weyl group of $\bfG$.
When $p\nmid|W_{\bfG}|$, any supercuspidal representation of $G$ is tame supercuspidal.
\end{thm}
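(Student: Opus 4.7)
The plan is to follow the strategy of \cite{Fin21-Ann}, reducing the question to producing, for any irreducible supercuspidal representation $\pi$ of $G$, a Yu-datum $\Psi$ together with an isomorphism $\pi\cong\pi^{\Yu}_{\Psi}$; equivalently, to exhibiting a type for $\pi$ of the form $(K^{d},\rho^{\Yu}_{\Psi})$. I would argue by induction on the depth $r\geq 0$ of $\pi$, using Moy--Prasad theory to peel off the top layer of $\pi$ at each step.

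First I would handle the base case, $r=0$. Here Moy--Prasad theory already tells us that there exists a vertex $\bar{\x}\in\mcB^{\red}(\bfG,F)$ and an irreducible $G_{\bar{\x}}$-representation $\rho_{0}$ such that $\pi\cong\cInd_{G_{\bar{\x}}}^{G}\rho_{0}$, with $\rho_{0}|_{G_{\x,0}}$ containing the inflation of a cuspidal representation of $G_{\x,0:0+}$ (the cuspidality here is forced by the supercuspidality of $\pi$). This is exactly a Yu-datum with $d=0$, $\vec{\bfG}=(\bfG)$, and the trivial character $\phi_{0}=\mathbbm{1}$.

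For the inductive step, I would assume $r>0$ and use the structure of $\pi$ on the filtration quotient $G_{\x,r}/G_{\x,r+}$, which via the Moy--Prasad isomorphism is identified with a quotient of $\mfg_{\x,r}/\mfg_{\x,r+}$. Frobenius reciprocity, applied to a minimal $K$-type of $\pi$, provides a coset of characters $\chi$ of $G_{\x,r}$ (given on $G_{\x,r}/G_{\x,r+}$ by pairing with a coset $X+\mfg_{\x,r+}\subset\mfg_{\x,-r}/\mfg_{\x,-r+}$ via $\psi$). The heart of the argument, and the main obstacle, is to show that under the hypothesis $p\nmid |W_{\bfG}|$ the coset can be chosen so that $X$ has a \emph{good} semisimple part whose connected centralizer $\bfG^{d-1}\colonequals \Cent_{\bfG}(X_{\ss})^{\circ}$ is a tame twisted Levi subgroup of $\bfG$, and so that the associated character of $G^{d-1}_{\x,r}$ extends to a $\bfG$-generic character $\phi_{d-1}$ of $G^{d-1}$ of depth $r$ at $\x$. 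This is where the bad-prime hypothesis really enters: one needs control over the $G_{\x,0:0+}$-orbit structure on semisimple elements of $\mfg_{\x,r:r+}$ (essentially, existence of Jordan decomposition for such cosets), control guaranteed by results of Adler--Roche, Yu, and Fintzen's earlier \cite{Fin21-Compos} on genericity, all of which hold under $p\nmid|W_{\bfG}|$.

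Once the generic character $\phi_{d-1}$ and the subgroup $\bfG^{d-1}$ have been produced, I would invoke Howe factorization / Yu's inductive description: the supercuspidal $\pi$ contains (the induction of) a Heisenberg--Weil twist of a representation $\pi'$ of $\bfG^{d-1}(F)$, where $\pi'$ is itself supercuspidal of depth $r_{d-1}<r$, with the same vertex $\bar{\x}\in\mcB^{\red}(\bfG^{d-1},F)$ (using Remark \ref{rem:Yu-datum}). The induction hypothesis then furnishes a Yu-datum for $\pi'$ on $\bfG^{d-1}$, and concatenating with $(\bfG^{d-1}\subsetneq\bfG,\phi_{d-1},r_{d-1})$ produces a Yu-datum $\Psi$ for $\bfG$. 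By construction, compact induction of the representation $\rho^{\Yu}_{\Psi}$ agrees with $\pi$, completing the induction. Throughout, the assumption that $\bfZ_{\bfG^{0}}/\bfZ_{\bfG}$ is anisotropic is automatic because $\pi$ is supercuspidal (so its central character controls the inductive chain down to an anisotropic quotient of centers). The main difficulty, to reiterate, is the genericity/existence step, not the descent mechanism.
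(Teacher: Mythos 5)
This statement is not proved in the paper at all: it is Fintzen's exhaustion theorem, imported verbatim from \cite[Theorem 8.1]{Fin21-Ann}, so there is no internal proof to compare against. Your proposal is a reasonable road-map of the general strategy (Moy--Prasad base case, then iterated descent through generic characters of twisted Levi subgroups), but as written it is not a proof; the steps you defer are precisely the content of the theorem.

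Concretely: (i) The step you yourself flag as ``the heart of the argument'' --- that a minimal $K$-type of $\pi$ can be represented by a coset $X+\mfg_{\x,r+}$ whose semisimple part is \emph{good}, with $\Cent_{\bfG}(X_{\ss})^{\circ}$ a \emph{tamely ramified} twisted Levi, and with the resulting character extending to a $\bfG$-generic character of depth $r$ --- is not available from Adler--Roche or from \cite{Fin21-Compos} (the latter concerns irreducibility of Yu's construction, not exhaustion). Establishing this under the sole hypothesis $p\nmid|W_{\bfG}|$ is the main new technical input of \cite{Fin21-Ann}, resting on Fintzen's analysis of Moy--Prasad filtration quotients and of tori in $\bfG$ (cf.\ \cite{Fin21-IMRN}); asserting it is assuming the theorem. (ii) The descent step ``$\pi$ contains a Heisenberg--Weil twist of a representation $\pi'$ of $\bfG^{d-1}(F)$ which is itself supercuspidal of strictly smaller depth'' is a substantial claim: one must invert the Heisenberg--Weil construction, show the descended object is supercuspidal rather than merely irreducible, and show $\bar{\x}$ remains a vertex for the smaller group. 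This is where supercuspidality of $\pi$ is actually used, via a covers/types argument ruling out that the candidate type is covered by one on a proper parabolic --- not, as you suggest, via the central character. For the same reason the anisotropy of $\bfZ_{\bfG^{0}}/\bfZ_{\bfG}$ is a conclusion of that argument, not an automatic consequence of the central character. If your goal is only to use the theorem (as this paper does), cite it; if your goal is to prove it, each of these deferred steps needs the full force of \cite{Fin21-Ann}.
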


In summary, we have an injective map from the set of $\bfG$-equivalence classes of Yu-data to the set of equivalence classes of irreducible supercuspidal representations of $G$, which is surjective if $p\nmid|W_{\bfG}|$:
\begin{equation*}
    \begin{tikzcd}[column sep=1in]
        & \{\text{irred.\ s.c.\ rep'ns of $G$}\}/{\sim}\\
        \{\text{Yu-data}\}/\text{$\bfG$-eq.} \ar{r}[above]{1:1} \ar{r}[below]{\text{Yu's construction}} & \{\text{tame s.c.\ rep'ns of $G$}\}/{\sim}\ar[hookrightarrow]{u}[right]{\quad \text{equal if $p\nmid|W_{\bfG}|$}}
    \end{tikzcd}
\end{equation*}

We finally recall a modified version of the construction of tame supercuspidal representations proposed by Fintzen--Kaletha--Spice recently \cite{FKS21}.
The key in their construction is a sign character $\epsilon_{\Psi}\colon K^{d}\rightarrow\C^{\times}$ associated to a Yu-datum $\Psi$ (see \cite[Definition 4.1.10]{FKS21} and also \cite[15 page]{FKS21})\footnote{The character $\epsilon_{\Psi}$ is denoted by $\epsilon$ in \cite{FKS21}. We use the symbol $\epsilon_{\Psi}$ to emphasize its dependence on $\Psi$.}.
For a Yu-datum $\Psi$, they defined an irreducible supercuspidal representation $\pi_{\Psi}^{\FKS}$ to be the compact induction of $\rho_{\Psi}^{\Yu}\otimes\epsilon_{\Psi}$.
Let us write $\pi_{\Psi}^{\FKS}$ for this representation:
\[
\pi_{\Psi}^{\FKS}\colonequals \cInd_{K^{d}}^{G}(\rho_{\Psi}^{\Yu}\otimes\epsilon_{\Psi}).
\]

We give a few more comments about the sign character $\epsilon_{\Psi}$.
Suppose that a Yu-datum $\Psi$ is given by $(\vec{\bfG},\vec{\phi},\vec{r},\x,\rho_{0})$.
Then the restriction $\epsilon_{\Psi}|_{G^{0}_{\bar{\x}}}$ of $\epsilon_{\Psi}$ to $G^{0}_{\bar{\x}}$ is given by the product
\[
\epsilon_{\Psi}|_{G^{0}_{\bar{\x}}}
=\prod_{i=1}^{d}\epsilon_{\bar{\x}}^{G^{i}/G^{i-1}}|_{G^{0}_{\bar{\x}}},
\]
where $\epsilon_{\bar{\x}}^{G^{i}/G^{i-1}}\colon G^{i-1}_{\bar{\x}}\rightarrow\{\pm1\}$ is a sign character satisfying the following condition (see \cite[Theorem 3.4 and Lemma 3.5]{FKS21}):
Let $\bfS$ be a tame elliptic maximal torus of $\bfG^{0}$ such that $\x$ belongs to $\mcB(\bfS,F)$ (hence we have $S\subset G^{0}_{\bar{\x}}$).
Then $\epsilon_{\bar{\x}}^{G^{i}/G^{i-1}}|_{S}$ is given by
\[
\epsilon_{\sharp,\x}^{G^{i}/G^{i-1}}
\cdot\epsilon_{\flat}^{G^{i}/G^{i-1}}
\cdot\epsilon_{f}^{G^{i}/G^{i-1}}.
\]
Here, $\epsilon_{\sharp,\x}^{G^{i}/G^{i-1}}$, $\epsilon_{\flat}^{G^{i}/G^{i-1}}$, and $\epsilon_{f}^{G^{i}/G^{i-1}}$ are characters of $S$ determined by the root systems $R(\bfG^{i},\bfS)$ and $R(\bfG^{i-1},\bfS)$; see \cite[Definition 3.1]{FKS21} for the details.
We remark that $\epsilon_{\sharp,\x}^{G^{i}/G^{i-1}}$ and $\epsilon_{f}^{G^{i}/G^{i-1}}$ are nothing but the quantities $\epsilon^{G^{i}/G^{i-1},\ram}$ and $\epsilon_{f,\ram}^{G^{i}/G^{i-1}}$ introduced in \cite{Kal19}\footnote{Although $\epsilon^{G^{i}/G^{i-1},\ram}$ and $\epsilon_{f,\ram}^{G^{i}/G^{i-1}}$ are simply denoted by $\epsilon^{\ram}$ and $\epsilon_{f,\ram}$ in \cite{Kal19}, we put $G^{i}/G^{i-1}$ on their exponents in order to emphasize that they are defined for each successive pair $(G^{i},G^{i-1})$.}, which appear naturally in the character formula of tame supercuspidal representations (see \cite[Remark 3.3]{FKS21}).
In summary, we have the following.
\begin{prop}\label{prop:epsilon}
Let $\bfS$ be a tame elliptic maximal torus of $\bfG^{0}$ such that $\x$ belongs to $\mcB(\bfS,F)$.
We define the characters $\epsilon^{\ram}_{\Psi,\bfS}$, $\epsilon_{\Psi,\bfS,\flat}$, and $\epsilon_{\Psi,\bfS,f,\ram}$ by
\[
\epsilon_{\Psi,\bfS}^{\ram}\colonequals \prod_{i=1}^{d}\epsilon^{G^{i}/G^{i-1},\ram},\quad
\epsilon_{\Psi,\bfS,\flat}\colonequals \prod_{i=1}^{d}\epsilon_{\flat}^{G^{i}/G^{i-1}},\quad
\epsilon_{\Psi,\bfS,f,\ram}\colonequals \prod_{i=1}^{d}\epsilon_{f,\ram}^{G^{i}/G^{i-1}}.
\]
Then we have 
\[
\epsilon_{\Psi}|_{S}
=
\epsilon_{\Psi,\bfS}^{\ram}
\cdot\epsilon_{\Psi,\bfS,\flat}
\cdot\epsilon_{\Psi,\bfS,f,\ram}.
\]
\end{prop}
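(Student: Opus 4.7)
The statement is essentially a bookkeeping assembly of results already cited in the paragraphs preceding it, so the proof plan is short. First, I would observe that because $\bfS$ is a tame elliptic maximal torus of $\bfG^{0}$ containing $\x$ in its apartment, the maximal bounded subgroup $T_{\mathrm{b}}$ of $T$ lies in $G^{0}_{\bar{\x}}$; since $\bfS$ is elliptic in $\bfG^{0}$, $T$ itself is bounded and hence $S \subset G^{0}_{\bar{\x}}$. Thus it makes sense to restrict the already-established identity
\[
\epsilon_{\Psi}|_{G^{0}_{\bar{\x}}} = \prod_{i=1}^{d} \epsilon_{\bar{\x}}^{G^{i}/G^{i-1}}\bigr|_{G^{0}_{\bar{\x}}}
\]
from $G^{0}_{\bar{\x}}$ down to $S$, obtaining
\[
\epsilon_{\Psi}|_{S} = \prod_{i=1}^{d} \epsilon_{\bar{\x}}^{G^{i}/G^{i-1}}\bigr|_{S}.
\]

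Next, I would apply \cite[Lemma 3.5]{FKS21} (recalled just above the proposition) to each factor, obtaining
\[
\epsilon_{\bar{\x}}^{G^{i}/G^{i-1}}\bigr|_{S} = \epsilon_{\sharp,\x}^{G^{i}/G^{i-1}} \cdot \epsilon_{\flat}^{G^{i}/G^{i-1}} \cdot \epsilon_{f}^{G^{i}/G^{i-1}}
\]
for every $1 \leq i \leq d$. I would then substitute this into the previous display and rearrange the resulting triple product by grouping together the $\sharp$-factors, the $\flat$-factors, and the $f$-factors.

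To close the argument, I would invoke the identifications $\epsilon_{\sharp,\x}^{G^{i}/G^{i-1}} = \epsilon^{G^{i}/G^{i-1},\ram}$ and $\epsilon_{f}^{G^{i}/G^{i-1}} = \epsilon_{f,\ram}^{G^{i}/G^{i-1}}$ with Kaletha's notation (explicitly recorded in the footnote and the remark just before the proposition, cf.\ \cite[Remark 3.3]{FKS21}). Substituting these names into the rearranged product, the three factors become exactly $\epsilon_{\Psi,\bfS}^{\ram}$, $\epsilon_{\Psi,\bfS,\flat}$, and $\epsilon_{\Psi,\bfS,f,\ram}$ as defined in the statement, completing the proof.

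Since every ingredient is drawn verbatim from \cite{FKS21} and the discussion of the paragraphs immediately preceding the proposition, there is no genuine obstacle; the only thing that requires attention is verifying that the rearrangement of products is legal (which is automatic, all factors being characters valued in $\{\pm 1\}$ or $\C^{\times}$ of the abelian group $S$) and that the notational translation between the $\sharp,f$ conventions of \cite{FKS21} and the $\ram$ conventions of \cite{Kal19} is applied consistently across all indices $i$.
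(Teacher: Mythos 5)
Your proposal is correct and is essentially identical to the paper's own (implicit) proof: the proposition is stated as a summary of the two preceding paragraphs, which record exactly the restriction formula $\epsilon_{\Psi}|_{G^{0}_{\bar{\x}}}=\prod_{i=1}^{d}\epsilon_{\bar{\x}}^{G^{i}/G^{i-1}}|_{G^{0}_{\bar{\x}}}$ from \cite[Definition 4.1.10]{FKS21}, the factorwise decomposition $\epsilon_{\bar{\x}}^{G^{i}/G^{i-1}}|_{S}=\epsilon_{\sharp,\x}^{G^{i}/G^{i-1}}\cdot\epsilon_{\flat}^{G^{i}/G^{i-1}}\cdot\epsilon_{f}^{G^{i}/G^{i-1}}$ from \cite[Theorem 3.4, Lemma 3.5]{FKS21}, and the notational translation to Kaletha's $\epsilon^{\ram}$ and $\epsilon_{f,\ram}$. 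One small inaccuracy: your justification that ``$T$ itself is bounded'' because $\bfS$ is elliptic is false in general (e.g.\ $S=E^{\times}$ for an elliptic torus of $\GL_{n}$); the correct reason for $S\subset G^{0}_{\bar{\x}}$ is that an elliptic $\bfS$ with $\x\in\mcB(\bfS,F)$ fixes the single point $\bar{\x}$ of $\mcB^{\red}(\bfG^{0},F)$ and $G^{0}_{\bar{\x}}$ is the full stabilizer of $\bar{\x}$, as recorded in the notation section of the paper --- this does not affect the rest of the argument.
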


\begin{rem}
Theorems \ref{thm:HM} and \ref{thm:Fintzen} again hold for the modified construction of Fintzen--Kaletha--Spice.
This can be checked as follows.
For any Yu-datum $\Psi=(\vec{\bfG},\vec{\phi},\vec{r},\x,\rho_{0})$, we put $\Psi'\colonequals(\vec{\bfG},\vec{\phi},\vec{r},\x,\rho_{0}\otimes(\epsilon_{\Psi}|_{G^{0}_{\bar{\x}}}))$.
Then, by noting that $\epsilon_{\Psi}$ is the character of $K^{d}$ such that $\epsilon_{\Psi}|_{J^{i}}$ is trivial for any $0<j\leq d$, we see that $\pi^{\FKS}_{\Psi}=\pi^{\Yu}_{\Psi'}$.
(Recall that $\pi^{\FKS}_{\Psi}=\cInd_{K^{d}}^{G}(\rho_{\Psi}^{\Yu}\otimes\epsilon_{\Psi})$ and $K^{d}=G^{0}_{\bar{\x}}J_{1}\cdots J_{d}$.)
Since $\epsilon_{\bar{\x}}^{G^{i}/G^{i-1}}$ is a sign character depending only on $\vec{r}$ and $\bar{\x}$, we have $\epsilon_{\Psi}=\epsilon_{\Psi'}$, which implies that $(\Psi')'=\Psi$ and also that $\Psi_{1}\cong\Psi_{2}$ if and only if $\Psi'_{1}\cong\Psi'_{2}$ for any Yu-data $\Psi_{1}$, $\Psi_{2}$.
Thus $\Psi\mapsto\Psi'$ gives an involutive automorphism on the set of Yu-data of $\bfG$ which is equivariant with respect to the $\bfG$-equivalence.
This implies Theorems \ref{thm:HM} and \ref{thm:Fintzen} for the modified construction.
\end{rem}


\subsection{Non-singularity and unipotency of supercuspidal representations}\label{subsec:sc classes}

Let $\Psi=(\vec{\bfG},\vec{\phi},\vec{r},\x,\rho_{0})$ be a Yu-datum and $\pi_{\Psi}^{\FKS}$ the associated tame supercuspidal representation.

Following \cite[Section 3]{Kal19-sc}, we introduce several smooth group schemes over $\F_{q}$ attached to the Yu-datum $\Psi$.
Let $\bbG$ be the reductive quotient of the special fiber of the unique smooth integral model $\mathcal{G}^{0}$ of $\bfG^{0}$ satisfying $\mathcal{G}^{0}(F^{\ur})=\bfG^{0}(F^{\ur})_{\bar{\x}}$.
Let $\bbG^{\circ}$ be the identity component of $\bbG$, which is a connected reductive group scheme over $\F_{q}$.
Note that then we have 
\[
\bbG(\F_{q})\cong G^{0}_{\bar{\x}}/G^{0}_{\x,0+}
\quad\text{and}\quad
\bbG^{\circ}(\F_{q})\cong G^{0}_{\x,0:0+}.
\]
We also define a group scheme $\bbZ_{\bbG}$ over $\F_{q}$ to be the subgroup of $\bbG$ whose $\overline \F_q$-points are given by the image of $\bfZ_{\bfG^{0}}(F^{\ur})$ in $\bfG^{0}(F^{\ur})_{\bar{\x}}/\bfG^{0}(F^{\ur})_{\x,0+}$.
Then the pair $(\bbG^{\circ},\bbZ_{\bbG})$ satisfies the assumptions as in the beginning of Section \ref{subsec:disconn} (see \cite[Section 3.2]{Kal19-sc} for details).

Recall that $\rho_{0}$ is an irreducible representation of $G^{0}_{\bar{\x}}$ which is trivial on $G^{0}_{\x,0+}$, hence can be regarded as an irreducible representation of $\bbG(\F_{q})$.
By Corollary \ref{cor:exhaustion}, there exists an $\F_{q}$-rational maximal torus $\bbS^{\circ}$ of $\bbG^{\circ}$ and a character $\phi_{-1}$ of $\bbS(\F_{q})$ such that $\rho_{0}$ is a subrepresentation of $\pm R_{\bbS}^{\bbG}(\phi_{-1})$, where $\bbS\colonequals \bbS^{\circ}\bbZ_{\bbG}$.
By Lemma \ref{lem:not-geom-conj}, such a pair $(\bbS,\phi_{-1})$ is determined by $\rho_{0}$ uniquely up to geometric conjugacy.
Furthermore, by Lemma \ref{lem:cuspidal}, the maximal torus $\bbS^\circ$ must be elliptic in $\bbG^\circ$.
Let us fix such $(\bbS,\phi_{-1})$ and put $\phi_{-1}^{\circ}\colonequals \phi_{-1}|_{\bbS^{\circ}(\F_{q})}$.

\begin{defn}\label{def:ss and unip}
\begin{enumerate}
\item
We say that $\Psi$ and $\pi_{\Psi}^{\FKS}$ are \textit{non-singular}\footnote{In \cite[Section 1.2]{Kal22-ICM}, it is suggested to use the terminology ``semisimple'' instead of ``non-singular''.} if $\phi_{-1}^{\circ}$ is non-singular in $\bbG^{\circ}$ in the sense of Deligne--Lusztig (\cite[Definition 5.15]{DL76}).
\item
We say that $\Psi$ and $\pi_{\Psi}^{\FKS}$ are \textit{unipotent} if $\vec{\bfG}=(\bfG^{0}=\bfG)$, $\vec{\phi}=(\phi_{0}=\mathbbm{1})$, $\vec{r}=(r_{0}=0)$, and $\phi_{-1}^{\circ}$ is trivial.
\end{enumerate}
Note that these notions are independent of the choice of a pair $(\bbS,\phi_{-1})$ since the non-singularity or the triviality of $\phi_{-1}^{\circ}$ is invariant in the geometric conjugacy class of $(\bbS,\phi_{-1})$.
\end{defn}

\subsection{Tame elliptic non-singular pairs}\label{subsec:TENSP}
We review the notion of tame elliptic non-singular pairs, which is needed in Kaletha's classification of non-singular supercuspidal representations.

Let $\bfG^{0}$ be a tamely ramified connected reductive group over $F$.
Let $\bfS$ be an elliptic maximally unramified (in the sense of \cite[Definition 3.4.2]{Kal19}) maximal torus of $\bfG^{0}$ with maximal unramified subtorus $\bfS'$.
We fix a finite unramified extension $F'$ of $F$ which splits $\bfS'$.
We write $\Nr_{F'/F}$ for the norm map $\bfS'(F')\rightarrow\bfS'(F)$.
We let $N_{\bfG^{0}}(\bfS)$ (resp.\ $N_{G^{0}}(\bfS)$) be the normalizer group of $\bfS$ in $\bfG^{0}$ (resp.\ $G^{0}$) and put $W_{\bfG^{0}}(\bfS)\colonequals N_{\bfG^{0}}(\bfS)/\bfS$ (resp.\ $W_{G^{0}}(\bfS)\colonequals N_{G^{0}}(\bfS)/S$).
Note that we have $\bbS(\F_{q})=S/S_{0+}$, $\bbS^{\circ}(\F_{q})=S_{0}/S_{0+}=S'_{0}/S'_{0+}$ and $W_{\bbG(\F_{q})}(\bbS)\cong W_{G^{0}}(\bfS)$ (see \cite[Lemma 3.2.2]{Kal19-sc}).

\begin{defn}[{\cite[Definition 3.4.16]{Kal19}, \cite[Definition 3.1.1]{Kal19-sc}}]\label{defn:reg-nonsing}
Let $\phi_{-1}\colon S\rightarrow\C^{\times}$ be a character.
\begin{enumerate}
\item
We say that the character $\phi_{-1}$ is \textit{extra regular} if the stabilizer of $\phi_{-1}|_{S_{0}}$ in $W_{\bfG^{0}}(\bfS)(F)$ is trivial.
\item
We say that the character $\phi_{-1}$ is \textit{regular} if the stabilizer of $\phi_{-1}|_{S_{0}}$ in $W_{G^{0}}(\bfS)$ is trivial.
\item
We say that the character $\phi_{-1}$ is \textit{$F$-non-singular} if the character 
\[
\phi_{-1}\circ \Nr_{F'/F}\circ\alpha^{\vee}_{\res}\colon F^{\prime\times}\rightarrow\C^{\times}
\]
is not trivial on the subgroup $\mcO_{F'}^{\times}$ for any $\alpha_{\res}\in R_{\res}(\bfG^{0},\bfS')$, where $R_{\res}(\bfG^{0},\bfS')$ denotes the set of restrictions of the absolute roots of $\bfS$ in $\bfG^{0}$ to $\bfS'$.
\item
We say that the character $\phi_{-1}$ is \textit{$k_{F}$-non-singular} if the character 
\[
\phi_{-1}\circ \Nr_{F'/F}\circ\bar{\alpha}^{\vee}\colon F^{\prime\times}\rightarrow\C^{\times}
\]
is not trivial on the subgroup $\mcO_{F'}^{\times}$ for any $\bar{\alpha}\in R(\bbG^{\circ},\bbS^{\circ})$ (note that $R(\bbG^{\circ},\bbS^{\circ})$ can be regarded as a subset of $R_{\res}(\bfG^{0},\bfS')$).
\end{enumerate}
\end{defn}

The relationship between the above regularities in the depth zero setting and the regularities in the finite field setting due to Deligne--Lusztig is summarized as follows.

\begin{prop}\label{prop:reg-nonsing}
Let $\phi_{-1}\colon S\rightarrow\C^{\times}$ be a depth zero character (i.e., trivial on $S_{0+}$, hence regarded as a character of $\bbS(\F_{q})=S/S_{0+}$).
\begin{enumerate}
\item
The character $\phi_{-1}$ is $k_{F}$-non-singular if and only if $\phi_{-1}^{\circ}$ is non-singular in the sense of Deligne--Lusztig, where $\phi_{-1}^{\circ}$ denotes the restriction of $\phi_{-1}$ to $S_{0}/S_{0+}=\bbS^{\circ}(\F_{q})$.
\item
If $\phi_{-1}$ is $F$-non-singular, then $\phi_{-1}$ is $k_{F}$-non-singular.
\item
If $\phi_{-1}$ is regular, then $\phi_{-1}$ is $F$-non-singular.
\item
The character $\phi_{-1}$ is regular if and only if $\phi_{-1}^{\circ}$ is not stabilized by any nontrivial element of $W_{\bbG(\F_{q})}(\bbS)$.
In particular, if $\phi_{-1}$ is regular, then $\phi_{-1}$ (resp.\ $\phi_{-1}^{\circ}$) is in general position in the sense of Definition \ref{defn:in-gen-pos} (resp.\ Deligne--Lusztig).
\item
If $\phi_{-1}^{\circ}$ is in general position, then $\phi_{-1}^{\circ}$ is non-singular.
\end{enumerate}
\[
\xymatrix{
\text{$\phi_{-1}$: $k_{F}$-non-singular} &&& \text{$\phi_{-1}^{\circ}$: non-singular} \ar@{<=>}_-{(1)}[lll] \\
\text{$\phi_{-1}$: $F$-non-singular} \ar@{=>}_-{(2)}[u] &&&\\
\text{$\phi_{-1}$: regular} \ar@{=>}_-{(3)}[u] \ar@{=>}^-{(4)}[rrr]&&& \text{$\phi_{-1}^{\circ}$: in general position} \ar@{=>}_-{(5)}[uu]
}
\]
\end{prop}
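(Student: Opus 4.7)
The plan is to prove each of the five implications separately; the main engines are (a) unravelling the definitions using that $\phi_{-1}$ is depth zero, so that $\phi_{-1}|_{S_0}$ descends to $\phi_{-1}^{\circ}$ on $\bbS^\circ(\F_q) = S_0/S_{0+}$, and (b) the standard identification $W_{\bbG(\F_q)}(\bbS)\cong W_{G^0}(\bfS)$ recalled at the top of Section \ref{subsec:TENSP}. The non-trivial assertions are (3) and (5); the remaining ones are essentially formal.

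For (1), I would unwind both sides to a statement about the pull-back character $\phi_{-1}^\circ\circ\bar\alpha^\vee$. Since $\phi_{-1}$ has depth zero, the character $\phi_{-1}\circ\Nr_{F'/F}\circ\bar\alpha^\vee$ on $\mcO_{F'}^\times$ factors through the residue map $\mcO_{F'}^\times\twoheadrightarrow \F_{q^n}^\times$ (with $n=[F':F]$), where the field norm $\Nr_{\F_{q^n}/\F_q}$ agrees with the Frobenius-norm $\Nr = \prod_{i}\Frob^i$ that appears in Deligne--Lusztig's formulation of non-singularity via Lang's theorem. Thus $\bar\alpha$ contributes to $k_F$-non-singularity failing if and only if $\bar\alpha$ contributes to the Deligne--Lusztig non-singularity of $\phi_{-1}^\circ$ failing. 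Part (2) is then immediate from the containment $R(\bbG^\circ,\bbS^\circ)\subset R_{\res}(\bfG^0,\bfS')$, which holds by Bruhat--Tits theory because $R(\bbG^\circ,\bbS^\circ)$ consists of the restricted roots whose associated affine root vanishes at $\x$.

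For (3), I argue by contrapositive. Suppose there exists $\alpha_{\res}\in R_{\res}(\bfG^0,\bfS')$ for which $\phi_{-1}\circ\Nr_{F'/F}\circ\alpha_{\res}^\vee$ is trivial on $\mcO_{F'}^\times$. Because $\alpha_{\res}$ is Galois-stable by construction, the reflection $s_{\alpha_{\res}}\in W_{G^0}(\bfS)$ is $F$-rational; a direct computation shows that $s_{\alpha_{\res}}^{-1}(t)\cdot t^{-1}$ lies in the image of $\Nr_{F'/F}\circ\alpha_{\res}^\vee$ on $\mcO_{F'}^\times$ for $t\in S_0$, so $\phi_{-1}|_{S_0}$ is stabilised by the nontrivial element $s_{\alpha_{\res}}$, contradicting regularity. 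For (4), once we transport $\phi_{-1}|_{S_0}\leftrightarrow \phi_{-1}^\circ$ and $W_{G^0}(\bfS)\leftrightarrow W_{\bbG(\F_q)}(\bbS)$ via the isomorphisms recalled above, the two stabilizer conditions are literally identified; the ``in particular'' is then the definition of general position (Definition \ref{defn:in-gen-pos}).

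Finally, for (5), suppose $\phi_{-1}^\circ$ fails to be non-singular: then there exists a root $\bar\alpha\in R(\bbG^\circ,\bbS^\circ)$ such that $\phi_{-1}^\circ\circ\bar\alpha^\vee\circ\Nr$ is trivial, and the element $w$ obtained by multiplying the reflections $s_{\bar\alpha'}$ over the Frobenius-orbit of $\bar\alpha$ is a nontrivial Frobenius-fixed element of $W_{\bbG^\circ}(\bbS^\circ)$ that stabilises $\phi_{-1}^\circ$; this gives a nontrivial element of $W_{\bbG(\F_q)}(\bbS)$, contradicting general position. The principal technical obstacle across (1), (3), (5) is keeping bookkeeping straight between three incarnations of the ``norm'': the torus-level $\Nr_{F'/F}$, the residue-field norm $\Nr_{\F_{q^n}/\F_q}$, and the Deligne--Lusztig Lang-norm $\prod_i \Frob^i$; once their compatibility is established in (1), the rest is routine.
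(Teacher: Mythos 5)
Your proposal takes a genuinely different route from the paper: the paper disposes of this proposition almost entirely by citation, deferring (1) to \cite[Remark 3.1.3]{Kal19-sc}, (2) and (3) to \cite[Facts 3.1.5.1 and 3.1.5.3]{Kal19-sc}, and (5) to \cite[Corollary 5.18]{DL76}, arguing only (4) directly from the isomorphism $W_{\bbG(\F_{q})}(\bbS)\cong W_{G^{0}}(\bfS)$ (exactly as you do). Your direct arguments for (1), (2) and (4) are sound: the norm compatibility in (1) is precisely the content of Kaletha's remark, and (2) really is just the containment $R(\bbG^{\circ},\bbS^{\circ})\subset R_{\res}(\bfG^{0},\bfS')$ built into Definition \ref{defn:reg-nonsing}.

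However, (3) and (5) have genuine gaps. In (3), the claim that ``$\alpha_{\res}$ is Galois-stable by construction'' is false: Frobenius permutes the restricted roots in $R_{\res}(\bfG^{0},\bfS')$ nontrivially in general, so the single reflection $s_{\alpha_{\res}}$ need not be Frobenius-fixed, let alone represented by an $F$-rational point of $N_{G^{0}}(\bfS)$ --- and regularity is defined via $W_{G^{0}}(\bfS)=N_{G^{0}}(\bfS)/S$, so exhibiting a stabilizing element of the Galois-fixed abstract Weyl group would only contradict \emph{extra} regularity. The correct element is built from the reflections over the whole Frobenius orbit, and one must then check that the triviality hypothesis propagates along the orbit, that the resulting Weyl element is nontrivial, and that it lifts to an $F$-point of the normalizer; this rationality analysis for maximally unramified tori is exactly what Kaletha's treatment supplies and what is missing from your sketch. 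The same nontriviality issue sinks your sketch of (5) as written: for the nonsplit torus in $\SL_{2}$, the Frobenius orbit of $\bar\alpha$ is $\{\bar\alpha,-\bar\alpha\}$ and your prescribed product $s_{\bar\alpha}s_{-\bar\alpha}=s_{\bar\alpha}^{2}$ is the identity, so the element you name does not witness the failure of general position. One must instead argue as in \cite[Propositions 5.16--5.17 and Corollary 5.18]{DL76}; your one sentence is standing in for that argument rather than giving it.
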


\begin{proof}
The assertion (1) is explained in \cite[Remark 3.1.2]{Kal19-sc}.
See \cite[Fact 3.1.4(1)]{Kal19-sc} and \cite[Fact 3.1.4(3)]{Kal19-sc} for the assertions (2) and (3), respectively.
The assertion (4) follows from the definition of the regularity of $\phi_{-1}$ and the isomorphism $W_{\bbG(\F_{q})}(\bbS)\cong W_{G^{0}}(\bfS)$ mentioned above.
The assertion (5) is nothing but \cite[Corollary 5.18]{DL76}.
\end{proof}

\begin{defn}[{\cite[Definition 3.4.1]{Kal19-sc}}]\label{defn:non-sing-pair}
We call a pair $(\bfS,\theta)$ of a tame elliptic maximal torus $\bfS$ of $\bfG$ and a character $\theta\colon S\rightarrow\C^{\times}$ a \textit{tame elliptic (extra) regular/$F$-non-singular/$k_{F}$-non-singular pair} if the following conditions are satisfied:
\begin{enumerate}
\item
Let $R_{0+}$ be the subset of $R(\bfS,\bfG)$ defined by
\[
R_{0+}\colonequals \{\alpha\in R(\bfS,\bfG) \mid \theta\circ\Nr_{E/F}\circ\alpha^{\vee}(E_{0+}^{\times})=1\},
\]
where $E$ is a tamely ramified extension of $F$ which splits $\bfS$.
Let $\bfG^{0}$ be the connected reductive subgroup of $\bfG$ with maximal torus $\bfS$ and root system $R_{0+}$.
Then $\bfS$ is maximally unramified in $\bfG^{0}$.
\item
The character $\theta$ is (extra) regular/$F$-non-singular/$k_{F}$-non-singular in $\bfG^{0}$ in the sense of Definition \ref{defn:reg-nonsing}.
\end{enumerate}
\end{defn}

\subsection{Kaletha's classification of non-singular supercuspidal representations}\label{subsec:sssc}
Now we recall Kaletha's classification of non-singular supercuspidal representations.

Let us first suppose that $\Psi=(\vec{\bfG},\vec{\phi},\vec{r},\x,\rho_{0})$ is a non-singular Yu-datum.
For any $F$-rational maximal torus $\bfT$ of $\bfG^{0}$, we write $\mathbb{T}$ for the special fiber of the ft-N\'eron model of $\bfT$.
We have the following correspondence between maximal tori over $p$-adic fields and those over finite fields according to DeBacker's classification \cite{DeB06}:

\begin{prop}[{\cite[Lemma 3.4.4]{Kal19}}]\label{prop:DeBacker-Kaletha}
If $\bfT$ is a maximally unramified elliptic maximal torus of $\bfG^{0}$ with associated point $\x$ (hence $T\subset G^{0}_{\bar{\x}}$), then $\bbT^{\circ}$ is an elliptic maximal torus of $\bbG^{\circ}$.
Conversely, any elliptic maximal torus of $\bbG^{\circ}$ arises in this way.
\end{prop}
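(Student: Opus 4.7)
The plan is to derive this proposition from DeBacker's parameterization \cite{DeB06} of rational conjugacy classes of tame maximal $F$-tori of $\bfG^{0}$ via the Bruhat--Tits building, combined with a careful translation of ellipticity between the three relevant levels: the $F$-split rank of $\bfT$, the $F^{\ur}$-split $\Gamma_{F}$-fixed rank, and the $\F_{q}$-split rank of $\bbT^{\circ}/\bbZ_{\bbG^{\circ}}$.

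For the forward direction, I would begin by noting that since $\x\in\mcB(\bfT,F)$ and $\bar{\x}$ is a vertex, we have $T\subset G^{0}_{\bar{\x}}$, and the ft-N\'eron model of $\bfT$ embeds canonically into the integral model $\mcG^{0}$ of $\bfG^{0}$. Taking special fibers and passing to reductive quotients gives a closed subgroup scheme $\bbT^{\circ}\subset\bbG^{\circ}$. To conclude $\bbT^{\circ}$ is an $\F_{q}$-rational maximal torus, I would compare absolute ranks: $\bbG^{\circ}$ has the same absolute rank as $\bfG^{0}$ (since $\bar{\x}$ is a vertex of the reduced building), and the maximally unramified hypothesis on $\bfT$ ensures that its maximal unramified subtorus $\bfS'$ has the same absolute rank as $\bfT$, so that the reduction $\bbT^{\circ}$ has the full rank via the identification $\bbT^{\circ}(\ol{\F}_{q})\cong S'_{0}/S'_{0+}$.

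To establish that $\bbT^{\circ}$ is elliptic in $\bbG^{\circ}$, I would translate the ellipticity of $\bfT/\bfZ_{\bfG^{0}}$ over $F$ into the ellipticity of $\bbT^{\circ}/\bbZ_{\bbG^{\circ}}$ over $\F_{q}$. The key identification is that the $\F_{q}$-split rank of $\bbT^{\circ}/\bbZ_{\bbG^{\circ}}$ corresponds, via the Frobenius-equivariance of the reduction map on $\bfS'$, to the $F$-split rank of $\bfT/\bfZ_{\bfG^{0}}$ (the maximally unramified hypothesis being exactly what makes this correspondence rank-preserving). The ellipticity hypothesis on $\bfT$ then gives ellipticity of $\bbT^{\circ}$.

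For the converse, I would invoke DeBacker's theorem: any $\F_{q}$-rational maximal torus $\bbT_{0}^{\circ}$ of $\bbG^{\circ}$ lifts to a tame maximal $F$-torus $\bfT\subset\bfG^{0}$ with $\x\in\mcB(\bfT,F)$, unique up to $G^{0}_{\bar{\x}}$-conjugacy and whose ft-N\'eron reduction recovers $\bbT_{0}^{\circ}$. It then remains to verify that $\bbT_{0}^{\circ}$ elliptic in $\bbG^{\circ}$ forces $\bfT$ to be maximally unramified elliptic in $\bfG^{0}$; both conditions follow by reversing the rank and split-rank comparisons of the previous paragraphs. The main obstacle I anticipate is purely bookkeeping: making precise the Galois/Frobenius-equivariance that identifies $F^{\ur}$-split $\Gamma_{F}$-fixed subtori of $\bfT$ with $\F_{q}$-split subtori of $\bbT^{\circ}$, so that the maximally-unramified hypothesis (rather than merely tameness) is exactly what is needed in both directions. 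Once this dictionary is in place, the proposition reduces cleanly to DeBacker's classification.
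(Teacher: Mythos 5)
The paper gives no proof of this proposition: it is quoted verbatim as \cite[Lemma 3.4.4]{Kal19}, so the only comparison available is with Kaletha's argument, which (like yours) runs through the ft-N\'eron model reduction and DeBacker's parameterization \cite{DeB06}. Your overall route is the standard one and the converse via DeBacker is exactly right.

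There is, however, one incorrect rank comparison in your forward direction. You assert that $\bbG^{\circ}$ has the same absolute rank as $\bfG^{0}$ and that the maximal unramified subtorus $\bfS'$ has the same absolute rank as $\bfT$. Both statements fail whenever $\bfG^{0}$ is genuinely ramified (not split over $F^{\ur}$): already for $\bfG^{0}=\bfT$ a one-dimensional anisotropic torus split by a ramified quadratic extension, $\bfS'$ is trivial and $\bbG^{\circ}$ is a rank-zero torus, so both of your intermediate equalities compare $0$ with $1$. The correct chain is
\[
\mathrm{rank}(\bbT^{\circ})=\mathrm{rank}(\bfS')=\bigl(\text{$F^{\ur}$-split rank of }\bfG^{0}\bigr)=\mathrm{rank}(\bbG^{\circ}),
\]
where the first equality is the general fact (used in the paper's proof of Proposition \ref{prop:evreg-reduction}) that the reduction of the connected N\'eron model of a torus has rank equal to that of its maximal unramified subtorus, the middle equality is precisely the maximally-unramified hypothesis (an unramified subtorus becomes a split subtorus over $F^{\ur}$, so its rank is bounded by, and for $\bfS'$ maximal equals, the $F^{\ur}$-split rank), and the last holds because a maximal $F^{\ur}$-split torus through $\x$ reduces onto a maximal torus of $\bbG^{\circ}$. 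Your two false equalities happen to cancel, so the conclusion survives, but as written the step would not withstand scrutiny for ramified groups; with this repair the rest of your sketch, including the ellipticity dictionary between the $F$-split rank of $\bfT/\bfZ_{\bfG^{0}}$ and the $\F_{q}$-split rank of $\bbT^{\circ}/\bbZ_{\bbG^{\circ}}$, goes through as in Kaletha's proof.
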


Now we attach a tame elliptic $k_{F}$-non-singular pair $(\bfS,\theta)$ to $\Psi$ in the following manner.
As discussed in Section \ref{subsec:sc classes}, from the data $(\bfG^{0},\x,\rho_{0})$, we can construct a unique (up to geometric conjugacy) pair $(\bbS,\phi_{-1})$ whose $\bbS^{\circ}$ is elliptic in $\bbG^{\circ}$.
Thus, by Proposition \ref{prop:DeBacker-Kaletha}, there exists a maximally unramified elliptic maximal torus  $\bfS$ of $\bfG^{0}$ whose connected N\'eron model has $\bbS^{\circ}$ as its special fiber.
Since $\phi_{-1}^{\circ}\colonequals \phi_{-1}|_{\bbS^{\circ}(\F_{q})}$ is non-singular in the sense of Deligne--Lusztig by the definition of the non-singularity of $\Psi$, Proposition \ref{prop:reg-nonsing} (1) implies that $\phi_{-1}$ is a depth zero $k_{F}$-non-singular character of $S$ (with respect to $\bfS\subset\bfG^{0}$).
Then we can check that, by putting $\theta=\prod_{i=-1}^{d}\phi_{i}|_{S}$, the pair $(\bfS,\theta)$ is a tame elliptic $k_{F}$-non-singular pair.
Indeed, the group $\bfG^{0}$ of Definition \ref{defn:non-sing-pair} (1) is nothing but $\bfG^{0}$ of $\vec{\bfG}$ in this case, hence the condition (1) is satisfied.
Since $(\vec{\bfG},\vec{\phi})\colonequals ((\bfG^{-1}\colonequals \bfS\subset\bfG^{0}\subset\cdots\bfG^{d}),(\phi_{-1},\phi_{0},\ldots,\phi_{d}))$ gives a Howe factorization of $(\bfS,\theta)$ in the sense of Kaletha \cite[Definition 3.6.2]{Kal19}, we have $\phi_{-1}|_{S_{\sc,0}^{0}}=\theta|_{S_{\sc,0}^{0}}$ by \cite[Fact 3.6.4]{Kal19}.
Here, $S_{\sc,0}^{0}$ denotes the parahoric subgroup of the preimage $\bfS_{\sc}^{0}$ of $\bfS$ in the simply-connected cover $\bfG^{0}_{\sc}$ of the derived group of $\bfG^{0}$.
Thus, by noting that $(\bfS,\theta)$ (resp.\ $(\bfS,\phi_{-1})$) is $k_{F}$-non-singular in $\bfG^{0}$ if and only if so is $(\bfS_{\sc}^{0},\theta|_{S_{\sc}^{0}})$ (resp.\ $(\bfS_{\sc}^{0},\phi_{-1}|_{S_{\sc}^{0}})$) in $\bfG^{0}_{\sc}$, we see that the condition (2) is satisfied by $\theta$.

The converse of this procedure can be given as follows.
Let us suppose that $(\bfS,\theta)$ is a tame elliptic $k_{F}$-non-singular pair of $\bfG$.
Then, by using \cite[Proposition 3.6.7]{Kal19}, we get a Howe factorization $(\vec{\bfG},\vec{\phi})\colonequals ((\bfG^{-1}\colonequals \bfS\subset\bfG^{0}\subset\cdots\bfG^{d}),(\phi_{-1},\phi_{0},\ldots,\phi_{d}))$ of $(\bfS,\theta)$ (see \cite[Section 3.6]{Kal19} for details).
Note that, in particular, the sequence $(\phi_{-1},\ldots,\phi_{d})$ satisfies $\theta=\prod_{i=-1}^{d}\phi_{i}|_{S}$.
Since $\bfS$ is a maximally unramified elliptic maximal torus of $\bfG^{0}$, any  point $\x\in\mcB(\bfG^{0},F)$ associated to $\bfS$ maps to a vertex $\bar{\x}\in\mcB^{\red}(\bfG^{0},F)$ by \cite[Lemma 3.4.3]{Kal19}.
Moreover, again by the same argument as in the previous paragraph, we see that $\phi_{-1}$ is a $k_{F}$-non-singular character of $S$ in $\bfG^{0}$ since so is $\theta$.
We put $\vec{\phi}\colonequals (\phi_{0},\ldots,\phi_{d})$ and $\vec{r}\colonequals (0\leq r_{0}<\cdots<r_{d-1}\leq r_{d})$, where 
\[
r_{i}
\colonequals 
\begin{cases}
\depth_{\x}(\phi_{i}) & \text{if $0\leq i<d$,}\\
\depth_{\x}(\phi_{d}) & \text{if $i=d$ and $\phi_{d}\neq\mathbbm{1}$,}\\
\depth_{\x}(\phi_{d-1}) & \text{if $i=d$ and $\phi_{d}=\mathbbm{1}$.}
\end{cases}
\]
Then we get a clipped Yu-datum $(\vec{\bfG},\vec{\phi},\vec{r},\x)$ equipped with a $k_{F}$-non-singular character $\phi_{-1}$ of $S$ in $\bfG^{0}$.

As explained in \cite[Section 2.4]{Kal19-sc}, the $k_{F}$-non-singularity of $\phi_{-1}$ implies that $(-1)^{d(\bfS)}R_{\bbS^{\circ}}^{\bbG^{\circ}}(\phi_{-1}^{\circ})$ is a genuine representation of $\bbG^{\circ}(\F_{q})$, where $d(\bfS) $ is an explicit number determined by $\bfS$.
Accordingly, also $(-1)^{d(\bfS)}R_{\bbS}^{\bbG}(\phi_{-1})$ is a genuine representation of $\bbG(\F_{q})$.
Let 
\[
(-1)^{d(\bfS)}R_{\bbS}^{\bbG}(\phi_{-1})
=\bigoplus_{i=1}^{r} \rho_{i}^{\oplus m_{i}}
\]
be the irreducible decomposition of $(-1)^{d(\bfS)}R_{\bbS}^{\bbG}(\phi_{-1})$, where $m_{i}\in\Z_{>0}$ and $\rho_{i}$'s are pairwise non-isomorphic irreducible representations of $\bbG(\F_{q})$.
Then we get a non-singular Yu-datum $\Psi_{i}=(\vec{\bfG},\vec{\phi},\vec{r},\x,\rho_{i})$, hence an irreducible non-singular supercuspidal representation $\pi_{\Psi_{i}}^{\FKS}$.
We put
\[
\pi_{(\bfS,\theta)}^{\FKS}
\colonequals \bigoplus_{i=1}^{r} \pi_{\Psi_{i}}^{\FKS\oplus m_{i}}
\quad\text{and}\quad
[\pi_{(\bfS,\theta)}^{\FKS}]
\colonequals \{\pi_{\Psi_{i}}^{\FKS}\mid i=1,\ldots,r\}.
\]
Following Kaletha \cite{Kal19-sc}, we call $[\pi_{(\bfS,\theta)}^{\FKS}]$ the \textit{non-singular Deligne--Lusztig packet associated to $(\bfS,\theta)$}.

\begin{rem}\label{rem:parametrization}
We remark that if $\bbG$ is connected reductive, then all the multiplicities $m_i$ are equal to 1 \cite{Lus88}; in general, the integers $\{m_1,\ldots, m_r\}$ are exactly the dimensions of the irreducible representations $\tau$ of the stabilizer of $(\bbS,\theta)$ in $\bbG(\FF_q)$ whose restriction $\tau|_{\bbS(\FF_q)}$ is $\theta$-isotypic \cite{Kal19-sc}. 
This analysis of the irreducible constituents plays a crucial role in Kaletha's construction of the local Langlands correspondence for non-singular supercuspidal representations.
\end{rem}

\begin{prop}[{\cite[Corollary 3.4.7]{Kal19-sc}}]\label{prop:DL-packet}
\begin{enumerate}
\item
The isomorphism class of the representation $\pi_{(\bfS,\theta)}^{\FKS}$ depends only on $(\bfS,\theta)$.
In particular, it is independent of the choice of a Howe factorization of $(\bfS,\theta)$.
\item
For tame elliptic $k_{F}$-non-singular pairs $(\bfS,\theta)$ and $(\bfS',\theta')$, the associated non-singular Deligne--Lusztig packets $[\pi_{(\bfS,\theta)}^{\FKS}]$ and $[\pi_{(\bfS',\theta')}^{\FKS}]$ are equal or disjoint.
Moreover, they are equal if and only if $(\bfS,\theta)$ and $(\bfS',\theta')$ are $G$-conjugate.
\end{enumerate}
\end{prop}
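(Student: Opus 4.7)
The plan is to reduce both claims to the Hakim--Murnaghan classification (Theorem \ref{thm:HM}) by tracking how the pair $(\bfS,\theta)$ transforms under the three moves generating $\bfG$-equivalence of Yu-data, and how these moves interact with the choice of Howe factorization.

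For (1), I would fix two Howe factorizations $(\vec{\bfG},\vec{\phi})$ and $(\vec{\bfG}',\vec{\phi}')$ of a single pair $(\bfS,\theta)$, with associated depth-zero characters $\phi_{-1}$ and $\phi'_{-1}$. Appealing to the uniqueness-up-to-refactorization result for Howe factorizations in \cite[Section 3.6]{Kal19}, I would, after $G$-conjugation, arrange $\vec{\bfG}=\vec{\bfG}'$ and $\bar{\x}=\bar{\x}'$, and express $(\vec{\phi}',\phi'_{-1})$ as a twist of $(\vec{\phi},\phi_{-1})$ by a tuple of characters $\chi_i\colon G^i\to\C^{\times}$ satisfying $\phi'_i=\phi_i\chi_i\chi_{i+1}^{-1}$ for $i\geq 0$ and $\phi'_{-1}=\phi_{-1}\cdot(\chi_0|_S)$, with each $\chi_i$ of depth at most $r_{i-1}$. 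Since $\chi_0$ has depth $0$, it descends to a character of $\bbG(\F_q)$, and from the definition of $R_{\bbS}^{\bbG}$ one reads off $R_{\bbS}^{\bbG}(\phi'_{-1})\cong R_{\bbS}^{\bbG}(\phi_{-1})\otimes\chi_0$. Consequently the irreducible constituents pair up as $\rho'_i\cong\rho_i\otimes\chi_0$ with matching multiplicities, and the Yu-data $\Psi_i=(\vec{\bfG},\vec{\phi},\vec{r},\x,\rho_i)$ and $\Psi'_i=(\vec{\bfG},\vec{\phi}',\vec{r},\x,\rho'_i)$ are related by a refactorization with transition characters $(\chi_i)_{i\geq 0}$. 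Theorem \ref{thm:HM} then yields $\pi_{\Psi_i}^{\FKS}\cong\pi_{\Psi'_i}^{\FKS}$, which implies that both $\pi_{(\bfS,\theta)}^{\FKS}$ and $[\pi_{(\bfS,\theta)}^{\FKS}]$ are well-defined.

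For (2), the forward direction is immediate: a $g\in G$ carrying $(\bfS,\theta)$ to $(\bfS',\theta')$ carries any Howe factorization of one to one of the other, and hence a Yu-datum $\Psi_i$ attached to $(\bfS,\theta)$ to the $G$-conjugate Yu-datum attached to $(\bfS',\theta')$; by Theorem \ref{thm:HM} the two packets coincide. Conversely, suppose an isomorphism $\pi_{\Psi_i}^{\FKS}\cong\pi_{\Psi'_j}^{\FKS}$ exists. Theorem \ref{thm:HM} furnishes a $\bfG$-equivalence of the two Yu-data. After a $G$-conjugation move I may assume $\vec{\bfG}=\vec{\bfG}'$ and $\bar{\x}=\bar{\x}'$; after a refactorization (with transition characters $\chi_i$) I may further assume $\vec{\phi}=\vec{\phi}'$; elementary transformations then leave the depth-zero $\rho_0$ up to isomorphism, hence leave $\phi_{-1}$ up to geometric conjugacy of $(\bbS,\phi_{-1})$ in $\bbG$. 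Unwinding the reconstruction procedure of Section \ref{subsec:sssc} via Proposition \ref{prop:DeBacker-Kaletha} identifies $\bfS$ with $\bfS'$ inside the common $\bfG^0$, and shows that after this identification $\phi_{-1}$ and $\phi'_{-1}$ differ only by $\chi_0|_S$. Since the composite character $\theta=\phi_{-1}\cdot\prod_{i\geq 0}\phi_i|_S$ is, by the very relations $\phi'_i=\phi_i\chi_i\chi_{i+1}^{-1}$, invariant under refactorization, we deduce $\theta=\theta'$ on the identified torus, giving the sought $G$-conjugacy of $(\bfS,\theta)$ and $(\bfS',\theta')$.

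The main obstacle is the bookkeeping in the converse direction of (2): one must verify that each of the three $\bfG$-equivalence moves lifts coherently to a move on the underlying Howe factorization, and in particular confirm that the composite character $\theta=\prod_{i=-1}^{d}\phi_i|_S$ is a genuine invariant. At the depth-zero level this reduces to unraveling the refactorization axioms (F0)--(F2), and to the fact that the torus $\bfS$ is recovered up to $G$-conjugacy from $\rho_0$ via the Deligne--Lusztig cuspidal support (Lemma \ref{lem:not-geom-conj} together with Proposition \ref{prop:DeBacker-Kaletha}). Once these two points are in place, both claims follow mechanically from Theorem \ref{thm:HM}.
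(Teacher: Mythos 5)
The paper does not actually prove this proposition: it is quoted verbatim from \cite[Corollary 3.4.7]{Kal19-sc}, so there is no in-paper argument to compare against, and your proposal must stand on its own. Your overall skeleton (reduce everything to the Hakim--Murnaghan classification, Theorem \ref{thm:HM}, after checking it survives the FKS twist) is the right one, and part (1) is essentially correct: the uniqueness of Howe factorizations up to refactorization, the identity $R_{\bbS}^{\bbG}(\phi_{-1}\cdot\chi_0|_{S}) \cong R_{\bbS}^{\bbG}(\phi_{-1})\otimes\chi_0$, and the resulting pairing of constituents do give well-definedness of $\pi_{(\bfS,\theta)}^{\FKS}$ and of the packet.

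The converse direction of (2) has a genuine gap. After normalizing by $G$-conjugation, refactorization, and an elementary transformation, you arrive at an isomorphism of depth-zero components and conclude that the pairs $(\bbS,\phi_{-1})$ and $(\bbS',\phi'_{-1})$ (suitably twisted by $\chi_0$) are \emph{geometrically} conjugate, via Lemma \ref{lem:not-geom-conj}. From there you assert that this ``identifies $\bfS$ with $\bfS'$ inside the common $\bfG^0$.'' That step fails: a geometric conjugacy class of pairs can contain several distinct $\bbG(\F_q)$-conjugacy classes, and even several rational classes of elliptic maximal tori (already for $\GL_2(\F_q)$ the trivial characters of the split and nonsplit tori are geometrically conjugate), so geometric conjugacy neither pins down the rational class of $\bbS$ nor matches $\phi_{-1}$ with $\phi'_{-1}$ under any rational identification. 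Consequently you cannot descend, via Proposition \ref{prop:DeBacker-Kaletha}, to a $G^0$-conjugacy of $\bfS$ with $\bfS'$, and the claimed recovery of $\theta=\theta'$ does not follow. The missing ingredient is precisely the $k_F$-non-singularity hypothesis: it guarantees that $(-1)^{d(\bfS)}R_{\bbS}^{\bbG}(\phi_{-1})$ and $(-1)^{d(\bfS')}R_{\bbS'}^{\bbG}(\phi'_{-1})$ are \emph{genuine} representations, so a common irreducible constituent forces $\langle R_{\bbS}^{\bbG}(\phi_{-1}), R_{\bbS'}^{\bbG}(\phi'_{-1})\rangle\neq 0$, and the scalar product formula (Corollary \ref{cor:scalar-prod}) then produces an element $w\in W_{\bbG(\F_q)}(\bbS,\bbS')$ with $\phi_{-1}=\phi_{-1}'^{\,w}$, i.e.\ genuine rational conjugacy. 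This is exactly the mechanism used in the proof of Corollary \ref{cor:non-singular failure} in the paper, and it is where the non-singularity hypothesis of the proposition enters; without it your argument would ``prove'' the same disjointness statement for singular pairs, which is false. Once rational conjugacy of the depth-zero pairs is in hand, the lift to a $G^0$-conjugacy of the $p$-adic tori and the comparison of $\theta$ with $\theta'$ go through as you describe.
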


By combining this proposition with the above construction of $(\bfS,\theta)$ from $\Psi$, we see that the set of equivalence classes of non-singular supercuspidal representations is divided into the disjoint union of non-singular Deligne--Lusztig packets labelled by $G$-conjugacy classes of tame elliptic non-singular pairs:
\[
\{\text{irred.\ non-singular s.c.\ rep'ns of $G$}\}/{\sim}
=
\bigsqcup_{\begin{subarray}{c} \text{$(\bfS,\theta)$: TENS pairs}\\ /\text{$G$-conj.}\end{subarray}}
[\pi_{(\bfS,\theta)}^{\FKS}]
\]

\begin{rem}\label{rem:regular}
When $(\bfS,\theta)$ is a tame elliptic regular pair, then $(-1)^{d(\bfS)}R_{\bbS}^{\bbG}(\phi_{-1})$ is irreducible since $\phi_{-1}$ is in general position.
Hence $\pi_{(\bfS,\theta)}^{\FKS}$ is irreducible; this is what is called a \textit{regular supercuspidal representation} (see \cite[Section 3]{Kal19} and also \cite[Section 3.2]{CO21} for the details).
We call a non-singular Yu-datum arising from a tame elliptic regular pair \textit{a regular Yu-datum}.
Therefore Proposition \ref{prop:DL-packet} asserts that, in particular, the equivalence classes of regular supercuspidal representations bijectively correspond to $G$-conjugacy classes of tame elliptic regular pairs.
This is nothing but \cite[Proposition 3.7.8]{Kal19}.
\end{rem}

\section{Characters of supercuspidal representations at very regular elements}\label{sec:character}

\subsection{Definition and properties of very regular elements}\label{subsec:vreg}

We first introduce the notion of tame very regularity, which generalizes the notion of unramified very regularity considered by Chan--Ivanov \cite{CI21-RT} (and also \cite{CO21}).

\begin{definition}[very regular elements]
We say that a regular semisimple element $\gamma \in G$ is \textit{tame very regular} if
\begin{itemize}
\item
the connected centralizer $\bfT_{\gamma}$ of $\gamma$ in $\bfG$ is a tamely ramified maximal torus, and
\item
 $\alpha(\gamma) \not\equiv 1$ (mod $\mathfrak{p}_{\ol{F}}$) for any root $\alpha$ of $\bfT_{\gamma}$ in $\bfG$.
 \end{itemize}
\end{definition}

\begin{example}
Let $\bfG = \GL_N$ and $\bfS$ the maximal torus of $\bfG$ corresponding to a tamely ramified extension $E$ of $F$ of  degree $N$.
Then a regular semisimple element $\gamma\in S=E^{\times}$ is very regular if and only if $\val_{E}(\gamma)$ is coprime to the ramification index of $E/F$, where $\val_{E}$ is the valuation of $E$ normalized so that $\val_{E}(E^{\times})=\Z$.
\end{example}

According to \cite[Corollary 2.6]{Fin21-IMRN}, any maximal torus of $\bfG$ is tamely ramified under the assumption that $p\nmid|W_{\bfG}|$.
Thus, since we always assume that $p\nmid|W_{\bfG}|$ in this paper, the first condition of the tame very regularity is always satisfied.
For this reason, we simply say that \textit{$\gamma$ is very regular} when $\gamma$ is a tame very regular element of $G$.

\begin{rem}\label{rem:shallow}
Let $\gamma\in G$ be a regular semisimple element with topological Jordan decomposition $\gamma=\gamma_{0}\cdot\gamma_{+}$ (see \cite{Spi08}).
Then $\gamma$ is very regular if and only if $\gamma_{0}$ is regular semisimple.
We note that, in \cite{Kal19}, a regular topologically semisimple (modulo $Z_{\bfG}$) element of $G$ is called a \textit{shallow} element (see \cite[Section 4.10]{Kal19}).
Thus, with our terminology, we may understand that a regular semisimple element of $G$ is shallow in the sense of Kaletha if and only $\gamma$ is a very regular element with trivial $\gamma_{+}$.
\end{rem}

When a very regular element $\gamma$ of $G$ is furthermore elliptic, it associates a unique point $\bar{\x}_{\gamma}$ of $\mathcal{B}^{\red}(\bfG,F)$.
For any lift $\x_{\gamma}\in\mathcal{B}(\bfG,F)$ of $\bar{\x}_{\gamma}$, we say that $\gamma$ is an \textit{elliptic very regular element with point $\x_{\gamma}$}.
When $\gamma$ is an elliptic very regular element with point $\x_{\gamma}$, $T_{\gamma}$ is contained in $G_{\bar{\x}_{\gamma}}$.
In particular, $\gamma$ is an element of $G_{\bar{\x}_{\gamma}}$.
In fact, the point $\bar{\x}_{\gamma}\in\mathcal{B}^{\red}(\bfG,F)$ is uniquely characterized by this property:

\begin{lem}\label{lem:point}
Let $\gamma\in G$ be an elliptic very regular element with point $\x_{\gamma}$.
If $\x\in\mathcal{B}(\bfG,F)$ is a point such that $\gamma\in G_{\bar{\x}}$, then $\bar{\x}=\bar{\x}_{\gamma}$.
\end{lem}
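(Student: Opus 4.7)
My plan is to reduce the statement to a well-known fact about fixed points of absolutely semisimple elements on the Bruhat--Tits building, by passing to the topological Jordan decomposition of $\gamma$.

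First, write $\gamma = \gamma_{0}\cdot\gamma_{+}$ for the topological Jordan decomposition of $\gamma$ in $G$, where $\gamma_{0}$ is absolutely semisimple of finite order prime to $p$ and $\gamma_{+}$ is topologically unipotent. Since $\gamma \in G_{\bar{\x}}$ and $G_{\bar\x}$ is a closed subgroup (in fact, compact modulo center), and since $\gamma_{0}$ can be obtained as a limit of appropriate powers of $\gamma$ in $G_{\bar\x}$ (equivalently, since the topological Jordan decomposition is intrinsic to $\gamma$ and commutes with inclusions into closed subgroups), we have $\gamma_{0}, \gamma_{+} \in G_{\bar\x}$. Moreover, as pointed out in Remark \ref{rem:shallow}, the very regularity of $\gamma$ implies that $\gamma_{0}$ itself is regular semisimple: indeed, for any root $\alpha$ of $\bfT_\gamma$, we have $\alpha(\gamma_{+}) \equiv 1 \pmod{\mathfrak{p}_{\ol F}}$ by topological unipotence, so
\[
\alpha(\gamma_{0}) = \alpha(\gamma)\alpha(\gamma_{+})^{-1} \not\equiv 1 \pmod{\mathfrak{p}_{\ol F}},
\]
and in particular $\alpha(\gamma_{0}) \neq 1$. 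Thus $\bfT_\gamma$ is also the connected centralizer of $\gamma_{0}$, and $\gamma_0 \in T_\gamma$.

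Second, I would invoke the classical Bruhat--Tits result that for an absolutely semisimple element $\gamma_{0}$ of tame order, the fixed-point set of $\gamma_{0}$ on $\mathcal{B}(\bfG,F)$ coincides with $\mathcal{B}(\bfG_{\gamma_{0}}^{\circ},F)$, where $\bfG_{\gamma_{0}}^{\circ}$ is the connected centralizer. (This is a standard statement; see, for instance, the foundational discussions in Adler--DeBacker--Spice, or Prasad's work on Galois-fixed points of buildings.) In our setting, this gives that the fixed-point set of $\gamma_{0}$ on $\mathcal{B}(\bfG,F)$ is $\mathcal{B}(\bfT_{\gamma},F)$. Since $\gamma_{0} \in G_{\bar\x}$, the point $\bar\x \in \mathcal{B}^{\red}(\bfG,F)$ lies in the image of $\mathcal{B}(\bfT_{\gamma},F)$ in the reduced building. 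But $\bfT_\gamma$ is elliptic by assumption, so this image consists of the single point $\bar\x_\gamma$, forcing $\bar\x = \bar\x_\gamma$.

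The main obstacle (modulo citing the Bruhat--Tits fixed-point result) is to verify the compatibility of the topological Jordan decomposition with the stabilizer $G_{\bar\x}$ and to confirm that the very regularity hypothesis, rather than mere regularity, is what guarantees $\gamma_{0}$ inherits the regularity needed to apply the fixed-point identification with $\bfT_\gamma$ as its centralizer. Both of these should be straightforward given the explicit congruence $\alpha(\gamma_{0}) \not\equiv 1 \pmod{\mathfrak{p}_{\ol F}}$; the hard-sounding piece is really just the citation of a well-known building-theoretic fact.
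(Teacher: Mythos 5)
Your proof is correct, and its core mechanism is the same as the paper's: identify the fixed-point locus of the element on the building with the building of its connected centralizer $\bfT_\gamma$, and use ellipticity to see that this locus is the single point $\bar\x_\gamma$ in the reduced building. The only real difference is that you first pass to the topologically semisimple part $\gamma_0$ so as to invoke the fixed-point theorem for elements of finite prime-to-$p$ order (mod center), whereas the paper applies \cite[Section 3.6]{Tit79} directly to $\gamma$ itself, since that statement is formulated precisely under the congruence condition $\alpha(\gamma)\not\equiv 1 \pmod{\mathfrak{p}_{\ol F}}$ defining very regularity; your reduction is essentially how one deduces the latter form from the former, so the detour costs a little length but nothing substantive. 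Two small points worth tightening if you keep your route: the topological Jordan decomposition here is only a decomposition modulo $Z_{\bfG}$ (so $\gamma_0$ is absolutely semisimple only mod center), which is harmless because you work in the reduced building where $Z_\bfG$ acts trivially, but it should be said; and the fixed-point set of $\gamma_0$ in $\mathcal{B}(\bfG,F)$ is the image of $\mathcal{B}(\bfT_\gamma,F)$ under a chosen $T_\gamma$-equivariant embedding, whose image in $\mathcal{B}^{\red}(\bfG,F)$ is the singleton $\{\bar\x_\gamma\}$ exactly as you say.
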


\begin{proof}
By definition, we have $\alpha(\gamma) \not\equiv 1 \pmod{\mathfrak{p}_{\ol{F}}}$ for any $\alpha \in R(\bfT_{\gamma}, \bfG)$, and so by \cite[Section 3.6]{Tit79}, the set of fixed points of $\gamma$ in $\cB^{\red}(\bfG, F^{\ur})$ is $\cB^{\red}(\bfT_\gamma, F^{\ur})=\{\bar{\x}_{\gamma}\}$.
Since $\gamma\in G_{\bar{\x}}$, $\gamma$ fixes $\bar{\x}\in\cB^{\red}(\bfG,F)$, hence $\bar{\x}=\bar{\x}_{\gamma}$.
\end{proof}

The following properties of very regular elements are investigated in \cite{CO21} in the unramified setting.
We can easily check that the same proofs work by using elliptic very regularity instead of unramified very regularity.
In the following, we explain only minor modifications necessary for the proof in our setting.

\begin{lem}\label{lem:CO21-4.7}
Let $K^{d}=G^{0}_{\bar{\x}}(G^{0},\ldots,G^{d})_{\x,(0+,s_{0},\ldots,s_{d-1})}$ be the group associated to a Yu-datum $\Psi=(\vec{\bfG},\vec{\phi},\vec{r},\x,\rho_{0})$.
Any elliptic very regular element $\gamma\in K^{d}$ is $K^{d}$-conjugate to an element of $G^{0}_{\bar{\x}}$.
\end{lem}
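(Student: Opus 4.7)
The plan is to combine the uniqueness of the fixed point (Lemma~\ref{lem:point}) with the topological Jordan decomposition of $\gamma$ and an inductive sweep along the twisted Levi chain.

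First, since $K^{d}\subset G_{\bar{\x}}$, applying Lemma~\ref{lem:point} to any elliptic very regular $\gamma\in K^{d}$ will give $\bar{\x}_{\gamma}=\bar{\x}$; in particular $\bfT_{\gamma}$ passes through $\x$. Next, by Remark~\ref{rem:shallow}, the very regularity of $\gamma$ is precisely the statement that in its topological Jordan decomposition $\gamma=\gamma_{0}\cdot\gamma_{+}$, the topologically semisimple part $\gamma_{0}$ is already regular semisimple in $\bfG$ with connected centralizer $\bfT_{\gamma}$, while $\gamma_{+}\in T_{\gamma}$ is topologically unipotent. So it suffices to realize $\gamma_{0}$ (and hence $\gamma$) inside $G^{0}_{\bar{\x}}$ after conjugation.

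The heart of the argument will be a descending induction on $i=d,d-1,\ldots,1$ using the Moy--Prasad factorization
\[
K^{d}=G^{0}_{\bar{\x}}\cdot J^{1}\cdots J^{d},\qquad J^{i}=(G^{i-1},G^{i})_{\x,(r_{i-1},s_{i-1})}.
\]
At stage $i$, I would write the current representative as $\gamma=g\cdot j$ with $g\in K^{i-1}$ and $j\in J^{i}$, and then find an element $h\in J^{i}$ whose conjugation action absorbs $j$ into $K^{i-1}$ modulo $J^{i-1}$. What drives this step is the very regularity of $\gamma$: it ensures $\Ad(\gamma_{0})-1$ acts invertibly on every root space $\mathfrak{g}_{\alpha}$ for $\alpha\in R(\bfG^{i},\bfT_{\gamma})\smallsetminus R(\bfG^{i-1},\bfT_{\gamma})$, so the linearization of the commutator map $h\mapsto h\gamma h^{-1}\gamma^{-1}$ is surjective on the appropriate graded piece of $J^{i}/J^{i-1}$, allowing the obstruction equation to be solved one layer at a time.

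Once the induction lands $\gamma_{0}$ in $G^{0}_{\bar{\x}}$, I will invoke that $\bfG^{0}$ is a tame twisted Levi of $\bfG$: the centralizer in $\bfG$ of a regular semisimple element of $G^{0}$ is automatically a maximal torus contained in $\bfG^{0}$, so $\bfT_{\gamma}=C_{\bfG}(\gamma_{0})\subset\bfG^{0}$. Then $\gamma=\gamma_{0}\gamma_{+}\in T_{\gamma}\subset G^{0}$, and combined with $\gamma\in K^{d}\subset G_{\bar{\x}}$, this yields $\gamma\in G^{0}\cap G_{\bar{\x}}=G^{0}_{\bar{\x}}$, as desired. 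The hard part will be the inductive conjugation step: it is a technical Moy--Prasad computation made subtle by the fact that $\bfT_{\gamma}$ lives inside $\bfG$ rather than any of the intermediate $\bfG^{i}$'s, and the root-space invertibility supplied by very regularity is precisely what makes the sweep go through uniformly across all layers.
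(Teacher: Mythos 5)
Your plan is correct and is essentially the argument the paper uses: the paper's proof simply invokes \cite[Lemmas 4.3 and 4.7]{CO21}, and the proof there is exactly your descending sweep through $K^{d}=G^{0}_{\bar{\x}}J^{1}\cdots J^{d}$, absorbing each $J^{i}$-component by successive approximation using the invertibility of $\Ad(\gamma)-1$ on the root spaces outside $R(\bfG^{i-1},\bfT_{\gamma})$, which is what very regularity supplies. The only ingredient you leave implicit is that the Moy--Prasad graded pieces actually decompose along the roots of $\bfT_{\gamma}$ (the condition $(\mathbf{Gd}^{G})$ of Adler--Spice for $\bfT_{\gamma}$); the paper notes this holds because $p\nmid|W_{\bfG}|$ by Fintzen's result, so your sweep goes through.
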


\begin{proof}
The proof is the same as \cite[Lemma 4.7]{CO21}.
Note that \cite[Lemma 4.3]{CO21}, which is necessary for establishing \cite[Lemma 4.7]{CO21}, can be proved by the same argument in the present setting.
We remark that, in the proof of \cite[Lemma 4.3]{CO21}, we need to assume that the condition ``$(\mathbf{Gd}^{G})$'' is satisfied for the maximal torus $\bfT_{\gamma}$.
This follows from our assumption that $p\nmid|W_{\bfG}|$ by Fintzen's result \cite{Fin21-IMRN} (see \cite[Remark 4.4 (1)]{CO21}).
\end{proof}

\begin{lemma}\label{lem:CO21-5.1-5.2}
Let $\bfS$ be a tame elliptic maximal torus of $\bfG$ with associated point $\x$.
Then the set $G_{\bar{\x},\evrs}$ of elliptic very regular elements in $G_{\bar{\x}}$ is stable under $G_{\x,0+}$-translation.
Moreover, for any $\gamma_{1},\gamma_{2}\in G_{\bar{\x},\evrs}$, the associated maximal tori $\bfT_{\gamma_{1}}$ and $\bfT_{\gamma_{2}}$ are $G_{\x,0+}$-conjugate.
\end{lemma}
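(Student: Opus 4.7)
The plan is to recast very regularity in terms of the reduction map attached to the parahoric integral model. Any $\gamma \in G_{\bar\x}$ admits a topological Jordan decomposition $\gamma = \gamma_{0}\gamma_{+}$, and since $\gamma_{+}$ lies in the topologically unipotent radical of the centralizer of $\gamma_{0}$ (in particular, in $G_{\x,0+}$), the image of $\gamma$ in the reductive quotient $\bbG(\overline{\F_{q}})$ (in the sense of Section~\ref{subsec:sc classes}) coincides with the image of $\gamma_{0}$. Combined with Remark~\ref{rem:shallow}, which characterizes very regularity via regular semisimplicity of $\gamma_{0}$, this gives the crucial reformulation: $\gamma \in G_{\bar\x}$ is very regular if and only if its image $\bar\gamma \in \bbG(\overline{\F_{q}})$ is regular semisimple, with connected centralizer inside $\bbG^{\circ}$ a maximal torus.

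With this reformulation in hand, the first assertion is immediate. For $g \in G_{\x,0+}$ we have $\bar g = 1$, hence $\overline{g\gamma} = \bar\gamma$, so $g\gamma$ has regular semisimple reduction whenever $\gamma$ does. Thus $g\gamma$ is very regular whenever $\gamma$ is.

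For the second assertion, the connected centralizer $\bfT_{\gamma_{i}}$ is a tame elliptic maximal torus associated with $\x$ (cf.\ Lemma~\ref{lem:point}), and its reduction is an $\F_{q}$-rational elliptic maximal torus $\bbT_{\gamma_{i}}^{\circ}$ of $\bbG^{\circ}$ containing the regular semisimple element $\bar\gamma_{i}$. By DeBacker's classification (Proposition~\ref{prop:DeBacker-Kaletha} and \cite{DeB06}), the $G_{\x,0+}$-conjugacy class of $\bfT_{\gamma_{i}}$ is determined by the $\bbG^{\circ}(\F_{q})$-conjugacy class of its reduction $\bbT_{\gamma_{i}}^{\circ}$. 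Hence the second assertion reduces to showing that $\bbT_{\gamma_{1}}^{\circ}$ and $\bbT_{\gamma_{2}}^{\circ}$ lie in a single $\bbG^{\circ}(\F_{q})$-orbit.

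The hard part will be this final step, where the hypothesis on the existence of $\bfS$ enters: we expect to show that every elliptic very regular $\gamma \in G_{\bar\x}$ has torus $\bfT_{\gamma}$ which is $G_{\x,0+}$-conjugate to the fixed $\bfS$ itself, using that the reduction $\bbS^{\circ}$ provides a reference $\F_{q}$-rational elliptic maximal torus of $\bbG^{\circ}$. Pairwise $G_{\x,0+}$-conjugacy of the $\bfT_{\gamma_{i}}$ then follows by transitivity. This step generalizes \cite[Lemma~5.2]{CO21}: in the unramified case treated there, $\bar\x$ is hyperspecial and the uniqueness of the $\bbG^{\circ}(\F_{q})$-class of elliptic maximal tori (containing very regular reductions) is automatic, while here the same conclusion must be extracted from the elliptic very regular structure of $G_{\bar\x}$ relative to $\bfS$.
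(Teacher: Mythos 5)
There is a genuine gap, in fact two. First, your ``crucial reformulation'' is only half true: very regularity of $\gamma$ implies that $\bar\gamma$ is regular semisimple in $\bbG^{\circ}$ (this is the content of Proposition~\ref{prop:evreg-reduction}), but the converse fails whenever some root $\alpha$ of $\bfT_{\gamma}$ does not survive to the reductive quotient at $\x$, i.e.\ whenever $\alpha(\x)\notin\Z$ --- which happens as soon as $\bfS$ is not unramified. For instance, for $\bfG=\SL_{2}$ and $\x$ associated to a ramified elliptic torus, $\bbG^{\circ}$ is a torus, so \emph{every} element of $G_{\bar\x}$ has ``regular semisimple'' reduction, including central elements. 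Since your proof of the translation-stability assertion uses exactly this false direction (``$\overline{g\gamma}=\bar\gamma$ is regular semisimple, hence $g\gamma$ is very regular''), that step does not go through. The argument the paper imports from \cite[Lemma 5.1]{CO21} instead works upstairs: because $\alpha(\gamma)\not\equiv 1\pmod{\mfp_{\ol F}}$ for \emph{all} roots, $\Ad(\gamma)-1$ is invertible off $\Lie\bfT_{\gamma}$ on $\mfg_{\x,0+}$, so a successive-approximation argument (\cite[Lemma 4.3]{CO21}) shows every element of $\gamma G_{\x,0+}$ is $G_{\x,0+}$-conjugate to $\gamma t$ with $t\in T_{\gamma,0+}$; such $\gamma t$ is visibly very regular with torus $\bfT_{\gamma}$, which yields both assertions simultaneously.

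Second, your treatment of the conjugacy assertion is both incomplete and aimed at the wrong target. DeBacker's parametrization converts $\bbG^{\circ}(\F_{q})$-conjugacy of the reductions into $G_{\x,0}$-conjugacy of the tori, not $G_{\x,0+}$-conjugacy; for the latter the reductions must actually be \emph{equal}, since $G_{\x,0+}$ acts trivially on $\bbG^{\circ}$. Your proposed endgame --- that every $\bfT_{\gamma}$ with $\gamma\in G_{\bar\x,\evrs}$ is $G_{\x,0+}$-conjugate to $\bfS$ --- is false: already for $\GL_{2}$ at a hyperspecial point, two unramified elliptic tori whose reductions are distinct elliptic maximal tori of $\GL_{2}(\F_{q})$ cannot be $G_{\x,0+}$-conjugate. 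The assertion is only used (see the proof of Lemma~\ref{lem:Weyl-reduction}) for $\gamma_{1},\gamma_{2}$ lying in a common $G_{\x,0+}$-coset, in which case the reductions of their topologically semisimple parts coincide and the approximation argument above (or the rigidity of topologically semisimple elements modulo $G_{\x,0+}$) produces the conjugating element in $G_{\x,0+}$. As written, you acknowledge the key step only as something you ``expect to show,'' so the proof is not complete even on its own terms.
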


\begin{proof}
The same arguments as in \cite[Lemma 5.1]{CO21} give the assertions.
Note that \cite[Lemma 5.1]{CO21} is stated only for $G_{\x,0+}$-translations of very regular elements of $S$, but the same proof works.\footnote{We take this opportunity to correct a small mistake in the proof of \cite[Lemma 5.2]{CO21}; so that the proof of \cite[Lemma 5.2]{CO21} makes sense, we have to state \cite[Lemma 5.2]{CO21} in a generalized form as in Lemma \ref{lem:CO21-5.1-5.2}}
\end{proof}

\subsection{Elliptic very regular elements over finite fields}\label{subsec:ur-vreg}

Suppose that
\begin{itemize}
\item
$\bfG$ is a tamely ramified connected reductive group over $F$,
\item
$\bfG^{0}$ is a tame twisted Levi subgroup of $\bfG$ such that $\bfZ_{\bfG^{0}}/\bfZ_{\bfG}$ is anisotropic, 
\item
$\bfS$ is a maximally unramified elliptic maximal torus of $\bfG^{0}$,
\item
$\x\in\mcB(\bfG^{0},F)$ is a point associated to $\bfS$ such that its image $\bar{\x}$ in $\mcB^{\red}(\bfG^{0},F)$ is a vertex (this condition is automatic by \cite[Lemma 3.4.3]{Kal19}).
\end{itemize}
We introduce the groups $\bbG\supset\bbG^{\circ}$, $\bbS\supset\bbS^{\circ}$, and $\bbZ_{\bbG}$ as in Section \ref{subsec:sc classes}.

We put $\bbG':=\bbG^{\circ}\cdot\bbZ_{\bbG}$.
Recall that we have $\bbG'(\F_{q})=\bbS(\F_{q})\bbG^{\circ}(\F_{q})$ (Remark \ref{rem:rational-pts}).
Thus the natural reduction map gives a map $SG^{0}_{\x,0}\twoheadrightarrow\bbS(\F_{q})\bbG^{\circ}(\F_{q})=\bbG'(\F_{q})\colon\gamma\mapsto\overline{\gamma}$.

\begin{prop}\label{prop:evreg-reduction}
Let $\gamma\in SG^{0}_{\x,0}$ be an elliptic very regular element.
Then the image $\overline{\gamma}$ of $\gamma$ in $\bbG'(\F_{q})$ is elliptic regular semisimple in the sense of Definition \ref{def:G' semisimple}.
Moreover, the connected centralizer $\bfT_{\gamma}$ of $\gamma$ in $\bfG^{0}$ is a maximally unramified elliptic maximal torus of $\bfG^{0}$.
\end{prop}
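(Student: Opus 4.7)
The plan is first to prove the ``Moreover'' assertion about the torus $\bfT_\gamma$, and then to deduce the reduction statement about $\overline\gamma$ via the topological Jordan decomposition of $\gamma$.

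Since $\gamma$ is regular semisimple in $\bfG$ (by very regularity), its centralizer $\bfT_\gamma = Z_\bfG(\gamma)^\circ$ is an $F$-rational maximal torus of $\bfG$. The center $\bfZ_{\bfG^0}$ centralizes $\gamma\in G^0$, so $\bfZ_{\bfG^0} \subset \bfT_\gamma$, whence $\bfT_\gamma \subset Z_\bfG(\bfZ_{\bfG^0}) = \bfG^0$; as $\bfG^0$ has the same absolute rank as $\bfG$, this makes $\bfT_\gamma$ a maximal torus of $\bfG^0$. Its ellipticity in $\bfG^0$ follows from that in $\bfG$, since $\bfT_\gamma/\bfZ_{\bfG^0}$ is a quotient of the anisotropic $\bfT_\gamma/\bfZ_\bfG$. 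Because $R(\bfT_\gamma, \bfG^0) \subset R(\bfT_\gamma, \bfG)$, the very regularity condition persists for roots in $\bfG^0$, so the argument of Lemma~\ref{lem:point} carried out inside $\bfG^0$ shows that the set of fixed points of $\gamma$ in $\mcB^{\red}(\bfG^0, F^{\ur})$ consists of the single associated point of $\bfT_\gamma$. Since $\gamma \in G^0_{\bar\x}$ fixes $\bar\x$, this forces $\bar\x$ to be the associated point of $\bfT_\gamma$ in $\mcB^{\red}(\bfG^0, F)$. As $\bar\x$ is a vertex by hypothesis, the characterization of maximally unramified elliptic tori by their associated vertices (\cite[Lemma~3.4.3]{Kal19} combined with Proposition~\ref{prop:DeBacker-Kaletha}) yields that $\bfT_\gamma$ is maximally unramified in $\bfG^0$, settling the second assertion.

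For the first assertion, Proposition~\ref{prop:DeBacker-Kaletha} now gives that $\bbT_\gamma^\circ$ is an elliptic maximal torus of $\bbG^\circ$. I would next consider the topological Jordan decomposition $\gamma = \gamma_0 \gamma_+$ inside $T_\gamma$: this is well-defined modulo the center since $T_\gamma/\bfZ_\bfG(F)$ is compact by ellipticity of $\bfT_\gamma$ in $\bfG$. By Remark~\ref{rem:shallow}, $\gamma_0$ is regular semisimple in $\bfG$; moreover $\gamma_+ \in T_{\gamma, 0+}$ is topologically unipotent, and $\gamma_0$ lies in $T_{\gamma, 0}\bfZ_\bfG(F)$. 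Since $\x \in \mcB(\bfT_\gamma, F)$, the inclusions $T_{\gamma, 0+} \subset G^0_{\x, 0+}$ and $T_{\gamma, 0} \subset G^0_{\x, 0}$ hold; thus $\overline{\gamma_+} = 1$ in $\bbG(\F_q)$, so $\overline\gamma = \overline{\gamma_0}$ lies in $\bbT_\gamma^\circ(\F_q)\bbZ_\bbG(\F_q) \subset \bbG^\circ(\F_q)\bbZ_\bbG(\F_q) = \bbG'(\F_q)$.

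Finally, I would verify that $\overline\gamma$ is elliptic regular semisimple in $\bbG'(\F_q)$ per Definition~\ref{def:G' semisimple}. Writing $\overline\gamma = g \cdot \bar z$ with $g \in \bbT_\gamma^\circ(\F_q) \subset \bbG^\circ(\F_q)$ and $\bar z \in \bbZ_\bbG(\F_q)$ gives an extended Jordan decomposition whose $g\dot t$ (in the notation of Section~\ref{subsec:extended_jordan}) equals $g$, which lies in a torus and hence is semisimple with trivial unipotent part. For the regular semisimplicity of $g$ in $\bbG^\circ$, it suffices to show $\bar\alpha(g) \neq 1$ for every root $\bar\alpha$ of $\bbT_\gamma^\circ$ in $\bbG^\circ$. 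Each such $\bar\alpha$ arises as the reduction of a depth-zero root $\alpha$ of $\bfT_\gamma$ in $\bfG^0$, and $\bar\alpha(g)$ is the residue of $\alpha(\gamma_0) \in \mcO_{\ol F}^\times$. Since $\alpha(\gamma_+) \in 1 + \mathfrak{p}_{\ol F}$ and $\alpha(\gamma) \not\equiv 1 \pmod{\mathfrak{p}_{\ol F}}$ by very regularity, we obtain $\alpha(\gamma_0) \not\equiv 1 \pmod{\mathfrak{p}_{\ol F}}$, so $\bar\alpha(g) \neq 1$. Combined with the ellipticity of $\bbT_\gamma^\circ$, this gives the desired conclusion. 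The main technical hurdle is handling the topological Jordan decomposition when $\gamma$ is noncompact modulo $\bfZ_\bfG(F)$, but this central contribution is absorbed into $\bbZ_\bbG$ and causes no issue in the reduction.
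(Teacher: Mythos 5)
The central gap is the step where you deduce that $\bfT_\gamma$ is maximally unramified in $\bfG^{0}$ from the fact that its associated point is the vertex $\bar{\x}$. Neither \cite[Lemma 3.4.3]{Kal19} nor Proposition \ref{prop:DeBacker-Kaletha} gives this converse implication: the former says that a \emph{maximally unramified} elliptic maximal torus is associated to a vertex, and the latter identifies maximally unramified elliptic tori associated to $\x$ with \emph{elliptic maximal tori of} $\bbG^{\circ}$; neither rules out a more ramified elliptic torus also being associated to $\bar{\x}$ (its reduction $\bbT^{\circ}$ would then be a torus of $\bbG^{\circ}$ of strictly smaller rank). What has to be proved is precisely that $\bbT_\gamma^{\circ}$ has full rank in $\bbG^{\circ}$, and your argument never establishes this. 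Worse, your second half is circular on exactly this point: the criterion ``$\bar\alpha(g)\neq 1$ for all roots $\bar\alpha$ of $\bbT_\gamma^{\circ}$ in $\bbG^{\circ}$ implies $g$ regular semisimple,'' and the identification of those roots with reductions of depth-zero roots of $\bfT_\gamma$ in $\bfG^{0}$, both presuppose that $\bbT_\gamma^{\circ}$ is a \emph{maximal} torus of $\bbG^{\circ}$, i.e.\ they presuppose maximal unramifiedness. The paper breaks the circle by arguing in the opposite order: since $\gamma_{0}$ has finite prime-to-$p$ order modulo the center, $\Ad(\gamma_{0})$ acts semisimply on $\Lie\bfG^{0}(F^{\ur})_{\x,0}$ with $1$-eigenspace exactly $\Lie\bfT_{\gamma}(F^{\ur})_{0}$ (by very regularity) and all other eigenvalues prime-to-$p$ roots of unity distinct from $1$; this descends to $\Lie\bbG^{\circ}$ and shows \emph{directly} that the connected centralizer of $\overline{\gamma_{0}}$ in $\bbG^{\circ}$ is a torus, hence a maximal torus. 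Maximal unramifiedness of $\bfT_\gamma$ is then a consequence, obtained by identifying that connected centralizer with $\bbT_\gamma^{\circ}$ and comparing ranks of reductions of connected N\'eron models. You would need to either supply a proof of the ``vertex $\Rightarrow$ maximally unramified'' implication or reorganize your proof along these lines.

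A secondary inaccuracy: the assertion that $\gamma_{0}$ lies in $T_{\gamma,0}\bfZ_{\bfG}(F)$ is false in general. Take $\bfG=\GL_{4}$, $\bfG^{0}=\Res_{K/F}\GL_{2}$ with $K/F$ ramified quadratic and $E/K$ unramified quadratic, $\bfS\cong\Res_{E/F}\Gm$, and $\gamma=\varpi_{K}\zeta$ with $\zeta$ a generator of the prime-to-$p$ roots of unity in $E^{\times}$: then $\gamma=\gamma_{0}$ is topologically semisimple modulo the center, but $\varpi_{K}\notin\mcO_{E}^{\times}F^{\times}=T_{\gamma,0}Z_{\bfG}$. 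The correct statement involves $Z_{\bfG^{0}}$ (whose image is $\bbZ_{\bbG}$, not $\bbZ_{\bbG}^{\star}$) and is essentially \cite[Lemma 3.4.22]{Kal19}, which is what the paper invokes to place $\gamma_{0}$ in $SG^{0}_{\x,0}$ and $\gamma_{0+}$ in $G^{0}_{\x,0}$. Your closing sentence gestures at this but does not actually repair the decomposition $\overline{\gamma}=g\cdot\bar z$ on which your verification of Definition \ref{def:G' semisimple} rests.
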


\begin{proof}
Let $\gamma=\gamma_{0}\cdot\gamma_{0+}$ be a topological Jordan decomposition of $\gamma$, i.e., $\gamma_{0}$ is a topologically semisimple (modulo $Z_{\bfG^{0}}$) element and $\gamma_{0+}$ is a topologically unipotent element such that $\gamma_{0}\gamma_{0+}=\gamma_{0+}\gamma_{0}$.
Then, by \cite[Lemma 3.4.22]{Kal19}, $\gamma_{0}$ belongs to $SG^{0}_{\x,0}$ and $\gamma_{0+}$ belongs to $G^{0}_{\x,0}$.
Furthermore, the image $\overline{\gamma}=\overline{\gamma_{0}}\cdot\overline{\gamma_{0+}}$ in $\bbG'(\F_{q})$ gives the Jordan decomposition of $\overline{\gamma}$ in the usual sense.
Let us first show that $\overline{\gamma_{0}}$ is regular semisimple (this implies that $\overline{\gamma}$ is regular semisimple in the sense of Definition \ref{def:G' semisimple}).

Since $\gamma_{0}$ is of finite prime-to-$p$ order (modulo $Z_{\bfG^{0}}$), the parahoric Lie subalgebra $\Lie\bfG^{0}(F^{\ur})_{\x,0}$ has the eigenspace decomposition with respect to the conjugate action of $\gamma_{0}$.
By the very regularity of $\gamma$, the eigenspace with eigenvalue $1$ is given by $\Lie\bfT_{\gamma}(F^{\ur})\cap\Lie\bfG^{0}(F^{\ur})_{\x,0}=\Lie\bfT_{\gamma}(F^{\ur})_{0}$ and the eigenvalue of any other eigenspace is given by a root of unity of finite prime-to-$p$ order (not equal to $1$).
Hence, as $\bbG^{\circ}(\overline{\F}_{q})=\bfG^{0}(F^{\ur})_{\x,0:0+}$, the same is true for the conjugate action of $\overline{\gamma_{0}}$ on the Lie algebra of $\bbG^{\circ}$.
In particular, the connected centralizer of $\overline{\gamma_{0}}$ in $\bbG^{\circ}$ is given by a torus of $\bbG^{\circ}$, hence a maximal torus of $\bbG^{\circ}$.
In other words, $\overline{\gamma_{0}}$ is regular semisimple in $\bbG^{\circ}$.

Let us show that $\bfT_{\gamma}$ is maximally unramified.
For this, since $\bfS$ is maximally unramified, it is enough to show that the ranks of the maximally unramified subtori of $\bfS$ and $\bfT_{\gamma}$ are equal.
Note that, for any torus $\bfT$ over $F$, the rank of its maximally unramified subtorus is the same as the rank of the reduction of its connected N\'eron model (see, e.g., \cite[Corollary B.7.12]{KP23}).
Thus it suffices to show that the ranks of $\bbS^{\circ}$ and $\mathbb{T}_{\gamma}^{\circ}$ coincide.
This follows from that both are maximal tori of $\bbG^{\circ}$.
(Note that $\mathbb{T}_{\gamma}^{\circ}$, which is the reduction of the connected N\'eron model of $\bfT_{\gamma}$, is nothing but the connected centralizer of $\overline{\gamma_{0}}$ in $\bbG^{\circ}$.)

Finally, the ellipticity of $\overline{\gamma}$ follows from Proposition \ref{prop:DeBacker-Kaletha}.
\end{proof}

\begin{defn}\label{defn:evrs-finite-fields}
We say that an element of $\bbG'(\F_{q})$ is \textit{elliptic very regular} if it is the image of an elliptic very regular element of $SG^{0}_{\x,0}$ under the reduction map.
\end{defn}

We define $\bbG'(\F_{q})_{\evrs}$ to be the set of elliptic very regular elements of $\bbG'(\F_{q})$.
We put $\overline{\bbG}(\F_{q})_{\evrs}$ to be the image of $\bbG'(\FF_q)_{\evrs}$ under the map $\pr_{\ss}\circ\Jord$ (Section \ref{subsec:extended_jordan}). 
Then, by Proposition \ref{prop:evreg-reduction}, $\overline{\bbG}(\F_{q})_{\evrs}$ is contained in $\overline{\bbG}(\F_{q})_{\rs}$ and stable under $\bbG(\F_{q})$-conjugation.
We put $\bbS(\F_{q})_{\evrs}\colonequals \bbS(\F_{q})\cap\bbG'(\F_{q})_{\evrs}$ and define $[\bbS]_{\evrs}$ to be the image of $\bbS(\F_{q})_{\evrs}$ under the map $\bbS(\F_{q})\twoheadrightarrow[\bbS]$.
We also put $\overline{\bbS}(\F_{q})_{\evrs}$ to be the image of $\bbS(\F_{q})_{\evrs}$ under the map $\bbS(\F_{q})\twoheadrightarrow\overline{\bbS}(\F_{q})$.

We caution that, in the above definition of $\bbG'(\F_{q})_{\evrs}$, the very regularity of elements of $SG^{0}_{\x,0}$ is considered in $\bfG$ (not $\bfG^{0}$).
The set of very regular elements of $SG^{0}_{\x,0}$ is stable under $Z_{\bfG}$-translation, but might not be stable under $Z_{\bfG^{0}}$-translation.
This means that $\bbG'(\F_{q})_{\evrs}$ might not be stable under the $\bbZ_{\bbG}(\F_{q})$-translation.
Based on this observation, we define a subgroup $\bbZ^{\star}_{\bbG}$ of $\bbZ_{\bbG}$ to be the reduction of $Z_{\bfG}$.
(More precisely, $\bbZ^{\star}_{\bbG}$ is the image of $\bfZ_{\bfG}(F^{\ur})$ in $\bfG^{0}(F^{\ur})_{\bar{\x}}/\bfG^{0}(F^{\ur})_{\x,0+}$.)
Then $\bbG'(\F_{q})_{\evrs}$ is stable under the $\bbZ^{\star}_{\bbG}(\F_{q})$-translation.
Furthermore, by the assumption that $\bfZ_{\bfG^{0}}/\bfZ_{\bfG}$ is anisotropic, $\bbZ^{\star}_{\bbG}(\F_{q})$ is of finite index in $\bbZ_{\bbG}(\F_{q})$.
Therefore, $(\overline{\bbG}(\F_{q})_{\evrs}, \bbG'(\F_{q})_{\evrs}, \bbZ^{\star}_{\bbG})$ introduced here satisfies the assumptions explained in the beginning of Section \ref{subsec:char-finite-gen-pos}.

\subsection{$a$-data, $\chi$-data, and $\Delta_{\II}^{\abs,\bfG}$}\label{subsec:tran}

In this subsection, we recall the notions of $a$-data and $\chi$-data, and the absolute transfer factor $\Delta_{\II}^{\abs,\bfG}$, which will be utilized to describe the characters of supercuspidal representations.

Let $\bfS$ be an $F$-rational maximal torus of $\bfG$.
Then we get the set $R(\bfG,\bfS)$ of absolute roots of $\bfS$ in $\bfG$ which has an action of the absolute Galois group $\Gamma_{F}$ of $F$.
For each $\alpha\in R(\bfG,\bfS)$, we put $\Gamma_{\alpha}$ (resp.\ $\Gamma_{\pm\alpha}$) to be the stabilizer of $\alpha$ (resp.\ $\{\pm\alpha\}$) in $\Gamma_{F}$.
Let $F_{\alpha}$ (resp.\ $F_{\pm\alpha}$) be the subfield of $\ol{F}$ fixed by $\Gamma_{\alpha}$ (resp.\ $\Gamma_{\pm\alpha}$): 
\[
F\subset F_{\pm\alpha}\subset F_{\alpha}
\quad
\longleftrightarrow
\quad
\Gamma_{F}\supset \Gamma_{\pm\alpha}\supset \Gamma_{\alpha}.
\]
\begin{itemize}
\item
When $F_{\alpha}=F_{\pm\alpha}$, we say $\alpha$ is an \textit{asymmetric} root.
\item
When $F_{\alpha}\supsetneq F_{\pm\alpha}$, we say $\alpha$ is a \textit{symmetric} root.
Note that, in this case, the extension $F_{\alpha}/F_{\pm\alpha}$ is necessarily quadratic.
Furthermore,
\begin{itemize}
\item
when $F_{\alpha}/F_{\pm\alpha}$ is unramified, we say $\alpha$ is \textit{symmetric unramified}, and
\item
when $F_{\alpha}/F_{\pm\alpha}$ is ramified, we say $\alpha$ is \textit{symmetric ramified}.
\end{itemize}
\end{itemize}

\begin{defn}[$a$-data]
A family $\{a_{\alpha}\}_{\alpha\in R(\bfG,\bfS)}$ of elements $a_{\alpha}\in F_{\alpha}^{\times}$ is called a \textit{set of $a$-data (with respect to $\bfS$)} if the following conditions are satisfied:
\begin{itemize}
\item
$a_{-\alpha}=a_{\alpha}^{-1}$ for any $\alpha\in R(\bfG,\bfS)$, and
\item
$a_{\sigma(\alpha)}=\sigma(a_{\alpha})$ for any $\alpha\in R(\bfG,\bfS)$ and $\sigma\in\Gamma_{F}$.
\end{itemize}
\end{defn}

\begin{defn}[$\chi$-data]
A family $\{\chi_{\alpha}\}_{\alpha\in R(\bfG,\bfS)}$ of characters $\chi_{\alpha}\colon F_{\alpha}^{\times}\rightarrow\C^{\times}$ is called a \textit{set of $\chi$-data (with respect to $\bfS$)} if the following conditions are satisfied:
\begin{itemize}
\item
$\chi_{-\alpha}=\chi_{\alpha}^{-1}$ for any $\alpha\in R(\bfG,\bfS)$,
\item
$\chi_{\sigma(\alpha)}=\chi_{\alpha}\circ\sigma^{-1}$ for any $\alpha\in R(\bfG,\bfS)$ and $\sigma\in\Gamma_{F}$, and
\item
the restriction of $\chi_{\alpha}$ to $F_{\pm\alpha}^{\times}$ is the nontrivial quadratic character corresponding to the quadratic extension $F_{\alpha}/F_{\pm\alpha}$ for any symmetric root $\alpha\in R(\bfG,\bfS)$.
\end{itemize}
\end{defn}

\begin{defn}[$\Delta_{\II}^{\abs,\bfG}$]
Let $a=\{a_{\alpha}\}_{\alpha}$ be a set of $a$-data and $\chi=\{\chi_{\alpha}\}_{\alpha}$ a set of $\chi$-data with respect to $\bfS$.
We define a function $\Delta_{\II,\bfS}^{\abs,\bfG}[a,\chi]\colon S\rightarrow\C^{\times}$ by
\[
\Delta_{\II,\bfS}^{\abs,\bfG}[a,\chi](\gamma)
\colonequals \prod_{\begin{subarray}{c}\Gamma_{F}\backslash R(\bfG,\bfS)\\ \alpha(\gamma)\neq1\end{subarray}} \chi_{\alpha}\biggl(\frac{\alpha(\gamma)-1}{a_{\alpha}}\biggr).
\]
\end{defn}

Of course, the function $\Delta_{\II,\bfS}^{\abs,\bfG}[a,\chi]$ on $S$ depends on the choices of a set of $a$-data and a set of $\chi$-data with respect to $\bfS$.
In \cite[Section 4.7]{Kal19}, Kaletha associated a set of $a$-data $a_{\Psi,\bfS}=\{a_{\Psi,\bfS,\alpha}\}_{\alpha}$ and a set of $\chi$-data $\chi'_{\Psi,\bfS}=\{\chi'_{\Psi,\bfS,\alpha}\}_{\alpha}$ to each Yu-datum $\Psi$ and a tamely ramified maximal torus $\bfS\subset\bfG^{0}$.
Then he described the character of the supercuspidal representation $\pi_{\Psi}^{\Yu}$ by using the function $\Delta_{\II,\bfS}^{\abs,\bfG}[a_{\Psi,\bfS},\chi'_{\Psi,\bfS}]$ when $\Psi$ is a regular Yu-datum.
(In \cite[Section 4.7]{Kal19}, the sets $a_{\Psi,\bfS}$ and $\chi'_{\Psi,\bfS}$ are simply written by $a$ and $\chi'$, respectively.)
On the other hand, in his more recent paper \cite{Kal19-sc}, Kaletha introduced another set of $\chi$-data $\chi''_{\Psi,\bfS}$ for a better understanding of the local Langlands correspondence for supercuspidal representations \cite[Section 3.5]{Kal19-sc}.

Let us recall the definitions of Kaletha's $a$-data $a_{\Psi,\bfS}$ and $\chi$-data $\chi'_{\Psi,\bfS}$, $\chi''_{\Psi,\bfS}$ (see \cite[Section 4.2]{FKS21} for the details).
Let $\Psi=(\vec{\bfG},\vec{\phi},\vec{r},\x,\rho_{0})$ be a Yu-datum.
(Note that the following construction depends only on the clipped part $\dashover{\Psi}$ of $\Psi$.)
Let $\bfS$ be a tamely ramified maximal torus of $\bfG^{0}$.

Since $\vec{\bfG}$ is a sequence $\bfG^{0}\subset\bfG^{1}\subset\cdots\subset\bfG^{d}=\bfG$ of tame twisted Levi subgroups, each $\alpha\in R(\bfG,\bfS)$ belongs to $R(\bfG^{i},\bfS)\smallsetminus R(\bfG^{i-1},\bfS)$ for a unique $0\leq i\leq d$ (we put $\bfG^{-1}\colonequals \bfS$).
When $i=0$, we simply put $a_{\Psi,\bfS,\alpha}\colonequals 1$.
Thus we suppose that $\alpha\in R(\bfG^{i},\bfS)\smallsetminus R(\bfG^{i-1},\bfS)$ for $0<i\leq d$ in the following.
Let $\bfG^{i-1}_{\sc}$ and $\bfS_{\sc}$ denote the preimages of $\bfG^{i-1}$ and $\bfS$ in the simply-connected cover of the derived group of $\bfG^{i}$, respectively.
Let $\bfG^{i-1}_{\sc,\ab}$ be the maximal abelian quotient of $\bfG^{i-1}_{\sc}$.
We fix a $\bfG^{i}$-generic element $X^{\ast}_{i-1}\in\Lie^{\ast}(\bfG^{i-1}_{\sc,\ab})(F)$ of depth $r_{i-1}$ which represents the character $\phi_{i-1}$, where $\Lie^{\ast}$ denotes the dual Lie algebra.\footnote{We caution that here we adopt a different convention from \cite{Yu01}. In \cite{Yu01}, an element of $\Lie^{\ast}(\bfZ^{i-1})(F)$ is used to represent the character $\phi_{i-1}$, where $\bfZ^{i-1}$ is the identity component of the center of $\bfG^{i-1}$. See \cite[Remark 4.1.3]{FKS21} for the details.}
We regard $X^{\ast}_{i-1}$ as an element of $\Lie^{\ast}(\bfS_{\sc})(F)$ via the injection $\Lie^{\ast}(\bfG^{i-1}_{\sc,\ab})(F)\hookrightarrow\Lie^{\ast}(\bfS_{\sc})(F)$ induced from the surjection $\bfS_{\sc}\twoheadrightarrow\bfG^{i-1}_{\sc,\ab}$.
We put $H_{\alpha}\colonequals d\alpha_{\sc}^{\vee}(1)\in\Lie(\bfS_{\sc})(F_{\alpha})$, where $\alpha_{\sc}$ is the element of $R(\bfG^{i-1}_{\sc},\bfS_{\sc})$ corresponding to $\alpha$ under the identification $R(\bfG^{i-1}_{\sc},\bfS_{\sc})\cong R(\bfG^{i-1},\bfS)$.
Then we define $a_{\alpha}\in F_{\alpha}$ by
\[
a_{\Psi,\bfS,\alpha}\colonequals \langle X^{\ast}_{i-1},H_{\alpha}\rangle.
\]

We next recall the definition of the $\chi$-data $\{\chi'_{\Psi,\bfS,\alpha}\}_{\alpha}$.
Let $\alpha\in R(\bfG,\bfS)$.
\begin{itemize}
\item
When $\alpha$ is asymmetric, we define $\chi'_{\Psi,\bfS,\alpha}\colon F_{\alpha}^{\times}\rightarrow\C^{\times}$ to be the trivial character.
\item
When $\alpha$ is symmetric unramified, we define $\chi'_{\Psi,\bfS,\alpha}\colon F_{\alpha}^{\times}\rightarrow\C^{\times}$ to be the unique unramified quadratic character.
\item
When $\alpha$ is symmetric ramified, we define $\chi'_{\Psi,\bfS,\alpha}\colon F_{\alpha}^{\times}\rightarrow\C^{\times}$ to be the unique character such that 
\begin{itemize}
\item
its restriction $\chi'_{\Psi,\bfS,\alpha}|_{\mcO_{F_{\alpha}}^{\times}}$ to $\mcO_{F_{\alpha}^{\times}}$ is the inflation of the nontrivial quadratic character of $k_{F_{\alpha}}^{\times}$, and
\item
we have $\chi'_{\Psi,\bfS,\alpha}(2a_{\alpha})=\lambda_{F_{\alpha}/F_{\pm\alpha}}(\psi\circ\Tr_{F_{\pm\alpha}/F})$, where $\lambda_{F_{\alpha}/F_{\pm\alpha}}(\psi\circ\Tr_{F_{\pm\alpha}/F})$ denotes the Langlands constant (recall that $\psi$ is the fixed nontrivial additive character of $F$).
\end{itemize}
\end{itemize}

We finally recall the definition of the $\chi$-data $\{\chi''_{\Psi,\bfS,\alpha}\}_{\alpha}$.
Let $\bfZ^{i-1}$ denotes the identity component of the center of $\bfG^{i-1}$.
For $\alpha\in R(\bfG^{i},\bfS)\smallsetminus R(\bfG^{i-1},\bfS)$, we write $\alpha_{0}$ for the restriction of $\alpha$ to $\bfZ^{i-1}$.
Let $F_{0}$ be the subfield of $\overline{F}$ fixed by $\Gamma_{\alpha_{0}}\colonequals \Stab_{\Gamma_{F}}(\alpha_{0})$.
\begin{itemize}
\item
When $\alpha_{0}$ is asymmetric, we define $\chi''_{\Psi,\bfS,\alpha_{0}}\colon F_{\alpha_{0}}^{\times}\rightarrow\C^{\times}$ to be the trivial character.
\item
When $\alpha_{0}$ is symmetric unramified, we define $\chi''_{\Psi,\bfS,\alpha_{0}}\colon F_{\alpha_{0}}^{\times}\rightarrow\C^{\times}$ to be the unique unramified quadratic character.
\item
When $\alpha$ is symmetric ramified, we define $\chi''_{\Psi,\bfS,\alpha_{0}}\colon F_{\alpha_{0}}^{\times}\rightarrow\C^{\times}$ to be the unique character such that 
\begin{itemize}
\item
its restriction $\chi''_{\Psi,\bfS,\alpha_{0}}|_{\mcO_{F_{\alpha_{0}}}^{\times}}$ to $\mcO_{F_{\alpha_{0}}^{\times}}$ is the inflation of the nontrivial quadratic character of $k_{F_{\alpha_{0}}}^{\times}$, and
\item
we have $\chi''_{\Psi,\bfS,\alpha_{0}}(\ell(\alpha^{\vee})a_{\alpha})=(-1)^{f_{\alpha_{0}}+1}\mfG_{k_{\alpha_{0}}}(\psi^{0}\circ\Tr_{k_{\alpha_{0}}/k_{F}})$.
\end{itemize}
\end{itemize}
Then we define $\chi''_{\Psi,\bfS,\alpha}\colon F_{\alpha}^{\times}\rightarrow\C^{\times}$ by $\chi''_{\Psi,\bfS,\alpha}\colonequals \chi''_{\Psi,\bfS,\alpha_{0}}\circ\Nr_{F_{\alpha}/F_{\alpha_{0}}}$.
Here we do not explain the definitions of the quantities appearing in the symmetric ramified case (see \cite[Section 4.2]{FKS21} for the details).
The only important thing for us is that both characters $\chi'_{\Psi,\bfS,\alpha}$ and $\chi''_{\Psi,\bfS,\alpha}$ are tamely ramified (i.e., trivial on $1+\mfp_{F_{\alpha}}$) for every $\alpha\in R(\bfG,\bfS)$.

The following properties of the function $\Delta_{\II,\bfS}^{\abs,\bfG}$ associated with Kaletha's $a$-data and $\chi$-data will be needed later.

\begin{lem}[{\cite[(Proof of) Lemma 4.2.7]{FKS21}}]\label{lem:tran-diff}
With the above notations, for any $\gamma\in S$, we have
\[
\frac{\Delta_{\II,\bfS}^{\abs,\bfG}[a_{\Psi,\bfS},\chi''_{\Psi,\bfS}](\gamma)}{\Delta_{\II,\bfS}^{\abs,\bfG^{0}}[a_{\Psi,\bfS},\chi''_{\Psi,\bfS}](\gamma)}
=\frac{\Delta_{\II,\bfS}^{\abs,\bfG}[a_{\Psi,\bfS},\chi'_{\Psi,\bfS}](\gamma)}{\Delta_{\II,\bfS}^{\abs,\bfG^{0}}[a_{\Psi,\bfS},\chi'_{\Psi,\bfS}](\gamma)}
\cdot\epsilon_{\Psi,\bfS,\flat}(\gamma),
\]
where
\begin{itemize}
\item
$\Delta_{\II,\bfS}^{\abs,\bfG^{0}}$ is a function defined in the same way as $\Delta_{\II,\bfS}^{\abs,\bfG}$ by the product over $\{\alpha\in\Gamma_{F}\backslash R(\bfG^{0},\bfS) \mid \alpha(\gamma)\neq1\}$, and
\item
 $\epsilon_{\Psi,\bfS,\flat}$ is the character of $S$ mentioned in Section \ref{subsec:tamesc}.
 \end{itemize}
\end{lem}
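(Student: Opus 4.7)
The plan is to follow the strategy used in the cited \cite[Proof of Lemma 4.2.7]{FKS21}, which proceeds by reducing to a level-by-level comparison of $\chi$-data. First, I would exploit the tame twisted Levi filtration $\bfS \subset \bfG^0 \subsetneq \bfG^1 \subsetneq \cdots \subsetneq \bfG^d = \bfG$, which induces a $\Gamma_F$-stable disjoint-union decomposition
\[
R(\bfG,\bfS) \smallsetminus R(\bfG^0,\bfS) = \bigsqcup_{i=1}^{d} \bigl(R(\bfG^i,\bfS) \smallsetminus R(\bfG^{i-1},\bfS)\bigr).
\]
Both $\Delta_{\II,\bfS}^{\abs,\bfG}/\Delta_{\II,\bfS}^{\abs,\bfG^{0}}$ and the analogous ratio with $\chi''$ factor accordingly as products over $i$, and by definition $\epsilon_{\Psi,\bfS,\flat} = \prod_{i=1}^{d} \epsilon_{\flat}^{G^{i}/G^{i-1}}$. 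It thus suffices to prove the identity one step at a time.

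Second, I would fix an index $i$ and analyze the ratio $\chi''_{\Psi,\bfS,\alpha}/\chi'_{\Psi,\bfS,\alpha}$ root by root for $\alpha \in R(\bfG^i,\bfS) \smallsetminus R(\bfG^{i-1},\bfS)$. Recall that $\chi'_{\Psi,\bfS,\alpha}$ is determined by the symmetry type of $\alpha$ itself (asymmetric, symmetric unramified, or symmetric ramified), whereas $\chi''_{\Psi,\bfS,\alpha} = \chi''_{\Psi,\bfS,\alpha_0} \circ \Nr_{F_\alpha/F_{\alpha_0}}$ is determined by the symmetry type of $\alpha_0 \colonequals \alpha|_{\bfZ^{i-1}}$. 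When $\alpha$ and $\alpha_0$ have the same symmetry type and $F_\alpha = F_{\alpha_0}$, the two characters agree (up to an explicit tamely ramified factor arising from the normalization of Langlands/Gauss constants). When the type differs upon restriction, the ratio contributes a controlled discrepancy. The key point, verified by the root-theoretic bookkeeping in \cite[Definition 3.1 and Section 4.2]{FKS21}, is that the product of these discrepancies over $\Gamma_F$-orbits of roots with $\alpha(\gamma) \neq 1$ matches the defining formula for $\epsilon_{\flat}^{G^i/G^{i-1}}(\gamma)$.

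The hard part is Step 2: tracking the precise symmetry-type interaction between $\alpha$ and its restriction $\alpha_0$, and checking that the aggregate of local discrepancies assembles into the global character $\epsilon_{\flat}^{G^i/G^{i-1}}$. One subtlety is that in the symmetric ramified case, the explicit values $\chi'_\alpha(2 a_\alpha) = \lambda_{F_\alpha/F_{\pm\alpha}}(\psi \circ \Tr)$ and $\chi''_{\alpha_0}(\ell(\alpha^\vee) a_\alpha) = (-1)^{f_{\alpha_0}+1}\mfG_{k_{\alpha_0}}(\psi^0 \circ \Tr)$ must be compared, relating Langlands constants and Gauss sums; this requires the standard Gauss-sum identities together with the fact that the ratio of the normalizing scalars $2$ versus $\ell(\alpha^\vee)$ is a tamely ramified unit that gets absorbed by $\mcO_{F_\alpha}^\times$-parts of the characters. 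Once this root-level comparison is in hand, multiplying over $[\alpha]$ gives the $i$-th level identity, and multiplying over $i$ yields the lemma.
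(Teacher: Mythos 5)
The paper gives no proof of this lemma at all: it is imported verbatim from the proof of \cite[Lemma 4.2.7]{FKS21}, and your outline correctly reconstructs the strategy of that cited argument (the $\Gamma_F$-stable level-by-level factorization of $R(\bfG,\bfS)\smallsetminus R(\bfG^{0},\bfS)$ over the twisted Levi filtration, matching the factorization $\epsilon_{\Psi,\bfS,\flat}=\prod_{i}\epsilon_{\flat}^{G^{i}/G^{i-1}}$, followed by a root-by-root comparison of $\chi'_{\Psi,\bfS,\alpha}$ with $\chi''_{\Psi,\bfS,\alpha}=\chi''_{\Psi,\bfS,\alpha_{0}}\circ\Nr_{F_{\alpha}/F_{\alpha_{0}}}$ according to the symmetry types of $\alpha$ and $\alpha_{0}$). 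Since the decisive root-level comparison is exactly the content of the external reference and the paper itself defers entirely to it, your proposal takes essentially the same route as the paper.
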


\begin{lem}\label{lem:tran-translation}
When $\gamma\in S$ is a very regular element of $G$, for any $\gamma_{+}\in S_{0+}$, we have
\[
\Delta_{\II,\bfS}^{\abs,\bfG}[a_{\Psi,\bfS},\chi''_{\Psi,\bfS}](\gamma\cdot\gamma_{+})
=\Delta_{\II,\bfS}^{\abs,\bfG}[a_{\Psi,\bfS},\chi''_{\Psi,\bfS}](\gamma).
\]
\end{lem}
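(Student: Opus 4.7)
The plan is a direct computation using the very regularity of $\gamma$ and the tame ramification of each $\chi''_{\Psi,\bfS,\alpha}$. First I will fix $\alpha \in R(\bfG,\bfS)$ and compare $\alpha(\gamma\cdot\gamma_{+})-1$ with $\alpha(\gamma)-1$. Since $\gamma$ is very regular, by definition $\alpha(\gamma)-1$ is a unit in $\mcO_{\bar F}$, and so in particular $\alpha(\gamma)-1\in\mcO_{F_{\alpha}}^{\times}$. On the other hand, because $\gamma_{+}\in S_{0+}$ and $\bfS$ is tame, the character $\alpha\colon\bfS\to\mathbb{G}_{\mathrm m}$ (defined over $F_{\alpha}$) carries the Moy--Prasad filtration piece $S_{0+}$ into $(\mathbb{G}_{\mathrm m})_{0+}(F_{\alpha})=1+\mfp_{F_{\alpha}}$, so $\alpha(\gamma_{+})\in 1+\mfp_{F_{\alpha}}$.

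Writing $\alpha(\gamma_{+})=1+\varepsilon_{\alpha}$ with $\varepsilon_{\alpha}\in\mfp_{F_{\alpha}}$, I then expand
\[
\alpha(\gamma\cdot\gamma_{+})-1
=\alpha(\gamma)\alpha(\gamma_{+})-1
=\bigl(\alpha(\gamma)-1\bigr)+\alpha(\gamma)\varepsilon_{\alpha}
=\bigl(\alpha(\gamma)-1\bigr)\cdot u_{\alpha},
\]
where
\[
u_{\alpha}\colonequals 1+\frac{\alpha(\gamma)\,\varepsilon_{\alpha}}{\alpha(\gamma)-1}\in 1+\mfp_{F_{\alpha}},
\]
the last inclusion using that $\alpha(\gamma)-1\in\mcO_{F_{\alpha}}^{\times}$. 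In particular $\alpha(\gamma\gamma_{+})-1\in\mcO_{F_{\alpha}}^{\times}$, so the condition $\alpha(\gamma\gamma_{+})\neq 1$ in the product defining $\Delta_{\II,\bfS}^{\abs,\bfG}$ holds for every $\alpha$, exactly as it does for $\gamma$.

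Dividing by $a_{\Psi,\bfS,\alpha}\in F_{\alpha}^{\times}$, I get
\[
\frac{\alpha(\gamma\gamma_{+})-1}{a_{\Psi,\bfS,\alpha}}
=\frac{\alpha(\gamma)-1}{a_{\Psi,\bfS,\alpha}}\cdot u_{\alpha},
\qquad u_{\alpha}\in 1+\mfp_{F_{\alpha}}.
\]
The key input recalled at the end of Section \ref{subsec:tran} is that the character $\chi''_{\Psi,\bfS,\alpha}$ is tamely ramified, i.e.\ trivial on $1+\mfp_{F_{\alpha}}$. Hence $\chi''_{\Psi,\bfS,\alpha}(u_{\alpha})=1$, giving
\[
\chi''_{\Psi,\bfS,\alpha}\!\left(\frac{\alpha(\gamma\gamma_{+})-1}{a_{\Psi,\bfS,\alpha}}\right)
=\chi''_{\Psi,\bfS,\alpha}\!\left(\frac{\alpha(\gamma)-1}{a_{\Psi,\bfS,\alpha}}\right).
\]
Taking the product over Galois orbits in $R(\bfG,\bfS)$ (all of which contribute, by the first paragraph) yields the claimed equality of $\Delta_{\II,\bfS}^{\abs,\bfG}[a_{\Psi,\bfS},\chi''_{\Psi,\bfS}]$-values. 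There is no serious obstacle; the only small point to verify carefully is that $\alpha$ is in fact defined over $F_{\alpha}$ so that the containment $\alpha(S_{0+})\subset 1+\mfp_{F_{\alpha}}$ makes sense, which is immediate from the definition of $F_{\alpha}$ as the fixed field of the stabilizer of $\alpha$ and the tameness of $\bfS$.
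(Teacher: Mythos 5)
Your proof is correct and follows essentially the same route as the paper's: both arguments observe that very regularity gives $\alpha(\gamma)-1\not\equiv 0\pmod{\mfp_{\ol F}}$ while $\gamma_{+}\in S_{0+}$ gives $\alpha(\gamma_{+})\in 1+\mfp_{\ol F}$, so that $(\alpha(\gamma\gamma_{+})-1)/a_{\Psi,\bfS,\alpha}$ and $(\alpha(\gamma)-1)/a_{\Psi,\bfS,\alpha}$ differ by an element of $1+\mfp_{F_{\alpha}}$, on which the tamely ramified $\chi''_{\Psi,\bfS,\alpha}$ is trivial. You simply make explicit the factorization $\alpha(\gamma\gamma_{+})-1=(\alpha(\gamma)-1)u_{\alpha}$ that the paper leaves implicit.
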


\begin{proof}
Since $\gamma$ is very regular, $\alpha(\gamma)\not\equiv1 \pmod{\mfp_{\ol{F}}}$ for any $\alpha\in R(\bfG,\bfS)$.
On the other hand, since $\gamma_{+}$ belongs to $S_{0+}$, $\alpha(\gamma_{+})\equiv1 \pmod{\mfp_{\ol{F}}}$ for any $\alpha\in R(\bfG,\bfS)$.
Hence $\alpha(\gamma)\equiv\alpha(\gamma\cdot\gamma_{+})\not\equiv1 \pmod{\mfp_{\ol{F}}}$ for any $\alpha\in R(\bfG,\bfS)$.
By noting that $\chi''_{\Psi,\bfS,\alpha}$ is tamely ramified, this implies that
\[
\chi''_{\Psi,\bfS,\alpha}\biggl(\frac{\alpha(\gamma)-1}{a_{\Psi,\bfS,\alpha}}\biggr)
=
\chi''_{\Psi,\bfS,\alpha}\biggl(\frac{\alpha(\gamma\cdot\gamma_{+})-1}{a_{\Psi,\bfS,\alpha}}\biggr)
\]
for any $\alpha\in R(\bfG,\bfS)$.
Thus $\Delta_{\II,\bfS}^{\abs,\bfG}[a_{\Psi,\bfS},\chi''_{\Psi,\bfS}](\gamma)$ equals $\Delta_{\II,\bfS}^{\abs,\bfG}[a_{\Psi,\bfS},\chi''_{\Psi,\bfS}](\gamma\cdot\gamma_{+})$.
\end{proof}

\begin{lem}\label{lem:tran-conj}
The function $\Delta_{\II,\bfS}^{\abs,\bfG}[a_{\Psi,\bfS},\chi''_{\Psi,\bfS}]$ is $G^{0}$-conjugation invariant.
More precisely, for any $g\in G^{0}$, 
\[
\Delta_{\II,\bfS}^{\abs,\bfG}[a_{\Psi,\bfS},\chi''_{\Psi,\bfS}](\gamma)
=\Delta_{\II,{}^{g}\bfS}^{\abs,\bfG}[a_{\Psi,{}^{g}\bfS},\chi''_{\Psi,{}^{g}\bfS}]({}^{g}\gamma),
\]
where $a_{\Psi,{}^{g}\bfS}$ and $\chi''_{\Psi,{}^{g}\bfS}$ are Kaletha's $a$-data and $\chi$-data with respect to the tamely ramified maximal torus ${}^{g}\bfS\subset\bfG^{0}$.
\end{lem}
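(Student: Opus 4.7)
The plan is to verify that $G^0$-conjugation by $g$ induces a compatible bijection between all the ingredients entering the product on each side of the claimed identity, so that the factors match term by term.

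First I would set up the combinatorial bijection. Since $g \in G^0 \subset \bfG^{i-1}(F)$ for every $i \geq 0$, the conjugation map $\Ad(g) \colon \bfS \to {}^g\bfS$ is $F$-rational and induces a $\Gamma_F$-equivariant bijection
\[
R(\bfG,\bfS) \to R(\bfG,{}^g\bfS), \qquad \alpha \mapsto \alpha^g \colonequals \alpha \circ \Ad(g^{-1}),
\]
which preserves the stabilizers $\Gamma_\alpha, \Gamma_{\pm\alpha}$, and hence the fields $F_\alpha = F_{\alpha^g}$ and $F_{\pm\alpha} = F_{\pm\alpha^g}$ as well as the asymmetric / symmetric unramified / symmetric ramified trichotomy. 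We also have the identity $\alpha^g({}^g\gamma) = \alpha(\gamma)$, and $\alpha^g \in R(\bfG^i, {}^g\bfS) \smallsetminus R(\bfG^{i-1}, {}^g\bfS)$ if and only if $\alpha \in R(\bfG^i,\bfS) \smallsetminus R(\bfG^{i-1},\bfS)$. Passing to $\Gamma_F$-orbits gives a canonical bijection between the index sets of the two products.

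Next I would verify $a_{\Psi, {}^g\bfS, \alpha^g} = a_{\Psi, \bfS, \alpha}$ as elements of $F_\alpha = F_{\alpha^g}$. The element $X^\ast_{i-1}$ is fixed in $\Lie^\ast(\bfG^{i-1}_{\sc,\ab})(F)$ and enters the definition via the surjection $\bfS_{\sc} \twoheadrightarrow \bfG^{i-1}_{\sc,\ab}$ (resp.\ ${}^g\bfS_{\sc} \twoheadrightarrow \bfG^{i-1}_{\sc,\ab}$). Because $g \in \bfG^{i-1} = Z(\bfG^{i-1})^\circ \cdot \bfG^{i-1}_\der$, conjugation by $g$ acts trivially on the abelian group $\bfG^{i-1}_{\sc,\ab}$, so the square
\[
\begin{tikzcd}
\bfS_{\sc} \ar[r,"\Ad(g)"] \ar[d] & {}^g\bfS_{\sc} \ar[d] \\
\bfG^{i-1}_{\sc,\ab} \ar[r,"\id"] & \bfG^{i-1}_{\sc,\ab}
\end{tikzcd}
\]
commutes. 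Dualizing, the image of $X^\ast_{i-1}$ in $\Lie^\ast({}^g\bfS_{\sc})$ pulls back to its image in $\Lie^\ast(\bfS_{\sc})$ along $\Ad(g)^\ast$. Combined with $H_{\alpha^g} = \Ad(g) H_\alpha$, this gives
\[
a_{\Psi,{}^g\bfS,\alpha^g} = \langle X^\ast_{i-1,{}^g\bfS}, H_{\alpha^g} \rangle = \langle \Ad(g)^\ast X^\ast_{i-1,{}^g\bfS}, H_\alpha \rangle = \langle X^\ast_{i-1,\bfS}, H_\alpha \rangle = a_{\Psi,\bfS,\alpha}.
\]

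Finally I would verify $\chi''_{\Psi,\bfS,\alpha} = \chi''_{\Psi,{}^g\bfS,\alpha^g}$ as characters of $F_\alpha^\times$. Since $g$ centralizes $\bfZ^{i-1}$, the restriction of $\alpha^g$ to $\bfZ^{i-1}$ coincides with that of $\alpha$, so the auxiliary character $\chi''_{\Psi,\bfS,\alpha_0}$ attached to $\alpha_0$ in Kaletha's recipe is identical on both sides; the result then follows via $\chi''_{\Psi,\bfS,\alpha} = \chi''_{\Psi,\bfS,\alpha_0} \circ \Nr_{F_\alpha/F_{\alpha_0}}$. In the asymmetric and symmetric unramified cases of $\alpha_0$ this is immediate from the canonical definitions (trivial or unique unramified quadratic character). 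The main subtlety, and the step I expect to demand the most care, is the symmetric ramified case: there $\chi''_{\Psi,\bfS,\alpha_0}$ is pinned down by its restriction to $\mcO_{F_{\alpha_0}}^\times$ (which only depends on the field $F_{\alpha_0}$ and hence already matches) together with its value at $\ell(\alpha^\vee) a_{\Psi,\bfS,\alpha}$; one must check that the scalar $\ell(\alpha^\vee)$ and the Gauss-sum target depend only on invariants shared by $\alpha$ and $\alpha^g$ (namely the abstract root datum and the fields $F_\alpha, F_{\alpha_0}$), which, combined with the equality of $a$-data established above, forces the two normalizing values to agree. Assembling the three steps, each factor in the product defining $\Delta_{\II,\bfS}^{\abs,\bfG}[a_{\Psi,\bfS},\chi''_{\Psi,\bfS}](\gamma)$ matches the corresponding factor for $({}^g\bfS,{}^g\gamma)$, yielding the lemma.
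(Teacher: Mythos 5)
Your proof is correct and has the same overall architecture as the paper's: a $\Gamma_F$-equivariant identification $R(\bfG,\bfS)\cong R(\bfG,{}^g\bfS)$ preserving the values $\alpha(\gamma)={}^g\alpha({}^g\gamma)$ and the Galois-theoretic type of each root, followed by a comparison of the $a$-data and of the $\chi''$-data. The one step where you genuinely diverge is the $a$-data comparison. You prove the \emph{exact} equality $a_{\Psi,{}^g\bfS,{}^g\alpha}=a_{\Psi,\bfS,\alpha}$ by observing that conjugation by $g\in\bfG^{i-1}$ acts trivially on the torus $\bfG^{i-1}_{\sc,\ab}$ (a connected group acting on a torus acts trivially), which is valid provided the \emph{same} representative $X^\ast_{i-1}$ of $\phi_{i-1}$ is used for both tori. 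The paper instead establishes only the congruence $a_{\Psi,{}^g\bfS,{}^g\alpha}\cdot a_{\Psi,\bfS,\alpha}^{-1}\in 1+\mfp_{F_\alpha}$: since $g\in G^0\subset G^{i-1}$ fixes $\phi_{i-1}$, the elements $X^\ast_{i-1}$ and $X^{\ast,g}_{i-1}$ both represent $\phi_{i-1}$ and hence pair with $H_\alpha$ to values differing by $1+\mfp_{F_\alpha}$. This weaker statement suffices precisely because $\chi''_{\Psi,\bfS,\alpha}$ is tamely ramified, and it has the advantage of being insensitive to the choice of generic element, which is only well defined up to such a perturbation. Your stronger conclusion buys nothing extra for the lemma, and the tameness of the $\chi$-data is still needed anyway (both in matching the normalizing value $\chi''(\ell(\alpha^\vee)a_\alpha)$ in the symmetric ramified case, which you rightly flag as the delicate point, and which the paper compresses into ``by the construction of $\chi''$ it suffices to show the $a$-data congruence''), so it is worth making that dependence explicit.
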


\begin{proof}
Let $g\in G^{0}$.
We let $\alpha\leftrightarrow{}^{g}\alpha$ denote the identification $R(\bfG,\bfS)\cong R(\bfG,{}^{g}\bfS)$ induced by the $g$-conjugation.
It is enough to check that 
\[
\chi''_{\Psi,\bfS,\alpha}\biggl(\frac{\alpha(\gamma)-1}{a_{\Psi,\bfS,\alpha}}\biggr)
=
\chi''_{\Psi,{}^{g}\bfS,{}^{g}\alpha}\biggl(\frac{{}^{g}\alpha({}^{g}\gamma)-1}{a_{\Psi,{}^{g}\bfS,{}^{g}\alpha}}\biggr)
\]
for any $\alpha\in R(\bfG,\bfS)$.
Note that ${}^{g}\alpha({}^{g}\gamma)=\alpha(\gamma)$.
Since $g$ is $F$-rational, the identification $R(\bfG,\bfS)\cong R(\bfG,{}^{g}\bfS)$ is $\Gamma_{F}$-equivariant.
Therefore, by the construction of $\chi''_{\Psi,\bfS,\alpha}$, it suffices to show that $a_{\Psi,{}^{g}\bfS,{}^{g}\alpha}\cdot a_{\Psi,\bfS,\alpha}^{-1}\in 1+\mfp_{F_{\alpha}}$.
When $\alpha$ belongs to $R(\bfG^{0},\bfS)$, this is obvious since $a_{\Psi,\bfS,\alpha}=a_{\Psi,{}^{g}\bfS,{}^{g}\alpha}=1$.
Let us suppose that $\alpha$ belongs to $R(\bfG^{i},\bfS)\smallsetminus R(\bfG^{i-1},\bfS)$ for $0<i\leq d$.
Recall that we put
\[
a_{\Psi,\bfS,\alpha}
=\langle X^{\ast}_{i-1},H_{\alpha}\rangle
\quad\text{and}\quad
a_{\Psi,{}^{g}\bfS,{}^{g}\alpha}
=\langle X^{\ast}_{i-1},H_{{}^{g}\alpha}\rangle,
\]
where $X^{\ast}_{i-1}$ is a(ny) $\bfG^{i}$-generic element of $\Lie^{\ast}(\bfG^{i-1}_{\sc,\ab})(F)$ of depth $r_{i-1}$ which represents the character $\phi_{i-1}$.
Note that $X^{\ast,g}_{i-1}\colonequals g^{-1}X^{\ast}_{i-1}g$ is a $\bfG^{i}$-generic element of $\Lie^{\ast}(\bfG^{i-1}_{\sc,\ab})(F)$ of depth $r_{i-1}$ which represents the character $\phi_{i-1}^{g}$.
Since $g$ belongs to $G^{0}$, we have $\phi_{i-1}^{g}=\phi_{i-1}$.
Hence both $X^{\ast}_{i-1}$ and $X^{\ast,g}_{i-1}$ represents the character $\phi_{i-1}$.
This implies that $X^{\ast,g}_{i-1}\in X^{\ast}_{i-1}\cdot(1+\mfp_{F_{\alpha}})$.
Thus we have
\[
a_{\Psi,{}^{g}\bfS,{}^{g}\alpha}
=\langle X^{\ast}_{i-1},H_{{}^{g}\alpha}\rangle
=\langle X^{\ast,g}_{i-1},H_{\alpha}\rangle
\in\langle X^{\ast}_{i-1},H_{\alpha}\rangle\cdot(1+\mfp_{F_{\alpha}})
=a_{\Psi,\bfS,\alpha}\cdot(1+\mfp_{F_{\alpha}}). \qedhere
\]
\end{proof}

When $\gamma\in G$ is a regular semisimple element contained in $G^{0}$, any maximal torus of $\bfG$ containing $\gamma$ is necessarily equal to $\bfT_{\gamma}$, which is contained in $\bfG^{0}$.
For this reason, we simply write
\begin{align*}
\Delta_{\II}^{\abs,\bfG}[a_{\Psi},\chi'_{\Psi}](\gamma)
&\colonequals \Delta_{\II,\bfT_{\gamma}}^{\abs,\bfG}[a_{\Psi,\bfT_{\gamma}},\chi'_{\Psi,\bfT_{\gamma}}](\gamma),
\quad\text{and}\\
\Delta_{\II}^{\abs,\bfG}[a_{\Psi},\chi''_{\Psi}](\gamma)
&\colonequals \Delta_{\II,\bfT_{\gamma}}^{\abs,\bfG}[a_{\Psi,\bfT_{\gamma}},\chi''_{\Psi,\bfT_{\gamma}}](\gamma)
\end{align*}
for any regular semisimple element $\gamma\in G$ contained in $G^{0}$.

\subsection{Character formula at very regular elements}\label{subsec:character}
Let $\pi^{\FKS}_{\Psi}$ be the supercuspidal representation associated with a Yu-datum $\Psi=(\vec{\bfG},\vec{r},\x,\rho_{0},\vec{\phi})$ via modified construction of Fintzen--Kaletha--Spice (see Section \ref{subsec:tamesc}).

We first show supplementary lemmas which will be needed in the proof of our character formula.

\begin{lem}\label{lem:Weyl}
Let $\gamma\in G$ be an elliptic very regular element.
\begin{enumerate}
\item
We have $\{g\in G \mid {}^{g}\gamma\in G^{0}_{\bar{\x}}\}=N_{G}(T_{\gamma},G^{0}_{\bar{\x}})$.
\item
For any $g\in N_{G}(T_{\gamma},G^{0}_{\bar{\x}})$, we have $N_{G}(T_{\gamma},G^{0}_{\bar{\x}})=N_{G_{\bar{\x}}}({}^{g}T_{\gamma},G^{0}_{\bar{\x}})g$.
\end{enumerate}
\end{lem}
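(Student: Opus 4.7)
The plan is to deduce both parts from Lemma \ref{lem:point}, which asserts that an elliptic very regular element has a unique fixed point in the reduced building, supplemented by a rank argument to pass from $\bfG$ to the twisted Levi $\bfG^{0}$.

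For part (1), the inclusion $\supset$ is immediate because $\gamma\in T_{\gamma}$. For the reverse inclusion, let $g\in G$ satisfy ${}^{g}\gamma\in G^{0}_{\bar{\x}}$. The element ${}^{g}\gamma$ is semisimple and lies in $\bfG^{0}(F)$, so its connected centralizer $(\bfG^{0}_{{}^{g}\gamma})^{\circ}$ is a connected reductive subgroup of $\bfG^{0}$ of rank $\rank\bfG^{0}=\rank\bfG$. Since ${}^{g}\gamma$ is regular semisimple in $\bfG$, one has $\bfG_{{}^{g}\gamma}={}^{g}\bfT_{\gamma}$, so $(\bfG^{0}_{{}^{g}\gamma})^{\circ}$ is a connected reductive subgroup of the torus ${}^{g}\bfT_{\gamma}$ of full dimension; hence ${}^{g}\bfT_{\gamma}=(\bfG^{0}_{{}^{g}\gamma})^{\circ}\subset\bfG^{0}$. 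On the other hand, ${}^{g}\gamma$ is elliptic very regular with associated point $\overline{g\x_{\gamma}}$, and ${}^{g}\gamma\in G_{\bar{\x}}$; hence Lemma \ref{lem:point} gives $\bar{\x}=\overline{g\x_{\gamma}}$, which is the point associated to the elliptic torus ${}^{g}\bfT_{\gamma}$. It follows that ${}^{g}T_{\gamma}\subset G_{\bar{\x}}$, and combining with the previous inclusion we obtain ${}^{g}T_{\gamma}\subset G^{0}\cap G_{\bar{\x}}=G^{0}_{\bar{\x}}$.

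For part (2), the inclusion $\supset$ is clear: for any $n\in G_{\bar{\x}}$ with ${}^{n}({}^{g}T_{\gamma})\subset G^{0}_{\bar{\x}}$, one has ${}^{ng}T_{\gamma}\subset G^{0}_{\bar{\x}}$. For the reverse inclusion, let $h\in N_{G}(T_{\gamma},G^{0}_{\bar{\x}})$ and set $n\colonequals hg^{-1}$; the condition ${}^{n}({}^{g}T_{\gamma})={}^{h}T_{\gamma}\subset G^{0}_{\bar{\x}}$ is automatic, so it suffices to check $n\in G_{\bar{\x}}$. Both ${}^{g}\gamma$ and ${}^{h}\gamma$ lie in $G_{\bar{\x}}$ by part (1), so Lemma \ref{lem:point} applied twice gives $\overline{g\x_{\gamma}}=\bar{\x}=\overline{h\x_{\gamma}}$, whence $hg^{-1}$ fixes $\bar{\x}$, i.e., $n\in G_{\bar{\x}}$.

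The main subtlety is the rank argument in (1) forcing ${}^{g}\bfT_{\gamma}\subset\bfG^{0}$; once this is established, Lemma \ref{lem:point} handles the rest with minimal effort.
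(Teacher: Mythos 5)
Your proof is correct and follows essentially the same route as the paper's: in (1) deduce ${}^{g}\bfT_{\gamma}\subset\bfG^{0}$ from regular semisimplicity and then invoke Lemma \ref{lem:point} to get ${}^{g}T_{\gamma}\subset G_{\bar{\x}}$, and in (2) apply Lemma \ref{lem:point} twice to see that $hg^{-1}$ fixes $\bar{\x}$. The only difference is that you spell out the rank/dimension argument forcing $(\bfG^{0}_{{}^{g}\gamma})^{\circ}={}^{g}\bfT_{\gamma}$, which the paper asserts without detail.
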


\begin{proof}
\begin{enumerate}
\item
As $\gamma$ belongs to $T_{\gamma}$, the inclusion $\{g\in G \mid {}^{g}\gamma\in G^{0}_{\bar{\x}}\} \supset N_{G}(T_{\gamma},G^{0}_{\bar{\x}})$ is obvious.
Let us consider the converse inclusion.
Let $g\in G$ be an element satisfying ${}^{g}\gamma\in G^{0}_{\bar{\x}}$.
Then, in particular, we have ${}^{g}\gamma\in \bfG^{0}$.
As $\gamma$ is regular semisimple element in $\bfG$ and $\bfG^{0}$ is a tame twisted Levi subgroup of $\bfG$, this implies that ${}^{g}\bfT_{\gamma}\subset \bfG^{0}$, hence ${}^{g}T_{\gamma}\subset G^{0}$.
On the other hand, as ${}^{g}\gamma$ is elliptic very regular with point $g\x_{\gamma}$, Lemma \ref{lem:point} implies that $g\bar{\x}_{\gamma}=\bar{\x}$.
Hence we have ${}^{g}\bfT_{\gamma}\subset G_{\bar{\x}}$.
Thus we get ${}^{g}T_{\gamma}\subset G_{\bar{\x}}\cap G^{0}=G^{0}_{\bar{\x}}$.

\item
The inclusion $N_{G}(T_{\gamma},G^{0}_{\bar{\x}})\supset N_{G_{\bar{\x}}}({}^{g}T_{\gamma},G^{0}_{\bar{\x}})g$ is obvious.
Let us show the converse inclusion.
Let $g_{1}$ and $g_{2}$ be elements of $N_{G}(T_{\gamma},G^{0}_{\bar{\x}})$.
Since a point $g_{i}\x_{\gamma}$ is associated with ${}^{g_{i}}\gamma\in G_{\bar{\x}}$, we get $g_{i}\bar{\x}_{\gamma}=\bar{\x}$ ($i=1,2$) by Lemma \ref{lem:point}.
Hence $n\colonequals g_{1}g_{2}^{-1}$ belongs to $G_{\bar{\x}}$.
As ${}^{n}({}^{g_{2}}T_{\gamma})={}^{g_{1}}T_{\gamma}\subset G^{0}_{\bar{\x}}$, $n$ belongs to $N_{G_{\bar{\x}}}({}^{g_{2}}T_{\gamma},G^{0}_{\bar{\x}})$. \qedhere
\end{enumerate}
\end{proof}

\begin{lem}\label{lem:Weyl2}
Let $\gamma\in G$ be an elliptic very regular element.
Then we have $K^{d}\cap N_{G_{\bar{\x}}}(T_{\gamma},G^{0}_{\bar{\x}})=N_{G^{0}_{\bar{\x}}}(T_{\gamma},G^{0}_{\bar{\x}})$.
\end{lem}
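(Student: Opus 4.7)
The inclusion $\supseteq$ is immediate since $G^0_{\bar{\x}}\subset K^d$ and any element of $N_{G^0_{\bar{\x}}}(T_\gamma,G^0_{\bar{\x}})$ lies a fortiori in $N_{G_{\bar{\x}}}(T_\gamma,G^0_{\bar{\x}})$.

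For $\subseteq$, let $k\in K^d\cap N_{G_{\bar{\x}}}(T_\gamma,G^0_{\bar{\x}})$, so in particular $k\in G_{\bar{\x}}$ and ${}^kT_\gamma\subset G^0$. Using the factorization $K^d=G^0_{\bar{\x}}\cdot J$ with $J\colonequals(G^0,G^1,\ldots,G^d)_{\x,(0+,s_0,\ldots,s_{d-1})}$, write $k=g_0\cdot j$ with $g_0\in G^0_{\bar{\x}}$ and $j\in J$. Since $g_0\in G^0$ normalizes $G^0$, the condition ${}^kT_\gamma\subset G^0$ is equivalent to ${}^jT_\gamma\subset G^0$. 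Also, $J\subset G_{\x,0+}$ and the intersection $J\cap G^0=G^0_{\x,0+}$ is contained in $G^0_{\bar{\x}}$. Hence it is enough to prove the rigidity claim: if $j\in J$ satisfies ${}^jT_\gamma\subset G^0$, then $j\in G^0$.

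To establish the claim, I would exploit the Yu-theoretic structure of $J$, which admits a ``product decomposition'' (valid over $F^{\ur}$, and compatible with Galois descent) of the form
\[
J\;=\;G^0_{\x,0+}\cdot\prod_{i=1}^{d}\;\prod_{\alpha\in R(\bfG^i,\bfT_\gamma)\smallsetminus R(\bfG^{i-1},\bfT_\gamma)}U_{\alpha,\x,s_{i-1}},
\]
where $U_{\alpha,\x,r}$ denotes the depth-$r$ piece of the $\alpha$-root subgroup of $\bfG$ with respect to $\bfT_\gamma$; see \cite{Yu01}. Write $j$ according to this decomposition and assume for contradiction that some factor outside $G^0$ is nontrivial. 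Picking the smallest positive depth at which this occurs and projecting ${}^jt\cdot t^{-1}$ for a generic $t\in T_\gamma$ to the appropriate graded piece of the Moy--Prasad filtration, one finds a nonzero element in a root space $\mfg_\alpha$ for some $\alpha\in R(\bfG,\bfT_\gamma)\smallsetminus R(\bfG^0,\bfT_\gamma)$ (here the very regularity of $\gamma$—specifically $\alpha(\gamma)\not\equiv 1\pmod{\mfp_{\ol{F}}}$ for all $\alpha\in R(\bfG,\bfT_\gamma)$—ensures that the leading $t$-commutator computation is non-degenerate). This contradicts ${}^jT_\gamma\subset\bfG^0$, whose associated Lie-algebra condition forces all such graded components to vanish. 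Therefore $j\in G^0$, and consequently $j\in G^0\cap J=G^0_{\x,0+}\subset G^0_{\bar{\x}}$, giving $k\in G^0_{\bar{\x}}$.

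The main obstacle is carrying out this rigidity argument cleanly: one must keep track of the interaction between the successive filtration depths appearing in $J$ (which mix different twisted Levis $\bfG^i$) and the root space decomposition with respect to $\bfT_\gamma$. An alternative—and possibly cleaner—route would be induction on $d$: one writes $K^d=K^{d-1}\cdot J^d$ with $J^d=(G^{d-1},G^d)_{\x,(r_{d-1},s_{d-1})}$, and reduces the claim for step $d$ to the analogous claim for the tame twisted Levi sequence truncated at $\bfG^{d-1}$, leveraging that the non-$G^{d-1}$ directions of $J^d$ are governed by a single twisted Levi pair, for which the Lie-algebra rigidity is standard (cf.\ the proofs of Lemma~\ref{lem:CO21-4.7} and Lemma~\ref{lem:CO21-5.1-5.2}, which use exactly such an inductive analysis).
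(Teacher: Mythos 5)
Your proposal is correct in outline, and in fact the ``alternative---and possibly cleaner---route'' you mention at the end \emph{is} the paper's proof: one writes $k=g^0 j$ with $g^0\in G^0_{\bar{\x}}$ and $j\in(G^0,\ldots,G^d)_{\x,(0+,s_0,\ldots,s_{d-1})}$, applies the single-twisted-Levi rigidity statement to the pair $(\bfG^{d-1},\bfG^{d})$ to get $j\in G^{d-1}_{\x,0+}T_{\gamma,0+}=G^{d-1}_{\x,0+}$, intersects with the Yu group to land in $(G^0,\ldots,G^{d-1})_{\x,(0+,s_0,\ldots,s_{d-2})}$, and iterates down to $j\in G^0_{\x,0+}$. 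The one-pair rigidity you call ``standard'' is precisely \cite[Lemma 9.10]{AS08}, which the paper cites rather than reproves; you do not need your from-scratch root-space argument at all. That primary argument of yours is morally the content of the Adler--Spice lemma, but as written it has the exact soft spots you flag: the product decomposition of $J$ into root subgroups $U_{\alpha,\x,s_{i-1}}$ for $\bfT_\gamma$ is only available over a splitting field (individual $U_\alpha$ are not $F$-rational), so the ``pick the smallest depth with a nontrivial non-$G^0$ factor'' bookkeeping must be done over $F^{\ur}$ and descended, and the mixed depths $(0+,s_0,\ldots,s_{d-1})$ attached to different twisted Levis make ``smallest positive depth'' ambiguous without the layer-by-layer peeling. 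One further point both routes need and which the paper makes explicit first: from ${}^kT_\gamma\subset G^0_{\bar{\x}}$ and $k\in G_{\bar{\x}}$, Lemma \ref{lem:point} gives $\bar{\x}_\gamma=\bar{\x}$, i.e.\ $\x$ is associated with $\bfT_\gamma$; this is what licenses comparing the Moy--Prasad filtrations at $\x$ with the $\bfT_\gamma$-root decomposition (and what makes $T_{\gamma,0+}\subset G^{d-1}_{\x,0+}$ in the cited lemma's conclusion). With that observation added and the direct argument replaced by the citation, your inductive route coincides with the paper's.
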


\begin{proof}
This follows from the same argument as in the proof of \cite[Lemma 4.10]{CO21}.
We explain it for the sake of completeness.

Recall that $K^{d}=G^{0}_{\bar{\x}}(G^{0},\ldots,G^{d})_{\x,(0+,s_{0},\ldots,s_{d-1})}$.
The inclusion $K^{d}\cap N_{G_{\bar{\x}}}(T_{\gamma},G^{0}_{\bar{\x}})\supset N_{G^{0}_{\bar{\x}}}(T_{\gamma},G^{0}_{\bar{\x}})$ is obvious.
Let us show the converse.
Let $g\in K^{d}\cap N_{G_{\bar{\x}}}(T_{\gamma},G^{0}_{\bar{\x}})$.
We write $g=g^{0}k$ with elements $g^{0}\in G^{0}_{\bar{\x}}$ and $k\in(G^{0},\ldots,G^{d})_{\x,(0+,s_{0},\ldots,s_{d-1})}$.
Since $g$ satisfies ${}^{g}T_{\gamma}\subset G^{0}_{\bar{\x}}$, we have $g\bar{\x}_{\gamma}=\bar{\x}$ by Lemma \ref{lem:point}.
As $g\in G_{\bar{\x}}$, this implies that $\bar{\x}_{\gamma}=\bar{\x}$.
Moreover, as $g^{0}\in G^{0}_{\bar{\x}}$, we have ${}^{k}T_{\gamma}\subset G^{0}_{\bar{\x}}$.
Then, by applying \cite[Lemma 9.10]{AS08} to $(\bfG^{d-1},\bfG^{d})$, we get $k\in G^{d-1}_{\x,0+}T_{\gamma,0+}=G^{d-1}_{\x,0+}$.
Thus $k$ belongs to $(G^{0},\ldots,G^{d})_{\x,(0+,s_{0},\ldots,s_{d-1})}\cap G^{d-1}_{\x,0+}$, which equals to $(G^{0},\ldots,G^{d-1})_{\x,(0+,s_{0},\ldots,s_{d-2})}$.
Repeating this procedure for $(\bfG^{d-2},\bfG^{d-1}),\ldots(\bfG^{0},\bfG^{1})$, we eventually get $k\in G^{0}_{\x,0+}$.
Hence we obtain $g=g^{0}k\in G^{0}_{\bar{\x}}$.
\end{proof}

\begin{prop}\label{prop:CF1}
Let $\gamma \in G$ be an elliptic very regular element.
Then we have
\[
\Theta_{\pi^{\FKS}_{\Psi}}(\gamma)
=
\phi_{d}(\gamma)\sum_{g \in G^{0}_{\bar{\x}}\backslash N_{G}(T_{\gamma},G^{0}_{\bar{\x}})} \Theta_{\rho_{d}}({}^{g}\gamma)\cdot\epsilon_{\Psi}({}^{g}\gamma).
\]
\end{prop}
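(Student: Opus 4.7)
My plan is to follow the familiar strategy of applying Harish-Chandra's character formula for the compactly induced representation $\pi^{\FKS}_\Psi = \cInd_{K^d}^G(\rho^{\Yu}_\Psi \otimes \epsilon_\Psi)$ and then repeatedly shrinking the index set using the two lemmas on elliptic very regular elements recorded in Section~\ref{subsec:vreg}. Concretely, I would first observe that both sides of the desired identity are invariant under $G$-conjugation of $\gamma$: the LHS because $\Theta_{\pi^{\FKS}_\Psi}$ is a class function, the RHS because for $h \in G$ the substitution $\gamma \mapsto {}^h\gamma$ sends $T_\gamma$ to ${}^hT_\gamma$ and induces a bijection $G^0_{\bar{\x}}\backslash N_G({}^hT_\gamma,G^0_{\bar{\x}}) \xrightarrow{\sim} G^0_{\bar{\x}}\backslash N_G(T_\gamma,G^0_{\bar{\x}})$ via $g \leftrightarrow gh$ under which the summands $\Theta_{\rho_d}({}^g\cdot)\cdot\epsilon_\Psi({}^g\cdot)$ match, and $\phi_d$ is a character of all of $G$. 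Thus I may reduce to two cases: either $N_G(T_\gamma,G^0_{\bar{\x}})=\varnothing$ or $\gamma \in G^0_{\bar{\x}}$. In the first case Lemma~\ref{lem:CO21-4.7} shows no $G$-conjugate of $\gamma$ lies in $K^d$, hence both sides vanish.

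Assume henceforth that $\gamma \in G^0_{\bar{\x}}$. Since $\gamma$ is elliptic regular semisimple, $Z_G(\gamma) = T_\gamma$ is compact modulo $Z_{\bfG}(F)$, so the standard character formula for compact induction gives a finite sum
\[
\Theta_{\pi^{\FKS}_\Psi}(\gamma) = \sum_{\substack{K^d g \in K^d\backslash G \\ {}^g\gamma \in K^d}} \Theta_{\rho^{\Yu}_\Psi \otimes \epsilon_\Psi}({}^g\gamma).
\]
Factoring $\rho^{\Yu}_\Psi = \rho_d \otimes \phi_d$ and using that $\phi_d$ is a character of $G$ (so $\phi_d({}^g\gamma) = \phi_d(\gamma)$) pulls $\phi_d(\gamma)$ outside. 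For each coset appearing in the sum, ${}^g\gamma$ is still elliptic very regular, so Lemma~\ref{lem:CO21-4.7} furnishes $k \in K^d$ with ${}^{kg}\gamma = {}^k({}^g\gamma) \in G^0_{\bar{\x}}$; replacing $g$ by $kg$ keeps the coset $K^dg$ unchanged and leaves the summand invariant by the class-function property of $\rho_d$ and $\epsilon_\Psi$ on $K^d$. By Lemma~\ref{lem:Weyl}(1), the set of such representatives is exactly $N_G(T_\gamma,G^0_{\bar{\x}})$, so the sum reindexes as
\[
\Theta_{\pi^{\FKS}_\Psi}(\gamma) = \phi_d(\gamma) \sum_{K^d g \in K^d\backslash (K^d\cdot N_G(T_\gamma,G^0_{\bar{\x}}))} \Theta_{\rho_d}({}^g\gamma) \cdot \epsilon_\Psi({}^g\gamma).
\]

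The final step is to identify $K^d\backslash (K^d\cdot N_G(T_\gamma,G^0_{\bar{\x}}))$ with $G^0_{\bar{\x}}\backslash N_G(T_\gamma,G^0_{\bar{\x}})$ via the natural map, which is equivalent to computing the intersection $K^d \cap N_G(T_\gamma,G^0_{\bar{\x}})$. Since $\gamma \in G^0_{\bar{\x}}$, the reasoning of Lemma~\ref{lem:point} gives $\bar{\x}_\gamma = \bar{\x}$, and for any $g \in N_G(T_\gamma,G^0_{\bar{\x}})$ the element ${}^g\gamma \in G^0_{\bar{\x}}$ is elliptic very regular with associated point both $g\bar{\x} = g\bar{\x}_\gamma$ (by construction) and $\bar{\x}$ (again by Lemma~\ref{lem:point} applied to ${}^g\gamma$); hence $g \in G_{\bar{\x}}$. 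Therefore $K^d \cap N_G(T_\gamma,G^0_{\bar{\x}}) = K^d \cap N_{G_{\bar{\x}}}(T_\gamma,G^0_{\bar{\x}})$, which by Lemma~\ref{lem:Weyl2} equals $N_{G^0_{\bar{\x}}}(T_\gamma,G^0_{\bar{\x}}) = G^0_{\bar{\x}}$, the last equality because $T_\gamma \subset G^0 \cap G_{\bar{\x}} = G^0_{\bar{\x}}$ and $G^0_{\bar{\x}}$ is a group. The expected main obstacle is purely bookkeeping: ensuring that the coset identification is consistent with the convention (left vs.\ right cosets) of the Frobenius formula and confirming the $K^d$-conjugation invariance of the summand when switching coset representatives—both issues turn out to be straightforward once one tracks where the class-function property of $\Theta_{\rho_d \otimes \epsilon_\Psi}$ on $K^d$ is applied.
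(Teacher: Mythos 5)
Your proof is correct and follows essentially the same route as the paper: reduce via Lemma \ref{lem:CO21-4.7} and Lemma \ref{lem:point} to conjugates landing in $G^{0}_{\bar{\x}}$, then identify the index set using Lemma \ref{lem:Weyl} and Lemma \ref{lem:Weyl2}. The only cosmetic differences are that you first normalize $\gamma$ into $G^{0}_{\bar{\x}}$ by conjugation-invariance of both sides (the paper instead carries an element $g_{\gamma}$ through the computation) and that you invoke the Frobenius-type sum for compact induction directly, whereas the paper writes the Harish-Chandra integral and evaluates it against the formal degree; these amount to the same argument.
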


\begin{proof}

Recall that the irreducible supercuspidal representation $\pi^{\FKS}_{\Psi}$ is obtained by the compact induction of $\rho_{\Psi}^{\Yu}\otimes\epsilon_{\Psi}=(\rho_{d}\otimes\phi_{d})\otimes\epsilon_{\Psi}$ from $K^{d}$ to $G$ (see Section \ref{subsec:tamesc}).
Since $\gamma$ is an elliptic regular semisimple element, the Harish-Chandra integration formula (see \cite[page 94]{HC70} and also \cite[proof of Theorem 6.4]{AS09} for the validity in the positive characteristic case) gives 
\[
\Theta_{\pi^{\FKS}_{\Psi}}(\gamma)
= 
\frac{\deg\pi^{\FKS}_{\Psi}}{\dim\rho_{d}} \phi_d(\gamma)
\int_{G/Z_\bfG}\dot{\Theta}_{\rho_{d}}({}^{g} \gamma)\cdot\epsilon_{\Psi}({}^{g}\gamma) \, dg,
\]
where $\deg\pi^{\FKS}_{\Psi}$ is the formal degree of $\pi^{\FKS}_{\Psi}$ with respect to a fixed Haar measure $dg$ of $G/Z_{\bfG}$ and $\dot{\Theta}_{\rho_{d}}$ is the zero extension of the character $\Theta_{\rho_{d}}$ of $\rho_{d}$ from $K^{d}$ to $G$.
From this, we see that $\Theta_{\pi^{\FKS}_{\Psi}}(\gamma)=0$ if $g$ is not $G$-conjugate to any element of $K^{d}$, or equivalently (by Lemma \ref{lem:CO21-4.7}), $\gamma$ is not $G$-conjugate to any element of $G^{0}_{\bar{\x}}$.
Note that, in this case, the index set of the sum in the assertion is empty by Lemma \ref{lem:Weyl} (1), hence we get the assertion.

From now on, let $g\in G$ be an elliptic very regular element which is $G$-conjugate to an element of $G^{0}_{\bar{\x}}$.
Let us fix an element $g_{\gamma}\in G$ such that ${}^{g_{\gamma}}\gamma\in G^{0}_{\bar{\x}}$.
Note that then we have $g_{\gamma}\bar{\x}_{\gamma}=\bar{\x}$ by Lemma \ref{lem:point}.
We first argue that the function $g \mapsto \dot{\Theta}_{\rho_{d}}({}^g \gamma)\cdot\epsilon_{\Psi}({}^{g}\gamma)$ on $G/Z_\bfG$ is supported on $G_{\bar{\x}}g_{\gamma}/Z_\bfG$.
For any $g\in G$, the point $g\x_{\gamma}$ is associated with ${}^{g}\gamma$.
Hence, if ${}^{g}\gamma$ belongs to $K^{d}$, we have $g\bar{\x}_{\gamma}=\bar{\x}$ by Lemma \ref{lem:point}.
In other words, $gg_{\gamma}^{-1}$ necessarily belongs to the stabilizer subgroup $G_{\bar{\x}}$ of $\bar{\x}$.
In particular, unless $g\in G_{\bar{\x}}g_{\gamma}$,  we have ${}^g \gamma \notin K^{d}$ and $\dot{\Theta}_{\rho_{d}}({}^g \gamma) = 0$.
Therefore $g \mapsto \dot{\Theta}_{\rho_{d}}({}^g \gamma)\cdot\epsilon_{\Psi}({}^{g}\gamma)$ is supported on $G_{\bar{\x}}g_{\gamma}/Z_\bfG$.

We next note that $\rho_{d}\otimes\epsilon_{\Psi}$ is a representation of $K^{d}$, hence its character $\Theta_{\rho_{d}}\cdot\epsilon_{\Psi}$ is invariant under $K^{d}$-conjugation.
Then we can compute the integral as follows:
\begin{align*}
\int_{G/Z_\bfG} \dot{\Theta}_{\rho_{d}}({}^{g} \gamma)\cdot\epsilon_{\Psi}({}^{g}\gamma) \, dg
&=
\sum_{g' \in K^{d} \backslash G_{\bar{\x}}g_{\gamma}} \int_{K^{d} g'/Z_\bfG} \dot{\Theta}_{\rho_{d}}({}^{g} \gamma)\cdot\epsilon_{\Psi}({}^{g}\gamma) \, dg \\
&=\sum_{g' \in K^{d} \backslash G_{\bar{\x}}g_{\gamma}} \mathrm{meas}(K^{d} g'/Z_\bfG) \cdot \dot{\Theta}_{\rho_{d}}({}^{g'} \gamma)\cdot\epsilon_{\Psi}({}^{g'}\gamma) \\
&=\mathrm{meas}(K^{d}/Z_\bfG)\sum_{g \in K^{d} \backslash G_{\bar{\x}}g_{\gamma}}  \dot{\Theta}_{\rho_{d}}({}^{g} \gamma)\cdot\epsilon_{\Psi}({}^{g}\gamma).
\end{align*}
Since $\deg\pi^{\FKS}_{\Psi}=\dim\rho_{d}\cdot\mathrm{meas}(K^{d}/Z_\bfG)^{-1}$ (see, e.g., \cite[Theorem A.14]{BH96}),
\[
\Theta_{\pi^{\FKS}_{\Psi}}(\gamma)
=
\phi_{d}(\gamma) \sum_{g\in K^{d}\backslash G_{\bar{\x}}g_{\gamma}}\dot{\Theta}_{\rho_{d}}({}^{g}\gamma)\cdot\epsilon_{\Psi}({}^{g}\gamma).
\]

We finally rewrite the index set.
Let us put $\gamma'\colonequals {}^{g_{\gamma}}\gamma$.
Then we have
\[
\sum_{g\in K^{d}\backslash G_{\bar{\x}}g_{\gamma}}\dot{\Theta}_{\rho_{d}}({}^{g}\gamma)\cdot\epsilon_{\Psi}({}^{g}\gamma)
=
\sum_{g\in K^{d}\backslash G_{\bar{\x}}}\dot{\Theta}_{\rho_{d}}({}^{g}\gamma')\cdot\epsilon_{\Psi}({}^{g}\gamma').
\]
Whenever ${}^{g}\gamma'$ belongs to $K^{d}$, we may suppose that ${}^{g}\gamma'$ belongs to $G^{0}_{\bar{\x}}$ ($\subset K^{d}$) by replacing it with its $K^{d}$-conjugate element by Lemma \ref{lem:CO21-4.7}.
Hence
\[
\sum_{g\in K^{d}\backslash G_{\bar{\x}}}\dot{\Theta}_{\rho_{d}}({}^{g}\gamma')\cdot\epsilon_{\Psi}({}^{g}\gamma')
=
\sum_{\begin{subarray}{c}g \in K^{d}\backslash G_{\bar{\x}} \\  {}^{g}\gamma'\in G^{0}_{\bar{\x}}\end{subarray}}
\Theta_{\rho_{d}}({}^{g}\gamma')\cdot\epsilon_{\Psi}({}^{g}\gamma').
\]

Thus, by Lemma \ref{lem:Weyl} (1), the index set on the right-hand side can be rewritten as 
\[
(K^{d}\cap N_{G_{\bar{\x}}}(T_{\gamma'},G^{0}_{\bar{\x}}))\backslash N_{G_{\bar{\x}}}(T_{\gamma'},G^{0}_{\bar{\x}}).
\]
By Lemma \ref{lem:Weyl2}, $K^{d}\cap N_{G_{\bar{\x}}}(T_{\gamma'},G^{0}_{\bar{\x}})$ equals $N_{G^{0}_{\bar{\x}}}(T_{\gamma'},G^{0}_{\bar{\x}})=G^{0}_{\bar{\x}}$.
Therefore we get
\begin{align*}
\sum_{\begin{subarray}{c}g \in K^{d}\backslash G_{\bar{\x}} \\  {}^{g}\gamma'\in G^{0}_{\bar{\x}}\end{subarray}}
\Theta_{\rho_{d}}({}^{g}\gamma')\cdot\epsilon_{\Psi}({}^{g}\gamma')
&=
\sum_{g\in G^{0}_{\bar{\x}}\backslash N_{G_{\bar{\x}}}(T_{\gamma'},G^{0}_{\bar{\x}})}
\Theta_{\rho_{d}}({}^{g}\gamma')\cdot\epsilon_{\Psi}({}^{g}\gamma')\\
&=
\sum_{g\in G^{0}_{\bar{\x}}\backslash N_{G_{\bar{\x}}}(T_{\gamma'},G^{0}_{\bar{\x}})g_{\gamma}}
\Theta_{\rho_{d}}({}^{g}\gamma)\cdot\epsilon_{\Psi}({}^{g}\gamma)\\
&=
\sum_{g\in G^{0}_{\bar{\x}}\backslash N_{G}(T_{\gamma},G^{0}_{\bar{\x}})}
\Theta_{\rho_{d}}({}^{g}\gamma)\cdot\epsilon_{\Psi}({}^{g}\gamma).
\end{align*}
Here, we used Lemma \ref{lem:Weyl} (2) in the last equality.
\end{proof}

Let us investigate the summands in the formula in Proposition \ref{prop:CF1}.
We put $G^{0}_{\bar{\x},\evrs}$ to be the set of elliptic very regular elements of $G$ which lies in $G^{0}_{\bar{\x}}$.
(Thus note that $G^{0}_{\bar{\x},\evrs}$ depends on $\bfG$ since the very regularity is considered in $\bfG$.)

\begin{prop}\label{prop:CF2}
For any $\gamma\in G^{0}_{\bar{\x},\evrs}$, we have
\[
\phi_{d}(\gamma)\cdot\Theta_{\rho_{d}}(\gamma)\cdot\epsilon_{\Psi}(\gamma)
=
\Theta_{\rho_{0}}(\gamma)\cdot\frac{e(\bfG)}{e(\bfG^{0})}\cdot \varepsilon_{L}(\bfT_{\bfG^{\ast}}-\bfT_{\bfG^{0\ast}})\cdot\frac{\Delta_{\II}^{\abs,\bfG}[a_{\Psi},\chi''_{\Psi}](\gamma)}{\Delta_{\II}^{\abs,\bfG^{0}}[a_{\Psi},\chi''_{\Psi}](\gamma)}\cdot\phi_{\geq0}(\gamma),
\]
where 
\begin{itemize}
\item
$e(\bfG)$ (resp.\ $e(\bfG^{0})$) denotes the Kottwitz sign of $\bfG$ (resp.\ $\bfG^{0}$), 
\item
$\bfT_{\bfG^{\ast}}$ (resp.\ $\bfT_{\bfG^{0\ast}}$) denotes a minimal Levi subgroup of the quasi-split inner form of $\bfG$ (resp. $\bfG^{0}$),
\item
$\varepsilon_{L}(\bfT_{\bfG^{\ast}}-\bfT_{\bfG^{0\ast}})$ denotes the central value of the $\varepsilon$-factor of the virtual complex representation $X^{\ast}(\bfT_{\bfG^{\ast}})_{\C}-X^{\ast}(\bfT_{\bfG^{0\ast}})_{\C}$ of the absolute Galois group $\Gamma_{F}$.
\end{itemize}
\end{prop}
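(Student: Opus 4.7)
The plan is to proceed by induction on $d$, the length of the twisted Levi sequence $\vec{\bfG}$, and to leverage the fact that very regularity trivializes the more delicate (orbital-integral) part of the Adler--DeBacker--Spice character formula for Yu's construction. For the base case $d=0$ we have $\rho_d = \rho_0$, $\phi_d$ is either trivial or the only character in $\vec\phi$, and $\epsilon_\Psi = \mathbbm{1}$, so the claimed identity holds tautologically once one notes that the ratio of transfer factors reduces to $1$ and the Kottwitz/$\varepsilon$-factor contribution is trivial.

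For the inductive step, recall that $\rho_{i+1}$ is constructed as a descent of $\tilde{\phi_i}|_{K^{i}\ltimes J^{i+1}} \otimes ((\rho_i \otimes \phi_i|_{K^i}) \ltimes \mathbbm{1})$ from $K^i \ltimes J^{i+1}$ to $K^{i+1}$, where $\tilde\phi_i$ is the Heisenberg--Weil representation built from $\phi_i$. Evaluating at $\gamma \in G^0_{\bar\x,\evrs}$ and applying the character formula for Heisenberg--Weil representations, together with Lemma \ref{lem:CO21-4.7} and Lemma \ref{lem:Weyl2} to control the relevant conjugacy loci, the character $\Theta_{\rho_{i+1}}(\gamma)$ splits as the product of $\Theta_{\rho_i}(\gamma)\phi_i(\gamma)$ and a factor coming from $\tilde\phi_i$. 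Because $\gamma$ is very regular, we have $\alpha(\gamma) \not\equiv 1 \pmod{\mfp_{\overline F}}$ for every root $\alpha \in R(\bfG^{i+1},\bfT_\gamma) \smallsetminus R(\bfG^i,\bfT_\gamma)$; this is exactly the hypothesis that makes the Heisenberg--Weil factor collapse to the product of local factors $\chi'_{\Psi,\bfT_\gamma,\alpha}\bigl((\alpha(\gamma)-1)/a_{\Psi,\bfT_\gamma,\alpha}\bigr)$ indexed by $\Gamma_F$-orbits of such $\alpha$, i.e., to the ratio $\Delta_{\II}^{\abs,\bfG^{i+1}}[a_\Psi,\chi'_\Psi](\gamma)/\Delta_{\II}^{\abs,\bfG^i}[a_\Psi,\chi'_\Psi](\gamma)$, times a sign accounting for the genericity of $\phi_i$. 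This is essentially the content of the Adler--DeBacker--Spice character expansion for a ``shallow'' element, reformulated in the very regular setting. Telescoping over $i=0,\ldots,d-1$ and multiplying by $\phi_d(\gamma)$ yields
\[
\phi_d(\gamma)\Theta_{\rho_d}(\gamma)
= \Theta_{\rho_0}(\gamma)\cdot \mathrm{sign}\cdot \frac{\Delta_{\II}^{\abs,\bfG}[a_\Psi,\chi'_\Psi](\gamma)}{\Delta_{\II}^{\abs,\bfG^0}[a_\Psi,\chi'_\Psi](\gamma)}\cdot\phi_{\geq0}(\gamma),
\]
where $\mathrm{sign}$ is exactly the product $e(\bfG)/e(\bfG^0)\cdot\varepsilon_L(\bfT_{\bfG^\ast}-\bfT_{\bfG^{0\ast}})\cdot\epsilon_{\Psi,\bfT_\gamma}^{\ram}(\gamma)\cdot\epsilon_{\Psi,\bfT_\gamma,f,\ram}(\gamma)$; this is the identification worked out by Kaletha \cite{Kal19} for regular Yu-data and which carries over unchanged in our setting because we have only used the depth-positive data $(\vec\bfG,\vec\phi,\vec r,\x)$.

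Finally, we convert the formula from $\chi'$-data to $\chi''$-data and absorb the $\epsilon_\Psi$ twist. By Lemma \ref{lem:tran-diff},
\[
\frac{\Delta_{\II}^{\abs,\bfG}[a_\Psi,\chi''_\Psi](\gamma)}{\Delta_{\II}^{\abs,\bfG^0}[a_\Psi,\chi''_\Psi](\gamma)}
= \frac{\Delta_{\II}^{\abs,\bfG}[a_\Psi,\chi'_\Psi](\gamma)}{\Delta_{\II}^{\abs,\bfG^0}[a_\Psi,\chi'_\Psi](\gamma)}\cdot \epsilon_{\Psi,\bfT_\gamma,\flat}(\gamma),
\]
while by Proposition \ref{prop:epsilon},
\[
\epsilon_\Psi(\gamma) = \epsilon_{\Psi,\bfT_\gamma}^{\ram}(\gamma)\cdot \epsilon_{\Psi,\bfT_\gamma,\flat}(\gamma)\cdot \epsilon_{\Psi,\bfT_\gamma,f,\ram}(\gamma).
\]
Multiplying the displayed identity above by $\epsilon_\Psi(\gamma)$, the $\epsilon_{\Psi,\bfT_\gamma,\flat}$ factors cancel between the right-hand side of the Kaletha formula and $\epsilon_\Psi$, and the remaining $\epsilon^{\ram}_{\Psi,\bfT_\gamma}\cdot\epsilon_{\Psi,\bfT_\gamma,f,\ram}$ exactly cancels the corresponding factor already present in $\mathrm{sign}$, leaving precisely $e(\bfG)/e(\bfG^0)\cdot\varepsilon_L(\bfT_{\bfG^\ast}-\bfT_{\bfG^{0\ast}})$.

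The main obstacle, and the step requiring the most care, is the verification of the explicit sign $\mathrm{sign}$ appearing in the $\chi'$-data version of the character formula, namely the identification of the $\bfG$-genericity contribution of each Heisenberg--Weil piece with the product of a Kottwitz-sign ratio, an $\varepsilon$-factor of virtual Galois representations, and Kaletha's $\epsilon^{\ram}\cdot\epsilon_{f,\ram}$. This is a direct extraction from \cite{Kal19,FKS21} rather than anything conceptually new, but it needs the hypotheses on $\gamma$ (very regularity, ellipticity, containment in $G^0_{\bar\x}$) to hold uniformly at every level $i$, which in turn relies on Lemma \ref{lem:tran-translation} and Lemma \ref{lem:tran-conj} to reduce to a single choice of tame elliptic maximal torus $\bfT_\gamma\subset\bfG^0$.
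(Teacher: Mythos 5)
Your proposal is correct and follows essentially the same route as the paper: split $\Theta_{\rho_d}(\gamma)$ via the construction of $\rho_d$ into $\Theta_{\rho_0}(\gamma)\cdot\prod_i\Theta_{\tilde\phi_i}(\gamma\ltimes 1)\cdot\prod_i\phi_i(\gamma)$, evaluate the Heisenberg--Weil factors using very regularity and Kaletha's \cite[Corollary 4.7.6]{Kal19} to produce the $\chi'$-data transfer-factor ratio together with $e(\bfG)/e(\bfG^0)\cdot\varepsilon_L\cdot\epsilon^{\ram}\cdot\epsilon_{f,\ram}$, then pass to $\chi''$-data via Lemma \ref{lem:tran-diff} and absorb $\epsilon_\Psi$ via Proposition \ref{prop:epsilon}, with all sign factors squaring to $1$. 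The only cosmetic difference is that you phrase the telescoping as an induction on $d$, whereas the paper writes the product in one step.
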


\begin{proof}
Since $\gamma$ belongs to $G^{0}_{\bar{\x}}$, we get
\[
\phi_{d}(\gamma)\Theta_{\rho_{d}}(\gamma)
=
\Theta_{\rho_{0}}(\gamma)
\prod_{i=0}^{d-1}\Theta_{\tilde{\phi}_{i}}(\gamma\ltimes1)
\prod_{i=0}^{d}\phi_{i}(\gamma)
\]
by the construction of the representation $\rho_{d}$.
The same argument as in the proof of \cite[Proposition 4.9]{CO21} shows that
\[
\Theta_{\tilde{\phi}_{i}}(\gamma\ltimes1)
=
\varepsilon_{\sym,\ram}(\pi',\gamma)\cdot\varepsilon^{\ram}(\pi',\gamma)\cdot\tilde{e}(\pi',\gamma).
\]
See the proof of \cite[Proposition 4.9]{CO21} for the details of the notations used here.
Then, by \cite[Corollary 4.7.6]{Kal19} (cf.\ the proof of \cite[Corollary 4.10.1]{Kal19}), we get
\[
\prod_{i=0}^{d-1}\Theta_{\tilde{\phi}_{i}}(\gamma\ltimes1)
=
\epsilon_{\Psi,\bfT_{\gamma},f,\ram}(\gamma)\cdot\frac{e(\bfG)}{e(\bfG^{0})}\cdot \varepsilon_{L}(\bfT_{\bfG^{\ast}}-\bfT_{\bfG^{0\ast}})
\cdot\frac{\Delta_{\II}^{\abs,\bfG}[a_{\Psi},\chi'_{\Psi}](\gamma)}{\Delta_{\II}^{\abs,\bfG^{0}}[a_{\Psi},\chi'_{\Psi}](\gamma)}
\cdot\epsilon_{\Psi,\bfT_{\gamma}}^{\ram}(\gamma)
\]
($J^{i}$ in Corollary 4.7.6 of [Kal19] equals $\bfT_{\gamma}$ by the very regularity).
Since we have
\[
\frac{\Delta_{\II}^{\abs,\bfG}[a_{\Psi},\chi''_{\Psi}](\gamma)}{\Delta_{\II}^{\abs,\bfG^{0}}[a_{\Psi},\chi''_{\Psi}](\gamma)}
=\frac{\Delta_{\II}^{\abs,\bfG}[a_{\Psi},\chi'_{\Psi}](\gamma)}{\Delta_{\II}^{\abs,\bfG^{0}}[a_{\Psi},\chi'_{\Psi}](\gamma)}
\cdot\epsilon_{\Psi,\bfT_{\gamma},\flat}(\gamma)
\]
by Lemma \ref{lem:tran-diff} and
\[
\epsilon_{\Psi}(\gamma)=\epsilon_{\Psi,\bfT_{\gamma}}^{\ram}(\gamma)\cdot\epsilon_{\Psi,\bfT_{\gamma},\flat}(\gamma)\cdot\epsilon_{\Psi,\bfT_{\gamma},f,\ram}(\gamma)
\]
by Proposition \ref{prop:epsilon}, we get the desired identity.
(Note that all of $\epsilon$'s are signs.)
\end{proof}

By Propositions \ref{prop:CF1} and \ref{prop:CF2}, we get the following.

\begin{thm}\label{thm:CF}
Let $\gamma \in G$ be an elliptic very regular element.
Then
\[
\Theta_{\pi^{\FKS}_{\Psi}}(\gamma)
=
\frac{e(\bfG)}{e(\bfG^{0})}\cdot \varepsilon_{L}(\bfT_{\bfG^{\ast}}-\bfT_{\bfG^{0\ast}})\cdot
\sum_{g \in G^{0}_{\bar{\x}}\backslash N_{G}(T_{\gamma},G^{0}_{\bar{\x}})}\Theta_{\rho_{0}}({}^{g}\gamma)\cdot\frac{\Delta_{\II}^{\abs,\bfG}[a_{\Psi},\chi''_{\Psi}]({}^{g}\gamma)}{\Delta_{\II}^{\abs,\bfG^{0}}[a_{\Psi},\chi''_{\Psi}]({}^{g}\gamma)}\cdot\phi_{\geq0}({}^{g}\gamma).
\]
\end{thm}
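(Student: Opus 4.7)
The proof is essentially an assembly of the two preparatory results already in hand, so my plan is to make this combination explicit.

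\medskip

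\noindent\textbf{Step 1: Reduction to the depth-zero character.} Start from Proposition~\ref{prop:CF1}, which gives
\[
\Theta_{\pi^{\FKS}_{\Psi}}(\gamma)
=
\phi_{d}(\gamma)\sum_{g \in G^{0}_{\bar{\x}}\backslash N_{G}(T_{\gamma},G^{0}_{\bar{\x}})} \Theta_{\rho_{d}}({}^{g}\gamma)\cdot\epsilon_{\Psi}({}^{g}\gamma).
\]
Because $\phi_{d}$ is a character of $G=\bfG^{d}$, it is conjugation-invariant, so $\phi_{d}(\gamma)=\phi_{d}({}^{g}\gamma)$ for every $g$ in the indexing set, and we may absorb this factor into each summand, rewriting it as $\phi_{d}({}^{g}\gamma)\Theta_{\rho_{d}}({}^{g}\gamma)\epsilon_{\Psi}({}^{g}\gamma)$.

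\medskip

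\noindent\textbf{Step 2: Verify the hypotheses of Proposition~\ref{prop:CF2} at each conjugate.} For each $g\in N_{G}(T_{\gamma},G^{0}_{\bar{\x}})$, we have ${}^{g}\bfT_{\gamma}\subset \bfG^{0}_{\bar{\x}}$ by definition, and in particular ${}^{g}\gamma\in G^{0}_{\bar{\x}}$. Since very regularity and ellipticity are conjugation-invariant (the associated connected centralizer being ${}^{g}\bfT_{\gamma}$, still a tame elliptic maximal torus satisfying the root condition), the element ${}^{g}\gamma$ belongs to $G^{0}_{\bar{\x},\evrs}$. Thus Proposition~\ref{prop:CF2} applies to ${}^{g}\gamma$ for every index $g$.

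\medskip

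\noindent\textbf{Step 3: Substitute and factor.} Applying Proposition~\ref{prop:CF2} at each ${}^{g}\gamma$ yields
\[
\phi_{d}({}^{g}\gamma)\Theta_{\rho_{d}}({}^{g}\gamma)\epsilon_{\Psi}({}^{g}\gamma)
=
\Theta_{\rho_{0}}({}^{g}\gamma)\cdot\frac{e(\bfG)}{e(\bfG^{0})}\cdot\varepsilon_{L}(\bfT_{\bfG^{\ast}}-\bfT_{\bfG^{0\ast}})\cdot\frac{\Delta_{\II}^{\abs,\bfG}[a_{\Psi},\chi''_{\Psi}]({}^{g}\gamma)}{\Delta_{\II}^{\abs,\bfG^{0}}[a_{\Psi},\chi''_{\Psi}]({}^{g}\gamma)}\cdot\phi_{\geq0}({}^{g}\gamma).
\]
The scalar $\tfrac{e(\bfG)}{e(\bfG^{0})}\cdot\varepsilon_{L}(\bfT_{\bfG^{\ast}}-\bfT_{\bfG^{0\ast}})$ does not depend on $g$, so it pulls out of the sum. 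Plugging back into the expression from Step 1 gives precisely the claimed formula, completing the proof.

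\medskip

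\noindent\textbf{Anticipated obstacle.} There is no serious obstacle here: the whole theorem is a clean synthesis of Propositions~\ref{prop:CF1} and~\ref{prop:CF2}. The only mild subtlety to double-check is that one is free to move $\phi_{d}(\gamma)$ inside the sum (trivial, since characters are conjugation-invariant), and that the conjugation-invariance of very regularity and ellipticity is automatic from the definitions, so that Proposition~\ref{prop:CF2} indeed applies at every summand rather than merely at $\gamma$ itself.
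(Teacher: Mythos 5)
Your proposal is correct and matches the paper exactly: the paper's proof of Theorem \ref{thm:CF} is precisely the one-line combination of Propositions \ref{prop:CF1} and \ref{prop:CF2}, and the details you spell out (conjugation-invariance of $\phi_d$ as a character of $G$, the fact that each ${}^{g}\gamma$ lies in $G^{0}_{\bar{\x},\evrs}$ so that Proposition \ref{prop:CF2} applies summand by summand, and pulling out the $g$-independent constant) are the correct ones.
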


\begin{lem}\label{lem:max-ur}
Let $\gamma$ be an element of $G^{0}_{\bar{\x},\evrs}$ such that $\bfT_{\gamma}$ is maximally unramified in $\bfG^{0}$.
Then $\Delta_{\II}^{\abs,\bfG^{0}}[a_{\Psi},\chi''_{\Psi}](\gamma)=1$.
\end{lem}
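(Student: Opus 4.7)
The plan is to unwind the definition of $\Delta_{\II,\bfT_{\gamma}}^{\abs,\bfG^{0}}[a_{\Psi},\chi''_{\Psi}](\gamma)$ and check that every factor in the defining product is trivial. For each $\alpha\in R(\bfG^{0},\bfT_{\gamma})$ we have $a_{\Psi,\bfT_{\gamma},\alpha}=1$ by the convention introduced in Section~\ref{subsec:tran}, since such roots lie in the bottom layer of the tame twisted Levi sequence $\vec{\bfG}$. Moreover, the very regularity of $\gamma$ (in $\bfG$, and hence in $\bfG^{0}$) ensures $\alpha(\gamma)\not\equiv 1 \pmod{\mfp_{\ol{F}}}$, so $\alpha(\gamma)-1\in\mcO_{F_{\alpha}}^{\times}$ and each factor in the product takes the form $\chi''_{\Psi,\bfT_{\gamma},\alpha}(\alpha(\gamma)-1)$ with argument a unit.

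Next I would analyze each factor according to the symmetry type of $\alpha$. Since $\chi''_{\Psi,\bfT_{\gamma},\alpha}$ is tamely ramified (as recalled in Section~\ref{subsec:tran}), its restriction to $\mcO_{F_{\alpha}}^{\times}$ factors through $k_{F_{\alpha}}^{\times}$. For asymmetric $\alpha$ the character $\chi''$ is by definition trivial, so the factor is $1$. For symmetric unramified $\alpha$ the character $\chi''$ is an unramified quadratic character, hence trivial on $\mcO_{F_{\alpha}}^{\times}$, so the factor is again $1$. Only for symmetric ramified $\alpha$ could one pick up a possibly nontrivial value, since there $\chi''|_{\mcO_{F_{\alpha}}^{\times}}$ reduces to the nontrivial quadratic character of $k_{F_{\alpha}}^{\times}$.

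The main obstacle, and the step where the hypothesis that $\bfT_{\gamma}$ is maximally unramified in $\bfG^{0}$ genuinely enters, is to rule out the presence of symmetric ramified roots in $R(\bfG^{0},\bfT_{\gamma})$. The required structural fact is that for a maximally unramified elliptic maximal torus in a tamely ramified reductive group, every symmetric root is necessarily symmetric unramified; morally, the ramified part of the splitting field of $\bfT_{\gamma}$ cannot interchange $\alpha$ and $-\alpha$, because the ramified inertia acts trivially on the cocharacter lattice of the maximal unramified subtorus, which already has maximal $F^{\ur}$-split rank in $\bfG^{0}$ by the maximally unramified hypothesis. This is essentially the content of the basic symmetry analysis for maximally unramified tori carried out in \cite{Kal19} (cf.\ Section~3.4 of \textit{op.\ cit.}). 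Granting this, every $\alpha\in R(\bfG^{0},\bfT_{\gamma})$ is either asymmetric or symmetric unramified, so every factor in the product is $1$, giving $\Delta_{\II}^{\abs,\bfG^{0}}[a_{\Psi},\chi''_{\Psi}](\gamma)=1$ as claimed.
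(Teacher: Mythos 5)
Your proof is correct, and it is worth noting that it takes a more self-contained route than the paper. The paper disposes of this lemma by citing the final paragraph of the proof of Proposition 4.9.2 in \cite{Kal19}, which treats the case where $\gamma$ is shallow (regular topologically semisimple mod center), and then upgrades from shallow to very regular elements via the $S_{0+}$-translation invariance of $\Delta_{\II}^{\abs}$ (Lemma~\ref{lem:tran-translation}) together with Remark~\ref{rem:shallow}. What you have done is essentially unwind the content of Kaletha's cited paragraph and observe that it applies verbatim to any very regular $\gamma$: the only inputs are that $a_{\Psi,\bfT_\gamma,\alpha}=1$ for $\alpha\in R(\bfG^0,\bfT_\gamma)$, that $\alpha(\gamma)-1$ is a unit, and that the $\chi''$-data are tamely ramified, so the extension step through the topological Jordan decomposition becomes unnecessary. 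Your key structural claim --- that a maximally unramified elliptic maximal torus of $\bfG^0$ has no symmetric ramified roots --- is stated a little vaguely ("morally..."), but it is correct and follows cleanly from Kaletha's Fact 3.4.1 (quoted in the proof of Lemma~\ref{lem:TERP-transfer}): maximal unramifiedness is equivalent to the inertia group preserving a system of positive roots of $R(\bfG^0,\bfT_\gamma)$, and an inertia element negating $\alpha$ would violate this; since $F_\alpha/F_{\pm\alpha}$ being ramified is exactly the existence of such an inertia element, every symmetric root is symmetric unramified. With that reference made precise, your argument is complete and, if anything, slightly more transparent than the paper's citation-plus-reduction.
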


\begin{proof}
The statement is proved in the final paragraph of the proof of \cite[Proposition 4.9.2]{Kal19} in the case where $\gamma$ is in particular ``shallow'' in the sense of Kaletha.
Hence, by Lemma \ref{lem:tran-translation} and Remark \ref{rem:shallow}, the statement also holds for any very regular element.
\end{proof}

Let us investigate how the formula of Theorem \ref{thm:CF} can be simplified when the Yu-datum $\Psi$ is regular.

\begin{cor}\label{cor:regsc}
Suppose that $\Psi$ is a regular Yu-datum which corresponds to a tame elliptic regular pair $(\bfS,\theta)$.
Let $\gamma \in G$ be an elliptic very regular element.
Then
\[
\Theta_{\pi^{\FKS}_{(\bfS,\theta)}}(\gamma)
=
e(\bfG)\cdot\varepsilon_{L}(\bfT_{\bfG^{\ast}}-\bfS)\cdot
\sum_{w\in W_{G}(T_{\gamma},S)}\Delta_{\II}^{\abs,\bfG}[a_{\Psi},\chi''_{\Psi}]({}^{w}\gamma)\cdot\theta({}^{w}\gamma).
\]
Note that, in particular, if $W_{G}(T_{\gamma},S)$ is empty (equivalently, $\gamma$ is not conjugate to any element of $S$), then $\Theta_{\pi^{\FKS}_{(\bfS,\theta)}}(\gamma)=0$.
\end{cor}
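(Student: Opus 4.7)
The plan is to specialize the general character formula of Theorem \ref{thm:CF} to the regular Yu-datum setting by simplifying each factor in the summand and reindexing the sum in terms of the $p$-adic Weyl group $W_G(T_\gamma, S)$.

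First I would handle the depth zero factor $\Theta_{\rho_0}({}^g\gamma)$. By definition of a regular Yu-datum, $\rho_0$ is the irreducible representation of $\bbG(\F_q)$ with $\rho_0 = \varepsilon \cdot R_{\bbS}^{\bbG}(\phi_{-1})$, where $\varepsilon = (-1)^{d(\bfS)}$ is the sign making this a genuine representation, and $\phi_{-1}$ is the depth-zero piece of the Howe factorization of $(\bfS,\theta)$. Since $\gamma$ is elliptic very regular and ${}^g\gamma\in G^0_{\bar{\x}}$, Proposition \ref{prop:evreg-reduction} implies that the reduction $\overline{{}^g\gamma}\in\bbG'(\F_q)$ is elliptic regular semisimple. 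Then Corollary \ref{cor:Kaletha-CF-reg} gives
\[
\Theta_{\rho_0}({}^g\gamma) = \varepsilon\cdot\sum_{n\in W_{\bbG(\F_q)}(\bbS^\circ_{\overline{{}^g\gamma}},\bbS^\circ)} \phi_{-1}({}^{ng}\gamma),
\]
where we use that $\phi_{-1}$ is depth zero to evaluate it on lifts. Simultaneously, Lemma \ref{lem:max-ur} together with the fact that $\bfT_{{}^g\gamma}$ is maximally unramified in $\bfG^0$ (Proposition \ref{prop:evreg-reduction}) gives $\Delta_{\II}^{\abs,\bfG^0}[a_\Psi,\chi''_\Psi]({}^g\gamma)=1$.

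Next I would merge the two Weyl-group sums into a single sum over $W_G(T_\gamma,S)$. The outer indexing set $G^0_{\bar{\x}}\backslash N_G(T_\gamma,G^0_{\bar{\x}})$ parametrizes $G^0_{\bar{\x}}$-orbits of embeddings of $\bfT_\gamma$ into $G^0_{\bar{\x}}$, while the inner indexing set $W_{\bbG(\F_q)}(\bbS^\circ_{\overline{{}^g\gamma}},\bbS^\circ)$, via DeBacker's correspondence (Proposition \ref{prop:DeBacker-Kaletha}) and the compatibility $W_{\bbG(\F_q)}(\bbT^\circ)\cong W_{G^0}(\bfT)$ from \cite[Lemma 3.2.2]{Kal19-sc} (applied to the maximally unramified tori ${}^g\bfT_\gamma$ and $\bfS$ in $\bfG^0$ with common reduced point $\bar{\x}$), parametrizes $S$-orbits in $N_{G^0_{\bar{\x}}}({}^gT_\gamma,S)$. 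A straightforward double-coset manipulation then identifies the product indexing set with $S\backslash N_G(T_\gamma,S) = W_G(T_\gamma,S)$.

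Then I would collect the characters: using the Howe factorization identity $\theta = \prod_{i=-1}^d \phi_i|_S$, we have
\[
\phi_{\geq 0}({}^w\gamma)\cdot\phi_{-1}({}^w\gamma) = \theta({}^w\gamma)
\]
for any $w\in W_G(T_\gamma,S)$ mapping ${}^w\gamma$ into $S$. For the constants, I would use the $\varepsilon$-factor additivity
\[
\varepsilon_L(\bfT_{\bfG^\ast}-\bfS) = \varepsilon_L(\bfT_{\bfG^\ast}-\bfT_{\bfG^{0\ast}})\cdot\varepsilon_L(\bfT_{\bfG^{0\ast}}-\bfS),
\]
together with the Kottwitz/Steinberg-sign identity $e(\bfG^0)\cdot(-1)^{d(\bfS)} = \varepsilon_L(\bfT_{\bfG^{0\ast}}-\bfS)^{-1}$ (this appears, for instance, in \cite[Section 4.1]{Kal19}), to obtain
\[
\frac{e(\bfG)}{e(\bfG^0)}\cdot\varepsilon_L(\bfT_{\bfG^\ast}-\bfT_{\bfG^{0\ast}})\cdot\varepsilon = e(\bfG)\cdot\varepsilon_L(\bfT_{\bfG^\ast}-\bfS).
\]

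I expect the main obstacle to be the combinatorial step of rewriting the product of the two Weyl group sums as a single sum over $W_G(T_\gamma,S)$. Care is required because $\rho_0$ lives on the possibly disconnected group $\bbG(\F_q)$, so one must ensure that the lift of the finite-group normalizer $N_{\bbG(\F_q)}(\bbS^\circ_{\overline{{}^g\gamma}},\bbS^\circ)$ to $N_{G^0_{\bar{\x}}}({}^gT_\gamma,S)$ respects the $S$-action (modulo $S_{0+}$) compatibly with the outer quotient by $G^0_{\bar{\x}}$, rather than by the connected parahoric $G^0_{\x,0}$. Lemmas \ref{lem:Weyl} and \ref{lem:Weyl2} and the well-definedness of DeBacker's bijection on stable classes should resolve this, but spelling out the bookkeeping will be the technical heart of the argument.
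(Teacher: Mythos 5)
Your proposal is correct and follows essentially the same route as the paper's proof: specialize Theorem \ref{thm:CF}, evaluate $\Theta_{\rho_0}$ via Corollary \ref{cor:Kaletha-CF-reg} (the paper packages your Weyl-group comparison as Lemma \ref{lem:Weyl-reduction}), kill $\Delta_{\II}^{\abs,\bfG^0}$ by Lemma \ref{lem:max-ur}, merge the two sums into $W_G(T_\gamma,S)$ using $G^0$-conjugation invariance of $\Delta_{\II}^{\abs,\bfG}$ and $\phi_{\geq 0}$, and absorb the sign $(-1)^{r(\bbS^\circ)-r(\bbG^\circ)}=e(\bfG^0)\varepsilon_L(\bfS-\bfT_{\bfG^{0\ast}})$ into the constant. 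The only cosmetic difference is that the paper first restricts the outer sum to those $g$ with ${}^g\gamma\in SG^0_{\x,0}$ using that $R_\bbS^\bbG(\phi_{-1})$ is supported on $\bbG'(\F_q)$, before invoking Proposition \ref{prop:evreg-reduction}.
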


\begin{proof}
By Theorem \ref{thm:CF}, we have
\[
\Theta_{\pi^{\FKS}_{(\bfS,\theta)}}(\gamma)
=
\frac{e(\bfG)}{e(\bfG^{0})}\cdot \varepsilon_{L}(\bfT_{\bfG^{\ast}}-\bfT_{\bfG^{0\ast}})\cdot
\sum_{g \in G^{0}_{\bar{\x}}\backslash N_{G}(T_{\gamma},G^{0}_{\bar{\x}})}\Theta_{\rho_{0}}({}^{g}\gamma)\cdot\frac{\Delta_{\II}^{\abs,\bfG}[a_{\Psi},\chi''_{\Psi}]({}^{g}\gamma)}{\Delta_{\II}^{\abs,\bfG^{0}}[a_{\Psi},\chi''_{\Psi}]({}^{g}\gamma)}\cdot\phi_{\geq0}({}^{g}\gamma).
\]
Since $(\bfS,\theta)$ is regular, we have $\rho_0 \cong (-1)^{r(\bbS^{\circ})-r(\bbG^{\circ})} R_{\bbS}^{\bbG}(\phi_{-1})$, where $\bbG$ and $\bbS$ are as in Section \ref{subsec:sc classes} (see \cite[Section 3.2]{CO21}). 
As the character $R_{\bbS}^{\bbG}(\phi_{-1})$ is supported on $\bbG'(\F_{q})$ (Corollary \ref{cor:Kaletha-CF}), only the summand corresponding to $g$ such that ${}^{g}\gamma\in SG^{0}_{\x,0}$ contributes to the sum nontrivially.
Let us compute each summand by assuming that ${}^{g}\gamma$ belongs to $SG^{0}_{\x,0}$.
By Proposition \ref{prop:evreg-reduction}, the image of $\gamma$ in $\bbG'(\F_{q})$ is regular semisimple.
Hence, by Corollary \ref{cor:Kaletha-CF-reg}, we obtain
\begin{align*}
\Theta_{\rho_0}({}^{g}\gamma)
&=
(-1)^{r(\bbS^{\circ})-r(\bbG^{\circ})}\Theta_{R_{\bbS}^{\bbG}(\phi_{-1})}({}^{g}\gamma)\\
&=
(-1)^{r(\bbS^{\circ})-r(\bbG^{\circ})}\sum_{w\in W_{\bbG(\F_{q})}({}^{g}\bbT^{\circ}_{\gamma},\bbS^{\circ})}\phi_{-1}({}^{wg}\gamma)\\
&=
(-1)^{r(\bbS^{\circ})-r(\bbG^{\circ})}\sum_{w\in W_{G^{0}_{\bar{\x}}}({}^{g}T_{\gamma},S)}\phi_{-1}({}^{wg}\gamma).
\end{align*}
(See Lemma  \ref{lem:Weyl-reduction} for the last equality.)

As explained in the final paragraph of the proof of \cite[Proposition 4.9.2]{Kal19}, we have
\begin{align*}
(-1)^{r(\bbS^{\circ})-r(\bbG^{\circ})}
&=(-1)^{r(\bfG^{0})-r(\bfS)}\\
&=(-1)^{r(\bfG^{0})-r(\bfT_{\bfG^{0\ast}})}\cdot(-1)^{r(\bfT_{\bfG^{0\ast}})-r(\bfS)}\\
&=e(\bfG^{0})\cdot\varepsilon_{L}(\bfS-\bfT_{\bfG^{0\ast}}).
\end{align*}
Furthermore, since $\bfS$ is maximally unramified in $\bfG^{0}$, Lemma \ref{lem:max-ur} implies that $\Delta_{\II}^{\abs,\bfG^{0}}[a_{\Psi},\chi''_{\Psi}](\gamma)$ is trivial (as long as $\gamma$ is conjugate to an element of $S$).

Therefore we see that $\Theta_{\pi^{\FKS}_{(\bfS,\theta)}}(\gamma)$ is equal to
\[
e(\bfG)\cdot\varepsilon_{L}(\bfT_{\bfG^{\ast}}-\bfS)\cdot
\sum_{g \in G^{0}_{\bar{\x}}\backslash N_{G}(T_{\gamma},G^{0}_{\bar{\x}})}
\Delta_{\II}^{\abs,\bfG}[a_{\Psi},\chi''_{\Psi}]({}^{g}\gamma)\cdot\phi_{\geq0}({}^{g}\gamma)\cdot
\sum_{w\in W_{G^{0}_{\bar{\x}}}({}^{g}T_{\gamma},S)}\phi_{-1}({}^{wg}\gamma).
\]
Since $\Delta_{\II}^{\abs,\bfG}[a_{\Psi},\chi''_{\Psi}]$ and $\phi_{\geq0}$ are invariant under $G^{0}$-conjugation (see Lemma \ref{lem:tran-conj} for the former assertion), by also noting that $\theta=\phi_{-1}\cdot\phi_{\geq0}$, we get
\begin{align*}
&\sum_{g \in G^{0}_{\bar{\x}}\backslash N_{G}(T_{\gamma},G^{0}_{\bar{\x}})}
\Delta_{\II}^{\abs,\bfG}[a_{\Psi},\chi''_{\Psi}]({}^{g}\gamma)\cdot\phi_{\geq0}({}^{g}\gamma)\cdot
\sum_{w\in W_{G^{0}_{\bar{\x}}}({}^{g}T_{\gamma},S)}\phi_{-1}({}^{wg}\gamma)\\
&=
\sum_{g \in G^{0}_{\bar{\x}}\backslash N_{G}(T_{\gamma},G^{0}_{\bar{\x}})}
\sum_{w\in W_{G^{0}_{\bar{\x}}}({}^{g}T_{\gamma},S)}
\Delta_{\II}^{\abs,\bfG}[a_{\Psi},\chi''_{\Psi}]({}^{wg}\gamma)\cdot\phi_{\geq0}({}^{wg}\gamma)\cdot
\phi_{-1}({}^{wg}\gamma)\\
&=
\sum_{w\in W_{G}(T_{\gamma},S)}\Delta_{\II}^{\abs,\bfG}[a_{\Psi},\chi''_{\Psi}]({}^{w}\gamma)\cdot\theta({}^{w}\gamma).
\end{align*}
This completes the proof.
\end{proof}

\begin{lem}\label{lem:Weyl-reduction}
For any very regular element $\gamma\in SG^{0}_{\x,0}$, we have $W_{\bbG(\F_{q})}(\bbT^{\circ}_{\gamma},\bbS^{\circ})=W_{G^{0}_{\bar{\x}}}(T_{\gamma},S)$.
\end{lem}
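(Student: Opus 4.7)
The plan is to show that the reduction map $\pi \from G^0_{\bar{\x}} \twoheadrightarrow \bbG(\F_q) = G^0_{\bar{\x}}/G^0_{\x,0+}$ descends to a bijection between the two Weyl-type sets. By Proposition \ref{prop:evreg-reduction}, both $\bbT^\circ_\gamma$ and $\bbS^\circ$ are maximal tori of $\bbG^\circ$, so for $n \in N_{G^0_{\bar{\x}}}(T_\gamma,S)$ the identity ${}^n T_\gamma = S$ reduces modulo $G^0_{\x,0+}$ to ${}^{\pi(n)}\bbT^\circ_\gamma = \bbS^\circ$, placing $\pi(n)$ in $N_{\bbG(\F_q)}(\bbT^\circ_\gamma,\bbS^\circ)$. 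This yields a well-defined map $\bar{\pi} \from W_{G^0_{\bar{\x}}}(T_\gamma,S) \to W_{\bbG(\F_q)}(\bbT^\circ_\gamma,\bbS^\circ)$ after quotienting by $S$ and $\bbS(\F_q)$, respectively.

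For injectivity, I would take $n_1,n_2 \in N_{G^0_{\bar{\x}}}(T_\gamma,S)$ with $\pi(n_1) \in \bbS(\F_q)\pi(n_2)$ and write $n_1 n_2^{-1} = su$ with $s \in S$ and $u \in G^0_{\x,0+}$. Since $n_1 n_2^{-1}$ normalizes $S$, so does $u$; a standard Bruhat--Tits argument (e.g., applying \cite[Lemma 9.10]{AS08} to the pair $(\bfS,\bfG^0)$) then forces $u \in S_{0+} \subset S$, whence $n_1 n_2^{-1} \in S$.

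The main step, and the likely obstacle, is surjectivity. Given $\bar{n} \in N_{\bbG(\F_q)}(\bbT^\circ_\gamma,\bbS^\circ)$, I would choose any lift $n' \in G^0_{\bar{\x}}$. Then ${}^{n'}\gamma$ is again an elliptic very regular element lying in $G^0_{\bar{\x}}$ whose associated torus ${}^{n'}\bfT_\gamma$ is maximally unramified elliptic in $\bfG^0$, and the reduction of ${}^{n'}\bfT_\gamma$ in $\bbG^\circ$ equals $\bbS^\circ$. What is needed is the uniqueness statement that any two maximally unramified elliptic maximal tori of $\bfG^0$ associated to $\x$ with the same reduction in $\bbG^\circ$ are $G^0_{\x,0+}$-conjugate; this follows from DeBacker's classification of unramified tori \cite{DeB06} (cf.\ Proposition \ref{prop:DeBacker-Kaletha}). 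Granted this, there exists $u \in G^0_{\x,0+}$ with ${}^{un'}\bfT_\gamma = \bfS$, so that $un' \in N_{G^0_{\bar{\x}}}(T_\gamma,S)$ lifts $\bar{n}$. An alternative, which avoids direct invocation of DeBacker's classification, would be to apply Lemma \ref{lem:CO21-5.1-5.2} within $\bfG^0$ to the pair of very regular elements $\gamma$ and a representative lying in $S G^0_{\x,0}$, yielding the required $G^0_{\x,0+}$-conjugacy of the associated tori.
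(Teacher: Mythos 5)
Your proposal is correct, and its skeleton (reduction map, well-definedness, injectivity, surjectivity) matches the paper's. The paper dismisses well-definedness and injectivity as obvious; your explicit injectivity argument (writing $n_1n_2^{-1}=su$ and forcing the topologically unipotent normalizer $u$ of $\bfS$ into $S_{0+}$) is the right justification and is the same mechanism used in the proof of Lemma \ref{lem:Weyl2}. The only genuine divergence is in the surjectivity step, which is indeed the crux. Your primary route works at the level of tori: you observe that ${}^{n'}\bfT_\gamma$ and $\bfS$ are maximally unramified elliptic maximal tori associated to $\x$ with the same reduction $\bbS^\circ$, and invoke the uniqueness half of DeBacker's parametrization (the full statement of \cite[Lemma 3.4.4]{Kal19}, of which Proposition \ref{prop:DeBacker-Kaletha} quotes only the existence half) to get a $G^0_{\x,0+}$-conjugation. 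The paper instead works at the level of elements: it translates $\bar n\in N_{\bbG(\F_q)}(\bbT_\gamma^\circ,\bbS^\circ)$ into ${}^{n}\gamma\in SG^0_{\x,0+}$ and applies Lemma \ref{lem:CO21-5.1-5.2} ($G^0_{\x,0+}$-stability of the very regular locus plus $G^0_{\x,0+}$-conjugacy of the associated tori) — exactly the alternative you sketch at the end. The two are essentially equivalent in strength; yours leans on DeBacker's classification as a black box, while the paper's stays self-contained within the very-regular-element machinery already developed (Lemma \ref{lem:CO21-5.1-5.2} is itself proved by elementary means). Either is acceptable here.
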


\begin{proof}
We first note that $W_{G^{0}_{\bar{\x}}}(T_{\gamma},S)=W_{G^{0}_{\bar{\x}}}(\bfT_{\gamma},\bfS)$.
The natural reduction map $W_{G^{0}_{\bar{\x}}}(\bfT_{\gamma},\bfS)\rightarrow W_{\bbG(\F_{q})}(\bbT^{\circ}_{\gamma},\bbS^{\circ})$ is obviously well-defined and injective.
Let us show the surjectivity.
Let $\bar{n}$ be an element of $W_{\bbG(\F_{q})}(\bbT^{\circ}_{\gamma},\bbS^{\circ})$.
Then, as the map $G^{0}_{\bar{\x}}\twoheadrightarrow\bbG(\F_{q})$ is surjective, we can take an element $n\in G^{0}_{\bar{\x}}$ whose image in $\bbG(\F_{q})$ equals $\bar{n}$.
Since $\bbT_{\gamma}$ is the centralizer of $\bar{\gamma}$ and $\bar{\gamma}$ is regular semisimple (Proposition \ref{prop:evreg-reduction}), the condition $\bar{n}\in W_{\bbG(\F_{q})}(\bbT^{\circ}_{\gamma},\bbS^{\circ})$ is equivalent to the condition ${}^{\bar{n}}\bar{\gamma}\in\bbS$.
This is furthermore equivalent to that ${}^{n}\gamma\in SG^{0}_{\x,0+}$.
Then Lemma \ref{lem:CO21-5.1-5.2} implies that there exists a very regular element $\gamma'$ of $S$ and that ${}^{n}\bfT_{\gamma}$ and $\bfT_{\gamma'}=\bfS$ are $G^{0}_{\x,0+}$-conjugate.
If we let $n_{+}\in G^{0}_{\x,0+}$ be an element such that ${}^{n_{+}n}\bfT_{\gamma}=\bfS$, then $n_{+}n$ gives an element of $W_{G^{0}_{\bar{\x}}}(\bfT_{\gamma},\bfS)$ which maps to $\bar{n}\in W_{\bbG(\F_{q})}(\bbT^{\circ}_{\gamma},\bbS^{\circ})$.
\end{proof}

\newpage

\part{Supercuspidals distinguished by their characters on elliptic very regular elements}\label{part:characterization}

\section{Characterizing clipped Yu-data}\label{sec:clipped p-adic}

Recall from the previous part that on elliptic very regular elements, the character formula of any supercuspidal representation is extremely simple (Theorem \ref{thm:CF}). In this section, we prove that, up to equivalence, the clipped Yu-datum of any supercuspidal representation can be recovered from these character values (Proposition \ref{prop:clipped characterization}). Moreover, we prove that if two supercuspidal representations $\pi_\Psi^\FKS, \pi_{\Psi'}^\FKS$ have the same character values on the elliptic very regular locus, then their depth zero components $\rho_0, \rho_0'$ must also have matching character values on the image $\bbG'(\FF_q)_\evrs$ of the elliptic very regular locus (Proposition \ref{prop:reduction depth 0}).

\begin{prop}\label{prop:refacto}
Let $\dashover \Psi = (\vec{\bfG},\vec{\phi},\vec{r},\x)$ and $\dashover{\Psi}' = (\vec{\bfG}',\vec{\phi}',\vec{r}',\x')$ be clipped Yu-data such that $\bar{\x}=\bar{\x}'$.
Suppose that there exists a tame elliptic maximal torus $\bfS$ of $\bfG$ which is contained in both of $\bfG^{0}$ and $\bfG^{\prime0}$ such that $\phi_{\geq0}|_{S_{0+}}=\phi'_{\geq0}|_{S_{0+}}$.
Then $\dashover{\Psi}'$ is a refactorization of $\dashover \Psi$.
\end{prop}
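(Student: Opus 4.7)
To show that $\dashover{\Psi}'$ is a refactorization of $\dashover{\Psi}$, I need to establish $(\vec{\bfG}',\bar{\x}') = (\vec{\bfG},\bar{\x})$ (the equality of base points is given), then verify the two conditions (F0) and (F1). The overall strategy is to exploit the $\bfG^{i+1}$-genericity of each $\phi_i$ at depth $r_i$ (and similarly for $\phi'_i$) to extract the combinatorial skeleton $(\vec{\bfG},\vec{r})$ from the restricted character $\phi_{\geq 0}|_{S_{0+}}$, and then bootstrap the refactorization depth bounds from $S$ up to $G^i$.

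\emph{Step 1 (recover $\vec{r}$ and $\vec{\bfG}$).} Using the Moy--Prasad isomorphism $G^i_{\x,r_i:r_i+} \cong \mathfrak{g}^i_{\x,r_i:r_i+}$ and the chosen $\bfG^{i+1}$-generic representative $X^{\ast}_i \in \Lie^{\ast}(\bfG^{i-1}_{\sc,\ab})(F)$ of $\phi_i$ (as recalled in Section \ref{subsec:tran}), the restriction $\phi_i|_{S_{r_i:r_i+}}$ is pinned down by the pairing of $X^{\ast}_i$ with $\Lie(\bfS)$. Because the successive $r_j$ are strictly increasing (save possibly at the last step), the characters $\phi_j|_S$ for $j>i$ are trivial on $S_{r_i+}$ while those for $j<i$ are trivial on $S_{r_i}$ only modulo strictly smaller depth. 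Consequently, the graded pieces of $\phi_{\geq 0}|_{S_{0+}}$ with respect to the Moy--Prasad filtration on $S$ read off, layer by layer, the jump depths $\{r_i\}$ together with $\bfG^{i+1}$-generic functionals whose centralizer in $\bfG^{i+1}$ is exactly $\bfG^i$. The ellipticity and tameness of $\bfS$ in every $\bfG^i$ ensure that every absolute coroot is detected by some element of $\Lie(\bfS)$, so that genericity transports from $\mathfrak{g}^i$ to $\mathfrak{s}$. Applying the same analysis to $\dashover{\Psi}'$ and invoking the hypothesis $\phi_{\geq 0}|_{S_{0+}} = \phi'_{\geq 0}|_{S_{0+}}$ gives $d=d'$, $\vec{r}=\vec{r}'$, and $\vec{\bfG}=\vec{\bfG}'$.

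\emph{Step 2 (verify (F1) and (F0)).} With $\vec{\bfG}=\vec{\bfG}'$ and $\vec{r}=\vec{r}'$ in hand, set $\chi_i \colonequals \prod_{j=i}^d \phi_j\phi'^{-1}_j$, a character of $G^i$. The hypothesis immediately gives $\chi_0|_{S_{0+}} = 1$, whence $\chi_0$ restricted to $S$ has depth $\leq 0 = r_{-1}$. For $i \geq 1$, the graded-piece analysis of Step 1 shows that the generic representatives $X^{\ast}_j$ and $X'^{\ast}_j$ of $\phi_j$ and $\phi'_j$ coincide modulo depth $>r_j$, so each $\phi_j\phi'^{-1}_j|_S$ has depth $<r_j$; summing over $j\geq i$ yields $\depth(\chi_i|_S)\leq r_{i-1}$. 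To promote this depth bound from $S$ to $G^i$, I use that $\chi_i$, as a character built from $\bfG^{i+1}$-generic data of fixed depth, is itself of the same Yu-generic type, and for such a character the depth on $G^i$ is detected on any tame elliptic maximal torus $\bfS \subset \bfG^i$ associated with $\bar\x$ (again by unwinding through the dual Moy--Prasad isomorphism, using that the restriction map $\mathfrak{g}^i_{\x,r:r+} \to \mathfrak{s}_{\x,r:r+}$ is generically injective on generic functionals). This gives (F1). Finally (F0) is formal: $\phi_d = \mathbbm{1}$ forces $r_{d-1}=r_d$, hence $r'_{d-1}=r'_d$, hence $\phi'_d = \mathbbm{1}$ by the definition of a Yu-datum.

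\emph{Main obstacle.} The crux lies in Step 1: rigorously reading the tame twisted Levi filtration $\vec{\bfG}$ off of the single restricted character $\phi_{\geq 0}|_{S_{0+}}$. The depth-stratified description of $\phi_{\geq 0}|_{S_{0+}}$ depends on the assertion that each nontrivial graded piece is \emph{generic enough} on $\Lie(\bfS)$ to recover the centralizer $\bfG^i = \Cent_{\bfG^{i+1}}(X^{\ast}_i)$; this is not automatic and requires the standard Yu-genericity dictionary together with the elliptic-tame hypothesis on $\bfS$. The secondary difficulty is the promotion of the depth bound $\depth(\chi_i|_S)\leq r_{i-1}$ from $S$ to the whole of $G^i$, which rests on the same dictionary applied to the auxiliary character $\chi_i$.
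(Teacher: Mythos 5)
Your overall architecture coincides with the paper's: the proof there is precisely the two-step reduction you describe, except that both steps are delegated to Kaletha — \cite[Lemma 3.6.3]{Kal19} for recovering $(\vec{\bfG},\vec{r})$ from $\phi_{\geq0}|_{S_{0+}}$, and the argument of \cite[Lemma 3.6.6]{Kal19} for (F0)/(F1). Since you are reproving those lemmas rather than citing them, the details matter, and two of them do not hold up as written.

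First, in Step 1 the filtration bookkeeping is backwards: for $j<i$ one has $r_j<r_i$, so $\phi_j$ is trivial on $S_{r_j+}\supseteq S_{r_i}$, hence on \emph{all} of $S_{r_i}$; it is the characters with $j>i$ (of \emph{higher} depth) that survive on $S_{r_i}$, and they do not factor through $S_{r_i:r_i+}$, so $\phi_{\geq0}|_{S_{0+}}$ does not literally decompose into graded pieces indexed by the $r_i$. The recovery in \cite[Lemma 3.6.3]{Kal19} instead proceeds through the coroots, reading off for each $\alpha\in R(\bfG,\bfS)$ the depth of $\phi_{\geq0}\circ\Nm_{E/F}\circ\alpha^{\vee}$ and defining $\bfG^{i}$ by the resulting root subsystems; genericity guarantees this depth is exactly $r_j$ when $\alpha\in R(\bfG^{j+1},\bfS)\smallsetminus R(\bfG^{j},\bfS)$. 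Second, and more seriously, your derivation of $\depth(\chi_i|_S)\leq r_{i-1}$ is a non sequitur: from ``each $\phi_j\phi_j'^{-1}|_S$ has depth $<r_j$'' the depth of the product $\chi_i|_S=\prod_{j\geq i}\phi_j\phi_j'^{-1}|_S$ is only bounded by $\max_{j\geq i}r_j$, which is far weaker than $r_{i-1}$. The correct argument runs in the opposite direction: write $\chi_i=\chi_0\cdot\prod_{j=0}^{i-1}\bigl(\phi_j\phi_j'^{-1}\bigr)^{-1}$ on $S$; the hypothesis gives $\chi_0|_{S_{0+}}=1$, and each $\phi_j,\phi_j'$ with $j\leq i-1$ has depth $r_j\leq r_{i-1}$ and is therefore trivial on $S_{r_{i-1}+}$, whence $\chi_i|_{S_{r_{i-1}+}}=1$. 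Finally, the promotion of this bound from $S$ to $G^i$ has nothing to do with $\chi_i$ being ``of Yu-generic type'' (it is not generic): one uses that any smooth character of $G^i$ is trivial on the root subgroups (conjugation by the torus rescales them, forcing triviality), so that $G^i_{\x,r+}$ being generated by $S_{r+}$ and such subgroups reduces the depth computation to $S$. With these three repairs your argument becomes the one the paper imports from Kaletha.
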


\begin{proof}
According to \cite[Lemma 3.6.3]{Kal19}, the sequences $\vec{\bfG}$ and $\vec{r}$ are uniquely recovered from $\phi_{\geq0}|_{S_{0+}}$.
Hence the assumption that $\phi_{\geq0}|_{S_{0+}}=\phi'_{\geq0}|_{S_{0+}}$ implies that $(\vec{\bfG},\vec{r})=(\vec{\bfG}',\vec{r}')$.
The remaining part follows from the same argument as in \cite[Lemma 3.6.6]{Kal19}.
\end{proof}

\begin{prop}\label{prop:Yu8.3}
Let $(\vec{\bfG},\vec{\phi},\vec{r},\x)$ be a clipped Yu-datum.
Let $\bfS$ be a tame elliptic maximal torus of $\bfG$ contained in $\bfG^{0}$.
If an element $g\in N_{G_{\bar{\x}}}(\bfS,\bfG^{0})$ satisfies $\phi_{\geq0}^{g}|_{S_{0+}}=\phi_{\geq0}|_{S_{0+}}$, then $g$ belongs to $G^{0}_{\bar{\x}}$.
\end{prop}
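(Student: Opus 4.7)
The plan is to argue by downward induction on $j\in\{d,d-1,\ldots,0\}$, establishing at each step that $g\in G^{j}_{\bar{\x}}$ up to a filtration correction, with the base case $j=d$ being the hypothesis $g\in G_{\bar{\x}}$ and the target after $d$ iterations being $g\in G^{0}_{\bar{\x}}$. Before starting the induction, I will reduce to the case $g\in N_{G}(\bfS)\cap G_{\bar{\x}}$: factor $g=h\cdot m$ with $h\in G^{0}_{\bar{\x}}$ conjugating $g\bfS g^{-1}$ back to $\bfS$ (possible because any two maximal tori of $\bfG^{0}$ passing through $\bar{\x}$ are $G^{0}_{\bar{\x}}$-conjugate by Bruhat--Tits, in the spirit of \cite[Lemma 3.4.3]{Kal19}) and $m=h^{-1}g\in N_{G}(\bfS)\cap G_{\bar{\x}}$. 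Since $\phi_{\geq 0}$ is a group character of $G^{0}$, one has $\phi_{\geq 0}^{h}=\phi_{\geq 0}$ and therefore $\phi_{\geq 0}^{g}=\phi_{\geq 0}^{m}$ on $S_{0+}$, so the hypothesis passes to $m$ and we may replace $g$ by $m$.

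For the inductive step from level $j+1$ to level $j$, assume $g\in G^{j+1}_{\bar{\x}}\cap N_{G}(\bfS)$. The key observation is that for each $i\geq j+1$, $\phi_{i}$ is a group homomorphism $G^{i}\to\C^{\times}$ and $g\in G^{j+1}\subset G^{i}$, so $\phi_{i}^{g}=\phi_{i}$. Hence the hypothesis $\phi_{\geq 0}^{g}|_{S_{0+}}=\phi_{\geq 0}|_{S_{0+}}$ collapses to $(\phi_{0}\cdots\phi_{j})^{g}|_{S_{0+}}=(\phi_{0}\cdots\phi_{j})|_{S_{0+}}$. Restricting further to $S_{r_{j-1}+}$ (with $r_{-1}\colonequals 0$), the characters $\phi_{i}$ with $i<j$ vanish, since $\phi_{i}$ has depth $r_{i}\leq r_{j-1}$ and is therefore trivial on $S_{r_{j-1}+}$. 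We are left with $\phi_{j}^{g}|_{S_{r_{j-1}+}}=\phi_{j}|_{S_{r_{j-1}+}}$, which is precisely the hypothesis required to invoke Yu's Lemma~8.3 for the $\bfG^{j+1}$-generic character $\phi_{j}$ of depth $r_{j}$ (in the concrete form used in \cite[Lemma 9.10]{AS08}): this forces $g\in G^{j}\cdot G_{\x,r_{j}+}$, and absorbing the filtration factor into the induction yields $g\in G^{j}_{\bar{\x}}$ up to an element of $G_{\x,r_{j}+}\subset G_{\bar{\x}}$.

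The main technical obstacle will be the filtration bookkeeping: Yu's lemma produces only $g\in G^{j}\cdot G_{\x,r_{j}+}$, so at each step I must replace $g$ by $gz^{-1}$ for an appropriate $z\in G_{\x,r_{j}+}$ and check that this replacement still satisfies the character-preservation condition needed at the next iteration. Since $z$ has depth strictly greater than $r_{j-1}$, conjugation by $z$ modifies $\phi_{i}$ (for $i\leq j$) only through its restriction to a Moy--Prasad layer strictly deeper than the one controlling the next inductive step, so the required hypothesis propagates cleanly. After $d$ iterations we obtain $g\in G^{0}\cdot G_{\x,r_{0}+}\subset G^{0}\cdot G^{0}_{\x,0+}\subset G^{0}_{\bar{\x}}$, which is the desired conclusion.
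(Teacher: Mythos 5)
Your skeleton matches the paper's: peel off $\phi_{d}$ (a character of all of $G$), use depth considerations to isolate $\phi_{j}$ on a deep enough filtration subgroup of $S$, invoke Yu's Lemma~8.3, and descend from $\bfG^{d}$ to $\bfG^{0}$. But the execution has a genuine gap at the crucial step. The paper's application of \cite[Lemma 8.3]{Yu01} yields $g\in G^{j}$ \emph{exactly}, and this exactness is what makes the induction close up ($g\in G^{0}\cap G_{\bar{\x}}=G^{0}_{\bar{\x}}$ at the end). To get it, one must first translate the character identity $\phi_{j}^{g}|_{S_{r_{j}}}=\phi_{j}|_{S_{r_{j}}}$ into a congruence $X_{j}^{\ast,g}\equiv X_{j}^{\ast}$ of the representing generic elements inside $\Lie^{\ast}(\bfS)(F)_{-r_{j}:(-r_{j})+}$, and then perturb by a small \emph{regular semisimple} $Z^{\ast}\in\Lie^{\ast}(\bfS)(F)_{(-r_{j})+}$ so that Yu's lemma (whose hypotheses require regular semisimple elements) applies to the pair $(X_{j}^{\ast}+Z^{\ast})^{g}$ and $X_{j}^{\ast}+Z^{\ast}$. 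You omit this translation and perturbation entirely and instead quote a coset version of the lemma giving only $g\in G^{j}\cdot G_{\x,r_{j}+}$. That weaker output is fatal: the correction terms live in Moy--Prasad subgroups of the \emph{ambient} group $G$, and your final containment $G_{\x,r_{0}+}\subset G^{0}_{\x,0+}$ is false ($G_{\x,r_{0}+}$ contains root subgroups for roots outside $R(\bfG^{0},\bfS)$), so $G^{0}\cdot G_{\x,r_{0}+}\not\subset G^{0}_{\bar{\x}}$ and the conclusion does not follow. The "filtration bookkeeping" cannot repair this, because knowing $g\in G^{0}\cdot G_{\x,0+}$ and $g\in G_{\bar{\x}}$ simply does not imply $g\in G^{0}$.

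Two smaller problems: your preliminary reduction to $g\in N_{G}(\bfS)$ rests on the claim that any two maximal tori of $\bfG^{0}$ through $\bar{\x}$ are $G^{0}_{\bar{\x}}$-conjugate, which is false in general (tori associated to the same vertex correspond to possibly distinct conjugacy classes of elliptic maximal tori in the reductive quotient, and \cite[Lemma 3.4.3]{Kal19} says nothing about conjugacy); fortunately this reduction is also unnecessary, since the paper's argument never needs $g$ to normalize $\bfS$ --- it only needs $\bfZ^{j,g}\subset\bfS\subset\bfG^{j,g}$, which follows from ${}^{g}\bfS\subset\bfG^{0}$. Also, \cite[Lemma 9.10]{AS08} is a statement about elements of deep filtration subgroups conjugating a torus into a twisted Levi (it is used for Lemma \ref{lem:Weyl2}), not a form of Yu's intertwining lemma, so it is not the right tool here.
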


\begin{proof}
As $\phi_{\geq0}$ is defined to be the product of $\phi_{0},\ldots,\phi_{d}$ and $\phi_{d}$ is a character of $G^{d}=G$, the assumption $\phi_{\geq0}^{g}|_{S_{0+}}=\phi_{\geq0}|_{S_{0+}}$ implies that $(\phi_{0}\cdots\phi_{d-1})^{g}|_{S_{0+}}=(\phi_{0}\cdots\phi_{d-1})|_{S_{0+}}$.

Note that $\bfZ^{d-1}\subset\bfS\subset\bfG^{d-1}$, where $\bfZ^{d-1}$ denotes the center of $\bfG^{d-1}$.
As explained in \cite[Section 8]{Yu01}, by noting the coadjoint actions of $\bfS$ and $\bfG^{d-1}$ on $\Lie^{\ast}(\bfG^{d-1})$, we have 
\[
\Lie^{\ast}(\bfZ^{d-1})
\subset\Lie^{\ast}(\bfS)
\subset\Lie^{\ast}(\bfG^{d-1}),
\]
which induces
\[
\Lie^{\ast}(\bfZ^{d-1})(F)_{-r_{d-1}}
\subset\Lie^{\ast}(\bfS)(F)_{-r_{d-1}}
\subset\Lie^{\ast}(\bfG^{d-1})(F)_{\x,-r_{d-1}}.
\]
Similarly, by the inclusions $\bfZ^{d-1,g}\subset\bfS\subset\bfG^{d-1,g}$, we get
\[
\Lie^{\ast}(\bfZ^{d-1,g})(F)_{-r_{d-1}}
\subset\Lie^{\ast}(\bfS)(F)_{-r_{d-1}}
\subset\Lie^{\ast}(\bfG^{d-1,g})(F)_{\x,-r_{d-1}}.
\]

By the $\bfG^{d}$-genericity assumption of $\phi_{d-1}$, there exists a $\bfG^{d}$-generic element $X_{d-1}^{\ast}\in\Lie^{\ast}(\bfZ^{d-1})(F)_{-r_{d-1}}$ of depth $r_{d-1}$ which realizes $\phi_{d-1}|_{G^{d-1}_{\x,r_{d-1}:r_{d-1}+}}$ (in the sense of \cite[Section 8]{Yu01}).
Then $X_{d-1}^{\ast,g}\in\Lie^{\ast}(\bfZ^{d-1,g})(F)_{-r_{d-1}}$ is a $\bfG^{d,g}$-generic element of depth $r_{d-1}$ which realizes $\phi^{g}_{d-1}|_{G^{d-1,g}_{\x,r_{d-1}:r_{d-1}+}}$.
Note that, as the depth of $\phi_{i}$ is smaller than $r_{d-1}$ for any $0\leq i\leq d-2$, 
\[
(\phi_{d-1}|_{G^{d-1}_{\x,r_{d-1}:r_{d-1}+}})|_{S_{r_{d-1}}}
=\phi_{d-1}|_{S_{r_{d-1}}}
=(\phi_{0}\cdots\phi_{d-1})|_{S_{r_{d-1}}}
\]
and
\[
(\phi_{d-1}^{g}|_{G^{d-1}_{\x,r_{d-1}:r_{d-1}+}})|_{S_{r_{d-1}}}
=\phi_{d-1}^{g}|_{S_{r_{d-1}}}
=(\phi_{0}\cdots\phi_{d-1})^{g}|_{S_{r_{d-1}}}.
\]
Thus the equality $(\phi_{0}\cdots\phi_{d-1})^{g}|_{S_{0+}}=(\phi_{0}\cdots\phi_{d-1})|_{S_{0+}}$ implies that the elements $X_{d-1}^{\ast}\in\Lie^{\ast}(\bfZ^{d-1})(F)_{-r_{d-1}}$ and $X_{d-1}^{\ast,g}\in\Lie^{\ast}(\bfZ^{d-1,g})(F)_{-r_{d-1}}$ are equal in $\Lie^{\ast}(\bfS)(F)_{-r_{d-1}:(-r_{d-1})+}$, hence in $\Lie^{\ast}(\bfG^{d-1})(F)_{\x,-r_{d-1}:(-r_{d-1})+}$.
Now we utilize \cite[Lemma 8,3]{Yu01} as follows.

We take a regular semisimple (in $\bfG^{d-1}$) element $Z^{\ast}\in \Lie^{\ast}(\bfS)(F)$.
Here, the existence of a regular semisimple element in $\Lie^{\ast}(\bfS)(F)$ follows from that the regular semisimple locus of $\Lie^{\ast}(\bfS)$ is Zariski dense in $\Lie^{\ast}(\bfS)$, hence unirational.
(Any $F$-rational torus is unirational, hence so is $\Lie^{\ast}(\bfS)$; see \cite[AG13.7 and 8.13]{Bor91} for the unirationality of a torus.)
By scaling $Z^{\ast}$, we may assume that $Z^{\ast}$ belongs to $\Lie^{\ast}(\bfS)(F)_{(-r_{d-1})+}$ and is sufficiently close to $0$ so that
\begin{itemize}
\item
$(Z^{\ast}){}^{g}$ belongs to $\Lie^{\ast}(\bfG^{d-1})(F)_{\x,(-r_{d-1})+}$ and
\item
$X_{d-1}^{\ast}+Z^{\ast}$ is regular semisimple (hence so is $(X_{d-1}^{\ast}+Z^{\ast}){}^{g}$).
\end{itemize}
Then the elements $Y_{1}^{\ast}\colonequals (X_{d-1}^{\ast}+Z^{\ast})^{g}$ and $Y_{2}^{\ast}\colonequals X_{d-1}^{\ast}+Z^{\ast}$ satisfy the assumptions of \cite[Lemma 8,3]{Yu01}, i.e., $Y_{1}^{\ast}$ and $Y_{2}^{\ast}$ are regular semisimple and satisfy $Y_{1}^{\ast}\equiv Y_{2}^{\ast}\equiv X_{d-1}^{\ast}$ modulo $\Lie^{\ast}(\bfG^{d-1})(F)_{\x,(-r_{d-1})+}$.
Hence, as ${}^{g}Y_{1}^{\ast}=Y_{2}^{\ast}$, \cite[Lemma 8,3]{Yu01} implies that $g\in G^{d-1}$.

Since $\phi_{d-1}$ is a character of $G^{d-1}$, the equality $(\phi_{0}\cdots\phi_{d-1})^{g}|_{S_{0+}}=(\phi_{0}\cdots\phi_{d-1})|_{S_{0+}}$ implies that $(\phi_{0}\cdots\phi_{d-2})^{g}|_{S_{0+}}=(\phi_{0}\cdots\phi_{d-2})|_{S_{0+}}$.
Hence, by applying the same argument to $(d-1,d-2)$, we get $g\in G^{d-2}$.
Repeating this procedure inductively, we finally get $g\in G^{0}$.
Thus $g\in G_{\bar{\x}}\cap G^{0}=G^{0}_{\bar{\x}}$.
\end{proof}

For any tame elliptic maximal torus $\bfS$ of $\bfG$, we put $S_{\evrs}$ to be the set of elliptic very regular elements of $S$.

\begin{proposition}\label{prop:clipped characterization}
Let $\Psi=(\vec{\bfG},\vec{\phi},\vec{r},\x,\rho_{0})$ and $\Psi'=(\vec{\bfG}',\vec{\phi}',\vec{r}',\x',\rho'_{0})$ be Yu-data.
Let $\bfS$ be a tame elliptic maximal torus of $\bfG^{0}$ with associated point $\x$.
Assume that $\Theta_{\rho_{0}}$ is not identically zero on $S_{\evrs}$.
    If there exists a constant $c \in \bbC^{1}$ for which
    \begin{equation}\label{eq:vreg for two}
        \Theta_{\pi_\Psi^{\FKS}}(\gamma) = c \cdot \Theta_{\pi_{\Psi'}^{\FKS}}(\gamma) \qquad \text{for any $\gamma \in S_{\evrs}$},
    \end{equation}
    then the clipped Yu-data $\dashover \Psi, \dashover{\Psi}'$ are $\bfG$-equivalent. 
\end{proposition}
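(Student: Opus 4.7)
My plan is to combine the character formula of Theorem \ref{thm:CF} with Dedekind-style linear independence of characters on the abelian group $S_{0+}$, using Propositions \ref{prop:Yu8.3} and \ref{prop:refacto} as the two key structural inputs. The strategy extracts the restricted character $\phi_{\geq 0}|_{S_{0+}}$ from the character values on $S_{\evrs}$, which by Proposition \ref{prop:refacto} is exactly what is needed to determine $\dashover{\Psi}$ up to refactorization.

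The first step is to fix $\gamma_0 \in S_{\evrs}$ with $\Theta_{\rho_0}(\gamma_0) \neq 0$ and let $\gamma_+$ vary over $S_{0+}$; by Lemma \ref{lem:CO21-5.1-5.2} each $\gamma_0 \gamma_+$ remains in $S_{\evrs}$. Expanding $\Theta_{\pi_\Psi^{\FKS}}(\gamma_0 \gamma_+)$ via Theorem \ref{thm:CF}, and using Lemma \ref{lem:tran-translation} together with the triviality of $\rho_0$ on $G^0_{\x,0+}$ to strip out the $\gamma_+$-dependence from every factor except $\phi_{\geq 0}({}^g \gamma_+)$, one gets an expression of the form
\[
\Theta_{\pi_\Psi^{\FKS}}(\gamma_0 \gamma_+) = \sum_{g \in G^0_{\bar{\x}} \backslash N_G(\bfS, G^0_{\bar{\x}})} C_g(\gamma_0) \cdot \phi_{\geq 0}^g(\gamma_+),
\]
where $\phi_{\geq 0}^g$ is the character $s \mapsto \phi_{\geq 0}({}^g s)$ of $S_{0+}$, well-defined at the level of cosets of $g$ since $\phi_{\geq 0}$ is a character of $\bfG^0$. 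The coefficient attached to $g = 1$ is a nonzero scalar multiple of $\Theta_{\rho_0}(\gamma_0)$ (the transfer-factor ratio and the value $\phi_{\geq 0}(\gamma_0)$ are nonzero), so $\Theta_{\pi_\Psi^{\FKS}}(\gamma_0 \gamma_+) \neq 0$ for some $\gamma_+$, and hence $\Theta_{\pi_{\Psi'}^{\FKS}}(\gamma_0 \gamma_+) \neq 0$ by \eqref{eq:vreg for two}. Applying Theorem \ref{thm:CF} to $\pi_{\Psi'}^{\FKS}$ then forces $N_G(\bfS, G^{\prime 0}_{\bar{\x}'}) \neq \emptyset$. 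For any $g_0$ in this set, Lemma \ref{lem:point} applied to the elliptic very regular element ${}^{g_0}\gamma_0 \in G^{\prime 0}_{\bar{\x}'}$ yields $g_0 \bar{\x} = \bar{\x}'$; conjugating $\Psi'$ by $g_0^{-1}$ (a $\bfG$-equivalence) we may assume $\bfS \subset \bfG^{\prime 0}$ and $\bar{\x}' = \bar{\x}$.

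With a common torus and vertex in hand, I would expand $\Theta_{\pi_{\Psi'}^{\FKS}}(\gamma_0 \gamma_+)$ in the same form and compare. Proposition \ref{prop:Yu8.3} guarantees that on each side, distinct cosets yield distinct characters of $S_{0+}$. Dedekind's linear independence of distinct characters of $S_{0+}$ then forces the sets of characters appearing with nonzero coefficient on the two sides to coincide; in particular, $\phi_{\geq 0}|_{S_{0+}}$ must equal $(\phi'_{\geq 0})^{g'}|_{S_{0+}}$ for some $g' \in N_G(\bfS, G^{\prime 0}_{\bar{\x}})$. Conjugating $\Psi'$ once more by $g'^{-1}$, which preserves $\bfS \subset \bfG^{\prime 0}$ and $\bar{\x}' = \bar{\x}$ by the same application of Lemma \ref{lem:point}, we reduce to $\phi'_{\geq 0}|_{S_{0+}} = \phi_{\geq 0}|_{S_{0+}}$, and Proposition \ref{prop:refacto} immediately concludes that $\dashover{\Psi'}$ is a refactorization of $\dashover{\Psi}$.

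The main obstacle is coordinating the two successive conjugations of $\Psi'$ so that each preserves the alignments achieved previously; the two invocations of Lemma \ref{lem:point} (to verify that every conjugating element fixes $\bar{\x}$) are essential here, and rely on the fact that the relevant elements $g_0, g'$ normalize $\bfS$ into $\bfG^{\prime 0}_{\bar{\x}'}$, so that their actions on $\mcB^{\red}(\bfG, F)$ are controlled by how they move the canonical point of the very regular element ${}^{g_0}\gamma_0$ (resp.\ ${}^{g'}\gamma_0$).
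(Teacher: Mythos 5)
Your proposal is correct and follows essentially the same route as the paper's proof: translate a nonvanishing point $\gamma_0\in S_{\evrs}$ by $S_{0+}$, use Theorem \ref{thm:CF} together with Lemma \ref{lem:tran-translation} to isolate the characters $\phi_{\geq 0}^g|_{S_{0+}}$, invoke Proposition \ref{prop:Yu8.3} plus linear independence of distinct characters of $S_{0+}$ to match $\phi_{\geq 0}|_{S_{0+}}$ with some $(\phi'_{\geq 0})^{g'}|_{S_{0+}}$, align the data via conjugation and Lemma \ref{lem:point}, and conclude with Proposition \ref{prop:refacto}. The only differences are organizational — you phrase the orthogonality step as Dedekind linear independence rather than an inner product against $\phi_{\geq 0}|_{S_{0+}}$, and split the conjugation of $\Psi'$ into two steps — neither of which changes the substance of the argument.
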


\begin{proof}
    Applying Theorem \ref{thm:CF} to \eqref{eq:vreg for two}, for any $\gamma \in S_{\evrs}$ we have the identity  
    \begin{align*}
        \sum_{g \in G^{0}_{\bar{\x}}\backslash N_{G}(S,G^{0}_{\bar{\x}})}&\Theta_{\rho_{0}}({}^{g}\gamma)\cdot\frac{\Delta_{\II}^{\abs,\bfG}[a_{\Psi},\chi''_{\Psi}]({}^{g}\gamma)}{\Delta_{\II}^{\abs,\bfG^{0}}[a_{\Psi},\chi''_{\Psi}]({}^{g}\gamma)}\cdot\phi_{\geq0}({}^{g}\gamma) \\
        &=
        c \cdot c' \cdot
        \sum_{g \in G^{\prime 0}_{\bar{\x}'}\backslash N_{G}(S,G^{\prime 0}_{\bar{\x}'})}\Theta_{\rho_{0}'}({}^{g}\gamma)\cdot\frac{\Delta_{\II}^{\abs,\bfG}[a_{\Psi'},\chi''_{\Psi'}]({}^{g}\gamma)}{\Delta_{\II}^{\abs,\bfG^{\prime 0}}[a_{\Psi'},\chi''_{\Psi'}]({}^{g}\gamma)}\cdot\phi_{\geq0}'({}^{g}\gamma),
    \end{align*}
    where $c' \colonequals \frac{e(\bfG^0)}{e(\bfG'{}^{0})}\cdot \frac{\varepsilon_{L}(\bfT_{\bfG^{\ast}}-\bfT_{\bfG'{}^{0\ast}})}{\varepsilon_L(\bfT_{\bfG^*} - \bfT_{\bfG^{0\ast}})}$.
(Note that we have $N_{G}(S,G^{0}_{\bar{\x}})=N_{G_{\bar{\x}}}(S,G^{0}_{\bar{\x}})$ by Lemma \ref{lem:Weyl}.)
Let us take $\gamma\in S_{\evrs}$ such that $\Theta_{\rho_{0}}(\gamma)\neq0$ by assumption.
For any element $\gamma_{0+}\in S_{0+}$, the element $\gamma\gamma_{0+}$ is again very regular by Lemma \ref{lem:CO21-5.1-5.2}.
Since $\rho_0$ and $\rho_0'$ are of depth zero and $\Delta_{\II}^{\abs,\bfG}$ is invariant under $S_{0+}$-translation (Lemma \ref{lem:tran-translation}),
    \begin{align}\label{eq:0+ id}
        \sum_{g \in G^{0}_{\bar{\x}}\backslash N_{G_{\bar{\x}}}(S,G^{0}_{\bar{\x}})}&\Theta_{\rho_{0}}({}^{g}\gamma)\cdot\frac{\Delta_{\II}^{\abs,\bfG}[a_{\Psi},\chi''_{\Psi}]({}^{g}\gamma)}{\Delta_{\II}^{\abs,\bfG^{0}}[a_{\Psi},\chi''_{\Psi}]({}^{g}\gamma)}\cdot\phi_{\geq0}({}^{g}\gamma) \cdot \phi_{\geq 0}({}^{g} \gamma_{0+}) \\ \nonumber
        &=
        c \cdot c' \cdot
        \sum_{g \in G^{\prime 0}_{\bar{\x}'}\backslash N_{G}(S,G^{\prime 0}_{\bar{\x}'})}\Theta_{\rho_{0}'}({}^{g}\gamma)\cdot\frac{\Delta_{\II}^{\abs,\bfG}[a_{\Psi'},\chi''_{\Psi'}]({}^{g}\gamma)}{\Delta_{\II}^{\abs,\bfG^{\prime 0}}[a_{\Psi'},\chi''_{\Psi'}]({}^{g}\gamma)}\cdot\phi_{\geq0}'({}^{g}\gamma) \cdot \phi_{\geq 0}'({}^g \gamma_{0+}).
    \end{align}
    We regard this as an identity between linear combinations of the characters $\{\phi_{\geq 0}^{g}|_{S_{0+}}\}_{g \in G^{0}_{\bar{\x}}\backslash N_{G_{\bar{\x}}}(S,G^{0}_{\bar{\x}})}$ and $\{\phi_{\geq 0}^{\prime g}|_{S_{0+}}\}_{g \in G_{\bar{\x}'}^{\prime0} \backslash N_G(S,G_{\bar{\x}'}^{\prime0})}$ of $S_{0+}$. 
    We apply the inner product $\langle -, \phi_{\geq 0}|_{S_{0+}} \rangle_{S_{0+}}$ to the identity \eqref{eq:0+ id}. By Proposition \ref{prop:Yu8.3}, the stabilizer of $\phi_{\geq 0}|_{S_{0+}}$ under $N_{G_{\bar{\x}}}(S,G_{\bar \x}^0)$ is equal to $G_{\bar \x}^0$.
    Hence the left-hand side equals
    \[
    \Theta_{\rho_{0}}(\gamma)\cdot\frac{\Delta_{\II}^{\abs,\bfG}[a_{\Psi},\chi''_{\Psi}](\gamma)}{\Delta_{\II}^{\abs,\bfG^{0}}[a_{\Psi},\chi''_{\Psi}](\gamma)}\cdot\phi_{\geq0}(\gamma).
    \]
    As this is not equal to zero, we have now shown that the inner product of the left-hand side of \eqref{eq:0+ id} with $\phi_{\geq 0}|_{S_{0+}}$ is nonzero. 
    It follows then that the inner product of the right-hand side of \eqref{eq:0+ id} with $\phi_{\geq 0}|_{S_{0+}}$ is nonzero. 
    In particular, there exists an element $g \in G_{\bar{\x}'}^{\prime 0} \backslash N_G(S,G_{\bar{\x}'}^{\prime 0})$ such that $\phi_{\geq 0}^{\prime g}|_{S_{0+}} = \phi_{\geq 0}|_{S_{0+}}$. 
    By replacing the Yu-datum $\Psi'$ with its $g$-conjugation ${}^g \Psi'$, we may assume that this $g$ can be taken to be $1$.
    But now this implies that $S$ is contained also in $G_{\bar{\x}'}^{\prime 0}$.
    Since $\gamma\in S$ is an elliptic very regular element, Lemma \ref{lem:point} implies that $\bar{\x} = \bar{\x}'$.
Now Proposition \ref{prop:refacto} implies that the clipped Yu-datum $\dashover{\Psi}'$ is a refactorization of $\dashover{\Psi}$. 
\end{proof}

Recall that
\begin{itemize}
\item
$G^{0}_{\bar{\x},\evrs}$ is the set of elliptic very regular elements of $G^{0}_{\bar{\x}}$ (Section \ref{subsec:character}),
\item
$\bbG'(\F_{q})_{\evrs}$ is the set of reductions of elliptic very regular elements of $SG^{0}_{\x,0}$ (Section \ref{subsec:ur-vreg}).
\end{itemize}

\begin{proposition}\label{prop:reduction depth 0}
Let $\Psi=(\vec{\bfG},\vec{\phi},\vec{r},\x,\rho_{0})$ and $\Psi'=(\vec{\bfG},\vec{\phi},\vec{r},\x,\rho'_{0})$ be Yu-data (i.e., $\dashover{\Psi}=\dashover{\Psi}'$).
If there exists a constant $c \in \bbC^1$ for which
    \begin{equation}\label{eq:clipped id}
        \Theta_{\pi_\Psi^{\FKS}}(\gamma) = c \cdot \Theta_{\pi_{\Psi'}^{\FKS}}(\gamma) \qquad \text{for any $\gamma\in G^{0}_{\bar{\x},\evrs}$},
    \end{equation}
    then
    \begin{equation*}
        \Theta_{\rho_0}(\gamma) = c \cdot \Theta_{\rho_0'}(\gamma) \qquad \text{for any $\gamma \in \bbG'(\FF_q)_{\evrs}$}.
    \end{equation*}
\end{proposition}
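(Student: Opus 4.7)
My argument closely parallels the proof of Proposition \ref{prop:clipped characterization}: invoke the character formula of Theorem \ref{thm:CF}, use $\dashover{\Psi} = \dashover{\Psi}'$ to cancel every non-$\rho_0$ factor, then extract individual coset contributions via linear independence of characters supplied by Proposition \ref{prop:Yu8.3}. Concretely, applying Theorem \ref{thm:CF} to each side of \eqref{eq:clipped id}, the Kottwitz sign prefactor, the index set $G^{0}_{\bar{\x}} \backslash N_G(T_\gamma, G^{0}_{\bar{\x}})$, the transfer factor ratio, and the character $\phi_{\geq 0}$ coincide on the two sides. Bundling the common non-$\rho_0$ ingredients into
\[
D(\delta) \colonequals \frac{\Delta_{\II}^{\abs,\bfG}[a_\Psi, \chi''_\Psi](\delta)}{\Delta_{\II}^{\abs,\bfG^0}[a_\Psi, \chi''_\Psi](\delta)} \cdot \phi_{\geq 0}(\delta),
\]
the hypothesis \eqref{eq:clipped id} rewrites as
\[
\sum_{g \in G^{0}_{\bar{\x}} \backslash N_G(T_\gamma, G^{0}_{\bar{\x}})} \bigl(\Theta_{\rho_0}({}^g \gamma) - c \cdot \Theta_{\rho_0'}({}^g \gamma)\bigr) \cdot D({}^g \gamma) = 0 \qquad \text{for every } \gamma \in G^{0}_{\bar{\x}, \evrs}.
\]

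The next step is to probe this identity by translation $\gamma \mapsto \gamma \cdot \gamma_{0+}$ with $\gamma_{0+} \in T_{\gamma, 0+}$. By Lemma \ref{lem:CO21-5.1-5.2}, $\gamma \gamma_{0+}$ remains in $G^{0}_{\bar{\x}, \evrs}$; because $\rho_0, \rho_0'$ are of depth zero their characters at ${}^g(\gamma\gamma_{0+})$ coincide with those at ${}^g \gamma$; Lemma \ref{lem:tran-translation} ensures the same for the transfer factors; and all $\gamma_{0+}$-dependence collapses into $\phi_{\geq 0}({}^g(\gamma \gamma_{0+})) = \phi_{\geq 0}({}^g \gamma) \cdot \phi_{\geq 0}^g(\gamma_{0+})$. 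Writing $a_g \colonequals (\Theta_{\rho_0}({}^g \gamma) - c \cdot \Theta_{\rho_0'}({}^g \gamma)) \cdot D({}^g \gamma)$, the translated identity becomes a character relation
\[
\sum_{g \in G^{0}_{\bar{\x}} \backslash N_G(T_\gamma, G^{0}_{\bar{\x}})} a_g \cdot \phi_{\geq 0}^g\bigl|_{T_{\gamma, 0+}}(\gamma_{0+}) = 0 \qquad \text{for all } \gamma_{0+} \in T_{\gamma, 0+}.
\]
Proposition \ref{prop:Yu8.3}, applied to the tame elliptic maximal torus $\bfT_\gamma$ of $\bfG$ (which lies in $\bfG^0$ because $\gamma \in G^0$ is regular semisimple in $\bfG$), forces the stabilizer of $\phi_{\geq 0}|_{T_{\gamma, 0+}}$ inside $N_{G_{\bar{\x}}}(\bfT_\gamma, \bfG^0)$ to equal $G^{0}_{\bar{\x}}$, and hence the characters $\phi_{\geq 0}^g|_{T_{\gamma, 0+}}$ are pairwise distinct as $g$ ranges over the chosen coset representatives. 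Linear independence of distinct characters of $T_{\gamma, 0+}$ then forces $a_g = 0$ for every $g$; since $D({}^g \gamma) \neq 0$ (each transfer factor and $\phi_{\geq 0}$ is character-valued, and $\gamma$ is very regular), specializing to $g = 1$ yields $\Theta_{\rho_0}(\gamma) = c \cdot \Theta_{\rho_0'}(\gamma)$ for every $\gamma \in G^{0}_{\bar{\x}, \evrs}$.

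Finally, any $\bar{\gamma} \in \bbG'(\FF_q)_{\evrs}$ lifts, by Definition \ref{defn:evrs-finite-fields}, to an elliptic very regular $\gamma \in SG^{0}_{\x, 0} \subset G^{0}_{\bar{\x}, \evrs}$, and since both $\rho_0$ and $\rho_0'$ factor through the quotient $G^{0}_{\bar{\x}} \twoheadrightarrow \bbG(\FF_q)$ (with kernel $G^{0}_{\x, 0+}$), the pointwise equality $\Theta_{\rho_0}(\gamma) = c \cdot \Theta_{\rho_0'}(\gamma)$ descends as claimed. The only genuine obstacle in this scheme is the distinctness of the characters $\phi_{\geq 0}^g|_{T_{\gamma, 0+}}$ on distinct coset representatives; this is exactly the input supplied by Proposition \ref{prop:Yu8.3}, and everything else is bookkeeping around the simplified character formula.
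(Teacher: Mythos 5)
Your proposal is correct and follows essentially the same route as the paper's proof: apply Theorem \ref{thm:CF}, perturb by $T_{\gamma,0+}$-translations, and use Proposition \ref{prop:Yu8.3} to separate the cosets of $G^{0}_{\bar{\x}}$ in $N_G(T_\gamma,G^0_{\bar\x})$ before dividing out the nonvanishing factor $D(\gamma)$. The only cosmetic difference is that the paper isolates the $g\in G^0_{\bar\x}$ term by pairing against $\phi_{\geq 0}|_{T_{\gamma,0+}}$, whereas you invoke full linear independence of the distinct characters $\phi_{\geq 0}^g|_{T_{\gamma,0+}}$ to force every coefficient to vanish; these are the same argument.
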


\begin{proof}
Let $\gamma\in\bbG'(\F_{q})$.
We take an elliptic very regular element of $SG^{0}_{\bar{\x},0}$ whose reduction is $\gamma$ and again write $\gamma$ for it by abuse of notation.
    Combining \eqref{eq:clipped id} with Theorem \ref{thm:CF}, we have the identity
    \begin{multline*}
        \sum_{g \in G^{0}_{\bar{\x}}\backslash N_{G}(T_{\gamma},G^{0}_{\bar{\x}})}\Theta_{\rho_{0}}({}^{g}\gamma)\cdot\frac{\Delta_{\II}^{\abs,\bfG}[a_{\Psi},\chi''_{\Psi}]({}^{g}\gamma)}{\Delta_{\II}^{\abs,\bfG^{0}}[a_{\Psi},\chi''_{\Psi}]({}^{g}\gamma)}\cdot\phi_{\geq0}({}^{g}\gamma) \\
= c \cdot
        \sum_{g \in G^{0}_{\bar{\x}}\backslash N_{G}(T_{\gamma},G^{0}_{\bar{\x}})}\Theta_{\rho_{0}'}({}^{g}\gamma)\cdot\frac{\Delta_{\II}^{\abs,\bfG}[a_{\Psi'},\chi''_{\Psi'}]({}^{g}\gamma)}{\Delta_{\II}^{\abs,\bfG^{0}}[a_{\Psi'},\chi''_{\Psi'}]({}^{g}\gamma)}\cdot\phi_{\geq0}({}^{g}\gamma).
    \end{multline*}
Similarly to the proof of Proposition \ref{prop:clipped characterization}, we consider the $T_{\gamma,0+}$-translation of $\gamma$ and take the inner product $\langle -, \phi_{\geq 0}|_{T_{\gamma,0+}}\rangle_{T_{\gamma,0+}}$. Then the resulting left-hand side is given by 
    \begin{equation*}
        \sum_{g \in G^{0}_{\bar{\x}}\backslash N_{G}(T_{\gamma},G^{0}_{\bar{\x}})}\Theta_{\rho_{0}}({}^{g}\gamma)\cdot\frac{\Delta_{\II}^{\abs,\bfG}[a_{\Psi},\chi''_{\Psi}]({}^{g}\gamma)}{\Delta_{\II}^{\abs,\bfG^{0}}[a_{\Psi},\chi''_{\Psi}]({}^{g}\gamma)}\cdot\phi_{\geq0}({}^{g}\gamma) \cdot \langle \phi_{\geq 0}^g |_{T_{\gamma,0+}}, \phi_{\geq 0}|_{T_{\gamma,0+}} \rangle_{T_{\gamma,0+}}.
    \end{equation*}
    Since $N_G(T_\gamma,G_{\bar \x}^0) = N_{G_{\bar \x}}(T_\gamma,G_{\bar \x}^0)$ (Lemma \ref{lem:Weyl}), it follows from Proposition \ref{prop:Yu8.3} that if $g \in N_G(T_\gamma,G_{\bar \x}^0)$ is an element satisfying $\langle \phi_{\geq 0}^g|_{T_{\gamma,0+}}, \phi_{\geq 0}|_{T_{\gamma,0+}}\rangle_{T_{\gamma,0+}} \neq 0$, then necessarily $g \in G_{\bar \x}^0$. Applying the same argument to the right-hand side, we have now shown that for any $\gamma\in G^{0}_{\bar{\x},\evrs}$, 
    \begin{equation*}
        \Theta_{\rho_{0}}(\gamma)\cdot\frac{\Delta_{\II}^{\abs,\bfG}[a_{\Psi},\chi''_{\Psi}](\gamma)}{\Delta_{\II}^{\abs,\bfG^{0}}[a_{\Psi},\chi''_{\Psi}](\gamma)}\cdot\phi_{\geq0}(\gamma)
        = c \cdot \Theta_{\rho_{0}'}(\gamma)\cdot\frac{\Delta_{\II}^{\abs,\bfG}[a_{\Psi'},\chi''_{\Psi'}](\gamma)}{\Delta_{\II}^{\abs,\bfG^{0}}[a_{\Psi'},\chi''_{\Psi'}](\gamma)}\cdot\phi_{\geq0}(\gamma).
    \end{equation*}
Note that $\Delta_{\II}^{\abs,\bfG^{(0)}}[a_{\Psi},\chi''_{\Psi}](\gamma)=\Delta_{\II}^{\abs,\bfG^{(0)}}[a_{\Psi'},\chi''_{\Psi'}](\gamma)$ since Kaletha's $a$-data and $\chi$-data depend only on the clipped parts of Yu-data (see Section \ref{subsec:tran}).
Therefore, as $\frac{\Delta_{\II}^{\abs,\bfG}[a_{\Psi},\chi''_{\Psi}](\gamma)}{\Delta_{\II}^{\abs,\bfG^{0}}[a_{\Psi},\chi''_{\Psi}](\gamma)}\cdot\phi_{\geq0}(\gamma) \neq 0$, we get $\Theta_{\rho_0}(\gamma) = c \cdot \Theta_{\rho_0'}(\gamma)$.
\end{proof}

\section{Characterization on elliptic very regular elements}\label{sec:sc characterization}

In this section, we apply the results of Section \ref{sec:clipped p-adic} in various contexts to prove that \textit{if there are sufficiently many elliptic very regular elements}, then classes of supercuspidal representations are distinguished by only their character values on elliptic very regular elements. The essential line of reasoning is that Propositions \ref{prop:clipped characterization} and \ref{prop:reduction depth 0} allow us to reduce this kind of characterization problem to the depth zero setting, wherein we can apply the results of Section \ref{sec:char finite}.

\subsection{Characterizing regular supercuspidal representations}\label{subsec:regsc-characterization}

Let $\Psi=(\vec{\bfG},\vec{\phi},\vec{r},\x,\rho_{0})$ be a regular Yu-datum corresponding to a tame elliptic regular pair $(\bfS,\theta)$.
We introduce the groups $\bbG$, $\bbS$, $\bbZ_{\bbG}$, $\bbZ^{\star}_{\bbG}$ as in Section \ref{subsec:ur-vreg}.
Recall that we defined the set $\bbG'(\F_{q})_{\evrs}$ in Section \ref{subsec:ur-vreg}.
Let us put $\bbG'(\F_{q})_{\nevrs}\colonequals \bbG'(\F_{q})\smallsetminus\bbG'(\F_{q})_{\evrs}$ and $\bbG(\F_{q})_{\nevrs}\colonequals \bbG(\F_{q})\smallsetminus\bbG'(\F_{q})_{\evrs}$.
By taking $\bbG'(\F_{q})_{\bullet}$ and $\bbG'(\F_{q})_{\circ}$ in Section \ref{subsec:char-finite-gen-pos} to be $\bbG'(\F_{q})_{\evrs}$ and $\bbG'(\F_{q})_{\nevrs}$, respectively, we consider the inequality \eqref{ineq:Henniart-bullet}:
\[\label{ineq:Henniart-evrs}
\frac{|[\bbS]^{\star}|}{|[\bbS]^{\star}_{\nevrs}|} 
=
\frac{|[\bbS]^{\star}|}{|[\bbS]^{\star} \smallsetminus [\bbS]^{\star}_\evrs|} 
> 2\cdot|W_{\bbG(\F_{q})}(\bbS)|.
\tag{$\mathfrak{H}_{\evrs}$}
\]

\begin{theorem}\label{thm:regsc-characterization}
    Assume that \eqref{ineq:Henniart-evrs} is satisfied. 
    Then there exists a unique irreducible supercuspidal representation $\pi$ such that there exists a constant $c \in \bbC^1$ for which
    \begin{equation}\label{eq:vreg id}
        \Theta_{\pi}(\gamma)
        =
        c\cdot
        \sum_{w\in W_{G}(T_{\gamma},S)}\Delta_{\II}^{\abs,\bfG}[a_{\Psi},\chi''_{\Psi}]({}^{w}\gamma)\cdot\theta({}^{w}\gamma)
        \end{equation}
        for any $\gamma\in G^{0}_{\bar{\x},\evrs}$.
        Moreover, such $\pi$ is given by the regular supercuspidal representation $\pi_{(\bfS, \theta)}^{\FKS}$ associated with $(\bfS,\theta)$ via the modified construction of Fintzen--Kaletha--Spice.
\end{theorem}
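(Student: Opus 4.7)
The plan is to establish existence directly from the explicit character formula of Corollary \ref{cor:regsc}, and to reduce uniqueness to the finite-field characterization Theorem \ref{thm:Henniart} via the two reduction results Proposition \ref{prop:clipped characterization} (reduction to the clipped level) and Proposition \ref{prop:reduction depth 0} (reduction to the depth zero level).

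For existence, Corollary \ref{cor:regsc} shows directly that $\pi_{(\bfS,\theta)}^{\FKS}$ satisfies \eqref{eq:vreg id} with the explicit constant
\[
c = e(\bfG)\cdot\varepsilon_{L}(\bfT_{\bfG^{\ast}}-\bfS) \in \{\pm1\}.
\]
For uniqueness, suppose $\pi$ is an irreducible supercuspidal representation satisfying \eqref{eq:vreg id} with constant $c_{0}\in\bbC^{1}$. By Fintzen's exhaustion Theorem \ref{thm:Fintzen} we may write $\pi=\pi_{\Psi'}^{\FKS}$ for some Yu-datum $\Psi'=(\vec{\bfG}',\vec{\phi}',\vec{r}',\x',\rho_{0}')$. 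Combining \eqref{eq:vreg id} for $\pi_\Psi^{\FKS}$ and for $\pi$ yields an identity of the form \eqref{eq:vreg for two} of Proposition \ref{prop:clipped characterization} on $S_{\evrs}$ for some constant in $\bbC^{1}$.

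To apply Proposition \ref{prop:clipped characterization} I must first verify that $\Theta_{\rho_{0}}$ is not identically zero on $S_{\evrs}$. Since $(\bfS,\theta)$ is regular, $\rho_{0}=\varepsilon R_{\bbS}^{\bbG}(\phi_{-1})$ with $\phi_{-1}$ in general position in the sense of Definition \ref{defn:in-gen-pos} (by Proposition \ref{prop:reg-nonsing}(4)); under the hypothesis \eqref{ineq:Henniart-evrs}, Lemma \ref{lem:Henniart} then produces an element of $\bbS(\F_q)_{\evrs}$ on which $\Theta_{\rho_{0}}$ is nonzero, and such an element lifts to some $\gamma\in S_{\evrs}$ by the very definition of $\bbS(\F_{q})_{\evrs}$ (Definition \ref{defn:evrs-finite-fields}) combined with Lemma \ref{lem:CO21-5.1-5.2}. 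With this verified, Proposition \ref{prop:clipped characterization} gives that $\dashover{\Psi}$ and $\dashover{\Psi}'$ are $\bfG$-equivalent. Replacing $\Psi'$ by a $\bfG$-equivalent Yu-datum, I may assume $\dashover{\Psi}'=\dashover{\Psi}$, so that Proposition \ref{prop:reduction depth 0} applies and yields
\[
\Theta_{\rho_{0}}(g)=c_{1}\cdot\Theta_{\rho_{0}'}(g)\qquad\text{for every $g\in\bbG'(\F_{q})_{\evrs}$}
\]
for some $c_{1}\in\bbC^{1}$.

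The final step is to feed this into the finite-field characterization Theorem \ref{thm:Henniart} with $\bullet=\evrs$: the hypothesis \eqref{ineq:Henniart-evrs} is exactly \eqref{ineq:Henniart-bullet} in this setting, and $\phi_{-1}$ is in general position, so the irreducible representation $\rho_{0}'$ of $\bbG(\F_q)$ whose character is a scalar multiple of that of $\varepsilon R_{\bbS}^{\bbG}(\phi_{-1})$ on $\bbG'(\F_{q})_{\evrs}$ is necessarily isomorphic to $\rho_{0}$. Thus $\Psi'$ differs from $\Psi$ by an elementary transformation and an overall $G$-conjugation, which gives $\pi=\pi_{\Psi'}^{\FKS}\cong \pi_{\Psi}^{\FKS}=\pi_{(\bfS,\theta)}^{\FKS}$ via Theorem \ref{thm:HM}. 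The principal subtlety is the verification of the non-vanishing hypothesis of Proposition \ref{prop:clipped characterization}, which rests on carefully matching the $p$-adic regularity of $(\bfS,\theta)$ with the ``in general position'' condition of Definition \ref{defn:in-gen-pos} so that Lemma \ref{lem:Henniart} and Theorem \ref{thm:Henniart} can be invoked; the rest of the argument is an essentially mechanical cascade of reductions.
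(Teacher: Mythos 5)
Your proposal is correct and follows essentially the same route as the paper's proof: existence via Corollary \ref{cor:regsc}, then uniqueness by verifying the non-vanishing hypothesis, applying Propositions \ref{prop:clipped characterization} and \ref{prop:reduction depth 0}, and concluding with Theorem \ref{thm:Henniart}. The only cosmetic difference is that the paper packages the non-vanishing step as a separate statement (Lemma \ref{lem:nonvan}), which is itself proved by the direct appeal to Lemma \ref{lem:Henniart} that you make.
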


The assumption \eqref{ineq:Henniart-evrs} appears as it is a sufficient condition for the nonvanishing assertion in the hypothesis of Proposition \ref{prop:clipped characterization} in the setting of regular supercuspidal representations (i.e., when the depth zero part $\rho_0$ of the Yu-datum is the Deligne--Lusztig induction of a character $\phi_{-1}$ in general position). The following lemma provides the reason for this sufficiency:

\begin{lemma}\label{lem:nonvan}
Let $\phi_{-1}$ be a regular depth zero character of $S$.
    If \eqref{ineq:Henniart-evrs} holds, then there exists an element $\gamma \in S_{\evrs}$ such that
    \begin{equation*}
    \sum_{v \in W_{G^{0}}(\bfS)} \phi_{-1}^{v}(\gamma) \neq 0.
    \end{equation*}
\end{lemma}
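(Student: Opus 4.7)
The plan is to interpret the sum $f \colonequals \sum_{v \in W_{G^{0}}(\bfS)} \phi_{-1}^{v}$ as a function on $\bbS(\F_q) = S/S_{0+}$ and to show that it cannot vanish identically on the image of $S_{\evrs}$, by comparing two evaluations of its $L^2$-norm with respect to the inner product $\langle-,-\rangle^\star$ introduced in Section \ref{subsec:char-finite-gen-pos}.

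First I would collect the relevant preliminaries. By Section \ref{subsec:TENSP} we have $W_{G^{0}}(\bfS) \cong W_{\bbG(\F_q)}(\bbS)$, and $\phi_{-1}$ factors through $\bbS(\F_q)$. By Lemma \ref{lem:twist}, after multiplying $\phi_{-1}$ by a character of $\bbG(\F_q)$---which is $W$-invariant by conjugation and cannot change where $f$ vanishes---I may assume that the central character $\omega^\star \colonequals \phi_{-1}|_{\bbZ^\star_\bbG(\F_q)}$ is unitary. By Proposition \ref{prop:reg-nonsing}(4), the regularity hypothesis forces the stabilizer of $\phi_{-1}|_{\bbS^\circ(\F_q)}$ in $W_{\bbG(\F_q)}(\bbS)$ to be trivial; hence the $|W_{\bbG(\F_q)}(\bbS)|$ characters $\{\phi_{-1}^{v}\}_{v}$ of $\bbS(\F_q)$ are pairwise distinct (two conjugates agreeing on $\bbS(\F_q)$ agree \emph{a fortiori} on $\bbS^{\circ}(\F_q)$), and all restrict to $\omega^\star$ on $\bbZ^\star_\bbG(\F_q)$.

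Second, I would compute $\langle f,f\rangle^\star$ by orthogonality. Because the $\phi_{-1}^{v}$ share the central character $\omega^\star$ on $\bbZ^\star_\bbG(\F_q)$ and are pairwise distinct on $\bbS(\F_q)$, the ratios $\phi_{-1}^{v_1}\overline{\phi_{-1}^{v_2}}$ for $v_1\neq v_2$ descend to non-trivial characters of $[\bbS]^\star$; so $\langle \phi_{-1}^{v_1},\phi_{-1}^{v_2}\rangle^\star = \delta_{v_1,v_2}$, and hence $\langle f, f\rangle^\star = |W_{G^{0}}(\bfS)|$. Now suppose for contradiction that $f$ vanishes on $S_{\evrs}$. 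Since $f$ factors through $\bbS(\F_q)$ and every element of $[\bbS]^\star_{\evrs}$ lifts to $S_{\evrs}$ (given $s\in\bbS(\F_q)_{\evrs}$, any preimage in $S$ differs from an elliptic very regular lift in $SG^{0}_{\x,0}$ by an element of $G^{0}_{\x,0+}$, so is itself very regular by Lemma \ref{lem:CO21-5.1-5.2}), it follows that $f$ vanishes on $[\bbS]^\star_{\evrs}$. Combining this with the trivial pointwise bound $|f(s)|\leq |W_{G^{0}}(\bfS)|$ yields
\[
|W_{G^{0}}(\bfS)| = \langle f,f\rangle^\star = \frac{1}{|[\bbS]^\star|}\sum_{s\in[\bbS]^\star_{\nevrs}}|f(s)|^2 \leq \frac{|W_{G^{0}}(\bfS)|^2\cdot|[\bbS]^\star_{\nevrs}|}{|[\bbS]^\star|},
\]
i.e., $|[\bbS]^\star|/|[\bbS]^\star_{\nevrs}| \leq |W_{G^{0}}(\bfS)|$, contradicting \eqref{ineq:Henniart-evrs}.

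The proof is thus a direct orthogonality computation modulo the minor technicalities of reducing to the unitary case and lifting elements of $[\bbS]^\star_{\evrs}$ to $S_{\evrs}$; no single step should pose a serious obstacle.
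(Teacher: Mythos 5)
Your proof is correct, and it takes a genuinely different route from the paper's. The paper disposes of this lemma in two lines: by Corollary \ref{cor:Kaletha-CF-reg} the sum $\sum_{v}\phi_{-1}^{v}(\gamma)$ is the character of the Deligne--Lusztig virtual representation $R_{\bbS}^{\bbG}(\phi_{-1})$ at the image of $\gamma$ in $\bbS(\F_q)_{\evrs}$, and the non-vanishing is then exactly Lemma \ref{lem:Henniart}. That route leans on the representation-theoretic machinery of Part \ref{part:finite} --- the truncated inner product formula of Proposition \ref{prop:innerprod2} (hence Green function orthogonality) and the irreducibility $\langle R_\theta,R_\theta\rangle=1$ from Corollary \ref{cor:scalar-prod}. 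You instead work entirely on the torus: after the unitarizing twist, the regularity of $\phi_{-1}$ (via Proposition \ref{prop:reg-nonsing}(4)) makes the $\phi_{-1}^{v}$ pairwise distinct characters of the finite abelian group $[\bbS]^{\star}$, so $\langle f,f\rangle^{\star}=|W_{G^0}(\bfS)|$ by elementary orthogonality, and the second-moment bound over $[\bbS]^{\star}_{\nevrs}$ contradicts \eqref{ineq:Henniart-evrs}. The two arguments share the same mechanism (a second moment concentrated on the non-very-regular locus being too small), but yours avoids the Deligne--Lusztig input entirely and in fact only needs $|[\bbS]^{\star}|/|[\bbS]^{\star}_{\nevrs}|>|W_{\bbG(\F_q)}(\bbS)|$ rather than the factor $2$ in \eqref{ineq:Henniart-evrs} (the factor $2$ is needed elsewhere, in the Cauchy--Schwarz step of Theorem \ref{thm:Henniart}). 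A further merit of your write-up is that you make explicit the lifting step from $\bbS(\F_q)_{\evrs}$ back to $S_{\evrs}$ via Lemma \ref{lem:CO21-5.1-5.2}, which the paper's proof leaves implicit. The only cosmetic caveat is that you silently repurpose the notation $\langle-,-\rangle^{\star}$ for an average over $[\bbS]^{\star}$ rather than over $[\bbG]^{\star}$ as in Section \ref{subsec:char-finite-gen-pos}; since your computation is self-contained this is harmless, but it deserves a word of warning in the text.
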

    
\begin{proof}
    Since we have $W_{G^{0}}(\bfS)=W_{\bbG}(\bbS)$ (\cite[Lemma 3.2.2]{Kal19-sc}), the averaged sum is nothing but the character of the disconnected Deligne--Lusztig virtual representation $R_{\bbS}^{\bbG}(\phi_{-1})$ at $\gamma\in \bbS(\F_{q})_{\evrs}$ by Corollary \ref{cor:Kaletha-CF-reg}.
Thus the claim follows from Lemma \ref{lem:Henniart}.
\end{proof}

We are now ready to prove the theorem.

\begin{proof}[Proof of Theorem \ref{thm:regsc-characterization}]
    We know by Corollary \ref{cor:regsc} that the regular supercuspidal representation $\pi_{(\bfS,\theta)}^{\FKS}$ satisfies \eqref{eq:vreg id}. We now prove that this is in fact the unique irreducible supercuspidal representation satisfying \eqref{eq:vreg id}.

    Let $\pi$ be an irreducible supercuspidal representation satisfying \eqref{eq:vreg id}.
    By Fintzen's exhaustion theorem (Theorem \ref{thm:Fintzen}), our assumption that $p\nmid|W_{\bfG}|$ implies that $\pi$ is a tame supercuspidal representation of $G$.
    Let $\Psi'=(\vec{\bfG}',\vec{\phi}',\vec{r}',\x',\rho'_{0})$ be a Yu-datum satisfying $\pi^{\FKS}_{\Psi'}\cong\pi$.
    By \eqref{eq:vreg id} and Corollary \ref{cor:regsc}, there exists a constant $c \in \bbC^1$ such that
    \begin{equation}\label{eq:regsc compare}
        \Theta_\pi(\gamma) = c \cdot \Theta_{\pi_{(\bfS,\theta)}^{\FKS}}(\gamma)
    \end{equation}
    for any $\gamma\in G^{0}_{\bar{\x},\evrs}$. In particular \eqref{eq:regsc compare} holds for all $\gamma \in S_{\evrs}$. By the assumption \eqref{ineq:Henniart-evrs} together Lemma \ref{lem:nonvan}, we know there exists an element $\gamma \in S_{\evrs}$ such that $\rho_0(\gamma) \neq 0$. Hence by Proposition \ref{prop:clipped characterization}, we have that the clipped Yu-data $\dashover{\Psi}$ and $\dashover{\Psi}'$ are $\bfG$-equivalent. Thus we may assume that $\Psi'=(\vec \bfG, \vec \phi, \vec r, \x, \rho_0')$. Furthermore, by Proposition \ref{prop:reduction depth 0}, we have
    \begin{equation}\label{eq:rho0 rho0'}
        \Theta_{\rho_0'}(\gamma) = c \cdot \Theta_{\rho_0}(\gamma), \qquad \text{for all $\gamma \in \bbG'(\FF_q)_\evrs$}.
    \end{equation}
    Since $\Psi$ is a regular Yu-datum, we know that $\rho_0 \cong (-1)^{r(\bbS^{\circ})-r(\bbG^{\circ})}R_{\bbS}^{\bbG}(\phi_{-1})$ for some character $\phi_{-1}$ of $\bbS(\FF_q)$ in general position, where $R_{\bbS}^{\bbG}(\phi_{-1})$ is the virtual $\bbG(\FF_q)$-representation defined in Section \ref{subsec:disconn}. Finally, Theorem \ref{thm:Henniart} implies that $c = (-1)^{r(\bbS^{\circ})-r(\bbG^{\circ})}$ and $\rho_0' \cong \rho_0$ under the assumption \eqref{ineq:Henniart-evrs}. This completes the proof.
\end{proof}

In the proof of Theorem \ref{thm:regsc-characterization}, the assumption \eqref{ineq:Henniart-evrs} is needed in two places: to ensure the nonvanishing assertion in the hypothesis of Proposition \ref{prop:clipped characterization} and to invoke Theorem \ref{thm:Henniart} after \eqref{eq:rho0 rho0'}.
Following \cite[Definition 3.7]{CO21}, we say that a tame elliptic regular pair $(\bfS,\theta)$ is \textit{toral} if for a(ny) Yu-datum $\Psi = (\vec \bfG, \vec \phi, \vec r, \x, \rho_0)$ corresponding to $(\bfS,\theta)$, we have that $\bfG^0 = \bfS$.
When $(\bfS,\theta)$ is toral, the Weyl group $W_{\bbG(\F_{q})}(\bbS)$ is trivial, hence the inequality \eqref{ineq:Henniart-evsrs} is given by
\[
\frac{|[\bbS]^{\star}|}{|[\bbS]^{\star}_{\nevrs}|}
=
\frac{|[\bbS]^{\star}|}{|[\bbS]^{\star} \smallsetminus [\bbS]^{\star}_{\evrs}|} > 2.
\]
Note that this inequality implies the following (see \cite[Corollary 5.5]{CO21}):
\[\label{ineq:toral-Henniart-evrs}
\text{$[\bbS]^{\star}_{\evrs}$ generates $[\bbS]^{\star}$ as a group.}
\tag{$\mathfrak{T}_{\evrs}$}
\]
The proof of Theorem \ref{thm:regsc-characterization} can be refined to yield a stronger result.

\begin{theorem}\label{thm:toral-characterization}
Assume that $(\bfS,\theta)$ is toral and that \eqref{ineq:toral-Henniart-evrs} is satisfied.
Then there exists a unique irreducible supercuspidal representation $\pi$ such that there exists a constant $c \in \bbC^1$ for which \eqref{eq:vreg id} holds for any $\gamma \in S_{\evrs}$. 
Moreover, such $\pi$ is given by the toral supercuspidal representation $\pi_{(\bfS,\theta)}^{\FKS}$.
\end{theorem}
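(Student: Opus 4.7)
The proof mirrors that of Theorem \ref{thm:regsc-characterization}, but exploits the toral structure in two essential ways: to verify the nonvanishing hypothesis of Proposition \ref{prop:clipped characterization} automatically, and to replace the appeal to Theorem \ref{thm:Henniart} with a direct torus-level argument valid under the weaker hypothesis \eqref{ineq:toral-Henniart-evrs}. First, Corollary \ref{cor:regsc} gives that $\pi_{(\bfS,\theta)}^{\FKS}$ satisfies \eqref{eq:vreg id} with $c = e(\bfG)\cdot\varepsilon_L(\bfT_{\bfG^{\ast}} - \bfS)$. Let $\pi$ now be any irreducible supercuspidal satisfying \eqref{eq:vreg id}. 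By Theorem \ref{thm:Fintzen}, $\pi \cong \pi_{\Psi'}^{\FKS}$ for some Yu-datum $\Psi' = (\vec{\bfG}', \vec{\phi}', \vec{r}', \x', \rho_0')$.

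The torality assumption $\bfG^0 = \bfS$ forces $\bbG^\circ = \bbS^\circ$, so the depth zero part of $\Psi$ is simply $\rho_0 = \phi_{-1}$, a character of $\bbS(\FF_q)$ which is therefore nowhere zero on $S_{\evrs}$. The nonvanishing hypothesis of Proposition \ref{prop:clipped characterization} is thus automatic, and we conclude that $\dashover{\Psi}$ and $\dashover{\Psi}'$ are $\bfG$-equivalent. After absorbing this equivalence, we may assume $\Psi' = (\vec{\bfG}, \vec{\phi}, \vec{r}, \x, \phi_{-1}')$ for some character $\phi_{-1}'$ of $\bbS(\FF_q)$. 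Proposition \ref{prop:reduction depth 0} then yields some $c'' \in \bbC^1$ such that $\phi_{-1}'(\gamma) = c'' \phi_{-1}(\gamma)$ for all $\gamma \in \bbS(\FF_q)_{\evrs}$, so the character $\chi := \phi_{-1}' \phi_{-1}^{-1}$ takes the constant value $c''$ on $\bbS(\FF_q)_{\evrs}$. The $\bbZ^\star_\bbG(\FF_q)$-invariance of $\bbS(\FF_q)_{\evrs}$ forces $\chi$ to be trivial on $\bbZ^\star_\bbG(\FF_q)$, so $\chi$ descends to a character $\bar\chi$ of $[\bbS]^\star$ which is constant on $[\bbS]^\star_{\evrs}$.

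By Proposition \ref{prop:DL-packet}, the desired conclusion $\pi \cong \pi_{(\bfS,\theta)}^{\FKS}$ is equivalent to $(\bfS, \theta\chi)$ being $G$-conjugate to $(\bfS,\theta)$, and this final step is where the toral argument diverges from the regular case. Since $\bbS(\FF_q)_{\evrs}$ is additionally $W_G(\bfS)$-stable, for any $w \in W_G(\bfS)$ the character $\bar\chi^w$ is also constant with the same value $c''$ on $[\bbS]^\star_{\evrs}$, so $\bar\chi/\bar\chi^w$ is trivial on $[\bbS]^\star_{\evrs}$ and hence on all of $[\bbS]^\star$ by \eqref{ineq:toral-Henniart-evrs}; thus $\bar\chi$ is $W_G(\bfS)$-invariant. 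The final technical task, which mirrors the argument in the unramified toral case treated in \cite[Section 9]{CO21}, is to deduce from this $W_G(\bfS)$-invariance together with the regularity of both $\phi_{-1}$ and $\phi_{-1}\chi$ that $\theta\chi$ is $W_G(\bfS)$-conjugate to $\theta$. This direct torus-level analysis is the main obstacle, and it is precisely what allows the weaker hypothesis \eqref{ineq:toral-Henniart-evrs} to suffice in place of the stronger \eqref{ineq:Henniart-evrs} required in Theorem \ref{thm:regsc-characterization}.
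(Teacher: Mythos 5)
Your proposal tracks the paper's proof up through the reduction to depth zero: existence via Corollary \ref{cor:regsc}, exhaustion via Theorem \ref{thm:Fintzen}, the observation that torality makes $\rho_0=\phi_{-1}$ a nowhere-vanishing character of $\bbS(\FF_q)$ so that the hypothesis of Proposition \ref{prop:clipped characterization} is automatic, and the conclusion from Proposition \ref{prop:reduction depth 0} that $\chi\colonequals\phi_{-1}'\phi_{-1}^{-1}$ is constant on $\bbS(\FF_q)_{\evrs}$ and descends to a character $\bar\chi$ of $[\bbS]^{\star}$ constant on $[\bbS]^{\star}_{\evrs}$. All of this matches the paper.

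The final step is where there is a genuine gap. The paper finishes directly: in the toral case $\bbG(\FF_q)=\bbG'(\FF_q)=\bbS(\FF_q)$, so $\rho_0$ and $\rho_0'$ are one-dimensional, and the hypothesis \eqref{ineq:toral-Henniart-evrs} applied to the relation $\rho_0'=c\cdot\rho_0$ on $\bbS(\FF_q)_{\evrs}$ forces $\rho_0'=\rho_0$ and $c=1$ (this is precisely what the generation statement, imported from \cite[Corollary 5.5]{CO21}, is for); then $\Psi'=\Psi$ and uniqueness follows from Theorem \ref{thm:HM}. You instead reroute through Proposition \ref{prop:DL-packet}, reducing the claim to showing that $(\bfS,\theta\chi)$ is $G$-conjugate to $(\bfS,\theta)$, and you leave the decisive deduction --- that $\theta\chi$ is $W_{G}(\bfS)$-conjugate to $\theta$ --- as an unproven ``technical task.'' This deduction does not follow from what you have established: knowing that $\bar\chi$ is $W_{G}(\bfS)$-invariant gives no mechanism for $\theta\chi$ to land in the $W_{G}(\bfS)$-orbit of $\theta$; if anything, a nontrivial \emph{invariant} twist moves $\theta$ off its orbit uniformly. (The appeal to regularity of $\phi_{-1}$ is also empty here, since in the toral case $W_{G^{0}}(\bfS)$ is trivial and every character is regular.) The tool you need is already in your hands: you observe that $\bar\chi/\bar\chi^{w}$ is trivial on $[\bbS]^{\star}_{\evrs}$ and hence everywhere; the paper applies the same mechanism to $\bar\chi$ itself ($\bar\chi(st^{-1})=1$ for $s,t\in[\bbS]^{\star}_{\evrs}$, then \eqref{ineq:toral-Henniart-evrs}) to get $\bar\chi=1$ and $c=1$ outright, which makes the packet comparison and the conjugacy-of-pairs reformulation unnecessary.
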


\begin{proof}
The same proof as in Theorem \ref{thm:regsc-characterization} works with the modifications as follows.
We first note that $G^{0}_{\bar{\x},\evrs}=S_{\evrs}$ in this case.
The conclusion of Lemma \ref{lem:nonvan} obviously holds in this case since the Weyl group $W_{G^{0}}(\bfS)$ is trivial.
Hence, by using Propositions \ref{prop:clipped characterization} and \ref{prop:reduction depth 0}, we obtain \eqref{eq:rho0 rho0'}.
Note that $\bbG'(\F_{q})=\bbS(\F_{q})$, thus $\rho_{0}$ and $\rho'_{0}$ are one-dimensional characters.
Therefore the assumption \eqref{ineq:toral-Henniart-evrs} and \eqref{eq:rho0 rho0'} implies that $\rho_0'=\rho_0$ and $c = 1$.
\end{proof}

\begin{rem}\label{rem:toral-characterization}
It is worth noting that if $\bfS$ is a tamely and totally ramified (i.e., splits over a tamely and totally ramified extension of $F$) elliptic maximal torus of $\bfG$, then every tame elliptic regular pair $(\bfS,\theta)$ is automatically toral.
Indeed, since $\bfS$ is maximally unramified in $\bfG^{0}$, we have $\bfS=\Cent_{\bfG^{0}}(\bfS')$, where $\bfS'$ is the maximal unramified subtorus of $\bfS$ (\cite[Fact 3.4.1]{Kal19}).
As $\bfS'$ is trivial modulo center by the totally ramifiedness of $\bfS$, we have $\Cent_{\bfG^{0}}(\bfS')=\bfG^{0}$, thus $\bfS=\bfG^{0}$.
Therefore, when $\bfS$ is tamely and totally ramified, our characterization theorem for regular supercuspidals only requires the much weaker assumption \eqref{ineq:toral-Henniart-evrs}.
For example, if $\bfG = \GL_n$ and $\bfS$ is the totally ramified elliptic maximal torus, then we show in Section \ref{subsubsec:f=1} that \eqref{ineq:Henniart-evrs} is satisfied so long as $n > 2(n - \varphi(n))$, where $\varphi(n) = |(\bbZ/n\bbZ)^\times|$ denotes Euler's totient function. This inequality does not hold for all $n$, but it does for many; for example, it holds for all odd prime $n$, which recovers Henniart's characterization theorem in the ramified setting \cite[Section 8]{Hen93}.
On the other hand, we can check that the assumption \eqref{ineq:toral-Henniart-evrs} always holds.
\end{rem}

\begin{rem}\label{rem:strong toral characterization}
    Note that Theorem \ref{thm:toral-characterization} strictly subsumes \cite[Theorem 9.1]{CO21}. Indeed, Theorem \ref{thm:toral-characterization} says that there is a unique supercuspidal representation whose character on $S_{\evrs}$ is equal to $\pi_{(\bfS,\theta)}^{\FKS}$ for $\theta$ toral, whereas \cite[Theorem 9.1]{CO21} only says that there is a unique \textit{regular} supercuspidal representation whose character on $S_{\evrs}$ is equal to $\pi_{(\bfS,\theta)}^{\FKS}$ for $\theta$ toral \textit{and $\bfS$ unramified}. 
\end{rem}

\subsection{Unipotent supercuspidal representations}\label{subsec:unipsc-characterization}

Let us proceed with the notations as in Section \ref{subsec:regsc-characterization}.
We next consider the inequality \eqref{ineq:Lusztig-bullet} in Section \ref{subsec:Lusztig-E}:
\begin{equation*}\label{ineq:Lusztig-evrs}
\frac{|[\bbS]^{\star}|}{|[\bbS]^{\star}_{\nevrs}|}
=
\frac{|[\bbS]^{\star}|}{|[\bbS]^{\star} \smallsetminus [\bbS]^{\star}_{\evrs}|}
>
2^{2|W_{\bbG}|\cdot i(\bbS)-1}.
\tag{$\mathfrak{L}_{\evrs}$}
\end{equation*}

\begin{lemma}\label{lem:strong Henniart sat max unram}
    There exists a constant $C$ depending only on the absolute rank of $\bfG$ such that the inequality \eqref{ineq:Lusztig-evrs} is satisfied for every maximally unramified elliptic maximal torus $\bfS$ of $\bfG$ when $q > C$.
\end{lemma}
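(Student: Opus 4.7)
The plan is to show that the right-hand side of \eqref{ineq:Lusztig-evrs} admits an upper bound depending only on $\rank \bfG$, while the left-hand side grows linearly in $q$ with an implicit constant again depending only on $\rank \bfG$. Uniformity over $\bfS$ will follow because the combinatorial invariants of $\bbG$ and its elliptic maximal tori (Weyl group order, component group order, number of roots) are controlled by $\rank \bfG$.

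First, I would bound the right-hand side: $|W_\bbG|$ is at most $|W_\bfG|$, and the quotient $|[\bbG]|/|[\bbG^\circ]|$ equals $[\bbG(\F_q) : \bbG'(\F_q)]$, which is bounded by $[\bbG : \bbG^\circ \bbZ_\bbG]$ and hence by a function of $\rank \bfG$ via the structure of the stabilizer of a vertex in the Bruhat--Tits building. Thus the exponent $2|W_\bbG| \cdot |[\bbG]|/|[\bbG^\circ]| - 1$ is bounded by some $N = N(\rank \bfG)$, and the right-hand side is at most $M \colonequals 2^N$.

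Next, I would estimate the left-hand side in terms of $q$. Since $\bbS^\circ$ is an elliptic maximal torus of the connected reductive group $\bbG^\circ$ over $\F_q$, its order $|\bbS^\circ(\F_q)|$ is a polynomial in $q$ of degree $d = \rank \bbS^\circ$ with leading coefficient $1$. After quotienting out by $\bbZ_\bbG^\star(\F_q)$ (also polynomial in $q$), this gives $|[\bbS]^\star| \geq c_1 q^{d'}$ for large $q$, where $d' \colonequals d - \rank \bbZ_\bbG^\star$ and $c_1 > 0$ is uniform in $\bfS$. For the upper bound on $|[\bbS]^\star_{\nevrs}|$, the key observation is that, by the discussion in Section \ref{subsec:ur-vreg}, an element $\bar s \in \bbS(\F_q)$ fails to be elliptic very regular precisely when a lift $s \in SG^0_{\x,0}$ satisfies $\alpha(s) \equiv 1 \pmod{\mfp_{\bar F}}$ for some root $\alpha \in R(\bfG, \bfS)$; equivalently, $\bar s$ lies on the kernel of the reduction $\bar\alpha$ of some such root. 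Each such kernel defines a proper closed subscheme of $\bbS^\circ$, so its $\F_q$-points number at most $O(q^{d-1})$, and the number of roots is bounded by $|R(\bfG, \bfS)| \leq |W_\bfG|$. This yields $|[\bbS]^\star_{\nevrs}| \leq c_2 q^{d'-1}$ with $c_2$ depending only on $\rank \bfG$.

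Combining these estimates gives $|[\bbS]^\star|/|[\bbS]^\star_{\nevrs}| \geq (c_1/c_2) q$, which exceeds $M$ as soon as $q > Mc_2/c_1$, producing the desired constant $C$ depending only on $\rank \bfG$. The main technical point is not any single estimate in isolation but rather ensuring uniformity of the constants $c_1, c_2$ over all maximally unramified elliptic tori $\bfS$ of $\bfG$. This is where the boundedness of the combinatorial data of $\bbG$ and of $\bbS^\circ \subset \bbG^\circ$ in terms of $\rank \bfG$ becomes essential: for fixed absolute rank, both the number of possible reductive quotients $\bbG^\circ$ that can arise from vertex stabilizers and the corresponding invariants (Weyl group, root count, component group) are controlled uniformly.
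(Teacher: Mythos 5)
Your overall strategy coincides with the paper's (which defers to \cite[Lemmas 5.6, 5.7 and Proposition 5.8]{CO21}): bound the exponent on the right of \eqref{ineq:Lusztig-evrs} by a constant depending only on the absolute rank, and show the left-hand side is bounded below by a polynomial in $q$ of positive degree with coefficients controlled uniformly in $\bfS$. The bookkeeping in your first and third paragraphs is fine (modulo the harmless imprecision that $|[\bbG]|/|[\bbG^\circ]|$ is $[\,[\bbG]:[\bbG^\circ]\,]$ rather than $[\bbG(\F_q):\bbG'(\F_q)]$, and that you should also note the degree $d'$ is at least $1$ because $\bbG^\circ$ has positive semisimple rank at a vertex).

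The genuine gap is the sentence ``Each such kernel defines a proper closed subscheme of $\bbS^\circ$.'' This is precisely the point where the hypothesis that $\bfS$ is \emph{maximally unramified} must be used, and it is the entire mathematical content of the lemma; as stated, your argument would apply verbatim to any tame elliptic maximal torus, whereas Section \ref{subsec:intro failures} shows the conclusion is then false (e.g.\ ramified tori in $\SL_n$ have \emph{no} very regular elements). Concretely: a root $\alpha \in R(\bfG,\bfS)$ need not induce a \emph{nontrivial} character of the special fiber $\bbS^\circ$ of the connected N\'eron model --- the rank of $\bbS^\circ$ is only the rank of the maximal unramified subtorus $\bfS'$, so $\alpha$ can restrict trivially to $\bfS'$, in which case the locus where $\alpha(\gamma)\equiv 1 \pmod{\mfp_{\ol F}}$ can be an entire coset of $\bbS^\circ(\F_q)$ rather than a codimension-one subset, and your $O(q^{d-1})$ count fails. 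What rescues the argument for maximally unramified $\bfS$ is Kaletha's result \cite[Lemma 3.4.12(1)]{Kal19} (this is exactly the input the paper's proof singles out): it guarantees that for such tori the failure of very regularity of a topologically semisimple element is detected by the \emph{nontrivial} restricted characters $\bar\alpha$ of $\bbS^\circ$, so that each bad locus really is a coset of the kernel of a nontrivial character and hence has at most $O(q^{d-1})$ points. You need to invoke this (or reprove it) to justify the properness claim; once that is in place, the rest of your estimate goes through.
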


\begin{proof}
If we can show the right-hand side of the inequality \eqref{ineq:Lusztig-evrs} can be bounded above by a constant depending only on the absolute rank of $\bfG$ (especially, independent of $q$), then the proof of \cite[Lemma 5.6]{CO21} holds even after relaxing the unramifiedness condition on $\bfS$ to maximally unramifiedness because Kaletha's results (especially \cite[Lemma 3.4.12 (1)]{Kal19}) still hold.
Applying then \cite[Lemma 5.7]{CO21} and the proof strategy of Proposition 5.8 of \textit{op.\ cit.}, we can find a constant $C$ satisfying the desired condition.

We first note that $i(\bbS)\, (:=[\bbS(\F_{q}):\bbS^{\circ}(\F_{q})\bbZ_{\bbG}^{\star}(\F_{q})])$ is not greater than $[\bbG(\F_{q}):\bbG^{\circ}(\F_{q})\bbZ_{\bbG}^{\star}(\F_{q})]$.
By putting $\bbG^{\star}:=\bbG^{\circ}\bbZ_{\bbG}^{\star}$, we have
\begin{align*}
[\bbG(\F_{q}):\bbG^{\circ}(\F_{q})\bbZ_{\bbG}^{\star}(\F_{q})]
&=
[\bbG(\F_{q}):\bbG^{\star}(\F_{q})]\cdot[\bbG^{\star}(\F_{q}):\bbG^{\circ}(\F_{q})\bbZ_{\bbG}^{\star}(\F_{q})]\\
&\leq
[\bbG:\bbG^{\star}]\cdot[\bbG^{\star}(\F_{q}):\bbG^{\circ}(\F_{q})\bbZ_{\bbG}^{\star}(\F_{q})].
\end{align*}
Hence it is enough to evaluate $[\bbG^{\star}(\F_{q}):\bbG^{\circ}(\F_{q})\bbZ_{\bbG}^{\star}(\F_{q})]$.
For any element $g^{\star}\in\bbG^{\star}(\F_{q})$, we write $g^{\star}=gz$ with $g\in\bbG^{\circ}(\F_{q}), z\in\bbZ_{\bbG}^{\star}(\F_{q})$ and define a $1$-cocycle $\Gal(\overline{\F}_{q}/\F_{q})\rightarrow \bbG^{\circ}\cap\bbZ_{\bbG}^{\star}$ by $\sigma\mapsto z^{-1}\sigma(z)$.
We claim that this defines an injective map
\[
\bbG^{\star}(\F_{q})/\bbG^{\circ}(\F_{q})\bbZ_{\bbG}^{\star}(\F_{q})
\hookrightarrow
H^{1}(\F_{q},\bbG^{\circ}\cap\bbZ_{\bbG}^{\star}).
\]
Indeed, suppose that two elements $g^{\star}_{1}=g_{1}z_{1}$ and $g^{\star}_{2}=g_{2}z_{2}$ give rise to $1$-cocycles which are $1$-cohomologous, i.e., there exists $z\in \bbG^{\circ}\cap\bbZ_{\bbG}^{\star}$ satisfying $z_{1}^{-1}\sigma(z_{1})=z^{-1}z_{2}^{-1}\sigma(z_{2})\sigma(z)$.
Then $z_{1}^{-1}zz_{2}\in\bbZ_{\bbG}^{\star}$ is $\F_{q}$-rational.
By noting that $g^{\star}_{1}$ and $g^{\star}_{2}$ are $\F_{q}$-rational, we see that $g_{1}^{-1}g_{2}z^{-1}\in\bbG^{\circ}$ is also $\F_{q}$-rational.
Hence we get
\[
g_{2}^{\star}
=g_{2}z_{2}
=g_{1}z_{1}\cdot g_{1}^{-1}g_{2}z^{-1} \cdot z_{1}^{-1}zz_{2}
\equiv g_{1}z_{1}
=g_{1}^{\star}
\mod{\bbG^{\circ}(\F_{q})\bbZ_{\bbG}^{\star}(\F_{q})}.
\]
Since $|H^{1}(\F_{q},\bbG^{\circ}\cap\bbZ_{\bbG}^{\star})|$ is determined only by $\bbG$ and $\bbZ_{\bbG}^{\star}$, we get the assertion.
\end{proof}

\begin{theorem}\label{thm:p-adic unipotent}
    Assume $q$ is larger than the constant $C$ as in Lemma \ref{lem:strong Henniart sat max unram}. Then an irreducible supercuspidal representation $\pi$ of $G$ is unipotent in the sense of Definition \ref{def:ss and unip} if and only if the following two conditions hold:
    \begin{enumerate}[label=(\roman*)]
        \item $\Theta_\pi|_{S_{0,\evrs}}$ is constant for every maximally unramified elliptic maximal torus $\bfS$ of $\bfG$, and
        \item $\Theta_\pi|_{S_{0,\evrs}} \neq 0$ for some maximally unramified elliptic maximal torus $\bfS$ of $\bfG$.
    \end{enumerate}
\end{theorem}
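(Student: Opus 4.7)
The plan is to reduce both directions to the finite-field characterization of cuspidal unipotent representations (Corollary \ref{cor:ff unipotent cuspidal}). The hypothesis $q > C$ from Lemma \ref{lem:strong Henniart sat max unram} ensures the inequality \eqref{ineq:Lusztig-evrs} holds for every maximally unramified elliptic maximal torus of $\bfG$, which is exactly what is needed to invoke that corollary. For the ``only if'' direction, let $\pi = \pi_\Psi^{\FKS}$ be unipotent, so $\Psi = ((\bfG),(\mathbbm{1}),(0),\x_0,\rho_0)$ with $\rho_0$ a cuspidal unipotent representation of $\bbG(\F_q)$. Since $\vec{\bfG}$ has length one, the character formula (Theorem \ref{thm:CF}) collapses to $\Theta_\pi(\gamma) = \Theta_{\rho_0}(\bar\gamma)$ whenever $\gamma \in G$ is elliptic very regular and $G$-conjugate into $G_{\bar{\x}_0}$, and vanishes otherwise. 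Corollary \ref{cor:ff unipotent cuspidal} then yields (i). For (ii), unipotency of $\rho_0$ provides an elliptic maximal torus $\bbS_0^\circ$ of $\bbG^\circ$ with $\langle \rho_0, R_{\bbS_0}^{\bbG}(\mathbbm{1}) \rangle \neq 0$; Proposition \ref{prop:rho ss} combined with Lemma \ref{lem:not-geom-conj} shows this intertwining number equals the constant value of $\Theta_{\rho_0}$ on $\bbS_0(\F_q)_{\evrs}$, which is nonempty by the $q$-bound. Lifting $\bbS_0^\circ$ via Proposition \ref{prop:DeBacker-Kaletha} yields (ii).

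For the ``if'' direction, suppose $\pi$ satisfies (i) and (ii). By Fintzen's exhaustion (Theorem \ref{thm:Fintzen}), $\pi = \pi_\Psi^{\FKS}$ for a Yu-datum $\Psi = (\vec{\bfG}, \vec{\phi}, \vec{r}, \x, \rho_0)$. Using (ii), pick $\bfS_0$ maximally unramified elliptic in $\bfG$ with $\Theta_\pi|_{\bfS_0(F)_{\evrs}} \neq 0$; Proposition \ref{prop:CF1} forces $\bfS_0$ to be $G$-conjugate to a torus of $G^0_{\bar{\x}}$, so after $G$-conjugating $\Psi$ we may assume $\bfS_0 \subset \bfG^0$ with $\x$ associated, and a rank comparison then shows $\bfS_0$ is maximally unramified in $\bfG^0$ as well. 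The heart of the argument is deducing $\vec{\bfG} = (\bfG)$: combining condition (i) at $\bfS_0$ with $\bfS_0(F)_{0+}$-translation invariance (from Lemma \ref{lem:tran-translation} and the depth zero nature of $\rho_0$) in the character formula produces, exactly as in the proofs of Propositions \ref{prop:clipped characterization} and \ref{prop:reduction depth 0}, the identity
\[
    \sum_{g \in G^0_{\bar{\x}} \backslash N_G(\bfS_0, G^0_{\bar{\x}})} \mathrm{coeff}_g(\gamma) \cdot \bigl( \phi_{\geq 0}^g|_{\bfS_0(F)_{0+}}(\gamma_+) - 1 \bigr) = 0
\]
for every $\gamma \in \bfS_0(F)_{\evrs}$ and $\gamma_+ \in \bfS_0(F)_{0+}$. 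Linear independence of characters of $\bfS_0(F)_{0+}$ combined with Proposition \ref{prop:Yu8.3} (distinct cosets give distinct twists $\phi_{\geq 0}^g|_{\bfS_0(F)_{0+}}$) forces either $\phi_{\geq 0}|_{\bfS_0(F)_{0+}} = \mathbbm{1}$---which by \cite[Lemma 3.6.3]{Kal19} forces $\vec{\bfG} = (\bfG)$ and $\vec{\phi}, \vec{r}$ trivial up to an absorbable depth zero $\phi_d$---or every $\mathrm{coeff}_g(\gamma)$ vanishes, contradicting (ii). After a refactorization, $\Psi$ takes the form $((\bfG),(\mathbbm{1}),(0),\x,\rho_0)$, so the character formula reduces to $\Theta_\pi(\gamma) = \Theta_{\rho_0}(\bar\gamma)$, and condition (i) (via Proposition \ref{prop:DeBacker-Kaletha}) translates to constancy of $\Theta_{\rho_0}$ on $\bbS(\F_q)_{\evrs}$ for every elliptic maximal torus $\bbS^\circ$ of $\bbG^\circ$. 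Corollary \ref{cor:ff unipotent cuspidal} then yields $\rho_0$ unipotent, hence $\Psi$ is unipotent.

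The main obstacle is the extraction $\vec{\bfG} = (\bfG)$ from condition (i) at a single torus $\bfS_0$: one must rule out the degenerate configuration in which every $\mathrm{coeff}_g(\gamma)$ vanishes while $\phi_{\geq 0}|_{\bfS_0(F)_{0+}}$ stays nontrivial (which would allow nontrivial twisted Levi data to survive undetected), and verify cleanly that the rank argument identifying $\bfS_0$ as maximally unramified in $\bfG^0$ goes through using the anisotropicity of $\bfZ_{\bfG^0}/\bfZ_{\bfG}$ built into the definition of a Yu-datum.
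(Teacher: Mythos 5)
Your proposal is correct and follows essentially the same route as the paper's proof: both directions rest on Theorem \ref{thm:CF}, the $S_{0+}$-translation trick paired against the trivial character to force $\phi_{\geq 0}|_{S_{0+}}=\mathbbm{1}$ (hence a refactorization to a depth zero datum), and Corollary \ref{cor:ff unipotent cuspidal} for the depth zero part. The only deviations are cosmetic --- you make explicit the conjugation of $\bfS_0$ into $G^{0}_{\bar{\x}}$, and your dichotomy should read ``some $\phi_{\geq 0}^{g}|_{S_{0+}}$ is trivial or $\sum_g \mathrm{coeff}_g(\gamma)=0$,'' which is exactly what pairing with $\mathbbm{1}$ gives and still contradicts (ii).
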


\begin{proof}
Let $\pi$ be an irreducible unipotent supercuspidal representation which corresponds to a Yu-datum $(\vec{\bfG}=(\bfG^{0}),\vec{\phi}=(\phi_{0}=\mathbbm{1}),\vec{r}=(r_{0}=0),\x,\rho_{0})$ under the modified construction of Fintzen--Kaletha--Spice. 
Let $\bfS$ be a maximally unramified elliptic maximal torus of $\bfG$.
Then, by Theorem \ref{thm:CF}, we have
\[
\Theta_{\pi}(\gamma)
=
\sum_{g \in G_{\bar{\x}}\backslash N_{G}(S,G_{\bar{\x}})}\Theta_{\rho_{0}}({}^{g}\gamma)
\]
for any $\gamma\in S_{0,\evrs}$.
Note that the index set is at most singleton by Lemma \ref{lem:Weyl}.
Thus the constancy and the non-vanishing (for some $\bfS$) follow from Corollary \ref{cor:ff unipotent cuspidal}.

Let $\pi$ be an irreducible supercuspidal representation satisfying the two conditions (i) and (ii).
By Fintzen's exhaustion theorem (Theorem \ref{thm:Fintzen}), our assumption that $p\nmid|W_{\bfG}|$ implies that $\pi$ is a tame supercuspidal representation.
Let $\pi=\pi_{\Psi}^{\FKS}$ for a Yu-datum $\Psi=(\vec{\bfG},\vec{\phi},\vec{r},\x,\rho_{0})$.
By the condition (ii), there exists a maximally unramified elliptic maximal torus $\bfS$ of $\bfG$ such that $\Theta_\pi(\gamma) \neq 0$ for some $\gamma \in S_{0,\evrs}$.
For any $\gamma_{0+}\in S_{0+}$, by applying Theorem \ref{thm:CF} to $\gamma$ and $\gamma\gamma_{0+}$, the condition (i) implies that
    \begin{align*}
        \sum_{g \in G_{\bar \x}^0 \backslash N_G(S,G_{\bar \x}^0)} &\Theta_{\rho_0}({}^g \gamma) \cdot \frac{\Delta_{\II}^{\abs,\bfG}[a_\Psi,\chi_\Psi'']({}^g \gamma)}{\Delta_{\II}^{\abs,\bfG^0}[a_\Psi,\chi_\Psi'']({}^g \gamma)} \cdot \phi_{\geq 0}({}^g \gamma) \cdot \phi_{\geq 0}^g(\gamma_{0+}) \\
        &= 
        \sum_{g \in G_{\bar \x}^0 \backslash N_G(S,G_{\bar \x}^0)} \Theta_{\rho_0}({}^g \gamma) \cdot \frac{\Delta_{\II}^{\abs,\bfG}[a_\Psi,\chi_\Psi'']({}^g \gamma)}{\Delta_{\II}^{\abs,\bfG^0}[a_\Psi,\chi_\Psi'']({}^g \gamma)} \cdot \phi_{\geq 0}({}^g \gamma) 
    \end{align*}
    for all $\gamma_{0+} \in S_{0+}$. We may regard this equality as a linear relation amongst the $S_{0+}$-characters $\{\phi_{\geq 0}^g|_{S_{0+}}\}_{g \in G_{\bar \x}^0 \backslash N_G(S,G_{\bar \x}^0)} \cup \{\mathbbm{1}\}$. Applying $\langle -, \mathbbm{1}\rangle_{S_{0+}}$ to this identity, we see that the right-hand side must be nonzero by assumption, which necessarily means that one of the terms $\langle \phi_{\geq 0}^g|_{S_{0+}}, \mathbbm{1}\rangle_{S_{0+}}$ must also be nonzero.
By replacing $\bfS$ with ${}^{g}\bfS$, we may suppose that $g=1$.
But now this implies that $\phi_{\geq 0}|_{S_{0+}}$ is the trivial character. 
By Proposition \ref{prop:refacto}, we see that $(\vec{\bfG},\vec{\phi}, \vec{r},\x)$ is a refactorization of $((\bfG^{0}),(\mathbbm{1}), (0),\x)$.
Thus, by replacing $\Psi$ with a $\bfG$-equivalent Yu-datum, we may assume that $\Psi=((\bfG^{0}=\bfG), (\phi_{0}=\mathbbm{1}), (r_{0}=0), \x, \rho_{0})$.
Now our remaining task is to show that $\rho_{0}$ is a unipotent cuspidal representation (recall Definition \ref{def:ss and unip}).

As discussed in the first paragraph of this proof, for each elliptic maximal torus $\bbS^{\circ}$ of $\bbG^{\circ}$ corresponding to a maximally unramified elliptic maximal torus $\bfS$ of $\bfG$ (Proposition \ref{prop:DeBacker-Kaletha}), we have
\[
\Theta_{\pi}(\gamma)
=
\Theta_{\rho_{0}}(\gamma)
\]
for any $\gamma\in S_{0,\evrs}$ (note that $g\in G_{\bar{\x}}\backslash N_{G}(S,G_{\bar{\x}})$ can be taken to be $1$ in this case).
Hence the condition (i) implies that $\Theta_{\rho_{0}}(\gamma)$ is constant for any $\gamma\in\bbS^{\circ}(\F_{q})_{\evrs}$.
By Corollary \ref{cor:ff unipotent cuspidal} (applied to the setting that $\bullet = \evrs$), we conclude that $\rho_{0}$ is unipotent.
\end{proof}

\subsection{Families of tame supercuspidal representations}

We prove the following analogue of Corollary \ref{cor:tilde-Z} for supercuspidal representations of $G$.

\begin{theorem}\label{thm:p-adic E}
Let $\Psi=(\vec{\bfG},\vec{\phi},\vec{r},\x,\rho_{0})$ be a Yu-datum and $\pi=\pi_{\Psi}^{\FKS}$ be the associated tame supercuspidal representation.
Assume that the inequality \eqref{ineq:Lusztig-evrs} is satisfied for every maximally unramified elliptic maximal torus of $\bfG^{0}$ with associated point $\x$.
For any irreducible supercuspidal representation $\pi'$ of $G$, 
\begin{equation*}
\Theta_\pi(\gamma) = \Theta_{\pi'}(\gamma) \qquad \text{for all $\gamma\in G^{0}_{\bar{\x},\evrs}$}
\end{equation*}
if and only if $\pi'\cong\pi_{\Psi'}^{\FKS}$ for a Yu-datum $\Psi'=(\vec{\bfG},\vec{\phi},\vec{r},\x,\rho'_{0})$ with $\tilde E(\rho_0) = \tilde E(\rho_0')$.
\end{theorem}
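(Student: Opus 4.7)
The plan is to reduce the claim to the finite-field characterization Corollary~\ref{cor:tilde-Z} by combining the character formula Theorem~\ref{thm:CF} with the clipping results of Propositions~\ref{prop:clipped characterization} and \ref{prop:reduction depth 0}. These auxiliary results were designed precisely to bridge the $p$-adic and finite-field sides of the story.

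For the backward direction, given $\Psi'=(\vec\bfG,\vec\phi,\vec r,\x,\rho_0')$ with $\tilde E(\rho_0)=\tilde E(\rho_0')$, the character formula of Theorem~\ref{thm:CF} expresses $\Theta_{\pi}(\gamma)$ and $\Theta_{\pi'}(\gamma)$ for $\gamma\in G^0_{\bar\x,\evrs}$ by identical formulas except for the appearance of $\Theta_{\rho_0}({}^g\gamma)$ versus $\Theta_{\rho_0'}({}^g\gamma)$ in the sum. To equate these, I would argue that each ${}^g\gamma$, being elliptic very regular in $G^0_{\bar\x}$, has its centralizer $\bfT_{{}^g\gamma}$ being $G^0_{\x,0+}$-conjugate to the maximally unramified torus $\bfS$ attached to $\Psi$ by Lemma~\ref{lem:CO21-5.1-5.2} (using that \eqref{ineq:Lusztig-evrs} guarantees $\bfS(\F_q)_{\evrs}\neq\varnothing$). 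In particular $\bfT_{{}^g\gamma}$ is itself maximally unramified, so the reduction of ${}^g\gamma$ lands in $\bbG'(\F_q)_{\evrs}$. Corollary~\ref{cor:tilde-Z}, applied under \eqref{ineq:Lusztig-evrs} (and its cuspidal refinement via Corollary~\ref{cor:unique-Z-rho-cuspidal}, since $\rho_0$ is cuspidal), then yields $\Theta_{\rho_0}({}^g\gamma)=\Theta_{\rho_0'}({}^g\gamma)$, as required.

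For the forward direction, I would first apply Fintzen's exhaustion (Theorem~\ref{thm:Fintzen}) to write $\pi'\cong\pi_{\Psi'}^{\FKS}$ for some Yu-datum $\Psi'$. Next, to invoke Proposition~\ref{prop:clipped characterization}, one must produce a tame elliptic maximal torus $\bfS$ of $\bfG^0$ with associated point $\x$ on which $\Theta_{\rho_0}|_{S_{\evrs}}\not\equiv 0$. By Corollary~\ref{cor:exhaustion} and Lemma~\ref{lem:cuspidal}, there is some triple $(\bbS,\theta,n)\in\tilde Z_{\rho_0}$ with $\bbS^\circ$ elliptic; via Proposition~\ref{prop:DeBacker-Kaletha}, $\bbS^\circ$ lifts to a maximally unramified elliptic maximal torus $\bfS\subset\bfG^0$ with associated point $\x$. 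If $\Theta_{\rho_0}$ were identically zero on $\bbG'(\F_q)_{\evrs}$, then the uniqueness in Theorem~\ref{thm:unique-Z-rho} (via \eqref{ineq:Lusztig-evrs}) would force $\tilde Z_{\rho_0}=\varnothing$, contradicting Corollary~\ref{cor:exhaustion}. Proposition~\ref{prop:clipped characterization} (with $c=1$) then gives $\bfG$-equivalence of the clipped data $\dashover\Psi,\dashover{\Psi}'$. By Theorem~\ref{thm:HM}, I may replace $\Psi'$ by a $\bfG$-equivalent Yu-datum of the form $(\vec\bfG,\vec\phi,\vec r,\x,\rho_0')$ without changing $\pi'$; Proposition~\ref{prop:reduction depth 0} then yields $\Theta_{\rho_0}=\Theta_{\rho_0'}$ on $\bbG'(\F_q)_{\evrs}$, and Corollary~\ref{cor:tilde-Z} concludes $\tilde E(\rho_0)=\tilde E(\rho_0')$.

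The main obstacle is the nonvanishing step: producing a torus $\bfS$ for which Proposition~\ref{prop:clipped characterization} applies crucially relies on \eqref{ineq:Lusztig-evrs} holding on \emph{every} maximally unramified elliptic maximal torus of $\bfG^0$ with associated point $\x$, since one cannot control in advance which torus supports $\tilde Z_{\rho_0}$.
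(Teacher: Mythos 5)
Your proof is correct and follows essentially the same route as the paper: Fintzen's exhaustion, a nonvanishing argument for $\Theta_{\rho_0}$ on the elliptic very regular locus, then Propositions~\ref{prop:clipped characterization} and \ref{prop:reduction depth 0} to reduce to depth zero, and finally Corollary~\ref{cor:tilde-Z}. The only (harmless) difference is in the nonvanishing step: you deduce it from the uniqueness in Theorem~\ref{thm:unique-Z-rho} (the empty set would otherwise qualify, contradicting Corollary~\ref{cor:exhaustion}), whereas the paper routes through the unipotent characterization Corollary~\ref{cor:ff unipotent cuspidal} together with Proposition~\ref{prop:rho ss}; both arguments use exactly the hypothesis \eqref{ineq:Lusztig-evrs}.
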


\begin{proof}
Let $\pi'$ be an irreducible supercuspidal representation of $G$ satisfying the assumption as in the statement.
Let $\Psi'=(\vec{\bfG}',\vec{\phi}',\vec{r}',\x',\rho'_{0})$ be a Yu-datum such that $\pi'\cong\pi_{\Psi'}^{\FKS}$ (we can always take such a $\Psi'$ by Fintzen's exhaustion theorem and our assumption on $p$).

We first show, using the assumption \eqref{ineq:Lusztig-evrs} that there exists an element $\overline{\gamma_0} \in \bbG'(\FF_q)_{\evrs}$ such that $\Theta_{\rho_0}(\overline{\gamma_0}) \neq 0$.
Indeed, if this were not true, then of course $\Theta_{\rho_0}|_{\bbS(\FF_q)_{\evrs}}$ would be constant for every elliptic maximal torus $\bbS^\circ$ of $\bbG^\circ$. 
Since $\rho_{0}$ is cuspidal by assumption, by Corollary \ref{cor:ff unipotent cuspidal}, we have that $\rho_{0}$ is unipotent. 
Let $(\bbS,\theta)$ be arbitrary such that $\langle \rho_0, R_{\bbS}^{\bbG}(\theta) \rangle \neq 0$ (such a pair exists by Corollary \ref{cor:exhaustion}). Then necessarily $\theta^{\circ} = \mathbbm{1}$ and $\bbS^\circ$ is elliptic in $\bbG^\circ$ (Lemmas \ref{lem:not-geom-conj} and \ref{lem:cuspidal}). But then there exists $s \in \bbG'(\FF_q)_{\evrs}$ such that $\Theta_{\rho_0}(s) \neq 0$ by Proposition \ref{prop:rho ss}, a contradiction.

We may now apply Proposition \ref{prop:clipped characterization} to obtain that $\dashover{\Psi}$ and $\dashover{\Psi}'$ are $\bfG$-equivalent. 
    By Proposition \ref{prop:reduction depth 0}, we have moreover that 
    \begin{equation*}
        \Theta_{\rho_0}(\gamma) = \Theta_{\rho_0'}(\gamma) \qquad \text{for any $\gamma \in \bbG'(\FF_q)_{\evrs}$}. 
    \end{equation*}
    Applying Corollary \ref{cor:tilde-Z}, we conclude that $\tilde E(\rho_0) = \tilde E(\rho_0')$.
\end{proof}

One takeaway from Theorem \ref{thm:p-adic E} is that even if $\bfG$ satisfies the strong hypothesis of Theorem \ref{thm:p-adic unipotent}, not every irreducible supercuspidal representation is distinguished by its character values on elliptic very regular elements. Moreover, in this setting, the reason for this failure is a depth zero phenomenon: Lusztig's map $\tilde E$ (Section \ref{subsec:Lusztig-E}) is not injective.

We end on a corollary of the work in this paper that particularly exemplifies this failure.

\begin{cor}\label{cor:non-singular failure}
Let $(\bfS,\theta)$ be a tame elliptic $k_{F}$-non-singular pair of $\bfG$ whose $\bfS$ is unramified.
Then for any irreducible representations $\pi, \pi' \in [\pi^{\FKS}_{(\bfS,\theta)}]$, 
\begin{equation*}
    \Theta_{\pi}(\gamma) = \Theta_{\pi'}(\gamma)
\end{equation*}
for any $\gamma\in G^{0}_{\bar{\x},\evrs}$.
In particular, such $\pi$ can be uniquely determined by its character values on $G^{0}_{\bar{\x},\evrs}$ \textit{only if} $\pi$ is regular supercuspidal.
\end{cor}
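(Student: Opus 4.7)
Recall from Section \ref{subsec:sssc} that the packet $[\pi^{\FKS}_{(\bfS,\theta)}]$ consists of the supercuspidal representations $\pi_{\Psi_i}^{\FKS}$ associated to Yu-data $\Psi_i=(\vec\bfG,\vec\phi,\vec r,\x,\rho_i)$ sharing a common clipped Yu-datum $\dashover\Psi$ (obtained from a Howe factorization of $(\bfS,\theta)$), where the $\rho_i$ range over the distinct irreducible constituents of $(-1)^{d(\bfS)} R_\bbS^\bbG(\phi_{-1})$. Applying Theorem \ref{thm:CF}, for any $\gamma \in G^0_{\bar{\x},\evrs}$ we have
\[
\Theta_{\pi_{\Psi_i}^{\FKS}}(\gamma) = C \cdot \sum_{g \in G^0_{\bar{\x}} \backslash N_G(T_\gamma, G^0_{\bar{\x}})} \Theta_{\rho_i}({}^g\gamma) \cdot D({}^g\gamma),
\]
where the constant $C \in \bbC^1$ and the function $D$ depend only on $\dashover\Psi$ and hence are common to all packet elements. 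It therefore suffices to show $\Theta_{\rho_i}({}^g\gamma) = \Theta_{\rho_j}({}^g\gamma)$ for all $i,j$ and all $g$ in the above index set.

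For $\gamma \in G^0_{\bar{\x},\evrs}$ and $g \in N_G(T_\gamma, G^0_{\bar{\x}})$, the element ${}^g\gamma$ is elliptic very regular and lies in $G^0_{\bar{\x}}$, with centralizer ${}^g\bfT_\gamma$ a maximally unramified elliptic maximal torus of $\bfG^0$. By Lemma \ref{lem:CO21-5.1-5.2} we may, after $G^0_{\x,0+}$-conjugation (which leaves $\rho_i,\rho_j$ invariant as depth-zero representations), assume ${}^g\gamma \in S'G^0_{\x,0}$ for some maximally unramified elliptic maximal torus $\bfS'$ of $\bfG^0$; then by Proposition \ref{prop:evreg-reduction} the reduction $\overline{{}^g\gamma}$ is elliptic regular semisimple in $\bbG'(\FF_q)$. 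The problem is thereby reduced to showing $\Theta_{\rho_i}(\bar s) = \Theta_{\rho_j}(\bar s)$ for every elliptic regular semisimple $\bar s \in \bbG'(\FF_q)$.

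By Proposition \ref{prop:rho ss}, for the unique maximal torus $\bbS_{\bar s}$ containing $\bar s$,
\[
\Theta_{\rho_i}(\bar s) = \sum_{\theta \in \bbS_{\bar s}(\FF_q)^\wedge} \theta(\bar s) \cdot \langle \rho_i, R_{\bbS_{\bar s}}^\bbG(\theta) \rangle,
\]
and by Lemma \ref{lem:not-geom-conj} only characters $\theta$ in the geometric conjugacy class of $\phi_{-1}$ contribute. After replacing $\bbS_{\bar s}$ by a $\bbG(\FF_q)$-conjugate we may take $\bbS_{\bar s} = \bbS$, and the contributing $\theta$ are the rational Weyl translates $\phi_{-1}^w$. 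The standard identity $R_\bbS^\bbG(\phi_{-1}^w) \cong R_\bbS^\bbG(\phi_{-1})$ then shows that $\langle \rho_i, R_\bbS^\bbG(\phi_{-1}^w) \rangle = (-1)^{d(\bfS)} m_i$ independently of $w$, where $m_i$ is the multiplicity of $\rho_i$ in $(-1)^{d(\bfS)} R_\bbS^\bbG(\phi_{-1})$. The crux of the argument is to show that $m_i = 1$ for all $i$ under the unramifiedness assumption on $\bfS$: by Remark \ref{rem:parametrization}, this reduces to showing that every irreducible representation of the stabilizer of $(\bbS,\phi_{-1})$ in $\bbG(\FF_q)$ whose restriction to $\bbS(\FF_q)$ is $\phi_{-1}$-isotypic is one-dimensional, which follows from the structure of the disconnected group $\bbG$ in the unramified case (and in particular recovers Lusztig's multiplicity-one theorem when $\bbG = \bbG^\circ$). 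Granting this, $\Theta_{\rho_i}(\bar s) = (-1)^{d(\bfS)} \sum_{w \in W/W(\phi_{-1})} \phi_{-1}^w(\bar s)$ is independent of $i$, which completes the proof of the first assertion.

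For the "only if" statement, if $\pi \in [\pi^{\FKS}_{(\bfS,\theta)}]$ is uniquely determined among all irreducible supercuspidals by its character on $G^0_{\bar{\x},\evrs}$, then by the first assertion the packet must be a singleton; by Remark \ref{rem:regular}, this occurs precisely when $(\bfS,\theta)$ is regular, in which case $\pi$ is regular supercuspidal. The main obstacle in the proof is the multiplicity-one claim $m_i = 1$ in the third paragraph: one must verify that the unramifiedness of $\bfS$ ensures the relevant stabilizer group has only one-dimensional irreducible components over $\phi_{-1}$, which requires a careful analysis of the component group structure of the reductive quotient $\bbG$ at the associated vertex $\bar\x$.
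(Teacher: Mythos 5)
Your overall strategy matches the paper's: reduce via Theorem \ref{thm:CF} to comparing the depth-zero constituents $\rho_i$, compute $\Theta_{\rho_i}$ at regular semisimple elements through Proposition \ref{prop:rho ss}, and conclude once the multiplicities $\langle \rho_i, R_{\bbS'}^{\bbG}(\phi_{-1}')\rangle$ are shown to be independent of $i$. But the two steps you either assert or explicitly leave open are exactly where the content lies. First, the claim that after conjugation ``the contributing $\theta$ are the rational Weyl translates $\phi_{-1}^{w}$'' does not follow from Lemma \ref{lem:not-geom-conj}: geometric conjugacy of $(\bbS_{\bar s},\theta)$ with $(\bbS,\phi_{-1})$ implies neither that $\bbS_{\bar s}$ is $\bbG(\FF_q)$-conjugate to $\bbS$ nor that $\theta$ is a rational Weyl translate of $\phi_{-1}$, so your sum in Proposition \ref{prop:rho ss} may contain terms you have not controlled. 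The paper closes this using non-singularity: $\phi_{-1}$, and hence any geometrically conjugate $\phi_{-1}'$, is non-singular, so $(-1)^{d(\bfS)}R_{\bbS}^{\bbG}(\phi_{-1})$ and $(-1)^{d(\bfS')}R_{\bbS'}^{\bbG}(\phi_{-1}')$ are genuine representations \cite[Corollary 9.9]{DL76}; if they share the constituent $\rho_i$ then $\langle R_{\bbS}^{\bbG}(\phi_{-1}), R_{\bbS'}^{\bbG}(\phi_{-1}')\rangle \neq 0$, and the scalar product formula (Corollary \ref{cor:scalar-prod}) then forces $(\bbS',\phi_{-1}')$ to be rationally conjugate to $(\bbS,\phi_{-1})$ with $R_{\bbS'}^{\bbG}(\phi_{-1}') \cong R_{\bbS}^{\bbG}(\phi_{-1})$.

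Second, the multiplicity-one claim $m_i=1$, which you flag as the main obstacle and do not establish, is where the unramifiedness hypothesis enters, and it has a short proof: unramifiedness of $\bfS$ gives $G^{0}_{\bar{\x}} = Z_{\bfG^{0}}G^{0}_{\x,0}$, hence $\bbG=\bbG'$, so $R_{\bbS}^{\bbG}(\phi_{-1})$ is just Kaletha's extension of $R_{\bbS^{\circ}}^{\bbG^{\circ}}(\phi_{-1}^{\circ})$ from $\bbG^{\circ}(\FF_q)$ to $\bbG'(\FF_q)=\bbG(\FF_q)$ (no induction step), and the extension of a multiplicity-free representation remains multiplicity-free; the connected case is Lusztig's theorem \cite{Lus88}. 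Your proposed detour through Remark \ref{rem:parametrization} and an analysis of the component group of the stabilizer of $(\bbS,\phi_{-1})$ is not needed. With these two points supplied, your argument coincides with the paper's; the ``only if'' statement then goes through as you describe.
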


\begin{proof}
Let $\Psi = (\vec \bfG, \vec \phi, \vec r, \x, \rho_0)$ be a Yu-datum such that $\pi \cong \pi_{\Psi}^{\FKS}$; by assumption, $\pi'\cong\pi_{\Psi'}^{\FKS}$ for $\Psi' = (\vec \bfG, \vec \phi, \vec r, \x, \rho_0')$ where $\rho_0$ and $\rho_0'$ are both irreducible summands of $(-1)^{d(\bfS)}R_{\bbS}^{\bbG}(\phi_{-1})$ for some $k_{F}$-non-singular character $\phi_{-1}$ of $S$ of depth zero. 
Since $\bfS$ is unramified, we have $G^{0}_{\bar{\x}} = Z_{\bfG^{0}}G^{0}_{\x,0}$. 
In particular, this implies $\bbG=\bbG'$.
It follows from Lusztig \cite{Lus88} (see also \cite[Theorem 2.3.1]{Kal19-sc}) that the irreducible decomposition of $(-1)^{d(\bfS)}R_{\bbS^{\circ}}^{\bbG^{\circ}}(\phi_{-1})$ is multiplicity-free, i.e., $(-1)^{d(\bfS)}R_{\bbS^{\circ}}^{\bbG^{\circ}}(\phi_{-1})=\bigoplus_{i=1}^{r}\rho_{i}$, where $\rho_{i}$'s are pairwise distinct irreducible representations of $\bbG^{\circ}(\F_{q})$.
Since $\bbG=\bbG'$, $(-1)^{d(\bfS)}R_{\bbS}^{\bbG}(\phi_{-1})=(-1)^{d(\bfS)}R_{\bbS}^{\bbG'}(\phi_{-1})$ is an extension of $(-1)^{d(\bfS)}R_{\bbS^{\circ}}^{\bbG^{\circ}}(\phi_{-1})$.
Thus $(-1)^{d(\bfS)}R_{\bbS}^{\bbG}(\phi_{-1})$ is also multiplicity-free.
In particular, we have $\langle \rho_0, R_{\bbS}^{\bbG}(\phi_{-1}) \rangle = \langle \rho_0', R_{\bbS}^{\bbG}(\phi_{-1}) \rangle$. 

By Theorem \ref{thm:CF} and Proposition \ref{prop:rho ss}, we see that to prove $\Theta_\pi|_{G_{\bar \x, \evrs}^0} = \Theta_{\pi'}|_{G_{\bar \x, \evrs}^0},$ it suffices to show that for any pair $(\bbS',\phi_{-1}')$, we have 
\begin{equation}\label{eq:rho rho'}
	\langle \rho_0, R_{\bbS'}^\bG(\phi_{-1}') \rangle = \langle \rho_0', R_{\bbS'}^\bbG(\phi_{-1}') \rangle.
\end{equation} 
To do this, by symmetry, it is enough to prove that if $(\bbS',\phi_{-1}')$ is such that $\langle \rho_0, R_{\bbS'}^{\bbG}(\phi_{-1}') \rangle \neq 0$, then \eqref{eq:rho rho'} holds; for the rest of the proof, let $(\bbS',\phi_{-1}')$ be any such pair.
The condition that $\langle \rho_0, R_{\bbS'}^\bbG(\phi_{-1}') \rangle \neq 0$ implies that $R_{\bbS}^\bbG(\phi_{-1})$ and $R_{\bbS'}^\bbG(\phi_{-1}')$ have the common irreducible constituent $\rho_{0}$.
Then Lemma \ref{lem:not-geom-conj} implies that $(\bbS,\phi_{-1})$ and $(\bbS',\phi'_{-1})$ are geometrically conjugate.
In particular, $\phi'_{-1}$ is also non-singular, hence $(-1)^{d(\bfS')} R_{\bbS'}^{\bbG}(\phi'_{-1})$ is a genuine representation.
Therefore, again noting that $R_{\bbS}^{\bbG}(\phi_{-1})$ and $R_{\bbS'}^{\bbG}(\phi'_{-1})$ have the common irreducible constituent $\rho_{0}$, we get $\langle R_{\bbS}^{\bbG}(\phi_{-1}),R_{\bbS'}^{\bbG}(\phi'_{-1})\rangle\neq0$.
By the scalar product formula (Corollary \ref{cor:scalar-prod}), this implies that $(\bbS,\phi_{-1}), (\bbS',\phi_{-1}')$ are $\bbG(\FF_q)$-conjugate and furthermore that $\langle R_{\bbS}^\bbG(\phi_{-1}), R_{\bbS}^\bbG(\phi_{-1}) \rangle = \langle R_{\bbS}^\bbG(\phi_{-1}), R_{\bbS'}^\bbG(\phi_{-1}') \rangle = \langle R_{\bbS'}^\bbG(\phi_{-1}'), R_{\bbS'}^\bbG(\phi_{-1}') \rangle$. Hence $R_{\bbS}^\bbG(\phi_{-1}) \cong R_{\bbS}^{\bbG}(\phi_{-1}')$, and \eqref{eq:rho rho'} follows.

In particular, the above argument shows that if $\pi$ is uniquely determined by its character values on $G_{\bar \x, \evrs}^0$, then necessarily $R_{\bbS}^\bbG(\phi_{-1})$ must be irreducible.
\end{proof}

\section{Application to the explicit local Jacquet--Langlands correspondence}\label{sec:LJLC}

In this section, we obtain an explicit characterization of the local Jacquet--Langlands correspondence for $L$-packets consisting of a single regular supercuspidal representation (Theorems \ref{thm:regsc-characterization-evsrs} and \ref{thm:LJLC}) in the flavor of our characterization theorem (Theorem \ref{thm:regsc-characterization}).
As proof of concept, we then show that this establishes a new instance of local Jacquet--Langlands transfers in the case of depth zero supercuspidal representations of $\SO_{2n+1}$ (Theorem \ref{thm:LJLC-SO}).

\subsection{Local Jacquet--Langlands correspondence}\label{subsec:LJLC}
Let us first briefly review the conjectural local Langlands correspondence.
For a connected reductive group $\bfG$ over $F$, we let $\Pi(\bfG)$ denote the set of equivalence classes of irreducible admissible representations of $G$ and $\Phi(\bfG)$ denote the set of $\hat{\bfG}$-conjugacy classes of $L$-parameters of $\bfG$, where $\hat{\bfG}$ is the Langlands dual group of $\bfG$ taken over $\C$.
\textit{The local Langlands correspondence for $\bfG$}, which is still conjectural in general, asserts that there exists a finite-to-one map
\[
\LLC_{\bfG}\colon\Pi(\bfG)\rightarrow\Phi(\bfG).
\]
In other words, it is conjectured that the set $\Pi(\bfG)$ is partitioned into the disjoint union of finite sets $\Pi^{\bfG}_{\phi}\colonequals \LLC_{\bfG}^{-1}(\phi)$ (called \textit{$L$-packets}) labelled by $L$-parameters $\phi\in\Phi(\bfG)$:
\[
\Pi(\bfG)
=
\bigsqcup_{\phi\in\Phi(\bfG)}\Pi_{\phi}^{\bfG}.
\]
Furthermore, each $L$-packet $\Pi_{\phi}^{\bfG}$ is expected to be equipped with a map $\iota\colon \Pi_{\phi}^{\bfG} \rightarrow\mathrm{Irr}(\mathcal{S}_{\phi})$ to the set of irreducible representations of a certain finite group $\mathcal{S}_{\phi}$ determined by the $L$-parameter $\phi$.
(We refer the reader to \cite{Art06} and \cite{Kal16-LLC} for details.)
The map $\LLC_{\bfG}$ is believed to be ``natural'' in the sense that it satisfies various nice properties.
Among such properties, we are especially interested in \textit{the standard endoscopic character relation} between inner forms.

To explain it, let us consider the quasi-split inner form $\bfG^{\ast}$ of $\bfG$ over $F$ realized by an inner twist $\psi\colon\bfG\rightarrow\bfG^{\ast}$.
Since the $L$-groups of $\bfG^{\ast}$ and $\bfG$ are the same, we may identify $\Phi(\bfG)$ with $\Phi(\bfG^{\ast})$.
Hence, if the local Langlands correspondence exists for both groups $\bfG$ and $\bfG^{\ast}$, we can associate an $L$-packet $\Pi^{\bfG}_{\phi}$ of $\bfG$ to any $L$-packet $\Pi^{\bfG^{\ast}}_{\phi}$ of $\bfG^{\ast}$.
We call this association $\Pi^{\bfG^{\ast}}_{\phi}\mapsto\Pi^{\bfG}_{\phi}$ \textit{the local Jacquet--Langlands correspondence between $\bfG$ and $\bfG^{\ast}$}.
In this situation, it is expected that the $L$-packets $\Pi^{\bfG}_{\phi}$ and $\Pi^{\bfG^{\ast}}_{\phi}$ satisfy the following identity:
\[\label{eq:JLCR}
e(\bfG)\sum_{\pi\in\Pi^{\bfG}_{\phi}}\dim\iota(\pi)\cdot\Theta_{\pi}(\gamma)
=
\sum_{\pi^{\ast}\in\Pi^{\bfG^{\ast}}_{\phi}}\dim\iota^{\ast}(\pi^{\ast})\cdot \Theta_{\pi^{\ast}}(\gamma^{\ast})
\tag{JLCR}
\]
for any strongly regular semisimple element $\gamma$ of $G$, where 
\begin{itemize}
\item
$e(\bfG)$ is the Kottwitz sign of $\bfG$, 
\item
$\iota$ (resp.\ $\iota^{\ast}$) is the map $\Pi_{\phi}^{\bfG}\rightarrow\mathrm{Irr}(\mathcal{S}_{\phi})$ (resp.\ $\Pi_{\phi}^{\bfG^{\ast}}\rightarrow\mathrm{Irr}(\mathcal{S}_{\phi})$) mentioned above,
\item
$\gamma^{\ast}$ is a(ny) strongly regular semisimple element of $G^{\ast}$ \textit{related to $\gamma$} in the sense that $\psi(\gamma)$ is conjugate to $\gamma^{\ast}$ in $\bfG^{\ast}$.
\end{itemize}
This is a special case of the standard endoscopic character relation.
(The quasi-split form $\bfG^{\ast}$ is the most special case of \textit{a standard endoscopic group of $\bfG$}.)
Let us call the identity \eqref{eq:JLCR} \textit{the Jacquet--Langlands character relation}.
Note that, by linear independence of the Harish-Chandra characters of irreducible admissible representations, the $L$-packet $\Pi^{\bfG}_{\phi}$ is characterized by the identity \eqref{eq:JLCR} as long as the $L$-packet $\Pi^{\bfG^{\ast}}_{\phi}$ is given.

\begin{rem}
\begin{enumerate}
\item
Recall that we say that strongly regular semisimple elements of $G$ (resp.\ $G^{\ast}$) are \textit{stably conjugate} if they are conjugate in $\bfG$ (resp.\ $\bfG^{\ast}$).
The identity \eqref{eq:JLCR} presupposes that the both sides are invariant under the stable conjugacy of $\gamma$ and $\gamma^{\ast}$.
This property is called \textit{the stability of $L$-packets}.
\item
The maps $\iota\colon\Pi^{\bfG}_{\phi}\rightarrow\mathrm{Irr}(\mathcal{S}_{\phi})$ and $\iota^{\ast}\colon\Pi^{\bfG^{\ast}}_{\phi}\rightarrow\mathrm{Irr}(\mathcal{S}_{\phi})$ are supposed to depend on the choice of \textit{a Whittaker datum of $\bfG^{\ast}$}.
Thus, precisely speaking, we must fix a Whittaker datum of $\bfG^{\ast}$ at the beginning.
However, later we will focus only on the case where the structure of an $L$-packet is trivial.
Thus we do not go into this matter in depth.
See \cite{Kal16-LLC} for the details.
\end{enumerate}
\end{rem}

In the following, we assume that we have the local Langlands correspondence for $\bfG$ and $\bfG^{\ast}$ satisfying the Jacquet--Langlands character relation \eqref{eq:JLCR} between $\bfG$ and $\bfG^{\ast}$.

\begin{example}\label{ex:LLC}
As explained in the beginning, our aim is to describe the local Jacquet--Langlands correspondence for regular supercuspidal representations explicitly.
In \cite{Kal19,Kal19-sc}, Kaletha established the local Langlands correspondence for non-singular supercuspidal representations.
In fact, it is essentially tautological to describe the local Jacquet--Langlands correspondence in the sense of Kaletha by looking at his construction.
Hence the problem we are interested in is only meaningful outside Kaletha's framework. 
We keep in mind the following cases, noting also that it is not obvious whether Kaletha's correspondence coincides with the ones listed below:
\begin{enumerate}
\item
When $\bfG^{\ast}=\GL_{n}$ and $\bfG$ is an inner form of $\bfG^{\ast}$, the local Langlands correspondence for $\bfG$ and $\bfG^{\ast}$ has been established by Harris--Taylor \cite{HT01} and Henniart \cite{Hen00}.
(The local Jacquet--Langlands correspondence had been established before the works \cite{HT01,Hen00} by Deligne--Kazhdan--Vign\'eras \cite{DKV84}.)
\item
When $\bfG^{\ast}$ is a quasi-split special orthogonal group or symplectic group, the local Langlands correspondence has been established by Arthur \cite{Art13}.
The case of inner forms of special orthogonal groups is treated in \cite{MR18}.
\item
When $\bfG^{\ast}$ is a quasi-split unitary group, the local Langlands correspondence has been established by Mok \cite{Mok15} based on the same method as in \cite{Art13}.
The case of inner forms is treated in \cite{KMSW14}.
\end{enumerate}
\end{example}

\subsection{Transfer of tame elliptic regular pairs}\label{subsec:transfer}
Let $\bfG$, $\bfG^{\ast}$, and $\psi$ be as in Section \ref{subsec:LJLC}.
Suppose that $(\bfS^{\ast},\theta^{\ast})$ is a tame elliptic extra regular pair of $\bfG^{\ast}$.
Since $\bfS^{\ast}$ is elliptic in $\bfG^{\ast}$, by replacing the inner twist $\psi\colon\bfG\rightarrow\bfG^{\ast}$ if necessary, we may assume that $\psi$ induces an $F$-rational isomorphism $\psi|_{\bfS}\colon\bfS\rightarrow\bfS^{\ast}$ between an elliptic maximal torus $\bfS$ of $\bfG$ and $\bfS^{\ast}$ (see \cite[Section 10]{Kot86} and also \cite[Section 3.2]{Kal19}).
We define a character $\theta\colon S\rightarrow\C^{\times}$ by $\theta\colonequals \theta^{\ast}\circ\psi|_{\bfS}$.

\begin{lem}\label{lem:TERP-transfer}
The pair $(\bfS,\theta)$ is a tame elliptic extra regular pair of $\bfG$.
\end{lem}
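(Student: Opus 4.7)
The plan is to transport each ingredient of the definition of a tame elliptic extra regular pair from $(\bfS^{\ast},\theta^{\ast})$ to $(\bfS,\theta)$ along the inner twist $\psi$. Since $\psi|_{\bfS}\colon\bfS\to\bfS^{\ast}$ is $F$-rational and $\bfS^{\ast}$ splits over a tamely ramified extension of $F$, the torus $\bfS$ is a tame elliptic maximal torus of $\bfG$. Thus the task reduces to checking (i) that the twisted Levi subgroup $\bfG^{0}\subset\bfG$ determined by the subset $R_{0+}\subset R(\bfG,\bfS)$ is an inner form of the corresponding $\bfG^{\ast 0}\subset\bfG^{\ast}$, (ii) that $\bfS$ is maximally unramified in $\bfG^{0}$, and (iii) that $\theta$ is extra regular in $\bfG^{0}$.

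The key input will be a Galois-equivariant bijection $R(\bfG,\bfS)\cong R(\bfG^{\ast},\bfS^{\ast})$ induced by the inner twist. Viewed over $\overline{F}$, the inner twist $\bar\psi\colon\bfG_{\overline{F}}\xrightarrow{\sim}\bfG^{\ast}_{\overline{F}}$ is an isomorphism carrying $\bfS_{\overline{F}}$ to $\bfS^{\ast}_{\overline{F}}$ (and $F$-rationally on $\bfS$). For any $\sigma\in\Gamma_{F}$, the automorphism $\sigma(\bar\psi)\circ\bar\psi^{-1}$ of $\bfG^{\ast}_{\overline{F}}$ is inner by definition of an inner twist, and because $\psi|_{\bfS}$ is $F$-rational it acts trivially on $\bfS^{\ast}_{\overline{F}}$. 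So the induced bijection of root systems is $\Gamma_{F}$-equivariant, and it also intertwines coroots. Combined with the identity $\theta=\theta^{\ast}\circ\psi|_{\bfS}$ on $F$-points, this bijection identifies the subsets $R_{0+}$ and $R_{0+}^{\ast}$; hence it sends $\bfG^{0}_{\overline{F}}$ to $\bfG^{\ast 0}_{\overline{F}}$, showing $\psi$ restricts to an inner twist $\bfG^{0}\to\bfG^{\ast 0}$.

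For maximal unramifiedness, I will use that an inner twist over a non-archimedean local field becomes an isomorphism over $F^{\ur}$, so $\bfG^{0}$ and $\bfG^{\ast 0}$ have equal $F^{\ur}$-split ranks. Since $\psi|_{\bfS}$ is $F$-rational it matches the maximal unramified subtorus $\bfS'$ of $\bfS$ with that $\bfS^{\ast\prime}$ of $\bfS^{\ast}$; and $\bfS^{\ast\prime}$ is a maximal $F^{\ur}$-split torus of $\bfG^{\ast 0}$, so $\bfS'$ is a maximal $F^{\ur}$-split torus of $\bfG^{0}$, proving (ii). For (iii), the bijection constructed above produces a $\Gamma_{F}$-equivariant isomorphism $W_{\bfG^{0}}(\bfS)\cong W_{\bfG^{\ast 0}}(\bfS^{\ast})$ of finite algebraic groups, under which $\theta|_{S_{0}}$ corresponds to $\theta^{\ast}|_{S^{\ast}_{0}}$; hence the two stabilizers in $F$-points coincide and one is trivial iff the other is.

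The main obstacle I anticipate is keeping the bookkeeping of Galois actions clean: specifically, verifying carefully that the root-system bijection induced by the inner twist commutes with the $\Gamma_{F}$-action despite $\bar\psi$ itself not being $F$-rational, and then propagating this equivariance through the comparison of $R_{0+}$-sets, of the twisted Levi subgroups, and of the relevant Weyl groups. Once these equivariance statements are in hand, the verification of each of the three conditions is essentially a transport-of-structure.
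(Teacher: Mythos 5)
Your proof is correct and follows essentially the same route as the paper: use the $F$-rationality of $\psi|_{\bfS}$ to get a $\Gamma_{F}$-equivariant bijection $R(\bfG,\bfS)\cong R(\bfG^{\ast},\bfS^{\ast})$ identifying $R_{0+}$ with $R_{0+}^{\ast}$, conclude that $\psi$ restricts to an inner twist $\bfG^{0}\to\bfG^{\ast 0}$, and then transfer the two defining conditions, with extra regularity handled via the induced identification $W_{\bfG^{0}}(\bfS)(F)\cong W_{\bfG^{\ast 0}}(\bfS^{\ast})(F)$. The only divergence is the maximal-unramifiedness step, where you compare $F^{\ur}$-split ranks and maximal unramified subtori (using that inner twists trivialize over $F^{\ur}$) whereas the paper invokes Kaletha's criterion that inertia preserve a set of positive roots in $R_{0+}$; both are valid.
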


\begin{proof}
Since $\psi|_{\bfS}$ is $F$-rational, $\psi|_{\bfS}$ induces a Galois-equivariant bijection between $R(\bfG,\bfS)$ and $R(\bfG^{\ast},\bfS^{\ast})$.
This means that $\psi$ also induces a bijection between $R_{0+}$ and $R^{\ast}_{0+}$, which are subsets of $R(\bfG,\bfS)$ and $R(\bfG^{\ast},\bfS^{\ast})$ defined as in Definition \ref{defn:non-sing-pair} (1), respectively.
If we let $\bfG^{0}$ and $\bfG^{\ast0}$ be the connected reductive subgroups of $\bfG$ and $\bfG^{\ast}$ defined as in Definition \ref{defn:non-sing-pair} (1), respectively, then $\psi$ gives an inner twist $\psi|_{\bfG^{0}}\colon\bfG^{0}\rightarrow\bfG^{\ast0}$.

Now let us check the condition (1) of Definition \ref{defn:non-sing-pair} for $(\bfS,\theta)$.
By \cite[Fact 3.4.1]{Kal19}, $\bfS$ (resp.\ $\bfS^{\ast}$) is maximally unramified in $\bfG^{0}$ (resp.\ $\bfG^{0\ast}$) if and only if the action of the inertia subgroup $I_{F}$ on $R_{0+}$ (resp.\ $R^{\ast}_{0+}$) preserves a set of positive roots.
Thus, since the identification between $R_{0+}$ and $R^{\ast}_{0+}$ is Galois-equivariant, the maximally unramifiedness of $\bfS^{\ast}$ in $\bfG^{0\ast}$ (guaranteed by the assumption that $(\bfS^{\ast},\theta^{\ast})$ is tame elliptic regular) implies that of $\bfS$ in $\bfG^{0}$.

The condition (2) of Definition \ref{defn:non-sing-pair} for $(\bfS,\theta)$ follows from that for $(\bfS^{\ast},\theta^{\ast})$ by noting that the inner twist $\psi|_{\bfG^{0}}\colon\bfG^{0}\rightarrow\bfG^{\ast0}$ naturally induces a Galois-equivariant identification $W_{\bfG^{0}}(\bfS)\cong W_{\bfG^{\ast0}}(\bfS^{\ast})$, hence $W_{\bfG^{0}}(\bfS)(F)\cong W_{\bfG^{\ast0}}(\bfS^{\ast})(F)$.
\end{proof}

When tame elliptic extra regular pairs $(\bfS^{\ast},\theta^{\ast})$ of $\bfG^{\ast}$ and $(\bfS,\theta)$ of $\bfG$ are related in this way, we say that $(\bfS,\theta)$ is a \textit{transfer} of $(\bfS^{\ast},\theta^{\ast})$.
Note that, when $(\bfS,\theta)$ is a transfer of $(\bfS^{\ast},\theta^{\ast})$ through an inner twist $\psi$, any strongly regular semisimple element $\gamma\in S$ is related to $\psi(\gamma)\in S^{\ast}$.
Furthermore, again by noting that $\psi$ induces a Galois-equivariant bijection between $R(\bfG^{\ast},\bfS^{\ast})$ and $R(\bfG,\bfS)$, we see that $\psi(\gamma)$ is very regular in $\bfG^{\ast}$ if and only if $\gamma$ is very regular in $\bfG$.

\begin{lem}\label{lem:tran-transfer}
Suppose that $(\bfS^{\ast},\theta^{\ast})$ is a tame elliptic regular pair of $\bfG^{\ast}$ and $(\bfS,\theta)$ is its transfer to $\bfG$ given by an inner twist $\psi$.
Let $\Psi^{\ast}$ (resp.\ $\Psi$) be a regular Yu-datum of $\bfG^{\ast}$ (resp.\ $\bfG$) corresponding to $(\bfS^{\ast},\theta^{\ast})$ (resp.\ $(\bfS,\theta)$).
Then, for any very regular element $\gamma\in S$, we have
\[
\Delta_{\II}^{\abs,\bfG^{\ast}}[a_{\Psi^{\ast}},\chi''_{\Psi^{\ast}}](\psi(\gamma))
=
\Delta_{\II}^{\abs,\bfG}[a_{\Psi},\chi''_{\Psi}](\gamma).
\]
\end{lem}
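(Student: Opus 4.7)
The plan is to establish the identity termwise by matching each factor of the product defining $\Delta_{\II}^{\abs,\bfG^{\ast}}$ with the corresponding factor of $\Delta_{\II}^{\abs,\bfG}$ under the natural root system identification induced by $\psi$. First, I would observe that since $\psi$ restricts to the $F$-rational isomorphism $\psi|_{\bfS}\colon \bfS\xrightarrow{\sim}\bfS^{\ast}$, pullback by $\psi|_{\bfS}$ gives a $\Gamma_{F}$-equivariant bijection $\iota\colon R(\bfG^{\ast},\bfS^{\ast})\xrightarrow{\sim} R(\bfG,\bfS)$ sending $\alpha^{\ast}\mapsto \alpha\colonequals \alpha^{\ast}\circ\psi|_{\bfS}$. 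This preserves Galois stabilizers (hence the fields $F_{\alpha^{\ast}}=F_{\alpha}$ and $F_{\pm\alpha^{\ast}}=F_{\pm\alpha}$, and the symmetry type of each root) and satisfies $\alpha^{\ast}(\psi(\gamma))=\alpha(\gamma)$ for all $\gamma\in S$. Since $\gamma$ is very regular in $\bfG$, equivalently $\psi(\gamma)$ is very regular in $\bfG^{\ast}$, both products in the identity are indexed by all of $\Gamma_{F}\backslash R(\bfG,\bfS)$, and $\iota$ identifies the two indexing sets.

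Second, I would align the two Yu-data under $\psi$. The proof of Lemma \ref{lem:TERP-transfer} shows that $\iota$ transports the Howe-factorization root subsystems on the $\bfG^{\ast}$ side to those on the $\bfG$ side, so after replacing $\Psi$ by a $\bfG$-equivalent Yu-datum we may assume that $\psi$ carries $\vec{\bfG}$ to $\vec{\bfG^{\ast}}$ over $\overline{F}$, and further, using the refactorization freedom, that $\phi_{i}|_{S}=\phi_{i}^{\ast}|_{S^{\ast}}\circ\psi|_{\bfS}$ for every $i$, so in particular $\vec{r}=\vec{r^{\ast}}$.

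Third, I would match the $a$-data and $\chi''$-data under $\iota$ termwise. For asymmetric roots and symmetric unramified roots, $\chi''_{\Psi,\alpha}$ is determined purely by the fields $F_{\alpha}$ and $F_{\pm\alpha}$, so $\chi''_{\Psi^{\ast},\alpha^{\ast}}=\chi''_{\Psi,\alpha}$ after the identification. For the $a$-data, the generic element $X^{\ast}_{i-1}\in\Lie^{\ast}(\bfG^{(i-1)\ast}_{\sc,\ab})(F)$ representing $\phi_{i-1}^{\ast}$ pulls back via $\psi$ to an element representing $\phi_{i-1}$ by the compatibility above, and the argument of Lemma \ref{lem:tran-conj} then shows that the two pairings $\langle X^{\ast}_{i-1},H_{\alpha^{\ast}}\rangle$ and $\langle X^{\ast}_{i-1}\circ\psi,H_{\alpha}\rangle$ agree modulo $1+\mfp_{F_{\alpha}}$. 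Since $\chi''_{\Psi,\alpha}$ is tamely ramified, this discrepancy is absorbed, and the same tameness upgrades the match also in the symmetric ramified case, where $\chi''_{\Psi,\alpha_{0}}$ is characterized by its restriction to $\mcO_{F_{\alpha_{0}}}^{\times}$ (which matches) and by its value on $\ell(\alpha^{\vee})a_{\Psi,\bfS,\alpha}$ (which matches modulo the units on which $\chi''_{\Psi,\alpha_{0}}$ is trivial).

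Combining the three steps yields the termwise equality
\[
\chi''_{\Psi^{\ast},\alpha^{\ast}}\!\left(\frac{\alpha^{\ast}(\psi(\gamma))-1}{a_{\Psi^{\ast},\bfS^{\ast},\alpha^{\ast}}}\right)
=\chi''_{\Psi,\alpha}\!\left(\frac{\alpha(\gamma)-1}{a_{\Psi,\bfS,\alpha}}\right),
\]
from which the stated equality of transfer factors follows by taking the product over $\Gamma_{F}\backslash R(\bfG,\bfS)$. The hard part will be the matching of the $a$-data, because $\psi$ is only $F$-rational on $\bfS$ (not globally), so one must carefully track the interplay between the two $F$-structures on the coadjoint representations $\Lie^{\ast}(\bfG^{i-1}_{\sc,\ab})$; the key observation that makes this work is that any resulting discrepancy lies in $1+\mfp_{F_{\alpha}}$ and is therefore killed by the tameness of $\chi''_{\Psi,\alpha}$.
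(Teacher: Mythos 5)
Your proposal is correct and follows essentially the same route as the paper, whose proof simply observes that the identity follows directly from the definitions of $\Delta_{\II}^{\abs}$ and of Kaletha's $a$-data and $\chi$-data, using that $\theta=\theta^{\ast}\circ\psi$ and that $\psi$ induces a Galois-equivariant bijection $R(\bfG^{\ast},\bfS^{\ast})\cong R(\bfG,\bfS)$. Your write-up just makes explicit the termwise matching (including the $1+\mfp_{F_{\alpha}}$ ambiguity in the $a$-data being absorbed by tameness of $\chi''$) that the paper leaves to the reader.
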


\begin{proof}
This directly follows from the definitions of $\Delta_{\II}^{\abs}$, Kaletha's $a$-data, and $\chi$-data (recall that $\theta$ is defined to be $\theta^{\ast}\circ\psi$ and again note that $\psi$ induces a Galois-equivariant bijection between $R(\bfG^{\ast},\bfS^{\ast})$ and $R(\bfG,\bfS)$).
\end{proof}

\subsection{Local Jacquet--Langlands correspondence for singleton $L$-packets}\label{subsec:LJLC-1}

\begin{lem}\label{lem:Weyl-eq}
Suppose that $\bfS$ is an elliptic maximal torus of $\bfG$ such that the natural map $H^{1}(F,\bfS)\rightarrow H^{1}(F,\bfG)$ is injective.
Then, for any $F$-rational maximal torus $\bfT$ of $\bfG$, we have $W_{G}(\bfT,\bfS)=W_{\bfG}(\bfT,\bfS)(F)$ and $W_{G}(\bfS,\bfT)=W_{\bfG}(\bfS,\bfT)(F)$.
\end{lem}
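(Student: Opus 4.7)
Proof plan. First I would observe that the two claimed equalities are equivalent, so that it suffices to prove one of them. Indeed, the inversion map $g \mapsto g^{-1}$ restricts to a Galois-equivariant bijection
\[
N_{\bfG}(\bfS,\bfT) \xrightarrow{\sim} N_{\bfG}(\bfT,\bfS),
\]
since $g\bfS g^{-1} = \bfT$ is equivalent to $g^{-1}\bfT g = \bfS$. For $g \in N_{\bfG}(\bfS,\bfT)$ and $t \in \bfT$, we have $g^{-1} t^{-1} = (g^{-1} t^{-1} g)g^{-1} = s' g^{-1}$ with $s' = g^{-1} t^{-1} g \in \bfS$, so this bijection descends to a Galois-equivariant bijection $W_{\bfG}(\bfS,\bfT) \cong W_{\bfG}(\bfT,\bfS)$. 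Taking $F$-points and comparing with the same inversion bijection $W_G(\bfS,\bfT) \cong W_G(\bfT,\bfS)$ reduces the lemma to showing $W_G(\bfT,\bfS) = W_{\bfG}(\bfT,\bfS)(F)$.

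For the nontrivial inclusion $W_{\bfG}(\bfT,\bfS)(F) \subset W_G(\bfT,\bfS)$, I would proceed via Galois cohomology in the standard way. Given $w \in W_{\bfG}(\bfT,\bfS)(F)$, choose any lift $n \in N_{\bfG}(\bfT,\bfS)(\bar{F})$. For each $\sigma \in \Gamma_F$, the element $\sigma(n)$ again lifts $w$, so $\sigma(n) = s_\sigma \cdot n$ for a unique $s_\sigma \in \bfS(\bar{F})$. The assignment
\[
\sigma \mapsto s_\sigma = \sigma(n)\,n^{-1}
\]
is a $1$-cocycle in $Z^1(\Gamma_F, \bfS)$. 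Viewed inside $\bfG$, this cocycle is visibly the coboundary of $n^{-1}$, hence its image in $H^1(F,\bfG)$ is trivial.

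By the hypothesis that $H^1(F,\bfS) \to H^1(F,\bfG)$ is injective, the class of $(s_\sigma)$ in $H^1(F,\bfS)$ is itself trivial, so there exists $s \in \bfS(\bar{F})$ with $s_\sigma = \sigma(s)\,s^{-1}$ for all $\sigma \in \Gamma_F$. Setting $n' \colonequals s^{-1} n$, one checks
\[
\sigma(n') = \sigma(s)^{-1}\sigma(n) = \sigma(s)^{-1}\,s_\sigma\, n = \sigma(s)^{-1}\,\sigma(s)\,s^{-1}\,n = n',
\]
so $n' \in N_{\bfG}(\bfT,\bfS)(F) = N_G(\bfT,\bfS)$. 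Since $n'$ and $n$ differ by left multiplication by an element of $\bfS(\bar{F})$, they project to the same class $w$ in $W_{\bfG}(\bfT,\bfS)$. Hence $w \in W_G(\bfT,\bfS)$, as desired. The only point requiring any care is keeping track of the cocycle conventions (left vs.\ right action of $\bfS$ on $N_{\bfG}(\bfT,\bfS)$), but this is mechanical, and there is no serious obstacle.
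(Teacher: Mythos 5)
Your proof is correct and is essentially the paper's own argument: the paper likewise reduces to one equality via $w\mapsto w^{-1}$ and then runs the same coboundary/injectivity argument, just with the mirror-image setup (a representative $n\in N_{\bfG}(\bfS,\bfT)$ and the $\bfS$-valued cocycle $n^{-1}\sigma(n)$, which is literally a coboundary in $\bfG$). The only hair to split is the one you flagged yourself: with the standard convention, $\sigma(n)n^{-1}$ is the \emph{inverse} of the coboundary of $n$ rather than a coboundary on the nose, but since $\bfS$ is abelian and $H^{1}(F,\bfS)$ is a group, triviality of the class follows all the same.
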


\begin{proof}
Since $W_{\bfG}(\bfT,\bfS)$ is isomorphic to $W_{\bfG}(\bfS,\bfT)$ by $w\mapsto w^{-1}$, it is enough to only show $W_{G}(\bfS,\bfT)=W_{\bfG}(\bfS,\bfT)(F)$.
Since the inclusion $W_{G}(\bfS,\bfT)\subset W_{\bfG}(\bfS,\bfT)(F)$ is obvious, our task is to show the converse.
Let $w\in W_{\bfG}(\bfS,\bfT)(F)$.
We take a representative $n\in N_{\bfG}(\bfS,\bfT)(\overline{F})$ of $w$.
Then, as $w$ is $F$-rational, we have $n^{-1}\sigma(n)\in\bfS(\overline{F})$ for any $\sigma\in\Gamma_{F}$.
If we put $s_{\sigma}\colonequals n^{-1}\sigma(n)$, then we get a $1$-cocycle $s_{\sigma}\in Z^{1}(F,\bfS)$.
Moreover, by construction, its image in $H^{1}(F,\bfS)$ belongs to the kernel of the natural map $H^{1}(F,\bfS)\rightarrow H^{1}(F,\bfG)$.
Thus the assumption implies that the image of $s_{\sigma}$ in $H^{1}(F,\bfS)$ is trivial.
In other words, there exists an element $s\in\bfS(\overline{F})$ satisfying $s_{\sigma}=s^{-1}\sigma(s)$ for any $\sigma\in\Gamma_{F}$.
This means that $ns^{-1}\in N_{\bfG}(\bfS,\bfT)(\overline{F})$ is an $F$-rational element which represents $w$.
\end{proof}

In the following, let us assume that 
\[\label{hyp:injectivity}
\text{
the maps $H^{1}(F,\bfS^{\ast})\rightarrow H^{1}(F,\bfG^{\ast})$ and $H^{1}(F,\bfS)\rightarrow H^{1}(F,\bfG)$ are injective.
}
\tag{$\mathfrak{inj}$}
\]
Let $(\bfS^{\ast},\theta^{\ast})$ be a tame elliptic regular pair of $\bfG^{\ast}$ and $(\bfS,\theta)$ its transfer to $\bfG$.
(Note that the regularity of of a tame elliptic pair is equivalent to the extra regularity by Lemma \ref{lem:Weyl-eq} under the assumption \eqref{hyp:injectivity}.)
Let $\Psi=(\vec{\bfG},\vec{\phi},\vec{r},\x,\rho_{0})$ and $\Psi^{\ast}=(\vec{\bfG}^{\ast},\vec{\phi}^{\ast},\vec{r}^{\ast},\x^{\ast},\rho^{\ast}_{0})$ be regular Yu-data associated to $(\bfS,\theta)$ and $(\bfS^{\ast},\theta^{\ast})$, respectively.

\begin{rem}\label{rem:injectivity}
Since $\bfS^{\ast}$ is elliptic, the map $H^{1}(F,\bfS^{\ast})\rightarrow H^{1}(F,\bfG^{\ast})$ is surjective by \cite[10.2 Lemma]{Kot86}.
Hence, the above condition is equivalent to the condition that $H^{1}(F,\bfS^{\ast})$ and $H^{1}(F,\bfG^{\ast})$ have the same order.
It is not so difficult to check if the latter condition holds when the pair $(\bfG^{\ast},\bfS^{\ast})$ is given explicitly.
Also note that we have $|H^{1}(F,\bfS^{\ast})|=|H^{1}(F,\bfS)|$ and $|H^{1}(F,\bfG^{\ast})|=|H^{1}(F,\bfG)|$.
(For example, this can be seen by using the Kottwitz isomorphism \cite[1.2 Theorem]{Kot86}).
Hence the above injectivity holds for $(\bfG^{\ast},\bfS^{\ast})$ if and only if it holds for $(\bfG,\bfS)$.
\end{rem}

We must be careful about the discrepancy between the strongly regular semisimple elements and regular semisimple elements.
Since the character relation \eqref{eq:JLCR} holds only for strongly regular semisimple elements, we have to be able to recover a regular supercuspidal representation from its character on elliptic very regular elements which are strongly regular semisimple (let us say ``elliptic very strongly regular elements'').
Thus let us introduce further variants of \eqref{ineq:Henniart-evrs} and \eqref{ineq:toral-Henniart-evrs}:
\[\label{ineq:Henniart-evsrs}
\frac{|[\bbS]^{\star}|}{|[\bbS]^{\star} \smallsetminus [\bbS]^{\star}_{\evsrs}|} 
> 2\cdot|W_{\bbG(\F_{q})}(\bbS)|,
\tag{$\mathfrak{H}_{\evsrs}$}
\]
\[\label{ineq:toral-Henniart-evsrs}
\text{$[\bbS]^{\star}_{\evsrs}$ generates $[\bbS]^{\star}$ as a group.}
\tag{$\mathfrak{T}_{\evsrs}$}
\]
Here, $[\bbS]^{\star}_{\evsrs}$ denotes the quotient $\bbS(\F_{q})_{\evsrs}/\bbZ_{\bbG}^{\star}(\F_{q})$, where $\bbS(\F_{q})_{\evsrs}$ is the image of elliptic very strongly regular elements of $S$ under the reduction map $S\twoheadrightarrow\bbS(\F_{q})$.
Then the exactly same proof as in Theorems \ref{thm:regsc-characterization} and \ref{thm:toral-characterization} implies the following:

\begin{theorem}\label{thm:regsc-characterization-evsrs}
Assume that \eqref{ineq:Henniart-evsrs} is satisfied for $(\bfG,\bfS)$. 
Then there exists a unique irreducible supercuspidal representation $\pi$ of $G$ such that there exists a constant $c \in \bbC^1$ for which
\[
\Theta_{\pi}(\gamma)
=
c\cdot
\sum_{w\in W_{G}(T_{\gamma},S)}\Delta_{\II}^{\abs,\bfG}[a_{\Psi},\chi''_{\Psi}]({}^{w}\gamma)\cdot\theta({}^{w}\gamma)
\]
for any elliptic very strongly regular element $\gamma\in G$ contained in $G^{0}_{\bar{\x}}$.
Moreover, such $\pi$ is given by the regular supercuspidal representation $\pi_{(\bfS, \theta)}^{\FKS}$.
Furthermore, when $(\bfS,\theta)$ is toral, only \eqref{ineq:toral-Henniart-evsrs} is enough.
\end{theorem}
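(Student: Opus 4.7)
The plan is to transpose the proof of Theorem \ref{thm:regsc-characterization} verbatim, with the elliptic very regular locus $S_{\evrs}$ replaced by its strongly regular refinement $S_{\evsrs}$ throughout. Existence is immediate: Corollary \ref{cor:regsc} gives the desired character identity (with $c = e(\bfG) \cdot \varepsilon_L(\bfT_{\bfG^{\ast}} - \bfS)$) for $\pi_{(\bfS,\theta)}^{\FKS}$ at every elliptic very regular element of $G$, hence a fortiori at every elliptic very strongly regular $\gamma \in G^{0}_{\bar{\x}}$.

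For uniqueness, I would begin by writing a given $\pi$ satisfying the identity as $\pi \cong \pi_{\Psi'}^{\FKS}$ for some Yu-datum $\Psi' = (\vec{\bfG}', \vec{\phi}', \vec{r}', \x', \rho'_0)$ via Theorem \ref{thm:Fintzen}. Combining the hypothesis with Corollary \ref{cor:regsc} gives $\Theta_{\pi_{\Psi'}^{\FKS}}(\gamma) = c' \cdot \Theta_{\pi_{(\bfS,\theta)}^{\FKS}}(\gamma)$ on elliptic very strongly regular elements in $G^{0}_{\bar{\x}}$, for an appropriate constant $c'$. The argument then proceeds in two stages: an evsrs-analog of Proposition \ref{prop:clipped characterization} to conclude that $\dashover{\Psi}$ and $\dashover{\Psi}'$ are $\bfG$-equivalent, followed by an evsrs-analog of Proposition \ref{prop:reduction depth 0} to obtain $\Theta_{\rho'_0}(\gamma) = c' \cdot \Theta_{\rho_0}(\gamma)$ on $\bbG'(\FF_q)_{\evsrs}$, after which the evsrs-analog of Theorem \ref{thm:Henniart} forces $\rho_0 \cong \rho'_0$ and pins down the constant. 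To validate each of these analogs one needs to check three points: (i) that $S_{\evsrs}$ is stable under $S_{0+}$-translation, so the Fourier-theoretic argument with $\phi_{\geq 0}|_{S_{0+}}$ still isolates individual $G_{\bar{\x}}^0$-conjugates (this follows as in Lemma \ref{lem:CO21-5.1-5.2}, since the eigenvalue structure of a very regular element pins down $\bfT_\gamma$, and strong regularity is a condition on $\Cent_G(\gamma)$ that is preserved under $T_{\gamma, 0+}$-translation); (ii) that $\bbG'(\FF_q)_{\evsrs}$ is a conjugation-stable, $\bbZ_\bbG^\star(\FF_q)$-invariant subset of $\bbG'(\FF_q)_{\rs}$, so the bullet-framework of Sections \ref{subsec:char-finite-gen-pos} and \ref{subsec:ur-vreg} applies with $\bullet = \evsrs$; and (iii) an evsrs-version of Lemma \ref{lem:nonvan}, which is immediate once Theorem \ref{thm:Henniart} is available in the bullet setting with \eqref{ineq:Henniart-evsrs}.

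With points (i)--(iii) in hand, the three steps of Theorem \ref{thm:regsc-characterization}'s proof run through unchanged, giving uniqueness and identifying $c$ with $(-1)^{r(\bbS^\circ) - r(\bbG^\circ)}$. The toral refinement is handled by following the proof of Theorem \ref{thm:toral-characterization}: when $(\bfS,\theta)$ is toral, one has $G^{0}_{\bar{\x},\evrs} = S_{\evrs}$ and $W_{\bbG(\FF_q)}(\bbS)$ is trivial, so the nonvanishing claim (iii) is automatic and the one-dimensionality of $\rho_0, \rho'_0$ lets \eqref{ineq:toral-Henniart-evsrs} replace the stronger \eqref{ineq:Henniart-evsrs}. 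The main point that requires genuine (if mild) verification is claim (i); everything else is essentially bookkeeping, since the bullet-framework of Part \ref{part:finite} was deliberately set up to accommodate any such conjugation-stable, $\bbZ_\bbG^\star$-invariant subset of the regular semisimple locus.
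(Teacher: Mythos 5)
Your proposal is correct and matches the paper's approach exactly: the paper's entire proof of this theorem is the assertion that ``the exactly same proof as in Theorems \ref{thm:regsc-characterization} and \ref{thm:toral-characterization}'' applies once the evrs locus is replaced by the evsrs locus and the inequalities \eqref{ineq:Henniart-evsrs}, \eqref{ineq:toral-Henniart-evsrs} are imposed. Your write-up is in fact more careful than the paper's, since you explicitly identify the three points (translation-stability, the $\bullet$-framework hypotheses, and the nonvanishing lemma) that must be rechecked for $\evsrs$.
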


In the following, let us furthermore assume that \eqref{ineq:Henniart-evsrs} is satisfied for $(\bfG,\bfS)$.
(When $(\bfG,\bfS)$ is toral, we only assume \eqref{ineq:toral-Henniart-evsrs}.)

\begin{thm}\label{thm:LJLC}
Suppose that the following conditions are satisfied:
\begin{enumerate}
\item
the $L$-packet $\Pi^{\bfG^{\ast}}_{\phi}$ of $\bfG^{\ast}$ containing $\pi_{(\bfS^{\ast},\theta^{\ast})}$ is a singleton;
\item
its Jacquet--Langlands transfer $\Pi^{\bfG}_{\phi}$ to $\bfG$ is a singleton;
\item
$\dim\iota(\pi^{\FKS}_{(\bfS,\theta)})=1$ and $\dim\iota^{\ast}(\pi^{\FKS}_{(\bfS^{\ast},\theta^{\ast})})=1$.
\end{enumerate}
Then the unique element of the $L$-packet $\Pi^{\bfG}_{\phi}$ is given by $\pi_{(\bfS,\theta)}$.
\end{thm}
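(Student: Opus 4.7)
The plan is to use the Jacquet--Langlands character relation \eqref{eq:JLCR} to transfer the known character formula for $\pi^{\FKS}_{(\bfS^{\ast},\theta^{\ast})}$ on the quasi-split side to a character formula on the inner form side, and then to apply the characterization result Theorem \ref{thm:regsc-characterization-evsrs} to identify the unique element of $\Pi^{\bfG}_{\phi}$.

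Let $\pi$ denote the unique element of $\Pi^{\bfG}_{\phi}$. By hypotheses (1), (2), (3) and the fact that $e(\bfG^{\ast})=1$, the character relation \eqref{eq:JLCR} reduces to
\[
e(\bfG)\cdot\Theta_{\pi}(\gamma)=\Theta_{\pi^{\FKS}_{(\bfS^{\ast},\theta^{\ast})}}(\gamma^{\ast})
\]
for every strongly regular semisimple $\gamma\in G$ and any related $\gamma^{\ast}\in G^{\ast}$. I would specialize to $\gamma \in S$ an elliptic very strongly regular element (in the sense used in Theorem \ref{thm:regsc-characterization-evsrs}), and take $\gamma^{\ast} \colonequals \psi(\gamma)\in S^{\ast}$. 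Since $\psi|_{\bfS}\colon\bfS\rightarrow\bfS^{\ast}$ is an $F$-rational isomorphism carrying $R(\bfG,\bfS)$ bijectively and Galois-equivariantly onto $R(\bfG^{\ast},\bfS^{\ast})$, such $\gamma^{\ast}$ is again elliptic very strongly regular. Then Corollary \ref{cor:regsc} gives
\[
\Theta_{\pi^{\FKS}_{(\bfS^{\ast},\theta^{\ast})}}(\gamma^{\ast})=\varepsilon_{L}(\bfT_{\bfG^{\ast}}-\bfS^{\ast})\cdot\sum_{w\in W_{G^{\ast}}(T_{\gamma^{\ast}},S^{\ast})}\Delta_{\II}^{\abs,\bfG^{\ast}}[a_{\Psi^{\ast}},\chi''_{\Psi^{\ast}}]({}^{w}\gamma^{\ast})\cdot\theta^{\ast}({}^{w}\gamma^{\ast}).
\]

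Next I would transfer each ingredient from $\bfG^{\ast}$ to $\bfG$. By the construction of the transfer, $\theta^{\ast}({}^{w}\gamma^{\ast})=\theta({}^{w}\gamma)$ under the identification of Weyl-orbits induced by $\psi$; Lemma \ref{lem:tran-transfer} gives the matching of transfer factors $\Delta_{\II}^{\abs,\bfG^{\ast}}[a_{\Psi^{\ast}},\chi''_{\Psi^{\ast}}]({}^{w}\gamma^{\ast})=\Delta_{\II}^{\abs,\bfG}[a_{\Psi},\chi''_{\Psi}]({}^{w}\gamma)$; and since $\psi|_{\bfS}$ is $F$-rational, the isomorphism $X^{\ast}(\bfS^{\ast})_{\bbC}\cong X^{\ast}(\bfS)_{\bbC}$ of $\Gamma_{F}$-representations yields $\varepsilon_{L}(\bfT_{\bfG^{\ast}}-\bfS^{\ast})=\varepsilon_{L}(\bfT_{\bfG^{\ast}}-\bfS)$. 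The remaining point is the identification of Weyl sets $W_{G^{\ast}}(T_{\gamma^{\ast}},S^{\ast})\cong W_{G}(T_{\gamma},S)$: the inner twist $\psi$ produces a Galois-equivariant bijection at the level of algebraic Weyl schemes $W_{\bfG^{\ast}}(\bfT_{\gamma^{\ast}},\bfS^{\ast})\cong W_{\bfG}(\bfT_{\gamma},\bfS)$, and Lemma \ref{lem:Weyl-eq} (applicable by our standing hypothesis \eqref{hyp:injectivity}) passes this to the bijection of $F$-points. Combining these identifications,
\[
\Theta_{\pi}(\gamma)=e(\bfG)\cdot\varepsilon_{L}(\bfT_{\bfG^{\ast}}-\bfS)\cdot\sum_{w\in W_{G}(T_{\gamma},S)}\Delta_{\II}^{\abs,\bfG}[a_{\Psi},\chi''_{\Psi}]({}^{w}\gamma)\cdot\theta({}^{w}\gamma),
\]
which is exactly the character formula of $\pi^{\FKS}_{(\bfS,\theta)}$ from Corollary \ref{cor:regsc}. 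Theorem \ref{thm:regsc-characterization-evsrs} (which applies by the standing hypothesis \eqref{ineq:Henniart-evsrs}) then forces $\pi\cong\pi^{\FKS}_{(\bfS,\theta)}$.

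The main technical obstacle I anticipate is the careful bookkeeping of the Weyl group identification via the inner twist: namely, verifying that $\psi$ really induces a Galois-equivariant isomorphism of the relevant Weyl-transporters when $\gamma$ and $\gamma^{\ast}$ are related rather than equal, and then correctly applying Lemma \ref{lem:Weyl-eq} to pass to $F$-rational points. Once this is pinned down, together with the matching of the $\varepsilon$-factor (which, while standard, depends on $\psi|_{\bfS}$ being $F$-rational rather than just on an abstract inner twist), the remaining steps are direct appeals to Corollary \ref{cor:regsc}, Lemma \ref{lem:tran-transfer}, and Theorem \ref{thm:regsc-characterization-evsrs}.
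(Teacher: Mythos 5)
Your proposal follows the same route as the paper's proof: apply the character relation \eqref{eq:JLCR} under hypotheses (1)--(3), evaluate the quasi-split side with Corollary \ref{cor:regsc}, match transfer factors via Lemma \ref{lem:tran-transfer}, match the $\varepsilon_{L}$-factors using the $F$-rational isomorphism $\bfS\cong\bfS^{\ast}$, identify the Weyl sets via Lemma \ref{lem:Weyl-eq}, and conclude with Theorem \ref{thm:regsc-characterization-evsrs}.

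The one place where your argument is incomplete as written is the choice of test elements. You verify the character identity only for $\gamma\in S$, where $\gamma^{\ast}\colonequals\psi(\gamma)$ is automatically $F$-rational because $\psi|_{\bfS}$ is defined over $F$. But Theorem \ref{thm:regsc-characterization-evsrs} requires the identity for \emph{every} elliptic very strongly regular $\gamma\in G$ contained in $G^{0}_{\bar{\x}}$ (this is genuinely needed downstream: the depth-zero comparison of Proposition \ref{prop:reduction depth 0} and the finite-level Theorem \ref{thm:Henniart} use the identity on the whole very regular locus, not just on the torus). For such $\gamma$ not lying in $S$, the element $\psi(\gamma)$ need not be $F$-rational, so the relation \eqref{eq:JLCR} cannot be invoked with $\gamma^{\ast}=\psi(\gamma)$. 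The paper handles this by choosing $g_{\gamma}\in\bfG^{\ast}$ so that $\gamma^{\ast}\colonequals{}^{g_{\gamma}}\psi(\gamma)$ is $F$-rational and $\Int(g_{\gamma})\circ\psi\colon\bfT_{\gamma}\to\bfT_{\gamma^{\ast}}$ is defined over $F$; the Weyl-set identification is then $w\mapsto\psi(w)g_{\gamma}^{-1}$ rather than $w\mapsto\psi(w)$, and the rest of the computation goes through verbatim. Alternatively, your restriction to $S$ can be repaired without that device: by Lemma \ref{lem:CO21-5.1-5.2}, every elliptic very (strongly) regular element of $G^{0}_{\bar{\x}}$ has its torus $G_{\x,0+}$-conjugate to $\bfS$, hence is conjugate to an elliptic very strongly regular element of $S$, and both sides of the target identity are conjugation-invariant, so the identity on $S$ propagates to all of $G^{0}_{\bar{\x}}$. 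Either way, this bridging step must be made explicit before Theorem \ref{thm:regsc-characterization-evsrs} can be applied; the remaining ingredients of your argument are correct and match the paper's.
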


\begin{proof}
Let $\pi$ be the unique element of $\Pi^{\bfG}_{\phi}$.
Let us take an elliptic very strongly regular element $\gamma$ of $G^{0}_{\bar{\x}}$.
Then there exists $g_{\gamma}\in \bfG^{\ast}$ such that $\gamma^{\ast}\colonequals {}^{g_{\gamma}}\psi(\gamma)$ is $F$-rational and $\Int(g_{\gamma})\circ\psi$ defines an $F$-rational isomorphism $\bfT_{\gamma}\rightarrow\bfT_{\gamma^{\ast}}$ (see \cite[Section 3.2]{Kal19}).
In particular, $\gamma^{\ast}$ is an elliptic very strongly regular element of $G^{\ast}$ related to $\gamma$.
Thus, by the assumptions and the Jacquet--Langlands character relation \eqref{eq:JLCR}, we have 
\[
e(\bfG)\cdot\Theta_{\pi}(\gamma)
=
\Theta_{\pi^{\FKS}_{(\bfS^{\ast},\theta^{\ast})}}(\gamma^{\ast}).
\]
By applying Corollary \ref{cor:regsc} to the right-hand side, we get
\[
e(\bfG)\cdot\Theta_{\pi}(\gamma)
=
\varepsilon_{L}(\bfT_{\bfG^{\ast}}-\bfS^{\ast})\cdot
\sum_{w^{\ast}\in W_{G^{\ast}}(T_{\gamma^{\ast}},S^{\ast})}\Delta_{\II}^{\abs,\bfG^{\ast}}[a_{\Psi^{\ast}},\chi''_{\Psi^{\ast}}]({}^{w^{\ast}}\gamma^{\ast})\cdot\theta^{\ast}({}^{w^{\ast}}\gamma^{\ast}).
\]
Note that we have an $F$-rational isomorphism
\[
W_{\bfG}(\bfT_{\gamma},\bfS)
\rightarrow
W_{\bfG^{\ast}}(\bfT_{\gamma^{\ast}},\bfS^{\ast})
\colon w\mapsto \psi(w)g_{\gamma}^{-1}.
\]
Hence, by applying Lemma \ref{lem:Weyl-eq} to both $W_{\bfG}(\bfT_{\gamma},\bfS)$ and $W_{\bfG^{\ast}}(\bfT_{\gamma^{\ast}},\bfS^{\ast})$, we see that the map $w\mapsto \psi(w)g_{\gamma}^{-1}$ induces a bijection $W_{G}(T_{\gamma},S)\rightarrow W_{G^{\ast}}(T_{\gamma^{\ast}},S^{\ast})$.
If we put $w^{\ast}\colonequals \psi(w)g_{\gamma}^{-1}$ for $w\in W_{G}(T_{\gamma},S)$, then we have 
\[
{}^{w^{\ast}}\gamma^{\ast}
={}^{\psi(w)g_{\gamma}^{-1}}{}^{g_{\gamma}}\psi(\gamma)
=\psi({}^{w}\gamma).
\]
By recalling that $\theta\colonequals \theta^{\ast}\circ\psi$, we have
\[
\theta^{\ast}({}^{w^{\ast}}\gamma^{\ast})
=\theta\circ\psi^{-1}(\psi({}^{w}\gamma))
=\theta({}^{w}\gamma).
\]
Moreover, by Lemma \ref{lem:tran-transfer}, we have
\[
\Delta_{\II}^{\abs,\bfG^{\ast}}[a_{\Psi^{\ast}},\chi''_{\Psi^{\ast}}]({}^{w^{\ast}}\gamma^{\ast})
=\Delta_{\II}^{\abs,\bfG^{\ast}}[a_{\Psi^{\ast}},\chi''_{\Psi^{\ast}}](\psi({}^{w}\gamma))
=\Delta_{\II}^{\abs,\bfG}[a_{\Psi},\chi''_{\Psi}]({}^{w}\gamma).
\]
Therefore, by noting that $\varepsilon_{L}(\bfT_{\bfG^{\ast}}-\bfS^{\ast})=\varepsilon_{L}(\bfT_{\bfG^{\ast}}-\bfS)$ (the tori $\bfS$ and $\bfS^{\ast}$ are $F$-rationally isomorphic), we get
\[
\Theta_{\pi}(\gamma)
=
e(\bfG)\cdot\varepsilon_{L}(\bfT_{\bfG^{\ast}}-\bfS)\cdot
\sum_{w\in W_{G}(T_{\gamma},S)}\Delta_{\II}^{\abs,\bfG}[a_{\Psi},\chi''_{\Psi}]({}^{w}\gamma)\cdot\theta({}^{w}\gamma).
\]
Now Theorem \ref{thm:regsc-characterization-evsrs} implies that $\pi$ is isomorphic to $\pi^{\FKS}_{(\bfS,\theta)}$.
\end{proof}

\begin{rem}
In Kaletha's construction of the local Langlands correspondence for regular supercuspidal representations \cite{Kal19}, the members of each $L$-packet are parametrized by the set of rational conjugacy class within a stable conjugacy class of $F$-rational embeddings of a tame elliptic maximal torus (see \cite[Section 5.3]{Kal19} and also \cite[Section 8]{CO21}).
In fact, this parametrizing set is bijective to the kernel of the map $H^{1}(F,\bfS^{\ast})\rightarrow H^{1}(F,\bfG^{\ast})$.
Hence the assumption \eqref{hyp:injectivity} amounts to supposing that the $L$-packet of $\bfG^{\ast}$ (resp.\ $\bfG$) containing the representation $\pi^{\FKS}_{(\bfS^{\ast},\theta^{\ast})}$ (resp.\ $\pi^{\FKS}_{(\bfS,\theta)}$) in the sense of Kaletha is a singleton.
Moreover, the third condition that $\dim\iota(\pi^{\FKS}_{(\bfS,\theta)})=1$ and $\dim\iota^{\ast}(\pi^{\FKS}_{(\bfS^{\ast},\theta^{\ast})})=1$ is automatically satisfied in Kaletha's construction since the group $\mathcal{S}_{\phi}$ is abelian (\cite[Section 5.3]{Kal19}).
Hence the assumptions of Theorem \ref{thm:LJLC} are expected to be implied by the assumption \eqref{hyp:injectivity}.
\end{rem}

The assumptions on $(\bfG^{\ast},\bfG,\bfS^{\ast},\bfS)$ we made so far are 
\begin{itemize}
\item
the condition \eqref{hyp:injectivity}, 
\item
the inequality \eqref{ineq:Henniart-evsrs} for $(\bfG,\bfS)$ (or $(\bfG^{\ast},\bfS^{\ast})$), and 
\item
the assumptions of Theorem \ref{thm:LJLC}.
\end{itemize}
We present some examples of such $(\bfG^{\ast},\bfG,\bfS^{\ast},\bfS)$ in the following sections.

\subsection{Examples of singleton $L$-packets}

\subsubsection{The case of $\GL_{n}$}
We first consider the case where $\bfG^{\ast}=\GL_{n}$ and $\bfG$ is an inner form of $\GL_{n}$.
In this case, since any elliptic maximal torus $\bfS^{\ast}$ of $\bfG^{\ast}=\GL_{n}$ is given by $\Res_{E/F}\Gm$ for a degree $n$ field extension $E$ of $F$, its first cohomology $H^{1}(F,\bfS^{\ast})$ is trivial by Shapiro's lemma and Hilbert's 90th theorem.
In particular, the condition \eqref{hyp:injectivity} is always satisfied.
Moreover, also the assumptions of Theorem \ref{thm:LJLC} are always satisfied.

Let us consider the inequality \eqref{ineq:Henniart-evsrs} for $(\bfG^{\ast},\bfS^{\ast})$.
Since the derived group of $\bfG^{\ast}$ is given by $\SL_{n}$, which is simply-connected, the strong regularity is equivalent to the regularity for semisimple elements of $\bfG^{\ast}$ (see, e.g., the final paragraph of \cite[Section 3, 788 page]{Kot82}).
Thus the inequality \eqref{ineq:Henniart-evsrs} for $(\bfG^{\ast},\bfS^{\ast})$ is the same as the inequality \eqref{ineq:Henniart-evrs} for $(\bfG^{\ast},\bfS^{\ast})$, which is investigated in Section \ref{subsec:Henn-GL}.
By the computation in Section \ref{subsec:Henn-GL},  we obtain the following from Theorem \ref{thm:LJLC}:

\begin{thm}\label{thm:LJLC-GL}
The regular supercuspidal representations $\pi^{\FKS}_{(\bfS^{\ast},\bfG^{\ast})}$ of $G^{\ast}$ and $\pi^{\FKS}_{(\bfS,\theta)}$ of $G$ correspond under the local Jacquet--Langlands correspondence in the following cases:
\begin{itemize}
\item
$E$ is unramified, $n$ is a prime such that $(n,q)\neq (1,\mathrm{any}), (2,2),(2,3)$;
\item
$E$ is totally ramified;
\item
the residue degree $f$ of $E/F$ is a prime satisfying $\frac{e}{e-\varphi(e)}>2f$, where $e$ denotes the ramification index of $E/F$, and $q$ is sufficiently large.
\end{itemize}
\end{thm}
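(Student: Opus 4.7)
The plan is to deduce Theorem \ref{thm:LJLC-GL} as a direct application of Theorem \ref{thm:LJLC}, so the work reduces to verifying, in each of the three listed cases, the three hypotheses of that theorem, together with the standing condition \eqref{hyp:injectivity} and the inequality \eqref{ineq:Henniart-evsrs}.

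First I will verify \eqref{hyp:injectivity}. Any elliptic maximal torus $\bfS^{\ast}$ of $\bfG^{\ast}=\GL_{n}$ is of the form $\Res_{E/F}\Gm$ for a degree $n$ extension $E/F$, so Shapiro's lemma and Hilbert 90 give $H^{1}(F,\bfS^{\ast})=H^{1}(E,\Gm)=0$. Hence the map $H^{1}(F,\bfS^{\ast})\to H^{1}(F,\bfG^{\ast})$ is trivially injective, and by Remark \ref{rem:injectivity} the same holds for $(\bfG,\bfS)$. This also shows that the kernel parametrizing the stable class is trivial, so I expect the $L$-packets on both sides to be singletons and $\mathcal{S}_{\phi}$ to contribute only one-dimensional characters.

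Second I will verify conditions (1)--(3) of Theorem \ref{thm:LJLC}. For $\bfG^{\ast}=\GL_{n}$, the local Langlands correspondence of Harris--Taylor and Henniart \cite{HT01,Hen00} assigns singleton $L$-packets (Example \ref{ex:LLC} (1)), and the corresponding inner-form transfer of Deligne--Kazhdan--Vign\'eras \cite{DKV84} sends each such $L$-packet to a singleton on the inner form $\bfG$. The component group $\mathcal{S}_{\phi}$ is trivial throughout, so the maps $\iota,\iota^{\ast}$ take values on one-dimensional representations. Thus (1)--(3) hold unconditionally in this setting.

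Third I will verify \eqref{ineq:Henniart-evsrs}. Because the derived group of $\GL_{n}$ is $\SL_{n}$, which is simply connected, strong regularity coincides with regularity for semisimple elements (see \cite[Section 3, p.~788]{Kot82}), so \eqref{ineq:Henniart-evsrs} for $(\bfG^{\ast},\bfS^{\ast})$ is equivalent to \eqref{ineq:Henniart-evrs}. This is where the three case distinctions in the statement originate, and this is the main technical heart of the argument: the three items in the theorem correspond precisely to the explicit evaluations of the quantities $|[\bbS]^{\star}|$, $|[\bbS]^{\star}\smallsetminus[\bbS]^{\star}_{\evrs}|$, and $|W_{\bbG(\F_{q})}(\bbS)|$ that are carried out in Section \ref{subsec:Henn-GL} (the unramified prime-degree case, the totally ramified case, and the mixed case with prime residue degree). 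Once these counts are in hand, Theorem \ref{thm:LJLC} applies and delivers the stated match $\pi^{\FKS}_{(\bfS^{\ast},\theta^{\ast})}\leftrightarrow\pi^{\FKS}_{(\bfS,\theta)}$. The obstacle is really only to ensure that the excluded small pairs $(n,q)\in\{(2,2),(2,3)\}$ (and the trivial $n=1$ case) are exactly the configurations in which the inequality fails; no further input is needed beyond the explicit combinatorics assembled in Section \ref{subsec:Henn-GL}.
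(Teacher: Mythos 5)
Your overall strategy is the paper's: verify \eqref{hyp:injectivity} via Shapiro and Hilbert 90, note that the $\GL_n$ packets on both sides are singletons with trivial $\mathcal{S}_\phi$, reduce \eqref{ineq:Henniart-evsrs} to \eqref{ineq:Henniart-evrs} using simple connectedness of $\SL_n$, and then appeal to the counts in Section \ref{subsec:Henn-GL}. The first two steps and the reduction of strong regularity to regularity are fine.

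However, there is a genuine gap in your treatment of the second bullet ($E$ totally ramified). You assert that all three cases of the theorem are exactly the configurations in which \eqref{ineq:Henniart-evrs} holds, but in the totally ramified case the relevant inequality computed in Section \ref{subsubsec:f=1} is $\frac{n}{n-\varphi(n)}>2$, and this \emph{fails} for many $n$ — for instance for every even $n$ — whereas the theorem asserts the correspondence for \emph{all} totally ramified $E$ with no restriction on $n$. Your argument as written would therefore only establish the second bullet for those $n$ with $\frac{n}{n-\varphi(n)}>2$. The missing ingredient is the toral refinement: when $E/F$ is totally ramified, $\bfS$ is tamely and totally ramified, so by Remark \ref{rem:toral-characterization} one automatically has $\bfG^0=\bfS$ and the pair $(\bfS,\theta)$ is toral. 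In the toral case, Theorem \ref{thm:regsc-characterization-evsrs} (and hence Theorem \ref{thm:LJLC}) requires only the much weaker hypothesis \eqref{ineq:toral-Henniart-evsrs}, namely that $[\bbS]^{\star}_{\evsrs}$ generates $[\bbS]^{\star}$ as a group; Section \ref{subsubsec:f=1} identifies $[\bbS]^{\star}\cong\Z/n\Z$ with $[\bbS]^{\star}_{\evrs}$ corresponding to $(\Z/n\Z)^{\times}$, which always generates. You need to invoke this substitute hypothesis explicitly to cover the totally ramified case in full generality; without it the second bullet is not proved.
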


The above result is not new in any case.
Indeed, for $\GL_{n}$, the regular supercuspidal representations are nothing but so-called \textit{essentially tame supercuspidal representations}, which has been thoroughly studied by Bushnell--Henniart (see \cite[Lemma 3.7.7]{Kal19} and also \cite[Section 4.1]{OT21}).
In \cite{BH11}, Bushnell--Henniart gave a description of the local Jacquet--Langlands correspondence for essentially tame supercuspidal representations, so it includes Theorem \ref{thm:LJLC-GL} completely.
However, we still emphasize that we obtained the above result (Theorem \ref{thm:LJLC-GL}) according to Henniart's original method used in his work \cite{Hen92,Hen93}.

\subsubsection{The case of $\SO_{2n+1}$}
We next consider the case where $\bfG^{\ast}=\SO_{2n+1}$ and $\bfG$ is an inner form of $\SO_{2n+1}$.
Let $\x^{\ast}\in\mcB(\bfG^{\ast},F)$ be a hyperspecial point.
Then the associated $\F_{q}$-group $\bbG^{\circ}$ defined as in Section \ref{subsec:sc classes} is given by $\SO_{2n+1,\F_{q}}$.

Let us construct an unramified elliptic maximal torus $\bfS^{\ast}$ with associated point $\x^{\ast}$ as follows.
Let $E$ be an unramified degree $2n$ extension of $F$ and $E_{\pm}$ a degree $n$ unramified extension of $F$ contained in $E$.
Let $\tau$ be the nontrivial Galois conjugation of $E/E_{\pm}$, i.e., $\Gal(E/E_{\pm})=\langle\tau\rangle$.
We define a quadratic form $q_{E}$ on $E$ (as an $F$-vector space) by
\[
q_{E}\colon E\times E\rightarrow F:\quad
(x,y)
\mapsto
\Tr_{E/F}(\tau(x)y).
\]
Then we can find a quadratic form $q_{F}$ on $F$ such that the quadratic space $(E',q_{E'})\colonequals (E\oplus F, q_{E}\oplus q_{F})$ is split, hence, the associated special orthogonal group $\SO(E', q_{E'})$ is isomorphic to $\bfG^{\ast}$.
We let $\bfS^{\ast}$ be the subgroup of $\bfG^{\ast}$ consisting of elements of $E^{\times}$ preserves the quadratic form $q_{E'}$.
More precisely, $\bfS^{\ast}$ is an $F$-rational subgroup of $\bfG^{\ast}$ whose set of $R$-valued points for any $F$-algebra $R$ is given by
\[
\{z\in (E\otimes_{F} R)^{\times} \mid q_{E'}(z\cdot x,z\cdot y)=q_{E'}(x,y),\, \forall x,y\in E'\otimes_{F}R\}
\]
(the action of $z\in(E\otimes_{F}R)^{\times}$ on $x=(x_{1},x_{2})\in E'\otimes_{F}R=E\otimes_{F}R\oplus F\otimes_{F}R$ is given by $z\cdot x=(zx_{1},x_{2})$).
Then $\bfS^{\ast}$ is an unramified elliptic maximal torus of $\bfG^{\ast}$, which is isomorphic to
\[
\Ker(\Nr_{E/E_{\pm}}\colon \Res_{E/F}\Gm \rightarrow \Res_{E_{\pm}/F}\Gm).
\]
Note that the reduction $\bbS^{\circ}$ of $\bfS^{\ast}$ can be described in a similar way; it is isomorphic to 
\[
\Ker(\Nr_{\F_{q^{2n}}/\F_{q^{n}}}\colon \Res_{\F_{q^{2n}}/\F_{q}}\Gm \rightarrow \Res_{\F_{q^{n}}/\F_{q}}\Gm).
\]
This is a maximal torus of $\SO_{2n+1,\F_{q}}$ of Coxeter type, i.e., its Frobenius structure is given by a Coxeter element of the absolute Weyl group of $\SO_{2n+1,\F_{q}}$.

Let us compute $H^{1}(F,\bfS^{\ast})$.
We consider the long exact sequence associated to
\[
1
\rightarrow
\bfS^{\ast}
\rightarrow
\Res_{E/F}\Gm
\xrightarrow{\Nr_{E/E_{\pm}}}
\Res_{E_{\pm}/F}\Gm
\rightarrow
1.
\]
Then, since $H^{1}(F,\Res_{E/F}\Gm)$ vanishes, we get an exact sequence
\[
E^{\times}
\xrightarrow{\Nr_{E/E_{\pm}}}
E_{\pm}^{\times}
\rightarrow
H^{1}(F,\bfS^{\ast})
\rightarrow
1.
\]
This implies that the order of $H^{1}(F,\bfS^{\ast})$ equals $2$.
On the other hand, by the Kottwitz isomorphism (\cite[1.2 Theorem]{Kot86}), we have $H^{1}(F,\SO_{2n+1})\cong\pi_{0}(\bfZ_{\Sp_{2n}(\C)}^{\Gamma_{F}})^{\wedge}$.
Thus the order of $H^{1}(F,\SO_{2n+1})$ is also equal to $2$.
Hence, by the surjectivity of the map $H^{1}(F,\bfS^{\ast})\rightarrow H^{1}(F,\bfG^{\ast})$, the assumption \eqref{hyp:injectivity} is satisfied (see Remark \ref{rem:injectivity}).

We next consider the inequality \eqref{ineq:Henniart-evsrs} for $(\bfG^{\ast},\bfS^{\ast})$.
Since $\bfG^{\ast}$ is not simply-connected, there might be a difference between \eqref{ineq:Henniart-evrs} and \eqref{ineq:Henniart-evsrs}.
However, at least we can show that \eqref{ineq:Henniart-evsrs} is satisfied if $q$ is larger than a constant determined by $n$ (see the discussion in Section \ref{subsec:Henn-ur}).
Thus let us just assume that $q$ is sufficiently large so that \eqref{ineq:Henniart-evsrs} is satisfied.

Finally, we investigate the assumptions of Theorem \ref{thm:LJLC} for regular characters of $S^{\ast}$.
Since the group $\mathcal{S}_{\phi}$ is abelian for any $L$-parameter of $\bfG$ and $\bfG^{\ast}$, the condition (3) is always satisfied.
Moreover, the condition (2) is implied by the condition (1).
So the problem is when the condition (1) is satisfied.
Although we expect that there are many examples of $\theta^{\ast}$ satisfying (1), we focus only on the depth zero case in the following.

We first briefly review \textit{Lusztig series} based on \cite[Section 2.5]{GM20}.
Let $\hat{\bbG}^{\circ}$ be the Langlands dual group of $\bbG^{\circ}$ taken over $\overline{\F}_{q}$.
Then the $\F_{q}$-rational structure on $\bbG^{\circ}$ induces an $\F_{q}$-rational structure on $\hat{\bbG}^{\circ}$.
We have a natural bijection (let us write $\bbT^{\circ}\leftrightarrow\hat{\bbT}^{\circ}$) between
\begin{itemize}
\item
the set of $\bbG^{\circ}(\F_{q})$-conjugacy classes of $\F_{q}$-rational maximal tori of $\bbG^{\circ}$ and
\item
the set of $\hat{\bbG}^{\circ}(\F_{q})$-conjugacy classes of $\F_{q}$-rational maximal tori of $\hat{\bbG}^{\circ}$.
\end{itemize}
Based on this, we also have a bijective correspondence between
\begin{itemize}
\item
the set $\bbG^{\circ}(\F_{q})$-conjugacy classes of pairs $(\bbT^{\circ},\theta^{\circ})$ of an $\F_{q}$-rational maximal torus $\bbT^{\circ}$ of $\bbG^{\circ}$ and a character $\theta^{\circ}$ of $\bbT^{\circ}(\F_{q})$ and 
\item
the set of $\hat{\bbG}^{\circ}(\F_{q})$-conjugacy classes of pairs $(\hat{\bbT}^{\circ},s)$ of an $\F_{q}$-rational maximal torus $\hat{\bbT}^{\circ}$ of $\hat{\bbG}^{\circ}$ and a semisimple element $s\in\hat{\bbT}^{\circ}(\F_{q})$
\end{itemize} 
(see \cite[Definition 2.5.17]{GM20}).
For any semisimple element $s\in\hat{\bbG}^{\circ}(\F_{q})$, we define $\mathcal{E}(\bbG^{\circ},s)$ to be the set of irreducible representations $\rho$ of $\bbG^{\circ}(\F_{q})$ satisfying $\langle R_{\bbT^{\circ}}^{\bbG^{\circ}}(\theta^{\circ}),\rho\rangle\neq0$ for some $(\bbT^{\circ},\theta^{\circ})$ satisfying $s\in\hat{\bbT}^{\circ}(\F_{q})$ (this is so-called \textit{Lusztig series}).
Then Lusztig's theorem \cite[7.6]{Lus77} asserts that we have a partition
\[
\Irr(\bbG^{\circ}(\F_{q}))
=
\bigsqcup_{s}\mathcal{E}(\bbG^{\circ},s),
\]
where the disjoint union is over the $\hat{\bbG}^{\circ}(\F_{q})$-conjugacy classes of semisimple elements of $\hat{\bbG}^{\circ}(\F_{q})$ (see also \cite[Theorem 2.6.2]{GM20}).

\begin{rem}
We note that a character $\theta^{\circ}$ of $\bbT^{\circ}(\F_{q})$ is in general position if the pair $(\bbT^{\circ},\theta^{\circ})$ corresponds to a pair $(\hat{\bbT}^{\circ},s)$ whose $s$ is regular semisimple (see \cite[Example 2.6.7]{GM20}).
\end{rem}

\begin{prop}\label{prop:SO singleton}
Let $\theta^{\ast}$ be a regular character of $S^{\ast}$ of depth zero such that its restriction to $S^{\ast}_{0}$ induces a character $\theta^{\circ}$ of $\bbS^{\circ}(\F_{q})$ corresponding to a pair $(\hat{\bbS}^{\circ},s)$ whose $s$ is regular semisimple.
Then the $L$-packet (in the sense of Arthur) containing $\pi^{\FKS}_{(\bfS^{\ast},\theta^{\ast})}$ is a singleton.
\end{prop}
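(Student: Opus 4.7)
The plan is to show that the Arthur--Langlands parameter $\phi$ associated to $\pi^{\FKS}_{(\bfS^{\ast}, \theta^{\ast})}$ is irreducible as a $2n$-dimensional symplectic representation of $W_F$, from which a Schur-type argument forces the component group $S_\phi$ to be trivial and hence Arthur's $L$-packet on $\bfG^{\ast} = \SO_{2n+1}$ to be a singleton.

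First I would identify $\phi\colon W_F \times \SL_2(\C) \to \Sp_{2n}(\C) = \hat{\bfG}^{\ast}$ explicitly. Since $\pi^{\FKS}_{(\bfS^{\ast}, \theta^{\ast})}$ is a depth zero regular supercuspidal attached to the unramified elliptic Coxeter torus $\bfS^{\ast}$, the parameter $\phi$ is expected to be tame, trivial on $\SL_2(\C)$, and to factor through the $L$-embedding ${}^L\bfS^{\ast} = \hat{\bfS}^{\ast} \rtimes W_F \hookrightarrow {}^L\bfG^{\ast}$, with restriction to $W_F$ determined by $\theta^{\ast}$ via the LLC for tori. In particular $\phi(I_F) \subset \hat{\bfS}^{\ast}$ is the finite cyclic subgroup prescribed by $\theta^{\ast}|_{S^{\ast}_0}$, while $\phi(\Frob_F)$ lies in the normalizer of $\hat{\bfS}^{\ast}$ and represents the Coxeter element of the Weyl group $W(C_n)$ of $\Sp_{2n}(\C)$. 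This identification is the step that I expect to be the main obstacle: it hinges on compatibility between Arthur's classification for $\SO_{2n+1}$ and the explicit construction of depth zero supercuspidal $L$-packets (via DeBacker--Reeder and Kaletha), and any careful writeup will need to pinpoint this compatibility or verify it by matching stable characters.

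Next, I would translate the regularity hypothesis into irreducibility of $\phi$ viewed through the standard embedding $\Sp_{2n}(\C) \hookrightarrow \GL_{2n}(\C)$. The $2n$ weights of the standard representation of $\Sp_{2n}(\C)$ on $\hat{\bfS}^{\ast}$ form a single $W(C_n)$-orbit, namely $\{\pm e_1, \ldots, \pm e_n\}$, on which the Coxeter element $\phi(\Frob_F)$ acts as a single $2n$-cycle. Consequently $\phi|_{W_F}$ is realized in $\GL_{2n}(\C)$ as an induced representation $\Ind_{W_E}^{W_F}\chi$, where $E/F$ is the degree $2n$ unramified extension and $\chi$ is a character of $W_E$ extracted from $\theta^{\ast}$ via the LLC for tori. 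The Mackey irreducibility criterion states that this induction is irreducible precisely when $\chi$ has trivial stabilizer under the conjugation action of $W_F/W_E$, and this is exactly the condition that $\theta^{\ast}$ is regular; equivalently, by Proposition~\ref{prop:reg-nonsing}(4) together with the Lusztig correspondence between $(\hat{\bbS}^{\circ}, s)$ and $(\bbS^{\circ}, \theta^{\circ})$, it is the regular semisimplicity of $s$.

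Finally, I would compute the component group. By Schur's lemma applied to the irreducible $\phi|_{W_F}$ inside $\GL_{2n}(\C)$, the centralizer $Z_{\GL_{2n}(\C)}(\Img\phi)$ equals $\C^{\times}$; intersecting with $\Sp_{2n}(\C)$ forces the scalar $c$ to satisfy $c^2 = 1$, so $Z_{\Sp_{2n}(\C)}(\Img\phi) = \{\pm I\}$, which coincides with $Z(\Sp_{2n}(\C))$. Since Arthur's component group for $\SO_{2n+1}$ is $S_\phi = \pi_0(Z_{\Sp_{2n}(\C)}(\Img\phi)/Z(\Sp_{2n}(\C)))$, we obtain $S_\phi = 1$, and the Arthur $L$-packet on $\bfG^{\ast} = \SO_{2n+1}$ containing $\pi^{\FKS}_{(\bfS^{\ast}, \theta^{\ast})}$ reduces to a single element.
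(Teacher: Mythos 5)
There is a genuine gap, and it sits exactly where you flagged it: the identification of the \emph{Arthur} parameter of $\pi^{\FKS}_{(\bfS^{\ast},\theta^{\ast})}$ with the explicit toral parameter $\Ind_{W_E}^{W_F}\chi$ built from $(\bfS^{\ast},\theta^{\ast})$ via the LLC for tori. That compatibility between Arthur's classification and the Yu/Kaletha construction is precisely what is \emph{not} known in this setting --- the paper says as much in Example \ref{ex:LLC} (``it is not obvious whether Kaletha's correspondence coincides with the ones listed below''), and circumventing this unknown comparison is the entire point of Section \ref{sec:LJLC}. If one already knew the Arthur parameter were the induced representation you describe, the conclusion would indeed follow by Mackey irreducibility plus your Schur-lemma centralizer computation; but as written your argument assumes the hard input rather than supplying it, so the proof does not close.

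The paper's proof avoids any explicit identification of $\phi$. It uses only two soft facts: (i) since $\pi^{\FKS}_{(\bfS^{\ast},\theta^{\ast})}$ is supercuspidal, $\phi$ is a discrete parameter, hence a multiplicity-free sum $\bigoplus_{i=1}^{r}\rho_{i}\boxtimes S_{n_{i}}$ of irreducible symplectic summands, with $\mathcal{S}_{\phi}\cong(\Z/2\Z)^{\oplus r-1}$ and $|\Pi^{\bfG^{\ast}}_{\phi}|=2^{r-1}$; and (ii) the Lust--Stevens description of the summands $(\rho_i,n_i)$ (up to unramified twist) in terms of the depth zero type of the representation. Concretely, the cuspidal representation $\rho=\pm R_{\bbS^{\circ}}^{\bbG^{\circ}}(\theta^{\circ})$ lies in the Lusztig series $\mathcal{E}(\bbG^{\circ},s)$, and the hypothesis that $s$ is regular semisimple in the Coxeter torus $\hat{\bbS}^{\circ}(\F_q)\subset\Sp_{2n}(\F_q)$ forces the characteristic polynomial $P_s(T)$ to be irreducible over $\F_q$; Lust--Stevens then forces some $\rho_i$ to have dimension $2n$, whence $r=1$. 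Your Weyl-orbit/Coxeter-cycle observation is the same combinatorial phenomenon that makes $P_s(T)$ irreducible, but it must be fed into the parameter through the finite-field type data rather than through an unproven explicit description of $\phi$.
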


\begin{proof}
Let $\phi$ be the $L$-parameter of $\pi^{\FKS}_{(\bfS^{\ast},\theta^{\ast})}$.
Then, since the dual group of $\bfG^{\ast}$ is given by $\Sp_{2n}(\C)$, we may think of $\phi$ as a $2n$-dimensional symplectic representation of $W_{F}\times\SL_{2}(\C)$.
As $\pi^{\FKS}_{(\bfS^{\ast},\theta^{\ast})}$ is supercuspidal, in particular, discrete series, $\phi$ is also discrete.
In this case, the discreteness of $\phi$ is equivalent to that $\phi$ is the direct sum of pairwise inequivalent symplectic representations of $W_{F}\times\SL_{2}(\C)$.
Let us write the irreducible decomposition of $\phi$ as
\[
\phi=\bigoplus_{i=1}^{r} \rho_{i}\boxtimes S_{n_{i}},
\]
where $\rho_{i}$ is an irreducible representation of $W_{F}$ and $S_{n_{i}}$ is the unique $n_{i}$-dimensional irreducible representation of $\SL_{2}(\C)$ for an integer $n_{i}\in\Z_{>0}$.
Then we can easily see that the group $\mathcal{S}_{\phi}$, which is defined to be $\Cent_{\Sp_{2n}(\C)}(\mathrm{Im}(\phi))/Z_{\Sp_{2n}(\C)}$ in this case, is isomorphic to $(\Z/2\Z)^{\oplus r-1}$.
As the map $\iota^{\ast}\colon\Pi^{\bfG^{\ast}}_{\phi}\rightarrow \mathrm{Irr}(\mathcal{S}_{\phi})$ is bijective in this setting, the order of the $L$-packet $\Pi^{\bfG^{\ast}}_{\phi}$ is given by $2^{r-1}$.
(See, e.g., \cite[Section 2]{Xu17-MM} for the details of all the arguments in this paragraph.)

Thus our task is to show that $r$ is given by $1$, i.e., $\phi$ is irreducible as a representation of $\SL_{2}(\C)\times W_{F}$.
For this, we utilize a result of Lust--Stevens \cite{LS20}, which enables us to describe the set $\{(\rho_{1},n_{1}),\ldots,(\rho_{r},n_{r})\}$ up to unramified twists of $\rho_{i}$'s by looking at the supercuspidal representation $\pi^{\FKS}_{(\bfS^{\ast},\theta^{\ast})}$.
Let $\Psi^{\ast}=(\vec{\bfG}^{\ast},\vec{\phi}^{\ast},\vec{r}^{\ast},\x^{\ast},\rho^{\ast}_{0})$ be a regular depth zero Yu-datum associated to $(\bfS^{\ast},\theta^{\ast})$.
Recall that $\rho^{\ast}_{0}$ is an extension of the inflation of an irreducible cuspidal representation of $\bbG^{\circ}(\F_{q})$ (say $\rho$).
(Since $\theta^{\ast}$ is regular of depth zero, $\rho$ is given by $\pm R_{\bbS^{\circ}}^{\bbG^{\circ}}(\theta^{\circ})$).

Let $s$ be a semisimple element of $\hat{\bbG}^{\circ}(\F_{q})$ such that $\rho\in\mathcal{E}(\bbG^{\circ},s)$.
We put $P_{s}(T)\in \F_{q}[X]$ to be the characteristic polynomial of $s\in \hat{\bbG}^{\circ}(\F_{q})=\Sp_{2n}(\F_{q})$ and write
\[
P_{s}(T)
=
\prod_{\begin{subarray}{c}P(T)\in \F_{q}[T], \\ \text{irreducible} \end{subarray}} P(T)^{a_{P}},
\]
where $a_{P}\in\Z_{\geq0}$.
In general, Lust--Stevens' description of $\{(\rho_{1},n_{1}),\ldots,(\rho_{r},n_{r})\}$ is given based on the information of $P(T)$ with $a_{P}>0$.
(See \cite[Section 9]{LS20} for the details.)
In our case, we can easily check that $P_{s}(T)$ is irreducible.
Indeed, by assumption, $s$ is a regular semisimple element of $\hat{\bbS}^{\circ}(\F_{q})$.
Since $\bbS^{\circ}$ is a maximal torus of $\bbG^{\circ}=\SO_{2n+1,\F_{q}}$ of Coxeter type, $\hat{\bbS}^{\circ}(\F_{q})$ is also a maximal torus of $\hat{\bbG}^{\circ}=\Sp_{2n,\F_{q}}$ of Coxeter type.
Similarly to the case of $\SO_{2n+1,\F_{q}}$, such a torus of $\Sp_{2n,\F_{q}}$ is given by the kernel of the norm map from $\F_{q^{2n}}^{\times}$ to $\F_{q^{n}}^{\times}$.
Thus the regular semisimplicity of $s$ is equivalent to the condition that $s$ does not belong to any proper subextension of $\F_{q^{2n}}/\F_{q}$, which implies that $P_{s}(T)$ is irreducible over $\F_{q}$.
Then the result of Lust--Stevens implies that, in particular, at least one $\rho_{i}$ must have the dimension $2n$.
However, as the dimension of $\phi$ is $2n$, this implies that $r=1$ (and furthermore $n_{1}=1$).
\end{proof}

Now we obtain the following from Theorem \ref{thm:LJLC}:

\begin{thm}\label{thm:LJLC-SO}
The depth zero regular supercuspidal representations $\pi^{\FKS}_{(\bfS^{\ast},\theta^{\ast})}$ of $G^{\ast}$ and $\pi^{\FKS}_{(\bfS,\theta)}$ of $G$ correspond under the local Jacquet--Langlands correspondence as long as $q$ is sufficiently large.
\end{thm}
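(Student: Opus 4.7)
The approach is to apply Theorem~\ref{thm:LJLC} to the pair $(\bfG^\ast, \bfG)$ and to the regular pair $(\bfS, \theta)$ obtained as the transfer of $(\bfS^\ast, \theta^\ast)$. Most of the preliminary hypotheses are already handled in the text preceding this theorem: $(\mathfrak{inj})$ follows from the computation $|H^1(F, \bfS^\ast)| = 2 = |H^1(F, \SO_{2n+1})|$ via the norm exact sequence $1 \to \bfS^\ast \to \Res_{E/F}\Gm \to \Res_{E_\pm/F}\Gm \to 1$ and the Kottwitz isomorphism, combined with surjectivity from the ellipticity of $\bfS^\ast$; the inner form case is then immediate from Remark~\ref{rem:injectivity}. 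The inequality $(\mathfrak{H}_{\evsrs})$ for the Coxeter torus $\bbS^\circ \subset \SO_{2n+1, \F_q}$ holds for all sufficiently large $q$ by the explicit computation of Section~\ref{subsec:Henn-ur} for Coxeter tori in split simple groups.

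The heart of the proof is Proposition~\ref{prop:SO singleton}, which supplies assumption~(1) of Theorem~\ref{thm:LJLC}: that $\Pi^{\bfG^\ast}_\phi$ is a singleton. Its argument decomposes the discrete symplectic parameter $\phi$ as $\bigoplus_{i=1}^r \rho_i \boxtimes S_{n_i}$ into pairwise inequivalent irreducible summands, identifies the component group as $\mathcal{S}_\phi \cong (\Z/2\Z)^{r-1}$, and then uses the Lust--Stevens description of the cuspidal support to constrain $\phi$: the regular semisimplicity of $s$ inside the Coxeter torus $\hat{\bbS}^\circ \subset \Sp_{2n, \F_q}$ forces its characteristic polynomial $P_s(T)$ to be irreducible of degree $2n$, whence some constituent of $\phi$ must have dimension $2n$; since $\dim \phi = 2n$, this forces $r = 1$.

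Once Proposition~\ref{prop:SO singleton} is in hand, assumption~(3) of Theorem~\ref{thm:LJLC} is automatic, since $\mathcal{S}_\phi = 1$ forces every irreducible representation of it to be one-dimensional. Assumption~(2), that the Jacquet--Langlands transfer $\Pi^\bfG_\phi$ is likewise a singleton, follows from the triviality of $\mathcal{S}_\phi$ combined with the parameterization of $L$-packets on inner forms of $\SO_{2n+1}$ from~\cite{MR18}, which in this regime distributes at most one representation to the inner form $\bfG$. With all hypotheses in place, Theorem~\ref{thm:LJLC} yields the conclusion.

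The main obstacle is thus Proposition~\ref{prop:SO singleton}: the delicate point is reducing from the parahoric level datum of a regular semisimple element in the Coxeter torus of $\Sp_{2n, \F_q}$ to the irreducibility of the full Langlands parameter via Lust--Stevens. Once that depth zero reduction is carried out, the remaining conditions of Theorem~\ref{thm:LJLC} fall out formally from $\mathcal{S}_\phi = 1$, and Theorem~\ref{thm:LJLC-SO} follows.
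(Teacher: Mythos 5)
Your proposal is correct and follows essentially the same route as the paper: Theorem \ref{thm:LJLC-SO} is obtained by applying Theorem \ref{thm:LJLC}, with \eqref{hyp:injectivity} verified by the cohomology computation, \eqref{ineq:Henniart-evsrs} by the Coxeter-torus estimate for large $q$, and the singleton condition supplied by Proposition \ref{prop:SO singleton} via the Lust--Stevens argument. The only cosmetic divergence is in how conditions (2) and (3) of Theorem \ref{thm:LJLC} are dispatched (you cite \cite{MR18} and triviality of $\mathcal{S}_\phi$, whereas the paper notes that $\mathcal{S}_\phi$ is abelian and that (1) implies (2)), which does not affect the argument.
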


\begin{rem}
It is proved that Kaletha's local Langlands correspondence indeed satisfies the standard endoscopic character relation under an additional assumption on $p$.
The toral case was firstly treated in \cite[Theorem 6.3.4]{Kal19}, and then the general non-singular case was treated by Fintzen--Kaletha--Spice in \cite[Theorem 4.4.4]{FKS21}.
Their method is based on the character formula of Adler--DeBacker--Spice (\cite{AS09,DS18,Spi18,Spi21}), which describes the Harish-Chandra character of a tame supercuspidal representation at any elliptic regular semisimple elements completely.
The point is that the shape of the character formula becomes more complicated if an elliptic regular semisimple element is not very regular.
Especially, we have to use a nice logarithm map from the $p$-adic group to its Lie algebra in order to express the contribution of the positive depth part of such an element to the Harish-Chandra character.
In general, the existence of such a logarithm map requires a stronger assumption on $p$ than the assumptions needed in Kaletha's construction of the local Langlands correspondence.
For example, a sufficient condition is that $p\geq (2+e)n$, where $e$ is the ramification index of $F/\bbQ_{p}$ and $n$ is the dimension of the smallest faithful representation of $\bfG$ (see \cite[Section 4.3]{Kal19-sc}).

Therefore, when $F$ has characteristic zero and $p$ is sufficiently large so that the result of \cite{FKS21} is available, Theorem \ref{thm:LJLC-SO} is not new.
(Nevertheless, it is worth noting that our approach provides a new proof.)
On the other hand, when $F$ has positive characteristic or when $F$ has characteristic zero and $p$ satisfies our basic assumptions but is not sufficiently large, Theorem \ref{thm:LJLC-SO} is \textit{not} covered by \cite{FKS21} as long as $q$ is sufficiently large.
\end{rem}

\newpage

\appendix

\section{Henniart inequality}\label{sec:henniart}

Let $\bfG$, $\bfG^{0}$, and $\bfS$ be as in Section \ref{subsec:ur-vreg}.
In this section, we present several examples of pairs $(\bfG,\bfS)$ satisfying the inequality \eqref{ineq:Henniart-evrs}:
\[
\frac{|[\bbS]^{\star}|}{|[\bbS]^{\star} \smallsetminus [\bbS]^{\star}_\evrs|} 
>
2\cdot|W_{\bbG(\F_{q})}(\bbS)|.
\]

\subsection{The unramified case}\label{subsec:Henn-ur}
We first consider the case where $\bfS$ is unramified.
In this case, we have $S=Z_{\bfG}S_{0}$, which implies that $\bbS(\F_{q})=\bbZ^{\star}_{\bbG}(\F_{q})\bbS^{\circ}(\F_{q})$ (see \cite[1095 page]{Kal19}).
Hence we get
\[
\bbS^{\circ}(\F_{q})/\bbZ_{\bbG^{\circ}}(\F_{q})
\xrightarrow{\cong}
\bbS(\F_{q})/\bbZ^{\star}_{\bbG}(\F_{q})=[\bbS]^{\star}.
\]
This implies that
\[
\frac{|[\bbS]^{\star}|}{|[\bbS]^{\star} \smallsetminus [\bbS]^{\star}_\evrs|} 
=\frac{|\bbS^{\circ}(\F_{q})|}{|\bbS^{\circ}(\F_{q}) \smallsetminus \bbS^{\circ}(\F_{q})_{\evrs}|},
\]
where $\bbS^{\circ}(\F_{q})_{\evrs}\colonequals \bbS^{\circ}(\F_{q})\cap\bbS(\F_{q})_{\evrs}$.
In fact, it can be checked that this quantity can be bounded below by a polynomial in $q$ with a positive coefficient on its highest degree.
See \cite[Section 5.1]{CO21} (especially, the proof of \cite[Proposition 5.8]{CO21}) for the details.
We caution that we adopted a different usage of notations in \cite[Section 5.1]{CO21}; ``$\bbS$'' in \cite[Section 5.1]{CO21} is the connected torus $\bbS^{\circ}$ and ``$\bbS(\F_{q})_{\nvreg}$'' in \cite[Section 5.1]{CO21} is $\bbS^{\circ}(\F_{q}) \smallsetminus \bbS^{\circ}(\F_{q})_\evrs$.

On the other hand, we have $|W_{\bbG(\F_{q})}(\bbS)|=|W_{\bbG^{\circ}(\F_{q})}(\bbS^{\circ})|$ and this quantity is determined only by the Lie type of $\bbG^{\circ}$ and the rational structure on $\bbS^{\circ}$.
At least, we see that $|W_{\bbG^{\circ}(\F_{q})}(\bbS^{\circ})|$ is not greater than the order of the absolute Weyl group of $\bbG^{\circ}$, which is independent of $q$.
Therefore, we conclude that the inequality \eqref{ineq:Henniart-evrs} holds whenever $q$ is sufficiently large compared to the absolute rank of $\bfG^{\circ}$.

We note that it is possible to explicate the precise bound of $q$ so that the Henniart inequality holds as long as the data $(\bfS\subset\bfG^{0}\subset\bfG)$ is given explicitly.
For example, any connected reductive group over a finite field has a particular elliptic maximal torus called ``Coxeter type''.
Thus it is natural to attempt to compute the inequality \eqref{ineq:Henniart-evrs} by choosing $\bfG$ to be a split connected reductive group over $F$ and $\bfS$ to be an unramified elliptic maximal torus of $\bfG$ such that $\bbS^{\circ}$ is a maximal torus of $\bbG^{\circ}$ of Coxeter type.
In this case, we have $\bbS^{\circ}(\F_{q})_{\evrs}=\bbS^{\circ}(\F_{q})_{\rs}$.

For example, when $\bfG=\GL_{n}$ with prime $n$, such an $\bfS$ is given by an unramified maximal torus of $\GL_{n}$ corresponding to an unramified extension of $F$ of degree $n$.
Thus the left-hand side of the inequality \eqref{ineq:Henniart-evrs} is given by $\frac{q^{n}-1}{q-1}$.
On the other hand, the left-hand side depends on $\bfG^{0}$; $|W_{\bbG(\F_{q})}(\bbS)|=1$ when $\bfG^{0}=\bfS$ and $|W_{\bbG(\F_{q})}(\bbS)|=n$ when $\bfG^{0}=\bfG$.
We can easily see that there are very few pairs of $(n,q)$ not satisfying the inequality in the ``worst'' case
\[
\frac{q^{n}-1}{q-1}>2n
\]
 (see Section \ref{subsubsec:e=1}).
In fact, this inequality is nothing but the one considered in the work of Henniart \cite{Hen92}.
(It is still possible to explicate the inequality even when $n$ is not a prime, but the computation is more complicated; see \cite[Section 2.7]{Hen92}.)
For this reason, we call the inequality \eqref{ineq:Henniart-bullet} (and its variant \eqref{ineq:Lusztig-bullet}) \textit{the Henniart inequality}.

Since an explicit formula of the number of regular semisimple elements in a Coxeter torus is known (see, e.g., \cite{FJ93}), similar computation should be able to be done also for other classical groups.
For exceptional groups of adjoint type, the numbers $|\bbS^{\circ}(\F_{q})|$ and $|\bbS^{\circ}(\F_{q})\smallsetminus\bbS^{\circ}(\F_{q})_{\rs}|$ can be computed case-by-case (for example using Carter's classification of conjugacy classes in Weyl groups \cite{Car72}).
We provide the conclusion of these calculations:
\begin{tabular}{|c|c|c|c|c|} \hline
type & $|\bbS^{\circ}(\F_{q})|$ & $|\bbS^{\circ}(\F_{q})\smallsetminus\bbS^{\circ}(\F_{q})_{\evrs}|$ & $\bfG^{0}=\bfS$ & $\bfG^{0}=\bfG$ \\ \hline
$E_{6}$ & $(q^{4}-q^{2}+1)(q^{2}+q+1)$ & $q^{2}+q+1$ & $q$: any & $q>2$ \\\hline
$E_{7}$ & $(q^{6}-q^{3}+1)(q+1)$ & $\begin{cases}3(q+1)&\text{$q\equiv-1\mod3$}\\q+1&\text{$q\not\equiv-1\mod3$}\end{cases}$ &  $q$: any & $\begin{cases}q>2\\ \text{$q$: any}\end{cases}$\\ \hline
$E_{8}$ & $q^{8}+q^{7}-q^{5}-q^{4}-q^{3}+q+1$ & 1  & $q$: any & $q$: any \\ \hline
$F_{4}$ & $q^{4}-q^{2}+1$ & $1$ & $q$: any & $q>2$ \\ \hline
$G_{2}$ & $q^{2}-q+1$ & $\begin{cases}3&\text{$q\equiv-1\mod3$}\\1&\text{$q\not\equiv-1\mod3$}\end{cases}$ & $\begin{cases}q>2\\ \text{$q$: any}\end{cases}$ & $\begin{cases}q>6\\ q>3 \end{cases}$ \\ \hline
\end{tabular}

\subsection{The case of $\GL_{n}$, $\SL_{n}$, and $\PGL_{n}$}\label{subsec:Henn-GL}
We next consider the case where $\bfG=\GL_{n}$.
Let $\bfS$ be a tame elliptic maximal torus $\bfS$ in $\bfG$.
Then, it is well-known that there exists a tamely ramified extension $E/F$ of degree $n$ such that $\bfS$ is isomorphic to $\Res_{E/F}\Gm$.

The set $\Gamma_{F}\backslash R(\bfG,\bfS)$ of $\Gamma_{F}$-orbits of absolute roots of $\bfS$ in $\bfG$ can described in the following manner (see \cite[Section 3.1]{OT21} and also \cite[Section 3.2]{Tam16}).
We fix a set $\{g_{1},\ldots,g_{n}\}$ of representatives of the quotient $\Gamma_{F}/\Gamma_{E}$ such that $g_{1}=\id$.
Then we get an isomorphism $\bfS(\overline{F})\cong\prod_{i=1}^{n} \overline{F}^{\times}$ which maps $x\in E^{\times}\cong\bfS(F)$ to $(g_{1}(x),\ldots,g_{n}(x))$.
The projections 
\[
\delta_{i}
\colon 
\bfS(\overline{F})\xrightarrow{\cong}\prod_{i=1}^{n} \overline{F}^{\times}\rightarrow \overline{F}^{\times}
;\quad
(x_{1},\ldots,x_{n})\mapsto x_{i}
\]
form a $\Z$-basis of the group $X^{\ast}(\bfS)$ of absolute characters of $\bfS$.
The set $R(\bfG,\bfS)$ of absolute roots of $\bfS$ in $\bfG$ is given by
\[
\biggl\{
\begin{bmatrix}g_{i}\\g_{j}\end{bmatrix}\colonequals \delta_{i}-\delta_{j}
\,\bigg\vert\,
1\leq i \neq j \leq n
\biggr\}
\]
and we have
\[
\Gamma_{F}\backslash R(\bfG,\bfS)
=\biggl\{
\Gamma_{F}\cdot\begin{bmatrix}1\\g_{i}\end{bmatrix}
\,\bigg\vert\,
i=1,\ldots,n
\biggr\}\smallsetminus \Gamma_{F}\cdot\begin{bmatrix}1\\1\end{bmatrix}.
\]
(Note that $\begin{bmatrix}1\\g_{i}\end{bmatrix}$ is the character of $\bfS$ such that $\begin{bmatrix}1\\g_{i}\end{bmatrix}(x)=x/g_{i}(x)$ for $x\in E^{\times}\cong\bfS(F)$.)

We write $e$ (resp.\ $f$) for the ramification index (resp.\ residue degree) of the extension $E/F$.
Let us recall an explicit choice of a set of representatives of 
$\Gamma_F/\Gamma_E$, following \cite[Section 3.2]{OT21} (see also \cite[Section 5.1]{Tam16}).
We fix uniformizers $\varpi_{E}$ and $\varpi_{F}$ of $E$ and $F$, respectively, so that
\[
\varpi_{E}^e=\zeta _{E/F}\varpi_{F}
\]
for some root of unity $\zeta_{E/F} \in E^{\times}$.
We fix a primitive $e$-th root $\zeta_{e}$ of unity 
and an $e$-th root $\zeta_{E/F, e}$ of $\zeta_{E/F}$,
and put $\zeta_{\phi}\colonequals \zeta_{E/F, e}^{q-1}$.
Then $L\colonequals E[\zeta_{e}, \zeta_{E/F, e}]$ is a tamely ramified extension of $F$
which contains the Galois closure of $E/F$ and is unramified over $E$.
The Galois group $\Gamma_{L/F}$ of the extension $L/F$ is given by the semi-direct product
$\langle \sigma \rangle \rtimes \langle \phi \rangle$, where
\begin{align*}
&\sigma \colon \zeta \mapsto \zeta \quad (\zeta \in \mu_{L}), \quad \varpi_{E} \mapsto \zeta_e \varpi_{E} \\
&\phi \colon \zeta \mapsto \zeta ^q \quad (\zeta \in \mu_{L}), \quad \varpi_{E} \mapsto \zeta_{\phi}\varpi_{E}
\end{align*}
and $\phi \sigma \phi ^{-1}=\sigma ^q$.
Here $\mu_{L}$ denotes the set of roots of unity in $L^{\times}$.
As explained in \cite[Proposition 3.3 (i)]{OT21}, we can take a set of representatives of $\Gamma_{F}/\Gamma_{E}$ to be
\[
\{
\sigma^{i}\phi^{j}
\mid
0\leq i \leq e-1,\, 0\leq j \leq f-1
\}.
\]
(Here we implicitly regard each $\sigma^{i}\phi^{j}\in\Gamma_{L/F}$ as an element of $\Gamma_{F}$ by taking its extension to $\overline{F}$ from $L$.)

Now, based on this description of $R(\bfG,\bfS)$, let us investigate the very regular elements of $S$.
We fix an isomorphism $\bfS\cong\Res_{E/F}\Gm$.
As $\bfG^{0}$ is a tame twisted Levi subgroup of $\bfG=\GL_{n}$ such that $\bfS$ is maximally unramified in $\bfG^{0}$, there exists a subextension $K/F$ of degree $m$ in $E$ such that
\begin{itemize}
\item
$\bfG^{0}\cong\Res_{K/F}\GL_{n/m}$ and
\item
$E/K$ is unramified.
\end{itemize}
We fix a uniformizer of $\varpi_{K}$ of $K$.
Then we have
\begin{itemize}
\item
$\bbS(\F_{q})\cong E^{\times}/(1+\mfp_{E})\cong\langle\varpi_{K}\rangle\times k_{E}^{\times}$,
\item
$\bbS^{\circ}(\F_{q})\cong \mcO_{E}^{\times}/(1+\mfp_{E})\cong k_{E}^{\times}$, 
\item
$\bbZ_{\bbG}(\F_{q})\cong K^{\times}/(1+\mfp_{K})\cong\langle\varpi_{K}\rangle\times k_{K}^{\times}$, and 
\item
$\bbZ^{\star}_{\bbG}(\F_{q})\cong F^{\times}/(1+\mfp_{F})\cong\langle\varpi_{F}\rangle\times k_{F}^{\times}$.
\end{itemize}
In particular, we have
\[
[\bbS]^{\star}
=\bbS(\F_{q})/\bbZ^{\star}_{\bbG}(\F_{q})
\cong (\langle\varpi_{K}\rangle/\langle\varpi_{F}\rangle)\times(k_{E}^{\times}/k_{F}^{\times}).
\]

We regard $k_{E}^{\times}$ as a subgroup of $E^{\times}$ via the Teichm\"uller lift.
Note that, for any $l\in\Z$ and any $y\in k_{E}^{\times}$, the element $\varpi_{E}^{l}y\in E^{\times}=S$ is a topologically semisimple element (i.e., of finite prime-to-$p$ order modulo $Z_{\bfG}$).

\begin{lem}\label{lem:A1}
For any $l\in\Z$ and $y\in k_{E}^{\times}$, the element $\varpi_{E}^{l}y\in E^{\times}=S$ is regular semisimple in $\GL_{n}$ if and only if
\[\label{A1}
y^{q^{j}-1}\neq (\zeta_{E/F,e}^{q^{j}-1}\cdot\zeta_{e}^{i})^{-l}.
\tag{$\ast$}
\]
for any $0\leq i<e$ and $0\leq j<f$ satisfying $(i,j)\neq(0,0)$.
In particular, for such $l$ and $y\in k_{E}^{\times}$, the element $\varpi_{E}^{l}y\in E^{\times}=S$ is shallow.
\end{lem}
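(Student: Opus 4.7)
The plan is to compute $\alpha(\varpi_E^l y)$ explicitly for each Galois orbit representative $\alpha = \begin{bmatrix}1\\ \sigma^i\phi^j\end{bmatrix}$ with $(i,j)\neq(0,0)$, and then observe that the regular semisimplicity condition $\alpha(\varpi_E^l y)\neq 1$ unpacks exactly to the inequality \eqref{A1}. The final ``shallow'' assertion is then a short check that $\varpi_E^l y$ is topologically semisimple modulo $Z_{\bfG}$.

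First, I would compute the action of $\sigma^i\phi^j$ on $\varpi_E$ and on the Teichm\"uller lift $y \in k_E^\times \subset \mu_L$. Using the defining relations $\sigma(\varpi_E)=\zeta_e\varpi_E$, $\phi(\varpi_E)=\zeta_\phi\varpi_E=\zeta_{E/F,e}^{q-1}\varpi_E$, the fact that $\sigma$ fixes $\mu_L$, and that $\phi$ acts on $\mu_L$ by $\zeta\mapsto\zeta^q$, an inductive computation (using $\phi\sigma\phi^{-1}=\sigma^q$) gives
\[
\phi^j(\varpi_E)=\zeta_{E/F,e}^{q^j-1}\varpi_E,\qquad
\sigma^i\phi^j(\varpi_E)=\zeta_e^i\,\zeta_{E/F,e}^{q^j-1}\,\varpi_E,\qquad
\sigma^i\phi^j(y)=y^{q^j}.
\]
Hence $\sigma^i\phi^j(\varpi_E^l y)=\bigl(\zeta_e^i\zeta_{E/F,e}^{q^j-1}\bigr)^{l}\,\varpi_E^l\,y^{q^j}$, and so
\[
\begin{bmatrix}1\\ \sigma^i\phi^j\end{bmatrix}\!(\varpi_E^l y)
\;=\;
\frac{\varpi_E^l y}{\sigma^i\phi^j(\varpi_E^l y)}
\;=\;
\bigl(\zeta_e^i\zeta_{E/F,e}^{q^j-1}\bigr)^{-l}\,y^{1-q^j}.
\]

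Second, $\varpi_E^l y$ is regular semisimple in $\GL_n$ if and only if $\alpha(\varpi_E^l y)\neq 1$ for every $\alpha\in R(\bfG,\bfS)$; by Galois-invariance of this condition it suffices to check the $\Gamma_F$-orbit representatives $\begin{bmatrix}1\\ \sigma^i\phi^j\end{bmatrix}$ for $(i,j)\neq(0,0)$, $0\le i<e$, $0\le j<f$. Rearranging the displayed formula, the nonvanishing condition becomes $y^{q^j-1}\neq(\zeta_{E/F,e}^{q^j-1}\zeta_e^i)^{-l}$, which is exactly \eqref{A1}.

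Finally, for the ``in particular'' clause, I need to show that a regular semisimple element of the form $\varpi_E^l y$ is shallow, i.e.\ regular topologically semisimple modulo $Z_{\bfG}$. The Teichm\"uller lift $y$ has order dividing $q^f-1$, hence prime to $p$. The element $\varpi_E^l$ satisfies $(\varpi_E^l)^e = \zeta_{E/F}^l\varpi_F^l \in \mu_E\cdot F^\times$, so some prime-to-$p$ power of $\varpi_E^l$ lies in $F^\times = Z_{\bfG}$; here I use tameness of $E/F$ to conclude $p\nmid e$ and $p\nmid|\mu_E|$. Combining, $\varpi_E^l y$ is of finite prime-to-$p$ order modulo $Z_{\bfG}$, hence topologically semisimple modulo $Z_{\bfG}$, and together with regular semisimplicity this is the definition of shallow. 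The only subtle point in the whole argument is correctly tracking the actions of $\sigma$ and $\phi$ on $\varpi_E$ and on the roots of unity $\zeta_e,\zeta_{E/F,e}\in\mu_L$, but once the formulas above are in hand the lemma is immediate.
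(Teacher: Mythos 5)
Your proposal is correct and follows essentially the same route as the paper: compute $\sigma^i\phi^j(\varpi_E)=\zeta_{E/F,e}^{q^j-1}\zeta_e^i\varpi_E$ and $\sigma^i\phi^j(y)=y^{q^j}$, evaluate the orbit representatives $\begin{bmatrix}1\\ \sigma^i\phi^j\end{bmatrix}$ on $\varpi_E^l y$, and read off \eqref{A1} from the nonvanishing condition. The paper simply asserts (in a remark preceding the lemma) that $\varpi_E^l y$ is topologically semisimple modulo $Z_{\bfG}$, whereas you supply the short prime-to-$p$ order argument, which is a harmless addition.
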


\begin{proof}
By the above description of the $\Gamma_{F}$-orbits of roots $\Gamma_{F}\backslash R(\bfG,\bfS)$, the element $\varpi_{E}y$ is regular semisimple if and only if 
\[
\begin{bmatrix}1\\\sigma^{i}\phi^{j}\end{bmatrix}(\varpi_{E}y)=\varpi_{E}y/\sigma^{i}\phi^{j}(\varpi_{E}y)\neq1
\]
for any $0\leq i\leq e-1$ and $0\leq j\leq f-1$ satisfying $(i,j)\neq(0,0)$.
Since we have
\begin{itemize}
\item
$\sigma^{i}\phi^{j}(y)=y^{q^{j}}$ and
\item
$\sigma^{i}\phi^{j}(\varpi_{E})=\sigma^{i}(\zeta_{\phi}^{1+q+\cdots+q^{j-1}}\varpi_{E})=\zeta_{\phi}^{\frac{q^{j}-1}{q-1}}\zeta_{e}^{i}\varpi_{E}=\zeta_{E/F,e}^{q^{j}-1}\zeta_{e}^{i}\varpi_{E}$,
\end{itemize}
we get 
\[
\sigma^{i}\phi^{j}(\varpi_{E}^{l}y)
=(\zeta_{E/F,e}^{q^{j}-1}\zeta_{e}^{i}\varpi_{E})^{l}y^{q^{j}}.
\]
Thus $\varpi_{E}^{l}y/\sigma^{i}\phi^{j}(\varpi_{E}^{l}y)\neq1$ if and only if $y^{q^{j}-1}\neq (\zeta_{E/F,e}^{q^{j}-1}\cdot\zeta_{e}^{i})^{-l}$.
\end{proof}

\subsubsection{The case where $e=1$}\label{subsubsec:e=1}
Let us first consider the case where $e=1$ (thus $n=f$).
In this case, $\bfS$ is an unramified torus.
Moreover, for simplicity, we also assume that $f$ is a prime and $\bfS\subsetneq\bfG^{0}$.

The group $[\bbS]^{\star}$ is isomorphic to $k_{E}^{\times}/k_{F}^{\times}\cong\F_{q^{n}}^{\times}/\F_{q}^{\times}$.
By Lemma \ref{lem:A1}, an element $y\in \F_{q^{n}}^{\times}\subset S$ is shallow if and only if $y^{q^{j}-1}\neq1$ for any $0\leq j<f$.
In other words, $y\in \F_{q^{n}}^{\times}$ is shallow if and only if $y$ does not belong to $\F_{q}^{\times}$.
Thus $[\bbS]^{\star}_{\evrs}$ is given by $(\F_{q^{n}}^{\times}\smallsetminus\F_{q}^{\times})/\F_{q}^{\times}$.
Since $|W_{\bbG(\F_{q})}(\bbS)|=|W_{\bbG^{\circ}(\F_{q})}(\bbS^{\circ})|=n$, the inequality \eqref{ineq:Henniart-evrs} is given by 
\[
\frac{q^{n}-1}{q-1}>2n.
\]

This inequality is not satisfied when $n=1$, so let us suppose that $n\geq2$.
When $q=2$, we can check that only $n=2$ does not satisfy this inequality.
When $q\geq3$, we have
\[
\frac{q^{n}-1}{q-1}-2n
=(q^{n-1}-2)+\cdots+(q-2)-1\geq0,
\]
where the equality holds only if $n=2$ and $q=3$.
In summary, the Henniart inequality holds when $(n,q)\neq(1,\text{any}), (2,2), (2,3)$.

See \cite[Section 2.7]{Hen92} for a computation for general (not necessarily a prime) $n$.
In fact, the inequality holds for any $(n,q)\neq(1,\text{any}), (2,2), (2,3), (4,2), (6,2)$.

\subsubsection{The case where $f=1$}\label{subsubsec:f=1}
We next consider the case where $f=1$ (thus $n=e$).
In this case, $\bfS$ is a totally ramified torus, hence we necessarily have $\bfS=\bfG^{0}$ (see Remark \ref{rem:toral-characterization}).

The group $[\bbS]^{\star}$ is isomorphic to $\langle\varpi_{E}\rangle/\langle\varpi_{F}\rangle\cong \Z/n\Z$.
By Lemma \ref{lem:A1}, for any $l\in\Z$, the element $\varpi_{E}^{l}\in S$ is shallow if and only if $\zeta_{e}^{il}\neq1$ for any $0\leq i<e$.
In other words, $\varpi_{E}^{l}\in S$ is shallow if and only if $l$ is prime to $e=n$.
Thus $[\bbS]^{\star}_{\evrs}$ can be identified with the subset of units $(\Z/n\Z)^{\times}$ in $\Z/n\Z$.
Hence the Henniart inequality becomes
\[
\frac{n}{n-\varphi(n)}>2,
\]
where $\varphi(n)$ denotes the Euler's totient function ($\varphi(n)=|(\Z/n\Z)^{\times}|$).

We have a lot of examples of $n$ satisfying this inequality; for example, any prime $n$ greater than $2$ satisfies this inequality.
However, we also have a lot of examples of $n$ not satisfying this inequality; for example, any $n$ divisible by $2$ cannot satisfy this inequality.

On the other hand, by the above description of $[\bbS]^{\star}$ and $[\bbS]^{\star}_{\evrs}$, we immediately see that $[\bbS]^{\star}_{\evrs}$ generates $[\bbS]^{\star}$ as a group.

\subsubsection{The case where $e>1$ and $f>1$}\label{subsubsec:remaining}
Let us next consider the case where $e>1$ and $f>1$.
For simplicity, let us again assume that the residue degree $f$ of $E/F$ is a prime.

\begin{prop}\label{prop:A1}
For any $l\in\Z$ and $y\in k_{E}^{\times}$, the element $\varpi_{E}^{l}y\in E^{\times}=S$ can be shallow only if $l$ is prime to $e$.
Moreover, for $l\in\Z$ prime to $e$, the number of elements $y\in k_{E}^{\times}$ such that $\varpi_{E}^{l}y$ is shallow is bounded below by
\[
(q^{f}-1)-(q-1)e(f-1).
\]
\end{prop}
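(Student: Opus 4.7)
The plan is to deduce the proposition directly from the explicit criterion provided by Lemma~\ref{lem:A1}. Recall that $\varpi_E^l y$ is topologically semisimple by construction, so shallowness is equivalent to regular semisimplicity, which in turn is controlled by the conditions \eqref{A1} indexed by pairs $(i,j)$ with $0 \leq i < e$, $0 \leq j < f$, $(i,j) \neq (0,0)$.

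For the first assertion, I will exploit only the $j = 0$ conditions. When $j = 0$, the condition \eqref{A1} reduces to $\zeta_e^{il} \neq 1$, i.e., $e \nmid il$. If $\gcd(l, e) = d > 1$, then setting $i = e/d$ yields $il = (e/d)l$ with $d \mid l$, so $e \mid il$ and $\zeta_e^{il} = 1$, violating \eqref{A1}. Hence no $y$ can make $\varpi_E^l y$ regular semisimple, proving the ``only if'' direction.

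For the second assertion, assume $\gcd(l, e) = 1$. First, for $j = 0$ and any $i \neq 0$, $\gcd(l,e) = 1$ forces $e \nmid il$, so \eqref{A1} holds automatically; no $y$ is excluded by a $j = 0$ condition. It remains to bound the number of $y$ violating \eqref{A1} for some $(i,j)$ with $1 \leq j \leq f-1$ and $0 \leq i \leq e-1$. For each such pair, the set of bad $y$ is either empty or a coset of the kernel of the homomorphism $y \mapsto y^{q^j - 1}$ on $k_E^\times$; crucially, since $|k_E^\times| = q^f - 1$ and
\[
\gcd(q^j - 1, q^f - 1) = q^{\gcd(j,f)} - 1 = q - 1,
\]
using that $f$ is prime and $1 \leq j \leq f-1$, each such coset has cardinality exactly $q-1$. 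There are $e(f-1)$ pairs $(i,j)$ in this range, so a union bound gives at most $(q-1)e(f-1)$ bad $y$, yielding the claimed lower bound $(q^f - 1) - (q-1)e(f-1)$ on the number of shallow $\varpi_E^l y$.

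The calculation is essentially routine once Lemma~\ref{lem:A1} is in hand; the only delicate point is invoking the primality of $f$ at the right moment to force $\gcd(q^j - 1, q^f - 1) = q-1$ for all $1 \leq j \leq f-1$, which is what keeps the total count of ``bad'' $y$ linear in $q$ rather than exponential.
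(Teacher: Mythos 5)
Your proof is correct and follows essentially the same route as the paper: the $j=0$ conditions give the coprimality constraint on $l$, and for each pair $(i,j)$ with $1 \leq j \leq f-1$ the set of bad $y$ is a coset of the kernel of $y \mapsto y^{q^j-1}$, of size $q-1$ by primality of $f$ (the paper phrases this as $y_1/y_2 \in \F_{q^j} \cap \F_{q^f} = \F_q$, which is the same computation). No gaps.
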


\begin{proof}
Let us investigate when the condition \eqref{A1} of Lemma \ref{lem:A1} holds for any $0\leq i<e$ and $0\leq j<f$ satisfying $(i,j)\neq(0,0)$.

We first consider the condition \eqref{A1} for $j=0$ (hence $0<i<e$).
In this case, \eqref{A1} is equivalent to $il\not\equiv0 \pmod{e}$ as $\zeta_{e}$ is a primitive $e$-th root of unity.
Thus \eqref{A1} holds for any $0<i<e$ if and only if $l$ is prime to $e$.

We next consider the condition \eqref{A1} for $0<j<f$.
Let us fix $0\leq i<e$ and $0<j<f$ and count the number of $y\in k_{E}^{\times}$ which does not satisfy \eqref{A1}.
If $y_{1}\in k_{E}^{\times}$ and $y_{2}\in k_{E}^{\times}$ do not satisfy \eqref{A1}, then we must have $(y_{1}/y_{2})^{q^{j}-1}=1$.
This means that $y_{1}/y_{2}$ belongs to the degree $j$ extension $\F_{q^{j}}$ of $k_{F}=\F_{q}$.
Since we assume that $f$ is a prime number, this implies that $y_{1}/y_{2}\in k_{F}^{\times}$.
Hence, we see that at most $(q-1)$ elements of $k_{E}^{\times}$ can fail to satisfy the condition \eqref{A1} for fixed $0\leq i<e$ and $0<j<f$.
Therefore, in total, at most $(q-1)e(f-1)$ elements of $k_{E}^{\times}$ can be non-shallow.
In other words, the number of elements $y\in k_{E}^{\times}$ with shallow $\varpi_{E}^{l}y$ is bounded below by $(q^{f}-1)-(q-1)e(f-1)$.
\end{proof}

As we assume that $f$ is a prime, we necessarily have $[E:K]=1$ or $[E:K]=f$.
If $[E:K]=1$, then $\bfG^{0}=\bfS$.
Since the Henniart inequality is more restrictive in the non-toral situation, let us suppose that $[E:K]=f$ in the following.
Recall that $\bbG$ is the reduction of the subgroup $G^{0}_{\bar{\x}}$.
Thus the Weyl group $W_{\bbG(\F_{q})}(\bbS)$ is contained in the Weyl group $W_{G^{0}}(\bfS)$, which is isomorphic to the Galois group of the extension $E/K$, hence is of order $f$.
In particular, we have $f\geq |W_{\bbG(\F_{q})}(\bbS)|$.

\begin{cor}\label{cor:A1}
If we have
\[
\frac{e}{e-\varphi(e)}>2f,
\]
then the inequality \eqref{ineq:Henniart-evrs} is satisfied for sufficiently large $q$.
\end{cor}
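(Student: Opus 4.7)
The plan is to bound both sides of the Henniart inequality \eqref{ineq:Henniart-evrs} explicitly, using the structure computed in this subsection, and then let $q\to\infty$.

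First, I would recall from the discussion following the identification $\bbS(\F_q)\cong \langle\varpi_K\rangle\times k_E^\times$ that
\[
[\bbS]^{\star}\cong (\langle\varpi_K\rangle/\langle\varpi_F\rangle)\times (k_E^\times/k_F^\times),
\]
so $|[\bbS]^\star| = e\cdot\frac{q^f-1}{q-1}$. Since $\varpi_E = u\varpi_K$ for some unit $u\in\mathcal{O}_E^\times$, an element $\varpi_K^l\,y\in[\bbS]^\star$ corresponds to $\varpi_E^l\,(u^ly)\in S$ modulo $1+\mfp_E$, so Proposition \ref{prop:A1} applies with the exponent $l$ unchanged.

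Next, I would count the very regular locus. By Proposition \ref{prop:A1} the coset $\varpi_K^l y$ can be represented by a very regular element only when $l$ is coprime to $e$, and for each such $l$ the number of $y\in k_E^\times$ whose lift is shallow is at least $(q^f-1)-(q-1)e(f-1)$. Passing to classes modulo $k_F^\times$, I get
\[
|[\bbS]^\star_{\evrs}|\geq \varphi(e)\cdot\frac{(q^f-1)-(q-1)e(f-1)}{q-1} = \varphi(e)\cdot\frac{q^f-1}{q-1}-\varphi(e)\cdot e(f-1),
\]
so that
\[
|[\bbS]^\star\smallsetminus[\bbS]^\star_{\evrs}|\leq (e-\varphi(e))\cdot\frac{q^f-1}{q-1}+\varphi(e)\cdot e(f-1).
\]
Combined with the bound $|W_{\bbG(\F_q)}(\bbS)|\leq |W_{G^0}(\bfS)| = f$ recalled just before the corollary statement, the desired inequality \eqref{ineq:Henniart-evrs} will follow from
\[
\frac{e\cdot(q^f-1)/(q-1)}{(e-\varphi(e))\cdot (q^f-1)/(q-1)+\varphi(e)\cdot e(f-1)} > 2f.
\]

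Finally, I would analyze this last inequality asymptotically. The numerator and the first summand of the denominator both grow like $q^{f-1}$, whereas the second summand of the denominator is a constant independent of $q$. Dividing numerator and denominator by $(q^f-1)/(q-1)$, the left-hand side tends to $\frac{e}{e-\varphi(e)}$ as $q\to\infty$. Hence under the hypothesis $\frac{e}{e-\varphi(e)}>2f$, the inequality holds for all sufficiently large $q$. The proof is essentially an elementary limit argument, so I do not anticipate a serious obstacle; the only substantive input is Proposition \ref{prop:A1}, which has already been established.
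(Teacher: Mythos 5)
Your proposal is correct and follows essentially the same route as the paper's proof: the same count $|[\bbS]^\star| = e\,(q^f-1)/(q-1)$, the same lower bound on $|[\bbS]^\star_{\evrs}|$ from Proposition \ref{prop:A1}, the bound $|W_{\bbG(\F_q)}(\bbS)|\leq f$, and the same limit argument sending $q\to\infty$ to reach $\tfrac{e}{e-\varphi(e)}$. Your explicit remark that $\varpi_K=u\varpi_E$ for a unit $u$ (so that Proposition \ref{prop:A1}, stated for $\varpi_E^l y$, applies to cosets $\varpi_K^l y$ with the count over $y\in k_E^\times$ unchanged) is a small point the paper passes over silently, but otherwise the two arguments coincide.
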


\begin{proof}
We have $|[\bbS]^{\star}|=|(\langle\varpi_{K}\rangle/\langle\varpi_{F}\rangle)\times(k_{E}^{\times}/k_{F}^{\times})|=e(q^{f}-1)(q-1)^{-1}$.
If we put $g(q,f)\colonequals q^{f-1}+\cdots+q+1$, then we have $|[\bbS]^{\star}|=e\cdot g(q,f)$.

Let us evaluate $|[\bbS]^{\star}_{\evrs}|=|\bbS(\F_{q})^{\star}_{\evrs}/\bbZ^{\star}_{\bbG}(\F_{q})|$.
By Proposition \ref{prop:A1}, for any $l\in\Z$ prime to $e$, the number of elements $y\in k_{E}^{\times}$ such that $\varpi_{K}^{l}y$ is shallow is bounded below by $(q^{f}-1)-(q-1)e(f-1)$.
Hence we have
\[
|[\bbS]^{\star}_{\evrs}|
\geq
\varphi(e)\cdot\frac{(q^{f}-1)-(q-1)e(f-1)}{q-1}
=\varphi(e)\cdot g(q,f)-\varphi(e)\cdot e(f-1),
\]
where $\varphi(-)$ denotes Euler's totient function.
Thus we get
\begin{align*}
\frac{|[\bbS]^{\star}|}{|[\bbS]^{\star}\smallsetminus [\bbS]^{\star}_{\evrs}|} 
&\geq
\frac{e\cdot g(q,f)}{e\cdot g(q,f)-\varphi(e)\cdot g(q,f)+\varphi(e)e(f-1)}\\
&=\frac{e}{e-\varphi(e)+\varphi(e)e(f-1)\cdot g(q,f)^{-1}}.
\end{align*}
The right-hand side tends to $\frac{e}{e-\varphi(e)}$ when $q$ tends to infinity.
Therefore, if we have $\frac{e}{e-\varphi(e)}>2f$, then the inequality \eqref{ineq:Henniart-evrs} is satisfied for sufficiently large $q$.
\end{proof}

Similarly to the case in Section \ref{subsubsec:f=1}, we can find many examples of $n$ satisfying this inequality; for example, any prime $e$ greater than $2f$ satisfies this inequality.
However, we can also find many examples of $n$ not satisfying this inequality.

\subsubsection{The case of $\SL_{n}$ and $\PGL_{n}$}\label{subsubsec:Henn-SL-PGL}

We put $\bfG\colonequals \GL_{n}$, $\bfG_{\sc}\colonequals \SL_{n}$, and $\bfG_{\ad}\colonequals \PGL_{n}$.
Let $\bfS\cong\Res_{E/F}\Gm$ be an elliptic maximal torus of $\bfG$.
Let $\bfS_{\sc}$ be the preimage of $\bfS$ in $\bfG_{\sc}$ and $\bfS_{\ad}$ the image of $\bfS$ in $\bfG_{\ad}$.

Via the identification $\bfS\cong\Res_{E/F}\Gm$, $\bfS_{\sc}$ is mapped to the subgroup given by the kernel of the norm map $\Nr\colon \Res_{E/F}\Gm\rightarrow\Gm$.
Thus we can see that
\begin{itemize}
\item
$\bbS_{\sc}(\F_{q})=\Ker(\Nr\colon k_{E}^{\times}\rightarrow k_{F}^{\times})$.
\end{itemize}
In particular, this implies that no element of $S$ can be shallow if $e>1$.
Hence the Henniart inequality can hold only when $\bfS$ is unramified.

On the other hand, we see that
\begin{itemize}
\item
$\bbS_{\ad}(\F_{q})\cong (\langle\varpi_{E}\rangle/\langle\varpi_{F}\rangle)\times (k_{E}^{\times}/k_{F}^{\times})$,
\item
$\bbS_{\ad}^{\circ}(\F_{q})\cong k_{E}^{\times}/k_{F}^{\times}$, and
\item
$\Z^{\star}_{\overline{\bbG}}(\F_{q})=\{1\}$.
\end{itemize}
Thus exactly the same estimate as in the $\GL_{n}$ case is available for $\bfG_{\ad}=\PGL_{n}$.

\newpage


\begin{thebibliography}{KMSW14}

\bibitem[Adl98]{Adl98}
J.~D. Adler, \emph{Refined anisotropic {$K$}-types and supercuspidal
  representations}, Pacific J. Math. \textbf{185} (1998), no.~1, 1--32.

\bibitem[Art06]{Art06}
J.~Arthur, \emph{A note on {$L$}-packets}, Pure Appl. Math. Q. \textbf{2}
  (2006), no.~1, Special Issue: In honor of John H. Coates. Part 1, 199--217.

\bibitem[Art13]{Art13}
\bysame, \emph{The endoscopic classification of representations: Orthogonal and
  symplectic groups}, American Mathematical Society Colloquium Publications,
  vol.~61, American Mathematical Society, Providence, RI, 2013.

\bibitem[AS08]{AS08}
J.~D. Adler and L.~Spice, \emph{Good product expansions for tame elements of
  {$p$}-adic groups}, Int. Math. Res. Pap. IMRP (2008), no.~1, Art. ID rp. 003,
  95.

\bibitem[AS09]{AS09}
\bysame, \emph{Supercuspidal characters of reductive {$p$}-adic groups}, Amer.
  J. Math. \textbf{131} (2009), no.~4, 1137--1210.

\bibitem[BH96]{BH96}
C.~J. Bushnell and G.~Henniart, \emph{Local tame lifting for {${\rm GL}(N)$}.
  {I}. {S}imple characters}, Inst. Hautes {\'E}tudes Sci. Publ. Math. (1996),
  no.~83, 105--233.

\bibitem[BH11]{BH11}
\bysame, \emph{The essentially tame {J}acquet-{L}anglands correspondence for
  inner forms of {${\rm GL}(n)$}}, Pure Appl. Math. Q. \textbf{7} (2011),
  no.~3, Special Issue: In honor of Jacques Tits, 469--538.

\bibitem[Bor91]{Bor91}
A.~Borel, \emph{Linear algebraic groups}, second ed., Graduate Texts in
  Mathematics, vol. 126, Springer-Verlag, New York, 1991.

\bibitem[BW13]{BW13}
M.~Boyarchenko and J.~Weinstein, \emph{Geometric realization of special cases
  of local langlands and jacquet--langlands correpsondences}, preprint,
  \url{arXiv:1303.5795}, 2013.

\bibitem[Car72]{Car72}
R.~W. Carter, \emph{Conjugacy classes in the {W}eyl group}, Compositio Math.
  \textbf{25} (1972), 1--59.

\bibitem[Car85]{Car85}
\bysame, \emph{Finite groups of {L}ie type}, Pure and Applied Mathematics (New
  York), John Wiley \& Sons, Inc., New York, 1985, Conjugacy classes and
  complex characters, A Wiley-Interscience Publication.

\bibitem[Cha20]{Cha20}
C.~Chan, \emph{The cohomology of semi-infinite {D}eligne-{L}usztig varieties},
  J. Reine Angew. Math. \textbf{768} (2020), 93--147.

\bibitem[CI21a]{CI21-MA}
C.~Chan and A.~Ivanov, \emph{Affine {D}eligne-{L}usztig varieties at infinite
  level}, Math. Ann. \textbf{380} (2021), no.~3-4, 1801--1890.

\bibitem[CI21b]{CI21-RT}
\bysame, \emph{Cohomological representations of parahoric subgroups},
  Represent. Theory \textbf{25} (2021), 1--26.

\bibitem[Clo91]{Clo91}
L.~Clozel, \emph{Invariant harmonic analysis on the {S}chwartz space of a
  reductive {$p$}-adic group}, Harmonic analysis on reductive groups
  ({B}runswick, {ME}, 1989), Progr. Math., vol. 101, Birkh\"{a}user Boston,
  Boston, MA, 1991, pp.~101--121.

\bibitem[CO21]{CO21}
C.~Chan and M.~Oi, \emph{Geometric {$L$}-packets of {H}owe-unramified toral
  supercuspidal representations}, preprint, \url{arXiv:2105.06341}, to appear
  in J. Eur. Math. Soc. (JEMS), 2021.

\bibitem[DeB06]{DeB06}
S.~DeBacker, \emph{Parameterizing conjugacy classes of maximal unramified tori
  via {B}ruhat-{T}its theory}, Michigan Math. J. \textbf{54} (2006), no.~1,
  157--178.

\bibitem[DKV84]{DKV84}
P.~Deligne, D.~Kazhdan, and M.-F. Vign\'{e}ras, \emph{Repr\'{e}sentations des
  alg\`ebres centrales simples {$p$}-adiques}, Representations of reductive
  groups over a local field, Travaux en Cours, Hermann, Paris, 1984,
  pp.~33--117.

\bibitem[DL76]{DL76}
P.~Deligne and G.~Lusztig, \emph{Representations of reductive groups over
  finite fields}, Ann. of Math. (2) \textbf{103} (1976), no.~1, 103--161.

\bibitem[DS18]{DS18}
S.~DeBacker and L.~Spice, \emph{Stability of character sums for positive-depth,
  supercuspidal representations}, J. Reine Angew. Math. \textbf{742} (2018),
  47--78.

\bibitem[Fin21a]{Fin21-Compos}
J.~Fintzen, \emph{On the construction of tame supercuspidal representations},
  Compos. Math. \textbf{157} (2021), no.~12, 2733--2746.

\bibitem[Fin21b]{Fin21-IMRN}
\bysame, \emph{Tame tori in {$p$}-adic groups and good semisimple elements},
  Int. Math. Res. Not. IMRN (2021), no.~19, 14882--14904.

\bibitem[Fin21c]{Fin21-Ann}
\bysame, \emph{Types for tame {$p$}-adic groups}, Ann. of Math. (2)
  \textbf{193} (2021), no.~1, 303--346.

\bibitem[FJ93]{FJ93}
P.~Fleischmann and I.~Janiszczak, \emph{The number of regular semisimple
  elements for {C}hevalley groups of classical type}, J. Algebra \textbf{155}
  (1993), no.~2, 482--528.

\bibitem[FKS21]{FKS21}
J.~Fintzen, T.~Kaletha, and L.~Spice, \emph{A twisted {Y}u construction,
  {H}arish-{C}handra characters, and endoscopy}, preprint,
  \url{arXiv:2106.09120}, 2021.

\bibitem[GM20]{GM20}
M.~Geck and G.~Malle, \emph{The character theory of finite groups of {L}ie
  type: a guided tour}, Cambridge Studies in Advanced Mathematics, vol. 187,
  Cambridge University Press, Cambridge, 2020.

\bibitem[HC70]{HC70}
Harish-Chandra, \emph{Harmonic analysis on reductive {$p$}-adic groups},
  Lecture Notes in Mathematics, Vol. 162, Springer-Verlag, Berlin-New York,
  1970, Notes by G. van Dijk.

\bibitem[Hen92]{Hen92}
G.~Henniart, \emph{Correspondance de {L}anglands-{K}azhdan explicite dans le
  cas non ramifi\'{e}}, Math. Nachr. \textbf{158} (1992), 7--26.

\bibitem[Hen93]{Hen93}
\bysame, \emph{Correspondance de {J}acquet-{L}anglands explicite. {I}. {L}e cas
  mod\'{e}r\'{e} de degr\'{e} premier}, S\'{e}minaire de {T}h\'{e}orie des
  {N}ombres, {P}aris, 1990--91, Progr. Math., vol. 108, Birkh\"{a}user Boston,
  Boston, MA, 1993, pp.~85--114.

\bibitem[Hen00]{Hen00}
\bysame, \emph{Une preuve simple des conjectures de {L}anglands pour {${\rm
  GL}(n)$} sur un corps {$p$}-adique}, Invent. Math. \textbf{139} (2000),
  no.~2, 439--455.

\bibitem[HM08]{HM08}
J.~Hakim and F.~Murnaghan, \emph{Distinguished tame supercuspidal
  representations}, Int. Math. Res. Pap. IMRP (2008), no.~2, Art. ID rpn005,
  166.

\bibitem[HT01]{HT01}
M.~Harris and R.~Taylor, \emph{The geometry and cohomology of some simple
  {S}himura varieties}, Annals of Mathematics Studies, vol. 151, Princeton
  University Press, Princeton, NJ, 2001, With an appendix by Vladimir G.
  Berkovich.

\bibitem[Kal16]{Kal16-LLC}
T.~Kaletha, \emph{The local {L}anglands conjectures for non-quasi-split
  groups}, Families of automorphic forms and the trace formula, Simons Symp.,
  Springer, [Cham], 2016, pp.~217--257.

\bibitem[Kal19a]{Kal19}
\bysame, \emph{Regular supercuspidal representations}, J. Amer. Math. Soc.
  \textbf{32} (2019), no.~4, 1071--1170.

\bibitem[Kal19b]{Kal19-sc}
\bysame, \emph{Supercuspidal {$L$}-packets}, preprint, \url{arXiv:1912.03274},
  2019.

\bibitem[Kal22]{Kal22-ICM}
\bysame, \emph{Representations of reductive groups over local fields}, preprint
  (the Proceedings of ICM 2022), \url{arXiv:2201.07741v2}, 2022.

\bibitem[Kim07]{Kim07}
J.-L. Kim, \emph{Supercuspidal representations: an exhaustion theorem}, J.
  Amer. Math. Soc. \textbf{20} (2007), no.~2, 273--320.

\bibitem[KMSW14]{KMSW14}
T.~Kaletha, A.~Minguez, S.~W. Shin, and P.-J. White, \emph{Endoscopic
  classification of representations: inner forms of unitary groups}, preprint,
  \url{arXiv:1409.3731}, 2014.

\bibitem[Kot82]{Kot82}
R.~E. Kottwitz, \emph{Rational conjugacy classes in reductive groups}, Duke
  Math. J. \textbf{49} (1982), no.~4, 785--806.

\bibitem[Kot86]{Kot86}
\bysame, \emph{Stable trace formula: elliptic singular terms}, Math. Ann.
  \textbf{275} (1986), no.~3, 365--399.

\bibitem[KP23]{KP23}
T.~Kaletha and G.~Prasad, \emph{Bruhat-{T}its theory---a new approach}, New
  Mathematical Monographs, vol.~44, Cambridge University Press, Cambridge,
  2023.

\bibitem[Lan89]{Lan89}
R.~P. Langlands, \emph{On the classification of irreducible representations of
  real algebraic groups}, Representation theory and harmonic analysis on
  semisimple {L}ie groups, Math. Surveys Monogr., vol.~31, Amer. Math. Soc.,
  Providence, RI, 1989, pp.~101--170.

\bibitem[LS20]{LS20}
J.~Lust and S.~Stevens, \emph{On depth zero {L}-packets for classical groups},
  Proc. Lond. Math. Soc. (3) \textbf{121} (2020), no.~5, 1083--1120.

\bibitem[Lus77]{Lus77}
G.~Lusztig, \emph{Irreducible representations of finite classical groups},
  Invent. Math. \textbf{43} (1977), no.~2, 125--175.

\bibitem[Lus78]{Lus78}
\bysame, \emph{Representations of finite {C}hevalley groups}, CBMS Regional
  Conference Series in Mathematics, vol.~39, American Mathematical Society,
  Providence, R.I., 1978, Expository lectures from the CBMS Regional Conference
  held at Madison, Wis., August 8--12, 1977.

\bibitem[Lus88]{Lus88}
\bysame, \emph{On the representations of reductive groups with disconnected
  centre}, Ast\'{e}risque (1988), no.~168, 10, 157--166, Orbites unipotentes et
  repr\'{e}sentations, I.

\bibitem[Lus90]{Lus90}
\bysame, \emph{Green functions and character sheaves}, Ann. of Math. (2)
  \textbf{131} (1990), no.~2, 355--408.

\bibitem[Lus20]{Lus20}
\bysame, \emph{On the definition of unipotent representations}, preprint,
  \url{arXiv:2011.01824}, 2020.

\bibitem[Mok15]{Mok15}
C.~P. Mok, \emph{Endoscopic classification of representations of quasi-split
  unitary groups}, Mem. Amer. Math. Soc. \textbf{235} (2015), no.~1108, vi+248.

\bibitem[MR18]{MR18}
C.~M{\oe}glin and D.~Renard, \emph{Sur les paquets d'{A}rthur des groupes
  classiques et unitaires non quasi-d\'{e}ploy\'{e}s}, Relative aspects in
  representation theory, {L}anglands functoriality and automorphic forms,
  Lecture Notes in Math., vol. 2221, Springer, Cham, 2018, pp.~341--361.

\bibitem[OT21]{OT21}
M.~Oi and K.~Tokimoto, \emph{Local {L}anglands correspondence for regular
  supercuspidal representations of {$GL(n)$}}, Int. Math. Res. Not. IMRN
  (2021), no.~3, 2007--2073.

\bibitem[Spi08]{Spi08}
L.~Spice, \emph{Topological {J}ordan decompositions}, J. Algebra \textbf{319}
  (2008), no.~8, 3141--3163.

\bibitem[Spi18]{Spi18}
\bysame, \emph{Explicit asymptotic expansions for tame supercuspidal
  characters}, Compos. Math. \textbf{154} (2018), no.~11, 2305--2378.

\bibitem[Spi21]{Spi21}
\bysame, \emph{Explicit asymptotic expansions in {$p$}-adic harmonic analysis
  {II}}, preprint, \url{arXiv:2108.12935}, 2021.

\bibitem[Tam16]{Tam16}
K.-F. Tam, \emph{Admissible embeddings of {L}-tori and the essentially tame
  local {L}anglands correspondence}, Int. Math. Res. Not. IMRN (2016), no.~6,
  1695--1775.

\bibitem[Tit79]{Tit79}
J.~Tits, \emph{Reductive groups over local fields}, Automorphic forms,
  representations and {$L$}-functions ({P}roc. {S}ympos. {P}ure {M}ath.,
  {O}regon {S}tate {U}niv., {C}orvallis, {O}re., 1977), {P}art 1, Proc. Sympos.
  Pure Math., XXXIII, Amer. Math. Soc., Providence, R.I., 1979, pp.~29--69.

\bibitem[Xu17]{Xu17-MM}
B.~Xu, \emph{On the cuspidal support of discrete series for {$p$}-adic
  quasisplit {$Sp(N)$} and {$SO(N)$}}, Manuscripta Math. \textbf{154} (2017),
  no.~3-4, 441--502.

\bibitem[Yu01]{Yu01}
J.-K. Yu, \emph{Construction of tame supercuspidal representations}, J. Amer.
  Math. Soc. \textbf{14} (2001), no.~3, 579--622.

\end{thebibliography}
\end{document}